\newtheorem{theorem}{Theorem}[subsection]
\newtheorem{lemma}[theorem]{Lemma}
\newtheorem{prop}[theorem]{Proposition}
\newtheorem{cor}[theorem]{Corollary}
\theoremstyle{definition}
\newtheorem{defn}[theorem]{Definition}
\newtheorem{rem}[theorem]{Remark}
\newtheorem{ex}[theorem]{Example}
\newcommand{\sbt}{\,\begin{picture}(-1,1)(0.5,-1)\circle*{1.8}\end{picture}\hspace{.05cm}}
\DeclareMathOperator{\pr}{pr}
\DeclareMathOperator{\id}{id}
\DeclareMathOperator{\im}{Im}
\DeclareMathOperator{\tr}{Tr}
\DeclareMathOperator{\coker}{coker}
\DeclareMathOperator{\map}{Map}
\DeclareMathOperator{\Top}{Top}
\DeclareMathOperator*{\colim}{colim}
\DeclareMathOperator*{\hocolim}{hocolim}
\DeclareMathOperator{\THH}{THH}
\DeclareMathOperator{\THR}{THR}
\DeclareMathOperator{\sym}{Sym}
\DeclareMathOperator{\hofib}{hofib}
\DeclareMathOperator{\conn}{Conn}
\DeclareMathOperator{\modu}{Mod}
\DeclareMathOperator{\res}{res}
\DeclareMathOperator{\KR}{KR}
\begin{document}

\begin{titlepage}
\phantom{d}
\noindent
PhD thesis\newline
University of Copenhagen --- Department of Mathematical Sciences --- 2012

\vskip 20pt

\noindent\rule{\textwidth}{2pt}
\newline
\phantom{d}
\newline
\noindent{{\LARGE{\textbf{\textsc{Stable Real K-theory and Real}}}}}
\newline
\newline
\noindent{{\LARGE{\textbf{\textsc{Topological Hochschild Homology}}}}}\vspace{5pt}
\newline
\noindent\rule{\textwidth}{2pt}

\vskip 60pt
\noindent
{\Large {\textsc{\textbf{Emanuele Dotto}}}}\\
\vskip 0pt
\vspace{7cm}

\begin{flushleft}
{\small
\begin{tabular}{l l l}
Submitted: & 30th October 2012\\

& \\

Academic adviser: & Ib Madsen \\
                                  & University of Copenhagen\\
& \\
Assessment committee:  
& Marcel Bökstedt \\
& University of Aarhus\\
&\\
& Bj\o rn Jan Dundas\\
& University of Bergen\\
&\\
& Nathalie Wahl (chair)\\
& University of Copenhagen\\
\end{tabular}
}
\end{flushleft}

\end{titlepage}

\thispagestyle{empty}

\begin{flushleft}
{\small
Emanuele Dotto \\
Department of Mathematical Sciences \\
University of Copenhagen \\
Universitetsparken 5 \\
DK-2100 K\o benhavn \O \\
Denmark \\
\vspace{0.5 em}
dotto@math.ku.dk \\
http://www.math.ku.dk/\textasciitilde dotto/
}
\end{flushleft}

\vspace{5em}

\vspace{\stretch{0.85}}

{\small
\noindent\copyright{} Emanuele Dotto, 2012
\vspace{0.5 em}

\noindent ISBN 978-87-7078-997-4
}

\pagebreak

\pagenumbering{roman}
\setcounter{page}{1}


\begin{center}
\Large{\textbf{Abstract}}
\end{center}

\noindent The classical trace map is a highly non-trivial map from
algebraic $K$-theory to topological Hochschild homology (or topological cyclic homology) introduced by Bökstedt, Hsiang and Madsen. It led to
many computations of algebraic $K$-theory of rings.
Hesselholt and Madsen recently introduced a $\mathbb{Z}/2$-equivariant version of
Waldhausen $S_{\sbt}$-construction for categories with duality. The output is a certain spectrum with involution, called the real $K$-theory spectrum, and associated bigraded groups $\KR_{p,q}(C,D)$ analogous to Atiyah's real (topological) $K$-groups. This thesis develops a theory of topological Hochschild homology for categories with duality, and a $\mathbb{Z}/2$-equivariant trace map from real $K$-theory to it. The main result of the thesis is the theorem that stable $\KR$ of the category of projective modules over a split square zero extension of a ring is equivalent to the real topological Hochschild homology of the ring with appropriate coefficients. This is the real version of the celebrated Dundas-McCarthy theorem for ordinary $K$-theory.

\vspace{2.3cm}

\begin{center}
\Large{\textbf{Resum\' e}}
\end{center}

\noindent Den klassiske sporafbildning, som blev indf\o rt af Bökstedt, Hsiang og
Madsen, er en særdeles ikke-triviel afbildning
fra algebraisk $K$-teori til topologisk Hochschild-homologi (eller til
topologisk cyklisk homologi).
Den har f\o rt til mange udregninger af algebraisk $K$-teori for ringe.
Hesselholt og Madsen har nyeligt
indf\o rt en $\mathbb{Z}/2$-ækvivariant version af Waldhausens $S_{\sbt}$-konstruktion for
kategorier med dualitet.
Udkommet af dette er et særligt spektrum med en involution, kaldet det
reelle $K$-teori-spektrum, samt
til-h\o rende bigraduerede grupper $\KR_{p,q}(C,D)$, svarende til Atiyahs reelle
(topologiske) $K$-grupper.
I denne afhandling udvikles en teori for topologisk Hochschild-homologi
for kategorier med dualitet, til hvilken
der er en $\mathbb{Z}/2$-ækvivariant sporafbildning fra reel $K$-teori. Afhandlingens
hovedresultat er, at stabil $\KR$ af kategorien af
projektive moduler over en split kvadrat-$0$ udvidelse af en ring er
ækvivalent med den reelle topologiske
Hochschild-homologi af ringen med passende koefficienter. Dette er en reel
version af den kendte Dundas-McCarthy-sætning
for sædvanlig $K$-teori.

\newpage

\begin{center}
\Large{\textbf{Acknowledgements}}
\end{center}

\noindent I want to thank Ib Madsen for being a present adviser and an inspiring Mentor, and for always guiding me in the right direction with an energetic and friendly push.

\noindent I thank David Ayala and Oscar Randall-Williams for teaching me all sorts of math tricks.

\noindent I finally thank Alexander, Cyril, David, Eduard and Massimiliano for giving me all the distractions I needed  from work, and for always making me feel home.
\vspace{2cm}

\begin{flushright}\it{Copenhagen, 30th October 2012}\\
\it{Emanuele Dotto}\end{flushright}

\pagebreak

\tableofcontents

\newpage

\pagenumbering{arabic}
\setcounter{page}{1}


\addcontentsline{toc}{section}{Introduction}
\section*{Introduction}

Hesselholt and Madsen in \cite{IbLars} define a $\mathbb{Z}/2$-equivariant version of Waldhausen $S_{\sbt}$-construction: the $S_{\sbt}^{2,1}$-construction. This induces the real $K$-theory functor $\KR$, from the category of exact categories with duality to a certain category of spectra with involution, called real spectra. When specialized to rings with antistructures (e.g. in the sense of Wall \cite{wall}), the infinite loop space of the $\KR$ spectrum is a $\mathbb{Z}/2$-space, whose fixed points is the group completion for the classifying space of the category of non-degenerate bilinear forms on the ring with antistructure.

This thesis is part of a bigger project aimed to develop trace methods for computations in real $K$-theory.
The classical trace map is a natural weak map $\tr\colon K\longrightarrow TC$ from $K$-theory to topological cyclic homology, introduced in \cite{BHM} by Bökstedt, Hsiang and Madsen. In \cite{dmthm}, \cite{dundasrelKandTC} and \cite{Dundasbook}, Dundas, Goodwillie and McCarthy  show that the trace map defines a homotopy cartesian square
\[\xymatrix{K(B)\ar[d]\ar[r]^-{\tr}&TC(B)\ar[d]\\
K(A)\ar[r]_-{\tr}&TC(A)}\]
for every map of $\mathcal{S}$-algebras $B\rightarrow A$ (or symmetric ring spectra) which induces a surjection with nilpotent kernel in $\pi_0$.
This theorem leads to several hard computations in $K$-theory, for example for perfect fields \cite{wittvectors}, or truncated polynomial algebras \cite{teena}.

This thesis contains a first step toward a version of this theorem for real $K$-theory.
By approximating $\mathcal{S}$-algebras by simplicial ring and via Goodwillie calculus one can reduce the theorem above to analytical properties of $K$ and $TC$ \cite{dmthm}, and to showing that the connectivities of two maps
\[\tr\colon \widetilde{K}(A\ltimes M)\longrightarrow \THH(A;M(S^1)) \ \ \ \ \res\colon \widetilde{TC}(A\ltimes M)\longrightarrow \THH(A;M(S^1))\]
grow fast enough with the connectivity of the simplicial $A$-bimodule $M$, cf. \cite{stableDM}, \cite{stabletcisthh}.

The main result of this thesis is a real version of the first map together with its corollary
\[\tr\colon \widetilde{\KR}^S(A\ltimes M)\stackrel{\simeq}{\longrightarrow} \THR(A;M(S^{1,1}))\]
We examine the real topological Hochschild homology functor $\THR$ (of \cite{IbLars}) defined on exact categories with duality and with values in $\mathbb{Z}/2$-equivariant spectra. The underlying non-equivariant spectrum of $\THR(C,D)$ is the classical topological Hochschild homology spectrum $\THH(C)$. Second, we produce an equivariant trace map
\[\tr\colon \KR(C,D)\longrightarrow\THR(C,D)\]
Following the "classical" results from \cite{ringfctrs} we study the behavior of $\THR$ on products, prove equivariant Morita equivalence, establish deloopings via the $S_{\sbt}^{2,1}$-construction and other real versions of the classical results from \cite{ringfctrs}.

The main results relating $\KR$ and $\THR$ are Theorem \ref{relKR}, Theorem \ref{stabTHR} and Theorem \ref{comparisontrbeta}. They can be summarized as follows: Given a bimodule $M$ over a ring $A$ and a split antistructure over $A\ltimes M$, the trace map induces a $\mathbb{Z}/2$-equivalence
\[\colim\limits_m\Omega^{2m,m}\widetilde{\KR}(A\ltimes M(S^{2m,m}))\longrightarrow \THR(A;M(S^{1,1}))\]
if $2\in A$ is invertible. I hope in later works to be able to remove the condition that $2\in A$ is invertible.

\newpage

\section{Review of $K$-theory and real $K$-theory}


\subsection{The $S_{\cdot}$-construction}

The $S_{\sbt}$-construction is a functor from the category of exact categories to simplicial categories introduced by Waldhausen in \cite{waldhausen}. We recall its construction and refer to \cite{waldhausen} for the details.
\begin{defn} An \textbf{additive category} is a category $C$ enriched in the symmetric monoidal category of abelian groups, with a zero object and finite products.
\end{defn}

\begin{defn}
A sequence $c\stackrel{i}{\longrightarrow}d\stackrel{p}{\longrightarrow}e$ of an additive category $C$ is \textbf{split-exact} if there is a sequence $e\stackrel{s}{\longrightarrow}d\stackrel{r}{\longrightarrow}c$ in $C$ such that
\[p\circ s=\id_e, r\circ i=\id_c\mbox{ and }s\circ p+i\circ r=\id_d\]
\end{defn}

\begin{defn}
An \textbf{exact category} is an additive category $C$ together with a family $\mathcal{E}$ of sequences
\[c\stackrel{i}{\longrightarrow}d\stackrel{p}{\longrightarrow}e\]
in $C$ satisfying the axioms below.
The morphisms appearing as the first map of a sequence of $\mathcal{E}$ are called \textbf{admissible monomorphisms}; those appearing as the second map of a sequence of $\mathcal{E}$ are the \textbf{admissible epimorphisms}, and the elements of $\mathcal{E}$ are the \textbf{exact sequences}.
\begin{enumerate}
\item For every sequence $c\stackrel{i}{\longrightarrow}d\stackrel{p}{\longrightarrow}e$ in $\mathcal{E}$, the map $i$ is a kernel for $p$ and the map $p$ is a cokernel for $i$.
\item $\mathcal{E}$ is closed under taking isomorphic sequences.
\item The class of admissible monomorphisms is closed under composition, and so is the class of admissible epimorphisms.
\item The pushout of an admissible monomorphism along any morphism exists
and is an admissible monomorphism; the pullback of an admissible epimorphism
along any morphism exists and is an admissible epimorphism.
\item $\mathcal{E}$ contains all the split-exact sequences of $C$.
\end{enumerate}
We say that $(C,\mathcal{E})$ is \textbf{split-exact} if every sequence in $\mathcal{E}$ is a split-exact sequence. We will often suppress $\mathcal{E}$ from the notation.

A functor between exact categories is called an \textbf{exact functor} if it is additive (i.e. it is $Ab$-enriched) and maps exact sequences to exact sequences.
\end{defn}

Exact categories form a category with exact functors as morphisms.

\begin{ex}\label{ringexact}
The main cases we will be interested in are the projective modules: let $A$ be a ring and $\mathcal{P}_A$ be the category with objects finitely generated projective right modules over $A$, and morphisms module maps. The enrichement in the category of abelian groups is given by pointwise sum of module homomorphisms. This has a canonical exact category structure, by declaring a sequence
\[P\stackrel{i}{\longrightarrow}Q\stackrel{p}{\longrightarrow}R\]
to be exact if $i$ is injective, $p$ is surjective, and $\ker p=\im i$. Since we are considering finitely generated projective modules, every exact sequence is split-exact.

This defines a lax functor $\mathcal{P}$ from rings to exact categories, associating to a ring $A$ the category $\mathcal{P}_A$ with the exact structure defined above, and to a ring map $f\colon A\longrightarrow B$ the exact functor $f_\ast\colon\mathcal{P}_A\longrightarrow \mathcal{P}_B$ sending a module $P$ to $P\otimes_{A}B$, where $B$ is the left $A$-module with
\[a\cdot b=f(a)\cdot b\]
In \ref{functorialPA} we will discuss a strictly functorial model for $\mathcal{P}$, but we take the lax construction as a definition.
\end{ex}

For every natural number $n$, let $[n]$ be the category associated to the poset $\{0,\dots,n\}$. Given two natural numbers $n$ and $m$, we denote $Cat([m],[n])$ the category of functors from $[m]$ to $[n]$ with natural transformations as morphisms. For every $0\leq l\leq m$, the functor $d_l\colon Cat([m],[n])\longrightarrow Cat([m-1],[n])$ is precomposition by the unique injective map $\delta^l\colon [m-1]\longrightarrow [m]$ whose image does not contain $l$; the functor $s_l\colon Cat([m],[n])\longrightarrow Cat([m+1],[n])$ denotes precomposition by the unique surjective map $\sigma^l\colon [m+1]\longrightarrow [m]$ that sends $l$ and $l+1$ to the same element.

\begin{defn}
The category $S_nC$ has objects the set of functors \[X\colon Cat([1],[n])\longrightarrow C\] satisfying the two following conditions
\begin{enumerate}
\item $X(\omega)=0$ unless $\omega\colon[1]\longrightarrow [n]$ is injective.
\item For all $\theta\colon [2]\longrightarrow [n]$, the sequence
\[X(d_2\theta)\longrightarrow X(d_1\theta)\longrightarrow X(d_0\theta)\]
is exact.
\end{enumerate}
The morphisms of $S_nC$ are natural transformations of functors. For any category $C$ we let $iC$ denote the category with the same objects but with morphisms the isomorphisms of $C$.
\end{defn}

\begin{ex}
An object in $S_3C$ is a diagram of the form
\[\xymatrix{X_{01} \ \ar@{>->}[r]&X_{02} \ \ar@{>->}[r]\ar@{->>}[d]&X_{03}\ar@{->>}[d]\\
&X_{12} \ \ar@{>->}[r]&X_{13}\ar@{->>}[d]\\
& & X_{23}
}\]
where the horizontal maps are all admissible monomorphisms and the vertical ones admissible epimorphisms.
\end{ex}

The maps $\delta^l$ and $\sigma^l$ defined above induce functors $d_l\colon S_nC\longrightarrow S_{n-1}C$ and $s_l\colon S_nC\longrightarrow S_{n+1}C$ by
\[d_lX(\omega)=X(\delta^l\omega) \mbox{  and  }s_lX(\omega)=X(\sigma^l\omega)\]
making $S_{\sbt} C$ into a simplicial category.
Restricting the morphisms to isomorphisms we get a simplicial category $iS_{\sbt} C$, and hence a bi-simplicial set
\[[n],[k]\longmapsto \mathcal{N}_kiS_nC\]
where $\mathcal{N}_k$ is the $k$-the nerve.
\begin{defn}
The \textbf{algebraic $K$-theory space} of an exact category $C$ is defined as the loop space
\[K(C)=\Omega|[n]\mapsto iS_nC|=\Omega|[n],[k]\mapsto \mathcal{N}_kiS_nC|\]
The \textbf{algebraic $K$-theory of a ring $A$} is defined by
\[K(A)=K(\mathcal{P}_A)\]
with $\mathcal{P}_A$ from example \ref{ringexact} above.
\end{defn}

The space $K(C)$ is an infinite loop space. Indeed, $S_0C$ is the one point category, and $S_1C=C$. Therefore the adjoints of the canonical maps $\mathcal{N}_kiS_1C\times \Delta^1\longrightarrow |\mathcal{N}_kiS_{\sbt} C|$ induce a simplicial map $\mathcal{N}_kiC\longrightarrow \Omega|\mathcal{N}_kiS_{\sbt} C|$ that realized gives
\[|iC|\longrightarrow \Omega|iS_{\sbt} C|\]
Moreover, an exact structure on $C$ induces an exact structure on $S_nC$ by declaring a sequence of diagrams to be exact if pointwise is an exact sequence of $C$. This allows to iterate the $S_{\sbt}$-construction.

\begin{theorem}[{\cite[1.5.3]{waldhausen}}]
The map $|iS_{\sbt} C|\longrightarrow \Omega|iS_{\sbt} S_{\sbt} C|$ is a homotopy equivalence.
\end{theorem}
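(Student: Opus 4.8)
The plan is to recognise the map in the statement as the first structure map of a delooping machine and to reduce the claim to the Additivity Theorem for the $S_{\sbt}$-construction. Write $X_{\sbt}$ for the simplicial space $[n]\longmapsto X_n:=|iS_{\sbt}S_nC|$, where the realisation is taken in the inner $S_{\sbt}$-direction and in the nerve direction. Then $|X_{\sbt}|=|iS_{\sbt}S_{\sbt}C|$; since $S_0C$ is the trivial exact category we have $X_0=|iS_{\sbt}S_0C|=\ast$, and since $S_1C=C$ we have $X_1=|iS_{\sbt}C|$. Because $X_0$ is a single point, a point of $X_1$ spans a $1$-simplex in $|X_{\sbt}|$ with both endpoints at the basepoint, i.e. a loop, and the resulting map $X_1\to\Omega|X_{\sbt}|$ is precisely the map $|iS_{\sbt}C|\to\Omega|iS_{\sbt}S_{\sbt}C|$ of the statement.

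First I would invoke the standard realisation lemma for simplicial spaces (Segal): if $Y_{\sbt}$ is a proper simplicial space with $Y_0\simeq\ast$ for which all Segal maps $Y_n\to Y_1^{\times n}$ are weak equivalences, then the canonical map $Y_1\to\Omega|Y_{\sbt}|$ is a group completion of the resulting $H$-space $Y_1$; in particular it is a weak equivalence as soon as $\pi_0Y_1$ is already a group. Applied to $Y_{\sbt}=X_{\sbt}$, the hypothesis $X_0=\ast$ holds by construction, and $\pi_0X_1=\pi_0|iS_{\sbt}C|$ is trivial — hence a group — because $|iS_0C|=\ast$ forces the simplicial space $[n]\mapsto|iS_nC|$ to have path-connected realisation. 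Thus everything reduces to checking the Segal condition for $X_{\sbt}$.

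The Segal condition is exactly the Additivity Theorem, and verifying it is the main obstacle. The $n$ exact functors $S_nC\to S_1C=C$ picking out the consecutive subquotients $X(\{j-1<j\})$, $j=1,\dots,n$, assemble into the Segal map $|iS_{\sbt}S_nC|\to|iS_{\sbt}C|^{\times n}$; for $n=2$ this is the functor $S_2C\cong E(C)\to C\times C$ sending an admissible exact sequence to its sub and its quotient, and the general case follows from the case $n=2$ by an induction over the filtration of $[n]$. To prove that $|iS_{\sbt}E(C)|\to|iS_{\sbt}C|^{\times 2}$ is a homotopy equivalence I would fix the inner $S_{\sbt}$-coordinate and use the bisimplicial realisation lemma to reduce to a statement about nerves in each simplicial degree, then exhibit an explicit natural deformation (Waldhausen's argument, or the streamlined proofs of McCarthy and of Grayson) showing that the identity of $S_{\sbt}E(C)$ is homotopic to the composite that projects to sub and quotient and reassembles via direct sum. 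Granting Additivity, the two paragraphs above conclude the proof; and since the argument is formal in the exact category $C$, it iterates to make $n\mapsto|iS_{\sbt}^{(n)}C|$ a connective $\Omega$-spectrum, which is the structure exploited later in the real $K$-theory and $\THR$ settings.
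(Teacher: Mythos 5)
The paper states this result with only a citation to Waldhausen \cite[1.5.3]{waldhausen}; it reproduces no proof, though the $\THH$-analogue later in the text (\ref{noneqdeloopings}) is proved by the path-space method: form $PS_{\sbt}C$, observe that $|iS_{\sbt}PS_{\sbt}C|$ is simplicially contractible, identify the levelwise fiber of $d_0\colon |iS_{\sbt}PS_{\sbt}C|\to|iS_{\sbt}S_{\sbt}C|$ with $|iS_{\sbt}C|$ via additivity, and conclude by Bousfield--Friedlander. Your argument is a genuinely different packaging of the same ingredients: you instead recognise $n\mapsto|iS_{\sbt}S_nC|$ as a reduced Segal space whose special condition is supplied by the Additivity Theorem, and then invoke Segal's group-completion theorem, with the observation that $\pi_0|iS_{\sbt}C|=\ast$ (because $S_0C$ is trivial) making the group-completion an honest equivalence. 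This is correct, and it has the advantage of packaging the iteration to a connective $\Omega$-spectrum cleanly, whereas the path-space proof has the advantage of not needing the group-completion theorem --- it trades it for the $\pi_\ast$-Kan condition and Bousfield--Friedlander, which the thesis then re-uses equivariantly. Two small points worth tightening in your writeup: the reduction of the Segal maps for general $n$ to the $n=2$ case needs an actual induction (Waldhausen's 1.5.4, or the observation that $S_nC\to S_{n-1}C\times C$ via $(d_n,\text{last subquotient})$ is again an additivity-type equivalence), and you should note explicitly that the realization lemma needs $X_{\sbt}$ to be Reedy cofibrant / proper, which holds here because the degeneracies are cofibrations of CW-complexes.
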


In contrast the map $|iC|\longrightarrow \Omega|iS_{\sbt} C|$ is usually not an equivalence.

\begin{defn}
The \textbf{algebraic $K$-theory (spectrum)} of the exact category $C$ is the spectrum defined by the sequence of spaces
\[\uuline{K}(C)=\{n\mapsto |iS_{\sbt}^{(n)}C|\}\]
with structure maps adjoint to $|iS_{\sbt}^{(n)}C|\longrightarrow \Omega|iS_{\sbt}^{(n+1)} C|$.

The \textbf{algebraic $K$-theory (spectrum) of a ring $A$} is defined by
\[\uuline{K}(A)=\uuline{K}(\mathcal{P}_A)\]
\end{defn}
By the theorem above $\uuline{K}(C)$ is positively fibrant, in the sense that it is an $\Omega$-spectrum above degree zero. Its infinite loop space is $\Omega|S_{\sbt} C|$.

An exact functor $F\colon C\longrightarrow C'$ induces a simplicial functor $F_{\sbt}\colon S_{\sbt} C\longrightarrow S_{\sbt} C'$ by sending $X\colon Cat([1],[n])\longrightarrow C$ to $F\circ X$. Therefore $K$-theory defines a functor from the category of exact categories and exact functors to spectra. Precomposing this with the functor $\mathcal{P}$ of the example above, we obtain a functor
\[\uuline{K}\colon \mbox{Rings}\longrightarrow \mbox{Spectra}\]


\subsection{Categories with duality}\label{duality}

There are two generalizations of the $S_{\sbt}$-construction for categories "with duality", which will admit some extra structure. We recall the basic definitions from \cite{IbLars}.
\begin{defn} A \textbf{category with duality} is a category $C$ together with a functor $D\colon C^{op}\longrightarrow C$ and a natural isomorphism $\eta\colon \id \stackrel{\cong}{\Rightarrow}D^2$ such that for every object $c$ of $C$ the composite
\[D(c)\stackrel{\eta_{D(c)}}{\longrightarrow}DDD(c)\stackrel{D(\eta_c)}{\longrightarrow}D(c)\]
is the identity.
We say that $C$ is a \textbf{category with strict duality} if $D^2=\id$ and $\eta=\id$. 
\end{defn}
We will sometimes omit $\eta$, or even $D$ from the notation, and refer to $C$ as "a category with duality".

\begin{defn}\label{eqcatdual}
A \textbf{morphism $(C,D,\eta)\longrightarrow (C',D',\eta')$ of categories with duality} is a functor $F\colon C\longrightarrow C'$ together with a natural isomorphism $\xi\colon FD\stackrel{\cong}{\Rightarrow}D'F$ such that
\[\xymatrix{F(c)\ar[r]^-{F(\eta_c)}\ar[d]_{\eta'_{F(c)}}& FDD(c)\ar[d]^{\xi_{D(c)}}\\
D'D'F(c)\ar[r]_-{D'(\xi_c)} &D'FD(c)
}\]
commutes for all object $c$ of $C$.

Composition of $(F,\xi)\colon C\longrightarrow C'$ and $(G,\zeta)\colon C'\longrightarrow C''$ is defined by the pair $(G\circ F,\zeta_{F}\circ G(\xi))$.

A \textbf{morphism $(C,D,\id)\longrightarrow (C',D',\id)$ of categories with strict duality} is a functor $F\colon C\longrightarrow C'$ such that $FD=D'F$.

A \textbf{natural transformation of morphisms of categories with duality} $U\colon (F,\xi)\Rightarrow (G,\zeta)$ is a natural transformation $U\colon F\Rightarrow G$ such that
\[\xymatrix{D'G(c)\ar[r]^{D'(U_c)}&D'F(c)\\
GD(c)\ar[u]_{\zeta_c}&FD(c)\ar[l]^{U_{D(c)}}\ar[u]^{\xi_c}
}\]
commutes.

An \textbf{equivalence of categories with duality} is a morphism of categories with duality $(F,\xi)\colon C\longrightarrow C'$ such that there exist a morphism $(G,\zeta)\colon C'\longrightarrow C$ and natural isomorphisms of categories with duality $(F,\xi)\circ(G,\zeta)\Rightarrow\id$ and $(G,\zeta)\circ(F,\xi)\Rightarrow\id$.
\end{defn}

\begin{prop}[Pullback of duality structures under an equivalence of categories]\label{dualitiesandequivalences}
Let $(C,D,\eta)$ be a category with duality, $F\colon C'\longrightarrow C$ and $G\colon C\longrightarrow C'$ functors, and $\mu\colon \id\Rightarrow G\circ F$ and $\epsilon\colon F\circ G\Rightarrow\id$ natural isomorphism making $F$ and $G$ into an adjoint pair.
Then the functor $D'=GDF$ together with the natural isomorphism $\eta'\colon \id\Rightarrow (D')^2$ defined by
\[\xymatrix{ c\ar[r]^-{\eta_c'}\ar[d]_{\mu_c}&GDFGDF(c)\\
GF(c)\ar[r]_-{G(\eta_{F(c)})}&GDDF(c)\ar[u]_{GD(\epsilon_{DF(c)})}
}\]
is a duality on $C'$. Moreover the pair $(F,\epsilon_{DF})\colon (C',D')\longrightarrow (C,D)$ is an equivalence of categories with duality, with inverse $(G,GD(\epsilon))$.
\end{prop}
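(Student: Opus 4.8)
The plan is to verify the two assertions in turn: first that $(D',\eta')$ is a duality on $C'$, and then that $(F,\epsilon_{DF})$ is an equivalence of categories with duality with the stated inverse. For the first part, $D' = GDF\colon (C')^{op}\to C'$ is a functor as a composite of functors (note $F^{op}$ and $G^{op}$ are also functors, and $D$ reverses arrows). The map $\eta'$ defined by the displayed composite is a natural isomorphism because each of $\mu$, $G(\eta_{F(-)})$ and $GD(\epsilon_{DF(-)})$ is a natural isomorphism: $\mu$ and $\epsilon$ are given as iso's, $\eta$ is an iso by hypothesis, and functors preserve iso's. The one genuine computation is the duality identity: I must check that for every object $c'$ of $C'$,
\[
D'(c')\xrightarrow{\ \eta'_{D'(c')}\ } D'D'D'(c')\xrightarrow{\ D'(\eta'_{c'})\ } D'(c')
\]
is the identity. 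The strategy here is to expand both $\eta'_{D'(c')}$ and $D'(\eta'_{c'})$ using the defining square, paste the resulting diagram, and reduce it to the duality identity for $(D,\eta)$ together with the triangle identities $\epsilon_{F}\circ F(\mu) = \id_F$ and $G(\epsilon)\circ \mu_{G} = \id_G$ for the adjunction. This diagram chase is the main obstacle: it is purely formal but the pasting is intricate, and one has to use naturality of $\eta$, naturality of $\epsilon$ and $\mu$, functoriality of $D$, and both triangle identities in the right order. The cleanest way is probably to observe that since $F,G$ is an adjoint \emph{equivalence}, $\eta'$ is forced to be the unique natural iso making $(F,\epsilon_{DF})$ compatible with the dualities, and then the duality identity for $\eta'$ follows from that for $\eta$ by transport of structure; but I will also give the explicit chase for completeness.

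For the second part, I first check that $(F,\epsilon_{DF})\colon (C',D',\eta')\to (C,D,\eta)$ is a morphism of categories with duality, i.e. that $\xi := \epsilon_{DF}\colon FD' = FGDF \Rightarrow DF$ is a natural isomorphism (clear, being a component of $\epsilon$) satisfying the hexagon of Definition \ref{eqcatdual}: for all $c'$,
\[
\xymatrix{F(c')\ar[r]^-{F(\eta'_{c'})}\ar[d]_{\eta_{F(c')}}& FD'D'(c')\ar[d]^{\xi_{D'(c')}}\\
DDF(c')\ar[r]_-{D(\xi_{c'})} &DFD'(c')}
\]
commutes. Again I expand $F(\eta'_{c'})$ via the definition of $\eta'$ and reduce to naturality of $\epsilon$ plus a triangle identity. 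Symmetrically, I check that $(G,\zeta)$ with $\zeta := GD(\epsilon)\colon GD \Rightarrow D'G = GDFG$ is a morphism of categories with duality $(C,D,\eta)\to(C',D',\eta')$ by the analogous chase. Finally I verify that $\mu\colon \id \Rightarrow GF$ and $\epsilon\colon FG\Rightarrow\id$ are natural transformations \emph{of morphisms of categories with duality}, i.e. that they satisfy the compatibility square in Definition \ref{eqcatdual} with respect to the structure isomorphisms $\xi$, $\zeta$ and the composite structure isomorphisms on $(G,\zeta)\circ(F,\xi)$ and $(F,\xi)\circ(G,\zeta)$ (which by definition are $\zeta_F\circ G(\xi)$ and $\xi_G\circ F(\zeta)$ respectively); these are two more diagram chases using the triangle identities. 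Once all of this is in place, $(F,\xi)$ and $(G,\zeta)$ together with $\mu$, $\epsilon$ exhibit the claimed equivalence of categories with duality, proving the proposition.

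All of the work is formal 2-categorical bookkeeping in the 2-category of categories, functors and natural transformations, enhanced with the involutions; no hypothesis beyond the adjoint-equivalence data and the duality identity for $\eta$ is used. I expect the pasting for the duality identity of $\eta'$ to be the longest and most error-prone step, so I would isolate it as a sublemma and do it carefully, drawing the full pasting diagram; everything after that is a routine repetition of the same moves.
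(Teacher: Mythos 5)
Your outline matches the paper's own proof step for step: verify the duality identity $D'(\eta'_{c'})\circ\eta'_{D'(c')}=\id$ by unwinding the definition and using the triangle identities, check that $(F,\epsilon_{DF})$ and $(G,GD(\epsilon))$ satisfy the compatibility square of Definition \ref{eqcatdual}, and note that $\mu$ and $\epsilon$ are equivariant by naturality. The paper carries out the same diagram chases (rather more tersely) and does not bother with your alternative transport-of-structure argument, but the decomposition and the ingredients are identical.
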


\begin{proof}
In order to show that $(D',\eta')$ is a duality, one needs to show that $D'(\eta_c')\circ\eta'_{D(c)}=\id_{D'(c)}$. A computation of the left hand side shows
\[G(D(F(\mu_c))\circ D(\epsilon_{F(c)})\circ\epsilon_{DF(c)})\circ\mu_{GDF(c)}=G(\epsilon_{DF(c)})\circ\mu_{GDF(c)}=\id_{D'c}\]
where the equalities follow from standard properties of an adjunction.

To show that $(F,\epsilon_{DF})$ is a map of categories with duality, one needs to show that
\[\epsilon_{DFGDF(c)}\circ F(GD(\epsilon_{DF(c)})\circ G(\eta_{F(c)})\circ\mu_c)=D(\epsilon_{DF(c)})\circ\eta_{F(c)}\]
This follows by naturality of $\epsilon$ and the adjunction properties. Similarly $(G,GD(\epsilon))$ is a map of categories with duality. The natural isomorphisms $\eta$ and $\epsilon$ are equivariant just by naturality.
\end{proof}

\begin{rem}
Let $F\colon C'\longrightarrow C$ be a full and faithful functor, injective on objects and essentially surjective. Then there exist a functor $G\colon C\longrightarrow C'$ that satisfies the hypothesis of the proposition above.
It is defined by choosing, for every object $c\in C$, an isomorphism $\epsilon_c\colon F(c')\longrightarrow c$ for some $c'\in C'$, but such that if $c$ is in the image of $F$, we take the identity $F(c')=c$ for $c'$ the unique object mapped to $c$ by $F$. Define on objects $G(c)=c'$. For a morphism $f\colon c\longrightarrow d$, define $G(f)$ to be the unique morphism $c'\longrightarrow d'$ such that
\[\xymatrix{F(c')\ar[r]\ar[d]_{\epsilon_c}&F(d')\ar[d]^{\epsilon_d}\\
c\ar[r]_f&d
}\]
commutes, that is $G(f)=F^{-1}(\epsilon_{d}^{-1}\circ f\circ \epsilon_c)$. Then by definition $G\circ F=\id$ and the isomorphisms $\epsilon_c$ define a natural isomorphism $\epsilon\colon F\circ G\Rightarrow\id$ 
It is easy to check that $\epsilon$ and $\mu=\id$ form an adjunction. In particular since $\epsilon_{F(c)}=\id$.
\end{rem}

\begin{prop}\label{equivalenceequivariant}
Suppose that $F\colon C'\longrightarrow C$ is an equivalence of categories, and that there are duality structures $(C',D',\eta')$, $(C,D,\eta)$ and a natural isomorphism $\xi\colon FD'\Rightarrow DF$ such that $(F,\xi)$ is a morphism of categories with duality. Then $(F,\xi)$ is an equivalence of categories with duality.
\end{prop}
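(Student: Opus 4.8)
The plan is to produce a homotopy inverse $G\colon C \longrightarrow C'$ as a functor, equip it with an appropriate natural isomorphism $\zeta\colon GD \Rightarrow D'G$, and then verify that the pair $(G,\zeta)$ is a morphism of categories with duality which is inverse to $(F,\xi)$ in the sense of Definition \ref{eqcatdual}. Since $F$ is an equivalence of plain categories, standard category theory gives a functor $G\colon C \longrightarrow C'$ together with natural isomorphisms $\mu\colon \id_{C'} \Rightarrow GF$ and $\epsilon\colon FG \Rightarrow \id_C$ forming an adjoint equivalence; I would arrange from the start that these satisfy the triangle identities, which is harmless since any equivalence can be upgraded to an adjoint equivalence. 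The candidate for $\zeta$ is forced by wanting $(F,\xi)$ and $(G,\zeta)$ to be mutually inverse: I would define $\zeta_c\colon GD(c) \longrightarrow D'G(c)$ as the composite
\[
GD(c) \xrightarrow{\ GD(\epsilon_c^{-1})\ } GDFG(c) \xrightarrow{\ G(\xi_{G(c)}^{-1})\ } GFD'G(c) \xrightarrow{\ \mu_{D'G(c)}^{-1}\ } D'G(c),
\]
i.e. transporting $\xi$ across the adjunction. This is manifestly a natural isomorphism since each factor is.

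Next I would check that $(G,\zeta)$ is a morphism of categories with duality, i.e. that the hexagon in Definition \ref{eqcatdual} relating $\eta$ and $\eta'$ commutes for $G$. The input is that the analogous hexagon commutes for $(F,\xi)$ by hypothesis, together with the compatibility of $\eta$ and $\eta'$ built into the duality axioms, and the triangle identities for $(\mu,\epsilon)$. Concretely I would paste the $F$-hexagon, its image under $G$, and several naturality squares for $\mu$, $\epsilon$, $\xi$, $\eta$; this is a routine but slightly lengthy diagram chase. In fact this portion is essentially dual to the verification already carried out in the proof of Proposition \ref{dualitiesandequivalences}: there one pulls a duality back along an adjunction and checks the adjoint functor is a morphism of categories with duality, and here one does the same starting from $\xi$ rather than building $\xi$ from scratch. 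So I would either reduce to that proposition directly — noting that $D'$ and $GDF$ are related by the isomorphism $\zeta$ and invoking uniqueness of duality structures compatible with a given equivalence — or repeat the chase.

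Finally I would verify that $(F,\xi)\circ(G,\zeta) \Rightarrow \id$ and $(G,\zeta)\circ(F,\xi) \Rightarrow \id$ are natural isomorphisms \emph{of categories with duality}, using the transformation $\epsilon$ (resp. $\mu$) as the underlying natural transformation. By Definition \ref{eqcatdual} this means checking one more commuting square for each, expressing that $\epsilon$ (resp. $\mu$) intertwines the structure isomorphisms of the composite morphism with those of the identity; both follow from naturality of $\epsilon$, $\mu$, $\xi$, $\zeta$ together with the triangle identities, exactly as the last sentence of the proof of Proposition \ref{dualitiesandequivalences} observes. The main obstacle is purely bookkeeping: keeping track of the several nested isomorphisms in the definition of $\zeta$ when pasting the large diagram for the duality-morphism hexagon, and making sure the triangle identities are invoked at the right spots. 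There is no conceptual difficulty — once $(\mu,\epsilon)$ is chosen to be an adjoint equivalence, everything is formal.
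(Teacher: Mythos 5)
Your approach is essentially the paper's: construct an inverse functor $G$, transport $\xi$ across the adjoint equivalence to get $\zeta$, verify the hexagon for $(G,\zeta)$, and check that $\epsilon$ and $\mu$ are equivariant. The paper fixes a specific $G$ with $G\circ F=\id$ strictly and $\epsilon_{F(c')}=\id$ (so $\mu=\id$), which shortens the formula for $\zeta$ to $\chi_c = F^{-1}(\xi_{G(c)}^{-1}\circ D(\epsilon_c)\circ\epsilon_{Dc})$ — your formula specializes to this when $\mu=\id$. Note one direction slip: since $D$ is contravariant, $D(\epsilon_c)$ already points from $D(c)$ to $DFG(c)$, so the first arrow in your composite should be $GD(\epsilon_c)$, not $GD(\epsilon_c^{-1})$ (the latter points the wrong way). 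The paper also carries out the hexagon diagram chase explicitly rather than deferring it, but that is a matter of thoroughness rather than content.
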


\begin{proof}
We define a functor $G\colon C\longrightarrow C'$ in the same way as in the last proof.
Define a natural isomorphism $\chi\colon GD\Rightarrow D'G$ as follows. For an object $c\in C$, by definition $\epsilon_c\colon FG(c)\cong c$ and $\epsilon_{Dc}\colon FG(Dc)\cong Dc$. Consider the composite
\[FG(Dc)\stackrel{\epsilon_{Dc}}{\longrightarrow}D(c)\stackrel{D(\epsilon_c)}{\longrightarrow}DF(G(c))\stackrel{\xi_{G(c)}^{-1}}{\longrightarrow}FD'(G(c))\]
This is a morphism in $C$ from $F(GDc)$ to $F(D'Gc)$, and since $F$ is an equivalence of categories it has a unique preimage $\chi_c \colon GDc\longrightarrow D'Gc$, that is define
\[\chi_c=F^{-1}(\xi_{G(c)}^{-1}\circ D(\epsilon_c)\circ \epsilon_{Dc})\]
For $(G,\chi)$ to define a morphism of categories with duality, we need to show that 
\[\xymatrix{G(c)\ar[r]^-{G(\eta_{c})}\ar[d]_{\eta_{G(c)}'}& GDD(c)\ar[d]^{\chi_{D(c)}}\\
D'D'G(c)\ar[r]_-{D'(\chi_c)} &D'GD(c)
}\]
commutes. Let us compute $D'(\chi_c)$. Quite in general, 
\[D'F^{-1}(b\colon F(c')\longrightarrow F(d'))=F^{-1}(FD'd'\stackrel{\xi_{d'}}{\rightarrow}DF(d')\stackrel{Db}{\longrightarrow}DF(c')\stackrel{\xi_{c'}^{-1}}{\longrightarrow} FD'c')\]
Indeed, this is true if and only if
\[FD'F^{-1}(b)=\xi_{c'}^{-1}D(b)\xi_{d'}\]
which follows by naturality of $\xi$ for the map $F^{-1}(b)$. Therefore
\[\begin{array}{ll}D'(\chi_c)\!\circ\!\eta_{G(c)}'\!\!&=F^{-1}(\xi_{GDc}^{-1}\!\circ\!D(\xi_{G(c)}^{-1}\!\circ\!D(\epsilon_c)\!\circ\!\epsilon_{Dc})\!\circ\!\xi_{DGc}\!\circ\!F(\eta_{G(c)}'))\\
&=F^{-1}(\xi_{GDc}^{-1}\!\circ\!D(\epsilon_{Dc})\!\circ\!D^2(\epsilon_c)\!\circ\!D(\xi_{G(c)})^{-1}\!\circ\!\xi_{DGc}\!\circ\!F(\eta_{G(c)}'))
\end{array}\]
Since $(F,\xi)$ is a morphism of categories with duality, $D(\xi_{G(c)})^{-1}\circ\xi_{DGc}=\eta_{FGc}\circ F(\eta_{Gc}')^{-1}$ and therefore we get
\[D'(\chi_c)\circ\eta_{G(c)}=F^{-1}(\xi_{GDc}^{-1}\circ D(\epsilon_{Dc})\circ D^2(\epsilon_c)\circ \eta_{FGc})\]
By naturality of $\eta$, this is
\[F^{-1}(\xi_{GDc}^{-1}\circ D(\epsilon_{Dc})\circ \eta_{c}\circ \epsilon_c)\]

We now compute the other composite $\chi_{Dc}\circ G(\eta_{c})$. By definition, $\chi_{Dc}$ is given by
\[\chi_{Dc}=F^{-1}(\xi_{G(Dc)}^{-1}\circ D(\epsilon_{Dc})\circ \epsilon_{D^2c})\]
It remains to compute $G(\eta_{c})$, which by definition of $G$ is
\[G(\eta_{c})=F^{-1}(\epsilon_{D^2c}^{-1}\circ\eta_{c}\circ \epsilon_c)\]
Thus the composite gives
\[\chi_{Dc}\circ G(\eta_{c})=F^{-1}(\xi_{G(Dc)}^{-1}\circ D(\epsilon_{Dc})\circ\eta_{c}\circ \epsilon_c)\]

We now need to show that the natural transformation $\epsilon\colon (FG,\xi\circ \chi)\Rightarrow(\id,\id)$ given by $\epsilon_c=\phi_c$ is equivariant, that is
\[\xymatrix{Dc\ar[r]^{D(\epsilon_c)}&DFGc\\
Dc\ar[u]_{\id}&FGD(c)\ar[l]^{\epsilon_{Dc}}\ar[u]_{\xi_{Gc}\circ F(\chi_c)}
}\]
By definition, the right vertical map is
\[\xi_{Gc}\circ \xi_{G(c)}^{-1}\circ D(\epsilon_c)\circ \epsilon_{Dc}=D(\epsilon_c)\circ \epsilon_{Dc}\]
\end{proof}


\subsection{Antistructures and dualities on $\mathcal{P}_A$}\label{ringduality}

Here we recall the classification of duality structures on $\mathcal{P}_A$ from \cite{IbLars}, and we refer to it for the proofs.
Given a (right) $A\otimes A$-module $L$, we denote $L_t$ the right $A$-module structure on $L$ defined by $l\cdot a=l\cdot a\otimes 1$. Similarly, $L_s$ is the one given by $l\cdot a=l\cdot 1\otimes a$. 
\begin{defn}
An \textbf{antistructure over a ring $A$} is a triple $(A,L,\alpha)$, where $L$ is a (right) $A\otimes A$-module and $\alpha\colon L\longrightarrow L$ is an additive map such that
\begin{enumerate}
\item $\alpha(l\cdot a\otimes b)=\alpha(l)\cdot b\otimes a$
\item $\alpha^2=\id$
\item $L_t$ is a finitely generated projective $A$-module
\item The map $A\longrightarrow \hom_A(L_t,L_s)$ sending $a$ to $\alpha(-)\cdot a\otimes 1$ is bijective.
\end{enumerate}
\end{defn}

\begin{ex}\label{wallanti}
A Wall antistructure is a triple $(A,w,\epsilon)$ of a ring $A$, a unit $\epsilon\in A^{\!\times}$ and a ring isomorphism $w\colon A^{op}\longrightarrow A$ such that $w(\epsilon)=\epsilon^{-1}$ and $w^2(a)=\epsilon a \epsilon^{-1}$ (cf. \cite{wall}). It defines an antistructure $(A,A,\alpha)$ in our sense, where the $A\otimes A$-module structure on $A$ is
\[l\cdot a\otimes a'=w(a')la\]
and the map is $\alpha=w(-)\cdot \epsilon$. Notice that a Wall antistructure $(A,w,\epsilon)$ is exactly the data of an additive duality on the $Ab$-enriched category with one object $A$. Indeed, $w$ provides the additive functor $D=w\colon A^{op}\longrightarrow A$. The condition $w^2(a)=\epsilon a \epsilon^{-1}$ is naturality for the natural isomorphism
$\eta=\epsilon\colon\id\Rightarrow w^2$. Finally, the condition $w(\epsilon)=\epsilon^{-1}$ is the condition $D(\eta_\ast)\circ \eta_{D\ast}=\id$.
When $\epsilon$ is in the center the duality is strict, and we call the pair $(A,\alpha)$ an anti-involution.
\end{ex}

Given a projective $A$-module $P$, the abelian group
\[D_L(P)=\hom_A(P,L_s)\]
is an $A$-module via
\[(\lambda\cdot a)(p)=\lambda(p)\cdot a\otimes 1 \ , \ \ \lambda\in D_L(P)\]
Moreover, $D_L(P)$ is finitely generated projective, and thus this defines a functor
\[D_L\colon \mathcal{P}_A^{op}\longrightarrow \mathcal{P}_A\]
where a map $\phi\colon P\longrightarrow Q$ is sent to $\phi^\ast=(-)\circ\phi\colon D_L(Q)\longrightarrow D_L(P)$.

\begin{prop}[\cite{IbLars}] The functor $D_L\colon \mathcal{P}_A^{op}\longrightarrow \mathcal{P}_A$ together with the natural transformation $\eta^{\alpha}\colon \id\Rightarrow D_{L}^2$ defined by
\[\eta^{\alpha}_P(p)(\lambda\colon P\longrightarrow L_s)=\alpha(\lambda(p))\]
is a duality on the category $\mathcal{P}_A$.
\end{prop}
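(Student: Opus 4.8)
The plan is to check, one at a time, the three conditions in the definition of a category with duality: that $D_L$ is a well-defined functor $\mathcal{P}_A^{op}\to\mathcal{P}_A$, that $\eta^\alpha\colon\id\Rightarrow D_L^2$ is a natural isomorphism, and that the coherence identity $D_L(\eta^\alpha_P)\circ\eta^\alpha_{D_L(P)}=\id_{D_L(P)}$ holds. Each reduces to an unwinding of the four axioms of an antistructure, and the main thing to keep straight is the bookkeeping of the two commuting right $A$-actions $L_t$ and $L_s$ on $L$.

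First I would show $D_L$ is well defined. That $(\lambda\cdot a)(p)=\lambda(p)\cdot a\otimes 1$ gives a right $A$-module structure on $\hom_A(P,L_s)$ and that $\phi^\ast=(-)\circ\phi$ is $A$-linear are immediate diagram chases using only that the $L_t$- and $L_s$-actions commute (both being restrictions of the single $A\otimes A$-action). For finiteness and projectivity I would use the $A$-linear identification $\hom_A(A,L_s)\cong L_t$, $\lambda\mapsto\lambda(1)$, together with additivity of $\hom_A(-,L_s)$: then $D_L(A^n)\cong (L_t)^n$ is finitely generated projective by axiom (3), and $D_L(P)$, being a retract of some $D_L(A^n)$, is again finitely generated projective.

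Next I would treat $\eta^\alpha$. Checking that $\eta^\alpha_P(p)\colon\lambda\mapsto\alpha(\lambda(p))$ is $A$-linear $D_L(P)\to L_s$, that $p\mapsto\eta^\alpha_P(p)$ is $A$-linear $P\to D_L^2(P)$, and that this is natural in $P$ are all one-line computations; the two $A$-linearity checks are exactly axiom (1), $\alpha(l\cdot a\otimes b)=\alpha(l)\cdot b\otimes a$, which swaps the roles of $L_t$ and $L_s$, while naturality is associativity of composition. To see $\eta^\alpha_P$ is invertible, note that it is a natural transformation between the two additive functors $\id$ and $D_L^2$ on $\mathcal{P}_A$, so since every object of $\mathcal{P}_A$ is a retract of a free module it suffices to treat $P=A$. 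Under $D_L(A)\cong L_t$, hence $D_L^2(A)\cong\hom_A(L_t,L_s)$, the map $\eta^\alpha_A$ becomes $a\mapsto\big(l\mapsto\alpha(l)\cdot a\otimes 1\big)$, which is precisely the map of axiom (4); so $\eta^\alpha_A$ is bijective, and being an $A$-module map it is an isomorphism. Finally, the coherence identity: for $\lambda\in D_L(P)$ and $p\in P$ one computes $\big(D_L(\eta^\alpha_P)(\eta^\alpha_{D_L(P)}(\lambda))\big)(p)=\eta^\alpha_{D_L(P)}(\lambda)(\eta^\alpha_P(p))=\alpha\big(\eta^\alpha_P(p)(\lambda)\big)=\alpha(\alpha(\lambda(p)))=\lambda(p)$, the last step being axiom (2), $\alpha^2=\id$.

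I do not expect a genuine obstacle here: the proposition is formal once the axioms are in hand, and the only delicate point is tracking which of $L_t,L_s$ each map respects. The real content is recognizing that axioms (3) and (4) are exactly what makes $D_L$ land in $\mathcal{P}_A$ and $\eta^\alpha$ invertible, with axioms (1) and (2) supplying $A$-linearity and the coherence relation respectively.
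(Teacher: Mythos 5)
Your proof is correct and complete. The paper cites this proposition from \cite{IbLars} without reproducing the proof, so there is no in-text argument to compare against; your unwinding of the four axioms — axioms (3) and (4) to get $D_L(P)\in\mathcal{P}_A$ and invertibility of $\eta^\alpha$ after reduction to $P=A$, axiom (1) for all the $A$-linearity checks (including the swap of $L_t$ and $L_s$), and axiom (2) for the coherence identity — is exactly the standard argument and is the one the cited source carries out.
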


\begin{rem}\label{classdualpa} Conversely, every duality structure $(D,\eta)$ on $\mathcal{P}_A$ comes from an antistructure. Indeed the right module $D(A)$ admits an extra structure, namely it is an $A\otimes A$-module, where the module $D(A)_s$ is given by
\[\lambda\cdot a=D(l_a)(\lambda)\]
where $l_a\colon A\longrightarrow A$ is left multiplication by $a\in A$, which is a right-module map. In this case the triple $(A,D(A),\alpha)$ is an antistructure, where $\alpha\in\hom_A(D(A)_t,D(A)_s)$ the value at $1$ of the composite
\[A\stackrel{\eta_A}{\longrightarrow}DD(A)\stackrel{\Phi_{D(A)_t}}{\longrightarrow }\hom_A(D(A)_t,D(A)_s)\]
and $\Phi_P\colon D(P)\longrightarrow \hom_A(P,D(A)_s)$ is defined by
\[\Phi_P(\lambda)(p)(a)=\lambda(p\cdot a)\]
Then $(D,\eta)$ is naturally isomorphic to the duality induced by $(A,D(A),\alpha)$.
\end{rem}

\begin{defn}\label{mapofantistruct} A \textbf{map of antistructures} $(B,K,\beta)\longrightarrow(A,L,\alpha)$ is a pair $(f,F)$ of a ring map $f\colon B\longrightarrow A$ and a $B\otimes B$-module map $F\colon K\longrightarrow L$ with $B\otimes B$-module structure on $L$ via $f$, such that
\begin{enumerate}[ i)]
\item $F\circ\beta=\alpha\circ F$
\item The map
\[\widetilde{F}\colon K_t\otimes_{B}A\longrightarrow L_t \ , \ \ \widetilde{F}(l\otimes a)=F(l)\cdot a\otimes 1\]
is an isomorphism.
\end{enumerate}
\end{defn}

\begin{ex}
A map of Wall antistructures $(B,v,\mu)\longrightarrow (A,w,\epsilon)$ is a ring map $f\colon B\longrightarrow A$ such that $f\circ\beta=\alpha\circ f$ and $f(\mu)=\epsilon$. It induces a map of antistructures $(f,f)\colon (B,B,\beta(-)\cdot\mu)\longrightarrow (A,A,\alpha(-)\cdot\epsilon)$
\end{ex}

\begin{prop}[\cite{IbLars}]
A map of antistructures $(f,F)\colon (B,K,\beta)\longrightarrow(A,L,\alpha)$ induces a map of duality structures $(\mathcal{P}_B,D_K)\longrightarrow (\mathcal{P}_A,D_L)$ with underlying functor $-\otimes_BA\colon\mathcal{P}_B\longrightarrow \mathcal{P}_A$ and natural isomorphism
\[\xi_P\colon \hom_{B}(P,K_s)\otimes_BA\longrightarrow \hom_{A}(P\otimes_BA,L_s)\]
given by
\[\xi_P(\lambda\otimes a)(p\otimes a')=F(\lambda(p))\cdot a\otimes a'\]
\end{prop}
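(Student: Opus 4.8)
The plan is to verify the two defining conditions of a map of duality structures from Definition \ref{eqcatdual} for the pair $(-\otimes_BA,\xi)$, namely that each $\xi_P$ is a natural isomorphism of functors $\mathcal{P}_B^{op}\longrightarrow \mathcal{P}_A$, and that the pentagon relating $\xi$, the two duality squares $\eta^\beta$ and $\eta^\alpha$, and the functor $-\otimes_BA$ commutes. First I would check that $\xi_P$ is a well-defined $A$-linear map: the formula $\xi_P(\lambda\otimes a)(p\otimes a')=F(\lambda(p))\cdot a\otimes a'$ must be balanced over $B$ in the tensor $\lambda\otimes a$ and additive/$A$-linear in $p\otimes a'$; here one uses that $F$ is a $B\otimes B$-module map and that $\lambda\in\hom_B(P,K_s)$ intertwines the right $B$-action through the $s$-slot, together with condition i) of Definition \ref{mapofantistruct} only implicitly (it enters later for the $\eta$-compatibility). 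Naturality in $P$ — i.e. compatibility with $\phi^\ast$ — is a direct diagram chase: both composites send $\lambda\otimes a$ to the functional $q\otimes a'\mapsto F(\lambda(\phi q))\cdot a\otimes a'$.

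Next I would show $\xi_P$ is an isomorphism. Since every module in $\mathcal{P}_B$ is a retract of a finite free module and all the functors involved ($\hom_B(-,K_s)$, $-\otimes_BA$, $\hom_A(-\otimes_BA,L_s)$) are additive, it suffices to check this for $P=B$. For $P=B$ one has $\hom_B(B,K_s)\cong K_s$ naturally, so $\xi_B$ becomes, after these identifications, precisely the map $K_t\otimes_BA\longrightarrow L_t$ of condition ii) in Definition \ref{mapofantistruct} — which is assumed to be an isomorphism — up to reshuffling which tensor slot records the residual $A$-action. (One has to be a little careful: $\hom_B(B,K_s)$ carries its $A$-action after $\otimes_BA$ via the $s$-slot of $L$, while $\widetilde F$ lands in $L_t$; the point is that $\alpha$ — equivalently condition 4 in the definition of an antistructure applied to $(A,L,\alpha)$ — identifies $L_s$ with $L_t$ compatibly, so that $\xi_B$ is an iso iff $\widetilde F$ is.) I would spell this identification out once, since it is the only genuinely non-formal step.

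Finally I would verify the coherence pentagon of Definition \ref{eqcatdual} for $(F,\xi)=(-\otimes_BA,\xi)$, i.e. that
\[\xymatrix{P\otimes_BA\ar[r]^-{\eta^\beta_P\otimes A}\ar[d]_{\eta^\alpha_{P\otimes_BA}}& D_K D_K(P)\otimes_BA\ar[d]^{\xi_{D_K P}}\\
D_L D_L(P\otimes_BA)\ar[r]_-{D_L(\xi_P)} & D_L\bigl(D_K(P)\otimes_BA\bigr)}\]
commutes. This is again reduced to $P=B$ by additivity and naturality, and on $P=B$ it unwinds to an identity between elements of $L$: one side applies $\eta^\alpha$ built from $\alpha$, the other transports $\eta^\beta$ built from $\beta$ across $F$, and the two agree precisely because of condition i), $F\circ\beta=\alpha\circ F$, combined with the $B\otimes B$-linearity of $F$ to move the residual $a\otimes a'$ past everything. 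I expect this pentagon to be the main obstacle — not because any single step is deep, but because tracking which of the four tensor/hom slots each symbol lives in through a chain of canonical isomorphisms is error-prone; the safeguard is to do the whole computation only for the generator $P=B$, where $D_K(B)\cong K_t$, $D_K D_K(B)\cong B$ and all the maps become explicit small formulas in $K$, $L$, $\alpha$, $\beta$, $F$.
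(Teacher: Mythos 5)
The paper itself gives no proof of this proposition --- it defers to \cite{IbLars} --- so there is nothing internal to compare against; your outline is the expected verification and, with one correction, is correct.

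The one genuine wrinkle is the parenthetical claim that you need $\alpha$ (or condition 4 of the antistructure axioms) to reconcile $L_s$ with $L_t$ when identifying $\xi_B$ with $\widetilde F$. You do not. The right $A$-module structure on $D_L(Q)=\hom_A(Q,L_s)$ is \emph{by definition} through the $t$-slot, namely $(\mu\cdot a)(q)=\mu(q)\cdot a\otimes 1$; hence $\hom_A(A,L_s)\cong L_t$ as right $A$-modules with no duality invoked, and likewise $\hom_B(B,K_s)\cong K_t$ as right $B$-modules. Under these canonical identifications, $\xi_B(k\otimes a)=F(k)\cdot a\otimes 1=\widetilde F(k\otimes a)$ on the nose, so $\xi_B$ is an isomorphism iff $\widetilde F$ is, and condition ii) of Definition \ref{mapofantistruct} finishes this step without any appeal to $\alpha$. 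The rest of your outline is fine: the balance and $A$-linearity of $\xi_P$ reduce to $F$ being a $B\otimes B$-module map (with the $B\otimes B$-structure on $L$ transported along $f$); the reduction of both the isomorphism check and the coherence square to $P=B$ by additivity of the three functors plus naturality is legitimate, since any object of $\mathcal{P}_B$ is a retract of a finite free module; and the coherence square of Definition \ref{eqcatdual} for $P=B$ unwinds, after evaluating at $\lambda\otimes a'\in D_K(B)\otimes_BA$, to the identity
\[
\alpha\bigl(F(\lambda(1))\cdot a'\otimes a\bigr)=F\bigl(\beta(\lambda(1))\bigr)\cdot a\otimes a',
\]
which, after moving $a'\otimes a$ across $\alpha$ via the relation $\alpha(l\cdot a\otimes b)=\alpha(l)\cdot b\otimes a$, is precisely condition i), $F\circ\beta=\alpha\circ F$. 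So the architecture is right; just delete the appeal to condition 4 in the isomorphism step.
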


\begin{rem}\label{functorialPA}
This construction gives an association
\[\mathcal{P}_{-}\colon \mbox{Antistructures}\longrightarrow \mbox{Categories with duality}\]
which is very unfortunately not strictly functorial, since given $f\colon B\longrightarrow A$ and $g\colon C\longrightarrow B$, the functor $(-\otimes_CB)\otimes_B A$ is canonically naturally isomorphic to $-\otimes_C A$, but not equal. This will generally not be a problem, but for extending $\KR$-theory to simplicial rings in §\ref{stablekrsec} we will need a functorial model for $\mathcal{P}_A$.
One can replace $\mathcal{P}_A$ by the equivalent category $\mathcal{P}_{A}'$ with objects pairs $(m,p\colon A^m\longrightarrow A^m)$ with $p$ a projection $(p^2=p)$. A morphism $(m,p)\longrightarrow (l,q)$ is a module map between the images $f\colon \im(p)\longrightarrow \im(q)$. The functor
\[\im\colon \mathcal{P}_{A}'\longrightarrow \mathcal{P}_{A}\] that sends $(m,p)$ to the image of $p$ is an equivalence of categories (the inverse is given by choosing complements so that the sum is free). Thus by \ref{dualitiesandequivalences}, given an antistructure $(A,L,\alpha)$ there is an essentially unique duality on $\mathcal{P}_{A}'$ so that the functor $\im$ is an equivalence of categories with duality. For clarity, we give an explicit construction of this duality $D$. Since we want $\im$ to be a morphism of categories with duality, we need the projection of $D(m,p)$ to have image $\hom_A(\im(p),L_s)$. Remember that $L_t$ is a finitely generated projective $A$-module. Fix a complement $H$ and an isomorphism $L_t\oplus H\cong A^l$, and denote $\ker(p)^\ast=\hom_A(\ker(p),L_s)$. This determines an isomorphism 
\[\phi_p\colon A^{lm}\stackrel{\cong}{\longrightarrow}\hom_A(\im(p),L_s)\oplus\ker(p)^\ast\oplus H^m\] defined by the composite
\[\begin{array}{ll}A^{lm}\cong (L_t\oplus H)^m\cong\hom_A(A^m,L_s)\oplus H^m\cong\\
\cong \hom_A(\im(p)\oplus\ker(p),L_s)\oplus H^m\cong \hom_A(\im(p),L_s)\oplus\ker(p)^\ast\oplus H^m\end{array}\]
Then define the dual of $(m,p)$ as the pair $(lm,p^\ast)$, where $p^{\ast}$ corresponds to the projection
\[\hom_A(\im(p),L_s)\oplus\ker(p)^\ast\oplus H^m\longrightarrow\hom_A(\im(p),L_s)\oplus\ker(p)^\ast\oplus H^m\]
onto the $\hom_A(\im(p),L_s)$-summand under the isomorphism $\phi_p$ above.
This gives a duality $D$ on $\mathcal{P}_{A}'$, and the natural isomorphism
\[\xi_{(m,p)}\colon \im D(m,p)\stackrel{\phi^{-1}_p}{\longrightarrow}\hom_A(\im(p),L_s)\]
makes $(\im,\xi)\colon\mathcal{P}_A\longrightarrow \mathcal{P}_{A}'$ into an equivalence of categories with duality.
\end{rem}


\subsection{Real simplicial sets and real nerves}\label{secdefscriptd}

The classifying space of a category with duality should carry a $\mathbb{Z}/2$-action, coming from some extra structure on the nerve. We review this structure in some more generality.

\begin{defn}\label{defrealob}
A \textbf{real object in a category $C$} is a simplicial object $X\colon\Delta^{op}\longrightarrow C$ together with involutions
$\omega_k\colon X[k]\longrightarrow X[k]$ compatible with the simplicial structure in the following way.
\[\begin{array}{ll}d_l\omega_k=\omega_{k-1}d_{k-l} \\
s_l\omega_k=\omega_{k+1}s_{k-l} & 0\leq l\leq k\\
\end{array}\]
A \textbf{morphism of real objects} of $C$ is a natural transformation of functors that commutes with the involutions in every degree.
\end{defn}
We will mostly be interested in real sets and real spaces, that is real objects in the categories of sets and spaces.

\begin{ex} Let $C$ be a category with strict duality $D\colon C^{op}\longrightarrow C$. The map $\omega_k\colon \mathcal{N}_k C\longrightarrow\mathcal{N}_k C$ defined by
\[\omega_k(c_0\stackrel{f_1}{\longrightarrow}c_1\stackrel{f_2}{\longrightarrow}\dots\stackrel{f_k}{\longrightarrow}c_k)=D(c_k)\stackrel{D(f_k)}{\longrightarrow}D(c_{k-1})\stackrel{D(f_{k-1})}{\longrightarrow}\dots\stackrel{D(f_1)}{\longrightarrow}D(c_0)\]
gives the nerve $\mathcal{N}_{\sbt} C$ the structure of a real set. This real set is called the real nerve of $C$. We will consider categories with non-strict duality below.
\end{ex}

The geometric realization of a real space admits a $\mathbb{Z}/2$-action induced by the maps $X[k]\times\Delta^k\longrightarrow X[k]\times \Delta^k$ defined by
\[(x,(t_0,\dots,t_k))\longmapsto (\omega_k(x),(t_k,\dots,t_0))\]
Using Segal's subdivision, we can recover the fixed points space of this action as the realization of a simplicial space as follows.
Let $sd_e\colon\Delta^{op}\longrightarrow\Delta^{op}$ be the functor defined on objects by $sd_e([n])=[2n+1]=[n]\coprod[n]$ and on morphisms by sending $[n]\stackrel{\sigma}{\longrightarrow}[m]$ to $\sigma\coprod\overline{\sigma}\colon[n]\coprod[n]\longrightarrow[m]\coprod[m]$ where
\[\overline{\sigma}(s)=m-\sigma(n-s)\]
Precomposition by $sd_e$ induces a functor
\[sd_e\colon \mbox{Real spaces}\longrightarrow \mbox{Simplicial $\mathbb{Z}/2$-spaces}\]
from the category of real spaces to the category of simplicial objects in $\mathbb{Z}/2$-spaces.
Explicitly $(sd_e X)[k]=X[2k+1]$ and faces and degeneracies $(sd_ed)_l$ and $(sd_es)_l$ are given in degree $k$ by
\[\begin{array}{ll}(sd_ed)_l=d_l\circ d_{2k+1-l}\\(sd_es)_l=s_l\circ s_{2k+1-l}\end{array}\]
The simplicial involution $sd_e\omega\colon sd_e X\longrightarrow sd_eX$, given in degree $k$ by $\omega_{2k+1}$, induces a $\mathbb{Z}/2$-action on the realization $|sd_e X|$. Since the action is simplicial, taking levelwise fixed points defines a simplicial space $(sd_e X)^{\mathbb{Z}/2}$ and it's realization is homeomorphic to $|sd_e X|^{\mathbb{Z}/2}$.
\begin{prop}[\cite{IbLars}]\label{subdivision}
The maps $(sd_e X)[k]\times \Delta^{k}\longrightarrow X[2k+1]\times\Delta^{2k+1}$ defined by
\[(x,(t_0,\dots,t_k))\longmapsto (x,(t_0,\dots,t_k,t_k,\dots,t_0))\]
induce a $\mathbb{Z}/2$-homeomorphism $|sd_eX|\cong |X|$. In particular on the fixed points $|X|^{\mathbb{Z}/2}\cong |(sd_e X)^{\mathbb{Z}/2}|$.
\end{prop}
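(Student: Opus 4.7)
The plan is as follows. First I verify that the formulas given descend to a continuous map $f\colon |sd_eX|\to|X|$; then I check $\mathbb{Z}/2$-equivariance; then I argue that $f$ is a homeomorphism by reducing to the representable case. The statement on fixed points follows automatically from the fact that the $\mathbb{Z}/2$-action on $|sd_eX|$ is induced from a degreewise action trivial on each factor $\Delta^k$.

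Write $\iota_k(t_0,\dots,t_k)=\tfrac{1}{2}(t_0,\dots,t_k,t_k,\dots,t_0)\in\Delta^{2k+1}$ (so that the coordinates sum to $1$). Well-definedness of $f$ amounts to checking that for every $\sigma\colon[j]\to[k]$ and $t\in\Delta^j$, one has
\[(\sigma\sqcup\bar{\sigma})_\ast \iota_j(t)=\iota_k(\sigma_\ast t).\]
Using $(\sigma\sqcup\bar{\sigma})(p)=\sigma(p)$ for $0\leq p\leq j$ and $(\sigma\sqcup\bar{\sigma})(p)=2k+1-\sigma(2j+1-p)$ for $j+1\leq p\leq 2j+1$, one splits the defining sum $\sum_{(\sigma\sqcup\bar{\sigma})(p)=r}\iota_j(t)_p$ into its front and back halves; the palindromic symmetry of $\iota_j(t)$ makes each half reproduce the corresponding half of $\iota_k(\sigma_\ast t)$. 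This is exactly the compatibility needed for the maps in the statement to pass to the quotient defining geometric realization, yielding a continuous $f$.

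For equivariance, the $\mathbb{Z}/2$-action on $|X|$ sends the class of $(x,(t_0,\dots,t_{2k+1}))$ to that of $(\omega_{2k+1}x,(t_{2k+1},\dots,t_0))$, while the $\mathbb{Z}/2$-action on $|sd_eX|$ sends $(x,t)\in X[2k+1]\times\Delta^k$ to $(\omega_{2k+1}x,t)$. Since $\iota_k(t)$ is palindromic, its reversal equals itself, so both compositions give $(\omega_{2k+1}x,\iota_k(t))$, proving that $f$ is $\mathbb{Z}/2$-equivariant.

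For the homeomorphism claim, both $sd_e$ and geometric realization commute with colimits of simplicial sets, and $f$ is natural in $X$, so it suffices to prove the statement for $X=\Delta^n$. Here non-degenerate $k$-cells of $sd_e\Delta^n$ correspond to injective $[2k+1]\hookrightarrow[n]$; under $f$ such a cell maps homeomorphically via $\iota_k$ onto a closed polyhedral subcell of $|\Delta^n|$, and these subcells tile $|\Delta^n|$ with disjoint interiors. Equivalently, one can construct the inverse explicitly: given $(x,(t_0,\dots,t_m))\in X[m]\times\Delta^m$, apply appropriate degeneracy operators to produce an equivalent pair $(y,s)$ with $s=\iota_k(s')$ palindromic, giving a point in $(sd_eX)[k]\times\Delta^k$. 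This shows $f$ is bijective; continuity of the inverse follows from properness of $f$ on each finite skeletal piece (compact Hausdorff target, CW source). The fixed-points statement now follows from the general fact that if a $\mathbb{Z}/2$-action on a simplicial space acts trivially on the simplicial direction, fixed points commute with realization: $|sd_eX|^{\mathbb{Z}/2}=|(sd_eX)^{\mathbb{Z}/2}|$. The main obstacle is the bijectivity step, which requires care in identifying the correct palindromic representative for each equivalence class in $\coprod_m X[m]\times\Delta^m$, but is a routine (if tedious) subdivision-of-simplex bookkeeping.
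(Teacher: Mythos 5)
Your plan is sound in outline and the verification of well-definedness and $\mathbb{Z}/2$-equivariance is correct (including the observation that palindromicity of $\iota_k(t)$ makes the map equivariant with the action on $|sd_eX|$ acting only on the $X$-factor). However, the argument for bijectivity has a concrete error that breaks the tiling step.

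You assert that non-degenerate $k$-cells of $sd_e\Delta^n$ correspond to injective maps $[2k+1]\hookrightarrow[n]$. This is false. A $k$-simplex $\sigma\colon[2k+1]\to[n]$ of $sd_e\Delta^n$ is degenerate precisely when it factors through some $(sd_e s)_l=\sigma^l\sqcup\overline{\sigma^l}$, i.e.\ when there is some $0\leq l\leq k$ with simultaneously $\sigma(l)=\sigma(l+1)$ and $\sigma(2k-l)=\sigma(2k+1-l)$. This condition is strictly weaker than injectivity. For $n=1$ there are no injective maps $[3]\to[1]$ at all, yet $sd_e\Delta^1$ has exactly two non-degenerate $1$-simplices, namely $(0,0,0,1)$ and $(0,1,1,1)\colon[3]\to[1]$; geometrically these are the two halves of $[0,1]$ meeting at the fixed point $1/2$, exactly what the homeomorphism needs, and neither is injective. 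With the wrong cell catalogue the ``tiling'' of $|\Delta^n|$ you propose has the wrong pieces, so the homeomorphism argument on representables does not go through as written.

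A second issue is the reduction to representables. The proposition is stated for real \emph{spaces}, so $X$ is a simplicial space and its realization is the coend $\int^{[n]}X[n]\times\Delta^n$, not a colimit of representable simplicial sets $\Delta[n]$. The slogan ``$sd_e$ and realization commute with colimits, so reduce to $X=\Delta^n$'' is the right intuition for simplicial sets, but for simplicial spaces you need to argue either directly at the level of coends (verifying that $\iota_k\colon\Delta^k\hookrightarrow\Delta^{2k+1}$ together with the degeneracy identifications exhibits $|X|$ as a quotient of $\coprod_kX[2k+1]\times\Delta^k$ with the same equivalence relation as $|sd_eX|$), or by a cellular/skeletal filtration that tracks what is glued. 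Finally, the alternative you offer — ``construct the inverse by applying degeneracies to find a palindromic representative'' — is indeed how the proof goes, but that construction is the entire content of the proposition; leaving it as ``routine bookkeeping'' does not discharge it. (For what it is worth, the paper itself cites this result to Hesselholt--Madsen and gives no proof, so the point of comparison is simply correctness.)
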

For a deeper and more conceptual approach to real space and their realization one can consult \cite{IbLars}.

If $C$ has a non-strict duality, the nerve of $C$ does not carry a real structure. To fix this, there is a functor that associates to a category with duality $(C,D,\eta)$ a category with strict duality $(\mathcal{D}(C,D,\eta),D)=(\mathcal{D}C,D)$. The objects of $\mathcal{D}C$ are triples $(c,d,\phi)$ with
$\phi\colon d\stackrel{\cong}{\longrightarrow} D(c)$ an isomorphism in $C$. The morphisms from $(c,d,\phi)$ to $(c',d',\phi')$ are pairs of maps $(a\colon c\rightarrow c',b\colon d'\rightarrow d)$ in $C$ such that
\[\xymatrix{d\ar[r]^-{\phi}_-{\cong}& D(c)\\
d'\ar[u]^{b}\ar[r]^-{\cong}_-{\phi'} & D(c')\ar[u]_{D(a\colon c\longrightarrow c')}
}\]
commutes.
The functor $D\colon (\mathcal{D}C)^{op}\longrightarrow \mathcal{D}C$ that send an object $(c,d,\phi)$ to
\[(d,c,c\stackrel{\eta_{c}}{\longrightarrow} DD(c)\stackrel{D(\phi)}{\longrightarrow}D(d))\]
and a morphism $(a\colon c\longrightarrow c',b\colon d'\longrightarrow d)$ to
\[D(a,b)=(b,a)\]
is a strict duality on $\mathcal{D}C$.

A morphism of categories with duality $(F,\xi)\colon (C,D,\eta)\longrightarrow (C',D',\eta')$ induces a functor $\mathcal{D}F\colon \mathcal{D}C\longrightarrow  \mathcal{D}C'$ that commutes strictly with the dualities, by sending $(c,d,\phi)$ to
\[\mathcal{D}F(c,d,\phi)=(F(c),F(d),F(d)\stackrel{F(\phi)}{\longrightarrow}FD(c)\stackrel{\xi_c}{\longrightarrow}D'(F(c)))\]
and a morphism $(a,b)$ to $(F(a),F(b))$.
This procedure gives a functor
\[\mathcal{D}\colon \mbox{Categories with duality}\longrightarrow \mbox{Categories with strict duality}\]
Moreover we did not change $C$ too much, in the following sense.
\begin{prop}[\cite{IbLars}]\label{canddc}
The functor $V\colon C\longrightarrow \mathcal{D}C$ that sends an object $c$ to $(c,Dc,\id_{Dc})$ and a morphism $a\colon c\longrightarrow d$ to $(a,Da)$, together with the natural isomorphism
\[\xi_c\colon VDc=(Dc,D^2c,\id_{D^2c})\stackrel{(\id_{Dc},\eta_c)}{\longrightarrow} (Dc,c,\eta_c)=DVc\]
is an equivalence of categories with duality. Its inverse is the projection functor $U\colon \mathcal{D}C\longrightarrow C$ that sends an object $(c,d,\phi)$ to $c$ and a morphism $(a,b)$ to $a$.
\end{prop}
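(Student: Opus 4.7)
My plan is to deduce the proposition from Proposition \ref{equivalenceequivariant}: it suffices to check that $(V, \xi)$ is a morphism of categories with duality and that $V$ is an equivalence of the underlying categories. Proposition \ref{equivalenceequivariant} then produces an inverse equipped with a canonical duality structure, which one identifies with $U$ together with the natural isomorphism $\zeta_{(c,d,\phi)} = \phi$.

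The first task is largely routine: functoriality of $V$ follows from the composition rule $(a', b')\circ(a,b) = (a'a, bb')$ in $\mathcal{D}C$; the pair $\xi_c = (\id_{Dc}, \eta_c)$ is a well defined morphism $VDc = (Dc, D^2c, \id)\to (Dc, c, \eta_c) = DVc$ because the defining square collapses to $\id \circ \eta_c = D(\id)\circ \eta_c$; and naturality of $\xi$ in $c$ is naturality of $\eta$. The only nontrivial input is the hexagon axiom of Definition \ref{eqcatdual}, which, using $\eta' = \id$ on $\mathcal{D}C$, simplifies to $\xi_{Dc}\circ V(\eta_c) = D(\xi_c)$. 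Applying the formula $D(a,b) = (b,a)$, the right hand side is $(\eta_c, \id_{Dc})$; computing the left hand side via the composition rule in $\mathcal{D}C$ gives $(\eta_c, D\eta_c\circ \eta_{Dc})$, which collapses to $(\eta_c, \id_{Dc})$ by the defining axiom $D\eta_c\circ\eta_{Dc} = \id$. This is the one place the duality axiom enters.

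To see $V$ is an equivalence of underlying categories, I would note that $UV = \id_C$ strictly, and define a natural isomorphism $\epsilon\colon VU \Rightarrow \id$ by $\epsilon_{(c,d,\phi)} = (\id_c, \phi)\colon VU(c,d,\phi) = (c, Dc, \id)\to (c,d,\phi)$; its inverse is $(\id_c, \phi^{-1})$, and its naturality in $(c,d,\phi)$ is precisely the defining square of a morphism in $\mathcal{D}C$. Proposition \ref{equivalenceequivariant} then upgrades $(V, \xi)$ to an equivalence of categories with duality. Running through the construction in the proof of that proposition with $\mu = \id$ and this particular $\epsilon$ recovers $U$ together with the natural isomorphism $\zeta_{(c,d,\phi)} = \phi$; naturality of $\zeta$ is once again the defining square.

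The main obstacle is purely bookkeeping: keeping straight which duality is strict, which component of a morphism in $\mathcal{D}C$ points which way, and applying $D$ correctly throughout. No substantial obstruction is hidden in any individual step---essentially every verification reduces to the composition rule in $\mathcal{D}C$, the formula $D(a,b) = (b,a)$ for the strict duality, or the single identity $D\eta_c\circ\eta_{Dc} = \id$.
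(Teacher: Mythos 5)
The paper states this proposition as a citation from Hesselholt--Madsen and does not include its own proof, so there is nothing internal to compare against; your proposal must be judged on its own merits, and it holds up.

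Your reduction to Proposition \ref{equivalenceequivariant} is clean and correct, and (since that proposition appears earlier in the paper) not circular. The two nontrivial computations check out: for the hexagon axiom, $\xi_{Dc}\circ V(\eta_c) = (\id_{D^2c},\eta_{Dc})\circ(\eta_c,D\eta_c) = (\eta_c, D\eta_c\circ\eta_{Dc}) = (\eta_c,\id_{Dc}) = D(\xi_c)$, using the composition rule $(a_2,b_2)\circ(a_1,b_1)=(a_2a_1,b_1b_2)$ and the axiom $D(\eta_c)\circ\eta_{Dc}=\id_{Dc}$; and tracing $\chi_{(c,d,\phi)} = V^{-1}\bigl(\xi_{c}^{-1}\circ D(\epsilon_{(c,d,\phi)})\circ\epsilon_{D(c,d,\phi)}\bigr)$ with $\epsilon_{(c,d,\phi)}=(\id_c,\phi)$ does yield $(\phi,D\phi)=V(\phi)$, so the inverse structure map is $\phi$ as you claim. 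The identification $UV=\id$, the naturality of $\epsilon$, and the triangle identities are all immediate from the defining square of $\mathcal{D}C$. This is a correct proof.
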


Given $(C,D,\eta)$ we want to study the homotopy type of $|\mathcal{D}C|^{\mathbb{Z}/2}$.
The category $\sym C$ has objects $(c,f)$ with $f\colon c\longrightarrow D(c)$ a map in $C$ such that
\[\xymatrix{c\ar[r]^-{\eta_c}\ar[dr]_{f} & DD(c)\ar[d]^-{D(f)}\\
& D(c)
}\]
commutes.
Morphisms from $(c,f)$ to $(d,g)$ are commutative squares
\[\xymatrix{c\ar[r]^-{f}\ar[d]_-{a} & D(c)\\
d\ar[r]_{g} & D(d)\ar[u]_-{D(a)}
}\]

\begin{prop}[\cite{IbLars}]\label{symcsymdc} If $(C,D)$ is a category with strict duality, the nerve of $\sym C$ is isomorphic as a simplicial set to the fixed points of the subdivision $sd_e(\mathcal{N}_{\sbt} C)^{\mathbb{Z}/2}$. If $(C,D,\eta)$ is any category with duality, there is a natural equivalence of categories $\sym C\simeq \sym\mathcal{D}C$. 

In particular there is a natural homotopy equivalence on classifying spaces
\[|\sym C|\simeq |\sym\mathcal{D}C|\cong|sd_e(\mathcal{N}_{\sbt}\mathcal{D}C)^{\mathbb{Z}/2}|\cong|\mathcal{D}C|^{\mathbb{Z}/2}\]
\end{prop}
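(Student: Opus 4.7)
The plan is to prove the three assertions in order, deducing the string of homeomorphisms and equivalences at the end by combining them with Proposition \ref{subdivision}.

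First, I would tackle the isomorphism $\mathcal{N}_{\sbt}\sym C \cong sd_e(\mathcal{N}_{\sbt} C)^{\mathbb{Z}/2}$ for a category with strict duality by writing down an explicit simplicial map and checking it is a bijection in each degree. A $k$-simplex of $sd_e \mathcal{N}_{\sbt} C$ is an ordinary chain $x_0 \to x_1 \to \cdots \to x_{2k+1}$ in $C$. The real structure on $\mathcal{N}_{\sbt} C$ sends this to $D(x_{2k+1}) \to \cdots \to D(x_0)$, so a fixed point is the data of a $k$-chain $c_0 \xrightarrow{a_1} c_1 \xrightarrow{a_2} \cdots \xrightarrow{a_k} c_k$ (with $c_i := x_i$) together with a morphism $f\colon c_k \to D(c_k)$ (the middle edge $x_k \to x_{k+1}$), subject to $D(f) = f$, since this edge must be stable under the involution. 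I send this datum to the $k$-simplex $(c_0, f_0) \xrightarrow{a_1} \cdots \xrightarrow{a_k} (c_k, f)$ in $\sym C$, where $f_i$ is forced by descending compatibility $f_i = D(a_{i+1}) \circ f_{i+1} \circ a_{i+1}$. Conversely, any $k$-simplex of $\sym C$ is determined by its last vertex (the other $f_i$ are uniquely recovered), producing the inverse. Checking that the formulas for the subdivided faces and degeneracies $(sd_e d)_l = d_l \circ d_{2k+1-l}$ and $(sd_e s)_l = s_l \circ s_{2k+1-l}$ correspond under this bijection to the ordinary face and degeneracy maps of $\mathcal{N}_{\sbt}\sym C$ is a direct, if tedious, verification.

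Second, I would promote $\sym$ to a functor on categories with duality, sending $(F,\xi)\colon(C,D,\eta)\to(C',D',\eta')$ to the functor $(c,f)\mapsto (Fc, \xi_c\circ F(f))$ and a morphism $a$ to $Fa$; one checks directly that compatibility of $(F,\xi)$ with $\eta$ implies the image lies in $\sym C'$. By Proposition \ref{canddc} the pair $(V,\xi)\colon C\to\mathcal{D}C$ is an equivalence of categories with duality. I would then show that $\sym$ carries equivalences of categories with duality to ordinary equivalences of categories. The cleanest way is to observe that $\sym$ takes natural isomorphisms of morphisms of categories with duality to natural isomorphisms of functors, and so a strict inverse $(G,\zeta)$ of $(V,\xi)$ together with the unit and counit yields $\sym G$ inverse to $\sym V$ up to natural isomorphism.

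Third, putting the pieces together: the first part applied to the category with strict duality $\mathcal{D}C$ gives $\mathcal{N}_{\sbt}\sym\mathcal{D}C \cong sd_e(\mathcal{N}_{\sbt}\mathcal{D}C)^{\mathbb{Z}/2}$, hence on realizations $|\sym\mathcal{D}C|\cong|sd_e(\mathcal{N}_{\sbt}\mathcal{D}C)^{\mathbb{Z}/2}|$. Proposition \ref{subdivision} identifies the right-hand side with $|\mathcal{D}C|^{\mathbb{Z}/2}$, and the second part provides the homotopy equivalence $|\sym C|\simeq |\sym\mathcal{D}C|$, closing the chain.

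The main obstacle is the bookkeeping in the first step: one must confirm that the symmetry condition $D(f)=f$ coming from the middle edge being $\omega_{2k+1}$-fixed is both necessary and sufficient for the whole $(2k+1)$-chain to be fixed, and that the simplicial operators on $sd_e$ translate exactly to those of the nerve of $\sym C$. Once this is in hand, the remaining two assertions are formal consequences of functoriality of $\sym$ and the subdivision result.
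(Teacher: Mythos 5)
The paper does not supply a proof of this proposition; it is cited from Hesselholt and Madsen, and the surrounding text only records the functoriality of $\sym$ needed later. Your reconstruction is correct: the explicit bijection between $(2k+1)$-chains fixed by $\omega_{2k+1}$ and chains $(c_0,f_0)\to\cdots\to(c_k,f_k)$ in $\sym C$ is exactly the right one (the symmetry condition $D(f)=f$ on the middle edge is precisely the fixed-point constraint, and the lower $f_i$ are recovered from $f_k$ by the morphism condition in $\sym C$), the faces and degeneracies match because $(sd_e d)_l=d_l d_{2k+1-l}$ deletes a symmetric pair of vertices, and the functoriality of $\sym$ together with Proposition \ref{canddc} gives $\sym C\simeq\sym\mathcal{D}C$; combining with Proposition \ref{subdivision} closes the chain of identifications.

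One small refinement worth noting: your appeal to "$\sym$ takes natural isomorphisms of morphisms of categories with duality to natural isomorphisms" is exactly what the paper observes in the remark following Corollary \ref{fixedpointstrick}, where it also points out that a natural transformation of morphisms of categories with duality as defined in \ref{eqcatdual} is automatically an isomorphism; you may quote that remark rather than reverify it, but the direct verification you sketch (naturality of $U$ for $f\colon c\to Dc$ combined with the compatibility square $D'(U_c)\circ\zeta_c\circ U_{Dc}=\xi_c$) is correct and self-contained.
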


\begin{rem}
In the case of the category with duality $(\mathcal{P}_A,D_L,\eta^\alpha)$ associated to an antistructure $(A,L,\alpha)$, an object of $\sym\mathcal{P}_A$ is a pair $(P,f)$ of a module $P$ and a module map
\[f\colon P\longrightarrow\hom_A(P,L_s)\]
such that $f=f^\ast\circ\eta^\alpha$.
Taking the adjoint, this is the same as a map
\[\widetilde{f}\colon P\otimes_A P\longrightarrow L_s\]
such that $\widetilde{f}(xa,y)=\widetilde{f}(x,y)\cdot a\otimes 1$, $\widetilde{f}(x,ya)=\widetilde{f}(x,y)\cdot 1\otimes a$ and
\[\alpha \widetilde{f}(x,y)=\widetilde{f}(y,x)\]
Therefore $\sym i\mathcal{P}_A$ is the category of non-degenerated $\alpha$-bilinear forms on $A$.
\end{rem}

If $(F,\xi)\colon C\longrightarrow C'$ is a morphism of categories with duality, there is an induced functor $\sym(F,\xi)\colon\sym C\longrightarrow \sym C'$ that sends $f\colon c\longrightarrow D(c)$ to
\[F(c)\stackrel{F(f)}{\longrightarrow}FD(c)\stackrel{\xi_c}{\longrightarrow}DF(c)\]
Under the equivalence of the proposition above, $|\sym(F,\xi)|$ corresponds to the restriction of the equivariant map $|\mathcal{D}F|$ to the fixed points space. This proves the following corollary from \cite{IbLars}.

\begin{cor}\label{fixedpointstrick}
Let $(F,\xi)\colon C\longrightarrow C'$ be a morphism of categories with duality. If both $|F|\colon |C|\longrightarrow |C'|$
and $|\sym(F,\xi)|\colon|\sym C|\longrightarrow |\sym C'|$ are weak equivalences, the equivariant map $|\mathcal{D}F|\colon |\mathcal{D}C|\longrightarrow |\mathcal{D}C'|$ is a $\mathbb{Z}/2$-equivalence.

In particular, this is the case when $(F,\xi)$ is an equivalence of categories with duality.
\end{cor}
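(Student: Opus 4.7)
The plan is to unpack what it means for an equivariant map of $\mathbb{Z}/2$-spaces to be a $\mathbb{Z}/2$-equivalence and handle the two required conditions separately using the machinery already established.

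\textbf{Reduction.} A map of $\mathbb{Z}/2$-CW complexes is a $\mathbb{Z}/2$-equivalence precisely when both the underlying map and the induced map on fixed points are weak equivalences. So it suffices to verify these two conditions for $|\mathcal{D}F|\colon|\mathcal{D}C|\to|\mathcal{D}C'|$.

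\textbf{Underlying map.} I would invoke Proposition \ref{canddc}: the projection $U\colon\mathcal{D}C\to C$ sending $(c,d,\phi)\mapsto c$ and $(a,b)\mapsto a$ is an equivalence of categories (it is an inverse to $V$ in the purely categorical sense). By construction of $\mathcal{D}F$, one checks immediately from the formula $\mathcal{D}F(c,d,\phi)=(F(c),F(d),\xi_c\circ F(\phi))$ and $\mathcal{D}F(a,b)=(F(a),F(b))$ that $U\circ\mathcal{D}F=F\circ U$ strictly. Taking classifying spaces gives a commutative square in which both vertical maps $|U|$ are homotopy equivalences, so $|\mathcal{D}F|$ is a weak equivalence if and only if $|F|$ is, which holds by hypothesis.

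\textbf{Fixed points.} Here I would chain the natural identifications from Proposition \ref{symcsymdc}:
\[|\mathcal{D}C|^{\mathbb{Z}/2}\;\cong\;|sd_e(\mathcal{N}_{\sbt}\mathcal{D}C)^{\mathbb{Z}/2}|\;\cong\;|\sym\mathcal{D}C|\;\simeq\;|\sym C|,\]
the first isomorphism coming from Proposition \ref{subdivision} applied to the real set $\mathcal{N}_{\sbt}\mathcal{D}C$, the second from the identification of $\sym$ with the fixed simplicial set of the subdivision of the nerve (valid because $\mathcal{D}C$ has a strict duality), and the last from the equivalence $\sym C\simeq\sym\mathcal{D}C$. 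Each of these identifications is natural in morphisms of categories with duality, and unpacking the formulas shows that the map induced by $|\mathcal{D}F|^{\mathbb{Z}/2}$ corresponds under this chain to $|\sym(F,\xi)|$, which sends $f\colon c\to D(c)$ to $\xi_c\circ F(f)\colon F(c)\to D'F(c)$. Since $|\sym(F,\xi)|$ is a weak equivalence by hypothesis, so is the map on fixed points.

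\textbf{The ``in particular'' clause.} When $(F,\xi)$ is an equivalence of categories with duality, I would verify the two hypotheses directly. The underlying functor $F$ is an equivalence of categories (its categorical inverse is the functor underlying $(G,\zeta)$), so $|F|$ is a weak equivalence. For the second condition, I would observe that the assignment $\sym(-)$ is functorial on morphisms of categories with duality and sends a natural isomorphism of such morphisms to a natural isomorphism of functors between the associated $\sym$-categories (which one checks by a straightforward diagram chase using the compatibility square in Definition \ref{eqcatdual}). Hence $\sym$ carries an equivalence of categories with duality to an equivalence of categories, and $|\sym(F,\xi)|$ is a weak equivalence. The main (minor) obstacle is precisely this naturality check, ensuring that the chain of isomorphisms in Proposition \ref{symcsymdc} intertwines $|\mathcal{D}F|^{\mathbb{Z}/2}$ with $|\sym(F,\xi)|$; everything else is a direct application of results already in hand.
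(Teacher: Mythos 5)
Your proof is correct and follows essentially the same route as the paper's: the paper simply observes that under the identifications of Proposition \ref{symcsymdc} and \ref{subdivision}, the map $|\sym(F,\xi)|$ corresponds to $|\mathcal{D}F|^{\mathbb{Z}/2}$, while the underlying map is controlled by the (strictly commuting) square with $U\colon\mathcal{D}C\to C$. You simply make explicit the strict equality $U\circ\mathcal{D}F = F\circ U$, the naturality of the fixed-point chain, and the verification in the ``in particular'' clause that $\sym(-)$ carries equivalences of categories with duality to equivalences of categories — details the paper leaves as a one-line remark.
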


\begin{rem}\label{strictifytwice} One could wonder what happens if we apply $\mathcal{D}$ to a category $C$ that already had a strict duality. In this case the functor $V\colon C\longrightarrow\mathcal{D}C$ defined above commutes strictly with the dualities, and therefore it induces an equivariant map on classifying spaces. This map is a non-equivariant equivalence since $V$ is an equivalence of categories. Moreover, the functor $\sym(V,\id)\colon \sym C\longrightarrow\sym\mathcal{D}C$ is the equivalence of categories proposition \ref{symcsymdc}, and therefore it induces an equivalence on fixed points $|C|^{\mathbb{Z}/2}\longrightarrow|\mathcal{D}C|^{\mathbb{Z}/2}$.
\end{rem}

\begin{rem}
The careful reader might have noticed that a natural transformation of morphism of categories with duality as defined in \ref{eqcatdual} has to be an isomorphism. This condition it required if we want $U\colon (F,\xi)\Rightarrow (G,\zeta)$ to induce a natural transformation $\sym(F,\xi)\Rightarrow \sym(G,\zeta)$. However we can bypass the $\sym$ construction and see what condition is needed on a natural transformation $U\colon F\Rightarrow G$ to induce a simplicial $\mathbb{Z}/2$-homotopy between the subdivisions of the nerves of $\mathcal{D}(F,\xi)$ and $\mathcal{D}(G,\zeta)$. One finds that it is enough to ask for the weaker condition that
\[F(Df)\circ \xi^{-1}_d\circ D'(U_d)\circ\zeta_d\circ U_{Dd}= F(Df)\]
and
\[\xi^{-1}_c\circ D'(U_c)\circ\zeta_c\circ U_{Dc}\circ F(Df)=F(Df)\]
for every morphism $f\colon c\longrightarrow d$ in $C$.
There is a similar condition for $U$ to induce an equivariant homotopy in $\THR$ (cf. \ref{Gnattransf}).
\end{rem}


\subsection{The $S^{1,1}_{\cdot}$-construction and $S^{2,1}_\cdot$-construction}\label{S21andS11constr}

When an exact category has a compatible duality, $S_{\sbt} C$ becomes a category with duality called $S^{1,1}_{\sbt} C$. This allows to define an involution on the $K$-theory spectrum. However, if one wants deloopings in both the directions of trivial and non-trivial $\mathbb{Z}/2$-representations a more involved construction than $S^{1,1}_{\sbt} C$ is needed. This is $S^{2,1}_{\sbt} C$. The theory for these two construction can be found in \cite{IbLars}. Here we review the definitions and a few main statements from \cite{IbLars}.

\begin{defn} An \textbf{exact category with duality} is an exact category $(C,\mathcal{E})$ together with the structure of a category with duality $(C,D,\eta)$ such that
\begin{enumerate}
\item $D\colon C^{op}\longrightarrow C$ is an additive functor, that is $D\colon C(c,d)\longrightarrow C(Dd,Dc)$
is a group homomorphism
\item for every sequence $c\rightarrow d\rightarrow e\in\mathcal{E}$, its image
\[D(c\rightarrow d\rightarrow e)=D(e)\rightarrow D(d)\rightarrow D(c)\]
also belongs to $\mathcal{E}$.
\end{enumerate}
We will often denote this data by $(C,\mathcal{E},D)$ or just $C$.
A \textbf{morphism of exact categories with duality} is a morphism of categories with duality whose underlying functor is exact.
\end{defn}

\begin{ex}
The category $\mathcal{P}_A$ with the exact structure of \ref{ringexact} and the duality induced by an antistructure $(A,L,\alpha)$ is an exact category with duality. This defines a (lax) functor
\[\mathcal{P}\colon \mbox{Antistructures} \longrightarrow \mbox{Exact categories with duality}\]
\end{ex}

The categories $[n]$ have a canonical strict duality $\omega_n\colon[n]^{op}\longrightarrow [n]$ defined by
\[\omega_n(i)=n-i\]
This induces a strict duality on the functor category $Cat([n],[m])$ by sending $\sigma\colon[n]\longrightarrow[m]$ to its conjugate
\[\overline{\sigma}=\omega_m\circ\sigma\circ\omega_n\]
If $(C,D,\eta)$ is a category with duality, this defines in its turn a duality on the category of functors $X\colon Cat([n],[m])\longrightarrow C$ by sending $X$ to the conjugate
\[\xymatrix{Cat([n],[m])\ar[d]_{(\overline{-})^{op}}\ar[r]^-{DX} & C\\
Cat([n],[m])^{op}\ar[r]_-{X^{op}}&C^{op}\ar[u]_{D}
}\]
and where $\eta$ is the natural isomorphism
\[\eta_\sigma\colon X_\sigma\longrightarrow D^2(X_\sigma)=D^{2}(X)_\sigma\]

\begin{prop}
If $(C,\mathcal{E},D,\eta)$ is an exact category with duality, the duality above restricts to a duality $(D\colon S_nC^{op}\longrightarrow S_nC,\eta)$ on $S_nC$. Moreover the face and degeneracy functors of $S_{\sbt} C$ satisfy
\[\begin{array}{ll}d_lD(X)=D(d_{n-l}X)\\s_lD(X)=D(s_{n-l}X)\end{array}\]
for all $0\leq l\leq n$.\\
If the duality on $C$ is strict, so is the one on $S_nC$.
\end{prop}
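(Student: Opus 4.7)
The plan is to reduce everything to one combinatorial identity: $\omega_p\circ\delta^l=\delta^{p-l}\circ\omega_{p-1}$ and $\omega_p\circ\sigma^l=\sigma^{p-l}\circ\omega_{p+1}$. I would verify these by direct case analysis on the value of the input relative to $l$. Applied to the conjugate $\overline{\theta}=\omega_n\circ\theta\circ\omega_p$ of any $\theta\colon[p]\to[n]$, they upgrade immediately to the relations $\overline{d_l\theta}=d_{p-l}\overline{\theta}$ and $\overline{s_l\theta}=s_{p-l}\overline{\theta}$ on the simplex category.

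Next I would check that $DX\in S_nC$ whenever $X\in S_nC$. The vanishing axiom is automatic, since conjugation by the $\omega$'s preserves injectivity of $[1]\to[n]$, and $D$ is additive so $D(0)=0$; thus if $\omega$ is non-injective, $DX(\omega)=D(X(\overline{\omega}))=D(0)=0$. For the exactness axiom on $\theta\colon[2]\to[n]$, the identity $\overline{d_l\theta}=d_{2-l}\overline{\theta}$ identifies
\[DX(d_2\theta)\longrightarrow DX(d_1\theta)\longrightarrow DX(d_0\theta)\]
with the image under $D$, arrows reversed, of the $S_nC$-axiom sequence applied to $\overline{\theta}$, namely $X(d_2\overline{\theta})\to X(d_1\overline{\theta})\to X(d_0\overline{\theta})$; the hypothesis that $D$ sends $\mathcal{E}$ to $\mathcal{E}$ then gives the required exactness. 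The natural isomorphism $\eta^{S_nC}_X$ at $\omega$ is just $\eta_{X(\omega)}$, inherited pointwise from $\eta$ on $C$, and satisfies the duality condition automatically. The simplicial commutations $d_lDX=D(d_{n-l}X)$ and $s_lDX=D(s_{n-l}X)$ are then immediate from the combinatorial identities of the first step by expanding both sides at a fixed $\omega$.

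Strictness is free: if $D^2=\id$ and $\eta=\id$ on $C$ then $\overline{\overline{\omega}}=\omega$ (as $\omega_n^2=\id$), so $D^2X(\omega)=D^2(X(\omega))=X(\omega)$, and $\eta^{S_nC}$ remains the identity pointwise. I do not expect serious obstacles; the only delicacy is matching the reversal $l\mapsto 2-l$ in the exactness axiom with the direction-reversal produced by $D$, which is precisely what makes an $\mathcal{E}$-sequence for $\overline{\theta}$ yield an $\mathcal{E}$-sequence for $\theta$ after applying $D$. The rest is bookkeeping.
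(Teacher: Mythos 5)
Your proof is correct, and the paper in fact omits a proof of this proposition entirely (it is a routine verification attributed implicitly to Hesselholt--Madsen). Your reduction to the single combinatorial identity $\omega_p\circ\delta^l=\delta^{p-l}\circ\omega_{p-1}$ (and its degeneracy analogue), used once in the inner index $[2]$ to check that $DX$ satisfies the exactness axiom via the $\mathcal{E}$-stability of $D$, and once in the outer index $[n]$ to get $d_lD=Dd_{n-l}$ and $s_lD=Ds_{n-l}$, is precisely the expected argument; the pointwise definition of $\eta$ and the strictness claim follow as you say from $\omega_n^2=\id$.
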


\begin{defn}
The simplicial category $S_{\sbt} C$ together with the dualities $D_n\colon S_nC^{op}\longrightarrow S_nC$ is called \textbf{the $S^{1,1}_{\sbt}$-construction} of $C$, denoted $S^{1,1}_{\sbt} C$.
\end{defn}

\begin{defn}\label{defrealcat}
A \textbf{real category} is a simplicial category $C_{\sbt}$ with structures of categories with duality $(C_n,D_n,\eta_n)$ in every simplicial degree $n$, satisfying
\[\begin{array}{ll}d_lD_n=D_{n-1}d_{n-l}\\s_lD_n=D_{n+1}s_{n-l}\end{array}\]
for all $0\leq l\leq n$.

A \textbf{morphism of real categories} is a simplicial functor $F_{\sbt}\colon C_{\sbt}\longrightarrow C_{\sbt}'$ together with the structure of morphisms of categories with duality \[(F_n,\xi_n)\colon C_n\longrightarrow C_n'\] for all $n\geq 0$.
\end{defn}

This terminology might be misleading, since a real category as defined here is not a real object in the category of categories. Nevertheless, the levelwise nerve of the simplicial category $\mathcal{D}(C_{\sbt})$ defines a real object in the category of real sets. This gives a functor from real categories to real real sets.
In particular, $[n]\mapsto |\mathcal{N}_{\sbt} \mathcal{D}(C_n)|$ with the involutions induced by the functors $D_n$ on the classifying space is a real space. This gives a functor from real categories to real spaces. Taking realization, this induces a functor
\[|-|\colon \mbox{Real categories}\longrightarrow \mbox{$\mathbb{Z}/2$-Spaces$_\ast$}\]
If $(F,\xi)\colon (C,D,\eta,\mathcal{E})\longrightarrow (C',D',\eta',\mathcal{E}')$ is a morphism of exact categories with duality, the functor $F_n\colon S^{1,1}_n C\longrightarrow S^{1,1}_n C'$, defined as earlier by composing a diagram with $F$, admits a canonical structure of morphism of categories with duality, and it preserves the simplicial structure. This construction gives a functor
\[|\mathcal{D}S_{\sbt}^{1,1}(-)|\colon \mbox{Exact categories with duality}\longrightarrow \mbox{$\mathbb{Z}/2$-Spaces$_\ast$}\]

\begin{rem}\label{DfirstorlaterS11}
Let $(C,D,\eta,\mathcal{E})$ be an exact category with duality. The category $\mathcal{D}C$ inherits a structure of exact category with strict duality by declaring a sequence
\[\xymatrix{d \ar[d]^{\cong}& d'\ar[l]_{g}\ar[d]^{\cong} & d''\ar[l]_{g'}\ar[d]^{\cong}\\
D(c) & D(c')\ar[l]_{D(f)}& D(c'')\ar[l]_{D(f')}
}\]
to be exact if $c\stackrel{f}{\longrightarrow}c'\stackrel{f'}{\longrightarrow}c''$ is. This forces $d''\stackrel{g'}{\longrightarrow}d'\stackrel{g}{\longrightarrow}d$ to be exact as well, since exactness is preserved by the duality and under isomorphism of sequences. The equivalence of categories $C\longrightarrow \mathcal{D}C$ of \ref{canddc} is clearly an exact functor. This extends $\mathcal{D}$ to a functor
\[\mathcal{D}\colon \mbox{Exact categories with duality}\longrightarrow \mbox{Exact categories with strict duality}\]
Moreover, there is a canonical natural isomorphism of real categories with strict duality $S^{1,1}_{\sbt}\mathcal{D}C\cong \mathcal{D}S^{1,1}_{\sbt} C$. This is defined by a simple reindexing, as follows. An object of $\mathcal{D}S^{1,1}_{\sbt} C$ is a triple $(X,Y,\phi)$ with $\phi\colon Y\longrightarrow D(X)$ a natural isomorphism in $S^{1,1}_{\sbt} C$, given by a family of isomorphisms $\phi_\sigma\colon  Y_\sigma\longrightarrow D(X_{\overline{\sigma}})$. This is mapped by the isomorphism above to the diagram of $\mathcal{D}C$ defined at a $\sigma$ by the triple $(X_\sigma,Y_{\overline{\sigma}},\phi_{\overline{\sigma}})$.
\end{rem}

Suppose that the duality on $C$ is strict.
The $1$-simplicies are $S^{1,1}_1C=C$ with the same duality structure. Since the action on the realization of a real space flips the simplex coordinates, the maps $\mathcal{N}_kS_1C\times \Delta^1\longrightarrow |\mathcal{N}_kS_{\sbt} C|$ are equivariant for $\Delta^1=I$ on which $\mathbb{Z}/2$ reflects around $1/2$. We denote $S^{1,1}$ the circle with involution induced by complex conjugation, and for $Y$ a $\mathbb{Z}/2$-space, we denote $\Omega^{1,1}Y=\map_\ast(S^{1,1},Y)$ the based loop space with conjugation action. The adjoints of the maps above give an equivariant map
\[|C|\longrightarrow \Omega^{1,1}|S^{1,1}_{\sbt} C|\]
The duality structure on $S^{1,1}_nC$ is compatible with the exact category structure defined pointwise for $S_nC$, making $S^{1,1}_nC$ into an exact category with strict duality. This allows to iterate the $S^{1,1}_{\sbt}$-construction.
We would like to collect these iterated $S^{1,1}_{\sbt}$-constructions and the maps
\[|iC|\longrightarrow\Omega^{1,1}|iS^{1,1}_{\sbt}C|\]
above into some kind of $\mathbb{Z}/2$-spectrum (see \ref{realspec} below). In order to do this in a convenient category of $\mathbb{Z}/2$-spectra, we want a construction that deloops also with respect to the trivial action on the circle. For this, following \cite{IbLars}, we define another functor from exact categories with dualities to real categories, denoted $S^{2,1}_{\sbt}$. This should be somehow thought of as a $S^{1,1}_{\sbt}$-construction and a $S_{\sbt}$-construction combined.

\begin{defn}
Let $(C,\mathcal{E})$ be an exact category. A sequence \[a\stackrel{i}{\longrightarrow}b\stackrel{m}{\longrightarrow}c\stackrel{p}{\longrightarrow}d\] in $C$ is called \textbf{($4$-term) exact} if $i$ is an admissible monomorphism, $p$ is an admissible epimorphism and $m$ factors as
\[\xymatrix{b\ar[dr]_{m}\ar[r]^{f}&e\ar[d]^{g}\\
& c}\]
where $f$ is a cokernel for $i$ and $g$ is a kernel for $p$.
\end{defn}

\begin{defn} Let $(C,D,\eta,\mathcal{E})$ be an exact category with duality. The category $S^{2,1}_nC$ has objects the functors $X\colon Cat([2],[n])\longrightarrow C$ satisfying the two following conditions
\begin{enumerate}
\item $X(\theta)=0$ unless $\theta\colon[2]\longrightarrow [n]$ is injective.
\item For each $\psi\colon [3]\longrightarrow [n]$, the sequence
\[X(d_3\psi)\longrightarrow X(d_2\psi)\longrightarrow X(d_1\psi)\longrightarrow X(d_0\psi)\]
is exact.
\end{enumerate}
Morphisms of $S^{2,1}_nC$ are natural transformations of functors. Conjugation of functors by the dualities on $Cat([2],[n])$ and $C$ restricts to a duality $(D_n\colon S^{2,1}_nC^{op}\longrightarrow S^{2,1}_nC,\eta)$ on $S^{2,1}_nC$.

The subcategory of $S^{2,1}_nC$ with all objects and natural isomorphisms as morphisms is denoted $iS^{2,1}_nC$.
\end{defn}

\begin{ex}
An object of $S^{2,1}_5C$ is a diagram of the form
\begin{center}
\includegraphics[scale=0.7]{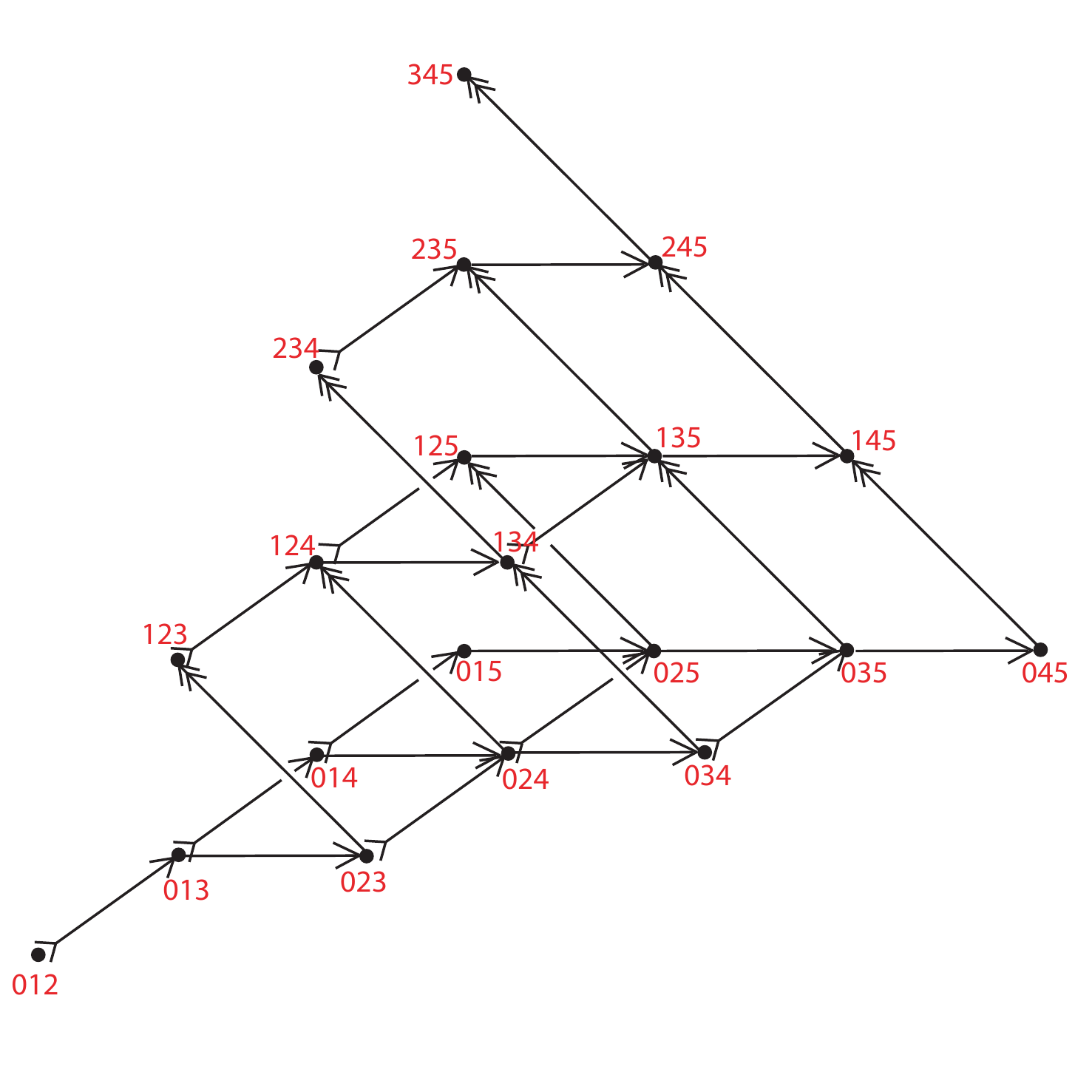}
\end{center}
where the tailed maps are admissible monomorphisms and the ones with double tip admissible epimorphisms. Notice that $S^{1,1}_4C$ includes in $S^{2,1}_5C$ both as the top and the bottom face of the tetrahedron. Quite in general $S^{1,1}_{k}C$ embeds in $S^{2,1}_{k+1}C$ in two ways. These two inclusion are swapped by the action, and therefore are not equivariant. For an equivariant map from $S^{1,1}_{\sbt} C$ to $S^{2,1}_{\sbt} C$ see \ref{picIk}.
\end{ex}
Just as for $S_{\sbt} C$, the maps $\delta^l\colon [n-1]\longrightarrow [n]$ and $\sigma^l\colon [n+1]\longrightarrow [n]$ make $S^{2,1}_{\sbt} C$ into a simplicial category.

\begin{prop}
The face and degeneracy functors of $S^{2,1}_{\sbt} C$ satisfy
\[\begin{array}{ll}d_lD(X)=D(d_{n-l}X)\\s_lD(X)=D(s_{n-l}X)\end{array}\]
for all $0\leq l\leq n$,
defining a structure of real category $(S^{2,1}_{\sbt} C,D,\eta)$.
\end{prop}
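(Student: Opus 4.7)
The plan is to follow the same template as the analogous proposition for $S^{1,1}_\bullet C$. The proof splits into three parts: (i) check that the duality on the functor category $\mathrm{Cat}(\mathrm{Cat}([2],[n]),C)$ restricts to one on the full subcategory $S^{2,1}_n C$, (ii) verify the simplicial identities, and (iii) verify the coherence condition for $\eta$.

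For (i), I would show that if $X$ satisfies the two defining conditions of $S^{2,1}_nC$, so does $D(X)$, where $D(X)(\theta)=D(X(\bar\theta))$ with $\bar\theta=\omega_n\circ\theta\circ\omega_2$. The support condition is immediate: $\bar\theta$ is injective iff $\theta$ is, since $\omega_n$ and $\omega_2$ are bijections, so $D(X)(\theta)=D(0)=0$ whenever $\theta$ is not injective. For the exactness condition, note that the duality on $\mathrm{Cat}([3],[n])$ carries $\psi$ to $\bar\psi=\omega_n\circ\psi\circ\omega_3$, and one checks $d_i\bar\psi = \overline{d_{3-i}\psi}$. Hence for any $\psi\colon[3]\longrightarrow[n]$, the sequence $D(X)(d_3\psi)\to D(X)(d_2\psi)\to D(X)(d_1\psi)\to D(X)(d_0\psi)$ equals $D$ applied to the 4-term exact sequence associated to $\bar\psi$. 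The main subtle point here is that the duality axiom on $(C,\mathcal{E},D)$, which a priori only mentions 2-term exact sequences, implies the analogous statement for 4-term exact sequences; this follows by inspection, since $D$ sends admissible monos to admissible epis and vice versa, and preserves the factorization $b\twoheadrightarrow e\rightarrowtail c$ where $e=\mathrm{coker}(a\to b)=\ker(c\to d)$.

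For (ii), the two simplicial identities reduce to the combinatorial facts
\[\omega_n\circ\delta^l=\delta^{n-l}\circ\omega_{n-1},\qquad \omega_n\circ\sigma^l=\sigma^{n-l}\circ\omega_{n+1}\]
in $\Delta$. Both are routine case analyses: for the face identity, both sides send $i<l$ to $n-i$ and $i\geq l$ to $n-1-i$, and an analogous check works for the degeneracies. Granted these, for $\sigma\colon[2]\longrightarrow[n-1]$ one computes
\[d_l D(X)(\sigma)=D(X)(\delta^l\sigma)=D\bigl(X(\omega_n\delta^l\sigma\omega_2)\bigr)=D\bigl(X(\delta^{n-l}\omega_{n-1}\sigma\omega_2)\bigr)=D(d_{n-l}X)(\sigma),\]
and the degeneracy identity is identical mutatis mutandis.

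Finally for (iii), the natural isomorphism $\eta$ on $S^{2,1}_nC$ is defined pointwise by $\eta_X(\sigma)=\eta_{X(\sigma)}$, so the axiom $D(\eta_X)\circ\eta_{D(X)}=\mathrm{id}_{D(X)}$ holds pointwise from the corresponding axiom on $C$; strictness of the duality on $C$ transfers to strictness on $S^{2,1}_nC$ in the same way. The only genuinely new ingredient is the verification that $D$ preserves 4-term exactness in part (i); everything else is bookkeeping parallel to the $S^{1,1}_\bullet$ case.
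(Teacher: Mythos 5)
Your proposal is correct, and it fills in a proof that the paper itself omits (the proposition is one of the "main statements from \cite{IbLars}" that the author recalls without proof). The three-part decomposition is the natural one, and I verified the key combinatorial identities you invoke: $\omega_n\circ\delta^l=\delta^{n-l}\circ\omega_{n-1}$ and $\omega_n\circ\sigma^l=\sigma^{n-l}\circ\omega_{n+1}$ both hold, and the specialization $\delta^i\circ\omega_2=\omega_3\circ\delta^{3-i}$ is exactly what you need for $\overline{d_i\psi}=d_{3-i}\overline\psi$. Your observation that the duality preserves $4$-term exactness because $D$ sends the two short exact sequences $a\rightarrowtail b\twoheadrightarrow e$ and $e\rightarrowtail c\twoheadrightarrow d$ to short exact sequences, swapping $\ker$ and $\coker$, is the right reduction; one minor slip in wording is that the duality axiom concerns $3$-term (short) exact sequences rather than "$2$-term" ones, but this does not affect the argument.
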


\begin{defn}
The real category $S^{2,1}_{\sbt} C$ is the \textbf{$S^{2,1}_{\sbt}$-construction of $C$}.
\end{defn}

As for $S^{1,1}_{\sbt}$, composing diagrams by a morphism of exact categories with duality $C\longrightarrow C'$ gives a morphism of real categories $S^{2,1}_{\sbt} C\longrightarrow S^{2,1}_{\sbt} C'$. Thus the $S^{2,1}_{\sbt}$-construction gives a functor
\[S^{2,1}_{\sbt}\colon \mbox{Exact categories with duality}\longrightarrow \mbox{Real categories}\]

\begin{rem}\label{DfirstorlaterS21}
Exactly like for \ref{DfirstorlaterS11} above, there is a canonical natural isomorphism of categories with strict duality $\mathcal{D}S^{2,1}_{\sbt} C\cong S^{2,1}_{\sbt}\mathcal{D}C$ defined by reindexing the data.
\end{rem}

\begin{defn}
The \textbf{real $K$-theory} of an exact category with duality $(C,D,\eta,\mathcal{E})$ is the $\mathbb{Z}/2$-space
\[\KR(C)=\Omega^{2,1}|iS^{2,1}_{\sbt} \mathcal{D}C|\]
The \textbf{real $K$-theory} of a ring with antistructure $(A,L,\alpha)$ is the $\mathbb{Z}/2$-space
\[\KR(A)=\KR(\mathcal{P}_A)\]
\end{defn}

Let us see how the $S^{2,1}_{\sbt}$-construction deloops. Suppose now that the duality on $C$ is strict. We denote $S^{2,1}$ the one point compactification of $\mathbb{C}$ with $\mathbb{Z}/2$-action induced from complex conjugation. For $Y$ a $\mathbb{Z}/2$-space, we denote $\Omega^{2,1}Y$ the based loop space $\map_\ast(S^{2,1},Y)$ with conjugation action. The projection maps $\mathcal{N}_kS^{2,1}_2C\times\Delta^2\longrightarrow|\mathcal{N}_kS^{2,1}_{\sbt} C|$ are equivariant for $\Delta^2$ equipped with the action that reverses the order of the coordinates. Since $S^{2,1}_0C=S^{2,1}_1C=0$ and $S^{2,1}_2C=C$, they induce a real map
\[\mathcal{N}_kC\longrightarrow \Omega^{2,1}|\mathcal{N}_kS^{2,1}_{\sbt} C|\]
that realized gives an equivariant map $|C|\longrightarrow \Omega^{2,1}|S^{2,1}_{\sbt} C|$ and $|iC|\longrightarrow \Omega^{2,1}|iS^{2,1}_{\sbt} C|$.
As for the other constructions, $S^{2,1}_n C$ is an exact category with duality by defining exact sequences as being sequences of diagrams that are pointwise exact. Thus the $S^{2,1}_{\sbt}$-construction can be iterated.

\begin{theorem}[\cite{IbLars}]\label{sumdiags}
For $(C,D,\mathcal{E})$ an exact category with strict duality, the map
\[|iS^{2,1}_{\sbt} C|\longrightarrow \Omega^{2,1}|iS^{2,1}_{\sbt} S^{2,1}_{\sbt} C|\]
is a $\mathbb{Z}/2$-homotopy equivalence. Moreover the map
\[|iC|\longrightarrow \Omega^{2,1}|iS^{2,1}_{\sbt} C|\]
is an equivariant group completion, i.e. it is a homology equivalence after inverting $\pi_0|iC|$, and the restriction
\[|\sym iC|\simeq|iC|^{\mathbb{Z}/2}\longrightarrow (\Omega^{2,1}|iS^{2,1}_{\sbt} C|)^{\mathbb{Z}/2}\]
is a homology equivalence after inverting $\pi_0|iC|^{\mathbb{Z}/2}$.
\end{theorem}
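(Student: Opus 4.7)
The strategy is to reduce each of the three assertions to classical non-equivariant results about $S_{\sbt}$ together with a separate analysis of the $\mathbb{Z}/2$-fixed points via the Segal subdivision of Proposition \ref{subdivision} and the rewriting $|X|^{\mathbb{Z}/2}\simeq|\sym X|$ of Proposition \ref{symcsymdc}. The guiding principle is the one behind Corollary \ref{fixedpointstrick}: since the two spaces in each statement are geometric realizations of real sets, an equivariant map is a $\mathbb{Z}/2$-equivalence once its underlying map and its fixed-point map are equivalences (or, for part 2, both are group completions in the appropriate sense).

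For the first assertion, I would begin by noting that the underlying non-equivariant map $|iS^{2,1}_{\sbt} C|\to\Omega^2|iS^{2,1}_{\sbt} S^{2,1}_{\sbt} C|$ is a double delooping of the classical Waldhausen map. There is a canonical comparison, coming from the inclusion $Cat([1],[n-1])\hookrightarrow Cat([2],[n])$ sending $\omega$ to $(0,\omega(0)+1,\omega(1)+1)$, which identifies $S^{2,1}_n C$ up to a degree shift with a bisimplicial version of $S_{\sbt} S_{\sbt} C$; hence the underlying statement follows from applying Waldhausen's $|iS_{\sbt} C|\simeq\Omega|iS_{\sbt} S_{\sbt} C|$ twice. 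For the fixed-point map, I would use the subdivision isomorphism $|sd_e S^{2,1}_{\sbt} C|^{\mathbb{Z}/2}\cong |(S^{2,1}_{\sbt} C)^{\mathbb{Z}/2}|$ together with $|\sym iS^{2,1}_nC|\simeq|iS^{2,1}_nC|^{\mathbb{Z}/2}$, so that the fixed-point statement becomes a delooping statement for the simplicial category $n\mapsto\sym iS^{2,1}_{2n+1}C$. A $\sym$-object in $S^{2,1}_{2n+1}C$ is a self-dual $4$-term exact diagram in $C$, i.e.\ a chain of admissible monomorphisms equipped with a non-degenerate hermitian form glued to its cokernel chain; such data form the hermitian analogue of Waldhausen's $S_{\sbt}$-diagrams, and the delooping one gets is precisely the additivity theorem for categories with duality. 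I expect the proof to mimic Waldhausen's: build an explicit equivariant simplicial homotopy from the composite with a "path space" functor to the identity.

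For assertion (2), the group completion, the underlying statement is again Waldhausen's classical group completion theorem, so nothing new is needed there. For the fixed-point statement I would identify both sides with symmetric-monoidal bar constructions through $\sym iC$ and $\sym iS^{2,1}_{\sbt}C$, and then apply an equivariant version of the McDuff--Segal group completion theorem: the monoid to be group-completed on the fixed-point side is $\pi_0|iC|^{\mathbb{Z}/2}$, which represents isomorphism classes of objects carrying a non-degenerate form, and the hermitian nature of $S^{2,1}$-diagrams guarantees that the associated classifying space is indeed a group completion relative to this monoid. The statement about inverting $\pi_0$ then falls out of the group completion theorem applied to the commutative $H$-space structure on $|\sym iC|$ coming from orthogonal direct sum.

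The main obstacle will be the equivariant additivity result underlying both (1) and the fixed-point half of (2). Waldhausen's additivity proof relies on a splitting construction and on a path-object trick; in the real setting each auxiliary category must itself be given a compatible duality, and every intermediate natural transformation must satisfy the $\sym$-compatibility spelled out in the last remark of Section \ref{secdefscriptd}. Once this equivariant additivity is in hand, both assertions follow by the standard Waldhausen-style formal arguments applied separately to the underlying and fixed-point components, and the conclusion is assembled by the criterion of Corollary \ref{fixedpointstrick}.
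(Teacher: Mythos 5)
The paper does not prove Theorem~\ref{sumdiags}; it cites it from \cite{IbLars} and uses it as a black box. So there is no internal proof to compare against, and you should be evaluated on whether your outline is a plausible route, not on whether it matches a proof in the text.

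Your plan has the right architecture: decompose via Corollary~\ref{fixedpointstrick} into an underlying statement and a fixed-point statement, reduce the underlying statement to Waldhausen's results, and reduce the fixed-point statement to an equivariant additivity theorem. That matches the shape of the Hesselholt--Madsen argument. But two of the steps are more than bookkeeping, and you underestimate the gap in each. First, identifying the underlying non-equivariant map with ``Waldhausen applied twice'' is not automatic: $S^{2,1}_{\sbt}$ is a single simplicial direction built from $\mathrm{Cat}([2],[n])$-diagrams, not literally $S_{\sbt}S_{\sbt}$; the comparison between $S^{2,1}_{\sbt}C$ and the diagonal of a bi-$S_{\sbt}$-construction is itself a nontrivial lemma and must be established before one invokes Waldhausen's theorem. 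Second, your proposal to prove the fixed-point delooping by ``mimicking Waldhausen's path-space trick with every auxiliary category given a compatible duality'' runs into exactly the obstruction the paper itself records in the remark of Section~\ref{THRforS21}: the path construction $P_+X_{\sbt}$ deletes the $0$-th face, whose conjugate under the real structure is the last face, so $P_+S^{2,1}_{\sbt}C$ is \emph{not} a real simplicial category and the usual contraction is not equivariant. The fix---working on a (doubly) subdivided object and replacing the naive inclusion of the fiber by a hand-built equivariant simplicial functor---is substantive, not formal, and is precisely what the paper does in the analogous $\THR$ delooping argument. Without spelling that out, the sketch does not yet contain the idea that makes the theorem true. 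Your description of a $\sym$-object of $S^{2,1}_{2n+1}C$ as a ``self-dual $4$-term exact diagram with a hermitian form'' is also loose: it is a whole $\mathrm{Cat}([2],[2n+1])$-diagram together with a symmetric isomorphism to its conjugate dual diagram, and treating it as a single hermitian object would obscure the combinatorics you need for the additivity argument.
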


We can assemble the iterated $S^{2,1}_{\sbt}$-constructions into a convenient notion of spectrum with $\mathbb{Z}/2$-action. We recall this definition from \cite{IbLars} (see also \cite{shima}). We denote $S^{2n,n}$ the $n$-fold smash product of the sphere $S^{2,1}$ with itself with diagonal action, and $\Omega^{2n,n}=\map_\ast (S^{2n,n},-)$ the pointed mapping space with conjugation action.

\begin{defn}\label{realspec}
A \textbf{real spectrum} $X$ is the data of a pointed $\mathbb{Z}/2$-space $X_m$ for every integer $m\geq 0$, and maps of pointed $\mathbb{Z}/2$-spaces
\[\sigma_{m,n}\colon X_m\wedge S^{2n,n}\longrightarrow X_{m+n}\]
such that $\sigma_{m,0}\colon X_m\wedge S^{0}\longrightarrow X_{m}$ is the canonical isomorphism, and
\[\xymatrix{(X_m\wedge S^{2n,n})\wedge S^{2p,p}\ar[d]^\cong\ar[rr]^-{\sigma_{m,n}\wedge\id}&&X_{m+n}\wedge S^{2p,p}\ar[d]^{\sigma_{m+n,p}}\\
X_m\wedge S^{2(n+p),n+p}\ar[rr]_-{\sigma_{m,n+p}}&&X_{m+n+p}
}\]
commutes.\newline
If the adjoint of the structure maps
\[\widetilde{\sigma}_{m,n}\colon X_m\longrightarrow \Omega^{2n,n}X_{m+n}\]
are weak $\mathbb{Z}/2$-equivalences, we say that $X$ is a \textbf{real $\Omega$-spectrum}.\newline
The \textbf{infinite real loop space of $X$} is the $\mathbb{Z}/2$-space
\[\Omega^{2\infty,\infty}X=\colim_n\Omega^{2n,n}X_{n}\]
where the colimit is taken over the adjoint of the structure maps.\newline
A $\mathbb{Z}/2$-space $Z$ is said to be \textbf{an infinite real loop space} if there is a real $\Omega$-spectrum $X$ with $X_0=Z$.\newline
A \textbf{map of real spectra $f\colon X\longrightarrow Y$} is a family of $\mathbb{Z}/2$-maps $f_m\colon X_m\longrightarrow Y_m$ such that
$f_{m+n}\circ\sigma_{m,n}=\sigma_{m,n}\circ(f_m\wedge\id)$. We say that $f$ is a \textbf{levelwise $\mathbb{Z}/2$-equivalence} if every $f_m$ is a weak $\mathbb{Z}/2$-equivalence of $\mathbb{Z}/2$-spaces. We say that $f$ is a \textbf{stable $\mathbb{Z}/2$-equivalence} if its induced map
\[\Omega^{2\infty,\infty}f\colon\Omega^{2\infty,\infty}X\longrightarrow \Omega^{2\infty,\infty}Y\]
is a weak $\mathbb{Z}/2$-equivalence of $\mathbb{Z}/2$-spaces.
\end{defn}

For an exact category with strict duality $C$, the family $|i(S^{2,1}_{\sbt})^{(m)}C|$ together with the structure maps 
\[|i(S^{2,1}_{\sbt})^{(m)}C|\wedge S^{2n,n}\longrightarrow |i(S^{2,1}_{\sbt})^{(m+n)}C|\]
defined by iterating the maps $|i(S^{2,1}_{\sbt})^{(m)}C|\wedge S^{2,1}\longrightarrow |i(S^{2,1}_{\sbt})^{(m+1)}C|$ defined above, is a real spectrum.

\begin{defn}
The \textbf{real algebraic $K$-theory} of an exact category with strict duality $(C,D,\mathcal{E})$ is the real spectrum
\[\uuline{\KR}(C)=\{|i(S^{2,1}_{\sbt})^{(m)}\mathcal{D}C|\}_{m\in\mathbb{N}}\]
\end{defn}
By \ref{sumdiags}, if $C$ is split-exact the spectrum $\uuline{\KR}(C)$ is "positively fibrant", i.e. the adjoint of the structure maps are $\mathbb{Z}/2$-equivalence above degree $1$, and the first map $|iC|\longrightarrow |iS^{2,1}_{\sbt}C|$ is an equivariant group completion. In this case the $K$-theory space $K(C)$ is a real infinite loop space. All together, we defined a functor
\[\uuline{\KR}\colon \mbox{Exact categories with duality}\longrightarrow \mbox{Real spectra}\]
that precomposed with $\mathcal{D}\mathcal{P}_{(-)}$ gives a (lax) functor
\[\KR\colon \mbox{Antistructures}\longrightarrow \mbox{Positively fibrant real spectra}\]

It is possible to define the more refined notion of a "real symmetric spectrum". However, in order to obtain such a structure on $\KR(C)$, one needs to replace $(S^{2,1}_{\sbt})^{(m)}$ with a homeomorphic construction, denoted $S_{\sbt}^{2m,m}$. This is done in great details in \cite{IbLars}.


\newpage

\section{Real $K$-theory of split square zero extensions of antistructures}\label{splitKR}

\subsection{Introduction}\label{secintrokranti}

Let $M$ be a bimodule over a ring $A$. Define $A\ltimes M$ to be the ring with underlying abelian group $A\oplus M$, and multiplication
\[(a,m)\cdot(a',m')=(a\cdot a',a\cdot m'+m\cdot a')\]
If $f\colon B\longrightarrow A$ is a split-surjective ring map with square zero kernel, there is an isomorphism over $A$ between $B$ and $A\ltimes \ker f$. Using this identification, one can describe the homotopy fiber of $K(B)\longrightarrow K(A)$ as the homotopy fiber of a map $K(A\ltimes M)\longrightarrow K(A)$ induced by the projection $A\ltimes M\longrightarrow A$. In \cite{stableDM}, the authors define a model for this homotopy fiber that can be easily mapped to $\THH(A;M)$. Here we generalize this for $\KR$ when $A$ and $B$ are equipped with antistructures and both $f$ and the section are maps of antistructures.

In \cite[{§4}]{stableDM}, the authors define a simplicial category $S_{\sbt}(A;M)$ with objects $S_{\sbt}\mathcal{P}_A$ and with the endomorphisms of $X$ given by the abelian group of natural transformations of diagrams of right $A$-modules $\hom_A(X,X\otimes_A M)$; for short
\[S_{\sbt}(A;M)=\coprod_{X\in S_{\sbt}\mathcal{P}_A}\hom_A(X,X\otimes_A M)\]
Composition is given by objectwise addition of module maps.
Then they build a homotopy commutative diagram
\[\xymatrix{S_{\sbt}(A;M)\ar[r]^{\Psi}_{\simeq}\ar[d]& iS_{\sbt}\mathcal{P}_{A\ltimes M}\ar[d]^{-\otimes_{A\ltimes M}A}\\
ObS_{\sbt}\mathcal{P}_A\ar[r]_\simeq& iS_{\sbt}\mathcal{P}_{A}
}\]
where the horizontal maps are weak equivalences.
The bottom map is the inclusion of the discrete category of objects $ObS_{\sbt}\mathcal{P}_A$. The left vertical map is the projection that remembers only the object of $S_{\sbt}\mathcal{P}_A$. It splits by the section sending the identity of an object $X\in ObS_{\sbt}\mathcal{P}_A$ to the zero natural transformation of $\hom_A(X,X\otimes_A M)$. In simplicial degree $1$, $\Psi$ sends an $A$-module $P$ to the abelian group $F(P)=P\oplus(P\otimes_AM)$ with $A\ltimes M$-module structure
\[(p,p'\otimes m')\cdot(a,m)=(p\cdot a,p'\otimes (m'\cdot a)+p\otimes m)\]
This module is canonically isomorphic to $P\otimes_{A}(A\ltimes M)$, via the map that sends $(p,p'\otimes m)$ to $p\otimes (1,0)+p'\otimes (0,m)$.
A module map $f\colon P\longrightarrow P\otimes_A M$ is sent to
\[\Psi_1(f)=\left(\begin{array}{cc}1&0\\f&1\end{array}\right)\colon(p,p'\otimes m')\longmapsto(p,p'\otimes m+f(p))\]
The functor $\Psi$ on higher simplicies is defined pointwise from $\Psi_1$.
\begin{rem} It might look surprising that we just have identities in the diagonal for $\Psi(f)$. This is similar to the fact that $ObS_{\sbt}C\longrightarrow iS_{\sbt}C$ induces a weak equivalence on realizations.
\end{rem}

The diagram above shows that the homotopy fiber $\widetilde{K}(A\ltimes M)$ of the map $K(A\ltimes M)\longrightarrow K(A)$ is weakly equivalent to the loop space of the homotopy fiber of the realization of $S_{\sbt}(A;M)\longrightarrow ObS_{\sbt}\mathcal{P}_A$. Then the authors compute the homotopy fiber of this last map, showing that it is weakly equivalent to the realization of the bisimplicial set $\bigvee_{X\in S_{\sbt}\mathcal{P}_A}\hom_A(X,X\otimes_A M(S_{\sbt}^1))$
where $M(S_{\sbt}^1)$ is the Bar construction on $M$.
This gives a weak equivalence
\[\widetilde{K}(A\ltimes M)\simeq\Omega|\bigvee_{X\in S_{\sbt}\mathcal{P}_A}\hom_A(X,X\otimes_A M(S_{\sbt}^1))|\]

Here, we generalize this model to the equivariant situation. An "$M$-twisting" $J$ defines both a lifting of an antistructure $(A,L,\alpha)$ to $A\ltimes M$ and natural duality maps
\[\widehat{J}\colon \hom_A(P,Q\otimes M)\longrightarrow \hom_A(D_LQ,(D_LP)\otimes_A M)\]
for all pairs of modules $P,Q\in\mathcal{P}_A$.
From this data, we give a model for $iS^{2,1}_{\sbt}\mathcal{D}\mathcal{P}_{A\ltimes M}$ by defining a simplicial category
\[S^{2,1}_{\sbt}(A;M)=\coprod_{\varphi\in ObS^{2,1}_{\sbt}\mathcal{D}\mathcal{P}_A}\hom_A(\varphi,\varphi\otimes_A M)\]
It has objects $ObS^{2,1}_{\sbt}\mathcal{D}\mathcal{P}_{A}$ and only endomorphisms. The endomorphisms of an object $\varphi=(X,Y,\phi\colon Y\longrightarrow D_L(X))$ is the abelian group $\hom_A(\varphi,\varphi\otimes_A M)$ of pairs of morphisms of diagrams $f\colon X\longrightarrow X\otimes_A M$ and $g\colon Y\longrightarrow  Y\otimes_A M$ such that
\[\xymatrix{Y_\theta\ar[d]_{\phi_\theta}\ar[r]^-{g_\theta}&Y_\theta\otimes_AM\ar[d]^{\phi_\theta\otimes M}\\
D_L(X_\theta)\ar[r]_-{\widehat{J}(f_\theta)}&D_L(X_\theta)\otimes_A M 
}\]
commutes for all $\theta\in Cat([2],[k])$. Composition is pointwise addition. This simplicial category has a duality that sends a pair $(\{f_\theta\},\{g_\theta\})$ to the pair $(\{g_{\overline{\theta}}\},\{f_{\overline{\theta}}\})$.
Theorem \ref{mainKR} is the main result of §\ref{splitKR}, where we define an equivariant version of the equivalence $\Psi$, fitting into a diagram analogous to the above diagram from \cite{stableDM}:
\begin{equation}\label{diagmainKR}
\xymatrix{S^{2,1}_{\sbt}(A;M)\ar[r]^-{\Psi}_-{\simeq}\ar[d]& iS^{2,1}_{\sbt}\mathcal{D}\mathcal{P}_{A\ltimes M}\ar[d]^{\mathcal{D}(-\otimes_{A\ltimes M}A)}\\
ObS^{2,1}_{\sbt}\mathcal{D}\mathcal{P}_{A}\ar[r]_-{\simeq}& iS^{2,1}_{\sbt}\mathcal{D}\mathcal{P}_{A}
}
\end{equation}
that commutes up to equivariant homotopy. The horizontal maps are both $\mathbb{Z}/2$-equivalences. The two vertical maps are induced from the projection $A\ltimes M\longrightarrow A$, and the bottom map is the inclusion of the objects as a discrete category.

By this theorem, the homotopy fiber $\widetilde{\KR}(A\ltimes M)$ of the projection map $\KR(A\ltimes M)\longrightarrow \KR(A)$ is $\mathbb{Z}/2$-equivalent to the equivariant loop space $\Omega^{2,1}=\map_\ast(\mathbb{C}^+,-)$ of the homotopy fiber of the left vertical map of diagram \eqref{diagmainKR}. Similarly to the non-equivariant case, we describe this fiber equivariantly as
\[\widetilde{\KR}(A\ltimes M)\simeq\Omega^{2,1}|\bigvee_{\varphi\in ObS^{2,1}_{\sbt}\mathcal{D}\mathcal{P}_A}\hom_A(\varphi,\varphi\otimes_A M(S_{\sbt}^{1,1}))|\]
Here $M(S_{\sbt}^{1,1})$ is the Bar construction on $M$ with levelwise involution given by sending $(m_1,\dots,m_q)$ to $(m_q,\dots,m_1)$.


\subsection{Split antistructures and $M$-twistings}\label{squarezeroext}

Given an antistructure $(A,L,\alpha)$ and an $A$-bimodule $M$ one may ask for antistructures $(A\ltimes M,L^M,\alpha^M)$ such that the projection $p\colon A\ltimes M\longrightarrow M$ is the underlying ring map of a map of antistructures \[(p,F)\colon (A\ltimes M,L^M,\alpha^M)\longrightarrow(A,L,\alpha)\] For general $M$ there might not be any such antistructure. However, we show in appendix \ref{classanti} that lifts of $(A,L,\alpha)$ to $A\ltimes M$ exist if and only if there is an $M$-twisting of $(A,L,\alpha)$ as defined below, and that the set of $M$-twistings classify the possible antistructures on $A\ltimes M$ that lift the original antistructure on $A$ and for which the zero section $A\longrightarrow A\ltimes M$ defines a map of antistructures.

Recall that $L$ is an $A\otimes A$-module and that $L_t$ and $L_s$ are the $A$-modules $(L,A\otimes 1)$ and $(L,1\otimes A)$. We consider $L_t\otimes_A M$ as an $A\otimes A$-module by
\[(l\otimes m)\cdot (a\otimes a')=(l\cdot 1\otimes a')\otimes m\cdot a\]

\begin{defn}\label{defmaptwist}
An \textbf{$M$-twisting of an antistructure $(A,L,\alpha)$} is an additive involution $J\colon L_t\otimes_A M\longrightarrow L_t\otimes_A M$ such that
\[J((l\otimes m)\cdot (a\otimes a'))=J(l\otimes m)\cdot (a'\otimes a)\]
If $M$ and $N$ are two $A$-bimodules, and $J$ and $J'$ are respectively an $M$-twisting an $N$-twisting of $(A,L,\alpha)$, a \textbf{map of twistings }from $J$ to $J'$ is a map of bimodules $f\colon M\longrightarrow N$ such that
\[J'\circ (L_t\otimes f)=(L_t\otimes f)\circ J\]
\end{defn}

\begin{rem}
One should think of an $M$-twisting as a bimodule over $(A,L,\alpha)$, just as a split square zero extension of rings is a bimodule over the base ring. For a category $C$ with finite limits, Quillen defines in \cite{quillenmod} the category of bimodules over an object $X\in C$ as the category of abelian group objects in the over category $C/X$. We show in \ref{quillenmodstory} that the category of bimodules over a antistructure $(A,L,\alpha)$ is equivalent to the category of twistings defined in \ref{defmaptwist} above.
\end{rem}

Given an $M$-twisting $J$, we construct an antistructure $(A\ltimes M,L^J,\alpha^J)$ lifting $(A,L,\alpha)$. First remember  for every $A$-module $P$ that $F(P)=P\oplus(P\otimes_AM)$ is a $A\ltimes M$-module via
\[(p,p'\otimes m)\cdot (a,n)=(p\cdot a,p'\otimes ma+p\otimes n)\]
For $P=L_t$ this defines an $A$-module $F(L_t)$. We use the $M$-twisting to define to define the second $A\ltimes M$-module $F(L_t)_s$ as follows
\[(l,l'\otimes m)\cdot 1\otimes(a,n)=(l\cdot 1\otimes a,l'\otimes m\cdot 1\otimes a+J(\alpha(l)\otimes n))\]
The two $A\ltimes M$-module structures on $F(L_t)$ commute to define a $(A\ltimes M)\otimes (A\ltimes M)$-module structure on $F(L_t)$. We denote this $(A\ltimes M)\otimes (A\ltimes M)$-module by $L^J$.

There is an involution $\alpha^J$ on $L^J=F(L_t)$ given by
\[\alpha^J(l,l'\otimes m)=(\alpha(l),J(l'\otimes m))\]

We show below in Proposition \ref{mtwistingantistructure} that $(A\ltimes M,L^J,\alpha^J)$ is indeed an antistructure. The obvious maps
\[p\colon(A\ltimes M,L^J,\alpha^J)\longrightarrow (A,L,\alpha) \ , \ \ s\colon (A,L,\alpha)\longrightarrow (A\ltimes M,L^J,\alpha^J)\]
are maps of antistructures ($p$ is projection on the first summand and $s$ the zero section, both on the ring level and on the modules). We have $p\circ s=\id$, and we say that $(A\ltimes M,L^J,\alpha^J)$ is a split antistructure over $(A,L,\alpha)$.

\begin{ex}\label{antiinvolutioncase}
Suppose that $\alpha\colon A^{op}\longrightarrow A$ and $\overline{\alpha}\colon A\ltimes M^{op}\longrightarrow A\ltimes M$ are anti-involutions and the projection $p\colon A\ltimes M\longrightarrow M$ is a map of anti-involutions. It is not hard to see that the map $\overline{\alpha}$ must be of the form
\[\overline{\alpha}=\left(\begin{array}{cc}\alpha&0\\
r&j\end{array}\right)\]
for additive maps $j\colon M\longrightarrow M$ and $r\colon A\longrightarrow M$ satisfying
\begin{enumerate}
\item $j^2=\id$
\item $j(a\cdot m)=j(m)\cdot\alpha(a)$
\item $j(m\cdot a)=\alpha(a)\cdot j(m)$
\item $r(a\cdot b)=\alpha(b)\cdot r(a)+r(b)\cdot\alpha(a)$
\item $r\circ\alpha=-j\circ r$
\end{enumerate}
Moreover the zero section $s\colon A\longrightarrow A\ltimes M$ is equivariant if and only if $r=0$. In this case,
the corresponding antistructure $(A\ltimes M,A\ltimes M,\overline{\alpha})$ is associated to the $M$-twisting $J$ given by $j$ under the canonical isomorphism $L_t\otimes M=A\otimes M\cong M$.

The case where $r\neq 0$ is not covered by this set of examples, but it is discussed in \ref{appendixdualities}.
\end{ex}

The following lemma is a central ingredient not only for proving that $(A\ltimes M,L^{J},\alpha^{J})$ is indeed an antistructure, but also for the next section.

\begin{lemma}\label{lemmaisoequivariant}
Let $J$ be an $M$-twisting of $(A,L,\alpha)$. Then for any $P\in\mathcal{P}_A$ the map \[\xi_P\colon F(\hom_A(P,L_s))\longrightarrow \hom_{A\ltimes M}(F(P),L^{J}_s)\]
sending $(\lambda,\delta\otimes n)$ to the module map
\[\xi(\lambda,\delta\otimes n)(p,p'\otimes m)=(\lambda(p),\delta(p)\otimes n+J(\alpha(\lambda(p'))\otimes m))\]
is an isomorphism of $A\ltimes M$-modules.
\end{lemma}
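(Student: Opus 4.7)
The plan is to verify, in order, that the formula defines (1) a well-defined right $A\ltimes M$-linear map $F(P)\to L^J_s$ for each $(\lambda,\delta\otimes n)$, (2) an $A\ltimes M$-linear assignment $\xi_P$, and (3) a bijection, which will follow from a reduction to the case $P=A$.

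For step (1) I would first check well-definedness on the tensor product $\hom_A(P,L_s)\otimes_A M$, i.e. that $(\delta\cdot a)\otimes n$ and $\delta\otimes(a\cdot n)$ give the same contribution; since the $A$-module structure on $\hom_A(P,L_s)$ comes through the $t$-action on $L$ (Remark \ref{classdualpa}), this reduces to the tensor relation $(\delta(p)\cdot_t a)\otimes n=\delta(p)\otimes(a\cdot n)$ in $L_t\otimes_A M$. Then I would expand both $\xi_P(\lambda,\delta\otimes n)\bigl((p,p'\otimes m)\cdot(a,k)\bigr)$ and $\xi_P(\lambda,\delta\otimes n)(p,p'\otimes m)\cdot_s(a,k)$, where $\cdot_s$ denotes the $s$-action on $L^J$ induced by the twisting. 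The first coordinates match trivially and the three summands in the second coordinate must agree: the $\delta$-summand follows from the tensor relation, matching the $J$-summand requires the twisting identity $J(x\cdot(a\otimes a'))=J(x)\cdot(a'\otimes a)$ combined with the antistructure axiom $\alpha(l\cdot(a\otimes b))=\alpha(l)\cdot(b\otimes a)$, and the cross-term $J(\alpha(\lambda(p))\otimes k)$ produced on the right appears identically on the left.

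For step (2), the target $\hom_{A\ltimes M}(F(P),L^J_s)$ is a right $A\ltimes M$-module via the $t$-action on $L^J$, which commutes with the $s$-action defining $L^J_s$. I would verify the identity $\xi_P\bigl((\lambda,\delta\otimes n)\cdot(a,k)\bigr)=\xi_P(\lambda,\delta\otimes n)\cdot_t(a,k)$ componentwise on a generic input $(p,p'\otimes m)$; the first coordinate is immediate from $(\lambda\cdot a)(p)=\lambda(p)\cdot_t a$, and the second uses the same twisting and antistructure identities as in step (1), together with the fact that $\alpha$ exchanges the $t$- and $s$-actions on $L$.

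For step (3), both functors $P\mapsto F(\hom_A(P,L_s))$ and $P\mapsto \hom_{A\ltimes M}(F(P),L^J_s)$ are additive and $\xi$ is natural in $P$, so $\xi_{P\oplus Q}=\xi_P\oplus\xi_Q$. Since every finitely generated projective $A$-module is a direct summand of some $A^n$, it suffices to establish the claim for $P=A$. There, one has $\hom_A(A,L_s)\cong L_t$ as right $A$-modules, hence $\hom_A(A,L_s)\otimes_A M\cong L_t\otimes_A M$, and $\hom_{A\ltimes M}(A\ltimes M,L^J_s)\cong L^J_s$ via evaluation at $(1,0)$. Under these canonical identifications $\xi_A$ becomes the identity map of the abelian group $F(L_t)=L^J$, which is visibly an isomorphism. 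The main obstacle, and essentially the only substantive step, is the bookkeeping in step (1): at every stage of the linearity computation one must keep track of which of the two commuting $A$-actions on $L$ (and on $L\otimes_A M$) is in play, and apply $J$ and $\alpha$ to swap between them at the right moment.
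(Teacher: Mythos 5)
Your proof is correct, but it takes a genuinely different route from the paper. The paper directly constructs a two-sided inverse: given $\chi\in\hom_{A\ltimes M}(F(P),L^J_s)$, it decomposes $\chi$ into its components $\chi_1,\chi_2$ landing in $L$ and $L_t\otimes_A M$, then uses the $A\ltimes M$-linearity of $\chi$ to show that $\chi_1(0,p\otimes n)=0$, that $\chi_2(0,p\otimes n)=J(\alpha(\chi_1(p,0))\otimes n)$ is determined by $\chi_1$, and that $\chi_1(-,0)\in\hom_A(P,L_s)$ and $\chi_2(-,0)\in\hom_A(P,(L_t\otimes_A M)_s)$. The inverse then sends $\chi$ to $(\chi_1(-,0),\mu^{-1}(\chi_2(-,0)))$, where $\mu\colon\hom_A(P,L_s)\otimes_A M\to\hom_A(P,(L_t\otimes_A M)_s)$ is the canonical map, invertible because $P$ is finitely generated projective. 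You instead reduce to the free case via additivity and naturality of $\xi$, then observe that $\xi_A$ becomes the identity under the canonical identifications $\hom_A(A,L_s)\cong L_t$ and $\hom_{A\ltimes M}(A\ltimes M,L^J_s)\cong(L^J)_t$ via evaluation at $1$. Both arguments use finite generation and projectivity of $P$ in an essential way, but in different places: the paper through the isomorphism $\mu$, you through realizing $P$ as a summand of a free module. Your route has the virtue of front-loading the linearity verifications that the paper leaves implicit and of exposing the free case as the content of the lemma; the paper's direct inverse is shorter once the linearity checks are suppressed, and it makes manifest which pieces of the structure equations in $\hom_{A\ltimes M}(F(P),L^J_s)$ pin the module map down.
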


\begin{proof}
We define an inverse for $\xi_P$. For $\chi\in \hom(F(P),L^{J}_s)$, denote $\chi_1$ and $\chi_2$ the compositions with the projections onto $L$ and $L_t\otimes_A M$, respectively.
Since $\chi$ is a module map
\[\begin{array}{lll}
\chi_1(p\cdot a, p'\!\otimes\!m\cdot a+p\!\otimes\!n)\!\!&=\chi_1(p,p'\!\otimes\!m)\cdot 1\!\otimes\!a\\
\chi_2(p\cdot a, p'\!\otimes\!m\cdot a+p\!\otimes\!n)\!\!&=\chi_2(p,p'\!\otimes\!m)\cdot 1\!\otimes\! a+J(\alpha(\chi_1(p,p'\!\otimes\!m))\!\otimes\!n)
\end{array}\]
Setting $a=0$ we get that $\chi_1(0,p\otimes n)=0$ and that
\[\chi_2(0,p\otimes n)=J(\alpha(\chi_1(p,0))\otimes n)\]
is determined by $\chi_1$. Setting $m,n$ and $p$ to zero, one sees that $\chi_1(-,0)$ and $\chi_2(-,0)$ are elements of $\hom_A(P,L_s)$ and $\hom_A(P,(L_t\otimes_A M)_s)$, respectively. For $P\in \mathcal{P}_A$, let 
\[\mu\colon \hom_A(P,L_s)\otimes_A M\longrightarrow\hom_A(P,(L_t\otimes_A M)_s)\]
be the canonical map which sends $\lambda\otimes m$ to $p\mapsto \lambda(p)\otimes m$. It is an isomorphism since $P$ is finitely generated projective.
The inverse $\xi_{P}^{-1}$ sends $\chi$ to
\[(\chi_1(-,0),\mu^{-1}(\chi_2(-,0)))\]
\end{proof}

\begin{prop}\label{mtwistingantistructure}
The triple $(A\ltimes M,L^{J},\alpha^{J})$ is a split antistructure lifting $(A,L,\alpha)$. A map of twistings $f\colon M\longrightarrow N$ from $J$ to $J'$ induces a map of antistructures
\[(A\ltimes f,L_t\oplus L_t\otimes_A f)\colon(A\ltimes M,L^{J},\alpha^{J})\longrightarrow (A\ltimes N,L^{J'},\alpha^{J'})\]
\end{prop}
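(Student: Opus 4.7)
The plan is to verify in turn the four antistructure axioms for $(A\ltimes M,L^J,\alpha^J)$, then that the projection $p$ and the zero section $s$ give a splitting, and finally naturality in $M$. As a preliminary step one checks that the two $(A\ltimes M)$-module actions that define $L^J=F(L_t)$ really do commute; together with the axioms $\alpha^2=\id$, $J^2=\id$, and the twisting identity $J((l\otimes m)\cdot(a\otimes a'))=J(l\otimes m)\cdot(a'\otimes a)$, this is also what allows one to verify (directly, term by term) the two axioms $(\alpha^J)^2=\id$ and the $(A\ltimes M)\otimes(A\ltimes M)$-equivariance $\alpha^J((l,l'\otimes m)\cdot x\otimes y)=\alpha^J(l,l'\otimes m)\cdot y\otimes x$.

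For the third axiom, the key observation is that $L^J_t=F(L_t)$ is canonically $L_t\otimes_A(A\ltimes M)$ as a right $A\ltimes M$-module under the isomorphism $F(P)\cong P\otimes_A(A\ltimes M)$ used already in \ref{secintrokranti}. Since $L_t$ is finitely generated projective over $A$ by hypothesis, its base change $L^J_t$ is finitely generated projective over $A\ltimes M$.

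The subtle axiom is the fourth, and this is the step where the real work sits. I would apply Lemma \ref{lemmaisoequivariant} with $P=L_t$ to obtain a natural iso $\xi_{L_t}\colon F(\hom_A(L_t,L_s))\xrightarrow{\cong}\hom_{A\ltimes M}(L^J_t,L^J_s)$. Applying $F$ to the original antistructure iso $A\xrightarrow{\cong}\hom_A(L_t,L_s)$, $a\mapsto\alpha(-)\cdot a\otimes 1$, then gives an iso $A\ltimes M\xrightarrow{\cong}\hom_{A\ltimes M}(L^J_t,L^J_s)$. Unwinding $\xi_{L_t}$ on the two summands $(a,0)$ and $(0,n)$ and using the formula for $\alpha^J$ shows that this composite is precisely $(a,n)\mapsto\alpha^J(-)\cdot(a,n)\otimes(1,0)$, as required. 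This proves that $(A\ltimes M,L^J,\alpha^J)$ is an antistructure. Splitness is immediate: on ring and module levels $p$ and $s$ are, respectively, projection to and inclusion of the first summand, so they intertwine $\alpha^J$ with $\alpha$ by definition, and the base-change maps of Definition \ref{mapofantistruct}(ii) are obvious isomorphisms.

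Finally, given a map of twistings $f\colon M\to N$, the pair $(A\ltimes f,\, L_t\oplus(L_t\otimes_A f))$ is manifestly a $B\otimes B$-linear module map over $B=A\ltimes M$. Condition (i) of Definition \ref{mapofantistruct} reduces exactly to the hypothesis $J'\circ(L_t\otimes f)=(L_t\otimes f)\circ J$, and the base-change condition (ii) becomes the canonical isomorphism $(L_t\oplus L_t\otimes_A M)\otimes_{A\ltimes M}(A\ltimes N)\cong L_t\oplus L_t\otimes_A N$ coming from $L^J_t=L_t\otimes_A(A\ltimes M)$. The main obstacle throughout is purely bookkeeping: keeping the $t$- and $s$-structures on $L$ and on $L_t\otimes_A M$ straight when computing with $\alpha^J$; Lemma \ref{lemmaisoequivariant} absorbs the only genuinely non-formal calculation, namely the proof of axiom (4).
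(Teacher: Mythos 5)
Your proposal is correct and follows essentially the same route as the paper: the decisive step for the nondegeneracy axiom is, in both cases, to set $P=L_t$ in Lemma \ref{lemmaisoequivariant} and reduce the map $f_{\alpha^J}\colon A\ltimes M\to\hom_{A\ltimes M}(L^J_t,L^J_s)$ to $F(f_\alpha)=f_\alpha\oplus(f_\alpha\otimes M)$ on $F(A)\cong A\ltimes M$, which is clearly invertible. You are slightly more explicit than the paper in spelling out the checks of $(\alpha^J)^2=\id$, the equivariance of $\alpha^J$, the commuting of the two $A\ltimes M$-actions on $L^J$, and the finite projectivity of $L^J_t$ via the identification $L^J_t\cong L_t\otimes_A(A\ltimes M)$; the paper leaves these as implicit routine verifications and concentrates on the nondegeneracy axiom and the splitting/naturality conditions, which you handle in the same way.
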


\begin{proof}
We must show that the map $f_{\alpha^{J}}\colon A\ltimes M\longrightarrow\hom_{A\ltimes M}(L^{J}_t,L^{J}_s)$ sending $(a,m)$ to $\alpha^J(-)\cdot (a,m)\otimes 1$ is an isomorphism.
Setting $P=L_t$ in the lemma \ref{lemmaisoequivariant} above,  shows that $f_{\alpha^{J}}$ is an isomorphism if and only if the map
\[\kappa\colon A\ltimes M\longrightarrow F(\hom_A(L_t,L_s))\]
that takes $(a,m)$ to $(\alpha\cdot a\otimes 1,\alpha(-)\otimes m)$ is an isomorphism.
Under the isomorphism $A\ltimes M\cong A\oplus (A\otimes_A M)=F(A)$, this is the map
\[f_\alpha\oplus (f_\alpha\otimes M)\colon F(A)\longrightarrow F(\hom(L_t,L_s))\]
which is an isomorphism with inverse $f^{-1}_\alpha\oplus f^{-1}_\alpha\otimes M$.

To see that $(A\ltimes M,L^{J},\alpha^{J})$ lifts $(A,L,\alpha)$, we need to show that the projections $p\colon A\ltimes M\longrightarrow A$ and $p\colon L^J\longrightarrow L$ onto the first summands define a map of antistructures $(p,p)$. The projection clearly commutes with the dualities $\alpha$ and $\alpha^J$, and the map
\[\widetilde{p}\colon L^{J}\otimes_{A\ltimes M}A\longrightarrow L\]
is the canonical isomorphism
\[L^{J}\otimes_{A\ltimes M}A=(L_t\oplus L_t\otimes_A M)\otimes_{A\ltimes M}A\cong (L_t\otimes_A (A\ltimes M))\otimes_{A\ltimes M}A\cong L\]
The zero sections $s\colon A\longrightarrow A\ltimes M$ and $s\colon L\longrightarrow L^J$ also define a map of antistructures, since the map
\[\widetilde{s}\colon L_t\otimes_{A}(A\ltimes M)\longrightarrow L^J=L_t\oplus L_t\otimes_A M\]
is the canonical isomorphism.

For the map of twistings, we must show that the map
\[ L^{J}\otimes_{A\ltimes M}A\ltimes N\longrightarrow  L^{J'}_t\]
that sends $(l,l'\otimes m)\otimes (a,n)$ to
\[(l,l'\otimes f(m))\cdot (a,n)\otimes 1\]
is an isomorphism.
The inverse is 
\[(l,l'\otimes n)\longmapsto (l,0)\otimes (1,0)+(l',0)\otimes(0,n)\]
\end{proof}

Given an $M$-twisting $J$ of $(A,L,\alpha)$ and $P,Q\in\mathcal{P}_A$, define

\[\widehat{J}\colon \hom_A(P,Q\otimes_A M)\longrightarrow \hom_A(D_L(Q),D_L(P)\otimes_A M)\]

\begin{flushleft}as follows. For $f\colon P\longrightarrow Q\otimes_A M$ and $\lambda\colon Q\longrightarrow L_s$ let it be \end{flushleft}
\[\widehat{J}(f)(\lambda)=\mu^{-1}(P\stackrel{f}{\longrightarrow}Q\otimes_A M\stackrel{\lambda\otimes\id}{\longrightarrow}L_s\otimes_A M\stackrel{\alpha\otimes\id}{\longrightarrow}L_t\otimes_A M\stackrel{J}{\longrightarrow}L_t\otimes_A M)\]

\begin{flushleft}where $\mu\colon D_L(P)\otimes_A M\longrightarrow \hom_A(P,(L_t\otimes_A M)_s)$  is the canonical isomorphism.
The following properties of $\widehat{J}$ are easy consequences of the definitions.\end{flushleft}

\begin{prop}\label{naturalityJK}
The map \[\widehat{J}\colon \hom_A(P,Q\otimes_A M)\longrightarrow \hom_A(D_L(Q),D_L(P)\otimes_A M)\] is natural in both variables. Moreover, the diagram
\[\xymatrix{P\ar[d]_f\ar[r]^-{\eta^\alpha}&D^{2}_A(P)\ar[d]^{\widehat{J}^{2}(f)}\\
Q\otimes_A M\ar[r]_-{\eta^{\alpha}\otimes M}&D^{2}_A(Q)\otimes_A M}\]
commutes.
\end{prop}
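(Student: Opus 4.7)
Both claims unwind to formal manipulations with the defining formula
\[\widehat{J}(f)(\lambda) = \mu^{-1}\bigl(J\circ (\alpha\otimes \id_M)\circ (\lambda\otimes \id_M)\circ f\bigr),\]
so the strategy is a direct check rather than any structural argument.

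For naturality, I would verify each square separately. Given $h\colon Q\to Q'$, replacing $f$ by $(h\otimes \id_M)\circ f$ inside the bracket has exactly the same effect as replacing $\lambda$ by $\lambda\circ h = D_L(h)(\lambda)$, since $(\lambda\otimes\id_M)\circ (h\otimes \id_M) = (\lambda\circ h)\otimes \id_M$; this gives covariant naturality in $Q$. For $g\colon P'\to P$, replacing $f$ by $f\circ g$ simply precomposes the whole string with $g$, and the fact that $\mu$ is natural in $P$ (it is induced by the universal property of tensor products) translates this to postcomposition by $g^\ast\otimes \id_M = D_L(g)\otimes \id_M$ on the image side, giving contravariant naturality in $P$.

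For the square with $\eta^\alpha$, I would unfold $\widehat{J}^2(f) = \widehat{J}(\widehat{J}(f))$ by substituting $P\rightsquigarrow D_L Q$, $Q\rightsquigarrow D_L P$, and $f\rightsquigarrow \widehat{J}(f)$ in the defining formula. Evaluating at $\eta^\alpha_P(p)$ and passing through $\mu$, the result is a composite that contains two copies of $J$ and two copies of $\alpha\otimes \id_M$. Since $J$ is an involution of $L_t\otimes_A M$ and $\alpha$ is an involution of $L$, these cancel in pairs. What remains, after using $\eta^\alpha_P(p)(\lambda)=\alpha(\lambda(p))$ and the naturality of $\mu$, is precisely the element represented by $(\eta^\alpha_Q\otimes \id_M)(f(p))$ under the canonical identification $D_L^2(Q)\otimes_A M \cong \hom_A(D_L Q, (L_t\otimes M)_s)$.

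The only real obstacle is bookkeeping: one must keep the $A\otimes A$-module structures on $L_t\otimes_A M$ straight, and invoke $\mu$ and $\mu^{-1}$ with the correct source and target at each stage of the iteration. Once the types are pinned down, both statements collapse to one-line identities using $J^2=\id$ and $\alpha^2=\id$.
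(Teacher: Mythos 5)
Your proposal is correct and takes the only sensible approach: unwind the defining formula for $\widehat{J}$, use the naturality of $\mu$ in the $P$-variable for the naturality statements, and use $J^2=\id$ and $\alpha^2=\id$ plus the identification through $\mu$ for the square with $\eta^\alpha$. The paper gives no proof of this proposition at all---it is introduced with ``the following properties of $\widehat{J}$ are easy consequences of the definitions''---so your sketch supplies exactly the routine computation the authors leave implicit.
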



\subsection{The models for $\KR(A\ltimes M)$ and $\widetilde{\KR}(A\ltimes M)$}\label{secmainthmkr}

We now have the tools to give a precise definition of diagram \eqref{diagmainKR}. Fix an $M$-twisting $J$ of $(A,L,\alpha)$, and recall that the category 
\[S^{2,1}_{\sbt}(A;M)=\coprod_{\varphi\in ObS^{2,1}_{\sbt}\mathcal{D}\mathcal{P}_A}\hom_A(\varphi,\varphi\otimes_A M)\]
has objects $Ob S^{2,1}_{\sbt}\mathcal{D}\mathcal{P}_A$, that is triples $\varphi=(X,Y,\phi)$ with $X$ and $Y$ diagrams in $S^{2,1}_{\sbt}\mathcal{P}_A$, and $\phi$ a natural isomorphism $\{\phi_\theta\colon Y_\theta\longrightarrow D_L(X_\theta)\}$. The endomorphisms of $\varphi$ is the abelian group $\hom_A(\varphi,\varphi\otimes_A M)$, defined as the set of pairs of natural transformations of diagrams $f\colon X\longrightarrow X\otimes_A M$ and $g\colon Y\longrightarrow  Y\otimes_A M$ such that
\[\xymatrix{Y_\theta\ar[d]_{\phi_\theta}\ar[r]^-{g_\theta}&Y_\theta\otimes_AM\ar[d]^{\phi_\theta\otimes M}\\
D_L(X_\theta)\ar[r]_-{\widehat{J}(f_\theta)}&D_L(X_\theta)\otimes_A M 
}\]
commutes for all $\theta\in Cat([2],[k])$. Here $\widehat{J}$ is the natural map defined in \ref{naturalityJK}. Composition in $S^{2,1}_{\sbt}(A;M)$ is pointwise sum, and in particular every morphism is invertible. The duality is, on objects, the same as the duality on $S^{2,1}_k\mathcal{D}\mathcal{P}_A$ induced by $(A,L,\alpha)$. On morphisms it sends a pair $(\{f_\theta\},\{g_\theta\})$ to $(\{g_{\overline{\theta}}\},\{f_{\overline{\theta}}\})$ as an endomorphism
\[\xymatrix{X_{\overline{\theta}}\ar[d]_{\eta}\ar[r]^-{f_{\overline{\theta}}}&X_{\overline{\theta}}\otimes_AM\ar[d]^{\eta\otimes M}\\
D_{L}^{2}(X_{\overline{\theta}})\ar[d]_{D_L(\phi_{\overline{\theta}})}\ar@{-->}[r]^-{J^2(f)}&D_{L}^{2}(X_{\overline{\theta}})\otimes_A M\ar[d]^{D_L(\phi_{\overline{\theta}})\otimes M}\\
D_L(Y_{\overline{\theta}})\ar[r]_-{J(g_{\overline{\theta}})}&D_L(Y_{\overline{\theta}})\otimes_A M 
}\]
of $D_L(\varphi)=(Y,X,D_L(\phi)\circ\eta)$.
Both squares commute by the properties of $\widehat{J}$ showed in \ref{naturalityJK}.

There is a duality preserving functor 
\[\Psi\colon S^{2,1}_{\sbt}(A;M)\longrightarrow iS^{2,1}_{\sbt}\mathcal{D}\mathcal{P}_{A\ltimes M}\]
where the right hand-side is equipped with the duality induced by the antistructure $(A\ltimes M,L^J,\alpha^J)$.
It sends an object $\varphi$ to the diagram defined at $\theta$ by
\[\xymatrix{F(Y_\theta)\ar[rr]^-{\phi_\theta\oplus(\phi_\theta\otimes M)}&&F(D_L(X_\theta))\ar[r]^-{\xi}&D_{L^J}(F(X_\theta))}\]
with $\xi$ defined in \ref{lemmaisoequivariant}. We recall that $F(P)=P\oplus P\otimes_A M\cong P\otimes_AA\ltimes M$ for every $A$-module $P$.
An endomorphism $(f,g)$ of $\varphi$ is sent to the endomorphism of $\Psi(\varphi)$ in $iS^{2,1}_{\sbt}\mathcal{D}\mathcal{P}_{A\ltimes M}$ defined by the pair of natural maps
\[\left(\left(\begin{smallmatrix}
1&0\\f_\theta&1
\end{smallmatrix}\right)\colon F(X_\theta)\longrightarrow F(X_\theta),\left(\begin{smallmatrix}
1&0\\g_\theta&1
\end{smallmatrix}\right)\colon F(Y_\theta)\longrightarrow F(Y_\theta)\right)\]

To finish the definition of diagram \eqref{diagmainKR} we need to define the vertical maps. The left one is the functor $S^{2,1}_{\sbt}(A;M)\longrightarrow Ob S^{2,1}_{\sbt}\mathcal{D}\mathcal{P}_A$ that projects onto the objects. The right one is the functor \[\mathcal{D}(-\otimes_{A\ltimes M} A)\colon iS^{2,1}_{\sbt} \mathcal{D}\mathcal{P}_{A\ltimes M}\longrightarrow iS^{2,1}_{\sbt} \mathcal{D}\mathcal{P}_{A}\] induced by the map of antistructures $(A\ltimes M,L^J,\alpha^J)\longrightarrow (A,L,\alpha)$ from \ref{mtwistingantistructure}. This is the main result of §\ref{splitKR}.

\begin{theorem}\label{mainKR} Let $J$ be an $M$-twisting of an antistructure $(A,L,\alpha)$. Then 
there is a diagram of duality preserving functors
\[\xymatrix{S^{2,1}_{\sbt}(A;M)\ar[r]^-{\Psi}_-{\simeq}\ar[d]& iS^{2,1}_{\sbt}\mathcal{D}\mathcal{P}_{A\ltimes M}\ar[d]^{\mathcal{D}(-\otimes_{A\ltimes M}A)}\\
ObS^{2,1}_{\sbt}\mathcal{D}\mathcal{P}_{A}\ar[r]_-{\simeq}& iS^{2,1}_{\sbt}\mathcal{D}\mathcal{P}_{A}
}\]
that commutes up to $\mathbb{Z}/2$-homotopy, where the two horizontal maps are $\mathbb{Z}/2$-equivalences.
\end{theorem}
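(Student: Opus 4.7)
The strategy is to verify each of the three claims separately, in each case applying Corollary \ref{fixedpointstrick} levelwise in the $S^{2,1}_{\sbt}$-direction to reduce a $\mathbb{Z}/2$-equivalence to the combination of a non-equivariant equivalence of categories and a weak equivalence of the associated $\sym$-categories (via Proposition \ref{symcsymdc}).

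For the bottom horizontal map $ObS^{2,1}_{\sbt}\mathcal{D}\mathcal{P}_A \to iS^{2,1}_{\sbt}\mathcal{D}\mathcal{P}_A$, the non-equivariant weak equivalence of realizations is the standard fact from \cite{stableDM} for split-exact categories such as $\mathcal{P}_A$. For the fixed-point half, I would identify $\sym iS^{2,1}_k\mathcal{D}\mathcal{P}_A$ with the groupoid of non-degenerate $\alpha$-bilinear forms on $S^{2,1}_k$-diagrams in $\mathcal{P}_A$ (parallel to the remark following Proposition \ref{symcsymdc}) and show that the inclusion of the discrete subcategory of such forms is a weak equivalence on realizations. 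Since every symmetric isomorphism of symmetric forms on split-exact diagrams is, up to a contractible choice of splittings, determined by its underlying object data, this reduces to the same split-exact argument used non-equivariantly.

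For the horizontal map $\Psi$, the non-equivariant statement is precisely the content of the corresponding diagram in \cite{stableDM}: in each level, essential surjectivity rests on the fact that every finitely generated projective $A\ltimes M$-module is isomorphic to $F(P) = P\otimes_A(A\ltimes M)$ for some $P\in\mathcal{P}_A$, and full faithfulness on isomorphisms is a direct matrix computation matching module automorphisms of $F(P)$ with pairs of the form $\bigl(\begin{smallmatrix}1&0\\f&1\end{smallmatrix}\bigr)$. For the equivariant upgrade, Lemma \ref{lemmaisoequivariant} is the key tool: its map $\xi_P$ identifies $D_{L^J}(F(X))$ with $F(D_L(X))$ naturally, so that a non-degenerate $\alpha^J$-bilinear form on a diagram over $A\ltimes M$ corresponds to a symmetric form on a diagram over $A$ together with compatible twisted morphism data, which is exactly an object of $\sym S^{2,1}_k(A;M)$. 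Proposition \ref{naturalityJK} then matches the morphism data so that the functor is an equivalence of groupoids in each simplicial level.

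The square commutes up to $\mathbb{Z}/2$-homotopy via the canonical natural isomorphism $F(X)\otimes_{A\ltimes M}A \cong X$, which assembles into a natural transformation between the two composites. Checking that it induces a $\mathbb{Z}/2$-homotopy after realization reduces, via the compatibility criterion in the final remark of \S\ref{secdefscriptd}, to verifying that this natural transformation interacts correctly with the duality isomorphism $\xi$ of Lemma \ref{lemmaisoequivariant}; this is immediate from how $\xi$ is built. I expect the main obstacle to be the fixed-point step in the analysis of $\Psi$: while Lemma \ref{lemmaisoequivariant} gives a clean module-level identification, propagating it across an entire $S^{2,1}_k$-diagram and verifying that all the naturality squares defining a symmetric form (and all symmetric isomorphisms between such forms) correspond under $\Psi$ to the twisted data of $S^{2,1}_{\sbt}(A;M)$ is where the genuine bookkeeping lives.
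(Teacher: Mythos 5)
Your opening strategy does not work: you propose to apply Corollary~\ref{fixedpointstrick} ``levelwise in the $S^{2,1}_{\sbt}$-direction,'' but neither horizontal map is a levelwise equivalence of categories in that direction. For fixed $n$, the inclusion $ObS^{2,1}_n\mathcal{D}\mathcal{P}_A \hookrightarrow iS^{2,1}_n\mathcal{D}\mathcal{P}_A$ of the discrete category of objects into the groupoid of isomorphisms has $\pi_1$ on the target given by the automorphism groups of diagrams, which are far from trivial; so $|F|$ (and likewise $|\sym F|$) is not a weak equivalence at any fixed $S^{2,1}$-level, and the hypotheses of Corollary~\ref{fixedpointstrick} are violated. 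The same applies to $\Psi$: for fixed $n$, $S^{2,1}_n(A;M)$ has only endomorphisms, whereas $iS^{2,1}_n\mathcal{D}\mathcal{P}_{A\ltimes M}$ contains isomorphisms between distinct objects, so $\Psi$ is not full at any level. In both cases the weak equivalence only holds \emph{after} realizing over the $S^{2,1}$-direction, and the proof of that cannot be a levelwise category-theoretic argument.

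The missing ingredient is the construction of explicit \emph{equivariant simplicial homotopies}. The paper slices the other way: it fixes the subdivided nerve degree $2k+1$ (where the subdivision is essential for equivariance), views $\mathcal{N}_{2k+1}iS^{2,1}_{\sbt}\mathcal{D}\mathcal{P}_A$ as a simplicial set in the $S^{2,1}$-direction with involution, and builds a retraction to the middle object $X_{k+1}$ of any string of isomorphisms, together with a simplicial homotopy $H$ implemented via the evaluation functor $ev_1\colon Cat([2],[n])\to[n]$. The choice of the middle object and the definition of $H$ are carefully arranged so that $H(D\underline\phi,\sigma)=DH(\underline\phi,\sigma)$, which is what delivers the fixed-point equivalence. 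This homotopy is genuinely non-levelwise in the $S^{2,1}$-direction, which is why no $\sym$-based reduction to individual levels can substitute for it. For $\Psi$, the paper further factors through an intermediate real category $i\mathcal{D}T^{2,1}_{\sbt}(A;M)$: the leg $T^{2,1}_{\sbt}(A;M)\to S^{2,1}_{\sbt}\mathcal{P}_{A\ltimes M}$ \emph{is} a levelwise equivalence of categories with duality (here Proposition~\ref{equivalenceequivariant} together with Lemma~\ref{lemmaisoequivariant} does the work, and your use of these lemmas is on target), but the other leg $\widehat{S}^{2,1}_{\sbt}(A;M)\to i\mathcal{D}T^{2,1}_{\sbt}(A;M)$ again requires an explicit equivariant simplicial homotopy of exactly the same nature as for the bottom map. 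Without supplying these homotopies --- or an argument of comparable strength --- the proof is incomplete.
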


The proof of this theorem is presented in the next four sections. We finish this section by proving a crucial corollary. Recall that we denote $\widetilde{\KR}(A\ltimes M)$ the homotopy fiber of the projection $\KR(A\ltimes M)\longrightarrow \KR(A)$ over the basepoint. Since the basepoint is a fixed point, $\widetilde{\KR}(A\ltimes M)$ is a $\mathbb{Z}/2$-space, equivariantly homeomorphic to 
\[\widetilde{\KR}(A\ltimes M)\cong \Omega^{2,1}\hofib(|iS^{2,1}_{\sbt}\mathcal{D}\mathcal{P}_{A\ltimes M}|\longrightarrow |iS^{2,1}_{\sbt}\mathcal{D}\mathcal{P}_A|)\]
A model for this homotopy fiber is $\Omega^{2,1}$ of the realization of the bisimplicial set
\[\widetilde{S}^{2,1}_{\sbt}(A;M(S_{\sbt}^{1,1}))=\bigvee_{\varphi\in ObS^{2,1}_{\sbt}\mathcal{D}\mathcal{P}_A}\hom_A(\varphi,\varphi\otimes_A M(S_{\sbt}^{1,1}))\]
where we recall that $M(S_{\sbt}^{1,1})$ is the Bar construction on $M$ with the involution that reverses the order of the components. The bisimplicial set $\widetilde{S}^{2,1}_{\sbt}(A;M(S_{\sbt}^{1,1}))$ has a degreewise involution defined as follows. An element of $\widetilde{S}^{2,1}_k(A;M(S_{q}^{1,1}))$ consists of a $\varphi\in Ob S^{2,1}_{\sbt}\mathcal{D}\mathcal{P}_{A}$ labeling the wedge component, and of a pair of $q$-tuples of natural transformations
\[(\{f_{\theta}^1\},\dots, \{f_{\theta}^q\}\colon X\longrightarrow X\otimes_A M, \{g_{\theta}^1\},\dots, \{g_{\theta}^q\}\colon Y\longrightarrow Y\otimes_A M)\]
such that each pair $(f^i,g^i)$ belongs to $\hom_A(\varphi,\varphi\otimes_AM)$.
The involution sends this pair to the pair
\[( \{g_{\overline{\theta}}^q\},\dots, \{g_{\overline{\theta}}^1\}\colon Y\longrightarrow Y\otimes_A M,\{f_{\overline{\theta}}^q\},\dots, \{f_{\overline{\theta}}^1\}\colon X\longrightarrow X\otimes_A M)\]
in the $D_L(\varphi)$ wedge component.

\begin{cor}\label{relKR}
Let $J$ be an $M$-twisting of an antistructure $(A,L,\alpha)$. Then there is a weak $\mathbb{Z}/2$-equivalence
\[\widetilde{\KR}(A\ltimes M)\simeq \Omega^{2,1}|\bigvee_{\varphi\in ObS^{2,1}_{\sbt}\mathcal{D}\mathcal{P}_A}\hom_A(\varphi,\varphi\otimes_A M(S_{\sbt}^{1,1}))|\]
\end{cor}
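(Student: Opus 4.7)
The plan is to deduce the corollary from Theorem \ref{mainKR} by computing the homotopy fiber of the left vertical map in its diagram. Since both horizontal maps are $\mathbb{Z}/2$-equivalences and the square commutes up to $\mathbb{Z}/2$-homotopy, the induced homotopy fibers of the two vertical maps over the basepoint are equivariantly weakly equivalent. Combined with the standard identification $\widetilde{\KR}(A\ltimes M)\cong\Omega^{2,1}\hofib(|iS^{2,1}_{\sbt}\mathcal{D}\mathcal{P}_{A\ltimes M}|\to|iS^{2,1}_{\sbt}\mathcal{D}\mathcal{P}_A|)$ recalled right before the statement, this reduces the problem to establishing an equivariant equivalence
\[\hofib\bigl(|S^{2,1}_{\sbt}(A;M)|\longrightarrow|ObS^{2,1}_{\sbt}\mathcal{D}\mathcal{P}_A|\bigr)\simeq\Bigl|\bigvee_{\varphi\in ObS^{2,1}_{\sbt}\mathcal{D}\mathcal{P}_A}\hom_A(\varphi,\varphi\otimes_A M(S^{1,1}_{\sbt}))\Bigr|.\]

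The second step unfolds the source. As a category, $S^{2,1}_p(A;M)$ is a disjoint union of one-object groupoids indexed by $\varphi\in ObS^{2,1}_p\mathcal{D}\mathcal{P}_A$ with endomorphism abelian group $\hom_A(\varphi,\varphi\otimes_A M)$, so in each categorical nerve degree $q$ one has $N_qS^{2,1}_p(A;M)=\bigsqcup_\varphi\hom_A(\varphi,\varphi\otimes_A M)^q$. The projection to the objects is split by the zero section sending $\varphi$ to the trivial endomorphism; this section is equivariant because the duality fixes the zero morphism. Applying the non-equivariant fiber-cofiber comparison of \cite[\S 4]{stableDM} levelwise in $p$ identifies the homotopy fiber of $|S^{2,1}_{\sbt}(A;M)|\to|ObS^{2,1}_{\sbt}\mathcal{D}\mathcal{P}_A|$ with the realization of the simplicial pointed space $[p]\mapsto\bigvee_\varphi B\hom_A(\varphi,\varphi\otimes_A M)$. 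Using that $\hom_A(\varphi,-)$ commutes with finite direct sums together with the definition $M(S^{1,1}_q)=M^{\oplus q}$ of the bar construction, the bar construction on each endomorphism group rewrites in degree $q$ as $\hom_A(\varphi,\varphi\otimes_A M)^q=\hom_A(\varphi,\varphi\otimes_A M(S^{1,1}_q))$, giving exactly the right-hand side of the stated equivalence.

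The remaining task is to match the $\mathbb{Z}/2$-actions. The duality on $S^{2,1}_{\sbt}(A;M)$ sends a pair $(f,g)$ of natural transformations at $\varphi$ to $(g_{\overline\theta},f_{\overline\theta})$ at $D_L(\varphi)$, so on the $q$-th nerve a composable tuple $((f^1,g^1),\dots,(f^q,g^q))$ is sent to the reverse-ordered tuple at $D_L(\varphi)$ with the duality applied componentwise. Under the identification above, reversing the order of factors in $\hom_A(\varphi,\varphi\otimes_A M)^q$ corresponds exactly to the order-reversing involution defining $M(S^{1,1}_{\sbt})$ recalled in \S\ref{secintrokranti}, while the duality acting on individual factors accounts for the duality $D_L$ acting on the base object $\varphi$. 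The expected main obstacle is precisely the last identification: one must check that the non-equivariant fiber-cofiber comparison lifts to a $\mathbb{Z}/2$-equivariant equivalence, for which it suffices, by applying the non-equivariant statement both to the whole spaces and to their fixed points via the subdivision of Proposition \ref{subdivision}, to verify that the section is a levelwise equivariant cofibration and that the simplicial bar argument commutes with taking fixed points. Applying $\Omega^{2,1}$ to both sides of the resulting equivariant equivalence concludes the proof.
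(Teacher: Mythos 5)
Your setup is correct: you rightly reduce via Theorem~\ref{mainKR} to the identification
\[\hofib\bigl(|S^{2,1}_{\sbt}(A;M)|\longrightarrow|ObS^{2,1}_{\sbt}\mathcal{D}\mathcal{P}_A|\bigr)\simeq\Bigl|\bigvee_{\varphi}\hom_A(\varphi,\varphi\otimes_A M(S^{1,1}_{\sbt}))\Bigr|,\]
you correctly compute that the nerve of $S^{2,1}_{\sbt}(A;M)$ is a disjoint union of bar constructions, and your matching of the $\mathbb{Z}/2$-actions with the order-reversing involution on $M(S^{1,1}_{\sbt})$ is accurate. But the central step has a genuine gap.

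The problem is that the homotopy fiber of a split map is \emph{not} the cofiber, and the comparison is not something that works ``levelwise in $p$'': the homotopy fiber of a realization is not the realization of the levelwise homotopy fibers without a Bousfield--Friedlander-type input, and more fundamentally the natural map $\alpha$ from the homotopy fiber $F$ to the realization of the wedge (obtained by restricting the projection ``disjoint union $\to$ wedge'') is only \emph{finitely} connected for any fixed model. The non-equivariant argument in \cite[\S 4]{stableDM} that you invoke as a ``fiber-cofiber comparison'' is not a formal statement that can be transported by checking a cofibration condition and commutation with fixed points; it is a connectivity argument. Concretely, the paper proves $\alpha$ is a $\mathbb{Z}/2$-equivalence by constructing, for each $p$, the iterated model $S^{2p,p}_{\sbt}(A;M)=\coprod_{\vartheta}\hom_A(\vartheta,\vartheta\otimes_A M)$ over $(S^{2,1}_{\sbt})^{(p)}$, showing these deloop $F$ and the wedge equivariantly (using that the maps involved split), and then applying Blakers--Massey to the homotopy cocartesian square whose top arrow is the projection from disjoint union to wedge. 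Since the $S^{2,1}$-direction is $1$-reduced, after $p$ iterations the spaces are $2(p-1)$-connected (resp.\ $(p-1)$-connected on fixed points), so $\alpha^{(p)}$ is $(4p-3)$-connected (resp.\ $(2p-3)$-connected on fixed points); looping down by $(\Omega^{2,1})^{(p-1)}$ and using the behaviour of connectivity under equivariant loops (Corollary~\ref{eqmappingspace}) then yields that $\alpha$ is $(2p-1,p-2)$-connected for all $p$, hence a $\mathbb{Z}/2$-equivalence. None of this is captured by ``the section is a levelwise equivariant cofibration and the bar argument commutes with fixed points'': those conditions hold but do not by themselves produce the equivalence. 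You need the iterated-$S^{2,1}_{\sbt}$ delooping and the two separate Blakers--Massey estimates (one for the underlying spaces, one for the fixed-point square, whose input connectivity is roughly halved because of subdivision).
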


\begin{proof} We use an equivariant version of the argument for \cite[3.4]{stableDM}.
We define $\KR(A;M(S^{1,1}_{\sbt}))$ as the realization of the real category
\[\KR(A;M(S^{1,1}_{\sbt}))=\Omega^{2,1}|S^{2,1}_{\sbt}(A;M)|\]
The homotopy fiber of the map $\KR(A;M(S^{1,1}_{\sbt}))\longrightarrow \KR(A)$ is denoted $\widetilde{\KR}(A;M(S^{1,1}_{\sbt}))$ and it is equivariantly homeomorphic to
\[\widetilde{\KR}(A;M(S^{1,1}_{\sbt}))\cong \Omega^{2,1}\hofib(| S^{2,1}_{\sbt}(A;M)|\longrightarrow |ObS^{2,1}_{\sbt}\mathcal{D}\mathcal{P}_A|)\]
With this notation, taking $\Omega^{2,1}$ of the realization of the diagram of \ref{mainKR} gives a $\mathbb{Z}/2$-homotopy commutative diagram
\[\xymatrix{\widetilde{\KR}(A; M(S^{1,1}_{\sbt}))\ar[d]\ar[r]^-{\widetilde{\Psi}}_-{\simeq} & \widetilde{\KR}(A\ltimes M)\ar[d]\\
\KR(A;M(S^{1,1}_{\sbt}))\ar[r]^-{\Psi}_-{\simeq}\ar[d]& \KR(A\ltimes M)\ar[d]\\
\Omega^{2,1}|ObS^{2,1}_{\sbt}\mathcal{D}\mathcal{P}_A|\ar[r]_-{\simeq}& \KR(A)
}\]
Therefore it is enough to show that there is an equivalence
\[\hofib(|S^{2,1}_{\sbt}(A;M)|\longrightarrow|ObS^{2,1}_{\sbt}\mathcal{D}\mathcal{P}_A|)\simeq|\!\!\!\!\!\bigvee\limits_{\varphi\in ObS^{2,1}_{\sbt}\mathcal{D}\mathcal{P}_A}\!\!\!\!\!\hom_A(\varphi,\varphi\otimes_A M(S^{1,1}_{\sbt}))|\]
We denote this homotopy fiber by
\[F=\hofib(|S^{2,1}_{\sbt}(A;M)|\longrightarrow |ObS^{2,1}_{\sbt}\mathcal{D}\mathcal{P}_A|)\]
Since composition in $S^{2,1}_{\sbt}(A;M)$ is addition, the nerve $\mathcal{N}_{\sbt} S^{2,1}_{\sbt}(A;M)$ is given by the disjoint union
\[\mathcal{N}_{\sbt} S^{2,1}_{\sbt}(A;M)=\coprod_{\varphi\in ObS^{2,1}_{\sbt}\mathcal{D}\mathcal{P}_A}\hom_A(\varphi,\varphi\otimes_A M(S^{1,1}_{\sbt}))\]
with involution defined in exactly the same way as for the wedge.
Restricting on the homotopy fiber the projection map from the disjoint union to the wedge we obtain an equivariant map
\[\alpha\colon F\longrightarrow |\bigvee_{\varphi\in ObS^{2,1}_{\sbt}\mathcal{D}\mathcal{P}_A}\hom_A(\varphi,\varphi\otimes_A M(S^{1,1}_{\sbt}))|\]

We are going to build, for any given positive integer $p$, a model for $\alpha$ which is roughly $2p$-connected as a map of spaces, and roughly $p$-connected on the fixed points. This shows that it $\alpha$ is arbitrarily highly connected both non-equivariantly and on the fixed points, and therefore it is a $\mathbb{Z}/2$-equivalence. This will finish the proof. We construct this models for $\alpha$ by iterating the $S^{2,1}_{\sbt}$-construction. Define for all natural number $p$ a multisimplicial category with duality 
\[S^{2p,p}_{\sbt}(A;M)=\coprod_{\vartheta\in Ob(S^{2,1}_{\sbt})^{(p)}\mathcal{D}\mathcal{P}_A}\hom_A(\vartheta,\vartheta\otimes_A M)\]
in a completely similar way as we did in the previous sections. Since all the maps we defined were pointwise, theorem \ref{mainKR} extends word by word to define a diagram of $\mathbb{Z}/2$-spaces
\[\xymatrix{|S^{2p,p}_{\sbt}(A;M)|\ar[r]^-{\Psi^{(p)}}_-{\simeq}\ar[d]& |i(S^{2,1}_{\sbt})^{(p)}\mathcal{D}\mathcal{P}_{A\ltimes M}|\ar[d]\\
|Ob(S^{2,1}_{\sbt})^{(p)}\mathcal{D}\mathcal{P}_{A}|\ar[r]^-{\simeq}&|i(S^{2,1}_{\sbt})^{(p)}\mathcal{D}\mathcal{P}_{A}|
}\]

Moreover there is a commutative square
\[\xymatrix{|S^{2p,p}_{\sbt}(A;M)|\ar[d]_{\Psi^{(p)}}\ar[r]&\Omega^{2,1}|S^{2(p+1),(p+1)}_{\sbt}(A;M)|\ar[d]_{\Psi^{(p+1)}}\\
|i(S^{2,1}_{\sbt})^{(p)}\mathcal{D}\mathcal{P}_{A\ltimes M}|\ar[r]^-{\simeq}&\Omega^{2,1}|i(S^{2,1}_{\sbt})^{(p+1)}\mathcal{D}\mathcal{P}_{A\ltimes M})|
}\]
where the two vertical maps are equivariant equivalences, and the lower horizontal is one as well by \ref{sumdiags}. Therefore the realization of the projection map $S^{2,1}_{\sbt}(A;M)\longrightarrow ObS^{2,1}_{\sbt}\mathcal{D}\mathcal{P}_{A}$ deloops equivariantly as
\[\xymatrix{|S^{2,1}_{\sbt}(A;M)|\ar[r]^-{\simeq}\ar[d]& (\Omega^{2,1})^{(p-1)}|S^{2p,p}_{\sbt}(A;M)|\ar[d]\\
|ObS^{2,1}_{\sbt}\mathcal{D}\mathcal{P}_{A}|\ar@/^1pc/[u]^{s}\ar[r]^-{\simeq} &(\Omega^{2,1})^{(p-1)}|Ob(S^{2,1}_{\sbt})^{(p)}\mathcal{D}\mathcal{P}_{A}|\ar@/_1pc/[u]_{s^{(p)}}
}\]
and both vertical maps have equivariant splittings, that also commute with the horizontal maps.
This shows that the homotopy fiber
\[F^{(p)}=(\hofib|S^{2p,p}_{\sbt}(A;M)|\longrightarrow |Ob(S^{2,1}_{\sbt})^{(p)}\mathcal{D}\mathcal{P}_A|)\]
deloops $F$ equivariantly by
\[F\stackrel{\simeq}{\longrightarrow}(\Omega^{2,1})^{(p-1)}F^{(p)}\]
Also, by homotopy invariance of the homotopy cofiber applied to the sections, there is an equivalence
\[\xymatrix{|\bigvee\limits_{\varphi\in ObS^{2,1}_{\sbt}\mathcal{D}\mathcal{P}_A}\hom_A(\varphi,\varphi\otimes_A M(S^{1,1}_{\sbt}))| \ar[d]^{\simeq}\\
(\Omega^{2,1})^{(p-1)}|\bigvee\limits_{\vartheta\in Ob(S^{2,1}_{\sbt})^{(p)}\mathcal{D}\mathcal{P}_A}\hom_A(\vartheta,\vartheta\otimes_A M((S^{1,1}_{\sbt})^{\wedge p}))|}\]
where $M((S^{1,1}_{\sbt})^{\wedge p})$ is the Bar construction $M(S^{1,1}_{\sbt})$ iterated degreewise $p$ times.
Similarly as before, the projection from disjoint union to wedge induces a map $\alpha^{(p)}$ fitting in a commutative diagram
\[\xymatrix{F\ar[d]^{\simeq}\ar[rr]^-{\alpha}&&|\bigvee\limits_{ ObS^{2,1}_{\sbt}\mathcal{D}\mathcal{P}_A}\hom_A(\varphi,\varphi\otimes_A M)|\ar[d]^{\simeq}\\
(\Omega^{2,1})^{(p-1)}F^{(p)}\ar[rr]_-{(\Omega^{2,1})^{(p-1)}\alpha^{(p)}}&&(\Omega^{2,1})^{(p-1)}|\!\!\!\!\!\!\!\!\!\!\bigvee\limits_{ Ob(S^{2,1}_{\sbt})^{(p)}\mathcal{D}\mathcal{P}_A}\!\!\!\!\!\!\!\!\!\!\hom_A(\vartheta,\vartheta\otimes_A\!\!M((S^{1,1}_{\sbt})^{\wedge p}))|
}\]
Therefore, it is enough to show that the connectivity of $(\Omega^{2,1})^{(p-1)}\alpha^{(p)}$ tends to infinity with $p$.

In order to prove this, consider the homotopy cocartesian square of $\mathbb{Z}/2$-spaces
\[\xymatrix{|S^{2p,p}_{\sbt}(A;M)|\ar[r]\ar[d]&|\bigvee\limits_{\vartheta\in Ob(S^{2,1}_{\sbt})^{(p)}\mathcal{D}\mathcal{P}_A}\hom_A(\vartheta,\vartheta\otimes_A M((S^{1,1}_{\sbt})^{\wedge p}))|\ar[d]\\
|(ObS^{2,1}_{\sbt})^{(p)}\mathcal{D}\mathcal{P}_A|\ar[r]&\ast
}\]
where the top horizontal map is the projection from the disjoint union to the wedge.
By this we mean that both the square and its restriction to the fixed points are homotopy cocartesian squares of spaces. For the statement on fixed points, simply notice that the fixed points of a disjoint union is a disjoint union of fixed points, and similarly for the wedge. Therefore the restriction of the square to the fixed points is of the same form, with a projection from disjoint union to wedge as top row, and therefore homotopy cocartesian.

Notice that, for each space of the last diagram, every simplicial direction is $1$-reduced since $S^{2,1}_0C=S^{2,1}_1C=\ast$. Therefore each of the spaces is at least $2(p-1)$-connected non-equivariantly. By the Blakers-Massey theorem (see e.g.  \cite[2.3]{calculusII}) the diagram above is $(2\cdot 2(p-1)-1)$-homotopy cartesian, that is the map $\alpha^{(p)}$ is non equivariantly $(4p-3)$-connected.

We want to know the connectivity of the fixed points of each of the spaces in the diagram above to do another Blakers-Massey argument. The fixed points are the realization of the multisimplicial set obtained by subdividing every simplicial direction. Again since every simplicial direction is $1$-reduced, every subdivided simplicial direction is $0$-reduced. Therefore the fixed points of the realizations above are each at least $(p-1)$-connected. Again by Blakers-Massey, the map $\alpha^{(p)}$ is $(2p-3)$-connected on the fixed points.

Looping $\alpha^{(p)}$ down by $(\Omega^{2,1})^{(p-1)}$ we get that $(\Omega^{2,1})^{(p-1)}\alpha^{(p)}$, and $\alpha$, is non-equivariantly $(2p-1)$-connected, and $(p-2)$-connected on the fixed points. This behavior of connectivity with respect to equivariant loop spaces is a consequence of the equivariant Whitehead theorem. The connectivity of $(\Omega^{2,1})^{(p-1)}\alpha^{(p)}$ on the fixed points is the connectivity of $\alpha^{(p)}$ minus the dimension of the fixed points of $p-1$ smash copies of $S^{2,1}$, which is $p-1$. This general result of equivariant homotopy theory is proved later in \ref{eqmappingspace}. This tends to infinity with $p$.
\end{proof}


\subsection{The equivalence $ObS^{2,1}_{\cdot} C\longrightarrow iS^{2,1}_{\cdot} C$}\label{obintoiso}

We show that the realization of the real set defined by the objects of $S^{2,1}_{\sbt} C$ is $\mathbb{Z}/2$-equivalent to the simplicial category $iS^{2,1}_{\sbt} C$. The proof is a straightforward generalization of \cite[I-2.3.2]{Dundasbook} (see also \cite[1.4.1]{waldhausen}). In particular this shows that the bottom horizontal map of the diagram of \ref{mainKR} is an equivalence.
 
Suppose that $C$ is an exact category with strict duality, and consider the simplicial set of objects $ObS^{2,1}_{\sbt} C$ as a simplicial category with only identities.

\begin{prop}\label{inclobinisos21}
The inclusion $ObS^{2,1}_{\sbt} C\longrightarrow iS^{2,1}_{\sbt} C$ induces a $\mathbb{Z}/2$-equivalence on the realization.
\end{prop}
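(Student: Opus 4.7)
The plan is to adapt the classical argument for $|ObS_\bullet C|\to |iS_\bullet C|$ from \cite[1.4.1]{waldhausen} / \cite[I-2.3.2]{Dundasbook} to $S^{2,1}_{\sbt}$, and then run the same argument on fixed points to upgrade the conclusion to a $\mathbb{Z}/2$-equivalence. Non-equivariantly, view $iS^{2,1}_{\sbt} C$ as the bisimplicial set $[n],[k]\mapsto \mathcal{N}_kiS^{2,1}_nC$ and $ObS^{2,1}_{\sbt} C$ as the constant-in-$k$ bisimplicial set $[n],[k]\mapsto ObS^{2,1}_nC$. For each $k$, the classical argument produces, for every chain of isomorphisms $X_0\stackrel{\cong}{\to}\dots\stackrel{\cong}{\to}X_k$ in $S^{2,1}_nC$, an explicit simplicial homotopy in $[n]\mapsto\mathcal{N}_kiS^{2,1}_nC$ back to the degenerate chain at $X_k$. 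This only uses that we are dealing with a simplicial category whose objects are diagrams indexed by $Cat([2],[n])$, together with the face/degeneracy combinatorics coming from the $\delta^l,\sigma^l$; the proof in \cite{Dundasbook} goes through verbatim once one replaces $Cat([1],[n])$ by $Cat([2],[n])$.

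To promote this to a $\mathbb{Z}/2$-equivalence, by Proposition \ref{subdivision} it suffices to check that the induced map on fixed points is a weak equivalence. After edgewise subdivision the map on fixed points becomes a bisimplicial map
\[
[n],[k]\longmapsto\bigl(ObS^{2,1}_{2n+1}C\bigr)^{\mathbb{Z}/2}\longrightarrow [n],[k]\longmapsto\bigl(\mathcal{N}_kiS^{2,1}_{2n+1}C\bigr)^{\mathbb{Z}/2}.
\]
Using Proposition \ref{symcsymdc} degreewise (and \ref{strictifytwice} since $C$ already has strict duality), the right-hand side is identified with the nerve of $\sym iS^{2,1}_{2n+1}C$, while the left-hand side is precisely the objects of $\sym S^{2,1}_{2n+1}C$. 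The resulting map is again of the form "objects $\hookrightarrow$ nerve of the isomorphism category", so the same classical bisimplicial argument applies, provided the simplicial homotopies we cooked up above can be chosen equivariantly whenever one feeds in a fixed chain of isomorphisms.

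The main obstacle is exactly this equivariance of the classical homotopies. Concretely, given a self-dual chain of isomorphisms in $S^{2,1}_nC$, one must verify that the filler built in the classical argument (essentially a canonical degeneracy/face combination realizing the isomorphism) is itself stable under the conjugation duality on $S^{2,1}_{\sbt} C$ induced by $\omega_n\colon [n]^{op}\to [n]$. This reduces to the observation that the duality on $S^{2,1}_nC$ is defined pointwise by precomposition with $\omega_n$, and hence commutes with every operation (degeneracies $s_l$ versus $s_{n-l}$, face maps $d_l$ versus $d_{n-l}$, and the construction of the canonical filler) used in the classical homotopy. Once this compatibility is recorded, the non-equivariant homotopies restrict to equivariant ones on fixed simplices, and the same argument yields the weak equivalence on fixed points, completing the proof by Corollary \ref{fixedpointstrick} (or directly from \ref{subdivision}).
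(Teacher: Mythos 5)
Your overall strategy (adapt the Dundas/Waldhausen argument, then treat fixed points separately via subdivision) is the right idea, but the final paragraph contains a genuine gap. You claim that the classical filler --- the one contracting a chain of isomorphisms $X_0\to\cdots\to X_k$ to the degenerate chain at $X_k$ --- is automatically equivariant because the duality on $S^{2,1}_nC$ ``commutes with every operation'' used to build it. This is false. The duality conjugates $d_l$ to $d_{n-l}$ and $s_l$ to $s_{n-l}$, so any homotopy constructed from specific faces and degeneracies is carried by the duality to a genuinely different homotopy unless it is built symmetrically under $l\mapsto n-l$. More concretely, the $\mathbb{Z}/2$-action on the nerve direction reverses the order of a chain of isomorphisms (and applies $D$), so a retraction that always picks the last object $X_k$ is carried by the action to the retraction that always picks the first object $X_0$; these two retractions are not equal, and no abstract commuting argument can make the classical asymmetric filler equivariant as it stands.

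What makes the equivariance work in the paper is a different choice of retraction. One subdivides in the \emph{nerve} direction, so the simplices are odd-length chains $X_0\to\cdots\to X_{2k+1}$, and one retracts to the \emph{middle} object $X_{k+1}$. The resulting natural isomorphism $U$ (collapsing the chain onto $X_{k+1}$ by composing the $\phi_i$ toward the middle) satisfies the required compatibility $U_{D\underline{\phi}}\circ DU_{\underline{\phi}}=\id$, and the simplicial homotopy $H$ built from $U$ via evaluation at $1\in[2]$ on $Cat([2],[n])$ is then verified directly to commute with the duality, $H(D\underline{\phi},\sigma)=DH(\underline{\phi},\sigma)$, the key computation being the combinatorial identity $n-ev_1(\overline{\theta})=\theta(1)$. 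Your proposal is also off about where to subdivide: the paper subdivides the nerve direction $k$ (keeping the $S^{2,1}_{\sbt}$-direction $n$ intact, and explicitly noting that the resulting retraction is simplicial in $n$ but \emph{not} in $k$), whereas your displayed bisimplicial map subdivides $n$ while leaving the nerve at $\mathcal{N}_k$. Both the symmetric retraction to the middle and the correct placement of the subdivision are needed, and neither follows from the observation that the duality is pointwise conjugation by $\omega_n$.
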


\begin{proof}
For a fixed $k$ we build a retraction 
\[r\colon \mathcal{N}_{2k+1}iS^{2,1}_{\sbt} C\longrightarrow \mathcal{N}_{2k+1}ObS^{2,1}_{\sbt} C\cong ObS^{2,1}_{\sbt} C\]
This is not going to be simplicial if we move $k$, but it is simplicial in the $S^{2,1}_{\sbt}$-direction.
An element of $\mathcal{N}_{2k+1}iS^{2,1}_{\sbt} C$ is a sequence of isomorphisms
\[\underline{\phi}=(X_0\stackrel{\phi_1}{\longrightarrow}X_1\stackrel{\phi_2}{\longrightarrow}\dots \stackrel{\phi_{2k+1}}{\longrightarrow}X_{2k+1})\]
We define the retraction $r$ by mapping this element to
\[X_{k+1}\stackrel{=}{\longrightarrow}X_{k+1}\stackrel{=}{\longrightarrow}\dots \stackrel{=}{\longrightarrow}X_{k+1}\]
Since we chose the middle object $X_{k+1}$, the map $r$ defines a map of real sets
\[r\colon \mathcal{N}_{2k+1}iS^{2,1}_{\sbt} C\longrightarrow \mathcal{N}_{2k+1}ObS^{2,1}_{\sbt} C\]
which is a retraction for the inclusion $\iota\colon \mathcal{N}_{2k+1}ObS^{2,1}_{\sbt} C\longrightarrow\mathcal{N}_{2k+1}iS^{2,1}_{\sbt} C$.
We find a simplicial homotopy \[H\colon \mathcal{N}_{2k+1}iS^{2,1}_{\sbt} C\times\Delta[1]\longrightarrow \mathcal{N}_{2k+1}iS^{2,1}_{\sbt} C\] commuting with the dualities, between $\iota\circ r$ and the identity. Look at $\mathcal{N}_{2k+1}iS^{2,1}_2 C=\mathcal{N}_{2k+1}iC$ as a category with duality, with natural transformations of diagrams as morphisms. Then $\iota_2\circ r_2$ extends to a functor $\mathcal{N}_{2k+1}iC\longrightarrow \mathcal{N}_{2k+1}iC$ in the obvious way, and there is a natural isomorphism $U\colon \id\Rightarrow\iota_2\circ r_2$ given at an object $\underline{\phi}$ by the diagram
\[\resizebox{1\hsize}{!}{\xymatrix{
X_{0}\ar[d]^{\phi_{k+1}\dots\phi_{1}}\ar[r]^{\phi_1}&  X_1\ar[d]^{\phi_{k+1}\dots\phi_{2}}\ar[r]^{\phi_2}&\dots \ar[r]^{\phi_{k}}&X_{k}\ar[d]^{\phi_{k+1}}\ar[r]^{\phi_{k+1}}&X_{k+1}\ar@{=}[d]\ar[r]^{\phi_{k+2}}&X_{k+2}\ar[d]^{\phi_{k+2}^{-1}}\ar[r]^{\phi_{k+3}}&\dots\ar[r]^{\phi_{2k+1}}&X_{2k+1}\ar[d]_{\phi_{k+2}^{-1}\dots\phi_{2k+1}^{-1}}\\
X_{k+1}\ar@{=}[r]&X_{k+1}\ar@{=}[r]&\dots\ar@{=}[r]&X_{k+1}\ar@{=}[r]&X_{k+1}\ar@{=}[r]&X_{k+1}\ar@{=}[r]&\dots\ar@{=}[r]&X_{k+1}
}}\]

Clearly it respects the duality, in the sense that $U_{D\underline{\phi}}\circ DU_{\underline{\phi}}=\id_{D\underline{\phi}}$.
As natural transformations do, it induces a functor $U\colon \mathcal{N}_{2k+1}iC\times[1]\longrightarrow \mathcal{N}_{2k+1}iC$.
In simplicial degree $p$, we define the homotopy 
\[H_p\colon \mathcal{N}_{2k+1}iS^{2,1}_p C\times\Delta[1]_p\longrightarrow \mathcal{N}_{2k+1}ObS^{2,1}_p C\] by sending a pair $(\underline{\phi},\sigma\colon [p]\longrightarrow [1])$ to the element
\[Cat([2],[p])\stackrel{\id\times ev_{1}}{\longrightarrow}Cat([2],[p])\times [p]\stackrel{\underline{\phi}\times \sigma}{\longrightarrow}\mathcal{N}_{2k+1}iC\times [1]\stackrel{U}{\longrightarrow}\mathcal{N}_{2k+1}iC\]
of $\mathcal{N}_{2k+1}iS^{2,1}_pC$. Here $ev_1\colon Cat([2],[n])\longrightarrow [n]$ is the evaluation functor that sends $\theta$ to $\theta(1)$.
For $\sigma=0$, the functor $H_p(\underline{\phi},0)$ sends $\theta\in Cat([2],[p])$ to
\[H_p(\underline{\phi},0)(\theta)=U(\underline{\phi}_{\theta},0)=(\iota_2\circ r_2)(\underline{\phi}_\theta)=(\iota\circ r)(\underline{\phi})_\theta\]
and for $\sigma=1$ to
\[H_p(\underline{\phi},1)(\theta)=U(\underline{\phi}_{\theta},1)=\underline{\phi}_\theta\]
The homotopy $H\colon \mathcal{N}_{2k+1}iS^{2,1}_{\sbt} C\times\Delta[1]\longrightarrow \mathcal{N}_{2k+1}iS^{2,1}_{\sbt} C$ is simplicial (always for $k$ fixed!), since for all $\kappa\colon [m]\longrightarrow [n]$ we have
\[\kappa(ev_1(\theta))=ev_1(\kappa\circ\theta)\]
for all $\theta\colon[2]\longrightarrow [m]$. Moreover it is equivariant, in the sense that
\[H(D\underline{\phi},\sigma)=DH(\underline{\phi},\sigma)\]
We show this carefully. The left hand-side evaluated at $\theta\colon[2]\longrightarrow [n]$ gives
\[H(D\underline{\phi},\sigma)(\theta)=U((D\underline{\phi})_{\theta},\sigma(ev_1(\theta)))\]
The right hand side gives
\[DH(\underline{\phi},\sigma)(\theta)=DU(\underline{\phi}_{\overline{\theta}},\sigma(ev_1(\overline{\theta})))=U(D\underline{\phi}_{\overline{\theta}},\sigma(n-ev_1(\overline{\theta})))\]
where the second equality follows from equivariancy of the natural transformation $U$. But now
\[n-ev_1(\overline{\theta})=n-\overline{\theta}(1)=n-(n-\theta(2-1))=\theta(1)\]

This shows that $(Sd\iota)_k\colon |ObS^{2,1}_{\sbt} C|\longrightarrow |\mathcal{N}_{2k+1}iS^{2,1}_{\sbt} C|$ is a levelwise equivariant homotopy equivalence. Therefore its realization is an equivariant weak homotopy equivalence
\[Sd\iota\colon |ObS^{2,1}_{\sbt} C|\longrightarrow |Sd\mathcal{N}_{\sbt}iS^{2,1}_{\sbt} C|\cong |\mathcal{N}_{\sbt}iS^{2,1}_{\sbt} C|\]

\end{proof}


\subsection{The factorization of $\Psi$ through $i\mathcal{D}T^{2,1}_{\cdot}(A;M)$}\label{model}

In order to prove that the map $\Psi$ of \ref{mainKR} is an equivalence, we factor it through an intermediate real category with strict duality $i\mathcal{D}T^{2,1}_{\sbt}(A;M)$. We prove in the next sections that each of the maps of the factorization is a $\mathbb{Z}/2$-equivalence.

The simplicial category $T^{2,1}_{\sbt}(A;M)$ has the same objects as $S^{2,1}_{\sbt}\mathcal{P}_A$. Morphisms from $X$ to $X'$ are pairs of maps of diagrams
\[(\phi\colon X\longrightarrow X',f\colon X\longrightarrow X'\otimes_A M)\]
with $\phi$ a morphism in $S^{2,1}_{\sbt}\mathcal{P}_A$ and $f$ a map of diagrams of right $A$-modules. Composition is defined by
\[(\psi,g)\circ (\phi,f)=(\psi\circ\phi,\psi\otimes M\circ f+g\circ\phi)\]
where composition, sum and tensor products of maps of diagrams are defined objectwise.
The identity of $X$ is the pair $(\id,0)$.

We remark that there is a canonical bijection between the set of morphisms in $T^{2,1}_{\sbt}(A;M)$ from $X$ to $X'$ and
\[\hom_{A\ltimes M}(X\otimes_A A\ltimes M,X'\otimes_A A\ltimes M)\]
Composition as defined above corresponds to standard composition of module maps under this bijection.

Now suppose that $J\colon L_t\otimes_A M\longrightarrow L_t\otimes_A M$ is an $M$-twisting of an antistructure $(A,L,\alpha)$.
For a map $f\colon X\longrightarrow X'\otimes_A M$, $\widehat{J}(f)\colon D_L(X')\longrightarrow D_L(X)\otimes_A M$ is the map of diagrams defined at a $\theta\colon [2]\longrightarrow [n]$ by 
\[\widehat{J}(f)_{\theta}=\widehat{J}(f_{\overline{\theta}}\colon X_{\overline{\theta}}\longrightarrow X'_{\overline{\theta}}\otimes_A M)\]
where we remember that $\overline{\theta}\in Cat([2],[k])$ is the dual given by $\overline{\theta}(i)=k-\theta(2-i)$.

We now define a duality structure $(D_J,\eta_J)$ on $T^{2,1}_{\sbt}(A;M)$. The functor $D_{J}\colon T^{2,1}_{\sbt}(A;M)^{op}\longrightarrow T^{2,1}_{\sbt}(A;M)$ is defined by $D_L$ on objects, and by
\[D_J(\phi,f)=(D_L(\phi),\widehat{J}(f))\]
on morphisms. The natural isomorphism $\eta_J\colon \id\Rightarrow D_{J}^2$ is defined at $X$ by the pair
\[\eta_J=(\eta\colon X\longrightarrow D_{L}^2(X),0\colon X\longrightarrow D_{L}^2(X)\otimes_AM)\]

\begin{prop}\label{defs21(A;M)}
$(T^{2,1}_{\sbt}(A;M),D_J,\eta_J)$ is a real category.
\end{prop}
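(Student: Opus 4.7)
The plan is to verify the three ingredients of Definition \ref{defrealcat}: that $T^{2,1}_{\sbt}(A;M)$ is a simplicial category, that $(T^{2,1}_k(A;M), D_J, \eta_J)$ is a category with duality for every $k$, and that these dualities satisfy the flip relations $d_l D_J = D_J d_{k-l}$ and $s_l D_J = D_J s_{k-l}$. The simplicial structure itself is inherited levelwise: precomposition by $\delta^l$ and $\sigma^l$ acts on a morphism $(\phi,f)$ by $(\phi\circ\delta^l, f\circ\delta^l)$ and similarly for $\sigma^l$; this is compatible with the composition rule $(\psi,g)\circ(\phi,f)=(\psi\phi,\psi\otimes M\circ f+g\circ\phi)$ since tensoring with $M$ commutes with precomposition.

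I would then check that $D_J$ is a contravariant functor. Preservation of identities is immediate: $D_J(\id_X, 0) = (D_L(\id_X), \widehat{J}(0)) = (\id_{D_L X}, 0)$. For composition, given $(\phi,f)\colon X\to X'$ and $(\psi,g)\colon X'\to X''$, the first components of $D_J((\psi,g)\circ(\phi,f))$ and $D_J(\phi,f)\circ D_J(\psi,g)$ agree because $D_L$ is a functor. Matching the second components reduces to
\[
\widehat{J}(\psi\otimes M\circ f)=\widehat{J}(f)\circ D_L(\psi)\quad\text{and}\quad \widehat{J}(g\circ\phi)=D_L(\phi)\otimes M\circ \widehat{J}(g),
\]
which are exactly the two naturality statements for $\widehat{J}$ in Proposition \ref{naturalityJK}.

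Next I would verify that $\eta_J=(\eta,0)$ is a natural isomorphism $\id\Rightarrow D_J^2$. Naturality of the first coordinate for $(\phi,f)$ is the usual naturality of $\eta$ for $\phi$, while naturality of the second coordinate amounts precisely to $\widehat{J}^2(f)\circ\eta = \eta\otimes M\circ f$, which is the second clause of Proposition \ref{naturalityJK}; all cross-terms vanish because the second component of $\eta_J$ is zero. The duality coherence $D_J(\eta_{J,X})\circ\eta_{J,D_JX}=\id_{D_JX}$ collapses componentwise to the triangle identity for $(D_L,\eta)$ in the first coordinate and to $0$ in the second, using $\widehat{J}(0)=0$.

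Finally, the simplicial compatibility $d_lD_J=D_Jd_{k-l}$ and $s_lD_J=D_Js_{k-l}$ reduces on objects and first components to the already-established identity for $S^{2,1}_{\sbt}\mathcal{P}_A$. For the second components it follows from the pointwise definition $\widehat{J}(f)_\theta=\widehat{J}(f_{\overline{\theta}})$ combined with the combinatorial identities $\omega_k\delta^l=\delta^{k-l}\omega_{k-1}$ and $\omega_k\sigma^l=\sigma^{k-l}\omega_{k+1}$, which guarantee that on each $\theta$ both sides evaluate $\widehat{J}$ on the same underlying $A$-module map. The main obstacle is really just bookkeeping: every substantive calculation (functoriality of $D_J$, naturality of $\eta_J$) is resolved by one of the two clauses of Proposition \ref{naturalityJK}, while the choice of zero for the second component of $\eta_J$ makes the coherence and triangle identities trivial in the new direction.
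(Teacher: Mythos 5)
Your argument is correct and follows essentially the same route as the paper: all the substantive verifications reduce to the two naturality clauses and the $\eta$-compatibility clause of Proposition \ref{naturalityJK}, with the choice of $0$ for the second component of $\eta_J$ trivialising the remaining coherences. You are in fact more thorough than the paper's own proof, which only treats the degreewise category-with-duality structure and omits the simplicial flip relations $d_lD_J=D_Jd_{n-l}$, $s_lD_J=D_Js_{n-l}$ that you verify via $\widehat{J}(f)_\theta=\widehat{J}(f_{\overline{\theta}})$ and the conjugation identities for $\delta^l,\sigma^l$.
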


\begin{proof}
Functoriality of $D_J$ follows by functoriality of $D_L$ and the two first properties of $\widehat{J}$ from \ref{naturalityJK}. The fact that $\eta_J$ is a natural transformation $\id\Rightarrow D_{J}^2$ follows from the third property of $\widehat{J}$ from \ref{naturalityJK}. Clearly $(\eta_J)_{D_J(-)}\circ D_J(\eta_J)=\id$ since a similar property is satisfied by $\eta$ and $\eta_J$ is zero in the second component.
\end{proof}

We want to identify $S^{2,1}(A;M)$ as a subcategory of $i\mathcal{D}T^{2,1}_{\sbt}(A;M)$. There is a natural isomorphism of categories with strict duality
\[S^{2,1}_{\sbt}(A;M)\cong\coprod_{\varphi\in Ob\mathcal{D}S^{2,1}_{\sbt}\mathcal{P}_A}\hom_A(\varphi,\varphi\otimes_A M)=\widehat{S}^{2,1}_{\sbt}(A;M)\] 
were now the $\mathcal{D}$ functor is taken after the $S^{2,1}_{\sbt}$-construction. The isomorphism is similar to the one from \ref{DfirstorlaterS21}.
An object of $i\mathcal{D}T^{2,1}_{\sbt}(A;M)$ is a triple $(X,Y,(\phi,h))$ with $X,Y\in S^{2,1}_{\sbt}\mathcal{P}_A$  and an isomorphism $(\phi,h)\colon Y\longrightarrow D_L(X)$ in $T^{2,1}_{\sbt}(A;M)$, that is a pair of natural transformations
\[(\phi\colon Y\stackrel{\cong}{\longrightarrow} D_L(X),h\colon Y\longrightarrow D_L(X)\otimes_A M)\]
where $\phi$ is an isomorphism.
The objects of $\widehat{S}^{2,1}_{\sbt}(A;M)$ are those with $h=0$. It is not however a full subcategory. Indeed, a morphism in $i\mathcal{D}T^{2,1}_{\sbt}(A;M)$ from $(X,Y,(\phi,0))$ to $(X',Y',(\phi',0))$ is a pair of isomorphisms $(a,f)\colon X\longrightarrow X'$ and $(b,g)\colon Y'\longrightarrow Y$ in $T^{2,1}_{\sbt}(A;M)$ satisfying certain properties coming from composition. The category $\widehat{S}^{2,1}_{\sbt}(A;M)$ has only endomorphisms, and the endomorphisms of $(X,Y,(\phi,0))$ coming from $\widehat{S}^{2,1}_{\sbt}(A;M)$ are exactly those where $a$ and $b$ are identities. This defines an inclusion
\[S^{2,1}_{\sbt}(A;M)\cong \widehat{S}^{2,1}_{\sbt}(A;M)\stackrel{\iota}{\longrightarrow} i\mathcal{D}T^{2,1}_{\sbt}(A;M)\]
We now define a morphism of categories with duality $(\overline{F},\overline{\xi})\colon T^{2,1}_{\sbt}(A;M)\longrightarrow S^{2,1}_{\sbt}\mathcal{P}_{A\ltimes M}$ fitting into a commutative diagram
\[\xymatrix{S^{2,1}_{\sbt}(A;M)\ar[d]_-{\cong}\ar[r]^-{\Psi}&
iS^{2,1}_{\sbt}\mathcal{D}\mathcal{P}_{A\ltimes M}\cong i\mathcal{D}S^{2,1}_{\sbt}\mathcal{P}_{A\ltimes M}\\
\widehat{S}^{2,1}_{\sbt}(A;M)\ar[r]^-{\iota}
& *[r]{i\mathcal{D}T^{2,1}_{\sbt}(A;M)}\ar@<-7ex>[u]_-{\mathcal{D}(\overline{F},\overline{\xi})}
}\]
giving the factorization of $\Psi$ we were after. The isomorphism in the top right corner is from \ref{DfirstorlaterS21}.

Let $\overline{F}\colon T^{2,1}_{\sbt}(A;M)\longrightarrow iS^{2,1}_{\sbt}\mathcal{P}_{A\ltimes M}$ be the functor which sends an object $X=\{X_\theta\}$ to the diagram
\[\overline{F}(X)_\theta=F(X_\theta)=X_\theta\oplus X_\theta\otimes_AM\]
It sends a morphism $(\phi,f)$ to the family of morphisms
\[\overline{F}(\phi,f)_\theta=\left(\begin{array}{cc}\phi_\theta&0\\f_\theta&\phi_\theta\otimes M\end{array}\right)\colon F(X_\theta)\longrightarrow F(X'_\theta)\] 
There is a natural isomorphism $\overline{\xi}\colon \overline{F}D_J\Rightarrow D_{L^J}\overline{F}$ defined at $\theta$ by the isomorphisms
\[\overline{\xi}_\theta=\xi_{X_{\overline{\theta}}}\colon FD_L(X_{\overline{\theta}})\longrightarrow D_{L^J}F(X_{\overline{\theta}})\]
of \ref{lemmaisoequivariant}. The pair $(\overline{F},\overline{\xi})$ defines a morphism of categories with duality, and it is easy to see that $\mathcal{D}(\overline{F},\overline{\xi})$ extends $\Psi$ making the diagram above commutative.


\subsection{The equivalence $T^{2,1}_{\cdot}(A;M)\longrightarrow S^{2,1}_{\cdot}\mathcal{P}_{A\ltimes M}$}\label{model1}

We prove that the pair $(\overline{F},\xi)\colon T^{2,1}_{\sbt}(A;M)\longrightarrow S^{2,1}_{\sbt}\mathcal{P}_{A\ltimes M}$ from the previous section is in every simplicial degree an equivalence of categories with duality in the sense of definition \ref{eqcatdual}. As a consequence, the functor $\mathcal{D}(\overline{F},\xi)$ is going to be levelwise an equivalence of categories with strict duality, and therefore it induces a $\mathbb{Z}/2$-homotopy equivalence on the realization.

We start by proving the result in simplicial degree $2$. In this case we denote $T^{2,1}_2(A;M)=\mathcal{P}(A;M)$. It has the same objects as $\mathcal{P}_A$ and the pairs of module maps $(\phi\colon P\longrightarrow Q,f\colon P\longrightarrow Q\otimes_A M)$ are the morphisms from $P$ to $Q$. We denote
\[(F,\xi)=(\overline{F}_1,\overline{\xi}_1)\colon \mathcal{P}(A;M)\longrightarrow \mathcal{P}_{A\ltimes M}\]

\begin{prop}\label{modelPAM}
The morphism of categories with duality \[(F,\xi)\colon (\mathcal{P}(A;M),D_J,\eta_J)\longrightarrow (\mathcal{P}_{A\ltimes M},D_{L^J},\eta^{\alpha^J})\]
is an equivalence of categories with duality.
\end{prop}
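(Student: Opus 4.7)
By Proposition \ref{equivalenceequivariant}, since $(F,\xi)$ is already a morphism of categories with duality, it suffices to prove that the underlying functor $F\colon \mathcal{P}(A;M)\longrightarrow\mathcal{P}_{A\ltimes M}$ is an equivalence of ordinary categories. Thus the plan is: (i) show $F$ is fully faithful, (ii) show $F$ is essentially surjective, (iii) invoke \ref{equivalenceequivariant}.

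For fully faithfulness, I would use the identification already noted in section \ref{model}, namely that the set of morphisms $P\longrightarrow Q$ in $\mathcal{P}(A;M)$ is in natural bijection with $\hom_{A\ltimes M}(F(P),F(Q))$. Concretely, an arbitrary $A\ltimes M$-linear map $\psi\colon P\oplus(P\otimes_AM)\longrightarrow Q\oplus(Q\otimes_AM)$ can be written as a $2\times 2$ matrix; the relation $\psi((p,0)\cdot(1,n))=\psi(p,0)\cdot(1,n)$ forces the upper right entry to vanish and the lower right entry to be $\phi\otimes_AM$, where $\phi$ is the upper left entry. Thus every such $\psi$ has exactly the form $\left(\begin{smallmatrix}\phi&0\\f&\phi\otimes M\end{smallmatrix}\right)$, which is precisely $F(\phi,f)$. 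This is the same calculation that makes $T^{2,1}_{\sbt}(A;M)$ into a category in the first place, so the bookkeeping is routine.

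For essential surjectivity, I would use the standard fact that f.g.\ projective modules lift along the split square-zero extension $A\ltimes M\twoheadrightarrow A$. Given a f.g.\ projective right $A\ltimes M$-module $N$, set $P=N\otimes_{A\ltimes M}A$, which is f.g.\ projective over $A$. The counit $\epsilon_N\colon F(P)=P\otimes_A(A\ltimes M)=(N\otimes_{A\ltimes M}A)\otimes_A(A\ltimes M)\longrightarrow N$, $n\otimes a\otimes r\mapsto n\cdot s(a)\cdot r$, is an isomorphism when $N=A\ltimes M$ and hence, by additivity and passage to summands, an isomorphism for every f.g.\ projective $N$. This gives $N\cong F(P)$.

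Combining (i) and (ii), $F$ is an equivalence of categories; by Proposition \ref{equivalenceequivariant}, $(F,\xi)$ is an equivalence of categories with duality. The only non-formal input is the lifting of projectives along $A\ltimes M\to A$, which is a well-known consequence of idempotent lifting along nilpotent ideals; there is no genuine obstacle.
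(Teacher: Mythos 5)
Your overall strategy — reduce to showing $F$ is an equivalence of ordinary categories by invoking Proposition~\ref{equivalenceequivariant}, then check full faithfulness and essential surjectivity — is exactly what the paper does, and your full-faithfulness argument (reading off the matrix form of an $A\ltimes M$-linear map $F(P)\to F(Q)$ from the module relations) matches the paper's computation.

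However, the essential surjectivity step has a genuine gap. The ``counit'' map $\epsilon_N\colon (N\otimes_{A\ltimes M}A)\otimes_A(A\ltimes M)\longrightarrow N$, $n\otimes a\otimes r\mapsto n\cdot s(a)\cdot r$, is not well-defined. In $N\otimes_{A\ltimes M}A$ one has the relation $n\cdot x\otimes a=n\otimes p(x)a$ for $x\in A\ltimes M$, but the proposed formula sends the left side to $n\cdot x\cdot s(a)\cdot r$ and the right side to $n\cdot s(p(x))\cdot s(a)\cdot r$, and $x\neq s(p(x))$ whenever $x$ has a nonzero $M$-component. Concretely, for $A=k$, $M=k$, $N=k[\epsilon]/\epsilon^2$, the formula sends the equal tensors $\epsilon\otimes 1\otimes 1$ and $1\otimes 0\otimes 1$ to $\epsilon$ and $0$ respectively. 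So there is no natural transformation $FG\Rightarrow\id$ of this form, and the ``iso for free modules, hence for summands by naturality'' argument collapses. The paper is explicit on this point: the isomorphism $K\cong H(K)=(K\otimes_{A\ltimes M}A)\otimes_A(A\ltimes M)$ is \emph{not} canonical. It is constructed from scratch for each $K$ by lifting the map $K\to K\otimes_{A\ltimes M}A$ (viewed appropriately) against the projection $H(K)\to K\otimes_{A\ltimes M}A$ using projectivity of $K$, and then checking that the resulting map $\epsilon\colon K\to H(K)$ is an isomorphism via a $\coker\epsilon\subset\coker\epsilon\cdot M$ argument in both directions. Your closing parenthetical about idempotent lifting along the nilpotent ideal $M$ does sketch a valid alternative (lift the idempotent of $P=N\otimes_{A\ltimes M}A$ via the section $s$ and show the two idempotents over $A\ltimes M$ are conjugate), but it is a different argument from the one you actually wrote out, and you would need to supply the conjugation step.
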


\begin{proof}
By \ref{equivalenceequivariant}, it is enough to show that $F$ is an equivalence of categories, and there is going to be automatically an inverse as a morphism of categories with duality. We prove equivalently that $F$ is fully faithful and essentially surjective.

It is faithful since if $F(\phi,f)=F(\psi,g)$, then there is an equality of matricies
\[\left(\begin{array}{cc}\phi&0\\f&\phi\otimes M\end{array}\right)=\left(\begin{array}{cc}\psi&0\\g&\psi\otimes M\end{array}\right)\]
and therefore clearly $\phi=\psi$ and $f=g$.

To show that $F$ is full, let $\chi\colon F(P)\longrightarrow F(Q)$ be a map of right modules. Let $\chi_1$ and $\chi_2$ denote respectively the $Q$ and $Q\otimes_A M$ component. Thus $\chi$ satisfies
\[\begin{array}{ll}\chi_1(p\cdot a,p'\otimes (m'\cdot a)+p\otimes m)=\chi_1(p,p'\otimes m')\cdot a\\
\chi_2(p\cdot a,p'\otimes (m'\cdot a)+p\otimes m)=\chi_2(p,p'\otimes m')\cdot a+\chi_1(p,p'\otimes m')\otimes m\end{array}\]
Setting appropriately the variables to zero, one can easily see that
\begin{enumerate}
\item $\chi_1(0,-)=0$
\item $\chi_1(-,0)\in\hom_A(P,Q)$
\item $\chi_2(0,p\otimes m)=\chi_1(p,0)\otimes m$
\item $\chi_2(-,0)\in\hom_A(P,Q\otimes_A M)$
\end{enumerate}
That is
\[\chi=\left(\begin{array}{cc}\chi_1(-,0)&0\\\chi_2(-,0)&\chi_1(-,0)\otimes M\end{array}\right)=F(\chi_1(-,0),\chi_2(-,0))\]

To show that $F$ is essentially surjective, we show equivalently that the isomorphic functor sending $P$ to $P\otimes_AA\ltimes M$ is. More precisely, we show that every projective $A\ltimes M$-module $K$ is (non canonically!) isomorphic to
\[H(K)=(K\otimes_{A\ltimes M}A)\otimes_AA\ltimes M\]
Here is the argument from \cite[I-2.5.5]{Dundasbook}. Denote $p^\ast (K\otimes_{A\ltimes M} A)$ the $A\ltimes M$-module structure on $K\otimes_{A\ltimes M} A$ induced by $p\colon A\ltimes M\longrightarrow A$, that is defined by
\[k\otimes a\cdot (a',m)=k\otimes (a\cdot a')\]
We see $M$ as an ideal of $A\ltimes M$ by $m\mapsto (m,0)$.
There are two short exact sequences of $A\ltimes M$-modules over $p^\ast (K\otimes_{A\ltimes M} A)$
\[\xymatrix{K\cdot M\ar[d]&H(K)\cdot M\ar[d]\\
K\ar[d]_{\pi'}\ar@{-->}[r]^{\epsilon} &H(K)\ar[d]_{\pi}\\
p^\ast (K\otimes_{A\ltimes M} A)\ar[r]_-{=}& p^\ast (K\otimes_{A\ltimes M} A)
}\]
Where $\pi'(k)=k\otimes 1$ and $\pi(k\otimes a'\otimes (a,m))=k\otimes (a'\cdot a)$. We denoted $K\cdot M$ the submodule of sums of elements of the form $k\cdot m$ for $k\in K$ and $m\in M$, and similarly for $H(K)\cdot M$. By projectivity of $K$, there is a lift $\epsilon$ of $\pi'$ along $\pi$.
If we show that the cokernel of $\epsilon$ satisfies $\coker\epsilon\subset\coker\epsilon\cdot M$ we would have
\[\coker\epsilon=\coker\epsilon\cdot M=\coker\epsilon\cdot M\cdot M=0\]
showing that $\epsilon$ is surjective. Decompose an element $k\otimes a'\otimes (a,m)\in H(K)$ as
\[k\otimes a'\otimes (a,m)=k\otimes a'\otimes (a,0)+k\otimes a'\otimes (0,m)\]
and notice that by commutativity of the diagram above
\[\pi\epsilon(k\cdot a'a)=\pi'(k\cdot a'a)=k\cdot a'a\otimes 1=k\otimes a'a=\pi(k\otimes a'\otimes (a,0))\]
Therefore there is an element $x\in \ker \pi=H(K)\cdot M$ such that
\[k\otimes a'\otimes (a,0)=\epsilon(k\cdot a'a)+x\]
Thus $k\otimes a'\otimes (a,m)$ decomposes as
\[k\otimes a'\otimes (a,m)=\epsilon(k\cdot a'a)+(x+k\otimes a'\otimes (0,m))\]
with $x+k\otimes a'\otimes (0,m)\in H(K)\cdot M$. This shows that
\[\coker\epsilon \subset \coker\epsilon\cdot M\]
and therefore $\epsilon$ is surjective. Since $H(K)$ is projective, one can choose a splitting $\nu$ for $\epsilon$. The same kind of argument shows that $\coker\nu \cdot M=\coker\nu$, and therefore that $\coker\nu=0$. This shows that $\epsilon$ is an isomorphism.
\end{proof}

Using this last result, we prove that $(\overline{F},\overline{\xi})$ is an equivalence in higher simplicial degrees as well.

\begin{prop}\label{noneqeq}
The map \[(\overline{F},\overline{\xi})\colon (T^{2,1}_{\sbt}(A;M),D_{J},\eta_J)\longrightarrow (S^{2,1}_{\sbt}\mathcal{P}_{A\ltimes M},D_{L^{J}},\eta^{\alpha^J})\] is an equivalence of categories with duality in every simplicial degree.
\end{prop}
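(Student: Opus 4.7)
By Proposition \ref{equivalenceequivariant} it suffices to show that the underlying functor $\overline{F}\colon T^{2,1}_n(A;M)\longrightarrow S^{2,1}_n\mathcal{P}_{A\ltimes M}$ is an equivalence of categories for every simplicial degree $n$; the natural isomorphism $\overline\xi$ then automatically upgrades this to an equivalence of categories with duality. The whole argument bootstraps Proposition \ref{modelPAM}, which is the case $n=2$ (in which $T^{2,1}_2(A;M)=\mathcal{P}(A;M)$ and $S^{2,1}_2\mathcal{P}_{A\ltimes M}=\mathcal{P}_{A\ltimes M}$), exploiting that objects and morphisms in $S^{2,1}_n$ are defined pointwise over $Cat([2],[n])$.

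\emph{Fully faithful.} A morphism $\chi\colon\overline F(X)\longrightarrow\overline F(X')$ in $S^{2,1}_n\mathcal{P}_{A\ltimes M}$ is a natural family $\{\chi_\theta\colon F(X_\theta)\to F(X'_\theta)\}_{\theta\in Cat([2],[n])}$. Proposition \ref{modelPAM} provides a unique pair $(\phi_\theta,f_\theta)$ in $\mathcal{P}(A;M)$ with $F(\phi_\theta,f_\theta)=\chi_\theta$. Since the structure maps of $\overline F(X)$ and $\overline F(X')$ are themselves of the form $\overline F(\phi_\sigma^X,0)$ and $\overline F(\phi_\sigma^{X'},0)$, naturality of $\chi$ translates, via the same uniqueness and the composition formula in $\mathcal{P}(A;M)$, into two naturality squares expressing that $\phi=\{\phi_\theta\}$ and $f=\{f_\theta\}$ are each natural transformations of diagrams $X\Rightarrow X'$ and $X\Rightarrow X'\otimes_AM$. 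Thus $\chi=\overline F(\phi,f)$ for a unique morphism $(\phi,f)$ in $T^{2,1}_n(A;M)$.

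\emph{Essentially surjective.} Let $Z\in S^{2,1}_n\mathcal{P}_{A\ltimes M}$. Since $\mathcal{P}_{A\ltimes M}$ is split-exact, the standard argument (analogous to the decomposition $S_n\mathcal{C}\simeq\mathcal{C}^{n-1}$ for split-exact $\mathcal{C}$) lets one choose splittings of all the admissible monomorphisms in $Z$ so as to realize each $Z_\theta$ simultaneously as a direct sum $\bigoplus_i P_i^{(\theta)}$ of $A\ltimes M$-modules for which every structure map $Z(\sigma)$ becomes a block matrix whose entries are identities or zeros. Set $X_\theta:=\bigoplus_i G(P_i^{(\theta)})\in\mathcal{P}_A$, with structure maps the corresponding $A$-module block matrices; here $G=-\otimes_{A\ltimes M}A$. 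Because $G$ preserves split-exact sequences, $X$ satisfies the $S^{2,1}_n$-conditions and so defines an object of $T^{2,1}_n(A;M)$. Applying the pointwise isomorphism $P_i^{(\theta)}\cong F(G(P_i^{(\theta)}))$ of Proposition \ref{modelPAM} to each summand and summing produces isomorphisms $\epsilon_\theta\colon Z_\theta\stackrel{\cong}{\longrightarrow}\overline F(X)_\theta$; these commute with the block structure maps on both sides and thus assemble into an isomorphism $Z\cong\overline F(X)$ in $S^{2,1}_n\mathcal{P}_{A\ltimes M}$.

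\emph{Main obstacle.} The only delicate step is the coherent splitting: producing the decompositions of all the $Z_\theta$ in a way compatible with every structure map $Z(\sigma)$ of the diagram, rather than splitting each $Z_\theta$ independently. This is the content of the usual atomic decomposition of $S^{2,1}_n$ of a split-exact category; once it is granted, the remainder of essential surjectivity is merely a summand-by-summand invocation of Proposition \ref{modelPAM}.
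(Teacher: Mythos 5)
Your proof is correct and follows essentially the same strategy as the paper: full faithfulness is deduced pointwise from the $n=2$ case (Proposition \ref{modelPAM}) via the uniqueness of the pair $(\phi_\theta,f_\theta)$ and the composition law in $\mathcal{P}(A;M)$, and essential surjectivity is reduced to the $n=2$ case by the atomic decomposition of $S^{2,1}_n$-diagrams in a split-exact category. The paper makes the coherent splitting explicit by indexing the atoms by retractions $\rho\in r(\theta)$ of $\theta$ and writing $Y_\theta=\bigoplus_{\rho\in r(\theta)}K_\rho$ with the structure maps being inclusions and projections of summands (citing \cite{IbLars}); you invoke the same fact as ``the usual atomic decomposition'' without spelling it out, but the content is the same.
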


\begin{proof}
Again by \ref{equivalenceequivariant}, it is enough to show that $\overline{F}$ is in each degree fully faithful and essentially surjective. The proof that $\overline{F}$ is fully faithful is completely analogous to the one for $F$.

The only point is to see that $\overline{F}$ is essentially surjective, and again we prove equivalently that $-\otimes_AA\ltimes M$ is. By \cite{IbLars}, every diagram $X\in S^{2,1}_k\mathcal{P}_{A\ltimes M}$ is isomorphic (non-canonically) to the diagram $Y$ given by
\[Y_\theta=\bigoplus_{\rho=(0^i1^j2^{k-i-j+1})\in r(\theta)}\ker(X_{i-1<i+j-1<i+j}\longrightarrow X_{i\leq i+j-1<i+j})\]
where $r(\theta)$ is the set of retractions for the map $\theta\colon [2]\longrightarrow [k]$. The maps of $Y$ are inclusions and projections of the direct summands. Therefore it is enough to find a diagram in $S^{2,1}_k\mathcal{P}_A$ whose image is isomorphic to $Y$.
Denote the kernel above corresponding to a retraction $\rho$ by $K_\rho$. Each of these $K_\rho$ is a finitely generated projective $A\ltimes M$-module, and by the previous proposition there exist some isomorphism 
\[\epsilon_\rho\colon K_\rho\stackrel{\cong}{\longrightarrow} K_\rho\otimes_{A\ltimes M}A\otimes_AA\ltimes M \]
Then the diagram $Y\otimes_{A\ltimes M}A\in S^{2,1}_k\mathcal{P}_A$ is sent by the functor to $Y\otimes_{A\ltimes M}A\otimes_AA\ltimes M $, and the isomorphisms $\epsilon_\rho$ provide a natural isomorphism
\[\xymatrix{Y_\theta=\bigoplus_{\rho\in r(\theta)}K_\rho\ar@{-->}[d]\ar[r]^-{\oplus\epsilon_\rho}&\bigoplus_{\rho\in r(\theta)}K_\rho\otimes_{A\ltimes M}A\otimes_AA\ltimes M\ar[d]^\cong\\
Y_\theta\otimes_{A\ltimes M}A\otimes_AA\ltimes M\ar@{=}[r]& (\bigoplus_{\rho\in r(\theta)}K_\rho)\otimes_{A\ltimes M}A\otimes_AA\ltimes }\]

\end{proof}


\subsection{The equivalence $S^{2,1}_{\cdot}(A;M)\longrightarrow i\mathcal{D}T^{2,1}_{\cdot}(A;M)$}\label{model2}

Recall that we denote
\[\widehat{S}^{2,1}_{\sbt}(A;M)=\coprod_{\varphi\in Ob\mathcal{D}S^{2,1}_{\sbt}\mathcal{P}_A}\hom_A(\varphi,\varphi\otimes_A M)\cong S^{2,1}_{\sbt}(A;M)\]
The following will finish the proof of \ref{mainKR}.

\begin{prop}\label{inclusion}
The inclusion functor
\[\widehat{S}^{2,1}_{\sbt}(A;M)\longrightarrow i\mathcal{D}T^{2,1}_{\sbt}(A;M)\]
induces a $\mathbb{Z}/2$-equivalence on realizations.
\end{prop}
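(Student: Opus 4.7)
My plan is to mimic the proof of Proposition \ref{inclobinisos21} essentially verbatim, with the subtle point being the definition of the retraction on morphisms.

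For each fixed $k$ and $n$, I would build a retraction of real sets
\[r_k\colon \mathcal{N}_{2k+1}(i\mathcal{D}T^{2,1}_n(A;M))\longrightarrow \mathcal{N}_{2k+1}(\widehat{S}^{2,1}_n(A;M))\]
on the $(2k+1)$-st subdivision in the nerve direction, together with an equivariant simplicial homotopy $H$ between $\iota\circ r_k$ and the identity. Given a sequence
\[\xi_0\stackrel{\alpha_1}{\longrightarrow}\xi_1\stackrel{\alpha_2}{\longrightarrow}\dots\stackrel{\alpha_{2k+1}}{\longrightarrow}\xi_{2k+1}\]
of isomorphisms in $i\mathcal{D}T^{2,1}_n(A;M)$, with $\xi_j=(X_j,Y_j,(\phi_j,h_j))$ and $\alpha_j=((a_j,F_j),(b_j,G_j))$, I would send it to the constant sequence at $\varphi=(X_{k+1},Y_{k+1},\phi_{k+1})\in\widehat{S}^{2,1}_n(A;M)$ equipped with endomorphisms $((\id,f_j),(\id,g_j))$, where $f_j$ and $g_j$ are obtained by conjugating $F_j$ and $G_j$ along the telescoping isomorphisms of the underlying $\mathcal{P}_A$-data to the middle position (using $a_{k+1}\cdots a_{j+1}$ and its inverse on the $X$-side, and similarly $b_{j+1}\cdots b_{k+1}$ on the $Y$-side, with evident modifications for $j>k+1$).

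The key checks are then routine: first, that $(f_j,g_j)$ satisfies the $\widehat{J}$-compatibility required to be a morphism in $\widehat{S}^{2,1}_n(A;M)$, which follows from the compatibility condition for the original $\alpha_j$ in $i\mathcal{D}T^{2,1}_n(A;M)$ (with the roles of $\phi_{j-1},\phi_j$ absorbed into the conjugation); second, that $r_k\circ\iota=\id$, which holds because on $\widehat{S}^{2,1}$ the iso's $a_\bullet,b_\bullet$ are all identities so the conjugation is trivial and $f_j=F_j$, $g_j=G_j$; and third, that $r_k$ is a map of real sets, using the strict duality on $\mathcal{D}T^{2,1}_n(A;M)$ computed in section \ref{model}.

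The homotopy $H\colon \mathcal{N}_{2k+1}(i\mathcal{D}T^{2,1}_n(A;M))\times\Delta[1]\longrightarrow \mathcal{N}_{2k+1}(i\mathcal{D}T^{2,1}_n(A;M))$ between $\iota\circ r_k$ and the identity is built, exactly as in the proof of \ref{inclobinisos21}, from a natural transformation $U\colon\id\Rightarrow \iota\circ r_k$ of functors on the category of $(2k+1)$-sequences in $i\mathcal{D}T^{2,1}_n(A;M)$: the component of $U$ at $\underline{\alpha}$ has $j$-th coordinate the isomorphism to $(X_{k+1},Y_{k+1},(\phi_{k+1},0))$ obtained from the telescoping iso's (and zero on the $M$-parts). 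One then repeats the argument of \ref{inclobinisos21} showing that $H$ is simplicial in the $S^{2,1}_\sbt$-direction and equivariant on the $\mathcal{N}$-direction, which gives a levelwise $\mathbb{Z}/2$-homotopy equivalence on the subdivision. Since subdivision computes the fixed points and is compatible with realization, this yields the desired $\mathbb{Z}/2$-equivalence on $|-|$.

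The main obstacle I foresee is purely bookkeeping: keeping track of the correct telescoping conjugations so that the natural transformation $U$ is genuinely equivariant in the sense needed for the subdivision machinery of Proposition \ref{subdivision}. The precise formulation of equivariance ``$U_{D\underline{\alpha}}\circ DU_{\underline{\alpha}}=\id$'' from the proof of \ref{inclobinisos21} should translate to the present situation using the strict duality on $\mathcal{D}T^{2,1}_n(A;M)$ described just before the proposition, although one has to be careful that the $M$-parts $F_j,G_j$ and their duals $\widehat{J}(F_j)$ interact correctly with the reversal of the simplex indices.
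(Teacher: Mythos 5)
There is a genuine gap that cannot be fixed by bookkeeping. Your natural transformation $U$ is claimed to have ``zero on the $M$-parts,'' but this cannot be a well-defined morphism in $i\mathcal{D}T^{2,1}_n(A;M)$ once the $h_j$'s are nonzero. Already at the middle position $j=k+1$, the $j$-th coordinate of $U$ must be an isomorphism from $\xi_{k+1}=(X_{k+1},Y_{k+1},(\phi_{k+1},h_{k+1}))$ to $(X_{k+1},Y_{k+1},(\phi_{k+1},0))$; taking $a=\id$, $b=\id$, $f=0$ and plugging into the compatibility condition for morphisms in $\mathcal{D}T^{2,1}_{\sbt}(A;M)$
\[(\phi_{k+1}\otimes M)\circ g+h_{k+1}\circ b=(D(a)\otimes M)\circ 0+\widehat{J}(f)\circ\phi_{k+1}\]
forces $g=-(\phi_{k+1}^{-1}\otimes M)\circ h_{k+1}$, which is nonzero in general. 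So the $M$-parts of $U$ cannot be zero.

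The deeper problem is what happens once you put the correct $M$-parts in: the resulting natural transformation is \emph{not} compatible with the duality. The paper flags this explicitly in the proof of Lemma \ref{inclfullsubcat}: the isomorphism $\kappa_{(\phi,h)}\colon(\phi,0)\to(\phi,h)$ (whose inverse is precisely the middle coordinate of your $U$) does not satisfy $\kappa_{D(\phi,h)}=D\kappa_{(\phi,h)}$. Concretely, $D\kappa_{(\phi,h)}$ has $g$-part $0$ and $f$-part $(\phi^{-1}\otimes M)\circ h$, while $\kappa_{D(\phi,h)}$ has $f$-part $0$ and $g$-part built from $\widehat J(h)\circ\eta$ and $(D_L\phi\circ\eta)^{-1}$; these are different. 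Consequently the equivariance identity you need for the subdivided homotopy (of the form $U_{D\underline{\alpha}}\circ DU_{\underline{\alpha}}=\id$) fails, and the single-step retraction does not yield a $\mathbb{Z}/2$-homotopy. This is precisely why the paper factors the inclusion through the full subcategory $i\mathcal{D}T^{2,1}_{\sbt}(A;M)^0$ of objects $(X,Y,(\phi,0))$: the first step (killing the $h$'s) is not accessible by a direct equivariant retraction-plus-homotopy and is instead handled as an equivalence of categories via Proposition \ref{equivalenceequivariant} and the $\mathcal{D}$-strictification trick of Remark \ref{strictifytwice}; the second step (collapsing to the middle column, where now all objects have $h=0$) is the one where your telescoping-retraction argument \`a la Proposition \ref{inclobinisos21} goes through cleanly.
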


We prove this by factoring the inclusion through the full subcategory $i\mathcal{D}T^{2,1}_{\sbt}(A;M)^0$ of $i\mathcal{D}T^{2,1}_{\sbt}(A;M)$ on the objects of $\widehat{S}^{2,1}_{\sbt}(A;M)$, i.e. the objects of the form $(X,Y,(\phi,0))$.

\begin{lemma}\label{inclfullsubcat} The inclusion $\iota\colon i\mathcal{D}T^{2,1}_{\sbt}(A;M)^0\longrightarrow i\mathcal{D}T^{2,1}_{\sbt}(A;M)$ induces a $\mathbb{Z}/2$-equivalence on geometric realizations.
\end{lemma}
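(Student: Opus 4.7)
The plan is to reduce the lemma to a levelwise check: in each simplicial degree $p$ the inclusion $\iota_p\colon i\mathcal{D}T^{2,1}_p(A;M)^0\hookrightarrow i\mathcal{D}T^{2,1}_p(A;M)$ is an equivalence of categories with strict duality, and then transfer this along Corollary \ref{fixedpointstrick} and Remark \ref{strictifytwice} to a $\mathbb{Z}/2$-equivalence on realizations.

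First I would observe that $\iota_p$ is fully faithful by construction (inclusion of a full subcategory) and that its image is closed under the duality, since $D(X,Y,(\phi,0)) = (Y,X,(D_L(\phi)\circ\eta,0))$ again has trivial $M$-component. Hence $(\iota_p,\id)$ is a morphism of categories with strict duality in each degree.

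The essential-surjectivity step is the only real computation. For each object $c=(X,Y,(\phi,h))$ I would exhibit the morphism
\[\epsilon_c = \bigl((\id_X,\,0),\ (\id_Y,\,-(\phi^{-1}\otimes M)\circ h)\bigr)\colon c\longrightarrow (X,Y,(\phi,0)).\]
The compatibility required for this pair to define a morphism in $\mathcal{D}T^{2,1}_p(A;M)$ unwinds to $(\phi\otimes M)(-(\phi^{-1}\otimes M)h)+h = \widehat{J}(0)\circ\phi$, i.e.\ $-h+h=0$, so it is automatic. Since both $\mathcal{P}_A$-components are identities, $\epsilon_c$ is invertible in $i\mathcal{D}T^{2,1}_p(A;M)$, and $\iota_p$ is an equivalence of plain categories. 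Proposition \ref{equivalenceequivariant} (applied with $\xi=\id$) then upgrades this to an equivalence of categories with duality.

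Finally, the last sentence of Corollary \ref{fixedpointstrick} gives that each $|\mathcal{D}\iota_p|$ is a $\mathbb{Z}/2$-equivalence, and the equivariant comparison $V\colon C\to\mathcal{D}C$ from Remark \ref{strictifytwice} identifies this with $|\iota_p|$ itself, since the ambient categories already carry strict dualities. A levelwise $\mathbb{Z}/2$-equivalence of simplicial objects in $\mathbb{Z}/2$-spaces yields a $\mathbb{Z}/2$-equivalence on the total realization, which is precisely the statement of the lemma. The main point requiring care is the verification of the compatibility condition for $\epsilon_c$ above, but as indicated this reduces to the trivial identity $-h+h=0$ once the definitions are unpacked; the bonus of going through Proposition \ref{equivalenceequivariant} is that one does not have to construct a simplicial, duality-preserving retraction by hand.
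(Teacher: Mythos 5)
Your proposal is correct and follows essentially the same route as the paper: you verify that each $\iota_p$ is a fully faithful, essentially surjective, strictly duality-preserving functor, invoke \ref{equivalenceequivariant} to upgrade it to an equivalence of categories with duality, then use \ref{fixedpointstrick} and \ref{strictifytwice} to conclude that $|\iota_p|$ is a $\mathbb{Z}/2$-equivalence. The paper's comparison isomorphism $\kappa_{(\phi,h)}\colon(\phi,0)\to(\phi,h)$ with second component $(\phi^{-1}\otimes M)\circ h$ is simply the inverse of your $\epsilon_c$ (hence the sign flip), so that difference is cosmetic. One small point worth tightening: in your last sentence, the simplicial object is a \emph{real} category — the duality $D_p$ reverses rather than commutes with the face maps — so "levelwise $\mathbb{Z}/2$-equivalence yields a $\mathbb{Z}/2$-equivalence on realization" is justified via the edgewise subdivision \ref{subdivision} (fixed points of $|X|$ are $|sd_e(X)^{\mathbb{Z}/2}|$), rather than the naive commuting picture; since your levelwise equivalences are $D_p$-equivariant for every $p$, in particular every odd $p$, the subdivision argument does go through.
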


\begin{proof} We denote $D=D_L$. The inclusion functor $\iota$ is an equivalence of categories. Indeed, it is fully faithful as inclusion of a full subcategory. It is essentially surjective for the following reason.
A morphism in $i\mathcal{D}T^{2,1}_{\sbt}(A;M)$ from $(X,Y,(\phi,h))$ to $(X',Y',(\phi',h'))$ is the data of isomorphisms $a\colon X\longrightarrow X'$ and $b\colon Y'\longrightarrow Y$ represented in the commutative diagram
\[\xymatrix{D(X)\otimes_A M&D(X')\otimes_A M\\
Y\ar[u]^{h}\ar[d]_{\phi}&Y'\ar[u]^{h'}\ar[d]_{\phi'}\ar[l]_{b}\\
D(X)&D(X')\ar[l]^{D(a)}
}\]
together with two maps $f\colon X\longrightarrow X'\otimes_A M$ and $g\colon Y'\longrightarrow Y\otimes_A M$ satisfying
\[(\phi\otimes M)\circ g+h\circ b=(D(a)\otimes M)\circ h'+\widehat{J}(f)\circ\phi'\]
as maps $Y'\longrightarrow D(X)\otimes_A M$.
We define an isomorphism 
\[\kappa_{(\phi,h)}\colon(\phi,0)\longrightarrow(\phi,h)\] for every object $(X,Y,(\phi,h))$ of $i\mathcal{D}T^{2,1}_{\sbt}(A;M)$, by the diagram
\[\xymatrix{D(X)\otimes_A M&D(X)\otimes_A M\\
Y\ar[u]^{0}\ar[d]_{\phi}&Y\ar[u]^{h}\ar[d]_{\phi}\ar[l]_{\id}\\
D(X)&D(X)\ar[l]^{D(\id)}
}\]
with the maps 
\[(\kappa_f=0,\kappa_g=(\phi^{-1}\otimes M)\circ h)\]
One can check that this is indeed a morphism in $i\mathcal{D}T^{2,1}_{\sbt}(A;M)$, and therefore $\iota$ is essentially surjective.

Since $\iota$ is an equivalence of categories, and it commutes strictly with the dualities,
 the pair $(\iota,\id)$ is an equivalence of categories with duality by \ref{equivalenceequivariant}. Notice though that even if the dualities are strict, the inverse
\[r\colon i\mathcal{D}T^{2,1}_{\sbt}(A;M)\longrightarrow i\mathcal{D}T^{2,1}_{\sbt}(A;M)^0\] does not need to commute strictly with the duality. Indeed in this case it does not, since the isomorphism $\kappa$ above does \textbf{not} satisfy $\kappa_{D(\phi,h)}=D\kappa_{(\phi,h)}$ in general. In order to obtain a strictly duality preserving inverse one needs to apply $\mathcal{D}$ (again), to get a commutative diagram of strictly duality preserving functors
\[\xymatrix{\mathcal{D}(i\mathcal{D}T^{2,1}_{\sbt}(A;M)^0)\ar@<1ex>[r]^-{\mathcal{D}(\iota)} &\mathcal{D}(i\mathcal{D}T^{2,1}_{\sbt}(A;M))\ar@<1ex>[l]^-{\mathcal{D}(r)}\\
i\mathcal{D}T^{2,1}_{\sbt}(A;M)^0\ar[u]^{\simeq}\ar[r]^-{\iota}&i\mathcal{D}T^{2,1}_{\sbt}(A;M)\ar[u]_{\simeq}
}\]
where now the top row induces mutually inverse $\mathbb{Z}/2$-homotopy equivalences on the realization. The vertical maps also induce $\mathbb{Z}/2$-equivalences on the realization by \ref{strictifytwice}, and so $\iota$ does as well.
\end{proof}

\begin{lemma}
The inclusion functor 
\[\widehat{S}^{2,1}_{\sbt}(A;M)\longrightarrow i\mathcal{D}T^{2,1}_{\sbt}(A;M)^0\] induces a $\mathbb{Z}/2$-equivalence on the realization.
\end{lemma}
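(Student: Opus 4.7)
The plan is to adapt the proof of Proposition~\ref{inclobinisos21} to carry along the $M$-twist data. For each fixed $k \geq 0$, I will build an explicit equivariant simplicial homotopy equivalence on the subdivided nerves $\mathcal{N}_{2k+1}(-)$; the conclusion then follows by realization and Segal's subdivision exactly as in \ref{inclobinisos21}.

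First I would define the retraction
\[
r\colon \mathcal{N}_{2k+1}i\mathcal{D}T^{2,1}_{\sbt}(A;M)^{0} \longrightarrow \mathcal{N}_{2k+1}\widehat{S}^{2,1}_{\sbt}(A;M).
\]
Given a sequence $\underline{\varphi} = (\varphi_0 \stackrel{M_1}{\longrightarrow} \cdots \stackrel{M_{2k+1}}{\longrightarrow} \varphi_{2k+1})$ with $M_i = (a_i, b_i, f_i, g_i)$, consider the transport morphisms $V_i = (A_i, B_i, 0, 0)\colon \varphi_i \to \varphi_{k+1}$, where $(A_i, B_i)$ is the underlying isomorphism from the proof of \ref{inclobinisos21} (compositions of the $a_j, b_j$ for $i \leq k+1$ and their inverses for $i > k+1$). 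The $i$-th entry of $r(\underline{\varphi})$ is defined to be the conjugation $V_i \circ M_i \circ V_{i-1}^{-1}$, computed with the composition rule in $T^{2,1}_{\sbt}(A;M)$; by construction it has identity underlying isomorphism and hence lies in $\widehat{S}^{2,1}_{\sbt}(A;M)$. A direct computation shows $r \circ \iota = \id$: when $\underline{\varphi}$ already has identity underlying isomorphisms, all $V_i$ are trivial and the conjugation does nothing.

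Second, the family $V_{\underline{\varphi}} = (V_0, \ldots, V_{2k+1})$ assembles into a natural transformation $V\colon \id \Rightarrow \iota\circ r$ of functors on the category $\mathcal{N}_{2k+1} i\mathcal{D}\mathcal{P}(A;M)^{0}$ (using $T^{2,1}_2(A;M) = \mathcal{P}(A;M)$): the naturality squares are exactly the composition identity used to define $r$. Then $V$ extends along the evaluation functor $\id \times ev_1$ precisely as in the proof of \ref{inclobinisos21}, producing a simplicial homotopy $H$ from $\iota\circ r$ to the identity which is simplicial in the $S^{2,1}$-direction.

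Finally I would verify equivariance of $V$ (and hence of $H$) in the sense $V_{D\underline{\varphi}} = D V_{\underline{\varphi}}$ modulo the reversal $i \mapsto 2k+1-i$. The underlying-isomorphism part $(A_i, B_i)$ is verbatim \ref{inclobinisos21}; the vanishing $M$-twist component of each $V_i$ is preserved by $D_J = (D_L, \widehat{J})$ because $\widehat{J}(0) = 0$, the same principle that makes $\eta_J = (\eta, 0)$ well-defined in \ref{defs21(A;M)}. This produces, for each $k$, a levelwise equivariant homotopy equivalence between the odd subdivisions of the two real sets, and realization yields the desired $\mathbb{Z}/2$-equivalence. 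The main obstacle I anticipate is purely the bookkeeping in the equivariance check: one must carefully track the duality on $\mathcal{D}T^{2,1}_{\sbt}(A;M)$, which on morphisms swaps $(a,b,f,g) \mapsto (b,a,g,f)$, combines with the indexing reversal $\overline{\theta}$, and involves $\widehat{J}$ through $D_J(\phi,f) = (D_L(\phi), \widehat{J}(f))$. The saving grace is that the twist components of the $V_i$ themselves are zero, so no nontrivial $\widehat{J}$-computation enters the equivariance argument; all the nontrivial twist data is absorbed into $r(\underline{\varphi})$, and everything needed beyond \ref{inclobinisos21} is supplied by the naturality properties of $\widehat{J}$ from Proposition~\ref{naturalityJK}.
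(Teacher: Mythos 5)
Your proposal is correct and reproduces the paper's argument: the same retraction (projection onto the middle column followed by conjugation by transport isomorphisms), the same use of the evaluation functor $ev_1$ to make the homotopy simplicial, and the same equivariance check exploiting that the transport morphisms have vanishing $M$-twist.

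One bookkeeping difference worth noting. The paper does \emph{not} package the transport data into a single natural transformation on the twisted category $\mathcal{N}_{2k+1}i\mathcal{D}\mathcal{P}(A;M)^0$. Instead it keeps the equivariant natural transformation $U$ at the level of the underlying category $\mathcal{N}_{2k+1}\mathcal{D}\mathcal{P}_A$ and handles the twist components of the homotopy by explicit piecewise formulas $H_{f^i}(\theta,\sigma)$, $H_{g^i}(\theta,\sigma)$. This caution is warranted: your claim that $V$ is a natural transformation ``of functors on $\mathcal{N}_{2k+1}i\mathcal{D}\mathcal{P}(A;M)^0$'' is only correct once one observes that the only chain morphisms entering the $ev_1$-construction are those arising from the $S^{2,1}$-structure maps $\theta\to\theta'$, and these have zero $M$-twist. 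For a general chain morphism $\underline{\psi}$ whose components carry nonzero twist $h_i$, the square $V'_i\circ\psi_i = \psi_{k+1}\circ V_i$ fails on twist components: one side gives $(A'_i\otimes M)h_i$, the other $h_{k+1}A_i$, and these need not agree. Also, the phrase ``the naturality squares are exactly the composition identity used to define $r$'' conflates the condition that $V_{\underline{\varphi}}$ be a morphism of chains (internal, automatic from the definition of the conjugates) with naturality in $\underline{\varphi}$ (external, needing the zero-twist observation). Neither issue affects the validity of the argument, since the relevant morphisms really do have zero twist, but you should make that restriction explicit.
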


\begin{proof} This proof has exactly the same structure as \ref{obintoiso} above, but with more data to keep track of.

We want to show that the inclusion $\iota$ induces a levelwise equivalence on the nerve, by moving the $S^{2,1}_{\sbt}$-simplicial direction. For $k$ fixed, an element in the nerve $\mathcal{N}_ki\mathcal{D}T^{2,1}_{\sbt}(A;M)^0$ is a commutative diagram $(\underline{\phi},\underline{f},\underline{g})$ in $S^{2,1}_{\sbt}\mathcal{P}_A$ of the form
\[\resizebox{1\hsize}{!}{\xymatrix{&Y_0\!\otimes_A\!\!M&\dots&Y_{k-2}\!\otimes_A\!\!M&Y_{k-1}\!\otimes_A\!\!M\\
Y_0\ar[d]_{\phi_0}&Y_1\ar[u]^{g^1}\ar[d]_{\phi_1}\ar[l]_{b_1}&\dots\ar[l]&Y_{k-1}\ar[d]_{\phi_{k-1}}\ar[l]\ar[u]^{g^{k-1}}&Y_k\ar[d]_{\phi_k}\ar[l]_{b_k}\ar[u]^{g^k}\\
D(X_0)&D(X_1)\ar[d]_{J(f^1)}\ar[l]^{D(a_1)}&\dots\ar[l]&D(X_{k-1})\ar[d]_{J(f^{k-1})}\ar[l]&D(X_k)\ar[d]_{J(f^{k})}\ar[l]^{D(a_k)}\\
& D(X_0)\!\otimes_A\!\!M &\dots &D(X_{k-2})\!\otimes_A\!\!M&D(X_{k-1})\!\otimes_A\!\!M
}}\]
with 
\[(\phi_{i-1}\otimes_A M)\circ g^i=\widehat{J}(f^i)\circ\phi_i\colon Y_i\longrightarrow D(X_{i-1})\otimes_A M\]
for all $i$.

The idea is that since all the horizontal maps are isomorphisms, we can retract all the data on one column. However, if we want to do this equivariantly we need to work on the subdivision, and retract to the middle column.
We define a map
\[\xymatrix{r\colon \mathcal{N}_{2k+1}i\mathcal{D}T^{2,1}_{\sbt}(A;M)^0\ar[r]& \mathcal{N}_{2k+1}\widehat{S}^{2,1}_{\sbt}(A;M)\ar@{=}[d]\\
&\coprod\limits_{\varphi\in Ob\mathcal{D}S^{2,1}_{\sbt}\mathcal{P}_A}\hom_A(\varphi,\varphi\otimes_A M)^{\oplus 2k+1}}\]
that takes $(\underline{\phi},\underline{f},\underline{g})$ to the element of $\hom_A(\phi_{k+1},\phi_{k+1}\otimes_A M)^{\oplus 2k+1}$ with components $i=1,\dots,k+1$ given by
\[\xymatrix{Y_{k+1}\ar[d]_{\phi_{k+1}}\ar[rrrrr]^-{((b_{i}\dots b_{k+1})^{-1}\otimes M) g^ib_{i+1}\dots b_{k+1}} &&&&&
Y_{k+1}\otimes_A M\ar[d]^-{\phi_{k+1}\otimes M}\\
D(X_{k+1})\ar[rrrrr]_-{J(((a_{k+1}\dots a_{i+1})\otimes M) f^{i}(a_{k+1}\dots a_{i})^{-1})}&&&&&
D(X_{k+1})\otimes_A M
}\]
and components $i=k+2,\dots,2k+1$
\[\xymatrix{Y_{k+1}\ar[d]_{\phi_{k+1}}\ar[rrrrr]^-{((b_{k+2}\dots b_{i-1})\otimes M) g^i(b_{k+2}\dots b_{i})^{-1}} &&&&&
Y_{k+1}\otimes_A M\ar[d]^-{\phi_{k+1}\otimes M}\\
D(X_{k+1})\ar[rrrrr]_-{J(((a_{i}\dots a_{k+2})^{-1}\otimes M) f^{i}a_{i-1}\dots a_{k+2})}&&&&&
D(X_{k+1})\otimes_A M
}\]

This is not simplicial in the $k$ direction. However, for $k$ fixed it is simplicial in the $S^{2,1}_{\sbt}$-direction. The composite $r\circ \iota$ is clearly the identity. In order to define a homotopy between $\iota\circ r$ and the identity, consider the equivariant natural isomorphism $U$ in the category of diagrams $\mathcal{N}_{2k+1}\mathcal{D}(\mathcal{P}_A)$ from
\[\xymatrix{
Q_0\ar[d]_{\phi_0}&Q_1\ar[d]_{\phi_1}\ar[l]_{b_1}&\dots\ar[l]&Q_{2k}\ar[d]_{\phi_{2k}}\ar[l]&Q_{2k+1}\ar[d]_{\phi_{2k+1}}\ar[l]_{b_{2k+1}}\\
D(P_0)&D(P_1)\ar[l]^{D(a_1)}&\dots\ar[l]&D(P_{2k})\ar[l]&D(P_{2k+1})\ar[l]^{D(a_{2k+1})}
}\]
to the constant diagram
\[\xymatrix{
Q_{k+1}\ar[d]_{\phi_{k+1}}&Q_{k+1}\ar[d]_{\phi_{k+1}}\ar@{=}[l]&\dots\ar@{=}[l]&Q_{k+1}\ar[d]_{\phi_{k+1}}\ar@{=}[l]&Q_{k+1}\ar[d]_{\phi_{k+1}}\ar@{=}[l]\\
D(P_{k+1})&D(P_{k+1})\ar@{=}[l]&\dots\ar@{=}[l]&D(P_{k+1})\ar@{=}[l]&D(P_{k+1})\ar@{=}[l]
}\]
In the $i$-th column it is given by the pair $(a_{k+1}\dots a_{i+1},(b_{i+1}\dots b_{k+1})^{-1})$ for $i\leq k+1$, and by $((a_i\dots a_{k+2})^{-1},b_{k+2}\dots b_i)$ for $ k+1<i$.
This induces a functor $U\colon \mathcal{N}_{2k+1}\mathcal{D}(\mathcal{P}_A)\times[1]\longrightarrow \mathcal{N}_{2k+1}\mathcal{D}(\mathcal{P}_A)$.
Now define a simplicial homotopy \[H\colon \mathcal{N}_{2k+1}i\mathcal{D}T^{2,1}_{\sbt}(A;M)^0\times \Delta[1]\longrightarrow \mathcal{N}_{2k+1}i\mathcal{D}T^{2,1}_{\sbt}(A;M)^0\] by sending $((\underline{\phi},\underline{f},\underline{g}),\sigma)$ in degree $n$ to the diagram
\[Cat([2],[n])\stackrel{\id\times ev_1}{\longrightarrow}Cat([2],[n])\times [n]\stackrel{\underline{\phi}\times \sigma}{\longrightarrow}\mathcal{N}_{2k+1}\mathcal{D}(\mathcal{P}_A)\times [1]\stackrel{U}{\longrightarrow}\mathcal{N}_{2k+1}\mathcal{D}(\mathcal{P}_A)\]
together with the maps 
\[\resizebox{1\hsize}{!}{$H_{g^i}(\theta,\sigma)\!=\!\left\{\begin{array}{lll}g^{i}_{\theta}& \sigma(\theta(1))\!=0\!\\
(((b_{i}\dots b_{k+1})^{-1}\otimes M) g^ib_{i+1}\dots b_{k+1})_{\theta}& \sigma(\theta(1))\!=\!1 \mbox{ and }i\!\leq\!k\!+\!1\\
((b_{k+2}\dots b_{i-1})\otimes M) g^i(b_{k+2}\dots b_{i})^{-1}_{\theta}& \sigma(\theta(1))\!=\!1 \mbox{ and }k\!+\!1\!<\!i
\end{array}\right.$}\]
and similarly for $H_{f^i}(\theta,\sigma)$. Here $ev_1\colon Cat([2],[n])\longrightarrow [n]$ sends $\theta$ to $\theta(1)$. The homotopy is simplicial since for all $\kappa\colon [m]\longrightarrow [n]$ we have
\[\kappa(ev_1(\theta))=ev_1(\kappa\circ\theta)\]
for all $\theta\colon[2]\longrightarrow [m]$. Moreover it is equivariant, in the sense that
\[H(D(\underline{\phi},\underline{f},\underline{g}),\sigma)=D(H(\underline{\phi},\underline{f},\underline{g},\sigma))\]
We show this carefully. Denote $Z=(\underline{\phi},\underline{f},\underline{g})$. The left hand side at the object $\theta\colon[2]\longrightarrow [n]$ is
\[H(D(Z),\sigma)_{\theta}=U(D(Z)_{\theta},\sigma(ev_1(\theta)))\]
The right hand-side is
\[D(H(Z,\sigma))_{\theta}=D(U(Z_{\overline{\theta}},\sigma(ev_1(\overline{\theta}))))=U(D(Z_{\overline{\theta}}),\sigma(n-ev_1(\overline{\theta})))\]
where the second equality follows from the equivariancy of the natural transformation $U$. But now
\[n-ev_1(\overline{\theta})=n-\overline{\theta}(1)=n-(n-\theta(2-1))=\theta(1)\]
\end{proof}


\newpage

\section{Review of $\THH$ and definition of $\THR$}

Given an abelian group $A$ and a simplicial set $X$ we denote $A[X]$ the simplicial abelian group which in degree $k$ is the $A$-linear combinations of the points in $X_k$
\[A[X]_k=A[X_k]=\bigoplus_{x\in X_k}A\cdot x\]
When $X$ is pointed, we denote $A(X)=A[X]/A\cdot\ast$.

We will be particularly interested in the case where $X$ is the $i$-th simplicial sphere
\[S^i=S^1\wedge\dots\wedge S^1 \ \ \ \ \ \ \ (i\mbox{ times})\]
Here $S^1$ is the simplicial circle $S^1=\Delta[1]/\partial\Delta[1]$.
We note that in this case $A(S^i)$ is the $i$-iterated Bar construction on $A$.


\subsection{Bimodules over categories enriched in symmetric spectra}\label{sectmodsym}

We denote $Sp^{\Sigma}$ the symmetric monoidal category of symmetric ring spectra. We refer to \cite{schwede} and \cite{HSS} for general facts about $Sp^{\Sigma}$, e.g. for the smash product. The input for $\THH$ is a category $\mathcal{C}$ enriched in $Sp^{\Sigma}$, together with a "bimodule over $\mathcal{C}$". This approach is completely analogous to $\THH$ for bimodules over ring functors as defined in \cite{ringfctrs}.

Given two objects $c$ and $d$ of a category $\mathcal{C}$ enriched in $Sp^\Sigma$, we denote $\mathcal{C}(c,d)$ the symmetric spectrum of morphisms from $c$ to $d$, and $\mathcal{C}(c,d)_i$ it's $i$-th space, for every natural number $i$. If $\mathcal{C}$ and $\mathcal{C}'$ are two categories enriched in $Sp^{\Sigma}$, we denote $\mathcal{C}\wedge \mathcal{C}'$ the category enriched in $Sp^{\Sigma}$ with objects $ObC\times Ob C'$ and morphism spectra
\[(\mathcal{C}\wedge \mathcal{C}')((c,c'),(d,d'))=\mathcal{C}(c,d)\wedge \mathcal{C}'(c',d')\]

\begin{defn} A \textbf{bimodule structure on $\mathcal{C}$} is an enriched functor $\mathcal{M}\colon\mathcal{C}^{op}\wedge\mathcal{C}\longrightarrow Sp^{\Sigma}$.
The \textbf{left action} map $l$ is defined by the diagram
\[\xymatrix{\mathcal{C}(d,e)\wedge\mathcal{M}(c,d)\ar[d]_{\cong}\ar[r]^l&\mathcal{M}(c,e)\\
\mathbb{S}\wedge\mathcal{C}(d,e)\wedge\mathcal{M}(c,d)\ar[d]_{\id_c\wedge\mathcal{C}(d,e)\wedge\mathcal{M}(c,d)}&Sp^{\Sigma}(\mathcal{M}(c,d),\mathcal{M}(c,e))\wedge\mathcal{M}(c,d)\ar[u]_{ev}\\
\mathcal{C}(c,c)\wedge\mathcal{C}(d,e)\wedge\mathcal{M}(c,d)\ar@{=}[r]&\hom_{\mathcal{C}^{op}\wedge\mathcal{C}}((c,d),(c,e))\wedge\mathcal{M}(c,d)\ar[u]_{\mathcal{M}\wedge\mathcal{M}(c,d)}
}\]
It is determined by the maps $l\colon \mathcal{C}(d,e)_i\wedge\mathcal{M}(c,d)_j\longrightarrow\mathcal{M}(c,e)_{i+j}$ defined by \[l(f\wedge m)=\mathcal{M}(\id_c\wedge f)(m)\]
and denoted $l(f\wedge m)=f\cdot m$.
The \textbf{right action} map
$r\colon \mathcal{M}(d,e)\wedge\mathcal{C}(c,d)\longrightarrow \mathcal{M}(c,e)$ is defined by
\[\xymatrix{\mathcal{M}(d,e)\wedge\mathcal{C}(c,d)\ar[d]_{\cong}\ar[r]^r&\mathcal{M}(c,e)\\
\mathcal{M}(d,e)\wedge\mathcal{C}(c,d)\wedge\mathbb{S}\ar[d]_{\mathcal{M}(d,e)\wedge\mathcal{C}(c,d)\wedge\id_e}&\mathcal{M}(d,e)\wedge Sp^{\Sigma}(\mathcal{M}(d,e),\mathcal{M}(c,e))\ar[u]_{ev\circ\gamma}\\
\mathcal{M}(d,e)\wedge\mathcal{C}(c,d)\wedge \mathcal{C}(e,e)\ar@{=}[r]&\mathcal{M}(d,e)\wedge\hom_{\mathcal{C}^{op}\wedge\mathcal{C}}((d,e),(c,e))\ar[u]_{\mathcal{M}(d,e)\wedge\mathcal{M}}
}\]
and we denote its components by
\[r(m\wedge g)=m\cdot g=\mathcal{M}(g\wedge\id_e)(m)\]
\end{defn}

Notice that since $\mathcal{M}$ preserve composition, the right and the left action satisfy the following compatibility condition
\[\xymatrix{ \mathcal{C}(d,f)\wedge\mathcal{M}(e,d)\wedge\mathcal{C}(c,e)\ar[r]^-{\mathcal{C}(d,f)\wedge r}\ar[d]_-{l\wedge\mathcal{C}(c,e)} &
\mathcal{C}(d,f)\wedge\mathcal{M}(c,d)\ar[d]^--{l}\\
 \mathcal{M}(e,f)\wedge\mathcal{C}(c,e)\ar[r]_-{r} &\mathcal{M}(c,f)
}\]

\begin{ex}
The very first example of a bimodule over a category $\mathcal{C}$ enriched in $Sp^{\Sigma}$ is the Hom-functor $\mathcal{C}^{op}\wedge\mathcal{C}\longrightarrow Sp^{\Sigma}$ which sends a pair of objects $(c,d)$ to the morphism spectrum $\mathcal{C}(c,d)$.
\end{ex}

We will mostly be concerned with the following class of linear examples.

\begin{ex}\label{homexeil} Let $C$ be a category enriched in abelian groups. Composing with the Eilenberg-MacLane functor $H\colon Ab\longrightarrow Sp^{\Sigma}$, we obtain an enrichment in $Sp^{\Sigma}$. More precisely, define a category $HC$ with same objects as $C$, and with morphisms spectra $HC(c,d)$ having $i$-th space $HC(c,d)_i$ the realization of
\[C(c,d)\otimes\mathbb{Z}(S^i)\cong C(c,d)(S^{i})\]
The symmetric action is given by permutation of the smash components of $S^{i}$.

A bimodule on $C$ is an enriched functor $M\colon C^{op}\otimes C\longrightarrow Ab$, where $C^{op}\otimes C$ has objects $ObC\times ObC$ and morphism abelian groups $C^{op}(c,d)\otimes C(c',d')$. It induces a bimodule $HM$ over $HC$ defined by the spectra $HM(c,d)$ with $i$-th space
\[M(c,d)\otimes\mathbb{Z}(S^i)\cong M(c,d)(S^{i})\]

If $A$ is a ring, and $M$ a bimodule over $A$, the functor $M\colon\mathcal{P}_{A}^{op}\otimes \mathcal{P}_{A}\longrightarrow Ab$ defined by the abelian group of right module homomorphisms
\[M(P,Q)=\hom_{A}(P,Q\otimes_A M)\]
is a bimodule over the category $\mathcal{P}_A$, and $HM$ is a bimodule over $H\mathcal{P}_A$.
\end{ex}

\begin{defn} Let $\mathcal{M}$ and $\mathcal{M}'$ be two bimodules on $\mathcal{C}$ and $\mathcal{C}'$ respectively. A \textbf{morphism of bimodules} is a pair $(F,\Phi)\colon (\mathcal{C},\mathcal{M})\longrightarrow (\mathcal{C}',\mathcal{M}')$ of an enriched functor $F\colon \mathcal{C}\longrightarrow \mathcal{C}'$ and an enriched natural transformation $\Phi\colon \mathcal{M}\Rightarrow \mathcal{M}'\circ(F^{op}\wedge F)$.
\end{defn}

\begin{rem} We remember (see e.g. \cite{kelly}) that an enriched natural transformation $U\colon G\Rightarrow G'$ between enriched functors $G,G'\colon \mathcal{C}\longrightarrow Sp^{\Sigma}$ consists of maps of spectra
\[U_{c}\colon\mathbb{S}\longrightarrow Sp^{\Sigma}(G(c),G'(c)),\]
one for every object $c\in \mathcal{C}$, satisfying a certain naturality condition. The map $U_{c}$ is determined by an element of the $0$-th space of the mapping spectrum $Sp^{\Sigma}(G(c),G'(c))$, that is, a map of spectra $U_c\colon G(c)\longrightarrow G'(c)$, and naturality for $U$ is commutativity of
\[\xymatrix{\mathcal{C}(c,d)\ar[d]_{G'}\ar[r]^-{G}&Sp^{\Sigma}(Gc,Gd)\ar[d]^{(-)\circ U_d}\\
Sp^{\Sigma}(G'c,G'd)\ar[r]_-{U_c\circ(-)}&Sp^{\Sigma}(Gc,G'd)
}\]
in $Sp^{\Sigma}$. We  refer to the maps $U_c\colon G(c)\longrightarrow G'(c)$ as an enriched natural transformation.
\end{rem}

In our specific case, $\Phi\colon \mathcal{M}\Rightarrow \mathcal{M}'\circ(F^{op}\wedge F)$ consists of maps of spectra $\Phi_{(c,d)}\colon \mathcal{M}(c,d)\longrightarrow \mathcal{M}'(Fc,Fd)$, and naturality of $\Phi$ is commutativity of the diagram
\[\xymatrix{\mathcal{C}(d,e)\wedge\mathcal{M}(c,d)\ar[d]_{F\wedge\Phi}\ar[r]^-{l}&\mathcal{M}(c,e)\ar[d]^{\Phi}\\
\mathcal{C}'(Fd,Fe)\wedge\mathcal{M}'(Fc,Fd)\ar[r]_-{l'}&\mathcal{M}'(Fc,Fe)
}\]
The situation for the right actions is completely similar.

Categories enriched in $Sp^{\Sigma}$ with bimodules together with morphism of bimodules define a category, and $\THH$ is going to be a functor from this category to spaces.


\subsection{The dihedral category of finite sets and injective maps}

The functors $\THH$ and $\THR$ are defined in terms of homotopy colimits of functors over a certain  dihedral category $I[-]$, first introduced by Bökstedt in \cite{bok}.
We recall the definition of a cyclic object in the sense of Connes, and of a dihedral object. We refer to \cite[§6.1-6.3]{loday} for further details.
\begin{defn}
A \textbf{cyclic object} in a category $\mathcal{D}$ is a simplicial object $E[-]\colon\Delta^{op}\longrightarrow \mathcal{D}$ together with maps $t_k\colon E[k]\longrightarrow E[k]$ with $t_k^{k+1}=\id$, and compatible with the simplicial structure in the following way.
\[\begin{array}{llll}d_lt_k=t_{k-1}d_{l-1} & 0<l\leq k\\
d_0t_k=d_k\\
s_lt_k=t_{k-1}s_{l-1} & 0<l\leq k\\
s_0t_k=t_{k+1}^2s_k
\end{array}\]

A \textbf{dihedral structure} on a cyclic object $E[-]$ is a family of maps $\omega_k\colon E[k]\longrightarrow E[k]$ satisfying $\omega_{k}^{2}=\id$ and compatible with the cyclic structure in the following way,
\[\begin{array}{lll}d_l\omega_k=\omega_{k-1}d_{k-l} & 0\leq l\leq k\\
s_l\omega_k=\omega_{k+1}s_{k-l} & 0\leq l\leq k\\
\omega_kt_k=t_{k}^{-1}\omega_k
\end{array}\]
\end{defn}
Notice that a dihedral object is a cyclic object with the additional structure of a real object as defined in \ref{defrealob} with a certain compatibility condition.
We will mostly be interested in cyclic and dihedral objects in the categories of spaces and categories. We refer to these respectively as cyclic/dihedral spaces and cyclic/dihedral categories.
The geometric realizations of a cyclic space and a cyclic category carry a natural $S^1$-action, see \cite[7.1.4]{loday}. In the dihedral case, this action is compatible with the $\mathbb{Z}/2$-action induced by the real structure, in the sense that it intertwines into a $O(2)=\mathbb{Z}/2\rtimes S^1$-action.

The index category $I$ has the natural numbers as objects, and the set of morphisms from $i$ to $j$ consists of all injective maps
\[\alpha\colon \{1,\dots, i\}\longrightarrow \{1,\dots, j\}\]
The empty set $0$ is an initial object of $I$. 

The category $I$ has a strict monoidal structure $+\colon I\times I\longrightarrow I$ sending a pair of objects $(i,j)$ to $i+j$ and maps $\alpha\colon i\longrightarrow i'$, $\beta\colon j\longrightarrow j'$ to
\[(\alpha+\beta)(s)=\left\{\begin{array}{ll}\alpha(s) & 1\leq s\leq i\\
\beta(s-i)+i' & i+1\leq s\leq j
\end{array}\right.\]

Also, every object $j$ has a canonical involution $\omega^j\colon j\longrightarrow j$ defined by
\[\omega^j(s)=j-s+1\]
This induces an endofunctor $\omega\colon I\longrightarrow I$ which is the identity on objects, and conjugation on morphisms. A morphism $\alpha\colon i\longrightarrow j$ is mapped to $\alpha^{\omega}=\omega^j\alpha\omega^i$. Notice that $(\alpha+\beta)^{\omega}=\beta^{\omega}+\alpha^\omega$

The monoidal structure allows us to define the cyclic bar construction of $I$. This is the simplicial category $I[-]$ given in degree $k$ by the $k+1$-fold cartesian product $I[k]=I^{k+1}$, with faces $d_i\colon I[k]\longrightarrow I[k-1]$ and degeneracy $s_j\colon I[k]\longrightarrow I[k+1]$ functors given on objects by
\[\begin{array}{lll}
d_l(i_0,\dots,i_k)=\left\{\begin{array}{ll}(i_0,\dots,i_{l-1},i_l+i_{l+1},\dots,i_k)\\
(i_k+i_0,i_1,\dots,i_{k-1})
\end{array}\right. &
\begin{array}{ll}0\leq l<k\\
i=k
\end{array}\\
\ \\
s_l(i_0,\dots,i_k)=(i_0,\dots,i_l,0,i_{l+1},\dots,i_k) & \ \ 0\leq l\leq k
\end{array}\]
and similarly on morphisms.
These functors satisfy the standard simplicial identities. In addition, the functors $t_k\colon I[k]\longrightarrow I[k]$, defined on objects by cyclic permutation
\[t_k(i_0,\dots,i_k)=(i_k,i_0,\dots,i_{k-1}),\]
give $I[-]$ the structure of a cyclic category.
The involution on $I$, induces functors $\omega_k\colon I[k]\longrightarrow I[k]$ defined on an object $\underline{i}=(i_0,\dots,i_k)$ by
\[\omega_k(i_0,\dots,i_k)=(i_0,i_k,i_{k-1},\dots,i_1)\]
and mapping a $(k+1)$-tuple of maps $(\alpha_0,\dots,\alpha_k)$ to
\[\omega_k(\alpha_0,\dots,\alpha_k)=(\alpha_{0}^{\omega},\alpha_{k}^{\omega},\alpha_{k-1}^{\omega},\dots,\alpha_{1}^{\omega})\]
The cyclic category $I[-]$ equipped with these extra maps defines a dihedral category.

It is going to be useful to describe the fixed points of the involution on $I[-]$. Let $G=\mathbb{Z}/2$. If $\mathcal{D}$ is a category with involution $\omega\colon \mathcal{D}\longrightarrow\mathcal{D}$, we denote by $\mathcal{D}^G$ it's fixed point category, that is the subcategory of $\mathcal{D}$ whose objects and morphisms are (strictly) fixed by the involution. 
In the case of $I$, its fixed point category $I^G$ has objects the natural numbers and morphisms the equivariant maps, $\alpha\colon i\longrightarrow j$ satisfying $\alpha^{\omega}=\alpha$.

\begin{prop}\label{fixedIk}
The functor $I^{G}\times I^k\times I^G\longrightarrow I[2k+1]^G$ that sends $(\alpha_0,\dots,\alpha_{k+1})$ to
\[(\alpha_0,\dots,\alpha_k,\alpha_{k+1},\alpha_{k}^{\omega},\alpha_{k-1}^{\omega},\dots,\alpha_{1}^{\omega})\]
is an isomorphism of categories, with inverse the projection onto the first $k+2$ components.
\end{prop}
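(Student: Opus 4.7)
The plan is a direct verification: unpack what it means for an object, and for a morphism, of $I[2k+1]$ to be fixed by the involution $\omega_{2k+1}$, and match the description against $I^G\times I^k\times I^G$.

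First I would recall that on objects $\omega_{2k+1}(i_0,i_1,\dots,i_{2k+1})=(i_0,i_{2k+1},i_{2k},\dots,i_1)$, so an object is fixed iff $i_j=i_{2k+2-j}$ for $j=1,\dots,2k+1$, i.e.\ iff it has the palindromic form $(i_0,i_1,\dots,i_k,i_{k+1},i_k,i_{k-1},\dots,i_1)$. This exhibits a bijection between $\Ob I[2k+1]^G$ and $\Ob(I^G\times I^k\times I^G)=\mathbb{N}^{k+2}$, realised by the proposed functor and with inverse the projection onto the first $k+2$ components; since the involution on $I$ is the identity on objects, there is no distinction between $I$ and $I^G$ on objects.

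Next I would do the analogous computation for morphisms. On morphisms $\omega_{2k+1}(\alpha_0,\dots,\alpha_{2k+1})=(\alpha_0^\omega,\alpha_{2k+1}^\omega,\alpha_{2k}^\omega,\dots,\alpha_1^\omega)$, so $(\alpha_0,\dots,\alpha_{2k+1})$ is fixed iff $\alpha_0=\alpha_0^\omega$, $\alpha_{k+1}=\alpha_{k+1}^\omega$, and $\alpha_{2k+2-j}=\alpha_j^\omega$ for $1\le j\le k$. That is, a fixed morphism is determined precisely by $(\alpha_0,\alpha_1,\dots,\alpha_k,\alpha_{k+1})$ with $\alpha_0,\alpha_{k+1}\in I^G$ and $\alpha_1,\dots,\alpha_k$ arbitrary in $I$; and any such tuple defines a fixed morphism via the unfolding rule in the statement. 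This is exactly the data of a morphism in $I^G\times I^k\times I^G$, so the proposed functor is a bijection on morphism sets as well.

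It remains to check that the unfolding is functorial, i.e.\ compatible with identities and with composition in $I[2k+1]$, which is componentwise. The identity goes to an all-identity tuple (noting $\id^\omega=\id$), and for composition one simply notes $(\beta\alpha)^\omega=\beta^\omega\alpha^\omega$, so the $\omega$-conjugated entries in positions $k+2,\dots,2k+1$ compose correctly. Combined with the object and morphism bijections this yields an isomorphism of categories whose inverse is the projection onto the first $k+2$ components, as claimed. The only point that requires any care is keeping the indexing straight in the palindromic identification; once that is set up, every step is mechanical.
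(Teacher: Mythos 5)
Your verification is correct, and it is the only reasonable way to prove this: unpack the fixed-point conditions of $\omega_{2k+1}$ on objects and morphisms, observe that the data is exactly a palindromic tuple determined by its first $k+2$ entries with equivariance forced at positions $0$ and $k+1$, and check that unfolding respects identities and (componentwise) composition via $(\beta\alpha)^\omega=\beta^\omega\alpha^\omega$. The paper states this proposition without proof, treating the check as routine, and your write-up supplies exactly that routine check with the indexing handled correctly.
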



\subsection{Topological Hochschild homology }\label{thhsec}

We recall the definition of Bökstedt's functor $\THH$. For details one can see \cite{ringfctrs} or \cite{Dundasbook}, and \cite{Ibtraces} for a nice survey.
Let $\mathcal{C}$ be a category enriched in $Sp^{\Sigma}$ and $\mathcal{M}\colon \mathcal{C}^{op}\wedge \mathcal{C}\longrightarrow Sp^{\Sigma}$ a bimodule over $\mathcal{C}$. Given an object $\underline{i}=(i_0,\dots,i_k)\in I[k]$, we denote by $V(\mathcal{C};\mathcal{M},\underline{i})$ the pointed space
\[V(\mathcal{C};\mathcal{M},\underline{i})=\bigvee \mathcal{M}(c_0,c_k)_{i_0}\wedge\mathcal{C}(c_1,c_0)_{i_1}\wedge\mathcal{C}(c_2,c_1)_{i_2}\wedge\dots\wedge\mathcal{C}(c_k,c_{k-1})_{i_k}\]
where the wedge runs over $(k+1)$-tuples of objects $c_0,\dots,c_k$ of $\mathcal{C}$. Inserting an identity of $\mathcal{C}$ after the $i_l$-factor, induces a map
\[\overline{s}_l\colon V(\mathcal{C};\mathcal{M},\underline{i})\longrightarrow V(\mathcal{C};\mathcal{M},s_l\underline{i})\]
for all $0\leq k$. Similarly, there are maps
\[\overline{d}_l\colon V(\mathcal{C};\mathcal{M},\underline{i})\longrightarrow V(\mathcal{C};\mathcal{M},d_l\underline{i})\]
defined as follows. For $1\leq l\leq k-1$, they are induced by composition in $\mathcal{C}$ of the $i_l$ and $i_{l+1}$ smash factor. The map $d_0$ is defined by the right action map of the bimodule on the $i_0$ and $i_1$ factors. The map $d_k$ is given by the left action map on the $i_k$ and $i_0$ factors. More precisely, it is the composition
\[\xymatrix{\bigvee \mathcal{M}(c_0,c_k)_{i_0}\wedge\mathcal{C}(c_1,c_0)_{i_1}\wedge\dots\wedge\mathcal{C}(c_k,c_{k-1})_{i_k}\ar[d]^\cong \\
\bigvee\mathcal{C}(c_k,c_{k-1})_{i_k}\wedge \mathcal{M}(c_0,c_k)_{i_0}\wedge\mathcal{C}(c_1,c_0)_{i_1}\wedge\dots\wedge\mathcal{C}(c_{k-1},c_{k-2})_{i_{k-1}}\ar[d]^{l\wedge \id\wedge\dots\wedge\id}\\
\bigvee \mathcal{M}(c_0,c_{k-1})_{i_k+i_0}\wedge\mathcal{C}(c_1,c_0)_{i_1}\wedge\dots\wedge\mathcal{C}(c_{k-1},c_{k-2})_{i_{k-1}}
}\]
of the canonical isomorphism permuting the smash factor, and of the $(i_k,i_0)$-component of the left module structure map.
When $\mathcal{M}=\mathcal{C}$ is the Hom-bimodule, cyclic permutation of the smash factor induces a map
\[\overline{t}_k\colon V(\mathcal{C};\mathcal{C},\underline{i})\longrightarrow V(\mathcal{C};\mathcal{C},t_k\underline{i})\]

For every natural number $k$, define a functor $\mathcal{G}_k(\mathcal{C};\mathcal{M})\colon I[k]\longrightarrow \Top_\ast$ by sending $\underline{i}=(i_0,\dots,i_k)$ to the pointed mapping space
\[\mathcal{G}_k(\mathcal{C};\mathcal{M})(\underline{i})=\map_{\ast}(S^{i_0}\wedge\dots\wedge S^{i_k},V(\mathcal{C};\mathcal{M},\underline{i}))\]
On morphisms, this functor is defined using the structure maps of the morphisms spectra. We refer to \cite[IV-1.2.1]{Dundasbook} for the details.
We denote the homotopy colimit of this functor by
\[\THH_k(\mathcal{C};\mathcal{M})=\hocolim_{I[k]} \mathcal{G}_k(\mathcal{C};\mathcal{M})\]
Composition with the maps $\overline{d}_l$,$\overline{s}_l$ and $\overline{t}$ above induce natural transformations
\[\begin{array}{lll}\overline{d}_l\colon \mathcal{G}_k(\mathcal{C};\mathcal{M})\longrightarrow \mathcal{G}_{k-1}(\mathcal{C};\mathcal{M})\circ d_l\\
\overline{s}_l\colon \mathcal{G}_k(\mathcal{C};\mathcal{M})\longrightarrow \mathcal{G}_{k+1}(\mathcal{C};\mathcal{M})\circ s_l\\
\overline{t}_k\colon \mathcal{G}_k(\mathcal{C};\mathcal{C})\longrightarrow \mathcal{G}_k(\mathcal{C};\mathcal{C})\circ t_k
\end{array}\]
These natural transformation followed by the natural maps on homotopy colimits induce maps
\[\begin{array}{lll}d_l\colon \THH_k(\mathcal{C};\mathcal{M})\longrightarrow \hocolim_{I[k]} \mathcal{G}_{k-1}(\mathcal{C};\mathcal{M})\circ d_l\longrightarrow\THH_{k-1}(\mathcal{C};\mathcal{M})\\
s_l\colon \THH_k(\mathcal{C};\mathcal{M}) \longrightarrow \hocolim_{I[k]} \mathcal{G}_{k+1}(\mathcal{C};\mathcal{M})\circ s_l\longrightarrow\THH_{k+1}(\mathcal{C};\mathcal{M})\\
t_k\colon\THH_k(\mathcal{C};\mathcal{C})\longrightarrow\hocolim_{I[k]} \mathcal{G}_k(\mathcal{C};\mathcal{C})\circ t_k\longrightarrow\THH_k(\mathcal{C};\mathcal{C})
\end{array}\]
The maps $d_l$ and $s_l$ make $[k]\mapsto \THH_k(\mathcal{C};\mathcal{M})$ into a simplicial space. When $\mathcal{M}=\mathcal{C}$ is the Hom-bimodule, the maps $t_k$ make $[k]\mapsto\THH_k(\mathcal{C};\mathcal{C})$ into a cyclic space. Therefore it's realization has a natural $S^1$-action.

\begin{defn}
Let $\mathcal{C}$ be a category enriched in symmetric spectra and $\mathcal{M}$ a bimodule over $\mathcal{C}$. The \textbf{topological Hochschild homology of $\mathcal{C}$ with coefficients in $\mathcal{M}$} is the topological space defined as the fat geometric realization
\[\THH(\mathcal{C};\mathcal{M})=|[k]\mapsto \THH_k(\mathcal{C};\mathcal{M})|\]
The \textbf{topological Hochschild homology of $\mathcal{C}$ (with coefficients in $\mathcal{C}$)} is the $S^1$-space defined as the geometric realization
\[\THH(\mathcal{C})=|[k]\mapsto \THH_k(\mathcal{C};\mathcal{C})|\]
\end{defn}

A map of bimodules $(F,\Phi)\colon (\mathcal{C},\mathcal{M})\longrightarrow (\mathcal{C}',\mathcal{M}')$ induces in the obvious way maps $V(\mathcal{C};\mathcal{M},\underline{i})\longrightarrow V(\mathcal{C}';\mathcal{M}',\underline{i})$, and hence maps $THH_k(\mathcal{C};\mathcal{M})\longrightarrow THH_k(\mathcal{C}';\mathcal{M}')$ compatible with the simplicial structure. All in all, a morphism $(F,\Phi)$ induces
\[(F,\Phi)_\ast\colon \THH(\mathcal{C};\mathcal{M})\longrightarrow \THH(\mathcal{C}';\mathcal{M}')\]
This map is $S^1$-equivariant when $\mathcal{M}=\mathcal{C}$ and $\mathcal{M}'=\mathcal{C}'$.


\subsection{Basic equivariant homotopy theory}\label{secghtpytheory}

Here is a quick review of basic notion of $\mathbb{Z}/2$-equivariant homotopy theory, mainly to fix some terminology. We denote $G=\mathbb{Z}/2$.

\begin{defn}
Let $\nu=(\nu_1,\nu_2)$ be a pair of integers, and $X$ a $G$-space. We say that \textbf{$X$ is $\nu$-connected} if it is $\nu_1$-connected as a non-equivariant space, and the fixed points space $X^G$ is $\nu_2$-connected.

A $G$-equivariant map $f\colon X\longrightarrow Y$ is $\nu$-connected if it is $\nu_1$-connected as a map of topological spaces, and its restriction to the fixed points $f^G\colon X^G\longrightarrow Y^G$ is $\nu_2$-connected; we remember that a map of spaces is $n$-connected if its homotopy fiber is $(n-1)$-connected, that is if the first $n-1$ homotopy groups are trivial.
\end{defn}

\begin{defn}
A \textbf{$G$-CW-complex} is a CW-complex $X$ together with a $G$-action via cellular maps such that $X^G$ is a subcomplex of $X$.

The \textbf{dimension} of a $G$-CW-complex $X$ is the pair of natural number
\[\dim_GX=(\dim X,\dim X^G)\]
given by the dimensions of $X$ and $X^G$ as CW-complexes.
\end{defn}

We denote $[X,Y]_G$ the set of based $G$-homotopy classes of based $G$-maps between $X$ and $Y$.

\begin{theorem}[$G$-Whitehead theorem,{\cite[2.7]{adams}},{\cite[3.4]{alaska}}] $ $\newline
\noindent Let $f\colon X\longrightarrow Y$ be a $\nu$-connected based  map of $G$-CW-complexes. Then for all $G$-CW-complex $K$, the induced map
\[f_\ast\colon [K,X]_G\longrightarrow[K,Y]_G\]
is a bijection if $\dim_GK<\nu$, and a surjection if $\dim_GK\leq\nu$.
\end{theorem}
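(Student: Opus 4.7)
The plan is a cell-by-cell obstruction-theoretic induction on the $G$-CW structure of $K$, reducing each step to a classical non-equivariant lifting problem against either $f\colon X\to Y$ or its restriction $f^G\colon X^G\to Y^G$, which have connectivity $\nu_1$ and $\nu_2$ respectively. This is the $G=\mathbb{Z}/2$ specialization of the general $G$-Whitehead theorem, in which the connectivity of an equivariant map is an order-reversing function on the poset of orbit types $G/H$; here that poset has only the two elements $G/e$ and $G/G$, giving rise to the pair $(\nu_1,\nu_2)$. A $G$-cell of orbit type $G/e$ is a \emph{free cell} $G\times D^n$ attached along $G\times S^{n-1}$ (empty fixed subspace); a cell of orbit type $G/G$ is a \emph{trivial cell} $D^n$ attached along $S^{n-1}\subset K^G$. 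By definition $\dim_G K\leq\nu$ is equivalent to every free cell satisfying $n\leq\nu_1$ and every trivial cell satisfying $n\leq\nu_2$.

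First I would recast both assertions as a single relative extension principle: for any inclusion $(K,L)$ of $G$-CW-complexes, a $G$-map $K\to Y$ together with a compatible partial $G$-lift $L\to X$ through $f$ extends to a $G$-lift $K\to X$ whenever $\dim_G(K\setminus L)\leq\nu$, and the extension is unique up to $G$-homotopy rel $L$ whenever $\dim_G(K\setminus L)<\nu$. Surjectivity of $f_*$ is the special case $L=\emptyset$. Injectivity of $f_*$ follows by applying the extension assertion to the pair $(K\times I,\,K\times\partial I)$, whose relative dimension in each coordinate is one greater than that of $K$, so the hypothesis $\dim_G K<\nu$ translates precisely to the relative dimension being $\leq\nu$.

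The induction then runs on the non-equivariant dimension of cells in $K\setminus L$, split into cases by orbit type. For a free cell $G\times D^n$, necessarily with $n\leq\nu_1$, the adjunction $\map_G(G\times A,Z)\cong\map(A,Z)$ identifies the $G$-equivariant extension problem over a single $n$-cell with the classical problem of extending a non-equivariant $n$-cell through $f$; the obstruction lies in $\pi_n(\hofib f)$ and vanishes under the connectivity hypothesis for $n\leq\nu_1$, while the next obstruction, governing uniqueness, vanishes for $n<\nu_1$. For a trivial cell $D^n$, necessarily with $n\leq\nu_2$, both the attaching map and the sought lift factor through the respective fixed-point subspaces, so the extension problem reduces to the non-equivariant obstruction for $f^G\colon X^G\to Y^G$, which vanishes in $\pi_n(\hofib f^G)$ for $n\leq\nu_2$ and strictly for uniqueness when $n<\nu_2$.

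The main technical delicacy is the bookkeeping of the skeletal filtration: one cannot in general attach all free cells before all trivial cells, so the induction must interleave them, and one must verify that the adjunction used in the free-cell step remains natural with respect to the partial data assembled at each previous stage (in particular that the obstruction classes pair correctly with the non-equivariant Hurewicz invariants of $\hofib f$ and of $\hofib f^G$). This is a standard argument, executed explicitly in the references \cite[2.7]{adams} and \cite[3.4]{alaska}; the dichotomy of orbit types for $G=\mathbb{Z}/2$ makes the two connectivity bounds $\nu_1$ and $\nu_2$ enter independently, one per orbit type, which is exactly the content of the theorem.
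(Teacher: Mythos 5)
The paper does not prove this statement: it is quoted from the cited references (Adams and the Alaska lecture notes), so there is no in-paper argument to compare against. Your cell-by-cell obstruction-theoretic induction --- free cells $G\times D^n$ handled via the adjunction reducing to the nonequivariant problem for $f$, trivial cells forced into the fixed points and handled by $f^G$, injectivity via the pair $(K\times I,K\times\partial I)$ --- is exactly the standard proof found in those references and is essentially correct; the only slips are cosmetic: the extension obstruction over an $n$-cell lies in $\pi_n$ of the mapping-cylinder pair, i.e.\ $\pi_{n-1}(\hofib f)$ rather than $\pi_n(\hofib f)$ (which is precisely why $n\le\nu_1$ suffices), and $\dim_GK\le\nu$ implies, rather than is equivalent to, your per-cell bounds, though the implication is the direction your induction actually uses.
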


We denote $\map_\ast(X,Y)^G$ the space of based $G$-maps with the compactly generated compact-open topology. This is equal to the fixed points of the mapping space $\map_\ast(X,Y)$ with conjugation action.

\begin{cor}\label{eqmappingspace}
Let $f\colon X\longrightarrow Y$ be a $\nu$-connected based  map of $G$-CW-complexes. Then for all $G$-CW-complex $K$, the induced map
\[(f_\ast)^G\colon \map_\ast(K,X)^G\longrightarrow\map_\ast(K,Y)^G \]
is $min\{\nu_1-\dim K,\nu_2-\dim K^G\}$-connected.
\end{cor}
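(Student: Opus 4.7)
The plan is to reduce the statement to the $G$-Whitehead theorem via the standard loop-smash adjunction. To show that $(f_*)^G$ is $m$-connected with $m = \min\{\nu_1 - \dim K, \nu_2 - \dim K^G\}$, it suffices to show that for every $k \leq m$ the induced map
\[ [S^k, \map_\ast(K,X)^G]_\ast \longrightarrow [S^k,\map_\ast(K,Y)^G]_\ast \]
is bijective for $k < m$ and surjective for $k = m$, where $S^k$ is given the trivial $G$-action.

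First I would rewrite both sides using the adjunction
\[ \map_\ast(S^k, \map_\ast(K,Z)^G) \;=\; \map_\ast(S^k, \map_\ast(K,Z))^G \;=\; \map_\ast(S^k \wedge K, Z)^G, \]
valid because $G$ acts trivially on $S^k$. Passing to based $G$-homotopy classes identifies the map in question with
\[ f_\ast \colon [S^k \wedge K, X]_G \longrightarrow [S^k \wedge K, Y]_G. \]

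Next I would compute the equivariant dimension of $S^k \wedge K$. Since $G$ acts trivially on $S^k$, one has $(S^k \wedge K)^G = S^k \wedge K^G$, so
\[ \dim_G (S^k \wedge K) \;=\; \bigl(\dim K + k,\; \dim K^G + k\bigr). \]
For $k < m$ the two inequalities $k < \nu_1 - \dim K$ and $k < \nu_2 - \dim K^G$ give $\dim_G(S^k \wedge K) < \nu$; for $k = m$ the non-strict inequalities give $\dim_G(S^k\wedge K) \leq \nu$. Applying the $G$-Whitehead theorem quoted above to the $\nu$-connected map $f$ and the $G$-CW-complex $S^k \wedge K$ (which carries the evident $G$-CW structure from $K$, with trivial action on the $S^k$-coordinates) yields bijectivity for $k < m$ and surjectivity for $k = m$, as required.

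I do not expect a serious obstacle: the only point that deserves care is ensuring that $S^k \wedge K$ is genuinely a $G$-CW-complex with $(S^k \wedge K)^G = S^k \wedge K^G$ and that the adjunction above is compatible with taking $G$-fixed points on both variables. Both are formal once $S^k$ has trivial $G$-action, which is the natural choice since we are computing ordinary (integer-indexed) homotopy groups of the fixed-point space $\map_\ast(K,Z)^G$.
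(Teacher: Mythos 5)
Your proof is correct and follows essentially the same route as the paper: both use the adjunction $[S^k,\map_\ast(K,Z)]_G \cong [S^k\wedge K,Z]_G$ (for $S^k$ with trivial action), compute $\dim_G(S^k\wedge K) = (k,k) + \dim_G K$, and then invoke the $G$-Whitehead theorem to get the desired bijectivity/surjectivity ranges. The only cosmetic difference is that you spell out the adjunction in slightly more detail and flag the $G$-CW structure on $S^k\wedge K$ explicitly, which the paper leaves implicit.
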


\begin{proof}
Consider the natural bijection
\[\resizebox{1\hsize}{!}{$\pi_k(\map_\ast(K,X)^G)=[S^k,\map_\ast(K,X)^G]=[S^k,\map_\ast(K,X)]_G\cong [S^k\wedge K,X]_G$}\]
where the second equality holds because $S^k$ has trivial action, and $S^k\wedge K$ has diagonal action. Under this bijection, our map corresponds to
\[f_\ast\colon[S^k\wedge K,X]_G\longrightarrow[S^k\wedge K,Y]_G\]
of the $G$-Whitehead theorem,
which is a bijection for $\dim_G(S^k\wedge K)<\nu$ and a surjection for $\dim_G(S^k\wedge K)\leq\nu$. But now,
\[\dim_G(S^k\wedge K)=(k,k)+\dim_G(K)\]
and therefore $\pi_k(f_{\ast}^G$) is an isomorphism for 
\[k<min\{\nu-\dim_G(K)\}=min\{\nu_1-\dim K,\nu_2-\dim K^G\}\]
and surjective for $k\leq min\{\nu_1-\dim K,\nu_2-\dim K^G\}$.
\end{proof}

For $V$ a $G$-representation, we denote $S^V$ its one point compactification with induced $G$-action, and $\Omega^V X=\map_\ast(S^V,X)$ the mapping space with conjugation action. For a space $Y$, we denote $\conn(Y)$ the biggest integer $k$ such that $Y$ is $k$-connected.

\begin{theorem}[$G$-suspension theorem,{\cite[3.3]{adams}},{\cite[2.11]{shah}}]\label{Gsuspthm}
The map \[\mu\colon X\longrightarrow \Omega^V(S^V\wedge X)\] adjoint to the identity of $S^V\wedge X$ is 
\[\nu=(2\conn(X)+1,\min\{\conn(X),2\conn(X^G)+1\})\]
connected.
\end{theorem}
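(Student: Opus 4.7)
The plan is to prove the two coordinates of $\nu$ separately. The non-equivariant bound $2\conn(X)+1$ is immediate from the classical Freudenthal suspension theorem applied to the underlying pointed space of $X$ and the $\dim V$-dimensional sphere $S^V$; the work lies entirely in bounding the connectivity of the fixed-point map $\mu^G \colon X^G \to (\Omega^V(S^V \wedge X))^G$.

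To analyze $\mu^G$ I would begin with the cofibre sequence of pointed $G$-spaces $S^{V^G} \hookrightarrow S^V \twoheadrightarrow S^V/S^{V^G}$. Applying $\map_\ast(-, S^V \wedge X)^G$ produces a homotopy fibre sequence
\[
F := \map_\ast(S^V/S^{V^G},\, S^V \wedge X)^G \longrightarrow (\Omega^V(S^V \wedge X))^G \stackrel{p}{\longrightarrow} \map_\ast(S^{V^G},\, S^V \wedge X)^G.
\]
Because $V^G$ carries the trivial action, the base identifies canonically with $\Omega^{V^G}(S^{V^G} \wedge X^G)$, and a direct inspection shows that the composite $p\circ\mu^G$ is precisely the classical Freudenthal suspension map for $X^G$, hence $(2\conn(X^G)+1)$-connected by the ordinary suspension theorem.

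The next step is to form the homotopy pullback $P$ of $p$ along $p\circ\mu^G$. The universal property factors $\mu^G$ as $X^G \stackrel{\sigma}{\to} P \stackrel{q}{\to} (\Omega^V(S^V \wedge X))^G$, where $\sigma$ is a section of the pullback fibration $P \to X^G$ (whose fibre is again $F$) and $q$ has homotopy fibre equal to $\hofib(p\circ\mu^G)$. A standard long-exact-sequence argument shows that a section of a fibration is as connected as its fibre, so $\sigma$ is $\conn(F)$-connected; meanwhile $q$ is $(2\conn(X^G)+1)$-connected. Composing, $\mu^G$ is at least $\min\{\conn(F),\, 2\conn(X^G)+1\}$-connected.

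The remaining, and only technically delicate, step is to prove $\conn(F) \geq \conn(X)$. I would invoke the standard $G$-CW structure on $S^V$ in which $S^{V^G}$ is a subcomplex, so that the quotient $S^V/S^{V^G}$ is a pointed $G$-CW-complex with $(S^V/S^{V^G})^G = *$ and all non-basepoint cells free of non-equivariant dimension at most $\dim V$. Using that $\map_\ast(G_+\wedge S^k,\, Y)^G \cong \Omega^k Y$ for each free cell and running up the skeletal tower yields
\[
\conn(F) \;\geq\; \conn(S^V \wedge X) - \dim V \;=\; \conn(X),
\]
since $S^V \wedge X$ is $(\dim V + \conn(X))$-connected. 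Combining all the bounds produces the claimed fixed-point connectivity $\min\{\conn(X),\, 2\conn(X^G)+1\}$.
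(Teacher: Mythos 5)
The paper does not give its own proof of this theorem; it is quoted from the references \cite{adams} and \cite{shah}, so there is no in-paper argument to compare against. Your proof is correct, and it follows the line of reasoning that is standard for this result (it is essentially the approach one finds in the cited sources). The two main ingredients are exactly the right ones: (i) the fibre sequence obtained by applying $\map_\ast(-,S^V\wedge X)^G$ to the $G$-cofibre sequence $S^{V^G}\hookrightarrow S^V \twoheadrightarrow S^V/S^{V^G}$, combined with the identification of the base with the nonequivariant suspension map for $X^G$; and (ii) the bound $\conn\bigl(\map_\ast(S^V/S^{V^G},S^V\wedge X)^G\bigr)\geq\conn(X)$, which uses that $S^V/S^{V^G}$ is a free $G$-CW-complex of dimension $\dim V$ together with the untwisting $\map_\ast(G_+\wedge S^k,Y)^G\cong\Omega^kY$. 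Your factorisation $\mu^G=q\circ\sigma$ through the homotopy pullback, with $\sigma$ a section of a fibration with fibre $F$ and $q$ a base change of $p\circ\mu^G$, is a clean way to combine the two bounds, and the claim that a section of a fibration is as connected as its fibre is correct (it follows from the split long exact sequence). One small remark: the argument implicitly assumes $X$ has the $G$-homotopy type of a $G$-CW-complex, which is the standing hypothesis in the sources; you could make that explicit.
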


\begin{cor}\label{corgsusp}
Let $X$ and $Y$ be two $G$-CW-complexes, and $V$ a representation of $G$. Then the map
\[(-)\wedge S^V\colon\map_\ast(X,Y)^G\longrightarrow\map_\ast(X\wedge S^V,Y\wedge S^V)^G\]
that sends an equivariant map $f$ to $f\wedge S^V$ has connectivity
\[\resizebox{1\hsize}{!}{$\min\{2\conn(Y)-\dim(X)+1,\conn(Y)-\dim(X^G),
2\conn(Y^G)-\dim(X^G)+1\}$}\]
\end{cor}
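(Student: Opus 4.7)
The plan is to identify the map $(-)\wedge S^V$ with post-composition by the equivariant adjunction unit and then apply the two preceding results (the $G$-suspension theorem and Corollary \ref{eqmappingspace}) in sequence.

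First, I would use the smash-hom adjunction $\map_\ast(X\wedge S^V,Y\wedge S^V)\cong \map_\ast(X,\Omega^V(Y\wedge S^V))$, which is $G$-equivariant since $S^V$ is a $G$-space and mapping spaces carry conjugation actions. Under this isomorphism, the map $f\mapsto f\wedge\id_{S^V}$ becomes post-composition $\mu_\ast$ with the unit $\mu\colon Y\longrightarrow \Omega^V(Y\wedge S^V)$ from the $G$-suspension theorem, since the adjoint of $f\wedge\id_{S^V}$ sends $x\in X$ to the map $v\mapsto f(x)\wedge v$, which is exactly $\mu(f(x))$. Restricting to fixed points, the map in question is identified with
\[(\mu_\ast)^G\colon\map_\ast(X,Y)^G\longrightarrow\map_\ast(X,\Omega^V(Y\wedge S^V))^G.\]

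Next, I would apply Theorem \ref{Gsuspthm} to determine the connectivity of $\mu$. This gives that $\mu$ is $\nu$-connected with
\[\nu=\bigl(2\conn(Y)+1,\ \min\{\conn(Y),\ 2\conn(Y^G)+1\}\bigr).\]
Then I would feed this connectivity, together with the $G$-CW-complex $K=X$, into Corollary \ref{eqmappingspace}, which asserts that $(\mu_\ast)^G$ is $\min\{\nu_1-\dim X,\ \nu_2-\dim X^G\}$-connected.

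Finally, expanding the minimum yields
\[\min\bigl\{2\conn(Y)+1-\dim X,\ \conn(Y)-\dim X^G,\ 2\conn(Y^G)+1-\dim X^G\bigr\},\]
which is exactly the claimed connectivity. There is no real obstacle here: the only subtle point is verifying that the adjunction isomorphism is $G$-equivariant and that it carries $(-)\wedge S^V$ to $\mu_\ast$, after which the statement is a formal combination of the two prior results.
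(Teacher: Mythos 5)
Your proposal is correct and follows the same route as the paper: identify $(-)\wedge S^V$ with post-composition by the unit $\mu$ under the smash-hom adjunction, read off the connectivity of $\mu$ from the $G$-suspension theorem, and then apply Corollary \ref{eqmappingspace}. The paper presents this as a single commutative triangle, but the argument is identical.
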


\begin{proof}
Notice that there is a commutative diagram
\[\xymatrix{\map_\ast(X,Y)^G\ar[dr]_-{(-)\circ\mu}\ar[r]^-{(-)\wedge S^V}&\map_\ast(X\wedge S^V,Y\wedge S^V)^G\ar[d]^{\cong}\\
&\map_\ast(X,\Omega^V(Y\wedge S^V))^G
}\]
where the connectivity of $\mu$ is given by the $G$-suspension theorem. Computing the connectivity of $(-)\circ\mu$ using corollary \ref{eqmappingspace} gives the formula above.
\end{proof}

We finish this section by describing how to construct a $G=\mathbb{Z}/2$-action on wedges and products of $G$-spaces.

Suppose that $\{X_\lambda\}_{\lambda\in\Lambda}$ is a family of based spaces indexed over a finite set $\Lambda$ with involution $\omega\colon \Lambda\longrightarrow \Lambda$, and suppose that there are based maps $D_\lambda\colon X_{\lambda}\longrightarrow X_{\omega\lambda}$ satisfying $D_{\omega \lambda}\circ D_{\lambda}=\id_{X_{\lambda}}$. The wedge $\bigvee_{\lambda\in\Lambda} X_\lambda$ admits a $G$-action described as follows. Let $(x,\lambda)$ be an element of the wedge, indicating that $x$ belongs to the $\lambda$-component of the wedge. Then the action sends $(x,\lambda)$ to
\[D(x,\lambda)=(D_\lambda x,\omega\lambda)\]
Similarly, there is an action on the product $\prod_{\lambda\in \Lambda}X_\lambda$, sending a sequence $\underline{x}=\{x_\lambda\}$ to the sequence $D\underline{x}$ with $\lambda$-component
\[(D\underline{x})_\lambda=D_{\omega\lambda}x_{\omega\lambda}\in X_{\omega^2\lambda}=X_\lambda\]

The inclusion $\bigvee_{\lambda\in \Lambda}X_\lambda\longrightarrow\prod_{\lambda\in \Lambda}X_\lambda$ is equivariant with respect to these actions, and we want to know how connected this map is on the fixed points. Notice that if $\lambda\in\Lambda^G$ is a fixed point, the map $D_\lambda$ defines a $G$-action on $X_\lambda$. The fixed points of the wedge are then
\[(\bigvee_{\lambda\in \Lambda}X_\lambda)^G=\bigvee_{\lambda\in \Lambda^G}(X_\lambda)^G\]
The description of the fixed points of the product is slightly more involved. For every "free element" $\lambda\in\Lambda\backslash\Lambda^G$, chose one representative of its orbit. Denote the set of these representatives by $\Lambda_f$. Then the fixed points of the product are
\[(\prod_{\lambda\in \Lambda}X_\lambda)^G=(\prod_{\lambda\in \Lambda^G}(X_\lambda)^G)\times \prod_{\lambda\in \Lambda_f}X_\lambda\]
Notice that for different choices of representatives, the map $D_\lambda\colon X_{\lambda}\longrightarrow X_{\omega\lambda}$ is a homeomorphism.
Under these identification, the map $\bigvee_{\lambda\in \Lambda}X_\lambda\longrightarrow\prod_{\lambda\in \Lambda}X_\lambda$ on the fixed points is the inclusion in the first factor
\[\bigvee_{\lambda\in \Lambda^G}(X_\lambda)^G\longrightarrow (\prod_{\lambda\in \Lambda^G}(X_\lambda)^G)\times \prod_{\lambda\in \Lambda_f}X_\lambda\]
In particular, this gives the following description for the connectivity of the inclusion.

\begin{prop}\label{wedgesintoproducts}
Suppose that every space $X_\lambda$ is $n$-connected, and that for $\lambda\in\Lambda^G$ the fixed points space $X_{\lambda}^G$ is $m$-connected. Then the inclusion
\[\bigvee_{\lambda\in \Lambda}X_\lambda\longrightarrow\prod_{\lambda\in \Lambda}X_\lambda\]
is $(2n+1, \min\{n,2m+1\})$-connected
\end{prop}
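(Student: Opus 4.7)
The discussion immediately preceding the statement already identifies the fixed points:
\[\Bigl(\bigvee_{\lambda\in\Lambda}X_\lambda\Bigr)^{\!G}=\bigvee_{\lambda\in\Lambda^G}X_\lambda^G,\qquad \Bigl(\prod_{\lambda\in\Lambda}X_\lambda\Bigr)^{\!G}=\Bigl(\prod_{\lambda\in\Lambda^G}X_\lambda^G\Bigr)\times\prod_{\lambda\in\Lambda_f}X_\lambda,\]
and the equivariant inclusion on fixed points becomes the inclusion of the wedge into the first factor. So the plan is to separate the problem into two classical connectivity estimates: a non-equivariant wedge-into-product estimate on the underlying spaces, and the same estimate on fixed points combined with an inclusion-as-section.

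First I would recall the classical fact that for a finite family of based $n$-connected well-pointed spaces $Y_1,\dots,Y_N$ the inclusion $\bigvee Y_i\hookrightarrow\prod Y_i$ is $(2n+1)$-connected. This is proved by induction on $N$ using the cofiber sequence $Y_1\vee Y_2\to Y_1\times Y_2\to Y_1\wedge Y_2$ and the fact that $Y_1\wedge Y_2$ is $(2n+1)$-connected whenever $Y_1,Y_2$ are $n$-connected (by Hurewicz and K\"unneth). Applying this to $\{X_\lambda\}_{\lambda\in\Lambda}$, the non-equivariant connectivity of the inclusion is $2n+1$; applying it to $\{X_\lambda^G\}_{\lambda\in\Lambda^G}$, the inclusion $\bigvee_{\lambda\in\Lambda^G}X_\lambda^G\hookrightarrow\prod_{\lambda\in\Lambda^G}X_\lambda^G$ is $(2m+1)$-connected.

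For the fixed-point statement I would then factor the inclusion as
\[\bigvee_{\lambda\in\Lambda^G}X_\lambda^G\;\xrightarrow{\;\iota\;}\;\prod_{\lambda\in\Lambda^G}X_\lambda^G\;\xrightarrow{\;\jmath\;}\;\Bigl(\prod_{\lambda\in\Lambda^G}X_\lambda^G\Bigr)\times\prod_{\lambda\in\Lambda_f}X_\lambda,\]
where $\jmath(y)=(y,\ast)$. The map $\iota$ is $(2m+1)$-connected by the previous paragraph. The map $\jmath$ is a section of the projection onto the first factor, so its homotopy fiber is $\Omega\bigl(\prod_{\lambda\in\Lambda_f}X_\lambda\bigr)$; since each $X_\lambda$ is $n$-connected the product is $n$-connected, hence the loop space is $(n-1)$-connected and $\jmath$ is $n$-connected. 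The composite $\jmath\circ\iota$ is therefore $\min(2m+1,n)$-connected, which together with the first paragraph gives the stated connectivity $(2n+1,\min\{n,2m+1\})$.

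There is no real obstacle; the only mild care needed is the edge cases $\Lambda^G=\emptyset$ (in which case the wedge of fixed points is a point and $\iota$ is trivially as highly connected as one wishes) and $\Lambda_f=\emptyset$ (in which case $\jmath$ is the identity). Both are handled by the same formula, with the convention that empty wedges and empty products are a point.
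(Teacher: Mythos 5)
Your proof is correct and takes essentially the same approach as the paper: Blakers--Massey (or your equivalent inductive cofiber-sequence argument) for the non-equivariant wedge-into-product estimate, and the same factorization of the fixed-point map through $\prod_{\lambda\in\Lambda^G}X_\lambda^G$ followed by the inclusion-as-section, with the same connectivity computations for each factor.
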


\begin{proof}
By the Blakers-Massey's theorem, (see e.g. \cite[2.3]{calculusII}), the inclusion of wedges into products is non-equivariantly $(2n+1)$-connected. On the fixed points, we factor the inclusion as composition of the two inclusions
\[\bigvee_{\lambda\in \Lambda^G}(X_\lambda)^G\longrightarrow\prod_{\lambda\in \Lambda^G}(X_\lambda)^G \longrightarrow (\prod_{\lambda\in \Lambda^G}(X_\lambda)^G)\times \prod_{\lambda\in \Lambda_f}X_\lambda\]
The first map is $(2m+1)$-connected as inclusion of wedges into products of $m$-connected spaces again by Blakers-Massey. The second one is as connected as $\prod_{\lambda\in \Lambda_f}X_\lambda$ is, that is $n$.
\end{proof}

As a consequence, we have the following useful result. If $K$ is a $G$-space, and $\{X_\lambda\}$ is a family of spaces as above, one can consider the family of mapping spaces $\{\map_\ast(K,X_\lambda)\}$ with maps
\[\widehat{D}_\lambda\colon \map_\ast(K,X_\lambda)\longrightarrow \map_\ast(K,X_{\omega\lambda})\]
defined by conjugation, $\widehat{D}_\lambda(f)(x)=D_\lambda(f(Dk))$.

\begin{cor}\label{wedgesandmaps} The canonical homeomorphism
\[\prod_{\lambda\in \Lambda}\map_\ast(K,X_\lambda)\longrightarrow\map_\ast(K,\prod_{\lambda\in \Lambda}X_\lambda)\]
is $G$-equivariant, when the target is equipped with conjugation action.

If $K$ is a $G$-CW complex, the canonical map
\[\bigvee_{\lambda\in \Lambda}\map_\ast(K,X_\lambda)\longrightarrow\map_\ast(K,\bigvee_{\lambda\in \Lambda}X_\lambda)\]
is non-equivariantly $(2(\conn X_{\lambda}-\dim K)+1)$-connected, and  \[\min\{\conn X_{\lambda}-\dim K,2(\conn X_{\lambda}^G-\dim K^G)+1\}\]connected on fixed points.
\end{cor}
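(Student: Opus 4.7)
The plan is to prove the two assertions separately: the equivariance of the canonical homeomorphism by direct computation, and the connectivity estimate on the wedge--mapping comparison via a commutative square that reduces the problem to Proposition \ref{wedgesintoproducts} and Corollary \ref{eqmappingspace}.

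For the first assertion, I will unpack the homeomorphism $\Phi \colon \prod_\lambda \map_\ast(K,X_\lambda) \to \map_\ast(K,\prod_\lambda X_\lambda)$ sending $(f_\lambda)_\lambda$ to the map $k \mapsto (f_\lambda(k))_\lambda$, and chase an element $(f_\lambda)$ at a point $k \in K$ through both group actions. Tracing through the source action (which swaps wedge indices via $\omega$ and applies the $\widehat{D}_\lambda$) and the target conjugation action (which precomposes with $D_K$ and postcomposes with the given product action on $\prod X_\lambda$) produces in either order the tuple with $\lambda$-component $D_{\omega\lambda}(f_{\omega\lambda}(D_Kk))$, establishing equivariance.

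For the connectivity of $\alpha \colon \bigvee_\lambda \map_\ast(K,X_\lambda) \to \map_\ast(K,\bigvee_\lambda X_\lambda)$, I will fit it into the commutative square
\[\xymatrix{\bigvee\limits_\lambda \map_\ast(K,X_\lambda) \ar[r]^-{\alpha}\ar[d]_-{L} & \map_\ast(K,\bigvee\limits_\lambda X_\lambda) \ar[d]^-{R} \\
\prod\limits_\lambda \map_\ast(K,X_\lambda) \ar[r]_-{\Phi}^-{\cong} & \map_\ast(K,\prod\limits_\lambda X_\lambda)
}\]
whose verticals are the canonical wedge-into-product maps and whose bottom is the homeomorphism $\Phi$ from part~1. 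Applying Proposition \ref{wedgesintoproducts} to the family $\{\map_\ast(K,X_\lambda)\}_\lambda$, with connectivity bounds on these mapping spaces supplied by Corollary \ref{eqmappingspace} applied to $X_\lambda \to \ast$, yields that $L$ has exactly the connectivities claimed for $\alpha$. Independently, applying Corollary \ref{eqmappingspace} to the $(2\conn X_\lambda + 1, \min(\conn X_\lambda, 2\conn X_\lambda^G + 1))$-connected inclusion $\bigvee X_\lambda \to \prod X_\lambda$ (Proposition \ref{wedgesintoproducts}) gives a connectivity bound on $R$ that is at least one larger than that of $L$ in each slot.

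Since $\Phi L = R\alpha$ with $\Phi$ a homeomorphism, the composite $R\alpha$ inherits the connectivity of $L$, and the long exact sequence of the fibre sequence $F_\alpha \to F_{R\alpha} \to F_R$ forces $\alpha$ to be $\min(\conn L,\conn R - 1)$-connected in each slot, which equals $\conn L$ and yields the stated bound. The main obstacle I expect is the arithmetic verification that $\conn R \geq \conn L + 1$ in both slots: this requires a careful comparison of the $\min$-expressions on the two sides, checking inequalities among $\conn X_\lambda$, $\conn X_\lambda^G$, $\dim K$, and $\dim K^G$ (using in particular $\dim K^G \leq \dim K$), and is the most delicate bookkeeping step in the argument.
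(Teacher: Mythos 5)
Your proof follows exactly the same route as the paper: the same commutative square with $L$, $R$, $\Phi$, and $\alpha$, the same invocations of Proposition \ref{wedgesintoproducts} and Corollary \ref{eqmappingspace}, and the equivariance check for $\Phi$ done by direct computation (the paper says only ``it is easy to check''). Where you are more careful than the paper is the final step: the paper writes ``Thus the non equivariant connectivity of the map we are interested in is\ldots'' without justification, whereas you supply the fibre-sequence argument giving $\conn\alpha\ge\min\{\conn L,\conn R-1\}$.

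However, the inequality $\conn R\ge\conn L+1$ that your final step relies on (and which you rightly flag as the delicate point) does not hold in general. Non-equivariantly one computes $\conn R-\conn L=\dim K$, so the inequality fails whenever $\dim K=0$; in the fixed-point slot one similarly finds cases with $\dim K=\dim K^G$ where the two connectivities coincide. In those cases your argument only delivers $\conn\alpha\ge\conn L-1$, one less than the corollary claims. This is a genuine gap, though a shared one: the paper's ``Thus'' glosses over the same point, and the bound claimed in the statement is really the connectivity of $L$, which equals $\min\{\conn L,\conn R-1\}$ only when $\dim K\ge1$ (and analogously on fixed points). For the paper's applications (Propositions \ref{htpyfibwedgesthr} and \ref{s11smashthr}) the bound is used only asymptotically as $m\to\infty$, so the possible off-by-one does not affect any downstream conclusion; but a fully rigorous write-up of this corollary should either restrict to $\dim K\ge1$, state the weaker $\min\{\conn L,\conn R-1\}$-bound, or treat the edge cases (where $\map_\ast(K,X_\lambda)$ is just a finite product of copies of $X_\lambda$) by a separate direct argument.
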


\begin{proof}
It is easy to check that the canonical isomorphism is equivariant. For the statement about the wedge, consider the commutative diagram
\[\xymatrix{\bigvee_{\lambda\in \Lambda}\map_\ast(K,X_\lambda)\ar[r]\ar[d]&\map_\ast(K,\bigvee_{\lambda\in \Lambda}X_\lambda)\ar[d]\\
\prod_{\lambda\in \Lambda}\map_\ast(K,X_\lambda)\ar[r]^{\cong}&\map_\ast(K,\prod_{\lambda\in \Lambda}X_\lambda)
}\]
The spaces $\map_\ast(K,X_\lambda)$ are non equivariantly $(\conn X_\lambda-\dim K)$-connected, and when $\lambda$ is fixed by the involution its fixed points are $(\conn X_{\lambda}^G-\dim K^G)$-connected
by \ref{eqmappingspace}. Thus the left vertical map is non-equivariantly
$(2(\conn X_{\lambda}-\dim K)+1)$-connected, and 
\[\min\{\conn X_{\lambda}-\dim K,2(\conn X_{\lambda}^G-\dim K^G)+1\}\]
connected on the fixed points by \ref{wedgesintoproducts}. The connectivity of the right vertical map is by \ref{eqmappingspace} the connectivity of the inclusion of wedges into product minus the dimension of $K$. This is by \ref{wedgesintoproducts}
\[(2\conn X_{\lambda}+1, \min\{\conn X_{\lambda},2\conn X_{\lambda}^G+1\})-\dim_G K\]
Thus the non equivariant connectivity of the map we are interested in is
\[2(\conn X_{\lambda}-\dim K)+1\]
and its connectivity on the fixed points is
\[\min\{\conn X_{\lambda}-\dim K,2(\conn X_{\lambda}^G-\dim K^G)+1\}\]

\end{proof}


\subsection{Real topological Hochschild homology}\label{defTHR}

The following is a version with coefficients of the definition of $\THR(\mathcal{C})$ of \cite{IbLars}. It uses the dihedral structure on $I[-]$ to obtain extra structure on $\THH$.

\begin{defn}\label{defdualityfromop}
Let $\mathcal{C}$ be a category enriched in $Sp^{\Sigma}$.
A \textbf{strict duality} on $\mathcal{C}$ is an enriched functor $D\colon\mathcal{C}^{op}\longrightarrow\mathcal{C}$ such that $D\circ D^{op}=\id_{\mathcal{C}}$.

Functoriality of $D$ means that for each pair $i,j$ of natural numbers the diagram of spaces commutes
\[\xymatrix{\mathcal{C}(c,d)_i\wedge\mathcal{C}(d,e)_j\ar[r]^-{D\wedge D}\ar[d]_{\gamma} &
\mathcal{C}(Dd,Dc)_i\wedge\mathcal{C}(De,Dd)_j\ar[ddd]^-{\circ}\\
 \mathcal{C}(d,e)_j\wedge\mathcal{C}(c,d)_i\ar[d]_{\circ} \\
\mathcal{C}(c,e)_{i+j}\ar[d]^-{\chi_{i,j}}\\
\mathcal{C}(c,e)_{i+j}\ar[r]^-D &  \mathcal{C}(De,Dc)_{i+j}
}\]
Here $\chi_{i,j}$ is the permutation of $\Sigma_{i+j}$ that interchanges the first $i$ with the last $j$ elements
\[\chi_{i,j}(s)=\left\{\begin{array}{ll}s+j&\mbox{ if }1\leq s\leq i\\
s-i&\mbox{ if }i+1\leq s\leq j\end{array}\right.\]
\end{defn}
From now on we will take all the dualities to be strict.

\begin{ex}
Suppose that a category $C$ enriched in abelian groups has an additive duality $D\colon C^{op}\longrightarrow C$. It induces a duality $D$ on $HC$ by the maps $C(c,d)\otimes\mathbb{Z}(S^i)\stackrel{D\otimes \id}{\longrightarrow} C(D(d),D(c))\otimes\mathbb{Z}(S^i)$ which sends $f\otimes x$ to $D(f)\otimes x$.
\end{ex}

We want to define an involution on $\THH(\mathcal{C};\mathcal{M})$, provided $\mathcal{M}$ has an involution as defined below. First we note that if $D\colon\mathcal{C}^{op}\longrightarrow\mathcal{C}$ is a duality on $\mathcal{C}$, there is an  endofunctor of $\mathcal{C}^{op}\wedge\mathcal{C}$ given by
\[D_{\gamma}\colon\mathcal{C}^{op}\wedge\mathcal{C}\stackrel{D\wedge D^{op}}{\longrightarrow}\mathcal{C}\wedge\mathcal{C}^{op}\stackrel{\gamma}{\longrightarrow}\mathcal{C}^{op}\wedge\mathcal{C}\]
where $\gamma$ is the natural twist isomorphism.

\begin{defn}
A \textbf{duality} on a bimodule $\mathcal{M}\colon\mathcal{C}^{op}\wedge\mathcal{C}\longrightarrow Sp^{\Sigma}$ is an enriched natural transformation $J\colon \mathcal{M}\Rightarrow \mathcal{M}\circ D_{\gamma}$ such that the composite
\[\mathcal{M}(c,d)\stackrel{J_{c,d}}{\longrightarrow}\mathcal{M}(Dd,Dc)\stackrel{J_{Dd,Dc}}{\longrightarrow}\mathcal{M}(DDc,DDd)=\mathcal{M}(c,d)\]
equals the identity.
\end{defn}

Notice that by naturality of $J$, the diagram
\[\xymatrix{\mathcal{C}(d,e)\wedge\mathcal{M}(c,d)\ar[dd]_{l}\ar[r]^-{D\wedge J} & \mathcal{C}(De,Dd)\wedge\mathcal{M}(Dd,Dc)\ar[d]^-{\gamma}
\\ & \mathcal{M}(Dd,Dc)\wedge\mathcal{C}(De,Dd)\ar[d]^-{r}\\
\mathcal{M}(c,e)\ar[r]^-{J} & \mathcal{M}(De,Dc)
}\]
commutes.

\begin{ex} In the case of the Hom-bimodule $\mathcal{M}=\mathcal{C}$, the duality itself defines a bimodule duality
\[\mathcal{C}(c,d)\stackrel{D}{\longrightarrow}\mathcal{C}(Dd,Dc)\]
\end{ex}
\begin{ex}\label{dualitymodabelian} Suppose that $M\colon C^{op}\otimes C\longrightarrow Ab$ is a bimodule over an $Ab$-enriched category with additive duality $(C,D)$. A duality on $M$ is an enriched natural transformation $J\colon M\Rightarrow M\circ D_\gamma$ such that $J\circ J=\id$. It induces a duality on the bimodule $HM\colon HC^{op}\wedge HC\longrightarrow Sp^{\Sigma}$ via the maps
\[M(c,d)\otimes \mathbb{Z}(S^i)\stackrel{J\otimes\id}{\longrightarrow} M(D(d),D(c))\otimes \mathbb{Z}(S^i)\]
\end{ex}

We now proceed in defining a $\mathbb{Z}/2$ action on $\THH$.
The dualities on $\mathcal{C}$ and $\mathcal{M}$ induce maps
\[\overline{\omega}_k\colon V(\mathcal{C};\mathcal{M},\underline{i})\longrightarrow V(\mathcal{C};\mathcal{M},\omega_k\underline{i})\]
for $\underline{i}\in I[k]$ by
\[\xymatrix{V(\mathcal{C};\mathcal{M},\underline{i})\ar@{=}[d]\\
\bigvee \mathcal{M}(c_0,c_k)_{i_0}\wedge\mathcal{C}(c_1,c_0)_{i_1}\wedge\dots\wedge\mathcal{C}(c_k,c_{k-1})_{i_k}\ar[d]^\cong \\
\bigvee\mathcal{M}(c_0,c_k)_{i_0}\wedge\mathcal{C}(c_k,c_{k-1})_{i_k}\wedge\dots\wedge\mathcal{C}(c_1,c_0)_{i_1}\ar[d]^{\omega_{i_0}\wedge \dots\wedge \omega_{i_1}}\\
\bigvee\mathcal{M}(c_0,c_k)_{i_0}\wedge\mathcal{C}(c_k,c_{k-1})_{i_k}\wedge\dots\wedge\mathcal{C}(c_1,c_0)_{i_1}\ar[d]^{J\wedge D\wedge\dots\wedge D}\\
\bigvee \mathcal{M}(Dc_{k},Dc_0)_{i_0}\wedge\mathcal{C}(Dc_{k-1},Dc_{k})_{i_k}\wedge\dots\wedge\mathcal{C}(Dc_{0},Dc_{1})_{i_{1}}\ar@{=}[d]\\
V(\mathcal{C};\mathcal{M},(i_0,i_k,\dots,i_1))
}\]
The first map is the natural isomorphism permuting the smash factors, and $\omega_l$ denotes the involution induced by the symmetric structure of the spectra of the permutation $\omega_l\in\Sigma_l$ that reverses the order of the elements.
Denoting $S^{\underline{i}}=S^{i_0}\wedge\dots\wedge{S}^{i_k}$, this induces a map
\[\map_\ast(S^{\underline{i}},V(\mathcal{C};\mathcal{M},\underline{i}))\longrightarrow \map_\ast(S^{\omega_k\underline{i}},V(\mathcal{C};\mathcal{M},\omega_k\underline{i}))\]
by sending a $f\in\map_\ast(S^{\underline{i}},V(\mathcal{C};\mathcal{M},\underline{i}))$ to
\[\xymatrix{S^{i_0}\wedge S^{i_k}\wedge\dots\wedge{S}^{i_1}\ar[d]^{\omega_{i_0}\wedge\dots\wedge\omega_{i_1}}\ar[r] & V(\mathcal{C};\mathcal{M},\omega_k\underline{i})\\
S^{i_0}\wedge S^{i_k}\wedge\dots\wedge{S}^{i_1}\ar[d]^{\cong}\\
S^{i_0}\wedge S^{i_1}\wedge\dots\wedge{S}^{i_k}\ar[r]^-{f}&V(\mathcal{C};\mathcal{M},\underline{i})\ar[uu]_{\overline{\omega}_k}
}\]
Here $\omega_l\colon S^l\longrightarrow S^l$ denotes the permutation that reverses the order of the $S^1$-smash factors.
This gives a natural transformation $\mathcal{G}_k(\mathcal{C};\mathcal{M})\longrightarrow \mathcal{G}_k(\mathcal{C};\mathcal{M})\circ\overline{\omega}_k$, and taking homotopy colimits a map
\[\omega_k\colon \THH_k(\mathcal{C};\mathcal{M})\longrightarrow\hocolim_{I[k]}\mathcal{G}_k(\mathcal{C};\mathcal{M})\circ\overline{\omega}_k\longrightarrow\THH_k(\mathcal{C};\mathcal{M})\]
These structure maps together with the simplicial maps described earlier define a real structure (see \ref{defrealob}) on the simplicial space $[k]\mapsto \THH_k(\mathcal{C};\mathcal{M})$, and a dihedral structure on $[k]\mapsto \THH_k(\mathcal{C};\mathcal{C})$. We denote the $k$-simplicies of this spaces together with their involution by $\THR_k(\mathcal{C};\mathcal{M})$ and $\THR_k(\mathcal{C};\mathcal{C})$ respectively.

Recall that the realization of a real space $X$ with levelwise involution $\omega$ has an involution defined by
\[[x_k,(t_0,\dots,t_k)]\longmapsto[\omega(x_k),(t_k,\dots,t_0)]\]
\begin{defn}
The \textbf{real topological Hochschild homology of $(\mathcal{C},D)$ with coefficients in $(\mathcal{M},J)$} is the $\mathbb{Z}/2$-space defined as the realization of the real space
\[\THR(\mathcal{C};\mathcal{M})=|[k]\mapsto\THR_k(\mathcal{C};\mathcal{M})|\]
The \textbf{real topological Hochschild homology of $(\mathcal{C},D)$} is the $\mathbb{Z}/2\rtimes S^1$-space
defined as the realization of the dihedral space
\[\THR(\mathcal{C})=|[k]\mapsto\THR_k(\mathcal{C};\mathcal{C})|\]
\end{defn}

We now restrict attention to a class of categories with duality and bimodules for which the homotopy colimit defining $\THR$ behaves nicely.
First we recall the non-equivariant condition.
\begin{defn}
Let $\mathcal{C}$ be a category enriched in $Sp^{\Sigma}$. We say that $\mathcal{C}$ \textbf{is $0$-connected} if
\begin{enumerate}[i)]
\item The spaces $\mathcal{C}(c,d)_i$ are $(i-1)$-connected,
\item There is a constant $\epsilon$ such that the structure maps $\mathcal{C}(c,d)_i\wedge S^j\longrightarrow \mathcal{C}(c,d)_{i+j}$ are $(2i+j-\epsilon)$-connected,
\end{enumerate}
A bimodule $\mathcal{M}$ over $\mathcal{C}$ is $0$-connected if the levels and the structure maps of its spectra satisfy the same properties.
\end{defn}

For any integer $i$ we denote $\lceil\frac{i}{2}\rceil$ the upper integral approximation of $\frac{i}{2}$.

\begin{defn}\label{defGconn}
Suppose that $(\mathcal{C},D)$ is a category enriched in $Sp^\Sigma$ with duality . We say that $\mathcal{C}$ \textbf{is $G$-connected} if it is $0$-connected, and
\begin{enumerate}[i)]
\item The fixed point spaces $(\mathcal{C}(c,Dc)_i)^G$ are $(\lceil\frac{i}{2}\rceil-1)$-connected, where $G$ acts via $D\circ\omega_i$,
\item There is a constant $\epsilon$ such that the restriction to the fixed points \[\mathcal{C}(c,Dc)_{i}^G\wedge (S^{j})^G\longrightarrow \mathcal{C}(c,Dc)_{i+j}^G\] of the structure maps are $(i+\lceil\frac{j}{2}\rceil-\epsilon)$-connected, with respect to the actions of $(D\circ\omega_i)\wedge\omega_j$ on the source space, and $D\circ(\omega_i\times\omega_j)$ on the target.
\end{enumerate}
A bimodule with duality $(\mathcal{M},J)$ over $(\mathcal{C},D)$ is $G$-connected if it is $0$-connected and it satisfies the analogous properties.
\end{defn}
Notice that we can assume that the constant $\epsilon$ is the same both for the non-equivariant condition and on the fixed points.
In particular, the class of examples we are mostly interested in satisfies this property by the following proposition.

\begin{prop}\label{linear0connected}
Let $(C,D)$ be a category enriched in abelian groups with duality, and let $(M,J)$ be a bimodule with duality over $(C,D)$. Then both $HC$ and $HM$ are $G$-connected.
\end{prop}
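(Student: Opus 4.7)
The proof will verify the two non-equivariant and two equivariant axioms of Definition \ref{defGconn} separately; the argument for $HM$ is formally identical to that for $HC$, so I will only discuss $HC$. For the non-equivariant axioms, I would start from the observation that $HC(c,d)_i = |A(S^i)|$ with $A = C(c,d)$ is the underlying space of a simplicial abelian group whose normalized chain complex is concentrated in degree $i$ with value $A$. Hence $HC(c,d)_i \simeq K(A,i)$ is $(i-1)$-connected, and the structure map $A(S^i)\wedge S^j \to A(S^{i+j})$ (obtained by applying $A(-)$ to the canonical map $S^i \wedge S^j \to S^{i+j}$) is approximately $(2i+j)$-connected by iterated Freudenthal suspension, since it is the adjoint of the weak equivalence $A(S^i)\xrightarrow{\simeq}\Omega^j A(S^{i+j})$ of $(i-1)$-connected spaces. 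Both non-equivariant axioms therefore hold with $\epsilon$ any sufficiently large constant.

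For the equivariant axioms the key input is the identification $(S^i,\omega_i)^G \simeq S^{\lceil i/2\rceil}$: the reversal involution on the $i$ smash factors of $S^i = (S^1)^{\wedge i}$ pairs factors symmetrically about the centre (with an additional fixed central factor when $i$ is odd), and the fixed points of the swap on $S^1 \wedge S^1$ form the diagonal circle. To bound the connectivity of the fixed points $(HC(c,Dc)_i)^G = (A(S^i))^G$ under the diagonal action $\phi \otimes \omega_i$ (with $\phi$ the involution on $A = C(c,Dc)$ induced by $D$), I would apply $A(-)$ to the cofibration of $G$-simplicial sets $(S^i)^G \hookrightarrow S^i$, obtaining a short exact sequence of $G$-simplicial abelian groups whose level-wise $G$-fixed points fit into a long exact sequence. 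The auxiliary lemma is that for a pointed $G$-simplicial set $Y$ with free action off the basepoint, the natural map
\[A(Y/G) \longrightarrow A(Y)^G, \qquad a\cdot[y] \longmapsto a\cdot y + \phi(a)\cdot \omega(y),\]
is an isomorphism of simplicial abelian groups (where $[y]$ denotes an orbit representative). Applied to the cofibre $S^i/(S^i)^G$ (whose $G$-action is free off the basepoint), this reduces the problem to the connectivity of $A((S^i/(S^i)^G)/G)$ and of $A^G(S^{\lceil i/2\rceil})$, both of which are at least $(\lceil i/2\rceil -1)$-connected.

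The connectivity estimate on the fixed-point structure map \[(HC(c,Dc)_i)^G \wedge S^{\lceil j/2\rceil} \longrightarrow (HC(c,Dc)_{i+j})^G\] then follows by combining the non-equivariant Freudenthal bound with the equivariant suspension theorem (Corollary \ref{corgsusp}), which together yield connectivity at least $i + \lceil j/2\rceil - \epsilon$ uniformly in $i,j$. The most delicate step I anticipate is verifying that the obstructions in the long exact sequence for the fixed-point short exact sequence (arising from $H^1(G;-)$ of the kernel term) do not spoil the expected $(\lceil i/2\rceil -1)$-connectivity. A more conceptual alternative would be to recognize $A(S^i)$ with its diagonal $G$-action as a model for the equivariant Eilenberg--MacLane space $H\underline{A}_\phi$ evaluated at the $G$-representation $V$ with $\dim V^G = \lceil i/2\rceil$, at which point the connectivity of the fixed points and of the structure maps are standard consequences of equivariant stable homotopy theory.
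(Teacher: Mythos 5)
Your non‑equivariant bounds and the treatment of the structure map on fixed points are essentially sound and parallel what the paper does (the paper's sphere case of Proposition \ref{shpereinduction} also proceeds via the $G$-suspension theorem and the equivalence $A(S^i)\simeq\Omega^j A(S^{i+j})$ of \ref{Gdeloopingsformonoid}). But your route for the connectivity of $(A(S^i))^G$ rests on an auxiliary lemma that is false as stated. The proposed map
\[
A(Y/G)\longrightarrow A(Y)^G,\qquad a\cdot[y]\longmapsto a\cdot y+\phi(a)\cdot\omega(y),
\]
depends on the chosen orbit representative $y$: replacing $y$ by $\omega(y)$ produces $a\cdot\omega(y)+\phi(a)\cdot y$, which differs from the first expression unless $a=\phi(a)$. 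Since a simplicial $G$‑set with free action off the basepoint admits no simplicially coherent system of orbit representatives in general, this map is not a well‑defined simplicial homomorphism, and a mere level‑wise abstract isomorphism of abelian groups does not control the homotopy groups (which are computed from the Moore complex and hence from the simplicial structure). The statement that \emph{is} true for free $Y$ is that the algebraic transfer $A(Y)_G\to A(Y)^G$, $\overline{\xi}\mapsto\xi+D\xi$, is a natural simplicial isomorphism; but then you would still need to estimate the connectivity of the coinvariants $A(Y)_G$, which is a separate problem, and the natural map $A(Y)_G\to A(Y/G)$ is the norm and not an isomorphism either. The paper sidesteps these difficulties entirely: its Lemma \ref{connconfspace} proves $(A(X))^G$ is $\min\{\conn X,\conn X^G\}$‑connected for an arbitrary (not necessarily free) $G$‑simplicial set $X$ by a direct element‑theoretic filling argument, expressing a single cycle $\xi\in A(X_k)^G$ as $\sum a_i x_i+\sum b_j y_j+\sum D a_i\,\overline{x}_i$ using \emph{ad hoc} representatives for that one element and then lifting using fibrancy of $X$ and $X^G$; this makes no coherent choice of representatives and is why it works where your decomposition does not. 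You should either replace your auxiliary lemma by a correct argument (for instance, verify the coinvariants statement and its connectivity directly, or argue as in \ref{connconfspace}), or point to \ref{connconfspace} for the fixed‑point connectivity and to \ref{connectvitysmashwithx} (or its sphere case) for the structure maps, as the paper does.
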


\begin{proof}
We check the conditions for the category $HC$. The proof for $HM$ is totally analogous. Given two objects $c,d\in C$, the space $C(c,d)\otimes\mathbb{Z}(S^i)$ is clearly $(i-1)$-connected (it is the realization of a $(i-1)$-reduced fibrant simplicial set). The fixed points space $(C(c,Dc)\otimes\mathbb{Z}(S^i))^G$ is $(\lceil\frac{i}{2}\rceil-1)$-connected by \ref{connconfspace} of the appendix, where
we prove more generally that for an abelian group with additive involution $A$, and a simplicial $G$-set $X$, the space $A(X)^G$ is
\[\min\{\conn(X),\conn(X^G)\}\]
connected.

The structure map $(C(c,d)\otimes\mathbb{Z}(S^i))\wedge S^j\longrightarrow C(c,d)\otimes\mathbb{Z}(S^{i+j})$ fits into a commutative diagram
\[\xymatrix{\Omega^j((C(c,d)\otimes\mathbb{Z}(S^i))\wedge S^j)\ar[r]&\Omega^j(C(c,d)\otimes\mathbb{Z}(S^{i+j}))\\
C(c,d)\otimes\mathbb{Z}(S^i)\ar[u]\ar[ur]_{\simeq}
}\]
where the vertical map is the adjoint of the identity, which is $(2(i-1)+1)$-connected by the suspension theorem. Thus the structure map is $(2i+j-1)$-connected.
For the restriction of the structure maps to the fixed points we refer to \ref{connectvitysmashwithx} in the appendix. There we prove in general that 
the map
\[A(S^i)\wedge |X|\longrightarrow A(S^i\wedge X)\]
induced by sending $(\sum m_l\cdot\sigma_l)\wedge x$ to $\sum m_l\cdot(\sigma_l\wedge x)$
is non-equivariantly $(2i+\conn X)$-connected, and $(i+\min\{\conn X^G,\conn X\})$-connected on the fixed points for every real set $X$ and abelian group with additive involution $A$.
\end{proof}


\subsection{$\THR$ of rings with antistructures}\label{THRrings}

Let $A$ be a ring, $M$ an $A$-bimodule, and let $M\colon \mathcal{P}_{A}^{op}\otimes\mathcal{P}_A\longrightarrow Ab$ be the bimodule over $\mathcal{P}_A$ which sends a pair of projective modules $(c,d)$ to
\[M(c,d)=\hom_A(c,d\otimes_A M)\]
Also recall that $M$ induces a bimodule $HM\colon H\mathcal{P}_{A}^{op}\wedge H\mathcal{P}_A\longrightarrow Sp^{\Sigma}$.

\begin{defn} The \textbf{topological Hochschild homology of $A$ with coefficients in $M$} is defined as
\[\THH(A;M)=\THH(H\mathcal{P}_A;HM)\]
\end{defn}

We want to define $\THR$ of an antistructure $(A,L,\alpha)$ with coefficients in an $A$-bimodule $M$, with a duality on the bimodule defined from an $M$-twisting $J\colon L_t\otimes_AM\longrightarrow L_t\otimes_AM$ (cf. \ref{defmaptwist}).
We defined $\THR$ of a category and a bimodule with strict dualities, but the dualities on $H\mathcal{P}_A$ and $HM$ are not strict. We know how to make the duality strict on $\mathcal{P}_A$ by applying the construction $\mathcal{D}\mathcal{P_A}$ of §\ref{duality}, but we do not know yet how to make a duality strict on the bimodule. We describe this in general.

Let $(C,D,\eta)$ be an $Ab$-enriched category with additive non-strict duality, and $M\colon C^{op}\otimes C\longrightarrow Ab$ a bimodule. Recall that $D_\gamma$ is the duality on $C^{op}\otimes C$ that sends $(c,d)$ to $(Dd,Dc)$.

\begin{defn}\label{defnonstrictdualmod}
A \textbf{non-strict duality} on $M$ is an enriched natural transformation $J\colon M\Rightarrow M\circ D_\gamma$ such that
\[\xymatrix{M(c,d)\ar[r]^-{J}\ar[dr]_-{M(\eta_{c}^{-1}\otimes\eta_d)}&M(Dd,Dc)\ar[d]^-{J}\\
&M(D^2c,D^2d)
}\]
commutes for all $c,d\in C$.
\end{defn}

\begin{ex}\label{Mtwistnonstrictdual}
Let $J$ be an $M$-twisting of $(A,L,\alpha)$. This induces a non-strict duality on the bimodule $M=\hom_A(-,-\otimes_AM)\colon \mathcal{P}_{A}^{op}\otimes\mathcal{P}_A\longrightarrow A$, defined as the natural transformation
\[J=\widehat{J}\colon \hom_A(P,Q\otimes_AM)\longrightarrow \hom_A(D_LQ,D_L(P)\otimes_AM)\]
of \ref{naturalityJK}.
\end{ex}

We recall that the objects of $\mathcal{D}C$ are triples $\varphi=(c,d,\phi)$ with $\phi\colon d\stackrel{\cong}{\longrightarrow} D(c)$. The morphisms are pairs of morphisms $(a\colon c\longrightarrow c',b\colon d'\longrightarrow d)$ in $C$ such that the square
\[\xymatrix{d\ar[d]_-{\phi}& d'\ar[d]^-{\phi'}\ar[l]_-{b}\\
Dc & Dc'\ar[l]^-{Da}
}\]
commutes. There is an $Ab$-enrichement of $\mathcal{D}C$ by seeing the morphism sets as subgroups of $\hom_C(c,c')\oplus\hom_C(d',d)$. The 
duality on $\mathcal{D}C$ is strict, and defined by \[D(c,d,\phi)=(d,c, c\stackrel{\eta_c}{\longrightarrow}D^2(c)\stackrel{D\phi}{\longrightarrow}Dd)\] on objects, and it sends $(a,b)$ to $(b,a)$ on morphisms.
A bimodule $M\colon C^{op}\otimes C\longrightarrow Ab$ induces a bimodule on $\mathcal{D}C$ by the composition
\[\mathcal{D}M\colon \mathcal{D}C^{op}\otimes \mathcal{D}C\stackrel{U^{op}\otimes U}{\longrightarrow}  C^{op}\otimes C\stackrel{M}{\longrightarrow} Ab\]
where $U\colon \mathcal{D}C\longrightarrow C$ is the equivalence of categories of \ref{canddc}. Explicitly, the value of $\mathcal{D}M$ at a pair $(\varphi=(c,d,\phi),\varphi'=(c',d',\phi))$ is
\[\mathcal{D}M(\varphi,\varphi')=M(c,c')\]
Let $J\colon M\Rightarrow M\circ D_\gamma$ be a non-strict duality on $(C,D,\eta)$. Define a natural transformation $\mathcal{D}J\colon \mathcal{D}M\Rightarrow \mathcal{D}M\circ D_\gamma$ by the composite
\[\xymatrix{*[l]{\mathcal{D}M(\varphi,\varphi')=M(c,c')}\ar@<-17ex>[d]_-{\mathcal{D}J}\ar[r]^-{J}&M(Dc',Dc)\ar[dl]^{M(\phi'\otimes \phi^{-1})}\\
*[l]{\mathcal{D}M(D\varphi',D\varphi)=M(d',d)}}\]
This is a strict duality on  $\mathcal{D}M$.

\begin{defn}
Let $(C,D,\eta)$ be an $Ab$-enriched category with additive non-strict duality, and $M\colon C^{op}\otimes C\longrightarrow Ab$ a bimodule with non-strict duality $J$. Its \textbf{real topological Hochschild homology} is
\[\THR(C;M):=\THR(H\mathcal{D}C;H\mathcal{D}M)\]
\end{defn}

We describe explicitly the bimodule and the duality in the ring case. Let $(A,L,\alpha)$ be an antistructure and $J\colon L_t\otimes_A M\longrightarrow L_t\otimes_A M$ an $M$-twisting. The induced bimodule $\mathcal{D}M$ sends a pair $\varphi=(P,Q,\phi),\varphi'=(P',Q',\phi')\in\mathcal{D}\mathcal{P}_A$ to the abelian group of module maps
\[\mathcal{D}M(\varphi,\varphi')=\hom_A(P,P'\otimes_AM)\]
with duality sending $f\colon P\longrightarrow P'\otimes_A M$ to
\[\mathcal{D}J(f)\colon Q'\stackrel{\phi'}{\longrightarrow}D_LP'\stackrel{\widehat{J}(f)}{\longrightarrow}D_L(P)\otimes_A M\stackrel{(\phi^{-1}\otimes M)}{\longrightarrow}Q\otimes_AM\]
We can display this information in a slightly different and more familiar way. Consider the abelian group $M^J(\varphi,\varphi')$ of pair of maps 
$(f\colon P\longrightarrow P'\otimes_A M,g\colon f\colon Q'\longrightarrow Q\otimes_A M)$ such that
\[\xymatrix{Q\otimes_A M\ar[d]_-{\phi\otimes M}& Q'\ar[l]_-{g}\ar[d]^-{\phi'}\\
D(P)\otimes_A M& DP'\ar[l]^-{\widehat{J}(f)}
}\]
commutes, viewed as a subgroup of $\hom_A(P,P'\otimes_A M)\oplus \hom_A(Q',Q\otimes_A M)$. Since both $\phi$ and $\phi'$ are isomorphism, the map $g$ is completely determined by $f$, and therefore $M^J(\varphi,\varphi')$ is isomorphic to $\mathcal{D}M(\varphi,\varphi')$. Under this identification the duality induced by $J$ sends $(f,g)$ to $(g,f)$.
Notice that for $\varphi=\varphi'$ there is an equality
\[M^J(\varphi,\varphi)=\hom_A(\varphi,\varphi\otimes_A M)\]
where the right hand side is the abelian group appearing in the description of $i\mathcal{D}\mathcal{P}_{A\ltimes M}$ of §\ref{secmainthmkr}.

\begin{defn}\label{defTHRforanti}
The \textbf{real topological Hochschild homology} of $(A,L,\alpha)$ with coefficients in the $M$-twisting $J$ is
\[\THR((A,L,\alpha),(M,J))=\THR(H\mathcal{D}\mathcal{P}_A;HM^J)\]
\end{defn}

Notice that the equivalence of categories $\mathcal{P}_A\longrightarrow \mathcal{D}\mathcal{P}_A$ of \ref{canddc} induces a (non-equivariant) equivalence $\THH(H\mathcal{P}_A;HM)\simeq \THH(H\mathcal{D}\mathcal{P}_A;HM^J)$ (see e.g. \cite[1.6.7]{ringfctrs} or \ref{THRpreservesGequiv} below). We will prove in \ref{scriptdandthr} that if the dualities on $C$ and $M$ are strict, the equivalence of categories $C\longrightarrow \mathcal{D}C$ induces a $G$-equivalence
\[\THR(HC;HM)\stackrel{\simeq}{\longrightarrow}\THR(H\mathcal{D}C;H\mathcal{D}M)\]

\begin{rem}
In the special case of an antistructure coming from an anti-involution $\alpha\colon A^{op}\longrightarrow A$, and an $M$-twisting given by a map $j\colon M\longrightarrow M$ (cf. example \ref{antiinvolutioncase}) there is another description for $\THR((A,A,\alpha),(M,j))$. One can consider the Eilenberg-MacLane spectrum $HA$ as a category enriched in  $Sp^\Sigma$ with only one object. $M$ defines a bimodule over $HA$ that sends the unique object to the Eilenberg-MacLane spectrum $HM$. It sends morphisms to the action on the bimodule. The anti-involution $\alpha$ defines a duality on $HA$, and the map $j$ a duality on the bimodule $HM$. Thus one can consider $\THR(HA,HM)$ instead. In §\ref{moritasec} we define an equivariant equivalence
\[\THR(HA,HM)\stackrel{\simeq}{\longrightarrow} \THR(H\mathcal{D}\mathcal{P}_A;HM^j)\]
Although the definition of $\THR(HA,HM)$ is simpler, it is easier to relate $\THR(H\mathcal{D}\mathcal{P}_A;HM^j)$ to $K$-theory since it already involves the category $\mathcal{P}_A$.
\end{rem}

\begin{rem}
For an $A$-bimodule $M$, one might want to classify all the non-strict dualities $\overline{J}$ on the bimodule
\[M=\hom_A(-,-\otimes_AM)\colon\mathcal{P}_{A}^{op}\otimes\mathcal{P}_A\longrightarrow Ab\]
We claim that they all come from an $M$-twisting $J$ as described above. Given a non-strict duality $\overline{J}$, define an $M$-twisting $J\colon L_t\otimes_AM\longrightarrow L_t\otimes_AM$ as follows. For a fixed element $l\otimes m$, consider the module map
\[f_{l\otimes m}\colon A_t\longrightarrow L_t\otimes_AM \ \ \ ; \ \ \ a\longmapsto l\otimes ma\]
Define $J$ as the composite
\[\xymatrix{ L_t\otimes_AM\ar[dr]_-{J}\ar[r]^-{f_{-\otimes -}}&\hom_A(A_t,L_t\otimes_AM)\ar[r]^-{\overline{J}}&\hom_A(D_LL_t,D_L(A_t)\otimes_AM)\ar[d]^-{ev_\alpha}\\
&L_t\otimes_AM&D_L(A_t)\otimes_A M\ar[l]^-{ev_1\otimes M}}\]
where we remind that $\alpha\in D_LL_t=\hom_A(L_t,L_s)$ is the involution of the antistructure, and $ev_x$ denotes evaluation at $x$. A computation shows that
\[\overline{J}(f)=\widehat{J}(f)\]
for all $f\in\hom_A(P, Q\otimes_A M)$, where $\widehat{J}$ is the natural transformation defined from $J$ in \ref{naturalityJK}.
\end{rem}

\newpage


\section{Properties of $\THR$}


\subsection{Functoriality of $\THR$ and homotopies}\label{functorialityTHR}

We will call an $Sp^\Sigma$-enriched category with duality an $Sp^{\Sigma}$-category with duality.
Let $(\mathcal{C},D)$ and $(\mathcal{C}',D')$ be $Sp^{\Sigma}$-categories with duality.

\begin{defn}
A \textbf{morphism of $Sp^{\Sigma}$-categories with duality} from $(\mathcal{C},D)$ to $(\mathcal{C}',D')$ is an enriched functor $F\colon \mathcal{C}\longrightarrow \mathcal{C}'$ such that on objects $FD=D'F$, and such that the diagram
\[\xymatrix{\mathcal{C}(c,d)\ar[d]_{F}\ar[r]^-{D}&\mathcal{C}(Dd,Dc)\ar[d]^F\\
\mathcal{C}'(Fc,Fd)\ar[r]_-{D'}&\mathcal{C}'(D'Fd,D'Fc)
}\]
commutes in $Sp^{\Sigma}$.
\end{defn}

Let $(\mathcal{M},J)$ be a bimodule with duality on $(\mathcal{C},D)$ and $(\mathcal{M}',J')$ be a bimodule with duality on $(\mathcal{C}',D')$, cf. §\ref{sectmodsym} and §\ref{defTHR}.

\begin{defn}
A \textbf{morphism of bimodules with duality} is a morphism of bimodules $(F,\Phi)\colon (\mathcal{C},\mathcal{M})\longrightarrow (\mathcal{C}',\mathcal{M}')$ such that $F$ is a morphism of $Sp^{\Sigma}$-categories with duality, and the natural transformation $\Phi\colon \mathcal{M}\Rightarrow \mathcal{M}'\circ(F^{op}\wedge F)$ is such that
\[\xymatrix{\mathcal{M}(c,d)\ar[r]^-{J}\ar[d]_{\Phi}&\mathcal{M}(Dd,Dc)\ar[d]^{\Phi}\\
\mathcal{M'}(Fc,Fd)\ar[r]_-{J'}&\mathcal{M}'(D'Fd,D'Fc)
}\]
commutes in $Sp^{\Sigma}$.
\end{defn}

We next explain how a morphism of bimodules with duality \[(F,\Phi)\colon (\mathcal{C},\mathcal{M})\longrightarrow (\mathcal{C}',\mathcal{M}')\] induces an equivariant map in real topological Hochschild homology, cf. §\ref{defTHR}. For every $\underline{i}\in I[k]$ we define
\[(F,\Phi)_\ast\colon V(\mathcal{C};\mathcal{M},\underline{i})\longrightarrow V(\mathcal{C}';\mathcal{M}',\underline{i})\]
to be the map that sends an element $m_0\wedge f_1\wedge\dots\wedge f_{k}$ in the $(c_0,\dots,c_{k})$ wedge component to
\[\Phi_{i_0}(m_0)\wedge F_{i_1}(f_1)\wedge\dots\wedge F_{i_k}(f_k)\]
in the $(F(c_0),\dots,F(c_{k}))$ wedge component,
where $F_{i_l}$ denotes the $i_l$-th level map $\mathcal{C}(c_{i_l},c_{i_{l-1}})_{i_l}\longrightarrow \mathcal{C}(Fc_{i_l},Fc_{i_{l-1}})_{i_l}$ of the map of spectra $F$.
Taking homotopy colimits, this induces a simplicial map on $\THR_{\sbt}$, and the compatibility of $F$ and $\Phi$ with the dualities insures that it is equivariant. All in all, $(F,\Phi)$ induces an equivariant map
\[(F,\Phi)_\ast\colon \THR(\mathcal{C};\mathcal{M})\longrightarrow \THR(\mathcal{C}';\mathcal{M}')\]
on realizations.
There is a category whose objects are pairs $(\mathcal{C},\mathcal{M})$ of an $Sp^\Sigma$-category with duality and a bimodule with duality, and whose morphisms are the morphisms of bimodules with duality as defined above. The $\THR$ construction defines a functor from this category to pointed $G$-spaces. 

\begin{defn}\label{Gnattransf}Let $(F,\Phi),(G,\Psi)\colon (\mathcal{C},\mathcal{M})\longrightarrow (\mathcal{C}',\mathcal{M}')$ be two morphisms of bimodules with duality. A \textbf{$G$-natural transformation} from $(F,\Phi)$ to $(G,\Psi)$ is an enriched natural transformation $\kappa\colon F\Rightarrow G$ such that for every object $c\in\mathcal{C}$, the element $\kappa_c\in\mathcal{C}'(F(c),G(c))_0$ defining the map of spectra $\kappa_c\colon\mathbb{S}\longrightarrow \mathcal{C}'(F(c),G(c))$ satisfies
\begin{enumerate}[i)]
\item For all $f\in\mathcal{C}(c,d)_i$  \[F(f)\circ D\kappa_{Dc}\circ \kappa_c=F(f)\] as elements of $\mathcal{C}(Fc,Fd)_i$,
\item For all $f\in\mathcal{C}(c,d)_i$
\[\kappa_d\circ D\kappa_{Dd}\circ G(f)=G(f)\] as elements of $\mathcal{C}(Gc,Gd)_i$
\item The diagram
\[\xymatrix{\mathcal{M}(c,d)\ar[d]_-{\Psi}\ar[rr]^-{\Phi}&&\mathcal{M}'(Fc,Fd)\ar[d]^-{\mathcal{M}'_0(\id_{Fc}\wedge\kappa_d)}\\
\mathcal{M}'(Gc,Gd)\ar[rr]_{\mathcal{M}'_0(\kappa_{c}\wedge\id_{Gd})}&& \mathcal{M}'(Fc,Gd)
}\]
commutes,
where the right vertical arrow is the map of spectra 
\[\mathcal{M}'_0(\id_{Fc}\wedge\kappa_d)\in Sp^{\Sigma}(\mathcal{M}'(Fc,Fd),\mathcal{M}'(Fc,Gd))_0\]
and similarly for the bottom horizontal map.
\end{enumerate}
\end{defn}

\begin{prop}\label{THRpreservesGequiv}
A $G$-natural transformation $\kappa$ between morphisms of bimodules with duality $(F,\Phi),(G,\Psi)\colon (\mathcal{C},\mathcal{M})\longrightarrow (\mathcal{C}',\mathcal{M}')$ induces a simplicial homotopy of equivariant maps
\[sd_e\THR_{\sbt}(\mathcal{C};\mathcal{M})\times\Delta[1]_{\sbt} \longrightarrow sd_e\THR_{\sbt}(\mathcal{C}';\mathcal{M}')\]
between the induced maps $(F,\Phi)_\ast$ and $(G,\Psi)_\ast$. In particular it induces a $G$-homotopy
\[\THR(\mathcal{C};\mathcal{M})\times I \longrightarrow \THR(\mathcal{C}';\mathcal{M}')\]
on realizations.
\end{prop}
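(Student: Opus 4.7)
The plan is to construct, for each $p$-simplex $\sigma\colon [p]\to[1]$ of $\Delta[1]$, an $\omega_{2p+1}$-equivariant map
\[H_\sigma\colon \THR_{2p+1}(\mathcal{C};\mathcal{M})\longrightarrow \THR_{2p+1}(\mathcal{C}';\mathcal{M}')\]
depending simplicially on $\sigma$ (where $\Delta[1]_{\sbt}$ carries the trivial $G$-action), with $H_0=(F,\Phi)_{\ast}$ and $H_1=(G,\Psi)_{\ast}$ when $\sigma$ is constant. Assembling these into a simplicial map on $sd_e\THR_{\sbt}(\mathcal{C};\mathcal{M})\times\Delta[1]_{\sbt}$, realizing, and invoking Proposition \ref{subdivision} then yields the desired $G$-homotopy on the unsubdivided realizations.

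First extend $\sigma$ palindromically to $\widetilde{\sigma}\colon[2p+1]\to[1]$ by $\widetilde{\sigma}(j)=\sigma(\min\{j,2p+1-j\})$, so $\widetilde{\sigma}(2p+1-j)=\widetilde{\sigma}(j)$, and label each position $j$ by $L_j=F$ if $\widetilde{\sigma}(j)=0$ and $L_j=G$ if $\widetilde{\sigma}(j)=1$. On a wedge summand of $V(\mathcal{C};\mathcal{M},\underline{i})$ indexed by $(c_0,\ldots,c_{2p+1})$, define $H_\sigma$ as follows: send $m_0$ to $\Phi(m_0)$ if $L_0=F$ and to $\Psi(m_0)$ if $L_0=G$ (consistent because $L_0=L_{2p+1}$ by palindromy); send $f_l\colon c_l\to c_{l-1}$ to $L_l(f_l)$ whenever $L_l=L_{l-1}$; at a $0\to1$ transition ($L_l=F$, $L_{l-1}=G$) replace $f_l$ with $\kappa_{c_{l-1}}\circ F(f_l)=G(f_l)\circ\kappa_{c_l}$ (equal by naturality of $\kappa$); and at a $1\to0$ transition ($L_l=G$, $L_{l-1}=F$) replace $f_l$ with $F(f_l)\circ D\kappa_{Dc_l}=D\kappa_{Dc_{l-1}}\circ G(f_l)$ (equal by applying $D$ to the naturality of $\kappa$). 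These prescriptions are compatible with the structure maps of the morphism spectra and assemble into a natural transformation $\mathcal{G}_{2p+1}(\mathcal{C};\mathcal{M})\Rightarrow\mathcal{G}_{2p+1}(\mathcal{C}';\mathcal{M}')$ over $I[2p+1]$, inducing the claimed $H_\sigma$ on homotopy colimits.

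Three verifications remain. The boundary values are immediate: for constant $\sigma$ there are no transitions and $H_\sigma$ reduces to $(F,\Phi)_{\ast}$ or $(G,\Psi)_{\ast}$. For $\omega_{2p+1}$-equivariance, palindromy of $\widetilde{\sigma}$ matches the labels after mirroring, while a $0\to1$ insertion $\kappa_{c_{l-1}}\circ F(f_l)$ at position $l$ is carried by $\omega$ to $F(Df_l)\circ D\kappa_{c_{l-1}}$ at the mirror position $2p+2-l$, which is precisely the prescribed $1\to0$ insertion there, using $DF=FD$, $DG=GD$, and $D\kappa_{Dc}\colon Gc\to Fc$; the bimodule factor is handled by the compatibility $\Phi\circ J=J'\circ\Phi$ from the definition of a morphism of bimodules with duality. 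Simpliciality in $p$ is the crux: the face operators of $sd_e$ act as $d_ld_{2p+1-l}$ on $\THR_{\sbt}$ and as $\delta^l$ on $\Delta[1]_{\sbt}$, and a case analysis shows that face maps interact with transitions in exactly three ways. Naturality of $\kappa$ handles faces that leave transitions unchanged. Composition of two adjacent transitions of opposite type produces loops $\kappa\circ D\kappa_D$ or $D\kappa_D\circ\kappa$ whose composites with $F(f)$ or $G(f)$ collapse to $F(f)$ or $G(f)$ exactly by conditions (i) and (ii) of Definition \ref{Gnattransf}. Finally, when a face map absorbs a transition into the bimodule factor (typically through $d_0$ or $d_{2p+1}$ combined with an interior face), condition (iii) identifies $\kappa_d\cdot\Phi(m)$ with $\Psi(m)\cdot\kappa_c$ and effects the required interchange between $\Phi$ and $\Psi$. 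The main obstacle is exactly this systematic bookkeeping: enumerating the finite list of ways face and degeneracy operators meet the (at most two) transition positions of $\widetilde{\sigma}$, and verifying that each reduces by naturality and (i)--(iii) to the map prescribed on the image simplex.
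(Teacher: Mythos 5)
Your construction is exactly the paper's: the palindromic extension $\widetilde\sigma$ of $\sigma=(0^b1^{p+1-b})$ produces precisely the paper's wedge component $(F(c_0),\dots,F(c_{b-1}),G(c_b),\dots,G(c_{2p+1-b}),F(c_{2p+2-b}),\dots,F(c_{2p+1}))$, your insertion $D\kappa_{Dc_{l-1}}\circ G(f_l)$ at the $G\!\to\!F$ transition is the paper's morphism at position $b$, and your $G(f_l)\circ\kappa_{c_l}$ at the $F\!\to\!G$ transition is the paper's morphism at position $2p+2-b$. The paper likewise leaves equivariance and simpliciality to ``one checks'' and ``by inspection'' (with one worked face-relation example), so your additional sketch of which of conditions (i)--(iii) handles which face interaction is, if anything, slightly more explicit than the published argument.
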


\begin{proof}
Denote $G\circ \kappa_{c}$ the level maps of the map of spectra given by the composite
\[\xymatrix{\mathcal{C}(c,d)\ar@{-->}[d]_{G\circ \kappa_{c}}\ar[r]^-{G}&\mathcal{C}'(Gc,Gd)\wedge\mathbb{S}\ar[d]^{\id\wedge \kappa_{c}}\\
\mathcal{C}'(Fc,Gd)&\mathcal{C}'(Gc,Gd)\wedge \mathcal{C}'(Fc,Gc)\ar[l]^-{\circ}
}
\]
Given a $\sigma\in \Delta[1]_k$ we define for every $\underline{i}\in I[2k+1]$ a map
\[H_\sigma\colon V(\mathcal{C};\mathcal{M},\underline{i})\longrightarrow V(\mathcal{C}';\mathcal{M}',\underline{i})\]
These maps will provide a simplicial homotopy as claimed upon taking loops and homotopy colimits. Elements $\sigma\in\Delta[1]_k$ can be displayed as $\sigma=(0^b1^{k+1-b})$.\newline
If $b=k+1$, $H_\sigma$ sends $m_0\wedge f_1\wedge\dots\wedge f_{2k+1}$ in the wedge component $(c_0,\dots,c_{2k+1})$ to
\[\Phi(m_0)\wedge F(f_{1})\wedge \dots\wedge F(f_{2k+1})\]
in the wedge component $(F(c_0),\dots,F(c_{2k+1}))$.\newline
If $b=0$, $H_\sigma$ maps $m_0\wedge f_1\wedge\dots\wedge f_{2k+1}$ to
\[\Psi(m_0)\wedge G(f_{1})\wedge \dots\wedge G(f_{2k+1})\]
in the wedge component $(G(c_0),\dots,G(c_{2k+1}))$.\newline
If $0<b<k+1$, $H_\sigma(m_0\wedge f_1\wedge\dots\wedge f_{2k+1})$ belongs to the wedge component \[(F(c_0),\dots, F(c_{b_\sigma-1}),G(c_{b_\sigma}),\dots, G(c_{2k+1-b_\sigma}),F(c_{2k+2-b_\sigma}),\dots, F(c_{2k+1}))\] and it is given by
\[\begin{array}{lll}\Phi(m_0)\wedge F(f_1)\wedge\dots\wedge F(f_{b-1})\wedge\\
(D\kappa_{Dc_{b-1}}\circ G(f_{b}))\wedge G(f_{b+1})\wedge\dots
\dots\wedge G(f_{2k+1-b})\wedge\\
(G(f_{2k+2-b})\circ \kappa_{c_{2k+2-b}})\wedge F(f_{2k+3-b})\wedge\dots\wedge F(f_{2k+1})\end{array}\]
One checks that the induced map
\[H_\sigma\colon \THR_{2k+1}(\mathcal{C};\mathcal{M})\longrightarrow \THR_{2k+1}(\mathcal{C}';\mathcal{M}')\] is $G$-equivariant upon using the various compatibility conditions satisfied by $(F,\Phi)$, $(G,\Psi)$ and $\kappa$.
By inspection, the family $H_\sigma$, $\sigma\in\Delta[1]_{\sbt}$ induces a simplicial homotopy
\[H\colon sd_e\THR_{\sbt}(\mathcal{C};\mathcal{M})\times \Delta[1]\longrightarrow sd_e\THR_{\sbt}(\mathcal{C}';\mathcal{M}')\]
\end{proof}

\begin{ex}We show that $d_0H=Hd_0$ when $k=1$ and $\sigma=(01)$. The map $Hd_0$ sends $(m_0\wedge f_1\wedge f_2\wedge f_3,(01))$ to
\[H((f_3m_0f_1)\wedge f_2,1)=\Psi(f_3\cdot m_0\cdot f_1)\wedge G(f_2)\]
Its image by $d_0H$ is
\[(( G(f_{3})\circ \kappa_{c_{3}})\cdot \Phi(m_0)\cdot(D\kappa_{Dc_{0}}\circ G(f_{1})))\wedge G(f_2)\]
The first smash factor is equal to
\[G(f_{3})\cdot(\kappa_{c_{3}}\cdot \Phi(m_0))\cdot (D\kappa_{Dc_{0}}\circ G(f_{1}))\]
which by definition of the bimodule actions is
\[G(f_{3})\cdot(\mathcal{M}_{0}'(\id_{Fc_0}\wedge\kappa_{c_{3}})\circ \Phi)(m_0)\cdot (D\kappa_{Dc_{0}}\circ G(f_{1}))\]
By property iii) of $\kappa$ this is equal to
\[G(f_{3})\cdot(\Psi\circ\mathcal{M}_{0}'(\kappa_{c_0}\wedge\id_{Gc_{3}}))(m_0)\cdot (D\kappa_{Dc_{0}}\circ G(f_{1}))\]
that by definition of the bimodule action is
\[G(f_{3})\cdot\Psi(m_0)\cdot (\kappa_{c_0}\circ D\kappa_{Dc_{0}}\circ G(f_{1}))\]
By property ii) this is equal to
\[G(f_{3})\cdot\Psi(m_0)\cdot G(f_{1})\]
which by naturality of
 $\Psi\colon\mathcal{M}\Rightarrow \mathcal{M}'\circ (G^{op}\wedge G)$ is
\[\mathcal{M}'( G(f_{1})\wedge G(f_{3}))(\Psi(m_0))=\Psi(\mathcal{M}(f_{1}\wedge f_{3})(m_0))=\Psi(f_3\cdot m_0\cdot f_1)\]
\end{ex}

Let $(C,D)$ be an $Ab$-enriched category with additive strict duality, and $(M,J)$ a bimodule with strict duality. Recall the functor $V\colon C\longrightarrow \mathcal{D}C$ from \ref{canddc} that sends $c$ to the triple $(c,Dc,\id_{Dc})$. Since the duality on $C$ is strict it commutes strictly with the duality. The identity map defines a map of bimodules $ M\Rightarrow\mathcal{D}M\circ (V^{op}\otimes V)$ commuting strictly with the dualities, since $M(c,c')=\mathcal{D}M(V(c),V(c'))$.

\begin{prop}\label{scriptdandthr} The $G$-map
\[(V,\id)_\ast\colon\THR(HC;HM)\longrightarrow\THR(H\mathcal{D}C;H\mathcal{D}M)\]
is a natural $G$-homotopy equivalence.
\end{prop}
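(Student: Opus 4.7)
My plan is to prove the two parts separately: that $(V,\id)_*$ is a non-equivariant weak equivalence and that its restriction to the $G$-fixed points is a weak equivalence; by equivariant Whitehead these together give a $G$-homotopy equivalence. Naturality in $(C,M,J)$ will be immediate from the explicit construction.

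For the non-equivariant statement, I would invoke the standard invariance of $\THH$ under enriched equivalences of categories with compatible equivalences of bimodules (see e.g.\ \cite[1.6.7]{ringfctrs}). The functor $V\colon C\to\mathcal{D}C$ is an equivalence of $Ab$-enriched categories by \ref{canddc}, hence $HV$ is fully faithful on hom-spectra (the map $\mathcal{C}(c,d)\to\mathcal{D}\mathcal{C}(Vc,Vd)$ is the identity at the spectrum level) and essentially surjective on objects. The compatible bimodule map is literally the identity $HM(c,d)=H\mathcal{D}M(Vc,Vd)$. Thus the induced map on $\THH$ is a weak equivalence.

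For the equivariant statement, I would use Segal's subdivision (Prop \ref{subdivision}) to identify $\THR(HC;HM)^G$ with $|(sd_e\THR_\sbt(HC;HM))^G|$, and similarly for $(H\mathcal{D}C,H\mathcal{D}M)$. At simplicial level $k$ the space $\THR_{2k+1}$ is a homotopy colimit over $I[2k+1]$; using the decomposition $I[2k+1]^G\cong I^G\times I^k\times I^G$ of Prop \ref{fixedIk} and the standard fact that $G$-fixed points commute (up to equivariant equivalence) with the Bousfield--Kan model of $\hocolim_{I[2k+1]}$, the fixed-point space is modelled by a homotopy colimit over $I^G\times I^k\times I^G$ of the pointwise $G$-fixed points of $\mathcal{G}_{2k+1}$. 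A quick inspection of the involution on $V(HC;HM,\underline{i})$ shows that for $\underline{i}\in I[2k+1]^G$ the $G$-fixed wedge summands are indexed by ``folded'' tuples $(c_0,c_1,\ldots,c_k,c_{k+1})$ with $c_0$ and $c_{k+1}$ carrying self-dual structures (i.e.\ fixed objects of $\sym\mathcal{D}C$). The map induced by $V$ identifies the summand indexed by such a tuple in $C$ with the corresponding summand in $\mathcal{D}C$; since $V$ is strictly duality-preserving, fully faithful on hom-spectra, and essentially surjective by a self-dual isomorphism (the iso $(\id_c,\phi)\colon(c,Dc,\id)\to(c,d,\phi)$ is itself equivariant when $(c,d,\phi)$ is a $G$-fixed object), the map of homotopy colimits is an equivalence on fixed points, levelwise in the subdivided direction.

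The main obstacle I expect is bookkeeping for the last step: verifying rigorously that the $G$-fixed-point functor commutes with the homotopy colimit over $I[2k+1]$ and that the equivariant ``contraction'' of tuples in $\mathcal{D}C$ to tuples in the image of $V$ via the isomorphisms $(\id_c,\phi)$ assembles into a simplicial map on $sd_e\THR_\sbt$. This is not a problem of principle — both points boil down to freeness of the $G$-action on indices $\underline{i}\notin I[2k+1]^G$ and to the fact that the natural transformation $VU\Rightarrow\id_{\mathcal{D}C}$, while not strictly duality-preserving on all of $\mathcal{D}C$, becomes strictly duality-preserving after restriction to the self-dual (i.e.\ $G$-fixed) objects — but writing out the details cleanly is the only real work.
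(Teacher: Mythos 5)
Your non-equivariant reduction is fine, but the fixed-point step does not go through as stated, and the specific claim you lean on is false. You want to use the unit/counit of the equivalence $V\dashv U$ (or rather $U\circ V=\id$, $\epsilon\colon VU\Rightarrow\id$) to ``contract'' tuples in $\mathcal{D}C$ to tuples in the image of $V$, and you assert that when $(c,d,\phi)$ is a $G$-fixed object the isomorphism $\epsilon_{(c,d,\phi)}=(\id_c,\phi)\colon V(c)\to(c,d,\phi)$ is equivariant. It is not. Even when $(c,d,\phi)$ is fixed (so $d=c$, $D\phi=\phi$), its source $V(c)=(c,Dc,\id)$ is generally \emph{not} a fixed object (that would require $Dc=c$ in $C$), and in any case $D(\id_c,\phi)=(\phi,\id_c)$, which equals $(\id_c,\phi)$ only when $\phi=\id$. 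The underlying issue is structural: $U\colon\mathcal{D}C\to C$ sends $(c,d,\phi)\mapsto c$ but $D(c,d,\phi)\mapsto d$, whereas $D U(c,d,\phi)=Dc$; so $U$ does \emph{not} commute with the strict dualities, hence $\epsilon$ cannot be a $G$-natural transformation in the sense of \ref{Gnattransf}, and there is no ``strictly duality-preserving'' inverse to apply equivariant Whitehead to. You also cannot appeal to \ref{inversenonstrict}, since its proof uses \ref{scriptdandthr}.

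The paper's proof avoids this entirely: rather than trying to realize the inverse by a duality-preserving functor, it works directly on the subdivision $sd_e\THR_{\sbt}$ and defines an explicit retraction $r$ that takes a string of $(2k+2)$ composable morphisms in $\mathcal{D}C$ over a $\underline{i}\in I[2k+1]$ and ``unfolds'' it using the component isomorphisms $\phi_i$ (the iso in the third slot of each triple), together with an explicit simplicial $G$-homotopy $H_\sigma$ from $\id$ to $V_\ast\circ r$. Both $r$ and $H_\sigma$ are equivariant only because the subdivision makes the middle of the string a canonical, action-preserving pivot; neither comes from a functor on $\mathcal{D}C$. If you want to salvage your outline, you would have to replace the ``$\epsilon$ is equivariant on fixed objects'' step by this (or an equivalent) explicit construction on $sd_e\THR_{\sbt}$; the fixed-point decomposition of $I[2k+1]$ and of $V(\cdot)^G$ that you set up is compatible with that, but it does not by itself supply the needed contraction.
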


\begin{proof}
We define a $G$-homotopy inverse for the subdivided map
\[(V,\id)_\ast\colon sd_e\THR_{\sbt}(HC;HM)\longrightarrow sd_e\THR_{\sbt}(H\mathcal{D}C;H\mathcal{D}M)\]
For $\underline{i}\in I[2k+1]$ we display an element of $V(HC;HM,\underline{i})$ as a string
\[c_{2k+1}\stackrel{m}{\leftarrow}c_0\stackrel{f_1}{\leftarrow}c_1\stackrel{}{\leftarrow}\dots\stackrel{}{\leftarrow}c_{2k}\stackrel{f_{2k+1}}{\leftarrow}c_{2k+1}\]
With this notation, an element of $V(H\mathcal{D}C;H\mathcal{D}M,\underline{i})$ is a string
\[\xymatrix{d_{2k+1}\ar[d]^{\phi_{2k+1}}\ar[r]^{m'}&d_{0}\ar[d]^{\phi_{0}}\ar[r]^{b_1}&d_1\ar[d]^{\phi_{1}}\ar[r]&\dots\ar[r]&d_{2k}\ar[d]^{\phi_{2k}}\ar[r]^{b_{2k+1}}&d_{2k+1}\ar[d]^{\phi_{2k+1}}\\
Dc_{2k+1}\ar[r]_{Jm}&Dc_0\ar[r]_{Da_1}&Dc_1\ar[r]&\dots\ar[r]&Dc_{2k}\ar[r]_{Da_{2k+1}}&Dc_{2k+1}
}\]
where the vertical maps are isomorphisms. This string is sent to
\[\xymatrix{Dd_{2k+1}&c_{0}\ar[l]_-{m\cdot\phi_{2k+1}}& c_1\ar[l]_-{a_1}&\dots\ar[l]_-{a_2}&c_k\ar[l]_{a_k}\\
Dd_{2k+1}\ar[r]_-{Db_{2k+1}}&Dd_{2k}\ar[r]_-{Db_{2k}}&\dots\ar[r]&Dd_{k+2}\ar[r]_-{Db_{k+2}}&Dd_{k+1}\ar[u]_-{a_{k+1}\circ D\phi_{k+1}^{-1}}
}\]
in $V(HC;HM,\underline{i})$.
These maps respect the structure maps and the dualities, and therefore they induce a map of real sets
\[r\colon sd_e\THR_{\sbt}(H\mathcal{D}C;H\mathcal{D}M)\longrightarrow sd_e\THR_{\sbt}(HC;HM)\]
This is a retraction for $(V,\id)_\ast$, and the composite $(V,\id)_\ast\circ r$ sends the string of $V(H\mathcal{D}C;H\mathcal{D}M,\underline{i})$ above to
\[\xymatrix{d_{2k+1}\ar@{=}[d]\ar[rr]^-{J(m\cdot\phi_{2k+1})}&&Dc_{0}\ar@{=}[d]\ar[r]^-{Da_1}&Dc_1\ar@{=}[d]\ar[r]&\dots\ar[r]&d_{2k}\ar@{=}[d]\ar[r]^-{b_{2k+1}}&dc_{2k+1}\ar@{=}[d]\\
d_{2k+1}\ar[rr]^-{J(m\cdot\phi_{2k+1})}&&Dc_{0}\ar[r]^-{Da_1}&Dc_1\ar[r]&\dots\ar[r]&d_{2k}\ar[r]^-{b_{2k+1}}&dc_{2k+1}
}\]

We define a $G$-homotopy from the identity to this composite by means of the maps  $H_\sigma\colon V(HC;HM,\underline{i})\longrightarrow V(HC;HM,\underline{i})$ defined as follows. For $\sigma=(0^\alpha1^{k+1-\alpha})\in\Delta[1]_k$ define $H_\sigma$ to be the identity if $\alpha=0$, the composite $(V,\id)_\ast\circ r$ if $\alpha=k+1$, and otherwise it sends the string above to
\[\resizebox{1\hsize}{!}{\xymatrix{
d_{2k\!+\!1}\ar@{=}[d]\ar[rr]^-{J(m\cdot\phi_{2k+1})}&&Dc_{0}\ar@{=}[d]\ar[r]^-{Da_1}&\dots\ar[r]^-{Da_{\alpha\!-\!1}}&Dc_{\alpha\!-\!1}\ar@{=}[d]\ar[rrdd]^{b_\alpha\circ\phi_{\alpha\!-\!1}^{-1}}
\\
d_{2k\!+\!1}\ar[rr]_-{J(m\cdot\phi_{2k+1})}&&Dc_{0}\ar[r]_-{Da_1}&\dots\ar[r]_-{Da_{\alpha\!-\!1}}&Dc_{\alpha\!-\!1}\ar[rd]_{Da_\alpha}\\
&&&&&
Dc_{\alpha}\ar[d]_-{Da_{\alpha\!+\!1}}&d_{\alpha}\ar[d]^-{b_{\alpha\!+\!1}}\ar[l]^-{\phi_\alpha}
\\
&&&&&
\vdots\ar[d]_-{Da_{\overline{\alpha}\!-\!1}}&\vdots\ar[d]^-{b_{\overline{\alpha}\!-\!1}}
\\
&&&&&
Dc_{\overline{\alpha}\!-\!1}\ar[dl]_-{\phi_{\overline{\alpha}}^{-1}\circ Da_{\overline{\alpha}}}&d_{\overline{\alpha}\!-\!1}\ar[ddll]^{b_{\overline{\alpha}}}\ar[l]_-{\phi_{\overline{\alpha}\!-\!1}}
\\
d_{2k\!+\!1}\ar@{=}[d]&&d_{2k}\ar@{=}[d]\ar[ll]_-{b_{2k\!+\!1}}&\dots\ar[l]_-{b_{2k}}&d_{\overline{\alpha}}\ar@{=}[d]\ar[l]_-{b_{\overline{\alpha}\!+\!1}}\\
d_{2k\!+\!1}&&d_{2k}\ar[ll]^-{b_{2k\!+\!1}}&\dots\ar[l]_-{b_{2k}}&d_{\overline{\alpha}}\ar[l]^-{b_{\overline{\alpha}\!+\!1}}
}}\]
where $\overline{\alpha}=2k+2-\alpha$. The maps $H_\sigma$ give a $G$-homotopy
\[sd_e\THR_{\sbt}(H\mathcal{D}C;H\mathcal{D}M)\times\Delta[1]\longrightarrow sd_e\THR_{\sbt}(H\mathcal{D}C;H\mathcal{D}M)\]
from the identity to $(V,\id)_\ast\circ r$.
\end{proof}

\begin{cor}\label{inversenonstrict}
Let $(C,D)$ and $(C',D')$ be $Ab$-enriched categories with additive strict dualities, $(M,J)$ and $(M',J')$ bimodules with strict duality respectively over $C$ and $C'$, and \[(F,\Phi)\colon (C,M)\longrightarrow (C',M')\] a morphism of bimodules with duality.
If the functor $F$ is an equivalence of categories and the natural maps
\[\Phi\colon M(c,c')\longrightarrow M'(Fc,Fc')\]
are all isomorphisms, the induced map
\[(F,\Phi)_\ast\colon\THR(HC;HM)\longrightarrow\THR(HC';HM')\]
is a $G$-equivalence.
\end{cor}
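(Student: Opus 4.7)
The plan is to construct a pseudo-inverse $(G,\Psi)$ to $(F,\Phi)$ as a morphism of bimodules with strict duality, together with natural isomorphisms between the two composites and the identities, and then to invoke Proposition \ref{THRpreservesGequiv} to translate these data into $G$-equivariant homotopies on $\THR$. The subtlety is that a morphism of $Sp^\Sigma$-categories with duality requires strict commutativity $GD'=DG$, and this cannot in general be arranged directly: a $D'$-fixed object of $C'$ may admit no $D$-fixed preimage in $C$, even when $F$ is an equivalence of $Ab$-enriched categories.

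To bypass this obstruction, I would first reduce via Proposition \ref{scriptdandthr}, applied naturally in $(C,M)$ and $(C',M')$, to the analogous statement for $(\mathcal{D}F,\mathcal{D}\Phi)\colon (\mathcal{D}C,\mathcal{D}M)\to(\mathcal{D}C',\mathcal{D}M')$; the vertical maps $(V,\id)_\ast$ are $G$-equivalences on both sides, so it suffices to show that $(\mathcal{D}F,\mathcal{D}\Phi)_\ast$ is one. The payoff of passing to $\mathcal{D}$ is that duality-fixed objects of $\mathcal{D}C'$ are triples $(c',c',\phi')$ with $\phi'=D'\phi'$, and such objects always admit strictly fixed preimages in $\mathcal{D}C$: picking any $a\in C$ and an isomorphism $\epsilon\colon Fa\cong c'$ via essential surjectivity, set $\psi:=F^{-1}(D'\epsilon\circ\phi'\circ\epsilon)\colon a\to Da$. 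The strictness $D'D'=\id$, the hypothesis $\phi'=D'\phi'$, and the faithfulness of $F$ together force $D\psi=\psi$, so that $(a,a,\psi)$ is a $D$-fixed object of $\mathcal{D}C$ with $\mathcal{D}F(a,a,\psi)\cong (c',c',\phi')$ in $\mathcal{D}C'$ via $(\epsilon,\epsilon^{-1})$.

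Using this construction on fixed orbits, an arbitrary preimage together with the rule $GD'=DG$ on non-fixed orbits, and the normalization $G\circ\mathcal{D}F=\id$ on the image of $\mathcal{D}F$ (as in the Remark after Proposition \ref{dualitiesandequivalences}), I would assemble an enriched functor $G\colon\mathcal{D}C'\to\mathcal{D}C$ strictly commuting with the dualities, and define $\Psi\colon \mathcal{D}M'\Rightarrow \mathcal{D}M\circ(G^{op}\otimes G)$ by transporting the levelwise inverses of $\mathcal{D}\Phi$; compatibility of $\Psi$ with the bimodule dualities would follow from that of $\mathcal{D}\Phi$ together with the strictness of $G$. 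The natural isomorphisms $\id\Rightarrow G\circ \mathcal{D}F$ and $\mathcal{D}F\circ G\Rightarrow \id$ would be given by the chosen object-wise components, and the strict duality preservation of $G$ combined with the equivariant choice of these components on orbit representatives would ensure that they satisfy the axioms of a $G$-natural transformation (Definition \ref{Gnattransf}). Two applications of Proposition \ref{THRpreservesGequiv} then furnish the $G$-homotopies witnessing $(\mathcal{D}F,\mathcal{D}\Phi)_\ast$ as a $G$-homotopy equivalence, which combined with the first reduction finishes the proof. The main technical hurdle is precisely the construction of fixed preimages: the $\mathcal{D}$-enhancement is essential because it replaces the rigid condition ``$Dc=c$'' by the flexible existence of a symmetric $\psi\colon a\to Da$, which the formula $\psi=F^{-1}(D'\epsilon\circ\phi'\circ\epsilon)$ always produces from a symmetric form $\phi'$ on $c'$.
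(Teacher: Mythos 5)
Your proposal is correct and shares the paper's two key structural ideas — pass to the $\mathcal{D}$-strictification via Proposition~\ref{scriptdandthr}, construct a strictly duality-preserving inverse there, and apply Proposition~\ref{THRpreservesGequiv} to the resulting $G$-natural isomorphisms. Where you differ is in how the strict inverse at the $\mathcal{D}$-level is obtained. The paper first invokes Proposition~\ref{equivalenceequivariant} to produce an inverse $(G,\xi)\colon(C',D')\to(C,D)$ at the level of $C$ and $C'$ (where $\xi\colon GD'\Rightarrow DG$ is permitted to be a non-trivial natural isomorphism even though $D$ and $D'$ are strict, and this is exactly the point), and only then applies the functor $\mathcal{D}$ to both $(G,\xi)$ and to the unit and counit, with $\mathcal{D}$ automatically delivering strict commutation with the dualities. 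You instead build a strictly compatible functor $G\colon\mathcal{D}C'\to\mathcal{D}C$ by hand, choosing preimages orbit by orbit and producing a $D$-fixed preimage $(a,a,\psi)$ of a $D'$-fixed $(c',c',\phi')$ via $\psi=F^{-1}(D'\epsilon\circ\phi'\circ\epsilon)$; your verification that $D\psi=\psi$ is correct. The trade-off: your route makes explicit exactly why $\mathcal{D}$ resolves the obstruction (flexibility in the form $\phi'$ replaces the rigid requirement $Dc=c$), which is pedagogically illuminating, but it forces you to re-verify by hand the functoriality of $G$ and the $G$-naturality axioms for $\epsilon$ and $\mu$ — verifications the paper inherits for free from Propositions~\ref{dualitiesandequivalences} and~\ref{equivalenceequivariant}, where the equivariance of the unit and counit is already established. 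Your sketch gestures at these verifications without carrying them out; if you filled them in you would essentially be re-proving the relevant parts of \ref{equivalenceequivariant} in the special case at hand, so citing that proposition and then strictifying with $\mathcal{D}$, as the paper does, is the more economical argument.
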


\begin{proof}
Since $F$ is an equivalence of categories, by \ref{equivalenceequivariant} it is an equivalence of categories with duality, i.e. there is a morphism of categories with duality
\[(G,\xi)\colon (C',D')\longrightarrow (C,D)\]
and natural isomorphisms $\epsilon\colon FG\Rightarrow\id_{C'}$ and $\mu\colon \id_C\Rightarrow GF$ respecting the dualities in the sense of definition \ref{eqcatdual}. Notice that even though the dualities on $C$ and $C'$ are strict, the isomorphism $\xi\colon GD'\Rightarrow DG$ does not need to be the identity. Define a bimodule map $\Psi\colon M'\Rightarrow M\circ(G^{op}\otimes G)$ as the composite
\[\xymatrix{\Psi\colon M'(d,d')\ar[rr]^-{M'(\epsilon\otimes\epsilon^{-1})}_{\cong}&&M'(FGd,FGd')\ar[r]^-{\Phi^{-1}}&M(Gd,Gd')
}\]
This construction induces a morphism of bimodules with duality
\[(\mathcal{D}(G,\xi),\mathcal{D}\Psi)\colon (\mathcal{D}C',\mathcal{D}M')\longrightarrow (\mathcal{D}C,\mathcal{D}M)\]
The natural isomorphisms $\epsilon$ and $\mu$ induce $G$-natural isomorphisms between the compositions of $(\mathcal{D}(G,\xi),\mathcal{D}\Psi)$ and $(\mathcal{D}(F,\id),\mathcal{D}\Phi)$ and the identities. By \ref{THRpreservesGequiv} above, the map
\[(\mathcal{D}(F,\id),\mathcal{D}\Phi)_\ast\colon\THR(H\mathcal{D}C;H\mathcal{D}M)\longrightarrow \THR(H\mathcal{D}C';H\mathcal{D}M')\]
is a $G$-equivalence, with $G$-homotopy inverse induced by $(G,\Psi)$. Proposition \ref{scriptdandthr} provides a commutative diagram
\[\xymatrix{\THR(HC;HM)\ar[d]_-{\simeq}\ar[rr]^-{(F,\Phi)_\ast}&&\THR(HC';HM')\ar[d]^-{\simeq}\\
\THR(H\mathcal{D}C;H\mathcal{D}M)\ar[rr]^-{\simeq}_-{(\mathcal{D}(F,\id),\mathcal{D}\Phi)_\ast}&&\THR(H\mathcal{D}C';H\mathcal{D}M')
}\]
where the vertical maps are $G$-equivalences. Therefore $(F,\Phi)_\ast$ is an equivalence as well.
\end{proof}


\subsection{The $\mathbb{Z}/2$-fixed points of $\THR$}

Let $(\mathcal{C},D)$ be an $Sp^\Sigma$-category with duality, and $(\mathcal{M},J)$ a bimodule with duality over $(\mathcal{C},D)$. We saw in §\ref{duality} that the $G$-fixed points of the realization of a real space can be described as the realization of a simplicial space by means of subdivision. Thus, if we want to say something about $\THR(\mathcal{C};\mathcal{M})^{G}$ we better understand the spaces $\THR_{2k+1}(\mathcal{C};\mathcal{M})^{G}$. Recall the explicit description of the fixed points category $I[2k+1]^G\cong I^G\times I^k\times I^G$ from proposition \ref{fixedIk}. To simplify the notation let $\mathcal{G}=\mathcal{G}_{2k+1}(\mathcal{C};\mathcal{M})$, so that 
\[\THR_{2k+1}(\mathcal{C};\mathcal{M})=\hocolim_{I[2k+1]}\mathcal{G}\]
In §\ref{defTHR} we defined maps
\[\omega_{2k+1}\colon\mathcal{G}(\underline{i})\longrightarrow \mathcal{G}(\omega_{2k+1}\underline{i})\]
that induce the $G$-action on $\THR_{2k+1}(\mathcal{C};\mathcal{M})$.
If $\underline{i}\in I[2k+1]^G$ is a fixed point,
\[\omega_{2k+1}\colon\mathcal{G}(\underline{i})\longrightarrow \mathcal{G}(\omega_{2k+1}\underline{i})=\mathcal{G}(\underline{i})\]
defines an involution on $\mathcal{G}(\underline{i})$.
Let $(\mathcal{G}|_{I[2k+1]^G})^G$ be the functor $I[2k+1]^G\longrightarrow \Top_\ast$ that sends $\underline{i}$ to $\mathcal{G}(\underline{i})^G$.

\begin{prop}\label{fixedhocolim} There is a natural homeomorphism
\[\THR_{2k+1}(\mathcal{C};\mathcal{M})^{G}\cong \hocolim_{I[2k+1]^G}(\mathcal{G}|_{I[2k+1]^G})^G\]
\end{prop}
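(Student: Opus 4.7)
The plan is to use the Bousfield-Kan formula for the homotopy colimit and then commute $G$-fixed points past the relevant colimits.

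First, I would write out
\[\THR_{2k+1}(\mathcal{C};\mathcal{M}) = \hocolim_{I[2k+1]} \mathcal{G} = \left| [n] \longmapsto \coprod_{\underline{i}_0 \to \cdots \to \underline{i}_n} \mathcal{G}(\underline{i}_0) \right|\]
where the coproduct ranges over $n$-chains in $I[2k+1]$. The $G$-action described in \S\ref{defTHR} acts on this simplicial replacement in a very concrete way: on an element $x \in \mathcal{G}(\underline{i}_0)$ sitting in the summand indexed by the chain $\sigma = (\underline{i}_0 \xrightarrow{\alpha_1} \cdots \xrightarrow{\alpha_n} \underline{i}_n)$, the involution sends $x$ to $\omega_{2k+1}(x) \in \mathcal{G}(\omega_{2k+1}\underline{i}_0)$ sitting in the summand indexed by the chain $\omega_{2k+1}(\sigma) = (\omega_{2k+1}\underline{i}_0 \xrightarrow{\omega_{2k+1}\alpha_1} \cdots \xrightarrow{\omega_{2k+1}\alpha_n} \omega_{2k+1}\underline{i}_n)$.

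Next I would commute $G$-fixed points past each of the three colimit operations. Since geometric realization of simplicial $G$-sets commutes with taking fixed points levelwise, it suffices to compute the fixed points of the $n$-th simplicial level. Since $G$ acts on the coproduct by permuting summands, the fixed points of $\coprod_\sigma \mathcal{G}(\underline{i}_0)$ consist precisely of those summands indexed by chains $\sigma$ satisfying $\omega_{2k+1}(\sigma) = \sigma$, intersected with the $G$-fixed points of $\mathcal{G}(\underline{i}_0)$ under the resulting involution. A chain is fixed if and only if every vertex $\underline{i}_j$ lies in $I[2k+1]^G$ and every arrow $\alpha_j$ satisfies $\omega_{2k+1}(\alpha_j) = \alpha_j$; that is, if and only if $\sigma$ is an $n$-simplex of the nerve of the subcategory $I[2k+1]^G$. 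On such a chain, the induced involution on $\mathcal{G}(\underline{i}_0)$ is exactly the structure involution defining $(\mathcal{G}|_{I[2k+1]^G})^G(\underline{i}_0)$.

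Putting these identifications together gives
\[\THR_{2k+1}(\mathcal{C};\mathcal{M})^G \;\cong\; \left| [n] \longmapsto \coprod_{\underline{i}_0 \to \cdots \to \underline{i}_n \text{ in } I[2k+1]^G} (\mathcal{G}|_{I[2k+1]^G})^G(\underline{i}_0) \right| \;\cong\; \hocolim_{I[2k+1]^G} (\mathcal{G}|_{I[2k+1]^G})^G\]
by reading off the Bousfield-Kan formula for the right-hand homotopy colimit.

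The computation is essentially formal; there is no real obstacle beyond bookkeeping. The only point that requires care is verifying that the $G$-action on the simplicial replacement agrees on summands with the structure involution $\omega_{2k+1}$ on $\mathcal{G}(\underline{i})$ (and not, say, some twisted version thereof). This is a direct unwinding of the definition of the $G$-action on $\THR$ given in \S\ref{defTHR}, combined with the observation that the face and degeneracy functors in the Bousfield-Kan simplicial replacement only modify the chain, not the element of $\mathcal{G}(\underline{i}_0)$ itself.
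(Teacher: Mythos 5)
Your proof is correct and is essentially identical to the paper's: both unwind the Bousfield–Kan simplicial replacement, observe that the involution is simplicial there, commute fixed points with geometric realization levelwise, and identify the fixed chains with the nerve of $I[2k+1]^G$. (The only cosmetic difference is that the paper works with the wedge $\bigvee$ appropriate for pointed homotopy colimits rather than the coproduct $\coprod$.)
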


\begin{proof}
This is a general result based on properties of the homotopy colimit. Indeed, suppose that $J$ is a category with involution $\omega$, $X\colon J\longrightarrow \Top_\ast$ a functor and $\tau\colon X\longrightarrow X\circ\omega$ a natural transformation such that
$\tau_{\omega j}\tau_j=\id_{X(j)}$. This induces an action on $\hocolim_J X$
\[\hocolim_J X\stackrel{\tau}{\longrightarrow}\hocolim_J (X\circ\omega)\longrightarrow\hocolim_J X\]
We show that $(\hocolim_J X)^G\cong\hocolim_{J^G}(X|_{J^G})^G$ by explicit calculation. The space $\hocolim_J X$ is the realization of the simplicial space
\[[k]\longmapsto \bigvee_{\underline{\sigma}\in\mathcal{N}_kJ}X(\sigma_0)\]
where $\underline{\sigma}=(\sigma_0\rightarrow \sigma_1\rightarrow\dots\rightarrow \sigma_k)$. The action on $\hocolim_J X$ is the realization of the simplicial map given in degree $k$ by
\[(x\in X(\sigma_0),\underline{\sigma})\longmapsto (\tau_{\sigma_0}x\in X(\omega\sigma_0),\omega\sigma_0\rightarrow \omega\sigma_1\rightarrow\dots\rightarrow \omega\sigma_k)\]
Since the action is simplicial, fixed points functor and realization functor commute, so
\[(\hocolim_J X)^G\cong|(\bigvee_{\underline{\sigma}\in\mathcal{N}_{\sbt} J}X(\sigma_0))^G|=|\bigvee_{\underline{\sigma}\in(\mathcal{N}_{\sbt} J)^G}X(\sigma_0)^G|\]
Since $(\mathcal{N}_{\sbt} J)^G=\mathcal{N}_{\sbt} J^G$, and therefore
\[(\hocolim_J X)^G\cong|\bigvee_{\underline{\sigma}\in\mathcal{N}_{\sbt} J^G}X(\sigma_0)^G|=\hocolim_{J^G}(X|_{J^G})^G\]
\end{proof}

We next examine the space of equivariant maps
\[\mathcal{G}_{2k+1}(\mathcal{C};\mathcal{M})(\underline{i})^G=\map_{\ast}(S^{\underline{i}},V(\mathcal{C};\mathcal{M},\underline{i}))^G\]
for $\underline{i}\in I[2k+1]^G$,
where $S^{\underline{i}}=S^{i_0}\wedge\dots\wedge S^{i_{2k+1}}$ has $G$-action
\[\xymatrix{S^{\underline{i}}=S^{i_0}\wedge\dots\wedge S^{i_{2k+1}}\ar[r]^{\omega_{i_0}\wedge\dots\wedge\omega_{i_{2k+1}}}&S^{i_0}\wedge\dots\wedge S^{i_{2k+1}}\ar[d]^{\cong}\\
&S^{\underline{i}}=S^{\omega\underline{i}}=S^{i_0}\wedge S^{i_{2k+1}}\wedge\dots\wedge S^{i_1}
}\]
where the vertical homeomorphism reverses the order of the last $2k+1$ smash factors. The action on $V(\mathcal{C};\mathcal{M},\underline{i})$ is given by the maps $\overline{\omega}_k$ defined in §\ref{defTHR}.
We begin by describing  $(S^{\underline{i}})^G$ and $V(\mathcal{C};\mathcal{M},\underline{i})^G$ and refer to §\ref{secghtpytheory}. The fixed set of $\omega_j\colon S^j\longrightarrow S^j$ is $S^{\lceil\frac{j}{2}\rceil}$, and using that $\underline{i}\in I[2k+1]^G$ one gets
\[(S^{\underline{i}})^G\cong S^{\lceil\frac{i_0}{2}\rceil}\wedge S^{\lceil\frac{i_1+\dots+i_{2k+1}}{2}\rceil}\cong S^{\lceil\frac{i_0}{2}\rceil+\lceil\frac{|\underline{i}|-i_0}{2}\rceil}\]
Here $\lceil \ \rceil$ denotes the upper integral approximation and $|\underline{i}|$ is the sum of the components of $\underline{i}$.

We now describe $V(\mathcal{C};\mathcal{M},\underline{i})^G$, for $\underline{i}\in I[2k+1]^G$. The space $\mathcal{C}(c,Dc)_{i}$ has a $G$-action by the involution $D\circ\omega_i$, where $\omega_i\in\Sigma_i$ is as usual the permutations that reverses the order of the elements. Similarly $J\circ\omega_i$ gives a $G$-action on $\mathcal{M}(c,Dc)_{i}$.

\begin{prop}\label{fixedptdescr}
For $\underline{i}=(i_0,\dots,i_k,i_{k+1},i_k,\dots,i_1)\in I[2k+1]^G$ there is a natural homeomorphism
\[\resizebox{1\hsize}{!}{$V(\mathcal{C};\mathcal{M},\underline{i})^G\cong\bigvee \mathcal{M}(c_0,D(c_0))_{i_0}^G\wedge\mathcal{C}(c_1,c_0)_{i_1}\wedge\dots\wedge\mathcal{C}(c_k,c_{k-1})_{i_k}\wedge\mathcal{C}(D(c_k),c_{k})_{i_{k+1}}^G$}\]
where the wedge runs over the $(k+1)$-tuples $c_0,\dots,c_k$ of objects of $\mathcal{C}$.

In particular if $\mathcal{C}$ and $\mathcal{M}$ are $G$-connected, the connectivity of $V=V(\mathcal{C};\mathcal{M},\underline{i})$ is \[(\conn V,\conn V^G)=(|\underline{i}|-1,\lceil\frac{i_0}{2}\rceil+\lceil\frac{|\underline{i}|-i_0}{2}\rceil-1)\]
\end{prop}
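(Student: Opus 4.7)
The plan is to read off the fixed set of $V(\mathcal{C};\mathcal{M},\underline{i})$ by first identifying which wedge summands are fixed by $\overline{\omega}_{2k+1}$, then computing the fixed set of the involution restricted to each such summand, and finally extracting the connectivity estimate from the $G$-connectedness hypothesis.

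First I would identify the fixed wedge summands. Unpacking the construction of $\overline{\omega}_{2k+1}$ in \S\ref{defTHR}, the involution sends the summand labeled $(c_0,c_1,\ldots,c_{2k+1})$ to the one labeled $(Dc_{2k+1},Dc_{2k},\ldots,Dc_0)$, so a summand is fixed precisely when $c_l=Dc_{2k+1-l}$ for all $l$. This amounts to freely choosing $c_0,c_1,\ldots,c_k$ and forcing $c_{k+j}=Dc_{k+1-j}$ for $1\le j\le k+1$. By the general description of fixed points of wedges recalled in \S\ref{secghtpytheory}, $V^G$ is the wedge over these fixed summands of their individual fixed sets.

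Next I would analyze the involution restricted to the fixed summand $W_{c_0,\ldots,c_k}$. Using the fixedness of $\underline{i}$ (so $i_l=i_{2k+2-l}$) together with the relations $c_{2k+2-l}=Dc_{l-1}$ and $c_{2k+1-l}=Dc_l$, one checks that the involution pairs the factor at position $l$ with the factor at position $2k+2-l$, applying $D\circ\omega_{i_l}$ in passing. Three cases arise: the module factor $\mathcal{M}(c_0,Dc_0)_{i_0}$ is sent to itself by $J\circ\omega_{i_0}$; the middle factor $\mathcal{C}(Dc_k,c_k)_{i_{k+1}}$ is sent to itself by $D\circ\omega_{i_{k+1}}$; and for each $1\le l\le k$ the pair
\[
\mathcal{C}(c_l,c_{l-1})_{i_l}\wedge\mathcal{C}(Dc_{l-1},Dc_l)_{i_l}
\]
is acted on by the twisted swap $(x,y)\mapsto(h^{-1}(y),h(x))$ with $h=D\circ\omega_{i_l}$ an isomorphism between the two smash factors. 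The fixed set of such a twisted swap on $X\wedge Y$ with $h\colon X\xrightarrow{\cong}Y$ is canonically the graph $\{(x,h(x))\}\cong X$. Since smashing commutes with taking fixed points factorwise, $W_{c_0,\ldots,c_k}^G$ is homeomorphic to
\[
\mathcal{M}(c_0,Dc_0)_{i_0}^G\wedge\mathcal{C}(c_1,c_0)_{i_1}\wedge\cdots\wedge\mathcal{C}(c_k,c_{k-1})_{i_k}\wedge\mathcal{C}(Dc_k,c_k)_{i_{k+1}}^G,
\]
exactly as claimed.

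For the connectivity statement, the non-equivariant estimate follows at once: each summand of $V$ is a smash of $(i_l-1)$-connected spaces (by $0$-connectedness of $\mathcal{C}$ and $\mathcal{M}$), hence is $(|\underline{i}|-1)$-connected. On the fixed points, the explicit description combined with the $G$-connectedness hypothesis gives that each summand is the smash of one $(\lceil i_0/2\rceil-1)$-connected space, $k$ spaces of connectivity $i_l-1$, and one $(\lceil i_{k+1}/2\rceil-1)$-connected space, totaling connectivity $\lceil i_0/2\rceil+(i_1+\cdots+i_k)+\lceil i_{k+1}/2\rceil-1$. Substituting the identity $|\underline{i}|-i_0=2(i_1+\cdots+i_k)+i_{k+1}$ rewrites this as $\lceil i_0/2\rceil+\lceil(|\underline{i}|-i_0)/2\rceil-1$. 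The main obstacle is the combinatorial bookkeeping in the second step: carefully tracking how the permutation of smash factors, the levelwise involutions $\omega_{i_l}$, and the operators $D$ and $J$ combine to produce exactly this pairing, and verifying that each twisted-swap fixed set is canonically a single copy of the $\mathcal{C}$-spectrum level rather than some subquotient; once this is settled the rest is arithmetic.
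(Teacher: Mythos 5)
Your argument is correct and is essentially the same as the paper's. You identify the fixed wedge summands by the same condition $c_l=Dc_{2k+1-l}$, then determine the fixed points within each such summand by analyzing how the involution permutes and twists the smash factors, and finally run the same arithmetic for the connectivity; the only stylistic difference is that you package the middle step as a collection of ``twisted swaps'' whose fixed sets are graphs, whereas the paper writes out the fixed-point equation $f_{2k+2-l}=D\omega_{i_l}f_l$ directly and projects off the determined factors --- two phrasings of the identical computation.
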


\begin{proof}
By definition, $V(\mathcal{C};\mathcal{M},\underline{i})^G$ is the wedge over the set of $(2k+2)$-tuples of objects $(c_0,\dots,c_{2k+1})$ fixed by the action of the spaces
\[(\mathcal{M}(c_0,c_k)_{i_0}\wedge\mathcal{C}(c_1,c_0)_{i_1}\wedge\dots\wedge\mathcal{C}(c_{2k+1},c_{2k})_{i_{2k+1}})^G\]
Now, a sequence $(c_0,\dots,c_{2k+1})$ is fixed if and only if
\[(c_0,\dots,c_{2k+1})=(D(c_{2k+1}),\dots,D(c_{0}))\]
that is, if $(c_0,\dots,c_{2k+1})$ is of the form
\[(c_0,\dots,c_k,Dc_k,\dots,Dc_0)\]
Therefore $V(\mathcal{C};\mathcal{M},\underline{i})^G$ is the wedge over $(c_0,\dots,c_{k})$ of the fixed point spaces
\[(\mathcal{M}(c_0,D(c_0))_{i_0}\wedge\mathcal{C}(c_1,c_0)_{i_1}\wedge\dots\wedge\mathcal{C}(D(c_{0}),D(c_{1}))_{i_{2k+1}})^G\]
An element $m\wedge f_1\wedge\dots\wedge f_{2k+1}$ is in this space if and only if
\[m\wedge f_1\wedge\dots\wedge f_{2k+1}=J\omega_{i_0}m\wedge D\omega_{i_{2k+1}}f_{2k+1}\wedge\dots\wedge D\omega_{i_1}f_{1}\]
that is, if and only if it is of the form
\[m\wedge f_1\wedge\dots\wedge f_k\wedge f_{k+1}\wedge D\omega_{i_k}f_k\wedge\dots\wedge D\omega_{i_1}f_1\]
with $m=J\omega_{i_0}m$ and $f_{k+1}=D\omega_{i_{k+1}}f_{k+1}$. This means that the last $k$ smash factors are determined by the first $k$, and projecting them off one gets the homeomorphism of the statement.

When $\mathcal{C}$ and $\mathcal{M}$ are $G$-connected the space $V(\mathcal{C};\mathcal{M},\underline{i})$ is non-equivariantly $(|\underline{i}|-1)$-connected, as the connectivity of the smash product of two spaces is the sum of the connectivities plus one. By our description, the connectivity of $V(\mathcal{C};\mathcal{M},\underline{i})^G$ is
\[\lceil\frac{i_0}{2}\rceil+i_1+\dots +i_k+\lceil\frac{i_{k+1}}{2}\rceil-1=\lceil\frac{i_0}{2}\rceil+\lceil\frac{|\underline{i}|-i_0}{2}\rceil-1\]
\end{proof}


\subsection{Equivariant approximation lemma}\label{Gapproxsec}

In this section we prove two closely related results about the behavior of the homotopy colimit defining $\THR$. One shows that if the connectivity of a map of diagrams over $I[k]$ "tends to infinity", it induces an equivalence on the homotopy colimits. The other is an equivariant version of Bökstedt approximation lemma for $\THH$.

We define a partial order on the set of objects $ObI[k]=\mathbb{N}^{k+1}$ by declaring $\underline{i}\leq\underline{j}$ if the components satisfy $i_l\leq j_l$ for all $0\leq l\leq k$. The subset $ObI[k]^G\subset ObI[k]$ has the induced order.
Given a partially ordered set $J$, we say that a map  $\lambda\colon J\longrightarrow \mathbb{N}$ tends to infinity on $J$ if given any $n\in \mathbb{N}$ there is a $j_0\in J$ such that $\lambda(j)\geq n$ for all $j\geq j_0$.

\begin{prop}\label{connectgoestoinfty} Let $J$ be either $I[k]$ or $I[k]^G$, let $X,Y\colon J\longrightarrow \Top_\ast$ be two functors and $f\colon X\Rightarrow Y$ a natural transformation. Suppose that for all object $i\in J$ the map $f_i\colon X(i)\longrightarrow Y(i)$ is  $\lambda(i)$-connected, for a function $\lambda\colon J\longrightarrow \mathbb{N}$ that tends to infinity on $J$. Then the induced map
\[f\colon \hocolim_J X\stackrel{\simeq}{\longrightarrow} \hocolim_J Y\]
is a weak equivalence.
\end{prop}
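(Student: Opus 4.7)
The plan is to reduce to showing that $\pi_n f_{\ast}\colon \pi_n \hocolim_J X \to \pi_n \hocolim_J Y$ is an isomorphism for every $n \geq 0$, for both $J = I[k]$ and $J = I[k]^G$. Both cases are handled uniformly once one uses the poset structure on objects: componentwise maximum in $\mathbb{N}^{k+1}$ provides upper bounds for any finite collection of objects in $J$, and this same property is inherited by $I[k]^G$ (the fixed-point description from Proposition~\ref{fixedIk} shows that $\mathrm{Ob}\, I[k]^G$ also has componentwise maxima).

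The key structural input is that the homotopy colimit, modeled as the bar construction $|[p]\mapsto \bigvee_{\underline{\sigma}\in\mathcal{N}_p J} X(\sigma_0)|$, is built from a sequential colimit of cofibrations along its skeletal filtration, so homotopy groups commute with the filtered-poset colimit on objects in the following sense: any class $[\alpha]\in\pi_n\hocolim_J Y$ is represented by a simplicial map out of a finite simplicial subset of the bar construction, and hence involves only finitely many simplices, referencing only finitely many objects $i_1,\dots,i_m\in\mathrm{Ob}\, J$. Choosing a common upper bound $i_0\geq i_1,\dots,i_m$ and pushing $\alpha$ along the structure morphisms of $Y$ produces a representative in the image of the natural inclusion $Y(i_0)\to \hocolim_J Y$; equivalently, $[\alpha]$ lifts to a class in $\pi_n Y(i_0)$.

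Enlarging $i_0$ if necessary so that $\lambda(i_0)\geq n+2$ (possible since $\lambda$ tends to infinity on $J$), the hypothesis that $f_{i_0}\colon X(i_0)\to Y(i_0)$ is $(n+2)$-connected lets us lift this class to $\pi_n X(i_0)$, whose image in $\pi_n\hocolim_J X$ maps to $[\alpha]$ under $\pi_n f_\ast$. This gives surjectivity. For injectivity, given two classes in $\pi_n\hocolim_J X$ that agree in $\pi_n\hocolim_J Y$, a null-homotopy of their difference again factors through finitely many objects, admitting an upper bound $i_1$ with $\lambda(i_1)\geq n+3$; the $(n+3)$-connectivity of $f_{i_1}$ then produces a null-homotopy one stage up in $\pi_n X(i_1)$, showing that the two classes were already equal in $\pi_n\hocolim_J X$.

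The main obstacle is the rigorous identification
\[\pi_n \hocolim_J X \;\cong\; \colim_{i\in\mathrm{Ob}\, J}\, \pi_n X(i),\]
because $J$ is a genuine category rather than a poset and the bar construction mixes objects with chains of morphisms. The cleanest way I would address this is to exploit the skeletal filtration of the bar construction and the fact that, on the level of $\pi_n$, adding higher-dimensional simplices only contributes classes already present at the level of objects (the $0$-simplex piece $\coprod_i X(i)$). Together with the filtered structure of $\mathrm{Ob}\, J$, this yields the desired identification, after which the statement becomes the elementary fact that a filtered colimit of maps whose connectivities tend to infinity is a weak equivalence.
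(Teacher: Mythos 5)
The central claimed identification, $\pi_n \hocolim_J X \cong \colim_{i\in\mathrm{Ob}\,J}\pi_n X(i)$, is the crux of your argument, and it does not hold for the categories $J=I[k]$ or $J=I[k]^G$. The principle you invoke to justify it — that in the bar construction ``adding higher-dimensional simplices only contributes classes already present at the level of objects'' — is false as a general fact: for a category with one object and endomorphism monoid $G=\mathbb{Z}/2$ and the constant functor at a point, $\hocolim$ is $B\mathbb{Z}/2\simeq \mathbb{RP}^\infty$, whose $\pi_1$ is $\mathbb{Z}/2$ while the colimit of $\pi_1$ over objects is trivial. The category $I$ is not exempt from this concern: the object $n$ has automorphism group $\Sigma_n$, and more to the point $I$ is not a filtered category in the homotopical sense — two distinct injections $\alpha,\beta\colon i\to j$ can never be coequalized by a further injection $\gamma\colon j\to j'$, since $\gamma\alpha=\gamma\beta$ already forces $\alpha=\beta$. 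So the ``componentwise maximum'' poset structure on $\mathrm{Ob}\,I[k]$ provides upper bounds for objects, but it does nothing to control parallel morphisms or automorphisms, and there is no reason for the homotopy colimit over the full category to be computed by a colimit over the object poset. Your sketch also implicitly treats the colimit as indexed by that poset, when in fact the transition maps in any putative identification would be indexed by \emph{all} morphisms of $J$.

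The paper's argument avoids this entirely by a different kind of cofinality. It packages $I[k]$ and $I[k]^G$ as ``good index categories'' equipped with endofunctors $\mu_{\underline j}$ that shift the whole diagram so that every object lands in the region where $\lambda\geq n$, together with a natural transformation $\mathrm{id}\Rightarrow\mu_{\underline j}$. That natural transformation is used to prove the under-categories $i/\mu_{\underline j}$ are contractible (it supplies a terminal-object-like deformation retraction), which is what makes the restriction $\hocolim_J X\circ\mu_{\underline j}\to\hocolim_J X$ a weak equivalence; then the result follows from the standard fact that homotopy colimits preserve connectivity of maps (the reference in the paper is to Goerss--Jardine). Notice that at no point does one need to compute $\pi_n$ of a homotopy colimit, only to compare two homotopy colimits by a change of index. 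If you want to salvage a more hands-on argument, the input you need is precisely the preservation-of-connectivity statement plus a cofinality argument of this form — not filteredness of the object poset.
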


\begin{prop}[G-approximation lemma]\label{eqapproxlemma} Let  $\mathcal{M}$ be a bimodule with duality over an $Sp^\Sigma$-category with duality $\mathcal{C}$, and suppose that both $\mathcal{C}$ and $\mathcal{M}$ are $G$-connected (cf. \ref{defGconn}). Then
for every natural number $n$, there is a $\underline{i}\in I[2k+1]^G$ such that the natural $G$-equivariant map
\[\mathcal{G}_{2k+1}(\mathcal{C};\mathcal{M})(\underline{i}')\longrightarrow \THR_{2k+1}(\mathcal{C};\mathcal{M})\]
is $(n,n)$-connected for all $\underline{i}\leq\underline{i}'\in I[2k+1]^G$.
\end{prop}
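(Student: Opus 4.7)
The plan is to reduce the lemma to two non-equivariant approximation statements---one for the underlying space and one for the fixed points---and treat them in parallel. By Proposition~\ref{fixedhocolim},
\[
\THR_{2k+1}(\mathcal{C};\mathcal{M})^G \cong \hocolim_{I[2k+1]^G}(\mathcal{G}|_{I[2k+1]^G})^G,
\]
where $\mathcal{G}=\mathcal{G}_{2k+1}(\mathcal{C};\mathcal{M})$. Thus it suffices to produce, for a given $n$, an index $\underline{i}\in I[2k+1]^G$ such that both the natural map $\mathcal{G}(\underline{i}')\to \hocolim_{I[2k+1]}\mathcal{G}$ and its restriction $\mathcal{G}(\underline{i}')^G\to\hocolim_{I[2k+1]^G}(\mathcal{G}|_{I[2k+1]^G})^G$ are $n$-connected for all $\underline{i}'\geq\underline{i}$ in $I[2k+1]^G$.

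First I would record a cofinality-type principle, in the same spirit as Proposition~\ref{connectgoestoinfty}: if $X\colon J\to\Top_*$ is a functor from a filtered poset and every structure map $X(\underline{i})\to X(\underline{j})$ is at least $\lambda(\underline{i})$-connected, where $\lambda\to\infty$ on $J$, then for each $n$ there is an $\underline{i}_0\in J$ with $X(\underline{i}')\to\hocolim_J X$ being $n$-connected for all $\underline{i}'\geq\underline{i}_0$. Both $I[2k+1]$ and its fixed-point subposet $I[2k+1]^G\cong I^G\times I^k\times I^G$ (Proposition~\ref{fixedIk}) are filtered under componentwise inequality, so this principle applies to both.

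The second step is to bound the connectivity of the structure maps of $\mathcal{G}$ and of $(\mathcal{G}|_{I[2k+1]^G})^G$. For a morphism $\underline{i}\leq\underline{j}$ in $I[2k+1]$, the induced map factors as
\[
\map_*(S^{\underline{i}},V(\mathcal{C};\mathcal{M},\underline{i}))\xrightarrow{(-)\wedge S^{\underline{j}-\underline{i}}}\map_*(S^{\underline{j}},V(\mathcal{C};\mathcal{M},\underline{i})\wedge S^{\underline{j}-\underline{i}})\longrightarrow \map_*(S^{\underline{j}},V(\mathcal{C};\mathcal{M},\underline{j})),
\]
where the second arrow is postcomposition with the map $V(\mathcal{C};\mathcal{M},\underline{i})\wedge S^{\underline{j}-\underline{i}}\to V(\mathcal{C};\mathcal{M},\underline{j})$ assembled from the spectrum structure maps of $\mathcal{C}$ and $\mathcal{M}$. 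The $G$-connectivity hypothesis (Definition~\ref{defGconn}) provides explicit connectivity bounds on these spectrum structure maps, both non-equivariantly (roughly $2i+j-\epsilon$) and on fixed points (roughly $i+\lceil j/2\rceil-\epsilon$). Combined with Corollary~\ref{corgsusp} and the $G$-suspension theorem (which control the effect of smashing with a representation sphere on a mapping space), these propagate into estimates on the connectivity of the structure map $\mathcal{G}(\underline{i})\to\mathcal{G}(\underline{j})$ that tend to infinity with $\underline{i}$, both non-equivariantly and on fixed points.

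The hard part will be the fixed-point bookkeeping. By Proposition~\ref{fixedptdescr}, the first and last smash factors of $V(\mathcal{C};\mathcal{M},\underline{i})^G$ involve only the halved contributions $\mathcal{M}(c_0,Dc_0)_{i_0}^G$ and $\mathcal{C}(Dc_k,c_k)_{i_{k+1}}^G$, while the fixed-point sphere $(S^{\underline{i}})^G$ has dimension $\lceil i_0/2\rceil + \lceil (|\underline{i}|-i_0)/2\rceil$. One must check that the fixed-point connectivity of the composite described above grows linearly in all coordinates of $\underline{i}$, by tracking separately the halved contributions coming from the two fixed coordinates and the full contributions from the interior ones, and verifying that this growth compensates the ceiling-losses incurred when passing to $G$-mapping spaces via the $G$-suspension theorem. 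Once these bookkeeping estimates are in place, applying the cofinality principle on both levels simultaneously produces an $\underline{i}\in I[2k+1]^G$ above which the natural map in question is $n$-connected both non-equivariantly and on fixed points, which is precisely the desired $(n,n)$-connectedness.
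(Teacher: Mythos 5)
Your overall strategy matches the paper's: reduce to showing the structure maps of $\mathcal{G}_{2k+1}$ are highly connected, handle the fixed points via Proposition~\ref{fixedhocolim} and Proposition~\ref{fixedptdescr}, and do the bookkeeping with the $G$-suspension theorem and the $G$-connectivity bounds. The second step you sketch is essentially the content of Lemmas~\ref{smashstructmaps} and~\ref{annoyingone}, and you have correctly identified the right ingredients for the estimates.

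There is, however, a genuine gap in the first step. Your ``cofinality-type principle'' does not follow from the reasoning you give: $I[2k+1]$ and $I[2k+1]^G$ are \emph{not} posets --- $I$ is the category of finite sets and injections, so there are many morphisms between a fixed pair of objects --- and ``filteredness'' of the underlying object poset $(\mathbb{N}^{2k+2},\leq)$ tells you nothing a priori about the homotopy colimit over the \emph{category} $I[2k+1]$. The statement you want --- that highly connected structure maps force the canonical map $\mathcal{G}(\underline{j})\to\hocolim\mathcal{G}$ to be highly connected --- is precisely Lemma~\ref{genapproxlemma}, and proving it is the real content of B\"okstedt's approximation lemma. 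The paper handles this via the good index category machinery of \ref{Ikgoodindexcat} and Lemmas~\ref{gapproxlemma1}--\ref{genapproxlemma}: one needs the cofinal endofunctors $\mu_{\underline{j}}$ together with the natural transformation $U\colon\id\Rightarrow\mu_{\underline{j}}$ to show that the under-categories $\underline{i}/\mu_{\underline{j}}$ are contractible, which is what lets you replace $\hocolim_{I[2k+1]}\mathcal{G}$ by $\hocolim_{I[2k+1]}\mathcal{G}\circ\mu_{\underline{j}}$ and then use preservation of connectivity under hocolim. In the equivariant case there is an additional wrinkle you have not flagged: the usual $\mu_j = j+(-)$ does \emph{not} restrict to $I^G$, since the sum of equivariant injections is not equivariant; this is why the paper introduces the modified functors $\mu_j(i)=\frac{j}{2}+i+\frac{j}{2}$ on even $j$ (Example~\ref{Ikgoodindexcat}). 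Without this machinery your reduction step is unjustified, so the proposal as written is incomplete.
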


These two statements depend only on structural properties of $I[k]$ and $I[k]^G$ and on the connectivity properties of the functor $\mathcal{G}_{2k+1}(\mathcal{C};\mathcal{M})$. Indeed, \ref{connectgoestoinfty} and \ref{eqapproxlemma} follow from the general statements \ref{gapproxlemma1} and \ref{genapproxlemma} below, respectively.

\begin{defn}[cf. {\cite[1.3]{bok}}]
A \textbf{good index category} is a triple $(J,\overline{J},\mu)$
where
\begin{enumerate}[i)]
\item $J$ is a category and $\overline{J}\subset J$ is a full subcategory
\item $\mu=\{\mu_j\colon J\longrightarrow J \ |j\in \overline{J}\}$ is a family of functors, one for each object $j\in \overline{J}$
\item for every $j\in \overline{J}$, there is a natural transformation $U\colon \id\Rightarrow \mu_j$ such that \[\mu_j(U_i)=U_{\mu_j(i)}\colon \mu_j(i)\longrightarrow \mu_j\mu_j(i)\]
\end{enumerate}
\end{defn}

\begin{ex} Given any pointed category $J$, there is a structure of trivial good index category on $J$ given by the triple $(J,\ast,\id_J)$, where the identity of $J$ is the unique functor of the family. Notice though that for this good index category the statement \ref{gapproxlemma1} below is trivial.
\end{ex}

\begin{ex}\label{Ikgoodindexcat}
We want to define a useful structure of good index category on $I[k]$ and $I[k]^G$. We start with the $k=0$ case. We denote $I_e$ the full subcategory of $I$ with objects the even natural numbers.
For $j\in I_e$ let $\mu_j\colon I\longrightarrow I$ be the functor that sends an object $i$ to
\[\mu_j(i)=i+j=\frac{j}{2}+i+\frac{j}{2}\]
and a map $\alpha \colon i\longrightarrow i'$ to
\[\mu_j(\alpha)=\id_{j/2}+\alpha+\id_{j/2}\]
It commutes strictly with the involution on $I$. It should be thought of as an equivariant variation of the functor that adds $j$ using the monoidal structure on $I$. There is a natural transformation $U\colon \id\Rightarrow \mu_j$, defined at an object $i\in I$ by the middle inclusion \[U_i\colon i\longrightarrow \frac{j}{2}+i+\frac{j}{2}\] that sends $s$ to $s+j/2$.
 Each map $U_i$ is equivariant. The triple $(I,I_e,\mu)$ is then a good index category.
Now let $I^{G}_e$ be the full subcategory of $I^G$ on even objects. For $j\in I^{G}_e$, the functor $\mu_j$
 restricts to a functor $\mu^{G}_j\colon I^G\longrightarrow I^G$, and the natural transformation $U$ gives a natural transformation $\id\Rightarrow \mu_{j}^G$. The triple $(I^G,I^{G}_e,\mu^G)$ is also a good index category.

For higher $k$, define $\underline{j}\in I[k]$ to be even if all of its components are even natural numbers. Denote $I[k]_e$ and $I[k]^{G}_e$ respectively the full subcategories of $I[k]$ and $I[k]^G$ on the even objects. For $\underline{j}\in I[k]_e$, the product functor
\[\mu_{\underline{j}}=\mu_{j_0}\times\dots\times\mu_{j_{k}}\colon I[k]\longrightarrow I[k]\]
defines a good index category $(I[k],I[k]_e,\mu)$. Similarly for $\underline{j}\in I[k]^{G}_e$ the restrictions $\mu_{\underline{j}}^G$ give a good index category $(I[k]^G,I[k]^{G}_e,\mu^G)$.
\end{ex}

\begin{rem}
Our definition of good index category differs from the one one of Bökstedt of \cite[1.3]{bok}. In \cite{bok}, $I$ is equipped with the monoidal structure $+\colon I\times I\longrightarrow I$. This gives a good index category in our sense, defined by the triple $(J,J,\mu)$ with $\mu_j=j+(-)$. We need a more general definition because $+$ does not restrict to a functor $I^G\times I^G\longrightarrow I^G$ since the sum of equivariant maps is not equivariant.

The techniques used for our proofs are generalizations of the methods used in the approximation lemma proof of \cite[IV-2.2.3]{Dundasbook}. 
\end{rem}

Let $(J,\overline{J},\mu)$ be a good index category and $\lambda\colon ObJ\longrightarrow \mathbb{N}$ a function that tends to infinity in the following sense: given $n\in\mathbb{N}$ there is a $j_0\in \overline{J}$ such that $\lambda(j)\geq n$ for all $j\in\mu_{j_0}(J)$.

\begin{lemma}\label{gapproxlemma1} Let $(J,\overline{J},\mu)$ be a good index category, $X,Y\colon J\longrightarrow\Top_\ast$ two functors and $f\colon X\Rightarrow Y$ a natural transformation. Suppose that $f_j$ is at least $\lambda(j)$-connected, for a function $\lambda\colon ObJ\longrightarrow \mathbb{N}$ that tends to infinity. Then the induced map
\[f_\ast\colon \hocolim_J X\stackrel{\simeq}{\longrightarrow} \hocolim_J Y\]
is a weak equivalence.
\end{lemma}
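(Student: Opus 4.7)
The plan is to use the good index category structure to exhibit $f_\ast\colon\hocolim_J X\to\hocolim_J Y$ as a retract up to homotopy of the induced map $\hocolim_J(X\circ\mu_{j_0})\to\hocolim_J(Y\circ\mu_{j_0})$, whose connectivity can be made arbitrarily large by choosing $j_0\in\overline{J}$ appropriately. Since this forces $f_\ast$ to be $n$-connected for every $n$, it will be a weak equivalence.

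For each $j_0\in\overline{J}$ I would introduce two natural maps. First, the natural transformation $U\colon\id_J\Rightarrow\mu_{j_0}$, applied through $X$, yields a natural transformation $X\Rightarrow X\circ\mu_{j_0}$ with components $X(U_j)$, and hence by functoriality of the homotopy colimit a map $\iota^X_{j_0}\colon\hocolim_J X\to\hocolim_J (X\circ\mu_{j_0})$. Second, precomposition by the endofunctor $\mu_{j_0}\colon J\to J$ gives $p^X_{j_0}\colon\hocolim_J (X\circ\mu_{j_0})\to\hocolim_J X$ sending a bar-construction $m$-simplex $(j_0\to\cdots\to j_m,\,x\in X(\mu_{j_0}(j_0)))$ to $(\mu_{j_0}(j_0)\to\cdots\to\mu_{j_0}(j_m),\,x)$. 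The core technical step will be to construct a simplicial homotopy from $p^X_{j_0}\circ\iota^X_{j_0}$ to the identity on $\hocolim_J X$, by interpolating at level $s$ through the simplex obtained by splicing $U_{j_s}\colon j_s\to\mu_{j_0}(j_s)$ between $j_s$ and $\mu_{j_0}(j_{s+1})$; naturality of $U$ identifies $\mu_{j_0}(j_s\to j_{s+1})\circ U_{j_s}$ with $U_{j_{s+1}}\circ(j_s\to j_{s+1})$, and the simplicial identities for the homotopy are then governed precisely by axiom (iii) of a good index category, $\mu_{j_0}(U_i)=U_{\mu_{j_0}(i)}$. The identical construction for $Y$ produces a square commuting with $f_\ast$ on either side whose horizontal composites are homotopic to the identity.

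Given $n\in\mathbb{N}$, choose $j_0\in\overline{J}$ with $\lambda(j)\geq n+1$ for every $j\in\mu_{j_0}(J)$, which is possible by the hypothesis that $\lambda$ tends to infinity. Then the natural transformation $f\cdot\mu_{j_0}\colon X\circ\mu_{j_0}\Rightarrow Y\circ\mu_{j_0}$ is levelwise $(n+1)$-connected. Since the bar model expresses $\hocolim_J$ as the realization of a simplicial space whose level $m$ is a wedge of the values of the functor, and both wedges of well-pointed $(n+1)$-connected maps and realizations of levelwise $(n+1)$-connected simplicial maps remain $(n+1)$-connected, the middle vertical map $\hocolim_J(X\circ\mu_{j_0})\to\hocolim_J(Y\circ\mu_{j_0})$ is $(n+1)$-connected. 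A routine retract-up-to-homotopy diagram chase using the identities $p^Y_{j_0}\circ\iota^Y_{j_0}\simeq\id$ and $p^X_{j_0}\circ\iota^X_{j_0}\simeq\id$ then shows that $f_\ast$ induces isomorphisms on $\pi_k$ for $k\leq n$ and a surjection on $\pi_{n+1}$.

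The main obstacle I expect is the explicit construction of the simplicial homotopy $p^X_{j_0}\circ\iota^X_{j_0}\simeq\id$. While morally clear, verifying that the face and degeneracy identities really do follow from the combination of naturality of $U$ and the compatibility $\mu_{j_0}(U_i)=U_{\mu_{j_0}(i)}$ requires careful bookkeeping; this compatibility is precisely the point of axiom (iii) in the definition of a good index category. Everything downstream is a soft application of the connectivity behavior of wedges and geometric realizations, so the proof essentially reduces to isolating and managing this one formal identity.
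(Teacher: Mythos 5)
Your proposal is correct but takes a genuinely different route from the paper. The paper also reduces to the square comparing $\hocolim_J X$ with $\hocolim_J (X\circ\mu_{j_0})$, and also controls the connectivity of $(f|_{\mu_{j_0}})_\ast$ via preservation of connectivity by homotopy colimits; but for the outer vertical maps it proves outright that the canonical map $\hocolim_J(X\circ\mu_{j_0})\to\hocolim_J X$ is a \emph{weak equivalence}, by invoking the Bousfield--Kan cofinality criterion and showing each under-category $i/\mu_{j_0}$ is contractible. The contracting data (the zigzag $\id\Rightarrow G\Leftarrow \ast$ of endofunctors of $i/\mu_{j_0}$) is built from $U$, and it is precisely in checking that $\ast\Rightarrow G$ is well defined that the compatibility $\mu_{j_0}(U_i)=U_{\mu_{j_0}(i)}$ enters. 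You instead bypass cofinality entirely and show $\hocolim_J X$ is a \emph{retract up to homotopy} of $\hocolim_J(X\circ\mu_{j_0})$ via the explicit prism homotopy, which is more elementary and gives slightly less (you do not learn that $p^X_{j_0}$ is an equivalence, only enough to run the five-lemma retract argument), but is entirely adequate for the lemma.

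One point worth flagging: if you write out the prism homotopy carefully as $h_s(\sigma_0\to\cdots\to\sigma_m,x)=(\sigma_0\to\cdots\to\sigma_s\stackrel{U_{\sigma_s}}{\longrightarrow}\mu_{j_0}(\sigma_s)\to\cdots\to\mu_{j_0}(\sigma_m),x)$, all the required simplicial identities follow from functoriality of $\mu_{j_0}$ and \emph{naturality} of $U$ alone; in particular the ``diagonal'' identity $d_{s+1}h_{s+1}=d_{s+1}h_s$ is exactly naturality of $U$ at $\sigma_s\to\sigma_{s+1}$. The compatibility equation $\mu_{j_0}(U_i)=U_{\mu_{j_0}(i)}$ from axiom (iii) is not actually needed for this homotopy, contrary to what you anticipate. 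So your argument is, if anything, a bit stronger than what you claim, whereas the paper's cofinality argument does genuinely use the compatibility equation. Also, a minor slip in phrasing: ``splicing $U_{j_s}$ between $j_s$ and $\mu_{j_0}(j_{s+1})$'' as stated would produce an $m$-simplex; you want to insert the extra object $\mu_{j_0}(j_s)$ to get the $(m+1)$-simplex displayed above.
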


\begin{proof}
Given any natural number $n$ we prove that $f_\ast$ is $n$-connected. Take a $j_0$ for $n$ as in the statement, and consider the diagram
\[\xymatrix{\hocolim_J X\ar[r]^-{f_\ast}&\hocolim_J Y\\
\hocolim_J X\circ \mu_{j_0}\ar[u]\ar[r]_{(f|_{\mu_{j_0}})_\ast}&\hocolim_J Y\circ \mu_{j_0}\ar[u]
}\]
Where the vertical maps are the canonical maps, and the bottom map is the restriction of $f$. We claim that the vertical maps are equivalences. Given that, it suffices to prove that $(f|_{\mu_{j_0}})_\ast$ is $n$-connected. For every object $j\in J$ the map
\[f_{\mu_{j_0}(j)}\colon X(\mu_{j_0}(j))\longrightarrow Y(\mu_{j_0}(j))\]
is $\lambda(\mu_{j_0}(j))\geq n$-connected. Since homotopy colimits preserve connectivity (cf. \cite[IV-4.13]{goersjardine}), $(f|_{\mu_{j_0}})_\ast$ is also $n$-connected.

In order to prove that the vertical maps are equivalences, it suffices to prove that the under categories $i/\mu_{j_0}$ are contractible for every $i\in J$ (see \cite[XI-9.2]{bouskan}). This follows from the natural transformation $U\colon \id\Rightarrow\mu_{j_0}$. Indeed, set $F:=\mu_{j_0}$ and consider the inclusion $\ast\longrightarrow i/F$ into the object $(i,U_i\colon i\longrightarrow F(i))$. In order to build a homotopy between the projection to $(i,U_i)$ and the identity, we exhibit a functor $G\colon i/F\longrightarrow i/F$ and natural transformations $\id\Rightarrow G$ and $\ast\Rightarrow G$, where $\ast$ is the constant functor at $(i,U_i)$. For an object $(i',f\colon i\longrightarrow F(i'))$ of $i/F$, define
\[G(i',f)=(F(i'),i\stackrel{f}{\longrightarrow} F(i')\stackrel{F(U_{i'})}{\longrightarrow}FF(i'))\]
A morphism $a\colon (i',f)\longrightarrow (i'',g)$ consisting of a map $a\colon i'\longrightarrow i''$ such that $F(a)\circ f=g$, is sent by $G$ to $F(a)\colon (F(i'),F(U_{i'})\circ f)\longrightarrow (F(i''),F(U_{i''})\circ g)$.
The natural transformation $\id\Rightarrow G$ is defined at an object $(i',f)$ by the map $U_{i'}\colon i'\longrightarrow F(i')$. Indeed the diagram
\[\xymatrix{i\ar[r]^{f}\ar[dr]_{G(f)}&F(i')\ar[d]^{F(U_{i'})}\\
& FF(i')
}\]
commutes by definition of $G$. Define the natural transformation $\ast\Rightarrow G$ at an object $(i',f\colon i\longrightarrow F(i'))$ by the morphism $(i,U_i)\longrightarrow G(i',f)$ given by the map $f\colon i\longrightarrow F(i')$. To see that it is a well defined morphism, we must show that
\[\xymatrix{i\ar[r]^{U_i}\ar[dr]_{G(f)}&F(i)\ar[d]^{F(f)}\\
& FF(i')
}\]
commutes, that is $F(f)\circ U_i=F(U_{i'})\circ f$. Since we assumed that $F(U_{i'})=U_{F(i')}$, this is naturality of $U$ for the map $f$.
\end{proof}

\begin{proof}[Proof of \ref{connectgoestoinfty}]
Let us first consider $J=I[k]$, and take the good index category $(I[k],I[k]_e,\mu)$ of the example \ref{Ikgoodindexcat} above. Notice that $\underline{i}$ belongs to $\mu_{\underline{j}}(I)$ exactly when $\underline{i}\geq \underline{j}$.
Therefore the connectivity function $\lambda\colon\mathbb{N}^{k+1}\longrightarrow \mathbb{N}$ for the natural transformation $f$ tends to infinity if and only if the condition of \ref{gapproxlemma1} is satisfied. Therefore $f_\ast$ is an equivalence by \ref{gapproxlemma1}.
Since $ObI[k]^G$ has the order induced from $I[k]^G$, the same argument applies to the good index category $(I[k]^G,I[k]^{G}_e,\mu^G)$.
\end{proof}

\begin{lemma}\label{genapproxlemma}
Let $(J,\overline{J},\mu)$ be a good index category such that $J$ has an initial object $0\in J$ with $\mu_j(0)=j$ for all $j\in \overline{J}$. Let $X\colon J\longrightarrow \Top_\ast$ be a functor such that for a fixed $j\in \overline{J}$ the map
\[X(j)=X(\mu_j(0))\longrightarrow X(\mu_j(i))\]
induced by the map $0\longrightarrow i$ is $n$-connected for all $i\in J$.

Then the canonical map
\[X(j)\longrightarrow \hocolim_J X\]
is $n$-connected.
\end{lemma}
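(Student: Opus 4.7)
The plan is to factor the canonical map $X(j) \to \hocolim_J X$ through $\hocolim_J X \circ \mu_j$, and reduce the statement to two ingredients: the homotopy cofinality of $\mu_j$ (already established inside the proof of Lemma \ref{gapproxlemma1}) and the standard fact that a levelwise $n$-connected natural transformation of diagrams induces an $n$-connected map on homotopy colimits.

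First I would use that $0$ is initial in $J$ to assemble the unique morphisms $0 \to i$ into a natural transformation
\[
\eta \colon \const_{X(j)} \Longrightarrow X \circ \mu_j, \qquad \eta_i = X(\mu_j(0 \to i)) \colon X(j) \longrightarrow X(\mu_j(i)),
\]
whose naturality is automatic since all composites in $J$ out of $0$ are uniquely determined. By hypothesis each $\eta_i$ is $n$-connected, so the induced map
\[
\eta_\ast \colon \hocolim_J \const_{X(j)} \longrightarrow \hocolim_J X \circ \mu_j
\]
is $n$-connected, using the same connectivity preservation of homotopy colimits invoked in the proof of Lemma \ref{gapproxlemma1}. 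Because $0$ is initial, $|\mathcal{N}_\sbt J|$ is contractible, and the standard computation of the homotopy colimit of a pointed constant diagram identifies $\hocolim_J \const_{X(j)}$ with $|\mathcal{N}_\sbt J|_+ \wedge X(j)$; in particular the canonical inclusion $X(j) \to \hocolim_J \const_{X(j)}$ at the vertex $0$ is a weak equivalence. Finally, the proof of Lemma \ref{gapproxlemma1} constructs, using $U \colon \id \Rightarrow \mu_j$ together with the relation $\mu_j(U_i) = U_{\mu_j(i)}$, an explicit contraction of each under category $i/\mu_j$; hence $\mu_j$ is homotopy cofinal and the canonical restriction map $\hocolim_J X \circ \mu_j \xrightarrow{\simeq} \hocolim_J X$ is a weak equivalence.

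Composing, the map $X(j) \to \hocolim_J X$ factors as a weak equivalence, followed by an $n$-connected map, followed by a weak equivalence, and is therefore $n$-connected. The only thing to verify is that this composite agrees with the canonical inclusion at $j = \mu_j(0)$: tracing an element $x \in X(j)$ through the bar constructions sends it to the $0$-simplex $[0,x]$ of $\hocolim_J \const_{X(j)}$, then (using $\eta_0 = \id$) to $[0,x]$ in $\hocolim_J X \circ \mu_j$, and finally, under the restriction map along the functor $\mu_j$, to $[\mu_j(0),x] = [j,x]$ in $\hocolim_J X$, which is exactly the canonical inclusion at the object $j$. There is no substantial obstacle; the proof is essentially the packaging of the cofinality argument already present in Lemma \ref{gapproxlemma1} together with the elementary fact that the homotopy colimit of a constant diagram over a category with initial object is computed by inclusion at that initial object.
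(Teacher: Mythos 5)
Your proof is correct and takes essentially the same route as the paper: both factor the canonical map as $X(j)\simeq\hocolim_J\const_{X(j)}\to\hocolim_J X\circ\mu_j\to\hocolim_J X$, using contractibility of $\mathcal{N}_\sbt J$ for the first leg, the levelwise $n$-connectivity hypothesis plus preservation of connectivity by homotopy colimits for the second, and the cofinality of $\mu_j$ established in the proof of Lemma~\ref{gapproxlemma1} for the third. The only difference is presentational: you spell out the natural transformation $\eta$ and verify the vertex-level identification explicitly, while the paper states the same factorization more tersely.
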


\begin{proof}
The canonical map $X(j)\longrightarrow \hocolim_J X$ fits into a commutative diagram
\[\xymatrix{X(j)\ar@{=}[d]\ar[r]&\hocolim\limits_{J}X\\
X(\mu_{j}(0))\ar[r]&\hocolim\limits_{J} X\circ\mu_{j}\ar[u]^{\simeq}_{\ref{gapproxlemma1}}
}\]
The right vertical map is the canonical map, which is an equivalence by the proof of \ref{gapproxlemma1} above. Therefore it is enough to show that the bottom row
\[X(\mu_{j}(0))\longrightarrow \hocolim_{J} X\circ\mu_{j}\]
is $n$-connected. Looking at $X(\mu_{j}(0))$ as the constant diagram, this map factors as
\[X(\mu_{j}(0))\stackrel{\simeq}{\longrightarrow} \hocolim_{J} X(\mu_{j}(0))\longrightarrow \hocolim\limits_{J} X\circ\mu_{j}\]
Since $J$ has an initial object, its nerve is contractible, and therefore the first map is an equivalence.
The second map is $n$-connected since by assumption each $X(\mu_j(0))\longrightarrow X(\mu_j(i))$ is $n$-connected, and homotopy colimits preserve connectivity.
\end{proof}

\begin{proof}[Proof of \ref{eqapproxlemma}] For simplicity, write $\mathcal{G}_{2k+1}=\mathcal{G}_{2k+1}(\mathcal{C};\mathcal{M})$.
The two good index categories $(I[2k+1],I[2k+1]_e,\mu)$ and $(I[2k+1]^G,I[2k+1]^{G}_e,\mu^G)$ of \ref{Ikgoodindexcat} both have an initial object $\emptyset$ with the property of the proposition above.
By proposition \ref{fixedhocolim}, the statement on the fixed points follows if 
for every $n$ there is a $\underline{j}$ in $I[2k+1]^G$ such that
\[\mathcal{G}_{2k+1}(\underline{j})^G=\mathcal{G}_{2k+1}(\mu_{\underline{j}}(\emptyset))^G\longrightarrow \mathcal{G}_{2k+1}(\mu_{\underline{j}}(\underline{i}))^G\]
is $n$-connected for all $\underline{i}\in I[2k+1]^G$. We prove this in lemma \ref{annoyingone} below. 
Similarly, the non-equivariant statement of \ref{eqapproxlemma} follows from the last proposition if we can prove that for every $n$ there is a $\underline{j}$ in $I[2k+1]^G$ such that
\[\mathcal{G}_{2k+1}(\underline{j})=\mathcal{G}_{2k+1}(\mu_{\underline{j}}(\emptyset))\longrightarrow \mathcal{G}_{2k+1}(\mu_{\underline{j}}(\underline{i}))\]
is $n$-connected for all $\underline{i}\in I[2k+1]^G$. The proof of this is similar to the proof of \ref{annoyingone} below.
\end{proof}

The proof of lemma \ref{annoyingone} depends on the connectivity of the map \[\sigma_{\underline{i}\underline{j}}\colon V(\mathcal{C};\mathcal{M},\underline{j})\wedge S^{\underline{i}}\longrightarrow V(\mathcal{C};\mathcal{M},\underline{j}+\underline{i})\]
defined to be the composition
\[\xymatrix{V(\mathcal{C};\mathcal{M},\underline{j})\wedge S^{\underline{i}}\ar[d]^{\cong}\\
\bigvee (\mathcal{M}(c_0,c_{2k+1})_{j_0}\wedge\mathcal{C}(c_1,c_0)_{j_1}\wedge \dots\wedge\mathcal{C}(c_{2k+1},c_{2k})_{j_{2k+1}}\ar[d]^{\cong}\wedge S^{\underline{i}})\\
\bigvee \mathcal{M}(c_0,c_{2k+1})_{j_0}\wedge S^{i_0}\wedge\dots\wedge\mathcal{C}(c_{2k+1},c_{2k})_{j_{2k+1}}\wedge S^{i_{2k+1}}\ar[d]\\
\bigvee \mathcal{M}(c_0,c_{2k+1})_{j_0+i_0}\wedge\dots\wedge\mathcal{C}(c_{2k+1},c_{2k})_{j_{2k+1}+i_{2k+1}}\ar@{=}[d]\\
V(\mathcal{C};\mathcal{M},\underline{j}+\underline{i})
}\]
where the first two maps are canonical isomorphisms, the third map is induced by the structure maps of the spectra and where $\underline{j}+\underline{i}$ is defined pointwise. When $\underline{i}$ and $\underline{j}$ belong to $I[2k+1]^G$, the map $\sigma_{\underline{i}\underline{j}}$ is $G$-equivariant with respect to the diagonal action on the source, and the action on the target $V(\mathcal{C};\mathcal{M},\underline{j}+\underline{i})$ defined by the involution
\[\xymatrix{\bigvee \mathcal{M}(c_0,c_{2k+1})_{j_0+i_0}\wedge\dots\wedge\mathcal{C}(c_{2k+1},c_{2k})_{j_{2k+1}+i_{2k+1}}\ar[d]^{\bigvee(\omega_{j_0}\times\omega_{i_0})\wedge\dots\wedge(\omega_{j_{2k+1}}\times\omega_{i_{2k+1}})}\\
\bigvee\mathcal{M}(c_0,c_{2k+1})_{j_0+i_0}\wedge\dots\wedge\mathcal{C}(c_{2k+1},c_{2k})_{j_{2k+1}+i_{2k+1}}\ar[d]^{\bigvee J\wedge D\wedge\dots\wedge D}\\
\bigvee\mathcal{M}(c_0,c_{2k+1})_{j_0+i_0}\wedge\dots\wedge\mathcal{C}(c_{2k+1},c_{2k})_{j_{2k+1}+i_{2k+1}}\ar[d]^{\cong}\\
\bigvee\mathcal{M}(c_0,c_{2k+1})_{j_0+i_0}\wedge\mathcal{C}(c_{2k+1},c_{2k})_{j_{2k+1}+i_{2k+1}}\wedge\dots\wedge\mathcal{C}(c_{1},c_{0})_{j_{0}+i_{0}}
}\]
Notice that this action is different from the standard action on $V(\mathcal{C};\mathcal{M},\underline{j}+\underline{i})$ defined in §\ref{defTHR}, where we used the permutations $\omega_{j_l+i_l}$ instead of $\omega_{j_l}\times\omega_{i_l}$.
For a $\underline{i}\in I[k]$, we denote $\min (\underline{i})$ its smallest component. Recall that $\epsilon$ denotes the connectivity constant from the $G$-connectivity of $\mathcal{C}$ and $\mathcal{M}$.

\begin{lemma}\label{smashstructmaps} If $\mathcal{C}$ and $\mathcal{M}$ are $G$-connected,
the map \[\sigma_{\underline{i}\underline{j}}\colon V(\mathcal{C};\mathcal{M},\underline{j})\wedge S^{\underline{i}}\longrightarrow V(\mathcal{C};\mathcal{M},\underline{j}+\underline{i})\] is $(\min(\underline{j})+|\underline{j}|+|\underline{i}|-1-\epsilon)$-connected for all $\underline{i},\underline{j}\in I[2k+1]$.

When, $\underline{i},\underline{j}\in I[2k+1]^G$ are fixed by the action, its restriction to the fixed points is \[\min(\underline{j})+\lceil\frac{j_0}{2}\rceil+\lceil\frac{|\underline{j}|-j_0}{2}\rceil+\lceil\frac{i_0}{2}\rceil+\lceil\frac{|\underline{i}|-i_0}{2}\rceil-\epsilon-1\]
connected.
\end{lemma}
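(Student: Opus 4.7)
The plan is to analyze $\sigma_{\underline{i}\underline{j}}$ wedge-component by wedge-component, and within each component decompose it as a composition of $2k+2$ structure-map applications, one at each smash position. Since a wedge of $n$-connected (pointed) maps is $n$-connected, both the non-equivariant and the equivariant lower bounds reduce to the same estimate for a single component. Within a wedge component indexed by $(c_0,\dots,c_{2k+1})$ the map is a smash of spectrum structure maps, and I will use the elementary fact that smashing an $r$-connected map of based spaces with a $c$-connected pointed space yields an $(r+c+1)$-connected map (so a smash of such structure maps has connectivity equal to the source connectivity plus $\min_\ell(r_\ell-c_\ell)$).

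For the non-equivariant statement, the factor at position $\ell$ has source connectivity $c_\ell=j_\ell+i_\ell-1$, while the structure map itself is $r_\ell=2j_\ell+i_\ell-\epsilon$-connected by the $0$-connectedness hypothesis of Definition \ref{defGconn}. The total source $V(\mathcal{C};\mathcal{M},\underline{j})\wedge S^{\underline{i}}$ is $(|\underline{j}|+|\underline{i}|-1)$-connected, and $\min_\ell(r_\ell-c_\ell)=\min(\underline{j})+1-\epsilon$, giving a smash connectivity of $\min(\underline{j})+|\underline{j}|+|\underline{i}|-\epsilon$, which is at least the claimed bound.

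For the fixed-point statement, I invoke Proposition \ref{fixedptdescr} and the analogous identification of $(S^{\underline{i}})^G\cong S^{\lceil i_0/2\rceil}\wedge S^{i_1}\wedge\dots\wedge S^{i_k}\wedge S^{\lceil i_{k+1}/2\rceil}$ to rewrite the fixed-point source, within a fixed wedge component $c_l=Dc_{2k+1-l}$, as a smash of $k+2$ pieces: a fixed factor at position $0$ of connectivity $\lceil j_0/2\rceil+\lceil i_0/2\rceil-1$, a fixed factor at position $k+1$ of connectivity $\lceil j_{k+1}/2\rceil+\lceil i_{k+1}/2\rceil-1$, and $k$ diagonal (free-pair) factors, each a copy of $\mathcal{C}(c_\ell,c_{\ell-1})_{j_\ell}\wedge S^{i_\ell}$. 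The corresponding restrictions of the structure map are, at positions $0$ and $k+1$, the equivariant structure maps of $\mathcal{M}$ and $\mathcal{C}$ which by the $G$-connectedness hypothesis are $(j_0+\lceil i_0/2\rceil-\epsilon)$- and $(j_{k+1}+\lceil i_{k+1}/2\rceil-\epsilon)$-connected, while at each pair position the fixed-point map coincides with a single non-equivariant structure map of connectivity $2j_\ell+i_\ell-\epsilon$. Summing source connectivities gives total source connectivity $\lceil j_0/2\rceil+\lceil(|\underline{j}|-j_0)/2\rceil+\lceil i_0/2\rceil+\lceil(|\underline{i}|-i_0)/2\rceil-1$, and computing $r_\ell-c_\ell$ at each position yields $\lfloor j_0/2\rfloor+1-\epsilon$, $j_\ell+1-\epsilon$, $\lfloor j_{k+1}/2\rfloor+1-\epsilon$ respectively.

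The main technical obstacle is precisely arithmetic: one must verify that taking the minimum of $\{\lfloor j_0/2\rfloor, j_1,\dots,j_k,\lfloor j_{k+1}/2\rfloor\}$ and adding the source connectivity delivers at least the stated bound $\min(\underline{j})+\lceil j_0/2\rceil+\lceil(|\underline{j}|-j_0)/2\rceil+\lceil i_0/2\rceil+\lceil(|\underline{i}|-i_0)/2\rceil-\epsilon-1$. This requires a short case analysis according to whether the minimum of $\underline{j}$ is attained at one of the fixed positions $0,k+1$ (where the gain is truncated by $\lfloor\cdot/2\rfloor$) or at one of the pair positions $1,\dots,k$. The accounting is cleanest on even indices; the $\lceil\cdot/2\rceil$ bookkeeping is absorbed into the $-1-\epsilon$ slack in the statement.
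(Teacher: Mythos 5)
Your overall strategy is the same as the paper's: reduce to a single wedge component, then estimate the connectivity of the smash of the spectrum structure maps, using Proposition \ref{fixedptdescr} to identify the fixed-point sources and the $G$-connectedness hypotheses to bound each factor. The non-equivariant part is fine (your convention for the connectivity of a smash of maps differs from the paper's by $1$, but you land above the stated bound either way). The source-connectivity and $r_\ell-c_\ell$ bookkeeping on the fixed points is also computed correctly.

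The problem is exactly the step you defer to a "short case analysis." You need
$\min\bigl\{\lfloor j_0/2\rfloor,\ j_1,\dots,j_k,\ \lfloor j_{k+1}/2\rfloor\bigr\}\ \ge\ \min(\underline{j})-1,$
and this is false; there is no slack in the "$-1-\epsilon$" that absorbs the deficit. Take $k=0$ and $j_0=j_{1}=2m$ (even, so no $\lceil\cdot\rceil$ issues): the left side is $m$ while the right side is $2m-1$, and the gap grows without bound with $m$. So the final step of your argument, as described, does not close. It is worth noting, though, that the paper's own proof has the same lacuna: it computes the $c_l$'s, whose excesses over the claimed bound are exactly $\lfloor j_0/2\rfloor-\min(\underline{j})$, $j_l-\min(\underline{j})$ and $\lfloor j_{k+1}/2\rfloor-\min(\underline{j})$, and then asserts without justification that the minimum is at least $\min(\underline{j})$ above the source connectivity. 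What the computation actually proves is a fixed-point connectivity of at least
$\min\bigl\{\lfloor j_0/2\rfloor,\ j_1,\dots,j_k,\ \lfloor j_{k+1}/2\rfloor\bigr\}+\lceil j_0/2\rceil+\lceil\tfrac{|\underline{j}|-j_0}{2}\rceil+\lceil i_0/2\rceil+\lceil\tfrac{|\underline{i}|-i_0}{2}\rceil-1-\epsilon,$
which is weaker than the stated lemma by at most $\lceil\min(\underline{j})/2\rceil$ but is still $\ge\lfloor\min(\underline{j})/2\rfloor$ above the source connectivity (minus $1+\epsilon$). Since the only downstream use, in \ref{annoyingone}, subtracts the fixed-point dimension and then concludes a bound of $\lfloor\min(\underline{j})/2\rfloor-1-\epsilon$ anyway, the weaker correct estimate suffices there. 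You should either state and prove that weaker bound, or at least note that the deficit is bounded by $\lceil\min(\underline{j})/2\rceil$ and that this is harmless for the application; as written, the claim that the case analysis works out is incorrect.
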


\begin{proof}
The first two maps in the composition defining $\sigma_{\underline{i}\underline{j}}$ form an equivariant homeomorphism. We are therefore only interested in the third map.

Since the wedge of $n$-connected maps is $n$-connected, it is enough to see how connected the smash of the structure maps is.
Recall that if $f\colon A\longrightarrow X$ and $g\colon B\longrightarrow Y$ are respectively $n$ and $m$ connected maps of spaces, the connectivity of their smash product $f\wedge g\colon A\wedge B\longrightarrow X\wedge Y$ is $\min\{n+\conn(B),m+\conn(X)\}$.
Since the $l$-th structure map appearing in the smash is $(2j_l+i_l-\epsilon)$-connected by assumption, the connectivity of their smash product is
\[\resizebox{1\hsize}{!}{$\begin{array}{ll}\min\limits_{l}\{(2j_l+i_l-\epsilon)+(\sum_{h\neq l}j_h+i_h)-1\}&=\min\limits_{l}\{j_l+\sum_{h}(j_h+i_h)\}-1-\epsilon\\
&=\min(\underline{j})+|\underline{j}|+|\underline{i}|-1-\epsilon\end{array}$}\]

Now suppose that $\underline{i},\underline{j}\in I[2k+1]^G$.
On the fixed points $\sigma_{\underline{i}\underline{j}}$ is, under the homeomorphism of \ref{fixedptdescr}, the map
\[\xymatrix{*\txt{$\bigvee (\mathcal{M}(c_0,D(c_0))_{j_0}\wedge S^{i_0})^G\wedge\mathcal{C}(c_1,c_0)_{j_1}\wedge S^{i_1}\wedge\dots$\\
$\dots\wedge\mathcal{C}(c_k,c_{k-1})_{j_k}\wedge{S^{i_k}}\wedge(\mathcal{C}(D(c_k),c_{k})_{j_{k+1}}\wedge S^{i_{k+1}})^G$}\ar[d]\\
*\txt{$\bigvee \mathcal{M}(c_0,D(c_0))_{j_0+i_0}^G\wedge\mathcal{C}(c_1,c_0)_{j_1+i_1}\wedge\dots$\\
$\dots\wedge\mathcal{C}(c_k,c_{k-1})_{j_k+i_k}\wedge\mathcal{C}(D(c_k),c_{k})_{j_{k+1}+i_{k+1}}^G$}
}\]
given by the wedge of the smash of the structure maps, where the first and the last are restricted to the fixed points. For any $l=0,\dots,k+1$ let $c_l$ be the connectivity of the $l$-th map appearing in the smash product above, plus the connectivity of the sources of the other $(k+1)$-maps, so that the connectivity of $\sigma_{\underline{i}\underline{j}}$ on the fixed points is the minimum of the $c_l$'s. We compute these numbers. 
Notice that since $\underline{i}\in I[2k+1]^G$,
\[|\underline{i}|=i_0+2(\sum_{1\leq l\leq k}i_l)+i_{k+1}\]
and similarly for $\underline{j}$.
By our connectivity assumptions
\[\begin{array}{ll}c_0&=(j_0+\lceil\frac{i_0}{2}\rceil-\epsilon)+\sum_{1\leq h\leq k}(j_h+i_h)+\lceil\frac{j_{k+1}}{2}\rceil+\lceil\frac{i_{k+1}}{2}\rceil-1\\
&=(j_0+\dots+j_k+\lceil\frac{j_{k+1}}{2}\rceil)+(\lceil\frac{i_{0}}{2}\rceil+i_1+\dots+i_k+\lceil\frac{i_{k+1}}{2}\rceil)-1-\epsilon
\end{array}\]
and similarly
\[\begin{array}{ll}c_{k+1}&=(j_{k+1}+\lceil\frac{i_{k+1}}{2}\rceil-\epsilon)+\lceil\frac{j_0}{2}\rceil+\lceil\frac{i_0}{2}\rceil+\sum_{1\leq h\leq k}(j_h+i_h)-1\\
&=(\lceil\frac{j_{0}}{2}\rceil+j_1+\dots+j_{k+1})+(\lceil\frac{i_{0}}{2}\rceil+i_1+\dots+i_k+\lceil\frac{i_{k+1}}{2}\rceil)-1-\epsilon
\end{array}\]
For $1\leq l\leq k$, we have that
\[\resizebox{1\hsize}{!}{$\begin{array}{ll}c_l&\!\!=\!(2j_l+i_l-\epsilon)+\lceil\frac{j_0}{2}\rceil+\lceil\frac{i_0}{2}\rceil+\sum_{1\leq h\neq l\leq k}(j_h+i_h)+\lceil\frac{j_{k+1}}{2}\rceil+\lceil\frac{i_{k+1}}{2}\rceil-1\\
&\!\!=\!(\lceil\frac{j_{0}}{2}\rceil\!+\!j_1\!+\!\dots\!+\!2j_l\!+\!\dots\!+\!j_{k}\!+\!\lceil\frac{j_{k+1}}{2}\rceil)+(\lceil\frac{i_{0}}{2}\rceil\!+\!i_1\!+\!\dots\!+\!i_k\!+\!\lceil\frac{i_{k+1}}{2}\rceil)\!-\!1\!-\!\epsilon\end{array}$}\]
The minimum of these numbers is at least
\[\resizebox{1\hsize}{!}{$\begin{array}{ll}\min(\underline{j})+
(\lceil\frac{j_{0}}{2}\rceil\!+\!j_1\!+\!\dots\!+\!j_{k}\!+\!\lceil\frac{j_{k+1}}{2}\rceil)+(\lceil\frac{i_{0}}{2}\rceil\!+\!i_1\!+\!\dots\!+\!i_k\!+\!\lceil\frac{i_{k+1}}{2}\rceil)\!-\!1\!-\!\epsilon\\
=\min(\underline{j})+\lceil\frac{j_0}{2}\rceil+\lceil\frac{|\underline{j}|-j_0}{2}\rceil+\lceil\frac{i_0}{2}\rceil+\lceil\frac{|\underline{i}|-i_0}{2}\rceil-\epsilon-1\end{array}$}\]
\end{proof}

\begin{lemma}\label{annoyingone}
For every natural number $n$, there is an even $\underline{j}\in I[2k+1]^G$ such that the map induced by $\emptyset\longrightarrow \underline{i}$
\[\mathcal{G}_{2k+1}(\underline{j})^G=\mathcal{G}_{2k+1}(\mu_{\underline{j}}(\emptyset))^G\longrightarrow \mathcal{G}_{2k+1}(\mu_{\underline{j}}(\underline{i}))^G\]
is $n$-connected for all $\underline{i}\in I[2k+1]^G$.
\end{lemma}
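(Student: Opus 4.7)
The plan is to factor the map $\mathcal{G}_{2k+1}(\underline{j})^G\to\mathcal{G}_{2k+1}(\underline{j}+\underline{i})^G$ induced by $\mu_{\underline{j}}(\emptyset\to\underline{i})$ (which is the componentwise middle inclusion $\underline{j}\to\underline{j}+\underline{i}$) as a composition of a smash--with--$S^{\underline{i}}$ map and a postcomposition with $\sigma_{\underline{i}\underline{j}}$, and to bound the connectivity of each piece using the $G$--suspension theorem and the $G$--mapping space estimates of \S\ref{secghtpytheory}. Writing $V(\underline{a})=V(\mathcal{C};\mathcal{M},\underline{a})$, an element $f\colon S^{\underline{j}}\to V(\underline{j})$ is sent, by unraveling the functoriality of $\mathcal{G}_{2k+1}$, to the composite
\[S^{\underline{j}+\underline{i}}\stackrel{\cong}{\longrightarrow}S^{\underline{j}}\wedge S^{\underline{i}}\xrightarrow{f\wedge\id}V(\underline{j})\wedge S^{\underline{i}}\xrightarrow{\sigma_{\underline{i}\underline{j}}}V(\underline{j}+\underline{i}),\]
where the first isomorphism is the componentwise middle--insertion permutation $j_l=\tfrac{j_l}{2}+0+\tfrac{j_l}{2}\mapsto\tfrac{j_l}{2}+i_l+\tfrac{j_l}{2}$. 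Evenness and equivariance of $\underline{j}$ ensure this permutation is $G$--equivariant, and (via the shuffle $\chi$, a $G$--equivariant isomorphism of the two symmetric--group actions on $V(\underline{j}+\underline{i})$) that $\sigma_{\underline{i}\underline{j}}$ may be taken to land in the standard $G$--action. This gives a factorization of maps of spaces
\[\map_*(S^{\underline{j}},V(\underline{j}))^G\xrightarrow{\alpha}\map_*(S^{\underline{j}+\underline{i}},V(\underline{j})\wedge S^{\underline{i}})^G\xrightarrow{\beta}\map_*(S^{\underline{j}+\underline{i}},V(\underline{j}+\underline{i}))^G.\]

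I would then bound the connectivity of $\alpha$ and $\beta$ separately. By Proposition \ref{fixedptdescr} together with the evenness of $\underline{j}$, $\conn V(\underline{j})=|\underline{j}|-1$, $\conn V(\underline{j})^G=|\underline{j}|/2-1$, $\dim S^{\underline{j}}=|\underline{j}|$, and $\dim(S^{\underline{j}})^G=|\underline{j}|/2$. Corollary \ref{corgsusp} then yields
\[\conn(\alpha)\geq\min\bigl\{|\underline{j}|-1,\;|\underline{j}|/2-1,\;|\underline{j}|/2-1\bigr\}=|\underline{j}|/2-1.\]
For $\beta$, Lemma \ref{smashstructmaps} gives the connectivity of $\sigma_{\underline{i}\underline{j}}$ as $\min(\underline{j})+|\underline{j}|+|\underline{i}|-1-\epsilon$ non-equivariantly and $\min(\underline{j})+\lceil j_0/2\rceil+\lceil(|\underline{j}|-j_0)/2\rceil+\lceil i_0/2\rceil+\lceil(|\underline{i}|-i_0)/2\rceil-\epsilon-1$ on fixed points, and Corollary \ref{eqmappingspace} produces $\conn(\beta)$ as the minimum of the differences with $\dim S^{\underline{j}+\underline{i}}$ and $\dim(S^{\underline{j}+\underline{i}})^G$. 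The crucial observation, and the point where evenness of $\underline{j}$ is used essentially, is that since $j_0$ and $|\underline{j}|-j_0$ are even one has $\lceil(j_0+i_0)/2\rceil=j_0/2+\lceil i_0/2\rceil$ and the analogous identity for the complementary block; hence
\[\conn(\beta)\geq\min(\underline{j})-\epsilon-1,\]
with \emph{all} $\underline{i}$--terms cancelling. Without evenness of $\underline{j}$ the ceilings would not split and the bound would not be uniform in $\underline{i}$.

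Combining these, $\conn(\beta\circ\alpha)\geq\min\{|\underline{j}|/2-1,\min(\underline{j})-\epsilon-1\}$. Since $|\underline{j}|\geq(2k+2)\min(\underline{j})\geq 2\min(\underline{j})$, this minimum equals $\min(\underline{j})-\epsilon-1$ as soon as $\min(\underline{j})\geq\epsilon$. Given $n$, I would therefore take $\underline{j}\in I[2k+1]^G\cap I[2k+1]_e$ with all components equal to any even integer exceeding $n+\epsilon+1$; this gives $\min(\underline{j})-\epsilon-1\geq n$ uniformly in $\underline{i}\in I[2k+1]^G$. The main technical hurdle is the careful bookkeeping of the ceiling--function estimates on $G$--fixed points; the design of the good index category $(I[2k+1]^G,I[2k+1]^G_e,\mu^G)$ in Example \ref{Ikgoodindexcat}, which restricts $\underline{j}$ to be both even and equivariant, is tailored precisely to make these ceilings collapse and yield the required $\underline{i}$--uniform bound.
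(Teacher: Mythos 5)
Your proposal reproduces the paper's argument essentially verbatim: the same factorization through $(-)\wedge S^{\underline{i}}$ followed by postcomposition with $\sigma_{\underline{i}\underline{j}}$, with Corollary \ref{corgsusp} and Lemma \ref{smashstructmaps} applied to each factor and the evenness of $\underline{j}$ invoked exactly where the paper invokes it to make the ceiling terms split. Your final bound $\min\{|\underline{j}|/2-1,\min(\underline{j})-\epsilon-1\}$ is in fact a slightly sharper estimate than the paper's coarser $\lfloor\min(\underline{j})/2\rfloor-\epsilon-1$, but the conclusion and the method are the same.
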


\begin{proof}
By definition of $\mathcal{G}_{2k+1}$ on morphisms (see e.g. \cite[IV-1.2.1]{Dundasbook}), the induced map $\mathcal{G}_{2k+1}(\underline{j})^G\longrightarrow \mathcal{G}_{2k+1}(\mu_{\underline{j}}(\underline{i}))^G$ sends $f\in \mathcal{G}_{2k+1}(\underline{j})^G$ to the composite
\[\resizebox{1\hsize}{!}{\xymatrix{
S^{\frac{\underline{j}}{2}\!+\!\underline{i}\!+\!\frac{\underline{j}}{2}}\ar[d]_-{(\id_{\frac{j_{0}}{2}}\!\!\!+\chi_{i_0,\frac{j_0}{2}}\!\!)\wedge\dots\wedge (\id_{\frac{j_{2k\!+\!1}}{2}}\!\!\!+\chi_{i_{2k\!+\!1},\frac{j_{2k\!+\!1}}{2}}\!\!)}\ar[r]& V(\mathcal{C};\!\mathcal{M},\!\underline{j}\!+\!\underline{i})\\
S^{\frac{\underline{j}}{2}\!+\!\underline{i}\!+\!\frac{\underline{j}}{2}}\ar[d]^{\cong}
&V(\mathcal{C};\!\mathcal{M},\!\underline{j}\!+\!\underline{i})\ar[u]_-{(\id_{\frac{j_{0}}{2}}\!\!\!+\chi_{\frac{j_{0}}{2},i_0}\!\!)\wedge\dots\wedge (\id_{\frac{j_{2k\!+\!1}}{2}}\!\!\!+\chi_{\frac{j_{2k\!+\!1}}{2},i_{2k\!+\!1}}\!\!\!\!)}\\
S^{\underline{j}}\!\wedge\!S^{\underline{i}}\ar[r]^-{f\wedge\id}&
V(\mathcal{C};\!\mathcal{M},\!\underline{j})\!\wedge\!S^{\underline{i}}\ar[u]_{\sigma_{\underline{i},\underline{j}}}\\
}}\]
The non-labeled homeomorphism in the diagram groups the last $i_l$-factors in every $(j_l+i_l)$-component. Precisely, it is the smash of the homeomorphisms 
\[S^{\frac{j_l}{2}+i_l+\frac{j_l}{2}}\cong \underbrace{S^{1}\wedge\dots\wedge S^{1}}_{j_l+i_l}=(\underbrace{S^{1}\wedge\dots\wedge S^{1}}_{j_l})\wedge(\underbrace{S^{1}\wedge\dots\wedge S^{1}}_{i_l})\cong S^{j_l}\wedge S^{i_l}\]
We recall from \ref{defdualityfromop} that the permutation $\chi_{p,q}\in \Sigma_{p+q}$ interchanges the first $p$ and the last $q$ elements.
The composition of the two vertical maps on the left is the equivariant homeomorphism $S^{\frac{\underline{j}}{2}+\underline{i}+\frac{\underline{j}}{2}}\cong S^{\underline{j}}\wedge S^{\underline{i}}$ induced by
\[\xymatrix{(x_1,\dots,x_{\frac{j_l}{2}},x_{\frac{j_l}{2}+1},\dots, x_{\frac{j_l}{2}+i_l},x_{\frac{j_l}{2}+i_l+1},\dots x_{j_l+i_l})\ar@{|->}[d]\\
(x_1,\dots,x_{\frac{j_l}{2}},x_{\frac{j_l}{2}+i_l+1},\dots, x_{j_l+i_l},x_{\frac{j_l}{2}+1},\dots, x_{\frac{j_l}{2}+i_l})}\]
Similarly, the map $(\id_{\frac{j_{0}}{2}}+\chi_{i_0,\frac{j_0}{2}})\wedge\dots\wedge (\id_{\frac{j_{2k+1}}{2}}+\chi_{i_{2k+1},\frac{j_{2k+1}}{2}})$ is an equivariant homeomorphism when the source is equipped with the action induced from the permutations $\omega_{j_l}\times\omega_{i_l}$, as defined just before \ref{smashstructmaps}, and the target with the standard action induced by the permutations $\omega_{j_l+i_l}$ of §\ref{defTHR}. Thus, we just need to show that the composite
\[\xymatrix{\mathcal{G}_{2k+1}(\underline{j})^G\ar@{-->}[drr]\ar[rr]^-{(-)\wedge S^{\underline{i}}}&&\map_\ast(S^{\underline{j}}\wedge S^{\underline{i}},V(\mathcal{C};\mathcal{M},\underline{j})\wedge S^{\underline{i}})^G\ar[d]^{(-)\circ\sigma_{\underline{i}\underline{j}}}\\
&&\map_\ast(S^{\underline{j}}\wedge S^{\underline{i}},V(\mathcal{C};\mathcal{M},\underline{j}+\underline{i}))^G}\]
sending $f$ to $\sigma_{\underline{i},\underline{j}}\circ(f\wedge\id)$ is highly connected. By corollary \ref{eqmappingspace} of the Whitehead theorem, the connectivity of $(-)\circ\sigma_{\underline{i},\underline{j}}$ is the connectivity of $\sigma_{\underline{i},\underline{j}}$ minus the dimension of $(S^{\underline{j}}\wedge S^{\underline{i}})^G$. By the  homeomorphism
\[(S^{\underline{j}}\wedge S^{\underline{i}})^G\cong S^{\lceil\frac{j_0}{2}\rceil+\lceil\frac{|\underline{j}|-j_0}{2}\rceil}\wedge S^{\lceil\frac{i_0}{2}\rceil+\lceil\frac{|\underline{i}|-i_0}{2}\rceil}\]
and \ref{smashstructmaps}, this difference is
\[\min(\underline{j})-1-\epsilon\]
Corollary \ref{corgsusp} of the $G$-suspension theorem gives us a way to compute the connectivity of $(-)\wedge S^{\underline{i}}$. Indeed, using that $V(\mathcal{C};\mathcal{M},\underline{j})$ is \[(|\underline{j}|-1,\lceil\frac{j_0}{2}\rceil+\lceil\frac{|\underline{j}|-j_0}{2}\rceil-1)\]
connected, one finds after using \ref{corgsusp} that $(-)\wedge S^{\underline{i}}$ is
\[\lfloor\frac{j_0}{2}\rfloor+j_1+\dots+j_k+\lfloor\frac{j_{k+1}}{2}\rfloor-1\]
connected. Therefore the dashed map above is at least 
\[\lfloor\frac{\min(\underline{j})}{2}\rfloor-\epsilon-1\]
connected, which can be chosen to be arbitrarily big with $\underline{j}$.
\end{proof}


\subsection{$\THR$ and stable equivalences}

We describe another class of maps of bimodules that induce $G$-equivalences on $\THR$.

\begin{defn}
A morphism of bimodules with duality $(F,\Phi)\colon (\mathcal{C},\mathcal{M})\longrightarrow (\mathcal{C}',\mathcal{M}')$ is a \textbf{stable $G$-equivalence} if the functor $F\colon \mathcal{C}\longrightarrow \mathcal{C}'$ is a bijection on objects, and there are functions $\delta,\delta^G\colon\mathbb{N}\longrightarrow\mathbb{N}$ such that
\begin{enumerate}[i)]
\item $\delta(i)-i$ and $\delta^G(i)-\lceil\frac{i}{2}\rceil$ tend to infinity with $i$,
\item For every objects $c,d$ of $\mathcal{C}$, the maps $F\colon\mathcal{C}(c,d)_i\longrightarrow\mathcal{C}'(Fc,Fd)_i$ and 
$\Phi\colon \mathcal{M}(c,d)_i\longrightarrow\mathcal{M}'(Fc,Fd)_i$
are at least $\delta(i)$-connected,
\item For every object $c$ of $\mathcal{C}$, the maps $F^G\colon\mathcal{C}(c,Dc)_{i}^G\longrightarrow\mathcal{C}'(Fc,FDc)_{i}^G$ and 
$\Phi^G\colon \mathcal{M}(c,Dc)_{i}^G\longrightarrow\mathcal{M}'(Fc,FDc)_{i}^G$
are at least $\delta^G(i)$-connected.
\end{enumerate}
\end{defn}

\begin{prop}\label{stableGeq}
Suppose that $\mathcal{C}$ and $\mathcal{M}$ are $G$-connected. A stable $G$-equivalence $(F,\Phi)\colon (\mathcal{C},\mathcal{M})\longrightarrow (\mathcal{C}',\mathcal{M}')$ induces a weak $G$-equivalence
\[(F,\Phi)_\ast\colon \THR(\mathcal{C};\mathcal{M})\longrightarrow \THR(\mathcal{C}';\mathcal{M}')\]
\end{prop}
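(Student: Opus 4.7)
The plan is to prove the $G$-equivalence level by simplicial level, reducing both the non-equivariant statement and the fixed-point statement to the connectivity criterion of Proposition \ref{connectgoestoinfty}. Since realization commutes with fixed points up to the subdivision equivalence of Proposition \ref{subdivision}, it suffices to show that for every $k$ the induced map $(F,\Phi)_\ast \colon \THR_k(\mathcal{C};\mathcal{M}) \to \THR_k(\mathcal{C}';\mathcal{M}')$ is a weak equivalence, and that for every $k$ the induced map on the $G$-fixed points of $\THR_{2k+1}$ is a weak equivalence. By Proposition \ref{fixedhocolim}, the latter is a map of homotopy colimits
\[
\hocolim_{I[2k+1]^G}(\mathcal{G}_{2k+1}(\mathcal{C};\mathcal{M})|_{I[2k+1]^G})^G \longrightarrow \hocolim_{I[2k+1]^G}(\mathcal{G}_{2k+1}(\mathcal{C}';\mathcal{M}')|_{I[2k+1]^G})^G,
\]
while the former is a map of homotopy colimits over $I[k]$. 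In both cases, since $F$ is a bijection on objects the natural transformation of diagrams is defined objectwise, so I can invoke Proposition \ref{connectgoestoinfty} provided that the pointwise connectivity of the natural transformation tends to infinity in the respective index categories.

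For the non-equivariant statement, the map $V(\mathcal{C};\mathcal{M},\underline{i}) \to V(\mathcal{C}';\mathcal{M}',\underline{i})$ is a wedge, indexed by tuples of objects, of smashes of spectrum-level maps $\Phi_{i_0}\wedge F_{i_1}\wedge \dots \wedge F_{i_k}$. The $l$-th smash factor is $\delta(i_l)$-connected between $(i_l-1)$-connected spaces (here I use $0$-connectivity of $\mathcal{C}$ and $\mathcal{M}$, which also forces $0$-connectivity of $\mathcal{C}'$ and $\mathcal{M}'$ through the stable equivalence hypothesis). Standard estimates for the connectivity of a smash of maps show that this smash is at least $\min_l\bigl(\delta(i_l)+\sum_{h\ne l}i_h\bigr)$-connected; the same bound passes to the wedge. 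Applying $\map_\ast(S^{\underline{i}},-)$ via Corollary \ref{eqmappingspace} subtracts $|\underline{i}|=\dim S^{\underline{i}}$, so $\mathcal{G}_k(\mathcal{C};\mathcal{M})(\underline{i}) \to \mathcal{G}_k(\mathcal{C}';\mathcal{M}')(\underline{i})$ is at least $\min_l(\delta(i_l)-i_l)$-connected. Since $\delta(i)-i \to \infty$, this tends to infinity whenever $\min_l i_l \to \infty$, which is exactly the content of $\lambda$ tending to infinity on the partial order of $I[k]$ used in Proposition \ref{connectgoestoinfty}.

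For the fixed points, I use the explicit description of $V(\mathcal{C};\mathcal{M},\underline{i})^G$ given by Proposition \ref{fixedptdescr}: for $\underline{i}\in I[2k+1]^G$ it is a wedge of smashes whose outer factors (indices $0$ and $k+1$) are replaced by fixed-point spaces, while the middle factors remain unchanged. The $l$-th factor map has connectivity $\delta(i_l)$ for $1\le l\le k$ and connectivity $\delta^G(i_l)$ on fixed points for $l\in\{0,k+1\}$, between sources that are $(i_l-1)$-connected, respectively $(\lceil i_l/2\rceil-1)$-connected. The smash-product estimate then bounds the connectivity of $V(\mathcal{C};\mathcal{M},\underline{i})^G \to V(\mathcal{C}';\mathcal{M}',\underline{i})^G$ by a minimum over $l$ of expressions mixing $\delta(i_l)+\sum_{h\ne l}(\cdots)$ and $\delta^G(i_l)+\sum_{h\ne l}(\cdots)$. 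Applying Corollary \ref{eqmappingspace} to $\map_\ast(S^{\underline{i}},-)^G$ subtracts $\dim(S^{\underline{i}})^G = \lceil i_0/2\rceil+\lceil(|\underline{i}|-i_0)/2\rceil$ (in the term coming from the fixed-point connectivity) and $|\underline{i}|$ (in the term coming from the ambient connectivity). The arithmetic rearranges the resulting lower bound into a minimum of terms of the form $\delta(i_l)-i_l$ and $\delta^G(i_l)-\lceil i_l/2\rceil$, both of which tend to infinity by hypothesis on $\delta,\delta^G$. Hence the pointwise connectivity of the natural transformation tends to infinity on $I[2k+1]^G$ as well, and Proposition \ref{connectgoestoinfty} concludes.

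The main obstacle will be managing the two competing connectivity bounds in the smash-of-maps estimate on fixed points: one must carefully track which factors contribute halved source connectivities against $\delta^G$-type bounds and which contribute full $i_l$ source connectivities against $\delta$-type bounds, and verify that after the $\dim(S^{\underline{i}})^G$ subtraction from Corollary \ref{eqmappingspace} the bound rearranges cleanly into $\min\{\delta(i_l)-i_l,\;\delta^G(i_l)-\lceil i_l/2\rceil\}$-type terms. Once this bookkeeping is done — it is analogous to the fixed-point connectivity computation in Lemma \ref{smashstructmaps} — the rest follows formally from the approximation machinery of the previous section.
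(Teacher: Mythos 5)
Your proposal follows the paper's proof essentially exactly: the same reduction to pointwise connectivity estimates over $I[k]$ and (after subdivision) $I[2k+1]^G$, the same use of Proposition \ref{fixedhocolim} and the fixed-point description \ref{fixedptdescr}, the same smash-of-maps connectivity estimate for the wedge components of $V$, the same subtraction of $\dim (S^{\underline{i}})^G$ via Corollary \ref{eqmappingspace}, and the same invocation of Proposition \ref{connectgoestoinfty}. The only discrepancy is a harmless off-by-one (your bound $\min_l(\delta(i_l)+\sum_{h\neq l}i_h)$ should carry a $-1$ from the iterated smash estimate, as in the paper), which does not affect the conclusion.
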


\begin{proof}
Since $(F,\Phi)$ is a stable equivalence, the connectivity of the map
\[F_i\colon\mathcal{C}(c,d)_i\longrightarrow \mathcal{C}(Fc,Fd)_i\]
is a function $\delta(i)$ of $i$ such that $\delta(i)-i$ tends to infinity with $i$, and similarly for $\mathcal{M}$.
Recall that if $f\colon A\longrightarrow X$ and $g\colon B\longrightarrow Y$ are respectively $n$ and $m$ connected maps of spaces, the connectivity of their smash product $f\wedge g\colon A\wedge B\longrightarrow X\wedge Y$ is $\min\{n+\conn(B),m+\conn(X)\}$. Therefore the connectivity of
\[(F,\Phi)_\ast\colon V(\mathcal{C};\mathcal{M},\underline{i})\longrightarrow V(\mathcal{C}';\mathcal{M}',\underline{i})\]
is
\[\min\limits_l\{\delta(i_l)+(\sum\limits_{0\leq h\neq l\leq k}i_h)-1\}\]
for each $\underline{i}\in I[k]$.
Notice that here we are using that $F$ is a bijection on objects, to make sure to reach all the wedge components. The map
\[(F,\Phi)_\ast\colon \Omega^{\underline{i}}V(\mathcal{C};\mathcal{M},\underline{i})\longrightarrow \Omega^{\underline{i}}V(\mathcal{C}';\mathcal{M}',\underline{i})\]
is thus
\[\min\limits_l\{\delta(i_l)+(\sum\limits_{0\leq h\neq l\leq k}i_h)-1\}-|\underline{i}|=\min\limits_l\{\delta(i_l)-i_l\}-1\]
connected.
This tends to infinity on $I[k]$, and therefore it induces an equivalence on homotopy colimits by \ref{connectgoestoinfty}. Consequently
\[(F,\Phi)_\ast\colon \THR(\mathcal{C};\mathcal{M})\longrightarrow \THR(\mathcal{C}';\mathcal{M}')\]
is an equivalence.

To show that $(F,\Phi)_\ast$ is an equivalence on the fixed points, we show that it is a levelwise equivalence on the fixed points of the subdivision. For 
\[\underline{i}=(i_0,i_1,\dots,i_k,i_{k+1},i_k,\dots,i_1)\in I[2k+1]^G\]
the map
$(F,\Phi)^{G}_\ast\colon V(\mathcal{C};\mathcal{M},\underline{i})^G\longrightarrow V(\mathcal{C}';\mathcal{M}',\underline{i})^G$
is, by the description of of the fixed points of \ref{fixedptdescr},  the wedge over the tuples $(c_0,\dots,c_k)$ of the smash products of the maps
\[F_l\colon\mathcal{C}(c_{l},c_{l-1})_{i_l}\longrightarrow \mathcal{C}'(Fc_{l},Fc_{l-1})_{i_l}\] for $1\leq l\leq k$, and of the maps  
\[\begin{array}{ll}F_{k+1}^G\colon\mathcal{C}(Dc_k,c_{k})_{i_{k+1}}^G\longrightarrow \mathcal{C}'(FDc_{k},Fc_{k})_{i_{k+1}}^G\\
\Phi_0^G\colon\mathcal{M}(c_{0},Dc_{0})_{i_{0}}^G\longrightarrow \mathcal{M}'(c_{0},Dc_{0})_{i_{0}}^G\end{array}\]
Thus its connectivity is the minimum of the following three quantities. The connectivity of $F_{k+1}^G$ plus the connectivity of the sources of the other maps in the smash, which is
\[\delta^G(i_{k+1})+\lceil\frac{i_0}{2}\rceil+i_1+\dots+i_k-1\]
The connectivity of $\Phi_{0}^G$ plus the connectivity of the sources of the other maps, which is
\[\delta^G(i_{0})+i_1+\dots+i_k+\lceil\frac{i_{k+1}}{2}\rceil-1\]
And finally the minimum over $1\leq l\leq k$ of the connectivity of $F_l$ plus the connectivity of the sources of the other maps, which is
\[\delta(i_l)+\lceil\frac{i_0}{2}\rceil+\sum_{1\leq h\neq l\leq k}i_h+\lceil\frac{i_{k+1}}{2}\rceil-1\]
Now recall that there is a homeomorphism
\[(S^{\underline{i}})^G\cong S^{\lceil\frac{i_0}{2}\rceil+i_1+\dots+i_k+\lceil\frac{i_{k+1}}{2}\rceil}\]
By the Whitehead theorem \ref{eqmappingspace}, the induced map
\[(F,\Phi)_\ast\colon \Omega^{\underline{i}}V(\mathcal{C};\mathcal{M},\underline{i})^G\longrightarrow \Omega^{\underline{i}}V(\mathcal{C}';\mathcal{M}',\underline{i})^G\]
is therefore
\[\min\left\{\delta^G(i_{k+1})-\lceil\frac{i_{k+1}}{2}\rceil,\delta^G(i_{0})-\lceil\frac{i_{0}}{2}\rceil,\min\limits_{1\leq l\leq k}\{\delta(i_l)-i_l\}\right\}-1\]
This tends to infinity on $I[2k+1]^G$ by the properties of $\delta$ and $\delta^G$, and therefore it induces an equivalence on homotopy colimits by \ref{connectgoestoinfty}. By the description of the fixed points of the homotopy colimit \ref{fixedhocolim}, the induced map
\[(F,\Phi)^{G}_\ast\colon \THR_{2k+1}(\mathcal{C};\mathcal{M})^G\longrightarrow\THR_{2k+1}(\mathcal{C}';\mathcal{M}')^G\]
is an equivalence. This gives the desired equivalence on the realization.
\end{proof}


\subsection{Abelian group model for $\THR$}\label{abgpmodelsec}

Let $C$ be a category enriched in abelian groups with strict duality $D$, and $M$ a bimodule with duality on $(C,D)$. We are going to show that $\THR(HC;HM)$ is $G$-equivalent to the realization of a real topological abelian group. For an abelian group $A$ we denote
\[A_{i}=A\otimes\mathbb{Z}(S^i)=A(S^i)\]
the space of reduced configurations. 

For all $\underline{i}\in I[k]$ define a topological abelian group
\[V^{\oplus}(HC;HM,\underline{i})=\bigoplus M(c_0,c_k)_{i_0}\otimes\mathbb{Z}(C(c_1,c_0)_{i_1}\wedge\dots\wedge C(c_k,c_{k-1})_{i_k})\]
where the direct sum runs over $(k+1)$-tuples of objects $c_0,\dots,c_k$ of $C$. 
Define a functor $\mathcal{G}_{k}^{\oplus}(HC;HM)\colon I[k]\longrightarrow\Top_\ast$ on objects by
\[\mathcal{G}_{k}^{\oplus}(HC;HM)(\underline{i})=\map_\ast(S^{\underline{i}},V^\oplus(HC;HM,\underline{i}))\]
and on morphisms analogously to $\mathcal{G}_{k}(HC;HM)$.
Then define
\[\THR_{k}^{\oplus}(HC;HM)=\hocolim_{I[k]}\mathcal{G}_{k}^{\oplus}(HC;HM)\]
As for $\THR$, the spaces $\THR_{k}^{\oplus}(HC;HM)$ form a simplicial topological abelian group. We define $\THR^{\oplus}(HC;HM)$ to be its realization.

There is a levelwise $G$-action on $\THR_{k}^{\oplus}(HC;HM)$, induced by the maps \[\mathcal{G}_{k}^{\oplus}(HC;HM)(\underline{i})\longrightarrow \mathcal{G}_{k}^{\oplus}(HC;HM)(\omega\underline{i})\] which conjugate an $f\in \map_\ast(S^{\underline{i}},V^\oplus(HC;HM,\underline{i}))$ by $S^{\omega\underline{i}}\cong S^{\underline{i}}\stackrel{\omega_{i_0}\wedge\dots\wedge\omega_{i_k}}{\longrightarrow}S^{\underline{i}}$ and by the map 
\[ V^{\oplus}(HC;HM,\underline{i})\longrightarrow V^{\oplus}(HC;HM,\omega\underline{i})\]
defined by the composition
\[\xymatrix{V^\oplus(HC;HM,\underline{i})\ar@{=}[d]\\
\bigoplus M(c_0,c_k)_{i_0}\otimes\mathbb{Z}(C(c_1,c_0)_{i_1}\wedge\dots\wedge C(c_k,c_{k-1})_{i_k})\ar[d]^\cong \\
\bigoplus M(c_0,c_k)_{i_0}\otimes\mathbb{Z}(C(c_k,c_{k-1})_{i_k}\wedge\dots\wedge C(c_1,c_{0})_{i_1})\ar[d]^{\omega_{i_0}\otimes \mathbb{Z}(\omega_{i_k}\wedge\dots\wedge \omega_{i_1})}\\
\bigoplus M(c_0,c_k)_{i_0}\otimes\mathbb{Z}(C(c_k,c_{k-1})_{i_k}\wedge\dots\wedge C(c_1,c_{0})_{i_1})\ar[d]^{J\otimes \mathbb{Z}(D\wedge\dots\wedge D)}\\
\bigoplus M(Dc_{k},Dc_0)_{i_0}\otimes \mathbb{Z}(C(Dc_{k-1},Dc_{k})_{i_k}\wedge\dots\wedge C(Dc_{0},Dc_{1})_{i_{1}})\ar@{=}[d]\\
V^\oplus(HC;HM,\omega\underline{i})
}\]
The the natural maps 
\[M(c_0,c_k)_{i_0}\wedge X\longrightarrow M(c_0,c_k)_{i_0}\otimes \mathbb{Z}(X)\]
with $X=C(c_1,c_0)_{i_1}\wedge\dots\wedge C(c_k,c_{k-1})_{i_k}$,
together with the inclusion of wedge products into direct sums, give maps $V(HC;HM,\underline{i})\longrightarrow V^{\oplus}(HC;HM,\underline{i})$ respecting the structure maps and the action. This induces a map of real spaces
\[\THR_{\sbt}(HC;HM)\longrightarrow \THR_{\sbt}^{\oplus}(HC;HM)\]
This is the main result of this section.

\begin{prop}\label{abgpmodel}
The map
\[\THR(HC;HM)\longrightarrow \THR^{\oplus}(HC;HM)\]
is a $G$-equivalence.
\end{prop}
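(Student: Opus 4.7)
The strategy is to prove the comparison one simplicial degree at a time, reducing to an application of the approximation-lemma machinery of \S\ref{Gapproxsec}. Since both sides are realizations of real simplicial spaces and the map is simplicial and $G$-equivariant, it suffices to show non-equivariantly that $\THR_k(HC;HM)\to\THR_k^{\oplus}(HC;HM)$ is a weak equivalence for each $k$ and, on fixed points (via the subdivision identification of \ref{subdivision}), that $\THR_{2k+1}(HC;HM)^G\to\THR_{2k+1}^{\oplus}(HC;HM)^G$ is a weak equivalence for each $k$. By \ref{fixedhocolim} and \ref{connectgoestoinfty}, both statements reduce to verifying that the pointwise comparison $\mathcal{G}_k(HC;HM)(\underline{i})\to\mathcal{G}_k^{\oplus}(HC;HM)(\underline{i})$ has connectivity, respectively fixed-point connectivity, tending to infinity in the partial order on $I[k]$, respectively on $I[2k+1]^G$.

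To estimate this connectivity I will factor the underlying map of $V$-spaces as
\[V(HC;HM,\underline{i})\;\xrightarrow{\ \alpha\ }\;V'(HC;HM,\underline{i})\;\xrightarrow{\ \beta\ }\;V^{\oplus}(HC;HM,\underline{i}),\]
where $V'$ is the wedge (not direct sum) over tuples $(c_0,\dots,c_k)$ of \[M(c_0,c_k)_{i_0}\otimes\mathbb{Z}\bigl(C(c_1,c_0)_{i_1}\wedge\cdots\wedge C(c_k,c_{k-1})_{i_k}\bigr).\]
The first map $\alpha$ is a wedge of the linearization maps $M(S^{i_0})\wedge X\to M(S^{i_0}\wedge X)$ with $X=\bigwedge_l C(c_l,c_{l-1})_{i_l}$, whose connectivity is controlled by the appendix result \ref{connectvitysmashwithx}; the second map $\beta$ is the inclusion of the wedge into the corresponding direct sum, viewed as the filtered colimit of finite products and hence controlled by \ref{wedgesintoproducts}. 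One obtains that $\alpha$ is roughly $(2i_0+\conn X)$-connected and $\beta$ is roughly $(2|\underline{i}|-1)$-connected, so after $\Omega^{\underline{i}}$ the connectivity of $\mathcal{G}_k(\underline{i})\to\mathcal{G}_k^{\oplus}(\underline{i})$ is at least approximately $i_0-1$, which tends to infinity in the partial order on $I[k]$; this proves the non-equivariant half via \ref{connectgoestoinfty}.

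On fixed points the same factorization is used together with the equivariant halves of those results: \ref{connectvitysmashwithx} bounds $\alpha^G$ by $i_0+\min\{\conn X,\conn X^G\}$, and the second component of \ref{wedgesintoproducts} bounds $\beta^G$. The fixed points of $X$ are computed by the same reasoning as in \ref{fixedptdescr}: the middle smash factor contributes $C(Dc_k,c_k)_{i_{k+1}}^G$ while each off-diagonal pair $(l,2k+2-l)$ collapses, under the swap-and-$D\omega$ action, to a single copy of $C(c_l,c_{l-1})_{i_l}$. Combining this with $(S^{\underline{i}})^G\cong S^{\lceil i_0/2\rceil+\lceil(|\underline{i}|-i_0)/2\rceil}$ and the $G$-Whitehead estimate \ref{eqmappingspace}, the looped fixed-point connectivity is roughly $\lfloor i_0/2\rfloor-1$, which tends to infinity in $I[2k+1]^G$; one more appeal to \ref{connectgoestoinfty} finishes the proof.

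The main obstacle is the careful fixed-point bookkeeping for the tensor-product summands of $V^{\oplus}$. Unlike the wedge summands of $V$ (whose $G$-fixed points split cleanly into a smash of fixed and paired factors by \ref{fixedptdescr}), tensor products of abelian groups with involution do not decompose as neatly into fixed and off-diagonal pieces (cf.\ \ref{connconfspace}). The factorization through $V'$ is precisely what avoids this difficulty, since it localizes the whole discrepancy between smash and tensor to the single linearization map $M(S^{i_0})\wedge X\to M(S^{i_0}\wedge X)$, whose equivariant connectivity is already pinned down by the appendix.
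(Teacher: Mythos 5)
Your proposal is correct and follows essentially the same route as the paper's own proof: the same intermediate wedge-of-tensor-products space $V'$, the same appeal to the appendix linearization estimate \ref{connectvitysmashwithx} for the first map, the same use of \ref{wedgesintoproducts} for the wedge-into-sum inclusion, and the same conclusion via \ref{fixedhocolim} and \ref{connectgoestoinfty}, with the non-equivariant and fixed-point connectivities growing like $i_0$ and $\lfloor i_0/2\rfloor$ respectively. The only superficial difference is that the paper spells out $\beta^G$ as a two-step composite (wedge into sum of fixed-point spaces, then inclusion into the summand carrying the free orbits), whereas you invoke \ref{wedgesintoproducts} in one step, which is equivalent.
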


\begin{proof}
We prove that the simplicial map is a levelwise equivalence, and that it induces a levelwise equivalence on the fixed points of the subdivisions. The main point is that for every $G$-CW-complex $X$ the map $M(c,d)_{i}\wedge X\longrightarrow M(c,d)_{i}\otimes \mathbb{Z}(X)$ is equivariantly
\[(2i+\conn X,i+\min\{\conn X^G,\conn X\})\]
connected.
The proof of this result is technical, and is deferred to appendix \ref{connectvitysmashwithx}.

Non equivariantly, the map $\THR_{k}(HC;HM)\longrightarrow \THR_{k}^{\oplus}(HC;HM)$ is induced by the composite
\[\xymatrix{
\bigvee\limits_{c_0,\dots,c_k}M(c_0,c_k)_{i_0}\wedge C(c_1,c_0)_{i_1}\wedge\dots\wedge C(c_{k},c_{k-1})_{i_{k}}\ar[d]\\
\bigvee\limits_{c_0,\dots,c_k}M(c_0,c_k)_{i_0}\otimes \mathbb{Z}(C(c_1,c_0)_{i_1}\wedge\dots\wedge C(c_{k},c_{k-1})_{i_{k}})\ar[d]\\
\bigoplus\limits_{c_0,\dots,c_k} M(c_0,c_k)_{i_0}\otimes\mathbb{Z}(C(c_1,c_0)_{i_1}\wedge\dots\wedge C(c_k,c_{k-1})_{i_k})
}\]

The first map is induced by $M(c_0,c_k)_{i_0}\wedge X\longrightarrow M(c_0,c_k)_{i_0}\otimes \mathbb{Z}(X)$, for $X=C(c_1,c_0)_{i_1}\wedge\dots\wedge C(c_k,c_{k-1})_{i_k}$ and it is
\[2(i_0-1)+(i_1+\dots+i_k-1)=i_0+|\underline{i}|-3\]
connected by \ref{connectvitysmashwithx}. The second map is $(2|\underline{i}|-1)$-connected as inclusion of wedges into sums, so the composite is $(i_0+|\underline{i}|-3)$-connected. Thus \[\mathcal{G}_{k}(HC;HM)(\underline{i})\longrightarrow\mathcal{G}_{k}^{\oplus}(HC;HM)(\underline{i})\] is $(i_0-3)$-connected, which tends to infinity on $I[k]$. By \ref{connectgoestoinfty} it induces a weak equivalence on the homotopy colimits.

Given a fixed point $\underline{i}=(i_0,i_1,\dots,i_k,i_{k+1},i_k,\dots,i_1)\in I[2k+1]^G$, we need to describe the map $V(HC;HM,\underline{i})^G\longrightarrow V^{\oplus}(HC;HM,\underline{i})^G$. Proposition \ref{fixedptdescr} gives us an explicit description for the source of this map. The action of $G$ on the set $ObC^{2k+1}$ is $D\circ\omega_{2k+2}$ with fixed set $ObC^{k+1}\cong\{c_0,\dots,c_k,Dc_k,\dots,Dc_0\}$ and we may write
\[ObC^{2k+1}\cong ObC^{k+1}\coprod G\times (ObC^{2k+1})_f\]
where $(ObC^{2k+1})_f$ is a set of representatives for the orbits of the free elements of $ObC^{2k+1}$.
Then $V^{\oplus}(HC;HM,\underline{i})^G$ is homeomorphic by \ref{wedgesintoproducts} to
\[\begin{array}{ll}\bigoplus\limits_{c_0,\dots,c_k}\left(M(c_0,Dc_0)_{i_0}\otimes \mathbb{Z}(C(c_1,c_0)_{i_1}\wedge\dots\wedge C(Dc_{0},Dc_{1})_{i_{1}})\right)^G\oplus\\
\bigoplus\limits_{(ObC^{2k+1})_f} M(c_0,c_{2k+1})_{i_0}\otimes\mathbb{Z}(C(c_1,c_0)_{i_1}\wedge\dots\wedge C(c_{2k+1},c_{2k})_{i_{1}})\end{array}\]
Under this identification, the map $V(HC;HM,\underline{i})^G\longrightarrow V^{\oplus}(HC;HM,\underline{i})^G$ is the composition of inclusions
\[\xymatrix{V(HC;HM,\underline{i})^G\ar@{=}[d]\\
\bigvee \left(M(c_0,Dc_0)_{i_0}\wedge C(c_1,c_0)_{i_1}\wedge\dots\wedge C(Dc_{0},Dc_{1})_{i_{1}}\right)^G\ar[d]\\
\bigvee \left(M(c_0,Dc_0)_{i_0}\otimes\mathbb{Z}(C(c_1,c_0)_{i_1}\wedge\dots\wedge C(Dc_{0},Dc_{1})_{i_{1}})\right)^G\ar[d]\\
\bigoplus \left(M(c_0,Dc_0)_{i_0}\otimes\mathbb{Z}(C(c_1,c_0)_{i_1}\wedge\dots\wedge C(Dc_{0},Dc_{1})_{i_{1}})\right)^G\ar[d]\\
*\txt{$\bigoplus\left(M(c_0,D(c_0))_{i_0}\otimes \mathbb{Z}(C(c_1,c_0)_{i_1}\wedge\dots\wedge C(Dc_{0},Dc_{1})_{i_{1}})\right)^G\oplus$\\
$\bigoplus\limits_{(ObC^{2k+1})_f} M(c_0,c_{2k+1})_{i_0}\otimes\mathbb{Z}(C(c_1,c_0)_{i_1}\wedge\dots\wedge C(c_{2k+1},c_{2k})_{i_1})$}\ar@{=}[d]\\
V^{\oplus}(HC;HM,\underline{i})^G
}\]
where the unlabeled wedges and sums are over the objects $c_0,\dots,c_k\in C$. The first map is the map \[(M(c_0,Dc_0)_{i_0}\wedge X)^G\longrightarrow (M(c_0,Dc_0)_{i_0}\otimes \mathbb{Z}(X))^G\]
for $X=C(c_1,c_0)_{i_1}\wedge\dots\wedge C(Dc_{k},c_{k})_{i_{k+1}}\wedge\dots\wedge C(Dc_{0},Dc_{1})_{i_{1}}$. It is 
$(i_0+\min\{\conn X^G,\conn X\})$-connected by \ref{connectvitysmashwithx}, and since
\[\xymatrix@R=6pt{(C(c_1,c_0)_{i_1}\wedge\dots\wedge C(Dc_{k},c_{k})_{i_{k+1}}\wedge\dots\wedge C(Dc_{0},Dc_{1})_{i_{1}})^G \ar@{=}[d]\\ C(c_1,c_0)_{i_1}\wedge\dots\wedge C(c_k,c_{k-1})_{i_k}\wedge C(Dc_{k},c_{k})_{i_{k+1}}^G}\]
this connectivity is
\[i_0+i_1+\dots+i_{k}+\lceil\frac{i_{k+1}}{2}\rceil-1\]
The second map is an inclusion of wedges into products, and it is therefore roughly twice as connected as $(M(c_0,Dc_0)_{i_0}\otimes \mathbb{Z}(X))^G$ is. This can be written as
\[(M(c_0,Dc_0)_{i_0}\otimes \mathbb{Z}(X))^G\cong (M(c_0,Dc_0)(S^{i_0}\wedge X))^G\]
which is $(\lceil\frac{i_0}{2}\rceil+i_1+\dots+i_k+\lceil\frac{i_{k+1}}{2}\rceil-1)$-connected by \ref{connconfspace}.
Thus the second map is $(|\underline{i}|-1)$-connected. The third map is as connected as the second summand of its target, which is $(|\underline{i}|-1)$-connected. Therefore the connectivity of the composite is the minimum of these three quantities, given by
\[i_0+i_1+\dots+i_{k}+\lceil\frac{i_{k+1}}{2}\rceil-1\]
By the corollary of the Whitehead theorem \ref{eqmappingspace}, the map
\[\mathcal{G}_{2k+1}(HC;HM)(\underline{i})^G\longrightarrow \mathcal{G}_{2k+1}^{\oplus}(HC;HM)(\underline{i})^G\]
is the connectivity above minus the dimension of the fixed points of $S^{\underline{i}}$, which is
\[i_0+i_1+\dots+i_{k}+\lceil\frac{i_{k+1}}{2}\rceil-1-(\lceil\frac{i_{0}}{2}\rceil+i_1+\dots+i_{k}+\lceil\frac{i_{k+1}}{2}\rceil)=\lfloor\frac{i_{0}}{2}\rfloor-1\]
connected. This tends to infinity on $I[2k+1]^G$, and by \ref{connectgoestoinfty} it induces an equivalence on homotopy colimits
\[\xymatrix{\hocolim_{I[2k+1]^G}\mathcal{G}_{2k+1}(HC;HM)(\underline{i})^G\cong \THR_{2k+1}(HC;HM)^G\ar[d]\\ \hocolim_{I[2k+1]^G}\mathcal{G}_{2k+1}^{\oplus}(HC;HM)(\underline{i})^G\cong \THR^{\oplus}_{2k+1}(HC;HM)^G}\]
\end{proof}


\subsection{A first delooping for $\THR(\mathcal{C};\mathcal{M})$}\label{firstdeloop}

Let $(\mathcal{C},D)$ be an $Sp^\Sigma$-category with duality, and $(\mathcal{M},J)$ a bimodule with duality over $(\mathcal{C},D)$.
In this section we show that $\THR(\mathcal{C};\mathcal{M})$ is the $0$-th space of a real spectrum $\uuline{\THR}(\mathcal{C};\mathcal{M})$ (see \ref{realspec}). Then we
prove that if $\mathcal{C}$ and $\mathcal{M}$ are $G$-connected (cf. \ref{defGconn}), $\uuline{\THR}(\mathcal{C};\mathcal{M})$ is a real $\Omega$-spectrum. Later on in \ref{THRdeloopings} we will describe a second delooping by the $S^{2,1}_{\sbt}$-construction in the case of split-exact categories.

For $\underline{i}=(i_0,\dots,i_k)\in I[k]$ and a $G$-space $X$, define
\[V(\mathcal{C};\!\mathcal{M},\!X,\!\underline{i})\!=\!\bigvee \mathcal{M}(c_0,c_k)_{i_0}\wedge\mathcal{C}(c_1,c_0)_{i_1}\wedge\mathcal{C}(c_2,c_1)_{i_2}\wedge\dots\wedge\mathcal{C}(c_k,c_{k-1})_{i_k}\wedge X\]
The $G$-space $X$ has the role of a dummy variable. Define
$\THR_{k}(\mathcal{C};\mathcal{M},X)$ as the homotopy colimit of the functor
$\mathcal{G}_k(\mathcal{C};\mathcal{M},X)\colon I[k]\longrightarrow \Top_\ast$,
\[\mathcal{G}_k(\mathcal{C};\mathcal{M},X)(\underline{i})=\map_\ast(S^{\underline{i}},V(\mathcal{C};\mathcal{M},X,\underline{i}))\]
Here the $G$-action is induced by the maps
\[\overline{\omega}_k\colon V(\mathcal{C};\mathcal{M},\underline{i})\longrightarrow V(\mathcal{C};\mathcal{M},\omega\underline{i})\]
from §\ref{defTHR}, smashed with the involution on $X$. This gives a real space $\THR_{\sbt}(\mathcal{C};\mathcal{M},X)$ whose realization is denoted $\THR(\mathcal{C};\mathcal{M},X)$.

Let $Y$ be another $G$-space.
For every $x\in X$ the  map $\iota_x\colon Y\longrightarrow Y\wedge X$ that sends $y$ to $y\wedge x$ induces
\[\iota_x\colon V(\mathcal{C};\mathcal{M},Y,\underline{i})\longrightarrow V(\mathcal{C};\mathcal{M},Y\wedge X,\underline{i})\]
and hence
\[(\iota_x)_\ast\colon \THR(\mathcal{C};\mathcal{M},Y)\longrightarrow \THR(\mathcal{C};\mathcal{M},Y\wedge X)\]
with $D (\iota_x)_\ast=(\iota_{\omega x})_\ast$. This induces a $G$-map
\[\THR(\mathcal{C};\mathcal{M},Y)\wedge X\longrightarrow \THR(\mathcal{C};\mathcal{M},Y\wedge X)\]
where the source is equipped with the diagonal action.

\begin{prop}\label{easydelooping} The family $\THR(\mathcal{C};\mathcal{M},S^{2m,m})$ with the structure maps
\[\xymatrix{\THR(\mathcal{C};\mathcal{M},S^{2m,m})\wedge S^{2n,n}\ar[r]\ar[dr]_{\sigma_{m,n}}&\THR(\mathcal{C};\mathcal{M},S^{2m,m}\wedge S^{2n,n})\ar@{=}[d]\\
&\THR(\mathcal{C};\mathcal{M},S^{2(m+n),m+n})}\]
defined above is a real spectrum, denoted $\uuline{\THR}(\mathcal{C};\mathcal{M})$.
If in addition $\mathcal{C}$ and $\mathcal{M}$ are $G$-connected, then $\uuline{\THR}(\mathcal{C};\mathcal{M})$ becomes a real $\Omega$-spectrum, and $\THR(\mathcal{C};\mathcal{M})$ an infinite real loop space.
\end{prop}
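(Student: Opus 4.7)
The proof splits naturally into two parts: first verifying the real spectrum axioms, then the $\Omega$-spectrum condition under $G$-connectivity.

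\textbf{(a) Spectrum axioms.} The structure map $\sigma_{m,n}$ is induced, at the level of the coefficient spaces, by the canonical $G$-equivariant identification
\[V(\mathcal{C};\mathcal{M}, Y, \underline{i}) \wedge X \;=\; V(\mathcal{C};\mathcal{M},\underline{i}) \wedge Y \wedge X \;=\; V(\mathcal{C};\mathcal{M}, Y \wedge X, \underline{i}).\]
Associativity of $\sigma$ then reduces to associativity of smash on $V(\mathcal{C};\mathcal{M},\underline{i}) \wedge Y \wedge X \wedge Z$, and the unit axiom to $Y \wedge S^0 = Y$. These identifications are natural in $\underline{i}$ and compatible with the faces, degeneracies, and real structure maps defined in \S\ref{defTHR}, so the axioms persist after $\map_\ast(S^{\underline{i}}, -)$, $\hocolim_{I[k]}$, and geometric realization. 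This part is routine.

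\textbf{(b) $\Omega$-spectrum condition.} Assuming $\mathcal{C}$ and $\mathcal{M}$ are $G$-connected, I would show levelwise in the simplicial direction $k$ that
\[\widetilde{\sigma}_{m,n}^k \colon \THR_k(\mathcal{C};\mathcal{M}, S^{2m,m}) \longrightarrow \Omega^{2n,n}\THR_k(\mathcal{C};\mathcal{M}, S^{2(m+n), m+n})\]
is a $G$-equivalence, and then pass to realization. For fixed $k$, the $G$-approximation lemma (\ref{eqapproxlemma}), applied to the twisted coefficients $\mathcal{M} \wedge S^{2m,m}$ and $\mathcal{M} \wedge S^{2(m+n),m+n}$ (whose $G$-connectivity is inherited from $\mathcal{M}$ with connectivity shifted upward by the smashed sphere), lets me replace both $\THR_k$'s by their values at some $\underline{i} \in I[k]^G$ arbitrarily large, in a range of $G$-connectivity that grows with $|\underline{i}|$. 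At this level $\underline{i}$, after unwinding adjoints, $\widetilde{\sigma}_{m,n}^k$ becomes $\Omega^{\underline{i}}$ of the equivariant suspension
\[V(\mathcal{C};\mathcal{M}, \underline{i}) \wedge S^{2m,m} \longrightarrow \Omega^{2n,n}\bigl(V(\mathcal{C};\mathcal{M},\underline{i}) \wedge S^{2(m+n),m+n}\bigr).\]
By the connectivity computation of \ref{fixedptdescr}, $V(\mathcal{C};\mathcal{M},\underline{i}) \wedge S^{2m,m}$ is $(|\underline{i}|+2m-1,\,\lceil i_0/2\rceil+\lceil(|\underline{i}|-i_0)/2\rceil+m-1)$-connected, both of which tend to infinity with $|\underline{i}|$. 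The $G$-suspension theorem (\ref{Gsuspthm}) then shows that this map is $G$-connected by an amount growing with $|\underline{i}|$, and taking $\Omega^{\underline{i}}$ preserves the gap. Choosing $\underline{i}$ large enough forces $\widetilde{\sigma}_{m,n}^k$ to be a $G$-equivalence.

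\textbf{Main obstacle.} The delicate technical step is exchanging the equivariant loop functor $\Omega^{2n,n}$ with the geometric realization in the simplicial direction $k$, in order to deduce a $G$-equivalence on the realizations $\THR(\mathcal{C};\mathcal{M}, S^{2m,m})$ from the levelwise $G$-equivalences $\widetilde{\sigma}_{m,n}^k$. I would handle this via a standard realization lemma (of Bousfield-Friedlander type, as in \cite{shima}), checking that the simplicial $G$-spaces involved are Reedy cofibrant and degreewise sufficiently $G$-connected, so that realization commutes with $\Omega^{2n,n}$ up to $G$-equivalence and preserves levelwise $G$-equivalences. The final statement that $\THR(\mathcal{C};\mathcal{M})$ is an infinite real loop space then follows from $\THR(\mathcal{C};\mathcal{M}, S^{0}) = \THR(\mathcal{C};\mathcal{M})$ being the $0$-th space of the resulting real $\Omega$-spectrum.
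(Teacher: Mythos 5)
Your proposal follows the same structure as the paper's proof: spectrum axioms are routine; the $\Omega$-condition is checked levelwise in the simplicial direction by combining the $G$-approximation lemma (applied to the dummy-variable version $\THR_k(\mathcal{C};\mathcal{M},X)$) with the $G$-suspension theorem and the connectivity estimate for $V(\mathcal{C};\mathcal{M},S^{2m,m},\underline{i})$ from \ref{fixedptdescr}; and the remaining exchange of $\Omega^{2n,n}$ with realization is handled by a goodness-and-connectivity argument, for which the paper invokes \cite[2.4]{wittvectors} and you propose an equivalent Bousfield--Friedlander/Reedy-cofibrancy criterion. This is essentially the paper's argument.
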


\begin{proof} It easy to check that $\uuline{\THR}(\mathcal{C};\mathcal{M})$ is indeed a real spectrum.

We check that the adjoints of the structure maps are weak $G$-equivalences in the $G$-connected case.
For every $\underline{i}\in I[2k+1]^G$, there is a commutative diagram of $G$-maps
\[\resizebox{1\hsize}{!}{\xymatrix{\THR_{2k\!+\!1}(\mathcal{C};\!\mathcal{M},\!S^{2m,m})\ar[dd]&\Omega^{\underline{i}}V(\mathcal{C};\!\mathcal{M},\!S^{2m,m},\!\underline{i})\ar[d]\ar[l]\\
 & \Omega^{\underline{i}}\Omega^{2n,n}(V(\mathcal{C};\!\mathcal{M},\!S^{2m,m},\!\underline{i})\!\wedge\!S^{2n,n})\ar[d]^{\cong}\\
\Omega^{2n,n}\THR_{2k\!+\!1}(\mathcal{C};\!\mathcal{M},\!S^{2(m\!+\!n),m\!+\!n}) & \Omega^{2n,n}\Omega^{\underline{i}}(V(\mathcal{C};\!\mathcal{M},\!S^{2m,m},\!\underline{i})\!\wedge\!S^{2n,n})\ar[l]
}}\]
where the horizontal maps are the canonical maps into the homotopy colimit, and the unlabeled vertical map on the right is induced by the adjoint of the identity of $V(\mathcal{C};\mathcal{M},S^{2m,m},\underline{i})\wedge S^{2n,n}$. The proof of the $G$-approximation lemma \ref{eqapproxlemma} works for $\THR(\mathcal{C};\mathcal{M},X)$, as lemma \ref{annoyingone} applies to $\mathcal{G}(\mathcal{C};\mathcal{M},X)$. Therefore one can choose $\underline{i}$ big enough so that the horizontal maps of the diagram are as connected as we like, both non-equivariantly and on the fixed points. By the suspension theorem, the vertical map is non-equivariantly
\[2(|\underline{i}|+2m-1)+1-|\underline{i}|=|\underline{i}|+2m-1\]
connected. By the $G$-suspension theorem, the connectivity of the fixed point map is
\[\resizebox{1\hsize}{!}{$\begin{array}{ll}\min\{|\underline{i}|\!-\!1,2(\lceil\frac{i_0}{2}\rceil\!+\!i_1\!+\!\dots\!+\!i_k\!+\!\lceil\frac{i_{k+1}}{2}\rceil)\!+\!1\}+m-(\lceil\frac{i_0}{2}\rceil\!+\!i_1\!+\!\dots\!+\!i_k\!+\!\lceil\frac{i_{k+1}}{2}\rceil)\\
=\lfloor\frac{i_0}{2}\rfloor+i_1+\dots+i_k+\lfloor\frac{i_{k+1}}{2}\rfloor+m-1\end{array}$}\]
which also tends to infinity with $|\underline{i}|$. This shows that
\[\THR_{2k+1}(\mathcal{C};\mathcal{M},S^{2m,m})\longrightarrow\Omega^{2n,n}\THR_{2k+1}(\mathcal{C};\mathcal{M},S^{2(m+n),m+n})\]
is $\nu$-connected for every $\nu$, and therefore a weak $G$-equivalence.
Its realization
\[\THR(\mathcal{C};\mathcal{M},S^{2m,m})\longrightarrow|\Omega^{2n,n}\THR_{\sbt}(\mathcal{C};\mathcal{M},S^{2(m+n),m+n})|\]
is therefore a weak $G$-equivalence.
Moreover the canonical map
\[\Omega^{2n,n}\THR(\mathcal{C};\mathcal{M},S^{2(m+n),m+n})\longrightarrow|\Omega^{2n,n}\THR_{\sbt}(\mathcal{C};\mathcal{M},S^{2(m+n),m+n})|\]
is a $G$-equivalence by \cite[2.4]{wittvectors}, since both $\THR(\mathcal{C};\mathcal{M},S^{2(m+n),m+n})$ and the subdivision $sd_e\THR(\mathcal{C};\mathcal{M},S^{2(m+n),m+n})^G$ are good simplicial spaces and levelwise connected when $n>0$.
\end{proof}


\subsection{$\THR$ of equivariant products}\label{secTHRprod}

Let $X$ be a finite set with an involution $\omega\colon X\longrightarrow X$, and $\{\mathcal{C}_x\}_{x\in X}$ a family of categories enriched in $Sp^{\Sigma}$ indexed over $X$. Suppose that there are enriched functors $D_x\colon\mathcal{C}_{x}^{op}\longrightarrow \mathcal{C}_{\omega x}$ such that $D_{\omega x}\circ D_{x}^{op}=\id_{\mathcal{C}_x}$.

Also suppose that $\mathcal{M}_x$ is a bimodule over $\mathcal{C}_x$, and that $J_x$ is a natural transformation
\[J_x\colon \mathcal{M}_x(c,d)\longrightarrow \mathcal{M}_{\omega x}(D_xd,D_xc)\]
with $J_{\omega_x}\circ J_x=\id_{\mathcal{M}_x}$, and such that the diagram
\[\xymatrix{\mathcal{C}_x(d,e)\wedge\mathcal{M}_x(c,d)\ar[dd]_{l}\ar[r]^-{D_x\wedge J_x} & \mathcal{C}_{\omega x}(D_xe,D_xd)\wedge\mathcal{M}_{\omega x}(D_xd,D_xc)\ar[d]^-{\gamma}
\\ & \mathcal{M}_{\omega x}(D_xd,D_xc)\wedge\mathcal{C}_{\omega x}(D_xe,D_xd)\ar[d]^-{r}\\
\mathcal{M}_x(c,e)\ar[r]^-{J_x} & \mathcal{M}_{\omega x}(D_xe,D_xc)
}\]
commutes.

Recall that the product and the wedge of symmetric spectra are defined levelwise.

\begin{defn}
Let $\prod_{X}\mathcal{C}_{x}$ be the category enriched in $Sp^{\Sigma}$ with objects $Ob\prod_{X}\mathcal{C}_{x}=\prod_{x\in X}Ob\mathcal{C}_x$ and morphisms spectra
\[(\prod_{X}\mathcal{C}_{x})(a,b)=\prod_{x\in X}\mathcal{C}_x(a_x,b_x)\]
Composition sends $(g\wedge f)\in \prod_{X}\mathcal{C}_{x}(b,c)_i\wedge\prod_{X}\mathcal{C}_{x}(a,b)_j$ to 
\[(f\circ g)_x=f_x\circ g_x\]
The identity of an object $a$ of $\prod_{X}\mathcal{C}_{x}$ is the product $\prod_{x\in X}\id_{a_x}\colon\mathbb{S}\longrightarrow \prod_{X}\mathcal{C}_{x}(a_x,a_x)$.

Define a duality $D\colon (\prod_{X}\mathcal{C}_{x})^{op}\longrightarrow \prod_{X}\mathcal{C}_{x}$ by sending an object $a$ to
\[(Da)_x=D_{\omega x}a_{\omega x}\in Ob\mathcal{C}_x\]
and a morphism $f\in \prod_{X}\mathcal{C}_{x}(a,b)_i$ to
\[(Df)_x\colon D_{\omega x}b_{\omega x}\stackrel{D_{\omega x}f_{\omega x}}{\longrightarrow} D_{\omega x}a_{\omega x}\in \mathcal{C}_x((Db)_{x},(Da)_{x})_i\]
Similarly define $\prod_{X}\mathcal{M}_{x}$ to be the bimodule over $\prod_{X}\mathcal{C}_{x}$ defined by
\[\prod_{X}\mathcal{M}_{x}(a,b)=\prod_{x\in X}\mathcal{M}_x(a_x,b_x)\]
with bimodule structure defined pointwise like for composition in $\prod_{X}\mathcal{C}_{x}$.
It has a duality $J\colon \prod_{X}\mathcal{M}_{x}(a,b)\longrightarrow\prod_{X}\mathcal{M}_{x}(Db,Da)$ defined in level $i$ by
\[J(m)_x=J_{\omega x}(m_{\omega x})\in\mathcal{M}_x((Db)_{x},(Da)_{x})_i\]
\end{defn}

For every element $x\in X$ there is a functor $\pr_x\colon \prod_{X}\mathcal{C}_{x}\longrightarrow\mathcal{C}_x$ that projects onto the $x$-component. These functors satisfy $D_x\pr_x=\pr_{\omega x}D$. Similarly, there are natural transformations \[\pr_x\colon \prod_{X}\mathcal{M}_{x}\Rightarrow\mathcal{M}_x\circ(\pr_{x}^{op}\wedge\pr_{x})\] satisfying the relation $J_x\pr_x=\pr_{\omega x}J$. These projections induce maps
\[\pr_x\colon \THR(\prod_{X}\mathcal{C}_{x},\prod_{X}\mathcal{M}_{x})\longrightarrow\THR(\mathcal{C}_x,\mathcal{M}_x)\]
for all $x$, fitting together into a map
\[\prod_{x\in X}\pr_x\colon \THR(\prod_{X}\mathcal{C}_{x},\prod_{X}\mathcal{M}_{x})\longrightarrow\prod_{X}\THR(\mathcal{C}_x,\mathcal{M}_x)\]
It is $G$-equivariant when the target has the following action. Let $Y_{\sbt}$ and $Z_{\sbt}$ be simplicial spaces, and $f_k\colon Y_k\longrightarrow Z_k$ a family of maps such that $d_lf_k=f_{k-1}d_{k-l}$ and $s_lf_k=f_{k+1}s_{k-l}$. Then there is an induced map $|f|\colon |Y_{\sbt}|\longrightarrow |Z|_{\sbt}$ that sends $[y;(t_0,\dots,t_k)]$ to $[f_k(y);(t_k,\dots,t_0)]$.
In our situation the maps $(D_x,J_x)$ satisfy this condition, and induce on realization
\[(D_x,J_x)\colon\THR(\mathcal{C}_{x},\mathcal{M}_{x})\longrightarrow \THR(\mathcal{C}_{\omega x},\mathcal{M}_{\omega x})\]
that clearly satisfy $(D_{\omega x},J_{\omega x})\circ (D_x,J_x)=\id$. Therefore they induce an action on $\prod_{X}\THR(\mathcal{C}_x,\mathcal{M}_x)$ as defined in \ref{wedgesintoproducts}.

\begin{prop}\label{THRproducts}
Suppose that each $\mathcal{C}_x$ and $\mathcal{M}_x$ is $0$-connected, and that for every fixed point $x\in X^G$, both $(\mathcal{C}_x,D_x)$ and $(\mathcal{M}_x,J_x)$ are $G$-connected.
Then the map
\[\prod_{x\in X}\pr_x\colon \THR(\prod_{X}\mathcal{C}_{x},\prod_{X}\mathcal{M}_{x})\longrightarrow\prod_{X}\THR(\mathcal{C}_x,\mathcal{M}_x)\]
is a $G$-equivalence.
\end{prop}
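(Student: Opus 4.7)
My plan is to reduce everything to a levelwise statement and then exploit the equivariant approximation lemma \ref{eqapproxlemma}. The map $\prod_x \pr_x$ is induced by simplicial maps $\prod_x \pr_x \colon \THR_\sbt(\prod_X \mathcal{C}_x;\prod_X \mathcal{M}_x) \to \prod_X \THR_\sbt(\mathcal{C}_x;\mathcal{M}_x)$ of real spaces (using the obvious pointwise product of real simplicial structures on the target), so it suffices to prove a levelwise weak $G$-equivalence, i.e.\ a weak equivalence in every simplicial degree of the target, and a weak equivalence on $G$-fixed points after Segal subdivision. I will therefore fix $k$ and work with $\THR_{2k+1}$.

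Using $\THR_{2k+1} = \hocolim_{I[2k+1]} \mathcal{G}_{2k+1}$ together with the description of fixed points of homotopy colimits in Proposition \ref{fixedhocolim}, and the $G$-approximation lemma \ref{eqapproxlemma} applied separately to each of the $G$-connected bimodules $(\prod\mathcal{C}_x,\prod\mathcal{M}_x)$ and $(\mathcal{C}_x,\mathcal{M}_x)$ (note the connectivity hypotheses are precisely what is needed to make these approximation lemmas available; for $x\notin X^G$ only the non-equivariant half is used, since then both $x$ and $\omega x$ are projected onto the \emph{free} part of the target's fixed points), it is enough to prove that for every $n$ there is some $\underline{i}_0\in I[2k+1]^G$ such that for every $\underline{i}\ge \underline{i}_0$ the natural comparison
\[
\Omega^{\underline{i}} V(\prod_X\mathcal{C}_x;\prod_X\mathcal{M}_x,\underline{i}) \longrightarrow \prod_{x\in X}\Omega^{\underline{i}} V(\mathcal{C}_x;\mathcal{M}_x,\underline{i})
\]
is $(n,n)$-connected, where on the right the $G$-action is the permutation action twisted by the $(D_x,J_x)$.

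To analyze this map I factor $V(\prod_X\mathcal{C}_x;\prod_X\mathcal{M}_x,\underline{i})\to \prod_{x} V(\mathcal{C}_x;\mathcal{M}_x,\underline{i})$ through the intermediate space
\[
\bigvee_{\underline{c}} \prod_{x\in X} W_x(\underline{c}_x),\qquad W_x(\underline{c}_x)=\mathcal{M}_x(c^x_0,c^x_k)_{i_0}\wedge \mathcal{C}_x(c^x_1,c^x_0)_{i_1}\wedge\dots\wedge\mathcal{C}_x(c^x_k,c^x_{k-1})_{i_k},
\]
where the wedge is over tuples $\underline{c}=(\underline{c}_x)_{x\in X}$ of objects in $\prod_x \mathcal{C}_x^{k+1}$. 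The first map is induced, on each wedge summand, by the natural distribution morphism from a smash of products to a product of smashes; its connectivity grows at least linearly with $\min(\underline{i})$ (both non-equivariantly by iterated Blakers--Massey on the $0$-connected morphism spaces, and on fixed points by the $G$-connectedness assumption combined with Proposition \ref{wedgesintoproducts}), exactly as in the estimates of Lemma \ref{smashstructmaps}. The second map is the inclusion of a wedge into a product of wedges; decomposing the indexing set $\prod_x (Ob\mathcal{C}_x)^{k+1}$ into its $G$-fixed part (tuples with $\underline{c}_{\omega x}=D_x\underline{c}_x$) and a set of representatives for the free orbits, Proposition \ref{wedgesintoproducts} gives a connectivity roughly twice the connectivity of the wedge summands, both in the total and in the fixed-point sense, and this again grows with $|\underline{i}|$ and $|\underline{i}^G|$.

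After looping by $\Omega^{\underline{i}}$ the corollary \ref{eqmappingspace} of the $G$-Whitehead theorem shifts both connectivities down by $\dim S^{\underline{i}}=|\underline{i}|$ and $\dim (S^{\underline{i}})^G=\lceil\tfrac{i_0}{2}\rceil+\lceil\tfrac{|\underline{i}|-i_0}{2}\rceil$ respectively; the differences remain unbounded as $\underline{i}$ grows, so the comparison is arbitrarily highly connected. Combining with the approximation lemma finishes the levelwise check, and hence yields the $G$-equivalence on realizations. The principal technical obstacle will be the bookkeeping for the fixed-point connectivity of the wedge-into-product inclusion: the contribution of each free orbit $\{x,\omega x\}$ needs only the $0$-connectedness of $\mathcal{C}_x\wedge\mathcal{C}_{\omega x}$, whereas the fixed orbits require the sharper $G$-connectivity estimates of Proposition \ref{fixedptdescr}, and one has to verify that these mixed estimates still tend to infinity uniformly in a cofinal family of $\underline{i}\in I[2k+1]^G$.
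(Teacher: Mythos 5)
Your reduction to showing that the levelwise comparison
\[
\Omega^{\underline{i}} V(\textstyle\prod_X\mathcal{C}_x;\textstyle\prod_X\mathcal{M}_x,\underline{i}) \longrightarrow \prod_{x\in X}\Omega^{\underline{i}} V(\mathcal{C}_x;\mathcal{M}_x,\underline{i})
\]
becomes $(n,n)$-connected for all large $\underline{i}$ breaks down already non-equivariantly, before one ever reaches the fixed-point bookkeeping you flag at the end. The unlooped map
$V(\prod_X\mathcal{C}_x;\prod_X\mathcal{M}_x,\underline{i})\to\prod_x V(\mathcal{C}_x;\mathcal{M}_x,\underline{i})$ is only about $|\underline{i}|$-connected, one more than the common connectivity $|\underline{i}|-1$ of source and target, so looping by $\Omega^{\underline{i}}$ brings the connectivity back down to roughly $0$ \emph{uniformly} in $\underline{i}$, and \ref{connectgoestoinfty} cannot be invoked. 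Concretely, take $X=\{1,2\}$ with trivial action and $\mathcal{C}_x=\mathcal{M}_x=H\mathbb{Z}$ with one object. Then $V(\prod_X\mathcal{C}_x;\prod_X\mathcal{M}_x,\underline{i})\simeq K(\mathbb{Z}^2,i_0)\wedge\dots\wedge K(\mathbb{Z}^2,i_k)$ has $\widetilde H_{|\underline{i}|}\cong\mathbb{Z}^{2^{k+1}}$ by K\"unneth (one class for each choice of a $\mathbb{Z}$-summand in each factor), whereas $\prod_x V(\mathcal{C}_x;\mathcal{M}_x,\underline{i})$ has $\widetilde H_{|\underline{i}|}\cong\mathbb{Z}^{2}$; the map annihilates all $2^{k+1}-2$ ``off-diagonal'' classes, so it is not injective on $\widetilde H_{|\underline{i}|}$ and hence exactly $|\underline{i}|$-connected.

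The slip is the appeal to \ref{wedgesintoproducts}. Your ``second map''
\[
\bigvee_{\underline{c}\in\prod_x J_x}\prod_{x} W_x(\underline{c}_x)\longrightarrow\prod_{x}\bigvee_{\underline{c}_x\in J_x}W_x(\underline{c}_x)
\]
is a wedge-of-products into a product-of-wedges, not the inclusion $\bigvee_\lambda X_\lambda\to\prod_\lambda X_\lambda$ of a single family into its own product. Proposition \ref{wedgesintoproducts} gives no doubling of connectivity here, and the computation above shows neither this map nor the ``first'' distribution map gains more than one degree over the connectivity of the summands. Comparing to Lemma \ref{smashstructmaps} is also misleading: that lemma gets the extra $\min(\underline{j})$ only because the spectra's structure maps are assumed $(2i+j-\epsilon)$-connected, a hypothesis the distribution map has no analogue of.

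The paper avoids this by interposing two auxiliary $Sp^\Sigma$-categories with duality: $\bigvee_X\mathcal{C}_x$ (same objects as the product, morphism spectra given by wedges over $X$) and $\coprod_X\mathcal{C}_x$ (objects $\coprod_x Ob\mathcal{C}_x$, only same-index morphisms). The decisive fact is $V(\coprod_X\mathcal{C}_x;\coprod_X\mathcal{M}_x,\underline{i})\cong\bigvee_{x\in X}V(\mathcal{C}_x;\mathcal{M}_x,\underline{i})$: this really is a wedge over the \emph{finite} set $X$ with fixed summands $V(\mathcal{C}_x;\mathcal{M}_x,\underline{i})$, so its inclusion into $\prod_x V(\mathcal{C}_x;\mathcal{M}_x,\underline{i})$ is genuinely $\approx(2|\underline{i}|-1,|\underline{i}|-1)$-connected by \ref{wedgesintoproducts}, and survives the $\Omega^{\underline{i}}$-looping with connectivity tending to infinity. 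The other two edges of the resulting square are handled by different tools entirely: $\THR(\bigvee_X\mathcal{C}_x;\bigvee_X\mathcal{M}_x)\to\THR(\prod_X\mathcal{C}_x;\prod_X\mathcal{M}_x)$ is a stable $G$-equivalence via \ref{stableGeq}, and $\THR(\bigvee_X\mathcal{C}_x;\bigvee_X\mathcal{M}_x)\to\THR(\coprod_X\mathcal{C}_x;\coprod_X\mathcal{M}_x)$ is a $G$-homotopy equivalence by an explicit simplicial $G$-homotopy built from a chosen $D$-fixed object. These two steps, which have no counterpart in your argument, are where the real content of the proof lives.
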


\begin{rem}
There are two special cases of particular interest. The first one is when the family is constant, that is $\mathcal{C}_x=\mathcal{C}$, is a $G$-connected category with duality $D_x=D$, and $\mathcal{M}_x=\mathcal{M}$ is a $G$-connected bimodule with duality $J_x=J$. We denote in this case $\prod_{X}\mathcal{C}_{x}=\mathcal{C}^X$ and $\prod_{X}\mathcal{M}_{x}=\mathcal{M}^X$. The proposition says that the map 
\[\prod_{x\in X}\pr_x\colon \THR(\mathcal{C}^X,\mathcal{M}^X)\longrightarrow\THR(\mathcal{C},\mathcal{M})^X\]
is a $G$-equivalence. In this case the action on the target space 
\[\THR(\mathcal{C},\mathcal{M})^X= \map(X,\THR(\mathcal{C},\mathcal{M}))\] is simply conjugation action.

The other case is when $X=\{0,1\}$ is the set with two elements with trivial $G$-action. In this case $\mathcal{C}_0$ and $\mathcal{C}_1$ are $G$-connected categories with dualities $D_0$ and $D_1$, and the duality on $\mathcal{C}_0\times \mathcal{C}_1$ is diagonal. Similarly for the bimodules. Then the proposition says that
\[\THR(\mathcal{C}_0\times \mathcal{C}_1,\mathcal{M}_0\times \mathcal{M}_1)\longrightarrow \THR(\mathcal{C}_0,\mathcal{M}_0)\times \THR(\mathcal{C}_1,\mathcal{M}_1)\]
is a $G$-equivalence, with the diagonal action on the target space.
\end{rem}

The proof of \ref{THRproducts} below is a straightforward generalization of the non-equivariant proof from \cite[1.6.15]{ringfctrs}: we build a diagram of $G$-maps
\begin{equation}\label{diagprod}\xymatrix{\THR(\prod_{X}\mathcal{C}_x,\prod_{X}\mathcal{M}_x)\ar[r]&\prod_{X}\THR(\mathcal{C}_x,\mathcal{M}_x)\\
\THR(\bigvee_X\mathcal{C}_x,\bigvee_X\mathcal{M}_x)\ar[u]\ar[r]&\THR(\coprod_X\mathcal{C}_x,\coprod_X\mathcal{M}_x)\ar[u]
}\end{equation}
and we show that the other three maps are $G$-equivalences.
The vertical maps essentially because the inclusion of wedges into products is highly connected. The lower map is going to admit a $G$-homotopy inverse. We proceed to define the missing objects and maps of the diagram.

\begin{defn}
We define $\bigvee_X\mathcal{C}_x$ to be the category (without identities) enriched in $Sp^{\Sigma}$ with objects $Ob\bigvee_X\mathcal{C}_x=Ob\prod_{X}\mathcal{C}_x$ and with morphisms spectra
\[(\bigvee_X\mathcal{C}_x)(a,b)=\bigvee_{x\in X}\mathcal{C}_x(a_x,b_x)\]
Composition sends $(g,y)\wedge (f,x)\in (\bigvee_X\mathcal{C}_x(b,c))_i\wedge (\bigvee_X\mathcal{C}_x(a,b))_j$ to
\[(f,x)\circ(g,y)=\left\{\begin{array}{ll}(f\circ g,x)& \mbox{ if }x=y\\
\ast &\mbox{ else }
\end{array}\right.\]

The duality $D\colon (\bigvee_X\mathcal{C}_x)^{op}\longrightarrow \bigvee_X\mathcal{C}_x$ sends an object $a$ to
\[(Da)_x=D_{\omega_x}a_{\omega x}\]
and a morphism $(f,x)\in \bigvee_X\mathcal{C}_x(a,b)_i$ to
\[(D_xf\in\mathcal{C}_{\omega x}(D_xb_x,D_xa_x)_i,\omega x)\in\bigvee_{x\in X}\mathcal{C}_x(D_{\omega x}b_{\omega x},D_{\omega x}a_{\omega x})_i\]
Similarly define $\bigvee_X\mathcal{M}_x$ to be the bimodule over $\bigvee_X\mathcal{C}_x$ given by
\[\bigvee_X\mathcal{M}_x(a,b)=\bigvee_{x\in X}\mathcal{M}_x(a_x,b_x)\]
with bimodule structure and duality defined by similar formulas as composition and duality on $\bigvee_X\mathcal{C}_x$.
\end{defn}

\begin{rem}
Even though $\bigvee_X\mathcal{C}_x$ does not have identities, one can define $\THR(\bigvee_X\mathcal{C}_x,\bigvee_X\mathcal{M}_x)$ as the realization of a real pre-simplicial space, that is a real simplicial space without degeneracies.
\end{rem}

The inclusion of sums into products defines an enriched functor (i.e. preserving compositions) $\bigvee_X\mathcal{C}_x\longrightarrow \prod_X\mathcal{C}_x$ given by the identity on objects, commuting with the dualities. Similarly, there is a map of bimodules $\bigvee_X\mathcal{M}_x\longrightarrow \prod_X\mathcal{M}_x$ commuting with the dualities. This induces an equivariant map
\[\THR(\bigvee_X\mathcal{C}_x,\bigvee_X\mathcal{M}_x)\longrightarrow \THR(\prod_X\mathcal{C}_x,\prod_X\mathcal{M}_x)\]
which sits as the left vertical map in diagram \eqref{diagprod}.

\begin{defn}
Define $\coprod_X\mathcal{C}_x$ to be the category enriched in $Sp^{\Sigma}$ with objects $Ob\coprod_X\mathcal{C}_x=\coprod_{x\in X}Ob\mathcal{C}_x$ and with morphisms spectra
\[(\coprod_X\mathcal{C}_x)((c,x),(d,y))=\left\{\begin{array}{lll}\mathcal{C}_x(c,d)&\mbox{ if }x=y\\
\ast &\mbox{else}
\end{array}\right.\]
with composition defined componentwise.
The duality $D\colon (\coprod_X\mathcal{C}_x)^{op}\longrightarrow\coprod_X\mathcal{C}_x$ sends an object $(c,x)$ to
\[D(c,x)=(D_xc,\omega x)\]
and a morphism $f\in (\coprod_X\mathcal{C}_x)((c,x),(d,x))_i=\mathcal{C}_x(c,d)_i$ to
\[D_xf\in\mathcal{C}_{\omega x}(D_xd,D_xc)_i=(\coprod_X\mathcal{C}_x)((D_xd,\omega x),(D_xc,\omega x))_i\]

One defines $\coprod_X\mathcal{M}_x$ as a bimodule with duality over $\coprod_X\mathcal{C}_x$ by similar formulas.
\end{defn}

There are no obvious interesting functors between $\coprod_X\mathcal{C}_x$ and $\bigvee_X\mathcal{C}_x$. The crucial point is that by definition of the morphisms of $\coprod_X\mathcal{C}_x$ there is a canonical homeomorphism
\[V(\coprod_X\mathcal{C}_x;\coprod_X\mathcal{M}_x,\underline{i})\cong \bigvee_{X}V(\mathcal{C}_x;\mathcal{M}_x,\underline{i})\]
for all $\underline{i}\in I[k]$ compatible with the face structure. When $\underline{i}\in I[k]^G$, the homeomorphism is equivariant. The action on the right hand side is induced by the maps $(D_x,J_x)\colon V(\mathcal{C}_x;\mathcal{M}_x,\underline{i})\longrightarrow V(\mathcal{C}_{\omega x};\mathcal{M}_{\omega x},\underline{i})$.

The inclusion
\[\bigvee_{X}V(\mathcal{C}_x;\mathcal{M}_x,\underline{i})\longrightarrow \prod_X V(\mathcal{C}_x;\mathcal{M}_x,\underline{i})\]
induces an equivariant map
\[\THR(\coprod_X\mathcal{C};\coprod_X\mathcal{M})\longrightarrow \prod_X\THR(\mathcal{C};\mathcal{M})\]
which is the right-hand vertical map of diagram \eqref{diagprod}.
It remains to define the bottom map of diagram \eqref{diagprod} \[\THR(\bigvee_X\mathcal{C}_x;\bigvee_X\mathcal{M}_x)\longrightarrow \THR(\coprod_X\mathcal{C}_x;\coprod_X\mathcal{M}_x)\]
It is induced by the maps
\[V(\bigvee_X\mathcal{C}_x;\bigvee_X\mathcal{M}_x,\underline{i})\longrightarrow \bigvee_{ X}V(\mathcal{C}_x;\mathcal{M}_x,\underline{i})\]
that send an element $((a^0,\dots,a^k),(m,x_0)\wedge (f^1,x_1)\wedge\dots\wedge (f^k,x_k))$, with $a^l\in \prod_XOb\mathcal{C}_x$, $m\in \mathcal{C}_{x_0}(a^{0}_{x_0},a^{k}_{x_0})_{i_0}$ and $f^l\in \mathcal{C}_{x_l}(a^{l}_{x_l},a^{l-1}_{x_{l}})_{i_l}$ to
\[((a^{0}_{x_0},\dots,a^{k}_{x_0}),(m,x_0)\wedge (f^1,x_0)\wedge\dots\wedge (f^k,x_0))\]
in the $x_0$-component of $\bigvee_{X}V(\mathcal{C}_x;\mathcal{M}_x,\underline{i})$ if $x_0=x_1=\dots=x_k$, and to the basepoint otherwise.

The proof of \ref{THRproducts} is broken up into the three lemmas below, where we prove that the vertical maps and the bottom map in diagram \eqref{diagprod} are $G$-equivalences.

\begin{lemma}\label{powerandsums}
The map $\THR(\bigvee_X\mathcal{C}_x,\bigvee_X\mathcal{M}_x)\longrightarrow \THR(\prod_X\mathcal{C}_x,\prod_X\mathcal{M}_x)$ is a $G$-equivalence.
\end{lemma}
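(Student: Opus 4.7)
The plan is to follow the pattern of the proof of Proposition \ref{stableGeq}: verify that the inclusion of wedges into products is sufficiently highly connected at each spectrum level, so that the resulting connectivity function on $V(-;-,\underline{i})$ tends to infinity on $I[k]$ and on $I[k]^G$. The conclusion then follows from Proposition \ref{connectgoestoinfty}. Note that although $\bigvee_X\mathcal{C}_x$ has no identities, $\THR$ of it is well-defined as a realization of a pre-simplicial space, and the argument of \ref{stableGeq} only uses connectivity estimates on the functors $\mathcal{G}_k$, not the existence of degeneracies.

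First I would record the non-equivariant estimates. The map is given, at a pair of objects $(a,b)$ and spectrum level $i$, by the inclusion
\[\bigvee_{x\in X}\mathcal{C}_x(a_x,b_x)_i\longrightarrow \prod_{x\in X}\mathcal{C}_x(a_x,b_x)_i,\]
and similarly for $\mathcal{M}$. Since each $\mathcal{C}_x$ and $\mathcal{M}_x$ is $0$-connected, the wedge summands are $(i-1)$-connected, so Proposition \ref{wedgesintoproducts} gives that this map is $(2i-1)$-connected. Smashing these maps together as in the definition of $V(-;-,\underline{i})$ preserves the property that the connectivity grows fast enough with $|\underline{i}|$: arguing exactly as in \ref{stableGeq} with $\delta(i)=2i-1$, the induced map $\Omega^{\underline{i}}V(\bigvee;\bigvee,\underline{i})\to\Omega^{\underline{i}}V(\prod;\prod,\underline{i})$ is $(\min_l\{\delta(i_l)-i_l\}-1)=(\min_l\{i_l\}-2)$-connected, which tends to infinity on $I[k]$.

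Second I would analyze the fixed points using the identifications recorded just before Proposition \ref{wedgesintoproducts}:
\[\Bigl(\bigvee_{x\in X}\mathcal{C}_x(a_x,D_{\omega x}a_{\omega x})_i\Bigr)^G=\bigvee_{x\in X^G}\mathcal{C}_x(a_x,D_x a_x)_i^G,\]
while the fixed points of the product split as $\prod_{x\in X^G}\mathcal{C}_x(a_x,D_x a_x)_i^G\times \prod_{x\in X_f}\mathcal{C}_x(a_x,D_{\omega x}a_{\omega x})_i$. The $G$-connectivity hypothesis makes each fixed-point factor $(\lceil i/2\rceil-1)$-connected and each free factor $(i-1)$-connected, so a second application of \ref{wedgesintoproducts} shows that the comparison map on fixed points is $\min\{i-1,\,2\lceil i/2\rceil-1\}=(i-1)$-connected. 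Taking $\delta^G(i):=i-1$, one has $\delta^G(i)-\lceil i/2\rceil=\lfloor i/2\rfloor-1\to\infty$, and, running through the $G$-fixed Whitehead computation as in the proof of \ref{stableGeq} with $\underline{i}\in I[2k+1]^G$, the induced map on $\mathcal{G}_{2k+1}(\underline{i})^G$ is at least $(\min\{\delta^G(i_0),\delta^G(i_{k+1}),\min_{1\leq l\leq k}\delta(i_l)\}-\lceil i_0/2\rceil-\dots-\lceil i_{k+1}/2\rceil)\cdot($bounded correction$)$, which again tends to infinity on $I[2k+1]^G$.

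With both connectivity functions tending to infinity, Proposition \ref{connectgoestoinfty} applied to $I[k]$ and to $I[k]^G$ (using the description of $\THR_k(-;-)^G$ as a homotopy colimit over $I[k]^G$ from Proposition \ref{fixedhocolim}) gives a weak equivalence on $\THR_k$ and on $\THR_k^G$ for each $k$, which realizes to the desired weak $G$-equivalence. The only subtlety is ensuring the argument of \ref{stableGeq} carries through without identities on $\bigvee_X\mathcal{C}_x$; this is the main point to check, but since \ref{connectgoestoinfty} is a pure statement about natural transformations of diagrams on $I[k]$ and $I[k]^G$, no identities are used and the argument goes through verbatim.
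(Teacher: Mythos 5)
Your proof is correct and follows essentially the same route as the paper: both compute the connectivity estimates $\delta(i)=2i-1$ and $\delta^G(i)=i-1$ via Proposition \ref{wedgesintoproducts}, and conclude by feeding these into the connectivity machinery of Proposition \ref{connectgoestoinfty}. The paper does this more compactly by recording these estimates as the hypotheses of a stable $G$-equivalence and citing Proposition \ref{stableGeq} once, whereas you unroll that proof inline (and rightly flag the identity-free nature of $\bigvee_X\mathcal{C}_x$, a point the paper's proof leaves to the preceding remark).
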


\begin{proof}
We prove that the map $(\bigvee_X\mathcal{C}_x,\bigvee_X\mathcal{M}_x)\longrightarrow (\prod_X\mathcal{C}_x,\prod_X\mathcal{M}_x)$ is a $G$-stable equivalence. The result then follows from \ref{stableGeq}, since $\bigvee_X\mathcal{C}_x$ and $\bigvee_X\mathcal{M}_x$ are clearly $G$-connected.

The functor $\bigvee_X\mathcal{C}_x\longrightarrow \prod_X\mathcal{C}_x$ is the identity on objects, and therefore in particular a bijection. For two objects $a,b$, the map
\[\bigvee_X\mathcal{C}_x(a,b)_i\longrightarrow\prod_X\mathcal{C}_x(a,b)_i\]
is $(2i-1)$-connected as inclusion of wedge sums into products of $(i-1)$-connected spaces. The function $\delta(i)=2i-1$ satisfies the condition defining a stable $G$-equivalence. 
The map 
\[(\bigvee_X\mathcal{C}_x(Da,a)_{i})^G\longrightarrow (\prod_X\mathcal{C}_x(Da,a)_{i})^G\]
is by \ref{wedgesintoproducts}
\[\min\{i-1,2(\lceil\frac{i}{2}\rceil-1)+1\}=i-1\]
connected. This gives a function $\delta^G(i)$ satisfying the condition for a stable $G$-equivalence on the fixed points.
\end{proof}

\begin{lemma}
The map $\THR(\coprod_X\mathcal{C}_x,\coprod_X\mathcal{M}_x)\longrightarrow \prod_X\THR(\mathcal{C}_x,\mathcal{M}_x)$ is a $G$-equivalence.
\end{lemma}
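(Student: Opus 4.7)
The plan is to prove, for every $n \geq 0$ and every simplicial degree $2k+1$, that the induced map
\[\THR_{2k+1}(\coprod_X\mathcal{C}_x; \coprod_X\mathcal{M}_x) \to \prod_X \THR_{2k+1}(\mathcal{C}_x; \mathcal{M}_x)\]
is $(n,n)$-connected as a map of $G$-spaces; passing to realizations of subdivisions then yields the claimed weak $G$-equivalence. The strategy is to apply the $G$-approximation lemma \ref{eqapproxlemma} to trade the two homotopy colimits for a single mapping space, where the wedge-vs-product comparison is controlled by \ref{wedgesandmaps} and \ref{wedgesintoproducts}.

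First, the canonical homeomorphism $V(\coprod_X\mathcal{C}_x;\coprod_X\mathcal{M}_x,\underline{i})\cong \bigvee_{X}V(\mathcal{C}_x;\mathcal{M}_x,\underline{i})$ recorded just before the statement identifies the source with $\hocolim_{I[2k+1]}\map_*(S^{\underline{i}}, \bigvee_X V(\mathcal{C}_x;\mathcal{M}_x,\underline{i}))$, and under this identification our map is induced by the inclusion of wedges into products (after using the natural homeomorphism $\map_*(S^{\underline{i}},\prod_X V)\cong\prod_X\map_*(S^{\underline{i}},V)$ and the canonical projection of homotopy colimits out of products). Moreover, $(\coprod_X\mathcal{C}_x,\coprod_X\mathcal{M}_x)$ is itself $G$-connected: a fixed object forces its index $x$ to lie in $X^G$, so its hom-spectrum reduces to $\mathcal{C}_x(c,D_xc)$ (respectively $\mathcal{M}_x(c,D_xc)$), which is $G$-connected by hypothesis. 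Hence the $G$-approximation lemma applies to both the source and every factor of the target.

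Given $n$, apply \ref{eqapproxlemma} to each of the finitely many pairs $(\mathcal{C}_x,\mathcal{M}_x)$ and to $(\coprod_X\mathcal{C}_x,\coprod_X\mathcal{M}_x)$, and take the componentwise maximum of the resulting objects in $I[2k+1]^G$ to obtain a single $\underline{i}$ such that, for all $\underline{i}'\geq\underline{i}$ and all $x$, both
\[\mathcal{G}_{2k+1}(\mathcal{C}_x;\mathcal{M}_x)(\underline{i}') \to \THR_{2k+1}(\mathcal{C}_x;\mathcal{M}_x), \qquad \mathcal{G}_{2k+1}(\coprod_X\mathcal{C}_x;\coprod_X\mathcal{M}_x)(\underline{i}') \to \THR_{2k+1}(\coprod_X\mathcal{C}_x;\coprod_X\mathcal{M}_x)\]
are $(n,n)$-connected. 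Since $X$ is finite, finite products preserve $(n,n)$-connected maps, so $\prod_X\mathcal{G}_{2k+1}(\mathcal{C}_x;\mathcal{M}_x)(\underline{i})\to\prod_X\THR_{2k+1}(\mathcal{C}_x;\mathcal{M}_x)$ is also $(n,n)$-connected. Consider the commutative square
\[\xymatrix{\map_*(S^{\underline{i}}, \bigvee_X V(\mathcal{C}_x;\mathcal{M}_x,\underline{i})) \ar[d] \ar[r] & \prod_X \map_*(S^{\underline{i}}, V(\mathcal{C}_x;\mathcal{M}_x,\underline{i})) \ar[d] \\
\THR_{2k+1}(\coprod_X\mathcal{C}_x; \coprod_X\mathcal{M}_x) \ar[r] & \prod_X \THR_{2k+1}(\mathcal{C}_x; \mathcal{M}_x)
}\]
whose vertical maps are $(n,n)$-connected. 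Corollary \ref{wedgesandmaps}, together with the connectivity estimates for $V(\mathcal{C}_x;\mathcal{M}_x,\underline{i})$ and $V(\mathcal{C}_x;\mathcal{M}_x,\underline{i})^G$ from \ref{fixedptdescr}, shows that the top row has connectivity growing with $\min(\underline{i})$ both non-equivariantly and on the fixed points. Enlarging $\underline{i}$ once more, we may assume the top row is $(n,n)$-connected, whence the bottom row is as well. As $n$ was arbitrary, this completes the argument.

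The main technical obstacle is verifying that the bounds produced by \ref{wedgesandmaps} genuinely tend to infinity with $|\underline{i}|$ after subtracting $\dim_G S^{\underline{i}}$, in particular on the fixed points, which is the reason for restricting to $\underline{i}\in I[2k+1]^G$ and for using the full $G$-connectedness hypothesis for $x\in X^G$ rather than only the non-equivariant version. This analysis is formally identical to the connectivity accounting carried out in lemma \ref{annoyingone}, so no new difficulty arises beyond what is already handled in the proof of the $G$-approximation lemma.
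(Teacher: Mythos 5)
Your argument coincides with the paper's: identify $V(\coprod_X\mathcal{C}_x;\coprod_X\mathcal{M}_x,\underline{i})$ with $\bigvee_X V(\mathcal{C}_x;\mathcal{M}_x,\underline{i})$, use the $G$-approximation lemma to make the vertical comparison maps $(n,n)$-connected, and deduce that $\Omega^{\underline{i}}$ of the wedge-into-product inclusion has connectivity growing with $\underline{i}$ via \ref{fixedptdescr}. One small imprecision: the top arrow in your square is $\Omega^{\underline{i}}$ applied to the inclusion $\bigvee_X V\hookrightarrow\prod_X V$, so the relevant estimate comes from Proposition \ref{wedgesintoproducts} together with Corollary \ref{eqmappingspace}, rather than Corollary \ref{wedgesandmaps} (which concerns the different map $\bigvee_\lambda\map_*(K,X_\lambda)\to\map_*(K,\bigvee_\lambda X_\lambda)$); the numerical bounds, and hence the conclusion, are unaffected.
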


\begin{proof}
By the $G$-approximation lemma, given any $n$ there is an $\underline{i}\in I[2k+1]^G$ such that the two vertical maps in the following commutative diagram are $(n,n)$-connected
\[\xymatrix{
\Omega^{\underline{i}}\bigvee_XV(\mathcal{C}_x;\underline{i})\ar[d]\ar[r]& \Omega^{\underline{i}}\prod_XV(\mathcal{C}_x;\underline{i})\ar[r]^{\cong}&\prod_X\Omega^{\underline{i}}V(\mathcal{C}_x;\underline{i}) \ar[d]
\\
\THR_{2k+1}(\coprod_X\mathcal{C}_x)\ar[rr] & &\prod_X\THR_{2k+1}(\mathcal{C}_x)
}\]
Thus it is enough to show that we can choose $\underline{i}$ so that the connectivity of
\[\Omega^{\underline{i}}\bigvee_XV(\mathcal{C}_x;\underline{i})\longrightarrow \Omega^{\underline{i}}\prod_XV(\mathcal{C}_x;\underline{i})\]
is as big as we like, both non-equivariantly and on the fixed points.
By \ref{wedgesintoproducts}, the map
\[\bigvee_XV(\mathcal{C}_x;\underline{i})\longrightarrow \prod_XV(\mathcal{C}_x;\underline{i})\]
is at least $(2|\underline{i}|-1,|\underline{i}|-1)$, since for $\underline{i}\in I[2k+1]^G$ the space $V(\mathcal{C}_x;\underline{i})$ is $(|\underline{i}|-1,\lceil\frac{i_0}{2}\rceil+\lceil\frac{|\underline{i}|-i_0}{2}\rceil-1)$-connected by \ref{fixedptdescr}.
Therefore when looped down by $\Omega^{\underline{i}}$, its connectivity becomes
\[(|\underline{i}|-1,\lfloor\frac{i_0}{2}\rfloor+\lfloor\frac{|\underline{i}|-i_0}{2}\rfloor-1)\]
which can be made arbitrarily big with the choice of $\underline{i}$.
\end{proof}

\begin{lemma}\label{messyhomotopy} The map
\[\THR(\bigvee_X\mathcal{C}_x,\bigvee_X\mathcal{M}_x)\longrightarrow\THR(\coprod_X\mathcal{C}_x,\coprod_X\mathcal{M}_x)\]
is a $G$-equivalence.
\end{lemma}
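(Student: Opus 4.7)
The plan is to prove this by exhibiting an explicit $G$-equivariant simplicial homotopy equivalence, in the spirit of \ref{THRpreservesGequiv} and \ref{scriptdandthr}. The central structural observation is the following: in the decomposition
\[V(\bigvee_X\mathcal{C}_x;\bigvee_X\mathcal{M}_x;\underline{i}) \;=\; \bigvee_{\underline{a},\,\underline{x}\in X^{k+1}}\mathcal{M}_{x_0}(a^0_{x_0},a^k_{x_0})_{i_0}\wedge\bigwedge_{l=1}^k\mathcal{C}_{x_l}(a^l_{x_l},a^{l-1}_{x_l})_{i_l},\]
the face operator $d_l$ of the simplicial space $\THR_\sbt(\bigvee_X\mathcal{C}_x,\bigvee_X\mathcal{M}_x)$ annihilates the $\underline{x}$-summand whenever $x_l\neq x_{l+1}$ (using the cyclic convention $x_{k+1}:=x_0$ for $d_k$), since composition in $\bigvee_X\mathcal{C}_x$ and the bimodule action of $\bigvee_X\mathcal{M}_x$ both vanish on mismatched labels. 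The map under consideration is precisely the projection onto the constant-label summands, composed with the forgetful collapse of the ``irrelevant'' coordinates $a^l_y$ for $y\neq x$.

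First, I would define a $G$-equivariant section $s\colon V(\coprod_X\mathcal{C}_x,\coprod_X\mathcal{M}_x;\underline{i})\to V(\bigvee_X\mathcal{C}_x,\bigvee_X\mathcal{M}_x;\underline{i})$ by choosing distinguished objects $0_y\in Ob\mathcal{C}_y$ for $y\in X$ with $D_y(0_y)=0_{\omega y}$ (such equivariant choices exist after possibly replacing each $\mathcal{C}_y$ by an equivalent $Sp^\Sigma$-category with a formal zero object, which does not change $\THR$ up to $G$-equivalence by \ref{inversenonstrict}). The section sends a $(b^0,\ldots,b^k)$-summand in the $x$-component to the $(\underline{a},\underline{x})$-summand with $\underline{x}=(x,\ldots,x)$, $a^l_x=b^l$, and $a^l_y=0_y$ for $y\neq x$. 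It is compatible with the simplicial structure and the duality, and is a right inverse for the map of interest.

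Second, I would construct a $G$-equivariant simplicial homotopy on $sd_e\THR_\sbt(\bigvee_X\mathcal{C}_x,\bigvee_X\mathcal{M}_x)\times\Delta[1]$ from the identity to the composite ``project-then-section'', proceeding in two stages as in \ref{scriptdandthr}. The first stage contracts each non-constant-$\underline{x}$ summand to the basepoint via its first annihilating face operator, resulting in a simplicial homotopy that kills the non-constant-label contribution. The second stage normalizes the irrelevant coordinates $a^l_y\mapsto 0_y$ on the constant-label stratum via a deformation built from the chosen $0_y$'s and the enriched bimodule structure; this is equivariant because $0_y\mapsto 0_{\omega y}$. The overall homotopy assembles the two stages on disjoint ``coordinates'' of the wedge decomposition.

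The main obstacle is verifying that the combined homotopy respects the cyclic faces $d_0$ and $d_k$ (which involve the bimodule action rather than plain composition) and remains well-defined after subdivision, particularly at the $G$-fixed level where $\underline{x}\in(X^{k+1})^G$ forces $x_0$ to determine most of $\underline{x}$ via the involution $\omega$. I expect to resolve this by writing down explicit formulas in the style of the homotopy $H$ constructed in \ref{THRpreservesGequiv}, and verifying the simplicial identities by direct computation component-by-component on the wedge decomposition. Together with the section $s$, this yields the required $G$-homotopy equivalence on realizations.
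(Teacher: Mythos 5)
Your proposal is essentially the approach taken in the paper: choose a $D$-fixed object $o=\{o_x\}$ of $\bigvee_X\mathcal{C}_x$, define a section $s$ that $\gamma$-ifies the missing coordinates to $o_y$, and construct an explicit simplicial $G$-homotopy on $sd_e\THR_\sbt(\bigvee_X\mathcal{C}_x;\bigvee_X\mathcal{M}_x)$ from the identity to $s\circ(\mathrm{projection})$. The structural observation you start from (the map is projection onto constant-label summands) is exactly what the paper uses.

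A few remarks on where the executions diverge. The paper builds a single homotopy $H_\sigma$ for $\sigma=(0^b1^{k+1-b})$ that simultaneously imposes the symmetric label constraint $x_0=\dots=x_b=x_{2k+2-b}=\dots=x_{2k+1}$ and $\gamma$-ifies exactly the objects $a^0,\dots,a^b,a^{2k+2-b},\dots,a^{2k+1}$ covered by that constraint, leaving the middle block of objects untouched. Your two-stage concatenation (first kill mismatched labels, then normalize objects) can be made to work, but the coupling in the one-step version is what makes compatibility with the subdivided faces $\overline{d}_l=d_ld_{2k+1-l}$ fall out immediately, since imposing a new equality $x_l=x_{l+1}$ and relabeling $a^l$ to $\gamma(a^l_{x_0})$ are exactly the two effects of composing across position $l$; it is worth writing it this way. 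Second, you do not need to adjoin formal zero objects: the reference \ref{inversenonstrict} is about $Ab$-enriched categories, not $Sp^\Sigma$-enriched ones, so it does not directly justify that move, and in any case the paper avoids it by observing that if no $D$-fixed object of $\bigvee_X\mathcal{C}_x$ exists then the $G$-fixed points of both source and target are contractible, so only the non-equivariant statement (already in \cite{ringfctrs}) needs proving. Finally, the phrase ``a deformation built from the chosen $0_y$'s and the enriched bimodule structure'' for stage two is a small red herring: the object-normalization is a purely combinatorial relabeling of wedge components and does not invoke the bimodule action at all.
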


\begin{proof}
The proof will produce a $G$-homotopy inverse of the stated map. To this end we work on the subdivision of $\THR_{\sbt}$ which, we remember, in degree $k$ is $\THR_{2k+1}$. We choose an object $o=\{o_x\}$ in $\bigvee_X\mathcal{C}_x$ which is fixed by $D$ (if no such object exists the fixed set of both source and target in \ref{messyhomotopy} are contractible, and we refer to the non-equivariant proof of \cite[1.6.15]{ringfctrs}). For $\underline{i}\in I[2k+1]$, define a map
\[\bigvee_XV(\mathcal{C}_x;\mathcal{M}_x,\underline{i})\longrightarrow V(\bigvee_X\mathcal{C}_x;\bigvee_X\mathcal{M}_x,\underline{i})\]
by sending $((c_0,\dots,c_{2k+1}),m\wedge f^1\wedge\dots\wedge f^{2k+1})\in V(\mathcal{C}_x;\mathcal{M}_x,\underline{i})$ to
\[((\gamma(c_0),\dots,\gamma(c_{2k+1})),(m,x)\wedge (f^1,x)\wedge \dots\wedge (f^{2k+1},x))\]
where $\gamma(c)$ is the object of $\bigvee_X\mathcal{C}_x$ with $y$-component
\[\gamma(c)_y=\left\{\begin{array}{ll}c&\mbox{ if }y=x\\
o_y&\mbox{ if }y\neq x
\end{array}\right.\]
Since $o$ is fixed by the action, this map is equivariant when $\underline{i}\in I[2k+1]^G$. It is a section for the map in the statement, compatible with the structure maps. The other composite $V(\bigvee_X\mathcal{C}_x;\bigvee_X\mathcal{M}_x,\underline{i})\longrightarrow V(\bigvee_X\mathcal{C}_x;\bigvee_X\mathcal{M}_x,\underline{i})$ sends
\[((a^0,\dots,a^{2k+1}),(m,x_0)\wedge (f^1,x_1)\wedge\dots\wedge (f^{2k+1},x_{2k+1}))\]
to
\[\resizebox{1\hsize}{!}{$\left\{\begin{array}{ll}
(\gamma(a_{x_0}^0),\dots,\gamma(a_{x_{0}}^{2k\!+\!1}),(m,x_0)\wedge (f^1,x_0)\wedge\dots\wedge (f^{2k\!+\!1},x_0))&\mbox{ if }x_0\!=\!\dots\!=\!x_{2k\!+\!1}\\
\ast&\mbox{ else}
\end{array}\right.$}\]
Here the notation $(f^l,x_l)$ indicates that $f^l$ is a morphism of $\mathcal{C}_{x_l}$.
Define a map \[H\colon V(\bigvee_X\mathcal{C}_x;\bigvee_X\mathcal{M}_x,\underline{i})\times \Delta^1[k]\longrightarrow V(\bigvee_X\mathcal{C}_x;\bigvee_X\mathcal{M}_x,\underline{i})\] as follows. For $\sigma=(0^b1^{k+1-b})\in \Delta[1]_k$ with $b\neq 0$ the map $H_\sigma$ sends an element $((a_0,\dots,a_{2k+1}),(m,x_0)\wedge (f^1,x_1)\wedge\dots\wedge (f^{2k+1},x_{2k+1}))$ to
\[\resizebox{1\hsize}{!}{$\left\{\begin{array}{ll}
(\underline{c}^\sigma\!,\!(m,\!x_0)\!\wedge\!(f^1,\!x_1)\!\wedge\!\dots\!\wedge\!(f^{2k\!+\!1},\!x_{2k\!+\!1}))&
\mbox{if }x_0\!=\!\dots\!=\!x_{b}\!=\!x_{2k\!+\!2\!-\!b}\!=\!\dots\!=\!x_{2k\!+\!1}\\
\ast& \mbox{otherwise}
\end{array}\right.$}\]
where $\underline{c}^\sigma\in (Ob\prod_{X}C_x)^{2k+1}$ is the object
\[\underline{c}^\sigma=(\gamma(a_{x_0}^0),\dots,\gamma(a^{b}_{x_0}),a^{b+1},\dots,a^{2k+1-b},\gamma(a^{2k+2-b}_{x_0}),\dots,\gamma(a^{2k+1}_{x_0}))\]
For $b=0$ define $H_\sigma=\id$.
The map $H$ preserve the structure maps, and each $H_\sigma$ is equivariant if $\underline{i}\in I[2k+1]^G$.
When $b=k+1$ the map $H_\sigma$ checks all the indicies $x_0=\dots=x_{k+1}=x_{k+1}=\dots=x_{2k+1}$ and therefore $H(-,1)$ is the composite above. This induces a simplicial $G$-homotopy between the composite and the identity.
\end{proof}


\subsection{$\THR$ of Wall antistructures and Morita invariance}\label{moritasec}

We give a simpler definition of $\THR$ for a Wall antistructure, and we show that this new construction is $G$-equivalent to the definition of §\ref{THRrings}.

\begin{defn}[\cite{wall}] A \textbf{Wall antistructure} is a triple $(A,w,\epsilon)$ of a ring $A$, a unit $\epsilon\in A^{\!\times}$ and a ring map $w\colon A^{op}\longrightarrow A$ such that $w(a)=\epsilon a\epsilon^{-1}$ and $w(\epsilon)=\epsilon^{-1}$.
\end{defn}
A Wall antistructure $(A,w,\epsilon)$ is the same data as an $Ab$-enriched category with one object and non-strict additive duality. Indeed, define a category with non-strict duality $(A,w,\epsilon)$, where $A$ is the category with one object, $w\colon A^{op}\longrightarrow A$ is the duality functor, and the natural isomorphism $\id\Rightarrow w^2$ is defined at the unique object by the isomorphism $\epsilon$ of $A$.

Let $M$ be an $A$-bimodule. Define a bimodule $M\colon A^{op}\otimes A\longrightarrow Ab$ over the category $A$ as the functor that sends the unique object to $M$, and a morphism $a\otimes b$ to the map
\[M(a\otimes b)\colon M\longrightarrow M\]
that sends $m$ to $b\cdot m\cdot a$. A non-strict duality $h\colon M\Rightarrow M\circ w_\gamma$ (cf. \ref{defnonstrictdualmod}) is precisely the data of an additive map $h\colon M\longrightarrow M$ satisfying the conditions
\[\begin{array}{lll}
h(a\cdot m)=h(m)\cdot w(a)\\
h(m\cdot a)=w(a)\cdot h(m)\\
h^{2}(m)=\epsilon\cdot m\cdot\epsilon^{-1}
\end{array}\]
We recall that $\gamma$ denotes the twist isomorphism and \[w_\gamma=(w^{op}\otimes w)\circ\gamma\colon A^{op}\otimes A\longrightarrow A^{op}\otimes A\]
We call a map $h$ that satisfies the properties above a Wall-twisting.
As seen in §\ref{THRrings} this induces a bimodule with strict duality $(\mathcal{D}M,\mathcal{D}h)$ on the category with strict duality $\mathcal{D}A$ (cf. §\ref{secdefscriptd}). We describe this explicitly. The objects of $\mathcal{D}A$ are units $u\in A^{\!\times}$, and the duality sends $u$ to $w(u)\cdot\epsilon$. The bimodule $\mathcal{D}M$ sends a pair of units $(u,v)\in\mathcal{D}A^{op}\otimes\mathcal{D}A$ to the abelian group $M$. The strict duality sends $m\in M=\mathcal{D}M(u,v)$ to
\[\mathcal{D}h(m)=u^{-1}\cdot h(m)\cdot v\]
This is isomorphic to the bimodule $M^h\colon A^{op}\otimes A\longrightarrow Ab$ that sends $(u,v)$ to the abelian group of pairs \[M^h(u,v)=\{(m,m')\in M\oplus M|u\cdot m'=h(m)\cdot v\}\]
with duality that sends $(m,m')\in M^h(u,v)$ to $(m',m)\in M^h(w(v)\epsilon,w(u)\epsilon)$ (cf. \ref{defTHRforanti}). One can define
\[\THR((A,w,\epsilon);(M,h)):=\THR(H\mathcal{D}A,HM^h)\]
We compare this construction with $\THR$ of the antistructure associated to $(A,w,\epsilon)$ as defined in §\ref{THRrings}.

Recall from \ref{wallanti} that a Wall antistructure $(A,w,\epsilon)$ also induces an antistructure $(A,A,\alpha)$ in the sense of §\ref{ringduality} with $A$ as underlying ring, $A\otimes A$-module structure on $A$ defined by
\[l\cdot a\otimes b=w(b)\cdot l\cdot a\]
and involution $\alpha\colon A\longrightarrow A$ defined by $\alpha(l)=w(l)\cdot\epsilon$. This defines in its turn a category with non-strict duality $(\mathcal{P}_A,D_A,\eta^{w\epsilon})$ on the category of finitely generated projective modules $\mathcal{P}_A$. We recall that the duality functor $D_A$ is
\[D_AP=\hom_A(P,A_s)\]
where $A_s$ is the module structure on $A$ defined by $l\cdot a=w(a)\cdot l$. The abelian group $D_AP$ has the module structure defined by pointwise right multiplication. The isomorphism $\eta^{w\epsilon}\colon P\longrightarrow D^{2}_AP$ is given by $\eta^{{w\epsilon}}(a)(\lambda)=w(\lambda(a))\cdot\epsilon$.

A Wall-twisting $h\colon M\longrightarrow M$ defines an $M$-twisting for the antistructure $(A,A,\alpha=w(-)\epsilon)$ (cf. \ref{defmaptwist}) by the composite
\[\xymatrix{A_t\otimes_AM\cong M\ar[rr]^-{h(-)\cdot\epsilon}&&M\cong A_t\otimes_AM}\]
In its turn this defines a non-strict duality $h\epsilon$ on the bimodule 
\[M=\hom_A(-,-\otimes_AM)\colon\mathcal{P}^{op}_A\otimes\mathcal{P}_A\longrightarrow Ab\]
as seen in \ref{Mtwistnonstrictdual}. Strictifying the duality leads to the bimodule with strict duality
\[M^{h\epsilon}\colon \mathcal{D}\mathcal{P}_{A}^{op}\otimes \mathcal{D}\mathcal{P}_{A}\longrightarrow Ab\]
of \ref{defTHRforanti}, defined at an object $(\varphi,\varphi')\in \mathcal{D}\mathcal{P}_{A}^{op}\otimes \mathcal{D}\mathcal{P}_{A}$ as the pairs of module maps $f\colon P\longrightarrow P'\otimes_A M$, $g\colon Q'\longrightarrow Q\otimes_A M$ such that
\[\xymatrix{Q\otimes_A M\ar[d]_{\phi\otimes M}&Q'\ar[l]_-{g}\ar[d]^{\phi'}\\
D_A(P)\otimes_AM&D_AP'\ar[l]^-{\widehat{h\epsilon}(f)}
}\]
commutes, where $\widehat{h\epsilon}$ is the duality of \ref{naturalityJK}. The following is the main result of this section.

\begin{prop}\label{moritawallanti}
There is a natural $G$-equivalence
\[\THR(H\mathcal{D}A,HM^h)\longrightarrow \THR(H\mathcal{D}\mathcal{P}_{A},HM^{h\epsilon})\]
\end{prop}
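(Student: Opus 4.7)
The plan is to exhibit an explicit morphism of bimodules with strict duality
\[(F,\Phi)\colon (H\mathcal{D}A, HM^h)\longrightarrow (H\mathcal{D}\mathcal{P}_A, HM^{h\epsilon})\]
and then to prove that it induces a $G$-equivalence on $\THR$ by a Morita-style reduction, using the equivariant technology developed in the preceding sections.

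On objects, $F$ sends a unit $u\in A^{\times}=\mathrm{Ob}(\mathcal{D}A)$ to a triple $(A,A_s,\phi_u)$ in $\mathcal{D}\mathcal{P}_A$, where the first $A$ is the free right $A$-module of rank one, $A_s$ is $A$ with the twisted module structure from the antistructure, and $\phi_u\colon A_s\xrightarrow{\cong} D_A(A)=\hom_A(A,A_s)$ is the isomorphism sending $1$ to the module map $a\mapsto w(a)\cdot u$. On morphism spectra, $F$ is induced by the inclusion $A\hookrightarrow \hom_A(A,A)$ by left multiplication. A direct verification, using the explicit formula for the strict duality on $\mathcal{D}\mathcal{P}_A$ from \ref{secdefscriptd} together with the identity $\alpha=w(-)\cdot\epsilon$, shows that $F$ commutes strictly with the dualities. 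For the bimodule transformation I take $\Phi\colon M^h(u,v)\to M^{h\epsilon}(F(u),F(v))$ sending $m\in M$ to the pair of right $A$-module maps $(f_m,g_m)$ determined by $f_m(a)=m\cdot a$ with $g_m$ uniquely specified by the commutativity square of Proposition \ref{naturalityJK}. The defining relation $u\cdot m'=h(m)\cdot v$ of $M^h(u,v)$ translates into precisely the square defining $M^{h\epsilon}(F(u),F(v))$, and equivariance of $\Phi$ with respect to the dualities is a matter of unwinding the formulas.

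The heart of the proof is showing that $(F,\Phi)_{\ast}$ is a $G$-equivalence. I would proceed by a standard Morita reduction adapted equivariantly. First, Corollary \ref{inversenonstrict} combined with Remark \ref{functorialPA} lets me replace $\mathcal{P}_A$ by the strictly functorial model $\mathcal{P}_A'$ and work entirely with strictly duality-preserving data. Then I would filter $\mathcal{P}_A'$ by the rank of the ambient free module. The equivariant product formula of Proposition \ref{THRproducts}, applied to the decomposition of $A^n$ into $n$ copies of $A$ equipped with the involution on $\{1,\dots,n\}$ induced by the duality on matrices, reduces the rank-$n$ free case to the rank-one case, where the map is an isomorphism. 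Finally, passing from free to projective modules uses that the resulting inclusion is a stable $G$-equivalence in the sense of Proposition \ref{stableGeq}, with the needed connectivity estimates supplied by the equivariant approximation lemma \ref{eqapproxlemma}.

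The main obstacle will be the duality bookkeeping in the rank-reduction step: the involution on $\{1,\dots,n\}$ produced by the duality on the $n\times n$ matrix ring must be matched correctly with the action of $w$ and $\epsilon$ on matrix coordinates, and the intertwining with the twist $\widehat{h\epsilon}$ has to be propagated through the idempotent splitting used to pass from free to projective modules. Getting all of these strict compatibilities to line up uniformly, rather than merely up to natural isomorphism, is the technical content of the proof, and is precisely what the strictly functorial model $\mathcal{P}_A'$ of Remark \ref{functorialPA} and the formalism of Section \ref{secdefscriptd} are set up to handle.
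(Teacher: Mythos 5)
Your overall strategy — define an explicit duality-preserving morphism of bimodules, then prove it is a $G$-equivalence by a Morita-style reduction through matrices and then a free-to-projective step — is the same general shape as the paper's proof (which factors the map through $\colim_n\THR(H\mathcal{D}M_n(A);HM_n(M)^{h^T})$ and $\colim_n\THR(H\mathcal{D}\mathcal{F}^n_A;HM^{h\epsilon})$ before passing to $\mathcal{D}\mathcal{P}_A$). However, the two lemmas you lean on for the reduction steps do not actually apply, and this is a genuine gap, not just a bookkeeping issue.

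For the rank-reduction step you invoke Proposition \ref{THRproducts} on ``the decomposition of $A^n$ into $n$ copies of $A$.'' But Proposition \ref{THRproducts} concerns $\prod_X\mathcal{C}_x$, a category with morphism spectra $\prod_x\mathcal{C}_x(a_x,b_x)$. The matrix category $M_n(C)$ (equivalently $\mathcal{F}^n_A$, or the one-object ring category $M_n(A)$) has morphism groups $\bigoplus_{1\leq i,j\leq n}C(c_i,d_j)$, which is not of product form in the relevant sense, so the product formula gives no relation between $\THR(M_n(C))$ and $\THR(C)^n$. The correct tool is the equivariant Morita invariance of Proposition \ref{morita}, whose proof passes through the intermediate category $M_n^\vee(HC)$ and a trace-retraction homotopy (Lemma \ref{matriciesforstrict}), and that homotopy is the essential nontrivial content here. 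Similarly, for the free-to-projective step you invoke Proposition \ref{stableGeq}, but a stable $G$-equivalence in that sense requires the underlying functor to be a \emph{bijection on objects}, and the inclusion $\mathcal{F}_A\hookrightarrow\mathcal{P}_A$ (or its $\mathcal{D}$-version) is not. The applicable result is the cofinality Proposition \ref{cofinality}, which requires the $G$-cofinality hypothesis — in particular, for each self-dual object $\varphi=(P,P,\phi)$ of $\mathcal{D}\mathcal{P}_A$ one must produce a self-dual free $r(\varphi)$ with an equivariant splitting; this is what the $P\oplus Q\oplus DQ\oplus DP$ construction in the example following Definition \ref{cofinality} is for, and it is not something \ref{stableGeq} supplies.

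Finally, even granting the reduction, you still need to check that your explicitly defined $(F,\Phi)_\ast$ agrees, up to $G$-homotopy, with the composite of the Morita and cofinality equivalences — this is not automatic since the Morita map lands in $\mathcal{F}^n_A$ via a rank-$n$ module, not the rank-one module that $F$ picks out. The paper handles this by building a $G$-natural transformation $S\colon\mathcal{D}(F,\xi)\Rightarrow\mathcal{D}(F_n,\xi_n)$ (given by the inclusion/projection pair between $A_t$ and $A_t^n$) and verifying the conditions of Definition \ref{Gnattransf} so that Proposition \ref{THRpreservesGequiv} produces the required $G$-homotopy. Your proposal should include an analogous verification.
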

In particular, suppose that $\epsilon\in A^{\!\times}$ is in the center of $A$. Then the Wall antistructure is an anti-involution $\alpha\colon A^{op}\longrightarrow A$ and it defines a strict duality on the category $A$. Similarly the Wall-twisting $h$ gives a strict duality on the bimodule $M\colon A^{op}\otimes A\longrightarrow Ab$. Composing the map of the proposition with the $\THR$-equivalence $A\rightarrow\mathcal{D}A$ of \ref{scriptdandthr} we obtain the following.
\begin{cor}
Let $\alpha\colon A^{op}\longrightarrow A$ be an anti-involution on the ring $A$, and $h\colon M\rightarrow M$ a Wall-twisting. Then there is a natural $G$-equivalence
\[\THR(HA,HM)\longrightarrow \THR(H\mathcal{D}\mathcal{P}_{A},HM^{h})\]
\end{cor}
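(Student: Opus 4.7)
The plan is to construct the map as a morphism of bimodules with strict duality and then factor it into a Morita reduction and a cofinality step. The inclusion of the one-object category $A$ into $\mathcal{P}_A$ sending $\ast$ to the rank-one free module $A$ is a morphism of $Ab$-enriched categories with non-strict duality: under the evaluation isomorphism $D_A(A) \cong A_s \cong A$, the natural isomorphism $\eta^{w\epsilon}_A$ at the unique module is precisely multiplication by $\epsilon$, and the bimodule $M$ maps compatibly with the twistings $h$ and $h\epsilon$. Applying the functor $\mathcal{D}$ of \ref{secdefscriptd} together with its bimodule analogue from \ref{THRrings} yields a morphism of bimodules with strict duality
\[(\iota,\Phi)\colon (H\mathcal{D}A, HM^h)\longrightarrow (H\mathcal{D}\mathcal{P}_A, HM^{h\epsilon}),\]
whose $\THR$ is the map in the statement.

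To prove it is a $G$-equivalence, I would factor it through the full subcategory $\mathcal{D}\mathcal{F}_A \subset \mathcal{D}\mathcal{P}_A$ on objects of the form $(A^n, A^n, \phi)$ with $\phi$ an isomorphism, giving a composition
\[\THR(H\mathcal{D}A; HM^h) \stackrel{(1)}{\longrightarrow} \THR(H\mathcal{D}\mathcal{F}_A; HM^{h\epsilon}) \stackrel{(2)}{\longrightarrow} \THR(H\mathcal{D}\mathcal{P}_A; HM^{h\epsilon}).\]
For step $(1)$, the abelian-group model of \ref{abgpmodelsec} expresses both sides as homotopy colimits of a direct-sum functor, and the space $V^{\oplus}$ for $H\mathcal{D}\mathcal{F}_A$ breaks up over matrix indices. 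The classical trace $\tr\colon\hom_A(A^n, A^n\otimes_A M) = M_n(M)\to M$ summing the diagonal assembles into a natural map of bimodules-with-duality providing a homotopy inverse to $(1)$: equivariance of $\tr$ reduces to the identity $\tr(\beta^T) = \tr(\beta)$, so that the Wall-twisting $h\epsilon$ on matrix-valued maps descends to $h$ on $M$, while the invertible $\phi$'s in the objects of $\mathcal{D}\mathcal{F}_A$ are absorbed by a change of basis compatible with $\tr$ and the duality.

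For step $(2)$, the strategy is a cofinality/splitting argument. Using the equivalent model $\mathcal{P}_A'$ of \ref{functorialPA}, every object of $\mathcal{D}\mathcal{P}_A'$ sits as a direct summand of an object of $\mathcal{D}\mathcal{F}_A$, and the inclusion is ``fully faithful in a split sense''. Via the abelian-group model and the $G$-approximation lemma \ref{eqapproxlemma}---applied to both the non-equivariant and the fixed-point connectivities and using the description of $V^G$ from \ref{fixedptdescr}---one reduces the problem to a connectivity estimate on the induced maps between $V^{\oplus}$-spaces. This connectivity follows from projectivity of the modules and standard additivity for the bimodule hom-functors $\hom_A(-,-\otimes_A M)$.

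The main obstacle I anticipate is carrying step $(2)$ through equivariantly. The strict duality on $\mathcal{D}\mathcal{P}_A'$ identifies the $X$-component of an object $(X,Y,\phi)$ with the dual of its $Y$-component via $\phi$, so a summand decomposition on $X$ must be matched with a compatible decomposition on $Y$, and the duality does not preserve either summand individually. This forces the cofinality argument to be carried out at the level of the triples $(X,Y,\phi)$ rather than on the underlying modules, and after subdivision the $G$-action on $V^{\oplus}$ mixes the two sides in a way that must be tracked carefully. Once the proposition is proved, the corollary follows by precomposing with the $G$-equivalence $\THR(HA; HM) \longrightarrow \THR(H\mathcal{D}A; HM^h)$ of Proposition \ref{scriptdandthr}, whose hypotheses are satisfied because centrality of $\epsilon$ forces $\alpha$ and $h$ to be strict dualities on $A$ and on $M$.
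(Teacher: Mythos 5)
Your last paragraph is correct and matches the paper: once Proposition~\ref{moritawallanti} is known, the corollary is exactly its composite with the $G$-equivalence of \ref{scriptdandthr}, using that a central $\epsilon$ makes the dualities on $A$ and $M$ strict. The problem is that both of your steps toward the proposition have genuine gaps.

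For step (1), you claim that the classical matrix trace $\tr\colon M_n(M)\to M$ ``assembles into a natural map of bimodules-with-duality'' providing a homotopy inverse. It cannot: a morphism of bimodules in the sense of \ref{sectmodsym} requires, in particular, an enriched functor between the underlying $Sp^{\Sigma}$-categories, i.e.\ a map that is compatible with \emph{composition} of morphism spectra. The matrix trace satisfies $\tr(XY)=\tr(YX)$ (cyclicity) but is not multiplicative, so there is no functor $\mathcal{D}\mathcal{F}_A\to\mathcal{D}A$ (or $M_n(A)\to A$) whose effect on morphism groups is $\tr$. The actual retraction used in the paper's Morita argument is not the matrix trace at all: in Lemma~\ref{matriciesforstrict} the ``trace'' $\tr_k\colon\THR_k(M^{\vee}_n(HA);M^{\vee}_n(HM))\to\THR_k(HA;HM)$ sends a cyclic word to itself if all matrix indices agree and to the basepoint otherwise. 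That map is only a map of pre-simplicial real sets, not $\THR$ applied to a functor, and the equivalence comes from an explicit simplicial $G$-homotopy (as in \ref{messyhomotopy}), together with the identification $M_n(\mathcal{D}C)\simeq\mathcal{D}M_n(C)$ that absorbs the $\phi$'s. Without that homotopy your step (1) has no mechanism, and even with it you still need to pass through $M_n(A)\cong\mathcal{F}_A^n$ and a colimit over $n$, as the paper does, since $\mathcal{F}_A$ is not a one-object category.

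For step (2), the connectivity strategy cannot work. The inclusion $\mathcal{D}\mathcal{F}_A\subset\mathcal{D}\mathcal{P}_A$ induces, for each $\underline{i}$, an inclusion of wedge/direct-sum components $V^{\oplus}(H\mathcal{D}\mathcal{F}_A;\underline{i})\hookrightarrow V^{\oplus}(H\mathcal{D}\mathcal{P}_A;\underline{i})$ that is \emph{not} highly connected and does not become highly connected after looping or passing to fixed points, so \ref{eqapproxlemma} and \ref{connectgoestoinfty} give nothing. What makes the cofinality statement \ref{cofinality} true is a deformation retraction: one chooses, equivariantly, a complementing summand and a duality-fixed $r(\varphi)$ for each self-dual $\varphi$, builds a pre-functorial retraction $r\colon C\to B$ and a $G$-natural transformation $\id\Rightarrow\iota\circ r$ in the sense of \ref{Gnattransf}, and appeals to \ref{THRpreservesGequiv} on the subdivision. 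The concern you raise at the end about the duality mixing $X$- and $Y$-components of $(X,Y,\phi)$ is exactly the reason the paper's proof works at the level of triples in $\mathcal{D}\mathcal{P}_A$ rather than on underlying modules, but it is resolved by the $G$-cofinality hypothesis (choosing $r(\varphi)$ self-dual with $D(i_\varphi)\circ i_\varphi=\id$), not by a connectivity estimate.
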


Let us define the map of proposition \ref{moritawallanti}.
There is a morphism of categories with duality (cf. §\ref{duality})
\[(F,\xi)\colon (A,w,\epsilon)\longrightarrow (\mathcal{P}_A,D_A,\eta^{w\epsilon})\]
with underlying functor $F\colon A\longrightarrow \mathcal{P}_A$ that sends the unique object to the module $A_t$ given by right multiplication of $A$. A morphism $a\in A$ is sent to left multiplication
\[F(a)=a\cdot(-)\colon A_t\longrightarrow A_t\]
The natural isomorphism $\xi\colon Fw\Rightarrow D_AF$ is defined at the unique object by the isomorphism
\[\xi\colon A_t\longrightarrow D_A(A_t)=\hom_A(A_t,A_s)\]
that sends $1$ to $w$. The pair $(F,\xi)$ induces a functor
\[\mathcal{D}(F,\xi)\colon\mathcal{D}A\longrightarrow \mathcal{D}\mathcal{P}_A\]
that commutes strictly with the duality (cf. \ref{canddc}). Explicitly, it sends an object $u\in Ob\mathcal{D}A=A^{\!\times}$ to the object $(A_t,A_t,\xi\circ F(u))$ of $\mathcal{D}\mathcal{P}_A$, where
\[\xi\circ F(u)\colon A_t\longrightarrow D_A(A_t)=\hom_A(A_t,A_s)\]
is the isomorphism the sends $a$ to $w(-)\cdot ua$.
There is also a map of bimodules $\Phi\colon M\Rightarrow M\circ(F^{op}\otimes F)$ defined at the unique object by the isomorphism
\[\Phi\colon M\longrightarrow \hom_A(A_t,A_t\otimes_A M)\]
that sends $m$ to $\Phi(m)(a)=1\otimes m\cdot a$.
This induces a morphism of bimodules \[\mathcal{D}\Phi\colon M^h\Rightarrow M^{h\epsilon}\circ(\mathcal{D}(F,\xi)^{op}\otimes \mathcal{D}(F,\xi))\] defined at an object $(u,v)\in Ob\mathcal{D}A^{op}\otimes \mathcal{D}A=A^{\!\times}\times A^{\!\times}$ by
\[\mathcal{D}\Phi(m,m')=(\Phi(m),\Phi(m'))\]
It is well defined since
\[\xymatrix{A_t\otimes_AM\ar[d]_-{u\cdot (-)\otimes M}&&A_t\ar[ll]_-{\Phi(m')}\ar[d]^-{v\cdot (-)}\\
A_t\otimes_AM\ar[d]_-{w\cdot(-)\otimes M}&&A_t\ar@{-->}[ll]_-{\Phi(h(m))}\ar[d]^-{w\cdot (-)}\\
D_A(A_t)\otimes_AM&&D_AA_t\ar[ll]^-{\widehat{h\epsilon}(\Phi(m))}
}\]
commutes. The morphism of bimodules with duality
\[(\mathcal{D}(F,\xi),\mathcal{D}\Phi)\colon (\mathcal{D}A,M^h)\longrightarrow (\mathcal{D}\mathcal{P}_A,M^{h\epsilon})\]
induces the map of the statement of \ref{moritawallanti}. We prove that this map is a $G$-equivalence by following the structure of the classical proof of \cite[IV-2.5.14]{Dundasbook}.

\begin{proof}[Proof of \ref{moritawallanti}]
We factor the map induced in $\THR$, and we prove that each of the maps of the factorization is an equivalence. Let $M_n(A)$ be the ring of $n\times n$ matricies on $A$. There is a Wall antistructure $(M_n(A),w(-)^T,\underline{\epsilon})$, where $w(N)^T$ denotes the matrix obtained by applying $w$ to the entries of the transposed matrix $N^T$, and $\underline{\epsilon}$ is the diagonal matrix with entries $(\epsilon,\dots,\epsilon)$. The abelian group $M_n(M)$ defines a bimodule on $M_n(A)$, and \[h(-)^T\colon M_n(M)\longrightarrow M_n(M)\]
defines a Wall-twisting on $M$. There is a map
\[(A,M)\longrightarrow (M_n(A),M_n(M))\]
that includes an element $a$ of $A$ as the diagonal matrix $(a,1,\dots,1)$, and similarly on the bimodule. It commutes strictly with the dualities, and therefore it induces a map
\[(\mathcal{D}A,M^h)\longrightarrow (\mathcal{D}M_n(A),M_n(M)^{h^T})\]
Now let $\mathcal{F}_A\subset\mathcal{P}_A$ be the category of finitely generated free right $A$-modules, and $\mathcal{F}^{n}_A$ the full subcategory of $\mathcal{F}_A$ of free modules of rank less than or equal to $n$. Clearly the duality on $\mathcal{P}_A$ restricts to both $\mathcal{F}_A$ and $\mathcal{F}^{n}_A$. There is a functor $G\colon M_n(A)\longrightarrow \mathcal{F}^{n}_A$ that sends the unique object to $A_{t}^n$ and a matrix $N$ to the map
\[N\cdot(-)\colon A_{t}^n\longrightarrow A_{t}^n\]
The isomorphism
\[\zeta\colon A_{t}^n\longrightarrow D_A(A_{t}^n)=\hom_A(A_{t}^n,A_s)\]
that sends $\underline{a}=(a_1,\dots,a_n)$ to $\zeta(\underline{a})(\underline{b})=(w(b_1)a_1+\dots+w(b_n)a_n)$
defines a morphism of categories with duality \[(G,\zeta)\colon (M_n(A),w(-)^T,\epsilon)\longrightarrow (\mathcal{F}^{n}_A,D_A,\eta^{w\epsilon})\] inducing a functor
\[\mathcal{D}(G,\zeta)\colon\mathcal{D}M_n(A)\longrightarrow \mathcal{D}\mathcal{F}^{n}_A\]
that commutes strictly with the dualities.
Doing similar constructions on bimodules we obtain a diagram
\[\xymatrix{\THR(H\mathcal{D}A;HM^h)\ar[dd]_-{(\mathcal{D}(F,\xi),\mathcal{D}\Phi)}\ar[rr]^-{\mbox{Morita}}_-{\simeq \ref{morita}}&&\colim_n \THR(H\mathcal{D}M_n(A);HM_n(M)^{h^T})\ar[d]^{\mbox{cofinality}}_{\simeq\ref{cofinality}}\\
 && \colim_n \THR(H\mathcal{D}\mathcal{F}^{n}_A;HM^{h\epsilon})\ar[d]_{\simeq}\\
\THR(H\mathcal{D}\mathcal{P}_A;HM^{h\epsilon})&&\THR(H\mathcal{D}\mathcal{F}_A;HM^{h\epsilon})\ar[ll]^{\mbox{cofinality}}_{\simeq \ref{cofinality}}
}\]
The labeled maps are equivalences by the referred results, proved below.
The maps in the colimit for $M_n(A)$ are induced by the inclusions $M_n(A)\longrightarrow M_{n+1}(A)$ that add a $1$ in the bottom right corner of the matrix and zero elsewhere. If we want the maps $M_n(A)\longrightarrow \mathcal{F}^{n}_A$ to commute with the stabilization maps, we need to define the stabilization maps for $\mathcal{F}^{n}_A$ by sending a module $P\in \mathcal{F}^{n}_A$ to $P\oplus A\in \mathcal{F}^{n+1}_A$, and extending the morphisms by the identity on $A$ (it is not the standard inclusion $\mathcal{F}^{n}_A\longrightarrow \mathcal{F}^{n+1}_A $). 

The right bottom vertical map of the diagram is an equivalence because the colimit commutes with the realization, with the homotopy colimit defining $\THR$, and with the loop space (cf. \cite[A-1.5.5]{Dundasbook}). Finally, the canonical map
\[\colim_n V(H\mathcal{D}\mathcal{F}^{n}_A;HM^{h\epsilon},\underline{i})\cong V(H\mathcal{D}\mathcal{F}_A;HM^{h\epsilon},\underline{i})\]
is  a homeomorphism, commuting with the duality.

We show that the diagram commutes up to $G$-homotopy, by proving that for any given $n\geq 1$ the diagram
\[\xymatrix{\THR(H\mathcal{D}A;\!HM^{h})\ar[dr]_-{\mathcal{D}(F,\xi)}\ar[r]&\THR(H\mathcal{D}M_n(A);\!HM_n(M)^{h^T})\ar[d]^{\mathcal{D}(G,\zeta)}\\
&\THR(H\mathcal{D}\mathcal{P}_A;\!HM^{h\epsilon})}\]
commutes up to $G$-homotopy. The upper composite is induced by the morphism of categories with duality
\[(F_n,\xi_n)\colon (A,w,\epsilon)\longrightarrow (\mathcal{P}_A,D_A,\eta^{h\epsilon})\] 
where $F_n$ sends the object to $A_{t}^n$ and a morphism $a$ to the diagonal matrix
\[a\oplus\id\colon A_{t}^n\longrightarrow A_{t}^n\]
and $\xi_n\colon A_{t}^n\longrightarrow D_AA_{t}^n$ is the isomorphism $\zeta$ above. We construct a $G$-homotopy by means of a natural transformation 
\[S\colon \mathcal{D}(F,\xi)\Rightarrow \mathcal{D}(F_n,\xi_n)\] It is defined at an object $u\in Ob\mathcal{D}A=A^{\!\times}$ by the morphism of $\mathcal{D}\mathcal{P}_A$ given by the pair
\[(i\colon A_t\longrightarrow A_{t}^n,p\colon A_{t}^n\longrightarrow A_t)\]
where $i$ denotes the inclusion in the first coordinate and $p$ the projection off it. This is a well defined morphism since
\[\xymatrix{A_t\ar[d]_{\xi\circ (u\cdot(-))}&A_{t}^n\ar[l]_-{p}\ar[d]^{\xi_n\circ(u\oplus\id\cdot(-))}\\
D_AA_t&D_AA_{t}^n\ar[l]^{D(i)}
}\]
commutes.
The dual map $D(S)$ is $(p,i)$, and therefore $D(S)\circ S=\id$ and $S\circ D(S)\circ F_n(a)=F_n(a)$. This shows that $S$ is a $G$-natural transformation in the sense of \ref{Gnattransf}, and therefore by \ref{THRpreservesGequiv} it induces a simplicial $G$-homotopy
\[sd_e\THR_{\sbt}(H\mathcal{D}A;HM^h)\times\Delta[1]\longrightarrow sd_e\THR_{\sbt}(H\mathcal{D}\mathcal{P}_A;HM^{h\epsilon})\]
between the subdivisions of $\mathcal{D}(F_n,\xi_n)$ and $\mathcal{D}(F,\xi)$.
\end{proof}

We want to show that the inclusion $\mathcal{D}A\longrightarrow \mathcal{D}M_n(A)$ induces a $G$-equivalence in $\THR$. This is true in greater generality.
Let $(C,D,\eta)$ be an $Ab$-enriched category with additive duality. Define a category $M_n(C)$ for every integer $n\geq 1$ with objects $ObC^n$ and morphisms from $\underline{c}=(c_1,\dots,c_n)$ to $\underline{d}=(d_1,\dots,d_n)$ the abelian group
\[M_n(C)(\underline{c},\underline{d})=\bigoplus\limits_{1\leq i,j\leq n}C(c_i,d_j)\]
Composition of $[f]=\{f_{ij}\}\colon \underline{c}\longrightarrow \underline{d}$ and $[g]=\{g_{ij}\}\colon \underline{d}\longrightarrow \underline{e}$ is defined by
\[([g]\circ[f])_{ij}=\sum_{k=1}^ng_{kj}\circ f_{ik}\]
$M_n(C)$ inherits the structure of a category with duality $(M_n(C),D,\eta)$ with $D$ defined on objects by
\[D(\underline{c})_i=Dc_i\]
and on morphism by sending  $[f]\colon \underline{c}\longrightarrow \underline{d}$ to
\[(D[f])_{ij}=([Df]^T)_{ij}=Df_{ji}\colon Dd_i\longrightarrow Dc_j\]
The isomorphism $\eta_{\underline{c}}\colon \underline{c}\longrightarrow D^2\underline{c}$ is the diagonal matrix with diagonal entries $(\eta_{c_1},\dots ,\eta_{c_n})$. If $C$ has an object $o$ and an isomorphism $\nu\colon o\longrightarrow D(o)$ such that $D\nu\circ\eta=\nu$, there is a morphism of categories with duality 
\[(\iota_o,\xi)\colon C\longrightarrow M_n(C)\]
that sends $c$ to $(c,o,\dots,o)$ and $f\colon c\longrightarrow d$ to the diagonal matrix with entries $(f,\id_o,\dots,\id_o)$. The isomorphism $\xi$ is defined at $c\in C$ by the diagonal matrix
\[\xi_c\colon \iota_o Dc=(Dc,o,\dots,o)\longrightarrow(Dc,Do,\dots,Do)=D\iota_oc\]
with entries $(\id,\nu,\dots,\nu)$.
A similar construction gives a bimodule with non-strict duality $M_n(M)$ over $M_n(C)$ and a map $M\Rightarrow M_n(M)\circ (\iota^{op}_o\otimes\iota_o)$ preserving the dualities.

\begin{prop}[Morita invariance]\label{morita}
The induced map \[\mathcal{D}(\iota_o,\xi)_\ast\colon\THR(H\mathcal{D}C,H\mathcal{D}M)\longrightarrow\THR(H\mathcal{D}M_n(C);H\mathcal{D}M_n(M))\]
is a $G$-equivalence for every $n\geq 1$.
\end{prop}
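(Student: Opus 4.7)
I would follow the structure of the classical non-equivariant Morita invariance for $\THH$ (as in \cite[IV-2.5.14]{Dundasbook}), tracking the $G$-action at each step. First, by Proposition~\ref{abgpmodel} the spaces on both sides are $G$-equivalent to their abelian-group models $\THR^\oplus$, so it suffices to prove the analogous statement in the $\oplus$-version. The advantage is that in the abelian group model the direct-sum decomposition
\[
\mathcal{D}M_n(C)(\underline{c},\underline{d}) = \bigoplus_{a,b\in[n]} \mathcal{D}C(c_a,d_b),
\]
and the analogous one for $\mathcal{D}M_n(M)$, distribute across the tensor products and outer direct sums appearing in $V^\oplus$. After reindexing, each wedge component at a $(k+1)$-tuple $(\underline{c}^0,\ldots,\underline{c}^k)$ of objects in $Ob\mathcal{D}C^n$ decomposes further as a direct sum indexed by cyclic words $\underline{a}=(a_0,a_1,\ldots,a_k)\in[n]^{k+1}$: the matrix composition pattern forces the column index of each factor to match the row index of the next, which closes up cyclically through the bimodule factor.

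Next I would identify the image of $\mathcal{D}(\iota_o,\Phi)_\ast$, after these reindexings, with the summand corresponding to the constant cyclic word $(1,1,\ldots,1)$, with the object coordinates other than the first padded by $o$ and the morphism coordinates padded by $\id_o$ (the map $\xi$ on the distinguished object $o$ provides the compatibility with the duality). Non-equivariantly, an extra-degeneracy style argument — cyclic permutation of matrix indices together with the use of the base point $o$ — produces a simplicial contraction of the non-constant cyclic-word summands onto the constant one. For the equivariant statement, I would instead invoke Proposition~\ref{THRproducts} at a suitable step to split the contributions, and construct the required $G$-equivariant simplicial homotopy via the $G$-natural transformation machinery of Definition~\ref{Gnattransf} and Proposition~\ref{THRpreservesGequiv}: the natural transformation $S$ providing the trace-section retraction on each matrix factor dualizes compatibly because conjugate-transposition on $M_n(C)$ preserves the constant-word summand.

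Finally I would apply the equivariant approximation lemma~\ref{eqapproxlemma} and Proposition~\ref{connectgoestoinfty} to reduce to connectivity estimates on each $V^\oplus(\underline{i})$ and its $G$-fixed points, using the smash--tensor comparison of the proof of \ref{abgpmodel} and Proposition~\ref{connconfspace} to check that the connectivity of the map tends to infinity with $|\underline{i}|$, both non-equivariantly and on $G$-fixed points.

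\textbf{Main obstacle.} The difficulty will be making the collapse of non-constant cyclic-word summands truly $G$-equivariant. The dihedral involution sends $(a_0,a_1,\ldots,a_k)$ to $(a_0,a_k,\ldots,a_1)$, so a naive cyclic contraction is not equivariant; non-palindromic orbits can be handled in pairs, but palindromic cyclic words are fixed by the involution and contribute to the $G$-fixed points, so they need a separate equivariant contraction. I expect that, after subdivision, the palindromic contributions organize into a sub-real-simplicial space whose equivariant connectivity still tends to infinity with $|\underline{i}|$ (by an argument parallel to the one controlling the connectivity of $V^\oplus$ on fixed points in the proof of \ref{abgpmodel}); combining this with the equivariant Whitehead theorem in the form of Corollary~\ref{eqmappingspace} then yields the required $G$-equivalence.
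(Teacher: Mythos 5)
There are two concrete gaps in your plan, one technical and one conceptual. First, the decomposition by cyclic words does not actually go through in the $\THR^\oplus$ model. In $V^\oplus$ the higher smash factors appear \emph{inside} the linearization $\mathbb{Z}(-)$: the summand at a tuple of objects is $M(c_0,c_k)_{i_0}\otimes\mathbb{Z}\bigl(C(c_1,c_0)_{i_1}\wedge\cdots\wedge C(c_k,c_{k-1})_{i_k}\bigr)$, and $\mathbb{Z}\bigl(M_n(C)(\underline{c},\underline{d})_{i}\bigr)$ does \emph{not} split as a direct sum over matrix entries, because $\mathbb{Z}(-)$ is the free abelian group on the underlying pointed space of a product of abelian groups, not a direct sum of linearizations. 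The decomposition you want lives instead in a \emph{wedge-matrix} model $M_n^\vee(HC)$, with $M_n^\vee(HC)(\underline{c},\underline{d})=\bigvee_{a,b}HC(c_a,d_b)$, which is related to $HM_n(C)$ by a stable $G$-equivalence (via \ref{wedgesintoproducts} and \ref{stableGeq}) and for which $V(M_n^\vee(HC);M_n^\vee(HM),\underline{i})$ genuinely decomposes as a wedge indexed by your ``cyclic words.'' This is where the collapse onto the constant-index wedge summand should be carried out, and the paper's \ref{matriciesforstrict} does exactly that.

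Second, your worry about palindromic cyclic words is well founded, but your proposed resolutions fail. The $G$-natural transformation machinery of \ref{THRpreservesGequiv} does not apply, because the retraction $\tr$ collapsing the non-constant summands to the basepoint is not induced by an enriched functor. And a connectivity argument cannot dispose of the palindromic summands: they have exactly the same connectivity as the constant-word summand, so nothing ``tends to infinity''; you would be trying to show a map is an equivalence by showing its cofiber is highly connected when it isn't. The correct fix is a two-sided simplicial homotopy on the edgewise subdivision, in the style of \ref{messyhomotopy}: for $\sigma=(0^b1^{k+1-b})$, keep only the wedge components whose index tuples agree on positions $0,1,\ldots,b$ and on positions $2k+2-b,\ldots,2k+1$ simultaneously, checking indices symmetrically from both ends of the $(2k+2)$-tuple. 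This condition is invariant under the dihedral involution (which reverses the tuple and applies $\omega$), so the homotopy is automatically $G$-equivariant and palindromic words require no special treatment. You are also missing the reduction step: factoring $\mathcal{D}(\iota_o,\xi)$ through the category equivalence $F\colon M_n(\mathcal{D}C)\xrightarrow{\simeq}\mathcal{D}M_n(C)$ (an equivalence of categories with strict duality, handled by \ref{inversenonstrict}) lets you invoke the strict-duality case of the lemma with the $D$-fixed padding object $\nu=(o,Do,\nu)\in\mathcal{D}C$, rather than working with $o\in C$ directly.
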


\begin{proof}
The functor $\mathcal{D}(\iota_o,\xi)$ factors as
\[\xymatrix{\mathcal{D}C\ar[d]_{\iota_{\nu}}\ar[r]^-{\mathcal{D}(\iota_o,\xi)}&\mathcal{D}M_n(C)\\
M_n(\mathcal{D}C)\ar[ur]_-{F}^-{\simeq}
}\]
where the vertical map is the inclusion of the category $\mathcal{D}C$ for the fixed object $\nu\colon o\longrightarrow Do$, which induces an equivalence in $\THR$ by lemma \ref{matriciesforstrict} below. The diagonal functor is the following equivalence of categories. An object of $M_n(\mathcal{D}C)$ is a $n$-tuple $\underline{\varphi}=\{\varphi_i=(c_i,d_i,\phi_i)\}$ of objects of $\mathcal{D}C$. The functor $F$ sends $\underline{\varphi}$ to the object $(\underline{c},\underline{d},F(\underline{\phi}))$ of $\mathcal{D}M_n(C)$ with $\underline{c}=(c_1,\dots,c_n)$, $\underline{d}=(d_1,\dots,d_n)$ and \[F(\underline{\phi})\colon \underline{d}\longrightarrow D\underline{c}\] defined as the diagonal matrix $[\underline{\phi}]$ with entries $(\phi_1,\dots,\phi_n)$.
A morphism in $M_n(\mathcal{D}C)$ from $\underline{\varphi}$ to $\underline{\varphi}'$ is a family of pairs $\{(a_{ij},b_{ij})\}$ such that every $(a_{ij},b_{ij})$ is a morphism from $\underline{\varphi}_i$ to $\underline{\varphi}_j'$ in $\mathcal{D}C$. This is the same as a pair of matricies $[a]\colon \underline{c}\longrightarrow \underline{c}'$ and $[b]^T\colon \underline{d}'\longrightarrow \underline{d}$ such that $D[a]^T[\underline{\phi}']=[\underline{\phi}][b]^T$. Define $F$ on morphisms by
\[F([a],[b])=([a],[b]^T)\]
The functor $F$ is clearly full and faithful. Let us see that it is also essentially surjective. An object of $\mathcal{D}M_n(C)$ is a triple $(\underline{c},\underline{d},N)$ where $N\colon \underline{d}\longrightarrow D\underline{c}$ is an invertible matrix. This is isomorphic to $F(\underline{c},D\underline{c},\id)$ via the isomorphism $(\id,N^{-1})$. Indeed, the diagram
\[\xymatrix{\underline{d}\ar[d]_-{N}&&D\underline{c}\ar[ll]_-{N^{-1}}\ar[d]^-{\id}\\
D\underline{c}&&D\underline{c}\ar[ll]^-{D(\id)^T=\id}
}\]
commutes.
Thus $F$ is an equivalence of categories that commutes strictly with the dualities. By \ref{inversenonstrict} it induces an equivalence on $\THR$.
\end{proof}

\begin{lemma}\label{matriciesforstrict}
Let $(C,D)$ be an $Ab$-enriched category with strict additive duality, $M$ a bimodule with strict duality on $C$ and $o\in C$ an object fixed by the duality. The inclusion $\iota_o\colon C\longrightarrow M_n(C)$ induces a $G$-equivalence
\[\THR(HC;HM)\longrightarrow\THR(HM_n(C);HM_n(M))\]
for all $n\geq 1$.
\end{lemma}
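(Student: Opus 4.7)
The plan is to reduce the statement to the abelian-group model $\THR^{\oplus}$ of Section \ref{abgpmodelsec} and then imitate the classical cyclic-trace proof of Morita invariance for $\THH$, with equivariance checks.

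First, Proposition \ref{abgpmodel} provides a natural $G$-equivalence $\THR(H{-};H{-}) \xrightarrow{\simeq} \THR^{\oplus}(H{-};H{-})$, so it suffices to show that
\[(\iota_o)_*\colon \THR^{\oplus}(HC; HM) \longrightarrow \THR^{\oplus}(HM_n(C); HM_n(M))\]
is a $G$-equivalence. The key advantage of this model is linearity: since the hom-objects of $M_n(C)$ and $M_n(M)$ are themselves direct sums ($M_n(C)(\underline{c},\underline{d}) = \bigoplus_{i,j} C(c_i,d_j)$, and similarly for $M_n(M)$), and tensor distributes over direct sum, the space $V^{\oplus}(HM_n(C); HM_n(M), \underline{i})$ decomposes canonically as a direct sum indexed by tuples of matrix objects $\underline{c}_0, \ldots, \underline{c}_k \in (ObC)^n$ and sequences of matrix indices $((a_l, b_l))_{l=0}^k \in \{1,\ldots,n\}^{2(k+1)}$, with summands of the form
\[M(c^{(0)}_{a_0}, c^{(k)}_{b_0})_{i_0} \otimes \mathbb{Z}\bigl(C(c^{(1)}_{a_1}, c^{(0)}_{b_1})_{i_1} \wedge \cdots \wedge C(c^{(k)}_{a_k}, c^{(k-1)}_{b_k})_{i_k}\bigr).\]

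Next, I would define a \emph{cyclic trace} map $\tr\colon \THR^{\oplus}(HM_n(C); HM_n(M)) \to \THR^{\oplus}(HC; HM)$: on the summand above, $\tr$ is the identity onto the summand of $V^{\oplus}(HC; HM, \underline{i})$ indexed by $(c^{(l)}_{a_l})_{l=0}^k$ whenever the index sequence is \emph{cyclically composable} in the sense that $b_l = a_{l-1 \bmod (k+1)}$ for all $l$, and is zero otherwise. The cyclic composability condition is precisely what the face structure of $\THR^{\oplus}$ enforces: inner faces $d_l$ compose two adjacent matrix factors and force matching inner indices; the outer faces $d_0, d_k$ use the bimodule structure and produce the cyclic closure; degeneracies insert diagonal identity matrices; and the dihedral involution $\omega_k$, combined with the transposing duality $D([f])_{ij} = D(f_{ji})$ on $M_n(C)$, permutes cyclically composable sequences among themselves. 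A direct verification from the formulas then shows that $\tr$ is a simplicial $G$-map, and the identity $\tr \circ (\iota_o)_* = \id$ is immediate: $\iota_o$ lands in summands whose indices are all $1$, which are trivially cyclically composable, and $\tr$ is the identity there.

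The main obstacle is producing a $G$-equivariant simplicial homotopy from $(\iota_o)_* \circ \tr$ to the identity. Non-equivariantly this is the classical Dundas--McCarthy cyclic-trace argument (cf.\ \cite[IV-2.5.14]{Dundasbook}), realized by an affine deformation in the abelian-group model that linearly contracts the non-cyclically-composable summands while reorganizing the cyclically-composable summands into the diagonal $(1,1,\ldots,1)$ form via conjugation by permutation matrices. The subtlety here is equivariance: since $D$ transposes matrix indices while $\omega_k$ reverses the simplicial direction, the interpolation parameter must be chosen invariant under this combined symmetry. This can be arranged by basing the homotopy only on dihedral-invariant data of the index sequence (e.g.\ its underlying multiset of indices and its cyclic structure), after which the classical interpolation carries over unchanged. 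Assembling everything and invoking Proposition \ref{abgpmodel} then completes the proof.
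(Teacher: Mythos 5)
There is a genuine gap in the claimed decomposition, which then propagates through the rest of the argument. You assert that $V^{\oplus}(HM_n(C);HM_n(M),\underline{i})$ decomposes canonically as a direct sum indexed by tuples of matrix objects and sequences of matrix indices. But the summands of $V^{\oplus}$ involve $\mathbb{Z}(\cdots\wedge M_n(C)(\underline{c}_l,\underline{c}_{l-1})_{i_l}\wedge\cdots)$, and the morphism space $M_n(C)(\underline{c}_l,\underline{c}_{l-1})_{i_l}=(\bigoplus_{a,b}C(c^{(l)}_a,c^{(l-1)}_b))\otimes\mathbb{Z}(S^{i_l})$ is a \emph{direct sum} (hence a product of pointed spaces), and $\mathbb{Z}(-)$ of a smash product with a product of pointed spaces does not split: $\mathbb{Z}((A\oplus B)\wedge Y)\not\cong\mathbb{Z}(A\wedge Y)\oplus\mathbb{Z}(B\wedge Y)$, because an element $(a,b)\wedge y$ with both $a\neq 0$ and $b\neq 0$ does not live in either summand. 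So the trace map $\tr$ "identity on cyclically-composable summands, zero elsewhere" is not defined at the level of $\THR^{\oplus}(HM_n(C);HM_n(M))$. Beyond this, the $G$-equivariant homotopy from $(\iota_o)_*\circ\tr$ to the identity is only asserted, not constructed: the claim that "this can be arranged by basing the homotopy only on dihedral-invariant data" is precisely the nontrivial content that needs to be produced, and the description via "affine deformation" and "conjugation by permutation matrices" does not correspond to a construction that one can check.

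The paper's proof circumvents both issues by working in the \emph{wedge} model $M_n^\vee(HC)$, whose morphism spectra are $\bigvee_{i,j}HC(c_i,d_j)$. In this model the trace really is a pointed projection: an element $(m,(l_0,h_0))\wedge(f_1,(l_1,h_1))\wedge\cdots$ is sent to $m\wedge f_1\wedge\cdots$ if all indices are equal and to the basepoint otherwise, and this is a well-defined map on each $V(M_n^\vee(HC);M_n^\vee(HM),\underline{i})$. The passage from the wedge model to $HM_n(C)$ is then handled by the wedges-into-sums stable $G$-equivalence (\ref{wedgesintoproducts}, \ref{stableGeq}), not by asserting a direct sum decomposition. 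Finally, the required $G$-homotopy between $s_o\circ\tr$ and the identity is constructed explicitly on the edgewise subdivision using the symmetric index-checking scheme of \ref{messyhomotopy} (at level $\sigma=(0^b1^{k+1-b})$, keep a summand only when the first $b$ and last $b$ index-pairs all agree), which is manifestly preserved by the combined transposition-plus-reversal involution. If you want to salvage your route, the cleanest fix is exactly this: factor $\iota_o$ through $M_n^\vee(HC)$, define $\tr$ there as a projection, and construct the subdivided homotopy; the abelian-group model can then be invoked afterward if convenient, but the decomposition, the retraction, and the homotopy must first live at the level of wedges.
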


\begin{proof}
Define $M_{n}^\vee(HC)$ as the $Sp^{\Sigma}$-enriched category with objects $ObC^n$ and morphism spectra
\[M_{n}^\vee(HC)(\underline{c},\underline{d})=\bigvee_{1\leq i,j\leq n}HC(c_i,d_j)\]
Composition is induced by the maps
\[(\bigvee_{1\leq i,j\leq n}HC(d_i,e_j)_p)\wedge(\bigvee_{1\leq l,h\leq n}HC(c_l,d_h)_q)\longrightarrow \bigvee_{1\leq l,h\leq n}HC(c_l,e_j)_{p+q}\]
that send $f\wedge g\in HC(d_i,e_j)_p\wedge HC(c_h,d_l)_q$ to $f\circ g\in HC(c_h,e_j)_{p+q}$ if $l=i$ and to the basepoint otherwise. It has a duality that sends $\underline{c}$ to $(D\underline{c})_i=Dc_i$, and a morphism $f$ in the $(i,j)$ wedge component of $M_{n}^\vee(HC)(\underline{c},\underline{d})$ to $Df$ in the $(j,i)$ wedge component of $M_{n}^\vee(HC)(D\underline{d},D\underline{c})$. The functor $\iota_o$ factors as
\[\xymatrix{HC\ar[dr]_-{\iota_o}\ar[r]^-{s_o}&M_{n}^\vee(HC)\ar[d]^{\simeq}\\
&HM_{n}(C)
}\]
where the vertical map is induced by the inclusion of wedges into direct sums. It is a stable $G$-equivalence by \ref{wedgesintoproducts} and it induces an equivalence in $\THR$ by \ref{stableGeq}. The horizontal map $s_o$ sends an object $c$ to 
$(c,o,\dots,o)$ and it is the inclusion in the $(1,1)$-wedge summand on morphisms.
We show that $s_o$ induces an equivalence in $\THR$. Consider the maps
\[V(M_{n}^\vee(HC);M_{n}^\vee(HM),\underline{i})\longrightarrow V(HC;HM,\underline{i})\]
that sends an element
\[(m,(l_0,h_0))\wedge(f_1,(l_1,h_1))\wedge\dots\wedge (f_k,(l_k,h_k))\]
into $m\wedge f_1\wedge\dots\wedge f_k$ if $l_0=h_0=l_1=h_1=\dots=l_k=h_k$ and to the base point otherwise. Here the notation $(f,(l,h))$ indicates that $f$ belongs to the $(l,h)$-wedge component of the morphism space.
This induces a $G$-map
\[\tr_k\colon\THR_k(M_{n}^\vee(HA);M_{n}^\vee(HM))\longrightarrow \THR_k(HA;HM)\]
that is clearly a retraction for $s_o$. A construction analogous to the proof of \ref{messyhomotopy} defines a $G$-homotopy on the subdivisions between $s_o\circ Tr$ and the identity. For the non-equivariant construction one can see  \cite[1.6.18]{ringfctrs}.
\end{proof}

\begin{defn}
A full subcategory $B\subset C$ is called \textbf{cofinal} if for every object $c\in C$ there is an object $r(c)\in B$ and maps in $C$
\[\xymatrix{c\ar@<.9ex>[r]^-{i_c}&r(c)\ar@<.9ex>[l]^-{p_c}}\]
with $p_c\circ i_c=\id_c$.

Let $(C,D)$ be a category with strict duality and $B\subset C$ a full subcategory closed under the duality. The category $B$ is \textbf{$G$-cofinal in $C$} if it is cofinal, and if for every object $c\in C$ with $c=Dc$, there is a $r(c)\in B$ satisfying $Dr(c)=r(c)$ and an inclusion $i_c\colon c\longrightarrow r(c)$ with $D(i_c)\circ i_c=\id_c$.
\end{defn}

\begin{ex}
If $C$ is the category of finitely generated projective $A$-modules $\mathcal{P}_A$, we may take $B$ to be the subcategory 
 category of finitely generated free modules $\mathcal{F}_A$. Then $\mathcal{F}_A$ is cofinal in $\mathcal{P}_A$ and $\mathcal{D}\mathcal{F}_A$ is $G$-cofinal in $\mathcal{D}\mathcal{P}_A$. Indeed, let $\varphi=(P,P,\phi)$ be an object of $\mathcal{D}\mathcal{P}_A$ fixed by the duality and choose an $A$-module $Q$ so that $P\oplus Q\cong A^n$. Then one can take $r(\varphi)$ to be the object
\[r(\varphi)=(P\oplus Q\oplus DQ\oplus DP \ ,\ P\oplus Q\oplus DQ\oplus DP,r(\phi))\]
of $\mathcal{D}\mathcal{F}_A$, where $r(\phi)$ is the isomorphism
\[\xymatrix{P\oplus Q\oplus DQ\oplus DP\ar[drrr]_-{r(\phi)}\ar[rrr]^-{\phi\oplus\left(\begin{smallmatrix}0& \eta\\1&0\end{smallmatrix}\right)\oplus D\phi^{-1}}&&& DP\oplus DQ\oplus D^2Q\oplus D^2P\ar[d]^{\cong}\\
&&& D(P\oplus Q\oplus DQ\oplus DP)
}\]
The map $i\colon \varphi\rightarrow r(\varphi)$ is the pair $(i_1,p_1)$ of the inclusion and projection of the $P$ summand of $P\oplus Q\oplus DQ\oplus DP$. Then
\[D(i_1,p_1)\circ(i_1,p_1)=(p_1,i_1)\circ(i_1,p_1)=(p_1\circ i_1,p_1\circ i_1)=\id\]

Similarly, the inclusion of $M_n(A)$ in $\mathcal{F}^{n}_A$ is cofinal, and the inclusion of $\mathcal{D}M_n(A)$ in $\mathcal{D}\mathcal{F}^{n}_A$ is $G$-cofinal.
\end{ex}

\begin{prop}[Cofinality]\label{cofinality}
The inclusion of a $G$-cofinal subcategory with duality $B\subset C$ defines a $G$-equivalence
\[\THR(HB,H\iota^\ast M)\longrightarrow\THR(HC,H M) \]
where $\iota^\ast M$ denotes the restriction of the bimodule $M$ to the subcategory $B$.
\end{prop}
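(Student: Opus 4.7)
The argument parallels the proofs of Morita invariance (Proposition \ref{morita}) and of Proposition \ref{inclfullsubcat}: I will construct an equivariant retraction at the simplicial level and then an equivariant simplicial homotopy on the Segal subdivision.

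First, fix equivariant retraction data. For each $c \in C$ choose $r(c) \in B$, $i_c\colon c \to r(c)$ and $p_c\colon r(c) \to c$ with $p_c \circ i_c = \id_c$, subject to the following symmetries:
(i) $r(b) = b$ and $i_b = p_b = \id_b$ for $b \in B$;
(ii) for $c = Dc$, use $G$-cofinality to arrange $r(c) = Dr(c)$ and $Di_c \circ i_c = \id_c$, and set $p_c := Di_c$;
(iii) for each free $D$-orbit $\{c, Dc\}$, make a choice for one representative and define $r(Dc) := Dr(c)$, $i_{Dc} := Dp_c$, $p_{Dc} := Di_c$.
Using strictness of $D$, (iii) gives $p_{Dc} \circ i_{Dc} = D(p_c \circ i_c) = \id_{Dc}$.

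Next, define a map $R\colon V(HC;HM,\underline{i}) \to V(HB;H\iota^*M,\underline{i})$ sending the wedge summand indexed by $(c_0,\ldots,c_k)$ to the one indexed by $(r(c_0),\ldots,r(c_k))$ via
\[
m \wedge f_1 \wedge \cdots \wedge f_k \longmapsto (i_{c_k} \cdot m \cdot p_{c_0}) \wedge (i_{c_0} f_1 p_{c_1}) \wedge \cdots \wedge (i_{c_{k-1}} f_k p_{c_k}).
\]
The crucial identity $p_{c_l} \circ i_{c_l} = \id_{c_l}$ makes $R$ commute with all face maps: for interior $d_l$, the adjacent factors $p_{c_l} i_{c_l}$ produced upon composition cancel; and for $d_0$ and $d_k$, the same cancellation occurs via the left and right bimodule actions. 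The choice of retraction data above ensures $R$ is equivariant for $\underline{i} \in I[2k+1]^G$ because the duality swaps $i$ and $p$ consistently. Moreover $R$ commutes with the structure maps parameterizing the homotopy colimit over $I[k]$, being defined levelwise in $\underline{i}$. Hence $R$ induces a $G$-equivariant map $R_*\colon \THR(HC;HM) \to \THR(HB;H\iota^*M)$ (working on the fat realization, which agrees with the usual realization since the relevant simplicial spaces are good), and $R_* \circ j_* = \id$ by condition (i).

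It remains to show that $j_* \circ R_*$ is $G$-homotopic to the identity. For this I construct a simplicial $G$-homotopy on the Segal subdivision, modeled on the one in Proposition \ref{inclfullsubcat}. For $\sigma = (0^b 1^{k+1-b}) \in \Delta[1]_k$, define $H_\sigma$ on the length-$(2k+2)$ subdivided string by conjugating with $i_{c_l}, p_{c_l}$ only at the outermost $b$ entries on each side, while leaving the middle $2(k+1-b)$ entries untouched; the telescoping identity $p_{c_l} i_{c_l} = \id_{c_l}$ ensures the image lies in the correct wedge summand and that the family is simplicial in $\sigma$. The $D$-symmetry of the retraction data in (ii)–(iii) makes $H$ equivariant for the reversal action on the subdivision. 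The main obstacle is the bookkeeping required to ensure simpliciality and equivariance simultaneously, but this proceeds by the same explicit verification as in the proof of Proposition \ref{inclfullsubcat}.
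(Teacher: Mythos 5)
Your proposal is correct and takes essentially the same approach as the paper: same choice of $D$-symmetric retraction data $(r(c),i_c,p_c)$, same pre-functorial retraction $r$ inducing a face-preserving map on $\THR_{\sbt}$ (handled via the fat realization), and same style of equivariant simplicial homotopy on the Segal subdivision exploiting $p_c\circ i_c=\id$. The one place you diverge is in how you produce the homotopy between $\iota\circ r$ and $\id$: the paper simply observes that $i\colon\id\Rightarrow\iota\circ r$ is a $G$-natural transformation in the sense of Definition~\ref{Gnattransf} (the key relation is $Di_{Dc}=p_c$, giving $Di_{Dc}\circ i_c=\id_c$ and $i_d\circ Di_{Dc}\circ r(f)=r(f)$) and then invokes Proposition~\ref{THRpreservesGequiv} to get the subdivided $G$-homotopy for free, whereas you write the homotopy out by hand. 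Your explicit homotopy (growing the $r$-conjugated region from the outside in) differs in detail from the one Proposition~\ref{THRpreservesGequiv} would produce (which grows the $r$-region from the middle out), but both are equivariant and satisfy the face identities, so either works; the paper's route is just shorter and reuses established machinery. One small slip: the homotopy you are modelling your construction on is the one in Proposition~\ref{obintoiso} (or the lemma after~\ref{inclfullsubcat}), not Proposition~\ref{inclfullsubcat} itself, whose proof goes through \ref{equivalenceequivariant} and \ref{strictifytwice} rather than an explicit retraction-to-the-middle homotopy.
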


\begin{proof}
We choose a duality preserving retraction $r\colon C\longrightarrow B$ for the inclusion functor $\iota\colon B\longrightarrow C$ and natural transformations $i\colon\id\Rightarrow r\iota$ and $p\colon r\iota\Rightarrow\id$, as follows. If $b\in B$, take $r(b)=b$ and define $i_b=p_b=\id_b$. If $Dc=c$, we choose a self dual $r(c)\in B$, an inclusion $i_c\colon c\longrightarrow r(c)$ and define $p_c=Di_c$. We have left to define $(r(c),i_c,p_c)$ for all $c\in C\backslash B$ with $Dc\neq c$. Choose a representative of each orbit under $D$, and define $(r(c),i_c,p_c)$ arbitrarily for these representatives. Then set $r(Dc)=Dr(c)$, $i_{Dc}=Dp_c$ and $p_{Dc}=Di_c$.
The retraction  $r\colon C\longrightarrow B$ is defined on objects by sending $c$ to $r(c)$ as notation suggests. A morphism $f\colon c\longrightarrow c'$ is sent to the composite
\[\xymatrix{ r(c)\ar@{-->}[d]_-{r(f)}\ar[r]^-{p_c}&c\ar[d]^{f}\\
r(c')&c'\ar[l]_-{i_c}
}\]
This defines a pre-functor: $r$ preserves compositions but not identities. Similarly there is a map of bimodules $M\Rightarrow i^\ast M\circ (r^{op}\otimes r)$ defined at a pair $(c,d)$ by
\[M(p_c,i_d)\colon M(c,d)\longrightarrow M(rc,rd)\]
The induced map $r\colon\THR_{\sbt}(HC,HM)\longrightarrow \THR_{\sbt}(HB,H\iota^\ast M)$
is a map of pre-simplicial spaces (i.e. it preserves the face structure) that commutes strictly with the involutions. It therefore induces a $G$-map on the realization (recall that we are using the thick realization).
By construction $r$ satisfies $r\circ \iota=\id$. The inclusions $i_c$ define a natural transformation $i\colon \id\Rightarrow \iota\circ r$. By construction $Di_{Dc}=p_c$, and therefore this natural transformation satisfies $Di_{Dc}\circ i_c=\id_c$ and $i_d\circ Di_{Dc}\circ r (f)= r(f)$. Thus $i$ is a $G$-natural transformation in the sense of \ref{Gnattransf}, and by \ref{THRpreservesGequiv} it induces a pre-simplicial $G$-homotopy
\[sd_e\THR_{\sbt}(HC;HM)\times\Delta[1]\longrightarrow sd_e\THR_{\sbt}(HC;HM)\]
between the identity and $\iota\circ r$.
\end{proof}


\subsection{$\THR$ of triangular and diagonal matricies}

Suppose that $C$ is a category enriched in abelian groups, $D$ a strict additive duality on $C$, and $(M,J)$ a bimodule with duality over $(C,D)$. Also, let $(X,\leq)$ be a finite poset with involution $\omega$ reversing the order. By this we mean that the involution $\omega$ extends to a strict duality on the associated category $\omega\colon (X,\leq)^{op}\longrightarrow (X,\leq)$.

\begin{defn}
The \textbf{category of triangular $X$-matricies on $HC$} is the category $T^XHC$ enriched in $Sp^{\Sigma}$ with objects $ObT^XHC=Ob(HC)^X$ and morphisms spectra
\[T^XHC(a,b)=\bigoplus_{x\leq y\in X\times X}HC(a_x,b_y)\]
Composition sends $f'\wedge f$ to
\[(f\circ f')_{x\leq y}=\sum_{x\leq j\leq y}f_{j\leq y}\circ f_{x\leq j}'\]
This uses that $HC(a_x,b_y)_i=C(a_x,b_y)\otimes\mathbb{Z}(S^i)$ is an abelian group.
There is a duality $D\colon (T^XHC)^{op}\longrightarrow T^XHC$ that sends an object $a$ to
\[(Da)_x=Da_{\omega x}\]
and a morphism $f\in T^X HC(a,b)_i$ to
\[(Df)_{x\leq y}=D(f_{\omega y\leq \omega x}\colon a_{\omega y}\longrightarrow b_{\omega x})\]
Define $T^XHM$ to be the bimodule over $T^XHC$
\[T^XHM(a,b)=\bigoplus_{x\leq y\in X\times X}HM(a_x,b_y)\]
with bimodule structure and duality defined in a similar way as for $T^XHC$.
\end{defn}

Recall that for an $Sp^\Sigma$-category with duality $\mathcal{C}$, we write $\prod_X\mathcal{C}=\mathcal{C}^X$, so that $H(C^X)=(HC^X)$.
There is an equivariant functor  $\tr_X \colon T^XHC\longrightarrow HC^X$ that is the identity on objects, and sends a morphism $\{f_{x\leq y}\}\in T^XHC(a,b)_i$ to its diagonal
\[\tr_X(f)_x=f_{x\leq x}\]
This functor admits an equivariant section $m_X\colon HC^X\longrightarrow T^XHC$ that is the identity on objects, and sends $f\in HC(a,b)_i$ to
\[m(f)_{x\leq y}=\left\{\begin{array}{ll}f_x &\mbox{x=y}\\
0&\mbox{ else}
\end{array}\right.\]
Similarly there are maps of bimodules $T^XHM\longrightarrow HM^X$ and $HM^X\longrightarrow T^XHM$. These induce equivariant maps between  $\THR(HC^X,HM^X)$ and $\THR(T^XHC;T^XHM)$.
The goal of this section is to prove the following, that generalizes \cite[1.6.20]{ringfctrs} to the equivariant setting.

\begin{prop}\label{triangdiagmat}
The map \[\tr_X\colon \THR(T^XHC;T^XHM)\longrightarrow\THR(HC^X,HM^X)\] is a $G$-homotopy equivalence with homotopy inverse $m_X$.
\end{prop}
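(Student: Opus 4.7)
The first composite is trivial: both $\tr_X$ and $m_X$ are the identity on objects, and on morphism spectra $m_X$ includes a morphism as its image in the ``diagonal'' summand of $T^XHC(a,b) = \bigoplus_{x\leq y} HC(a_x,b_y)$ (and similarly for $T^XHM$), while $\tr_X$ projects onto that summand. Hence $\tr_X \circ m_X = \id$ as a morphism of $Sp^\Sigma$-enriched bimodules with duality, and consequently $(\tr_X)_\ast \circ (m_X)_\ast = \id$ on $\THR(HC^X;HM^X)$. The proposition is therefore reduced to showing that the idempotent $p := m_X \circ \tr_X$ on $(T^XHC;T^XHM)$ induces a map $G$-homotopic to the identity on $\THR(T^XHC;T^XHM)$.

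For this I plan to construct an explicit simplicial $G$-homotopy on the Segal subdivision, in the spirit of Lemma~\ref{messyhomotopy}. For $\sigma=(0^b1^{k+1-b}) \in \Delta[1]_k$, define
\[
H_\sigma\colon V(T^XHC;T^XHM,\underline{i}) \longrightarrow V(T^XHC;T^XHM,\underline{i})
\]
whose effect on the smash factor at position $l\in\{0,\dots,2k+1\}$ of the cyclic string is: apply the diagonal projection $p$ when $l\leq b-1$ or $l\geq 2k+2-b$ (the outer band of width $b$ on each side of the central pair of indices), and apply the identity otherwise; in particular $H_{(0^{k+1})}=\id$ and $H_{(1^{k+1})}=p$. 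I then need to check that each $H_\sigma$ is $G$-equivariant, that the collection $\{H_\sigma\}$ is compatible with the structure maps of $\mathcal{G}_{2k+1}(T^XHC;T^XHM)$ and with the simplicial face and degeneracy maps, and that its endpoints are $\id$ and $p_\ast$. Equivariance follows from the symmetry of the outer-band prescription around the middle index $k+1$ of the subdivision, together with the observation that the duality on $T^XHC$ sends the $(x\leq y)$-summand to the $(\omega y\leq \omega x)$-summand and so commutes with $p$. Realization then produces the required $G$-homotopy on $\THR(T^XHC;T^XHM)$.

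The main obstacle will be verifying the simplicial compatibility, in particular with the cyclic face maps $d_0$ and $d_{2k+1}$, which cyclically rotate the smash factors before applying a bimodule action. One must check that projecting the outer factors and then cyclically rotating agrees with rotating and then projecting the new outer factors; this is precisely where the symmetric placement of the outer band around the middle pair is needed, since it is what makes the prescription invariant under the rotation used in the cyclic/subdivided structure. Compatibility with the inner face maps is then formal, using that composition in $T^XHC$ preserves the diagonal summand when at least one factor lies in it (orthogonality of the direct-sum decomposition), and compatibility with degeneracies follows from the fact that identity morphisms are already diagonal. Together these give the desired simplicial $G$-homotopy.
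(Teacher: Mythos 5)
Your reduction of the problem is correct: $\tr_X\circ m_X=\id$ on the nose, so it does suffice to show that the idempotent $p=m_X\circ\tr_X$ on $(T^XHC,T^XHM)$ induces a map $G$-homotopic to the identity. The genuine gap is in the claimed simplicial compatibility of the band homotopy $H_\sigma$, and it hinges on the ``orthogonality'' statement, which is false. For $f,g$ composable morphisms in $T^XHC$, the $(x\leq y)$-component of $p(f)\circ g$ is $f_{y\leq y}\circ g_{x\leq y}$: this is neither diagonal nor equal to $(f\circ g)_{x\leq y}=\sum_{x\leq j\leq y}f_{j\leq y}\circ g_{x\leq j}$ in general. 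Concretely, take $\sigma=(0^b1^{k+1-b})$ and the inner face $\overline{d}_{b-1}=d_{b-1}d_{2k+2-b}$. Your $H_\sigma$ applies $p$ at position $b-1$ (the last position in the outer band) and $\id$ at position $b$ (the first outside it), so $\overline{d}_{b-1}H_\sigma$ produces $p(f_{b-1})\circ f_b$ at the merged position; but $d_{b-1}\sigma=(0^{b-1}1^{k-b+1})$ has band width $b-1$, under which the merged position $b-1$ lies \emph{outside} the band, so $H_{d_{b-1}\sigma}\overline{d}_{b-1}$ produces $f_{b-1}\circ f_b$ there. These disagree. The issue is not peripheral: it recurs at every boundary crossing, including in the cyclic/rotating faces. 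The technique of Lemma~\ref{messyhomotopy} (and more precisely of the homotopy in the proposition just after \ref{triangdiagmat}) works because it is carried out in $T^{X}_{\vee}\mathcal{C}$, where a morphism is a single $(f,x\leq y)$ rather than a matrix, composition is sparse (basepoint unless the intermediate index agrees), and a family of index-equality constraints over $\Delta[1]_k$ defines a coherent homotopy. That structure has no counterpart in the full matrix category $T^XHC$.

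The paper's proof therefore does not attempt a direct homotopy on $T^XHC$. Instead it factors $m_X$ through a commutative square whose corners are $(HC^X,HM^X)$, $(T^XHC,T^XHM)$, $(\bigvee_XHC,\bigvee_XHM)$, and $(T^X_\vee HC,T^X_\vee HM)$, and it shows: the left vertical map $\bigvee_X\to\prod_X$ is a $G$-equivalence by \ref{powerandsums}; the right vertical map $T^X_\vee HC\to T^XHC$ (inclusion of wedges into sums) is a stable $G$-equivalence by \ref{wedgesintoproducts} and \ref{stableGeq}, hence a $\THR$-equivalence; and the bottom map $\bigvee_X\mathcal{C}\to T^X_\vee\mathcal{C}$ admits an explicit $G$-homotopy inverse via a constraint-checking homotopy that is legitimate precisely because composition there is sparse. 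So if you want to keep the spirit of your plan, the fix is to first pass to the wedge models (a connectivity argument, not a homotopy), and only then write down the simplicial homotopy.
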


\begin{rem} One can define a category enriched in abelian groups $T^XC$ with  objects $ObC^X$ and morphisms
\[T^XC(a,b)=\bigoplus_{x\leq y\in X\times X}C(a_x,b_y)\]
with similar composition and duality. The canonical isomorphism
\[(\bigoplus_{x\leq y\in X\times X}C(a_x,b_y))\otimes \mathbb{Z}(S^i)\cong (\bigoplus_{x\leq y\in X\times X}C(a_x,b_y)\otimes \mathbb{Z}(S^i))\]
gives an isomorphism $T^XHC\cong HT^XC$ respecting the dualities. One can also define similarly maps  between $C^X$ and $T^XC$, inducing maps between $H(C^X)$ and $HT^XC$, corresponding to $m_X$ and $\tr_X$ under the canonical isomorphisms $H(C^X)\cong (HC)^X$ and $HT^XC\cong T^XHC$.
\end{rem}

\begin{rem} For a general category $\mathcal{C}$ enriched in $Sp^{\Sigma}$, our construction $T^X$ does not make sense since we use the group structure on the morphism spectra when we define composition. One usually defines  $T^X\mathcal{C}$ by the spectra
\[T^X\mathcal{C}(a,b)=\prod_{y\in X}\bigvee_{x\leq y}\mathcal{C}(a_x,b_y)\]
In the case of $HC$, the inclusion of wedges into products gives a stable equivalence between these two constructions. There is however no obvious way to define a duality on $T^X\mathcal{C}$.
\end{rem}

The proof of \ref{triangdiagmat} is very close in spirit to the proof of \ref{THRproducts} and follows the non-equivariant proof of \cite[1.6.20]{ringfctrs}. In particular, we want to exchange products with wedges. For this purpose we define the following category, for a general $(\mathcal{C},D)$ enriched in $Sp^{\Sigma}$.

\begin{defn}
The category $T_{\vee}^X\mathcal{C}$ has objects $ObT_{\vee}^X\mathcal{C}=Ob\mathcal{C}^X$ and morphisms spectra
\[T_{\vee}^X\mathcal{C}(a,b)=\bigvee_{x\leq y\in X\times X}\mathcal{C}(a_x,b_y)\]
Composition sends $(f',x'\leq y')\wedge (f,x\leq y)$ to
\[(f,x\leq y)\circ (f',x'\leq y')=\left\{\begin{array}{ll}(f\circ f',x'\leq y)& \mbox{ if }y'=x\\
\ast &\mbox{ else }
\end{array}\right.\]
There is a duality $D\colon (T_{\vee}^X\mathcal{C})^{op}\longrightarrow T_{\vee}^X\mathcal{C}$ that sends an object $a$ to
\[(Da)_x=Da_{\omega x}\]
and a morphism $(f,x\leq y)\in T_{\vee}^X\mathcal{C}(a,b)_i$ to
\[(Df,\omega y\leq\omega x)\in\mathcal{C}(Db_{y},Da_{x})_i\subset\bigvee_{x'\leq y'} \mathcal{C}(Db_{\omega y'},Da_{\omega x'})_i\]
If $\mathcal{M}$ is a bimodule on $\mathcal{C}$ with duality $J$, define $T_{\vee}^X\mathcal{M}$ to be
\[T^X\mathcal{M}(a,b)=\bigvee_{x\leq y\in X\times X}\mathcal{M}(a_x,b_y)\]
with bimodule structure and duality defined similarly.
\end{defn}

There is a functor $\bigvee_X\mathcal{C}\longrightarrow T_{\vee}^X\mathcal{C}$ commuting with the dualities, given by the identity on objects, and sending $(f,x)\in \bigvee_X\mathcal{C}(a,b)_i$ to $(f,x\leq x)$.
Proposition \ref{triangdiagmat} is a direct corollary of the following.

\begin{prop}
There is a commutative diagram of equivariant maps
\[\xymatrix{\THR(HC^X;HM^X)\ar[r]^-{m_X}&\THR(T^XHC;T^XHM)\\
\THR(\bigvee_XHC;\bigvee_XHM)\ar[u]\ar[r]&\THR(T_{\vee}^XHC;T_{\vee}^XHM)\ar[u]
}\]
where the vertical maps and the bottom map are $G$-equivalences.
\end{prop}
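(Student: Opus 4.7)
The plan is to establish the three labelled $G$-equivalences separately and then read off the proposition by a $2$-out-of-$3$ argument in the commutative square; commutativity itself is immediate since every map in the square is the identity on objects and sends a morphism $(f,x)\in\bigvee_X HC(a,b)_i$ to the corresponding element of the $(x,x)$-summand of a direct sum (in $HC^X$, $T_\vee^X HC$, or $T^X HC$). Once the three maps are shown to be $G$-equivalences, so is the top map $m_X$, and since $\tr_X\circ m_X=\id$ this exhibits $\tr_X$ as a $G$-homotopy inverse to $m_X$, proving Proposition \ref{triangdiagmat}.

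The left vertical map is exactly the wedge-into-product map of Lemma \ref{powerandsums}, applied to the constant $G$-indexed family $(HC,HM)_{x\in X}$, which is $G$-connected by \ref{linear0connected}, so it is a $G$-equivalence. For the right vertical map, the underlying functor is the identity on objects, and on morphism spectra it is the levelwise inclusion of a wedge sum into a direct sum of the $(i-1)$-connected spectra $HC(a_x,b_y)$ indexed by $\{x\leq y\}\subset X\times X$. By Blakers--Massey this inclusion is $(2i-1)$-connected non-equivariantly, so one may take $\delta(i)=2i-1$; on the fixed points, for a self-dual object $a$ the duality permutes the $\{x\leq y\}$-summands by $(x\leq y)\mapsto (\omega y\leq \omega x)$, and \ref{wedgesintoproducts} together with the $G$-connectivity of $HC$ gives connectivity at least $\min\{i-1,2\lceil i/2\rceil-1\}=i-1$, so $\delta^G(i)=i-1$ satisfies $\delta^G(i)-\lceil i/2\rceil\to\infty$. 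The analogous statement for the bimodule shows that the map is a stable $G$-equivalence in the sense of \ref{stableGeq}, hence induces a $G$-equivalence on $\THR$.

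The main obstacle is the bottom map, where the connectivity argument breaks down because $\bigvee_X HC\to T_\vee^X HC$ adjoins extra off-diagonal wedge summands that are not highly connected relative to the diagonal ones; so instead I imitate the strategy of Lemma \ref{messyhomotopy}. Define an equivariant retraction
\[r\colon V(T_\vee^X HC;T_\vee^X HM,\underline{i})\longrightarrow V(\bigvee_X HC;\bigvee_X HM,\underline{i})\]
that sends an element whose labels are $(x_0\leq y_0),\ldots,(x_{2k+1}\leq y_{2k+1})$ to itself if every inequality is an equality, and to the basepoint otherwise; this is $G$-equivariant for $\underline{i}\in I[2k+1]^G$ because the duality on $T_\vee^X HC$ sends the $(x\leq y)$-summand to the $(\omega y\leq \omega x)$-summand and hence preserves the diagonality condition. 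Then, working on the subdivision $sd_e\THR_\bullet(T_\vee^X HC;T_\vee^X HM)$, construct a simplicial $G$-homotopy from $\id$ to the (simplicial) composite induced by $\iota\circ r$ by, for each $\sigma=(0^b1^{k+1-b})\in\Delta[1]_k$, letting $H_\sigma$ be the endomorphism of $V$ which retains the element unchanged if $x_l=y_l$ for all $l$ in the symmetric band $\{0,\dots,b-1\}\cup\{2k+2-b,\dots,2k+1\}$, and sends it to the basepoint otherwise; at $b=0$ this is the identity and at $b=k+1$ it is $\iota\circ r$. The symmetric band is stable under the reversal of indices induced by the duality, so every $H_\sigma$ is $G$-equivariant, and by inspection the family is compatible with the face and degeneracy maps of $\Delta[1]_\bullet$ and with the simplicial structure of $sd_e\THR_\bullet$.

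The technical core and main obstacle is precisely the equivariance and simpliciality of the family $\{H_\sigma\}$, where one must carefully track how the duality, the cyclic bimodule structure of $V$, and the "collapse to diagonal" interact, exactly as in the intricate bookkeeping of Lemma \ref{messyhomotopy}; once this is in hand, $\iota\circ r$ is simplicially $G$-homotopic to the identity, $r\circ\iota=\id$ by construction, and the bottom map is a $G$-equivalence, completing the proof.
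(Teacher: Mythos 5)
Your treatment of the two vertical maps is correct and matches the paper: the left one is \ref{powerandsums} for the constant family, and the right one is a stable $G$-equivalence because wedges into sums of suitably connected summands are highly connected, as you compute. Commutativity of the square is also as you say.

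The gap is in the bottom map. Your retraction $r$ collapses a string unless $x_l=y_l$ for \emph{each} $l$ separately. That is a genuinely different map from the paper's $\tr$, which collapses unless \emph{all} the indices $x_0,y_0,\dots,x_{2k+1},y_{2k+1}$ are equal to one another. Your $r$ has the pleasant feature that $r\circ d=\id$ strictly (the paper's $\tr\circ d$ is only simplicially homotopic to the identity), but the homotopy you propose for $d\circ r\simeq\id$ is not simplicial. The face maps of the cyclic bar construction on $T_\vee^X HC$ produce the label $(x_{l+1}\leq y_l)$ from $(x_l\leq y_l)$ and $(x_{l+1}\leq y_{l+1})$; they mix the $x$-index of one factor with the $y$-index of the neighbouring factor, so the per-position condition $x_l=y_l$ is not stable under them in the way your band condition requires. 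Concretely, take $k=1$ (actual degree $3$) with labels $(x_0,y_0)=(1,1)$, $(x_1,y_1)=(1,2)$, $(x_2,y_2)=(1,1)$, $(x_3,y_3)=(1,1)$, and $\sigma=(01)\in\Delta[1]_1$, so $b=1$ and your band is $\{0,3\}$. Both $x_0=y_0$ and $x_3=y_3$ hold, so $H_\sigma(\xi)=\xi$, and $\overline d_1\xi=d_1d_2\xi$ is nondegenerate with labels $(1\leq 1)$ and $(1\leq 2)$. But $d_1\sigma=(0)$ has $b'=1$ and band $\{0,1\}$, and the second label of $\overline d_1\xi$ has $1\neq 2$, so $H_{d_1\sigma}(\overline d_1\xi)=\ast$, whereas $\overline d_1 H_\sigma(\xi)=\overline d_1\xi\neq\ast$. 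So your family $\{H_\sigma\}$ does not commute with $\overline d_1$.

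What saves the paper's homotopy is that its condition at $\sigma=(0^b1^{k+1-b})$ is the cyclic chain of cross-equalities $x_{2k+1}=y_0$ and $x_l=y_{l+1}$ for $l\in\{0,\dots,b-1\}\cup\{2k+2-b,\dots,2k\}$, precisely matching how the faces tie $y$ of one factor to $x$ of the next; one checks (as the paper does) that at $b=k+1$ this forces all indices equal, giving $d\circ\tr$, and that the family is stable under $\overline d_l$ and $d_l\sigma$. If you want to keep your $r$, you would need to produce a different simplicial $G$-homotopy from $d\circ r$ to $\id$; the proposed band condition does not give one, and the interesting combinatorics of the lemma is exactly in finding a condition compatible with the cyclic bar faces and the duality simultaneously.
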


\begin{proof}
We proved in \ref{powerandsums} that the left vertical map is an equivalence. By \ref{wedgesintoproducts}, the right vertical map is a $G$-equivalence since the inclusion of wedges into products defines a stable $G$-equivalence.

We prove that the bottom map is an equivalence for a general $\mathcal{C}$, by defining a $G$-homotopy inverse for the levelwise maps
\[d\colon \THR_{2k+1}(\bigvee_X\mathcal{C};\bigvee_X\mathcal{M})\longrightarrow\THR_{2k+1}(T_{\vee}^X\mathcal{C};T_{\vee}^X\mathcal{M})\]
 We prove this by defining for every $\underline{i}\in I[2k+1]$ a map
\[\tr\colon V(T_{\vee}^X\mathcal{C};T_{\vee}^X\mathcal{M},\underline{i})\longrightarrow V(\bigvee_X\mathcal{C};\bigvee_X\mathcal{M},\underline{i})\]
compatible with the structure maps. It sends
\[((a^0,\dots,a^{2k+1}),(m,x_0\leq y_0)\wedge(f^1,x_1\leq y_1)\wedge\dots\wedge (f^{2k+1},x_{2k+1}\leq y_{2k+1}))\]
where $a^l\in Ob\mathcal{C}^X$, $m\in\mathcal{M}(a^{0}_{x_0},a^{2k+1}_{y_0})$ and $f^l\in \mathcal{C}(a^{l}_{x_l},a^{l-1}_{y_l})$ to
\[\resizebox{1\hsize}{!}{$\left\{\begin{array}{ll}((a^0\!\!,\!\dots\!,\!a^{2k\!+\!1}),\!(m,\!x_0)\!\wedge\!(f^1\!,\!x_0)\!\wedge\!\dots\!\wedge\!(f^{2k\!+\!1}\!,\!x_{0}))& \mbox{ if }x_0\!=\!y_0\!=\!x_1\!=\!\dots\!=\!y_{2k\!+\!1}\\
\ast & \mbox{  else}
\end{array}\right.$}\]
Notice that we do not just compare $x_l$ with $y_l$, but we look at all the indicies together. The composite $\tr\circ d\colon V(\bigvee_X\mathcal{C};\bigvee_X\mathcal{M};\underline{i})\longrightarrow V(\bigvee_X\mathcal{C};\bigvee_X\mathcal{M};\underline{i})$ sends
\[((a^0,\dots,a^{2k+1}),(m,x_0)\wedge(f^1,x_1)\wedge\dots\wedge (f^{2k+1},x_{2k+1}))\]
to itself if $x_0=\dots=x_{2k+1}$, and to the basepoint otherwise. A $G$-homotopy between this map and the identity, is induced by the maps \[H\colon V(\bigvee_X\mathcal{C};\bigvee_X\mathcal{M};\underline{i})\times\Delta[1]_k\longrightarrow V(\bigvee_X\mathcal{C};\bigvee_X\mathcal{M};\underline{i})\] defined for an $\sigma=(0^b,1^{k+1-b})\in \Delta[1]_k$ by sending an element as above to itself if
\[x_0=\dots=x_{b}=x_{2k+2-b}=\dots=x_{2k+1}\]
and to the basepoint otherwise.

The other composite $d\circ \tr$ sends an element \[((a^0,\dots,a^{2k+1}),(m,x_0\leq y_0)\wedge(f^1,x_1\leq y_1)\wedge\dots\wedge (f^{2k+1},x_{2k+1}\leq y_{2k+1}))\] to itself if $x_0=y_0=x_1=\dots=y_{2k+1}$ and to the basepoint otherwise. A $G$-homotopy is defined at $\sigma=(0^b,1^{k+1-b})\in \Delta[1]_k$ by sending such an element to itself if
\[\xymatrix@=3pt{ & x_0\ar@{=}[dr]&x_1\ar@{=}[dr]&x_2\ar@{=}[dr]&\dots\ar@{=}[dr]&x_{b\!-\!1}\ar@{=}[dr]&x_{b}&&x_{2k\!+\!2\!-\!b}\ar@{=}[dr]&x_{2k\!+\!3\!-\!b}\ar@{=}[dr]& \dots \ar@{=}[dr]&x_{2k}\ar@{=}[dr]&x_{2k\!+\!1}\ar@{=}[r]&y_0\\
x_{2k\!+\!1}\ar@{=}[r]&y_0&y_1&y_2&\dots &y_{b\!-\!1}&y_{b}&&y_{2k\!+\!2\!-\!b}&y_{2k\!+\!3\!-\!b}&\dots&y_{2k}&y_{2k\!+\!1}
}\]
and to the basepoint otherwise. Notice the missing equality sign in the middle of the expression. The symmetry of this relation shows that the map is equivariant for every fixed $\sigma$, and one can check that it preserves the structure maps. It therefore defines a simplicial map on the subdivision
\[H\colon \THR_{2k+1}(T_{\vee}^X\mathcal{C};T_{\vee}^X\mathcal{M})\times \Delta[1]_k\longrightarrow\THR_{2k+1}(T_{\vee}^X\mathcal{C};T_{\vee}^X\mathcal{M}) \]
When $b=0$ the table above is no condition, and therefore $H(-,0)=\id$. When $b=k+1$, the table gives that
\[x_0\leq y_0=x_{2k+1}\leq y_{2k+1}\]and also
\[x_0=y_1\geq x_1=y_2\geq\dots\geq x_{2k}=y_{2k+1}\]
Therefore $x_0=y_{2k+1}$, and this forces all the indicies to be equal since $x_l\leq y_l$. This shows that $H(-,1)$ is $d\circ\tr$.
\end{proof}


\subsection{$\THR$ of the $S^{1,1}_{\cdot}$-construction and $S^{2,1}_{\cdot}$-construction}\label{THRforS21}

Let $(C,D,\mathcal{E})$ be an exact category with strict duality, and $M\colon C^{op}\otimes C\longrightarrow Ab$ a bimodule over $C$ with duality $J\colon M\Rightarrow M\circ D_\gamma$. To simplify the notation we drop the Eilenberg-MacLane construction, and we denote
\[\THR(C;M)=\THR(HC;HM)\]
We are going to extend $(M,J)$ to a bimodule with duality
\[M_k\colon S^{2,1}_kC^{op}\otimes S^{2,1}_kC\longrightarrow Ab\]
for every $k\geq 0$, and then express $\THR(S^{2,1}_kC;M_k)$ in terms of $\THR(C;M)$. We will do a similar construction for $S^{1,1}_kC$.

Given $X,Y\in S^{2,1}_kC$ we define $M_k(X,Y)$ as the subgroup
\[M_k(X,Y)\subset\bigoplus_{\theta\in Cat([2],[k])}\!\!\!\!\!\!M(X_\theta,Y_\theta)\]
defined by the following condition. For each $a\colon \rho\longrightarrow\theta$ in $Cat([2],[k])$ we have $X(a)\colon X_\rho\longrightarrow X_\theta$ and $Y(a)\colon Y_\rho\longrightarrow Y_\theta$ and hence
\[M(X_\theta,Y_\theta)\stackrel{X(a)^\ast}{\longrightarrow}M(X_\rho,Y_\theta)\stackrel{Y(a)_\ast}{\longleftarrow}M(X_\rho,Y_\rho)\]
The requirement for $\{m_\theta\}_{\theta\in Cat([2],[k])}\in M_k(X,Y)$ is that
\[X(a)^\ast(m_\theta)=Y(a)_\ast(m_\rho)\]
for all $a\colon \rho\longrightarrow\theta$ in $Cat([2],[k])$.
A morphism $(\phi\colon X'\longrightarrow X)\otimes(\psi\colon Y\longrightarrow Y')$ in $(S^{2,1}_kC)^{op}\otimes S^{2,1}_kC$ is sent to the restriction to $M_k(X,Y)$ of the map
\[\xymatrix{\bigoplus\limits_{\theta\in Cat([2],[k])}\!\!\!\!\!M(X_\theta,Y_\theta)\ar[rr]^-{\bigoplus_\theta M(\phi_\theta,\psi_{\theta})}&& \!\!\!\bigoplus\limits_{\theta\in Cat([2],[k])}\!\!\!\!\!M(X_{\theta}',Y_{\theta}')}\]
The bimodule $M_k$ carries a duality $J\colon M_k\Rightarrow M_k\circ D_\gamma$, given by restricting to $M_k(X,Y)$ the map
\[\xymatrix{\bigoplus\limits_{\theta\in Cat([2],[k])}\!\!\!\!\!M(X_\theta,Y_\theta)\ar[rr]^-{\bigoplus_{\theta} J}&& \!\!\!\bigoplus\limits_{\theta\in Cat([2],[k])}\!\!\!\!\!M(DY_{\overline{\theta}},DX_{\overline{\theta}})}\]

\begin{ex}\label{moduleS21PA}
Let $A$ be a ring, $M$ an $A$-bimodule, and $J$ an $M$-twisting of an antistructure $(A,L,\alpha)$. Recall from §\ref{THRrings} that this data defines a bimodule with duality $M^J$ on $\mathcal{D}\mathcal{P}_A$. The induced bimodule $M^{J}_k$ on $S^{2,1}_k\mathcal{D}\mathcal{P}_A$ associates to two diagrams of triples $\varphi=(X,Y,\phi)$ and $\varphi'=(X',Y',\phi')$ the abelian group $M_{k}^J(\varphi,\varphi')$ of pairs of natural transformations of diagrams $(f\colon X\longrightarrow X'\otimes_A M,g\colon Y'\longrightarrow Y\otimes_A M)$ such that
\[\xymatrix{Y_\theta\otimes_A M\ar[r]^-{\phi_\theta\otimes M}& D_L(X_\theta)\otimes_A M\\
Y'_\theta\ar[r]_{\phi'_\theta}\ar[u]^{g_\theta} & D_L(X_\theta')\ar[u]_{\widehat{J}(f_\theta)}
}\]
commutes for all $\theta$. Indeed, the condition defining $M^{J}_k$ as a subset of the direct sum $\bigoplus M^J(X_\theta,Y_\theta)$, is exactly naturality for the families $\{f_\theta\}$ and $\{g_\theta\}$.
The induced duality on $M^{J}_k$ exchanges $f$ and $g$. When $\varphi=\varphi'$ we recover the abelian group
\[M^{J}_k(\varphi,\varphi)=\hom_A(\varphi,\varphi\otimes_AM)\]
of theorem \ref{mainKR}.
\end{ex}

A completely analogous construction defines from $(M,J)$ a bimodule with duality on  $S^{1,1}_kC$, still denoted $M_{k}$.

We denote $S^{1,1}_{\sbt}$ and $S^{2,1}_{\sbt}$ the simplicial spheres $S^{1}_{\sbt}$ and $S^{2}_{\sbt}$ with action induced by conjugation action on $Cat([k],[n])$, for $n=1,2$. Recall that this action sends an element $\sigma\colon [k]\longrightarrow [n]$ to
\[\overline{\sigma}(l)= n-\sigma(k-l)\]
If we denote an element of $Cat([k],[n])$ by a sequence $(i_0\leq\dots\leq i_k)$ of integers $0\leq i_l\leq n$, the involution is
\[(n-i_k\leq\dots\leq n-i_0)\]
For a pointed $G$-set $X$ and a pointed $G$-space $Y$ we denote by $\map_{\ast}(X,Y)$ the space of $G$-maps with $G$ acting by conjugation.

\begin{prop}\label{sumdiagsTHR} Suppose that $(C,D,\mathcal{E})$ is a split-exact category with strict duality, and $(M,J)$ a bimodule with duality over $(C,D)$. There are $G$-equivalences
\[\THR(S^{1,1}_kC;M_k)\longrightarrow \map_\ast(S^{1,1}_{k},\THR(C;M))\]
and
\[\THR(S^{2,1}_kC;M_k)\longrightarrow \map_\ast(S^{2,1}_{k},\THR(C;M))\]
for every $k\geq 0$.
\end{prop}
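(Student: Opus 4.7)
The strategy is to realize each of $S^{1,1}_k C$ and $S^{2,1}_k C$, together with its bimodule and duality, as a triangular matrix category $T^{Z}C$ over an appropriate poset $Z$ with order-reversing involution, and then to invoke Propositions~\ref{triangdiagmat} and~\ref{THRproducts} in succession.

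For the $S^{1,1}_k$-case, let $X_k = \{1,\dots,k\}$ with the standard linear order and involution $\omega(i) = k+1-i$. I would construct a functor
\[\Psi\colon T^{X_k}C \longrightarrow S^{1,1}_k C\]
sending a sequence $(Q_1,\dots,Q_k)$ to the filtered diagram with entries $X_{ij} = Q_{i+1}\oplus\cdots\oplus Q_j$ and evident inclusions. Split-exactness of $C$ gives essential surjectivity, and morphisms in $S^{1,1}_k C$ between two such standard objects decompose as upper triangular matrices on the atomic pieces, giving full faithfulness. The duality $D\colon (S^{1,1}_k C)^{op}\to S^{1,1}_k C$ sends the atomic entry at $\theta=(i-1,i)$ to $D X_{\overline\theta}$ with $\overline\theta = (k-i,k-i+1)$, corresponding on $T^{X_k}C$ to the duality $(Q_i)\mapsto (DQ_{\omega(i)})$ defined in the triangular matrix section. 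The bimodule $M_k$, characterized by natural families of elements of $M(X_\theta,Y_\theta)$, pulls back under $\Psi$ to $T^{X_k}M$: naturality in $\theta$ together with the atomic decomposition turns such a family into exactly the upper triangular data $\bigoplus_{i\leq j} M(Q_i,R_j)$.

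For the $S^{2,1}_k$-case the analogous poset is $Y_k = \{(i,j) : 1\le i<j\le k\}$ with componentwise partial order and involution $(i,j)\mapsto (k-j+1,k-i+1)$ (which indeed reverses the order). A split-exact object of $S^{2,1}_kC$ is determined by its atomic summands: the iterated $4$-term exactness conditions constrain the diagram so that each injective triple $(i,j,l)\in Cat([2],[k])$ is, after choosing splittings, a direct sum of atomic pieces indexed by the subset $\{(a,b) \in Y_k : i<a\le b<l,\ a\le j<b\}$ (as one verifies inductively; for $k=3$ the single $4$-term sequence $X_{012}\rightarrowtail X_{013}\to X_{023}\twoheadrightarrow X_{123}$ has $3=\binom{3}{2}$ atomic summands, matching $|Y_3|$). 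This upgrades to an equivalence of categories with duality $T^{Y_k}C\xrightarrow{\simeq} S^{2,1}_kC$ and to an identification of the bimodules $M_k \cong T^{Y_k}M$.

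Granting these identifications, in each case we obtain the chain of $G$-equivalences
\begin{align*}
\THR(S^{n,1}_k C;\, M_k)\ &\simeq\ \THR(T^{Z_k}C;\, T^{Z_k}M) &&\text{(equivalence of cats. w/ duality)}\\
&\simeq\ \THR(C^{Z_k};\, M^{Z_k}) &&(\text{Prop.~\ref{triangdiagmat}})\\
&\simeq\ \THR(C;M)^{Z_k} &&(\text{Prop.~\ref{THRproducts}})\\
&=\ \map_\ast(S^{n,1}_k,\, \THR(C;M)),
\end{align*}
with $Z_k$ equal to $X_k$ or $Y_k$ respectively. The final equality uses that $(X_k)_+\cong S^{1,1}_k$ and $(Y_k)_+\cong S^{2,1}_k$ as pointed $G$-sets: for $S^{1,1}_k=(\Delta[1]/\partial\Delta[1])_k$ the non-basepoint elements are parameterized by the number $m\in\{1,\dots,k\}$ of zeroes in a monotone map $[k]\to[1]$, and the involution sends $m\mapsto k+1-m$; for $S^{2,1}_k=(\Delta[2]/\partial\Delta[2])_k$ with the involution induced by reversing $[2]$, the non-basepoint elements are surjective monotone maps $[k]\to[2]$, parameterized by $Y_k$ via $(i,j)\leftrightarrow(\sigma\colon \sigma^{-1}(0)=\{0,\dots,i-1\},\ \sigma^{-1}(2)=\{j,\dots,k\})$, and the involution matches.

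The hard part, I expect, is the combinatorial/diagrammatic verification that $S^{2,1}_kC \simeq T^{Y_k}C$ as categories with duality and bimodule, compatibly. The $S^{1,1}$ version is essentially a classical Waldhausen-type computation, but the $S^{2,1}$-construction—indexed by $Cat([2],[k])$ with $4$-term exactness—requires more care to enumerate the atomic pieces, to see that morphism spaces and bimodule elements decompose as upper triangular data with respect to the componentwise order on $Y_k$, and to check that the conjugation involution on $Cat([2],[k])$ induces precisely $(i,j)\mapsto(k-j+1,k-i+1)$ on atomic indices. Once these identifications are in place, the rest of the argument is a formal application of the tools developed in the preceding sections.
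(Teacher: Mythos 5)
Your proposal takes essentially the same route as the paper: Proposition~\ref{matriciesandsdot} and its $S^{2,1}$-analogue identify $S^{n,1}_kC$ (split-exact) with the triangular-matrix category $T^{S^{n,1}_k\backslash\ast}C$ via the functor $F(a)_\theta=\bigoplus_{\rho\in r(\theta)}a_\rho$ indexed by retractions of $\theta$ (your $(a,b)$-parametrization of surjections $[k]\to[2]$ encodes the same data), and the result then follows from Propositions~\ref{triangdiagmat} and~\ref{THRproducts} exactly as you outline. One small correction: for $\theta=(i<j<l)$ the indexing set of atomic summands is $\{(a,b):i<a\le j<b\le l\}$ — your constraint "$b<l$" should be "$b\le l$" (as a sanity check, your own $k=3$ example needs $(a,b)=(1,2)$ to index the single summand of $X_{012}$, which your stated inequalities exclude).
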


Before proving this result, we discuss an interesting consequence.
\begin{cor}\label{faceshomotopic}
There is a $G$-homotopy between the two equivariant maps
\[d_1\simeq_G(d_0+d_2)\colon\THR^\oplus(S^{1,1}_2C;M_2)\longrightarrow \THR^\oplus(C;M)\]
where $d_i$ denote the face functors for $S^{1,1}_{\sbt} C$ and $\THR^\oplus$ is the abelian group model from §\ref{abgpmodelsec}.
\end{cor}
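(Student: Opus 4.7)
The plan is to use split-exactness to identify $\THR^\oplus(S^{1,1}_2 C; M_2)$ $G$-equivariantly with the product $\THR^\oplus(C;M)\times \THR^\oplus(C;M)$ (swap involution), under which the three face maps become respectively the two projections and the addition. Specifically, choosing splittings of every sequence $c_0\rightarrowtail c_1 \twoheadrightarrow c_2$ in $S^{1,1}_2 C$ identifies this exact category with duality with the triangular matrix category $T^{\{0<2\}} C$ of \S\ref{moritasec} (where the poset $\{0<2\}$ carries the order-reversing involution swapping $0\leftrightarrow 2$): an object becomes the pair $(c_0, c_2)$, and a morphism $(f_0, f_1, f_2)$ of exact sequences decomposes as the triple $(f_0, \beta, f_2)$, where $\beta\colon c_0\to c_2'$ is the off-diagonal entry of $f_1 = \left(\begin{smallmatrix}f_0 & 0\\ \beta & f_2\end{smallmatrix}\right)$. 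One checks that this identification respects the dualities (sending $D\varphi=(Dc_2\rightarrowtail Dc_1\twoheadrightarrow Dc_0)$ to $(Dc_2, Dc_0)$) and extends to a compatible identification $M_2\cong T^{\{0<2\}} M$ of bimodules with duality. By Proposition \ref{inversenonstrict} we obtain a $G$-equivalence $\THR^\oplus(S^{1,1}_2 C;M_2)\simeq \THR^\oplus(T^{\{0<2\}} C; T^{\{0<2\}} M)$, which composed with Propositions \ref{triangdiagmat} and \ref{THRproducts} yields a $G$-equivalence
\[
\alpha\colon \THR^\oplus(S^{1,1}_2 C;M_2)\xrightarrow{\simeq}\THR^\oplus(C;M)\times \THR^\oplus(C;M),
\]
whose involution swaps the factors and applies the duality on each.

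The next step is to identify the face functors under $\alpha$. The faces $d_2$ (returning $c_0$) and $d_0$ (returning $c_2$) correspond to the two projections $\pi_1, \pi_2$, since the trace functor $\tr\colon T^{\{0<2\}} C \to C\times C$ extracts precisely the diagonal entries. The face $d_1$ (returning $c_1$) corresponds, under the identification $S^{1,1}_2 C\simeq T^{\{0<2\}} C$, to the functor sending $(c_0,c_2)\mapsto c_0\oplus c_2$ and $(f_0,\beta,f_2)\mapsto \left(\begin{smallmatrix}f_0 & 0\\ \beta & f_2\end{smallmatrix}\right)$. Using that the section $m\colon C\times C \to T^{\{0<2\}} C$ killing $\beta$ is a homotopy inverse to $\tr$ by Proposition \ref{triangdiagmat}, this functor induces on $\THR^\oplus$ the same map (up to $G$-homotopy) as $\oplus\circ \tr$, where $\oplus\colon C\times C \to C$ is the direct sum functor. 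Hence under $\alpha$, the map $d_1$ corresponds to $\oplus_\ast\colon \THR^\oplus(C;M)\times \THR^\oplus(C;M)\to \THR^\oplus(C;M)$.

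Finally, because $\THR^\oplus(C;M)$ is a topological abelian group (its defining spaces $V^\oplus$ are direct sums of tensor products of abelian groups), the map $\oplus_\ast$ induced by the additive functor $\oplus$ is a homomorphism of abelian groups; composing with the inclusions $\iota_k\colon C \to C\times C$ via the zero object (satisfying $\oplus\circ \iota_k=\id_C$), one obtains $\oplus_\ast(x,y)=\oplus_\ast(x,0)+\oplus_\ast(0,y)=x+y$. Thus under $\alpha$, $d_1\leftrightarrow \pi_1+\pi_2$ and $d_0+d_2 \leftrightarrow \pi_2+\pi_1=\pi_1+\pi_2$. The two maps agree on the nose on the product, and pulling back through the $G$-equivalence $\alpha$ yields the desired $G$-homotopy $d_1\simeq_G d_0+d_2$. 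The main obstacle will be the first step, namely verifying that the splitting-based identification $S^{1,1}_2 C \simeq T^{\{0<2\}} C$ can be made into an equivalence of categories with duality and bimodules in spite of the non-canonicity of the splittings; this is handled via an intermediate equivalent category whose objects are exact sequences equipped with a splitting (whose forgetful functor is an equivalence by split-exactness) followed by Proposition \ref{inversenonstrict}.
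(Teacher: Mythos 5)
Your approach is correct in essence, and it does establish the result, but it takes a longer and less streamlined route than the paper. Both proofs hinge on the same $G$-equivalence $\THR^\oplus(S^{1,1}_2 C; M_2)\simeq \THR^\oplus(C;M)\times\THR^\oplus(C;M)$ (with swap-plus-duality action on the target), which underlies Proposition \ref{sumdiagsTHR} for $k=2$; you essentially reconstruct this via the triangular-matrix identification $S^{1,1}_2 C\simeq T^{\{0<2\}}C$ together with Propositions \ref{inversenonstrict}, \ref{triangdiagmat} and \ref{THRproducts}. The point of divergence is how you conclude. The paper exhibits an explicit equivariant section $s_1\pr_1+s_0\pr_2$ of $(d_2,d_0)$ (note: the paper prints $s_2$, which is presumably a typo for $s_1$, since from simplicial degree $1$ to degree $2$ only $s_0,s_1$ exist and the displayed identities $d_2s_1=\id$, $d_0s_1=0$ are what is needed) and then verifies, by a two-line simplicial-identity calculation, that both $d_1$ and $d_0+d_2$ pre-composed with this section give the literal map $\pr_1+\pr_2$ --- no homotopies are needed at this stage, only equalities. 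Your route instead identifies $d_1$ (up to $G$-homotopy, via the homotopy inverse $m$ to $\tr$) with the direct-sum functor and then argues that $\oplus_\ast$ is the group addition $\pi_1+\pi_2$.

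That last step deserves more care than you give it. The map $\oplus_\ast$ is a priori a map out of $\THR^\oplus(C\times C;M\times M)$, not out of $\THR^\oplus(C;M)^2$, and the identification of the two is only a zig-zag through a weak equivalence (Proposition \ref{THRproducts}). To make ``$\oplus_\ast(x,y)=x+y$'' literal you would want to produce a genuine equivariant section of the product map, for instance $\sigma(x,y)=(\iota_1)_\ast(x)+(\iota_2)_\ast(y)$, and check that $D\circ\iota_1=\iota_2\circ D$ for the swap duality so that $\sigma$ is a $G$-map, and also fix a choice of sums for which $c\oplus 0=0\oplus c=c$ strictly (cf.\ Remark \ref{sumondc}); then $\oplus_\ast\circ\sigma$ really is the addition. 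None of this is hard, but it is exactly the sort of bookkeeping that the paper's section $s_1\pr_1+s_0\pr_2$ sidesteps entirely: once you have the right section in hand, the simplicial identities do all the work. The additivity-flavoured route you chose is instructive and shows why the result is ``morally'' the additivity theorem in disguise, but for this particular corollary the paper's computational shortcut is the more economical argument.
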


\begin{proof}
The equivalence
\[\THR(S^{1,1}_2C;\!M_2)\longrightarrow \map_\ast(S^{1,1}_2\!,\THR(C;\!M))\!=\!\THR(C;\!M)\!\times\!\THR(C;\!M)\]
of \ref{sumdiagsTHR}
is given by the map $(d_2,d_0)$ as explained below. The action on the target swaps the two components and applies the involution of $\THR(C;M)$.
By \ref{abgpmodel}, the map
\[(d_2,d_0)\colon \THR^\oplus(S^{1,1}_2C;M_2)\longrightarrow\THR^\oplus(C;M)\times \THR^\oplus(C;M)\]
is a $G$-equivalence as well.
The map $s_2\pr_1+s_0\pr_2$ is an equivariant section for $(d_2,d_0)$, where $s_i$ denotes the degeneracy functors. Indeed
\[\begin{array}{ll}(d_2,d_0)\circ(s_2\pr_1+s_0\pr_2)&=(d_2s_2\pr_1+d_2s_0\pr_2,d_0s_2\pr_1+d_0s_0\pr_2)\\
&=(\pr_1+0,0+\pr_2)=\id\end{array}\]
Since $(d_2,d_0)$ is an equivalence, $s_2\pr_1+s_0\pr_2$ must be a $G$-homotopy inverse. 
Therefore the maps of the statement are $G$-homotopic if and only if the composites
\[\xymatrix{\THR^\oplus(C;M)\times \THR^\oplus(C;M)\ar[r]^-{\begin{smallmatrix}s_2\pr_1\\+\\s_0\pr_2\end{smallmatrix}}_-{\simeq}&\THR^\oplus(S^{1,1}_2C;M_2)\ar@<1ex>[r]^-{d_1}\ar@<-1ex>[r]_-{d_0+d_2}&\THR^\oplus(C;M)
}\]
are $G$-homotopic. These are in fact equal, since
\[\begin{array}{ll}
(d_0+d_2)\circ (s_2\pr_1+s_0\pr_2)&=d_0s_2\pr_1+d_0s_0\pr_2+d_2s_2\pr_1+d_2s_0\pr_2\\
&=\pr_2+\pr_1
\end{array}\]
and the other composite is
\[d_1\circ (s_2\pr_1+s_0\pr_2)=d_1s_2\pr_1+d_1s_0\pr_2=\pr_1+\pr_2\]
\end{proof}

We start the proof of \ref{sumdiagsTHR} with the statement for for $S^{1,1}_kC$. Let
\[d\colon S^{1,1}_kC\longrightarrow C^{S^{1,1}_k\backslash\ast}=\map_\ast(S^{1,1}_k,C)\]
be the functor that sends a diagram $X\colon Cat([1],[k])\longrightarrow C$ to its diagonal, that is to
\[d(X)(\sigma=(0^i1^{k+1-i}))=X_{k-i<k-i+1}\]
Similarly, there is a natural transformation $M_k\Rightarrow M^{S^{1,1}_k\backslash\ast}\circ(d^{op}\otimes d)$ given by the restriction to $M_k(X,Y)$ of the projection onto the diagonal
\[\bigoplus_{i<j}M(X_{i<j},Y_{i<j})\longrightarrow \bigoplus_{i}M(X_{i<i+1},Y_{i<i+1})\]
This induces on $\THR$ the map
\[\THR(S^{1,1}_kC;M_k)\longrightarrow \THR(C^{S^{1,1}_k\backslash\ast};M^{S^{1,1}_k\backslash\ast})\simeq \THR(C;M)^{S^{1,1}_k\backslash\ast}\]
from the proposition, after composing with the equivalence of \ref{THRproducts}.
There is a linear order on $S^{1,1}_k\backslash\ast$ that is reversed by the duality. It is defined by declaring \[\sigma=(0^i1^{k+1-i})\leq\sigma'=(0^j1^{k+1-j})\colon [k]\longrightarrow [1]\] if $j\leq i$.
The proposition \ref{sumdiagsTHR} for $S^{1,1}_kC$ is a direct corollary of the following statement, since by \ref{inversenonstrict} $\THR$ sends equivalences of categories to $G$-equivalences.

\begin{prop}\label{matriciesandsdot}
The map $d\colon (S^{1,1}_kC,M_k)\longrightarrow (C^{S^{1,1}_k\backslash\ast},M^{S^{1,1}_{k}\backslash\ast})$ extends to a commutative diagram of morphisms of bimodules with duality
\[\xymatrix{ (S^{1,1}_kC,M_k)\ar[r]^-{d}&(C^{S^{1,1}_k\backslash\ast},M^{S^{1,1}_k\backslash\ast})\\ 
(T^{S^{1,1}_k\backslash\ast}C,T^{S^{1,1}_k\backslash\ast}M)\ar[u]^{(F,\Phi)}\ar[ur]_{\tr}
}\]
where $\tr$ is the $\THR$-equivalence of \ref{triangdiagmat}. Moreover if the category $C$ is split-exact,
the functor $F$ is an equivalence of categories with duality, and $\Phi$ an isomorphism of bimodules with duality.
\end{prop}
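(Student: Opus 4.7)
My plan is to exploit the fact that, in a split-exact setting, an object of $S^{1,1}_k C$ is freely determined up to canonical isomorphism by its diagonal entries $X_{j<j+1}$. Identifying $S^{1,1}_k\backslash\ast$ with $\{0,\ldots,k-1\}$ via $(0^{k-j}1^{j+1}) \leftrightarrow j$, I would define $F$ on objects by
\[ F(a)_{i<l} = \bigoplus_{j=i}^{l-1} a_j, \]
with structure maps the canonical inclusions (when $l$ grows) and projections (when $i$ grows); these are admissible and the required three-term sequences of the $S_\cdot$-construction are split-exact by construction. On morphisms, a triangular matrix $\{f_{p,q}\}_{p\leq q}$ in $\bigoplus_{p\leq q} C(a_p,b_q)$ is sent to the family whose $(i<l)$-component is the matrix with entries $f_{p,q}$ for $p\leq q$ both in $[i,l-1]$ and zeros elsewhere; functoriality and naturality against the structure maps are direct matrix computations. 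For $\Phi$, additivity of $M$ in each variable gives a canonical decomposition $M(F(a)_{i<l},F(b)_{i<l}) \cong \bigoplus_{r,s\in[i,l-1]} M(a_r,b_s)$ into which I embed the triangular part, and the naturality condition defining $M_k$ follows from the same matrix description.

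The triangle commutes on the nose: by construction $d(F(a))(\sigma=(0^i1^{k+1-i})) = F(a)_{k-i<k-i+1} = a_{k-i}$, which is exactly $\tr(a)_\sigma$ under our identification, and the analogous identity holds for $\Phi$. For the duality compatibility of $F$, additivity of $D$ yields a canonical isomorphism $D\bigl(\bigoplus_p a_p\bigr) \cong \bigoplus_p D(a_p)$; combined with the involution $j \mapsto k-1-j$ on the indexing set this supplies the natural isomorphism $\xi\colon FD \Rightarrow DF$ making $F$ a morphism of categories with duality, and an entirely parallel computation gives the $J$-compatibility of $\Phi$.

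Assume now that $C$ is split-exact. Essential surjectivity of $F$ follows by induction on $l-i$ from the split sequences $X_{i<i+1}\rightarrowtail X_{i<l}\twoheadrightarrow X_{i+1<l}$, producing a natural isomorphism $X \cong F(X_{0<1},\ldots,X_{k-1<k})$ of diagrams. For fully faithfulness, given $\phi\colon F(a)\to F(b)$ one decomposes each $\phi_{i<l}$ into matrix components $\phi^{p,q}_{i<l}\colon a_p\to b_q$; naturality of $\phi$ against the inclusions $F(a)_{q<q+1}\hookrightarrow F(a)_{q<p+1}$ and the projections $F(b)_{q<p+1}\twoheadrightarrow F(b)_{p<p+1}$ forces $\phi^{p,q}_{i<l}$ to vanish when $p>q$, and compatibility with inclusions of intervals makes the surviving entries independent of the enclosing $(i,l)$. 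Having shown $F$ is an equivalence of plain categories, Proposition~\ref{equivalenceequivariant} upgrades it to an equivalence of categories with duality. The same matrix decomposition argument, applied to $M$ in place of $C$, shows $\Phi$ is an isomorphism of bimodules with duality.

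The main obstacle I anticipate is bookkeeping: the linear order on $S^{1,1}_k\backslash\ast$ runs opposite to the natural order on $[k]$, and the duality involution $j\mapsto k-1-j$ must be threaded through both $F$ and $\Phi$ consistently so that the $\xi$- and $J$-compatibility checks remain tractable. The analogous statement for $S^{2,1}_k$ should follow from an identical argument, using the decomposition of split-exact objects of $S^{2,1}_k C$ indexed by retractions of $\theta\colon[2]\to[k]$ from the proof of \ref{noneqeq}; the combinatorics are heavier but structurally the same.
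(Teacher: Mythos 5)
Your outline follows the same route as the paper: identify $S^{1,1}_k\backslash\ast$ with a linearly ordered $k$-element set, send a tuple $a$ to the diagram of partial direct sums, show on morphisms that naturality against the structure maps kills the lower-triangular entries and pins down the rest, and argue essential surjectivity by splitting, with $\Phi$ handled by the matching direct-sum decomposition of $M$. The commutativity computation $d(F(a))_\sigma=a_{k-i}=\tr(a)_\sigma$ also checks out under your indexing. That part is fine.

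There is, however, a genuine gap in how you treat the duality. You write that additivity of $D$ gives a canonical isomorphism $D(\bigoplus_p a_p)\cong\bigoplus_p D(a_p)$, and you let this (together with the index involution) supply a natural isomorphism $\xi\colon FD\Rightarrow DF$, then appeal to Proposition \ref{equivalenceequivariant} to upgrade $F$ to an equivalence of categories with duality. But for the conclusion of the proposition to be usable --- and this is exactly how it is used in Proposition \ref{sumdiagsTHR}, via Corollary \ref{inversenonstrict} --- $(F,\Phi)$ must be a morphism of bimodules with duality in the \emph{strict} sense: $FD=DF$ on the nose, not up to a natural isomorphism. Definition 4.1.1 and its $Ab$-enriched specialization admit only strict duality-preserving functors as $\THR$-morphisms, and $\tr$ and the $T^X$-constructions are built with strict dualities. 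A non-identity $\xi$ does not produce a map on $\THR$, and $\mathcal{D}$-strictifying afterwards changes all three vertices of the diagram, which is not the stated conclusion.

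The paper resolves this by Remark \ref{sumondc}: after replacing $C$ by the $\THR$-equivalent $\mathcal{D}C$, one may choose a functorial sum on $C$ with $D(c\oplus d)=Dd\oplus Dc$ \emph{exactly}, which (together with the index involution $j\mapsto k-1-j$) makes your $\xi$ literally the identity and hence $F$ a strict morphism. You need to invoke this --- or construct your own strict sum with the same property --- before any of the downstream duality-compatibility claims for $F$ and $\Phi$ go through. With that addition your argument is complete; without it, $F$ is only a non-strict morphism of categories with duality and the claimed diagram of morphisms of bimodules with duality does not exist as stated.
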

In order to define the functor $F$ of the proposition we need a remark on the choice of sums in an additive category $C$.
\begin{rem}\label{sumondc}
A sum for two objects $c$ and $d$ of $C$ is an object $c\oplus d$ of $C$ with maps
\[\xymatrix{c\ar@<.8ex>[r]^-{i_c}&c\oplus d\ar@<.8ex>[l]^-{p_c}\ar@<.8ex>[r]^-{p_d}&d\ar@<.8ex>[l]^-{i_d}}\]
satisfying $p_ci_c=\id_c$, $p_di_d=\id_d$ and $i_cp_c+i_dp_d=\id_{c\oplus d}$. Since $C$ is additive, a sum exists for every pair of objects and it is both a product an a coproduct for $c$ and $d$ (cf. \cite[VIII-2.2]{maclane}).
Since $D$ is an additive functor, it preserves sums, i.e. if $c\oplus d$ is a sum for $(c,d)$, the object $D(c\oplus d)$ is a sum for $(Dd,Dc)$. This means that for a functorial choice of sums $\oplus\colon C\times C\longrightarrow C$, there is canonical natural isomorphism $\kappa_{c,d}\colon Dd\oplus Dc\longrightarrow D(c\oplus d)$. Moreover by the universal property of sums it follows that the diagram
\[\xymatrix{c\oplus d\ar[r]^-{\eta_c\oplus\eta_d}\ar[d]_{\eta_{c\oplus d}}&D^2c\oplus D^2d\ar[d]^{\kappa_{Dd,Dc}}\\
D^2(c\oplus d)\ar[r]_-{D(\kappa_{c,d})}&D(Dd\oplus Dc)
}\]
commutes. This is exactly saying that the pair $(\oplus,\kappa)\colon C\times C\longrightarrow C$ is a morphism of categories with duality, where $C\times C$ has the duality that sends $(c,d)$ to $(Dd,Dc)$. Therefore the induced functor
\[\oplus\colon \mathcal{D}C\times\mathcal{D}C\cong \mathcal{D}(C\times C)\longrightarrow \mathcal{D}C\]
commutes strictly with the duality, i.e. it satisfies $D(\phi\oplus \psi)=D\psi\oplus D\phi$. Moreover this is a choice of sums for the category $\mathcal{D}C$. Since the inclusion $C\longrightarrow\mathcal{D}C$ induces a $G$-equivalence in $\THR$ by \ref{scriptdandthr}, by replacing $C$ with $\mathcal{D}C$ we can assume that there is a choice of sum on $C$ satisfying $D(c\oplus d)=Dd\oplus Dc$.

Notice that a similar argument with the duality $D\times D$ on $C\times C$ defines a sum on $\mathcal{D}C$ with the property $D(\phi\oplus \psi)=D\phi\oplus D\psi$.
\end{rem}

\begin{proof}[Proof of \ref{matriciesandsdot}]
Let us denote $T^{1,1}_kC:=T^{S^{1,1}_k\backslash\ast}C$ the category of triangular matricies.
Recall that an object of $T^{1,1}_kC$ is family of objects $a=\{a_{\sigma}\}$ of $C$, one for each $\sigma\colon [k]\longrightarrow [1]$ in $S^{1,1}_k\backslash\ast$. A map from $a$ to $b$ in $T^{1,1}_kC$ is a family of maps $f=\{f_{\sigma\leq \sigma'}\}$ of $C$, one for each pair of elements $\sigma\leq \sigma'$ of $S^{1,1}_k\backslash\ast$, with
\[f_{\sigma\leq \sigma'}\colon a_\sigma\longrightarrow b_{\sigma'}\]
Define the functor $F\colon T^{1,1}_kC\longrightarrow S^{1,1}_kC$ by sending an object $a$ to the diagram
\[F(a)_{i<j}=\bigoplus_{\sigma\in\phi(ij)} a_{\sigma}\]
where $\phi(ij)$ is the ordered set $\phi(ij)=\{(0^{i+1}1^{k-i})\leq\dots\leq(0^j1^{k+1-j})\}$ and $\oplus$
is a functorial choice of sums like in \ref{sumondc}.
The maps of $F(a)$ are inclusions and projections of the direct summands. A morphism $\{f_{\sigma\leq \sigma'}\}$ is sent to the natural transformation $F(f)$ defined at $i<j$ by the map
\[F(f)_{i<j}\colon \bigoplus_{\sigma\in\phi(ij)} a_{\sigma}\longrightarrow \bigoplus_{\sigma\in\phi(ij)} b_{\sigma}\]
having $(\sigma,\sigma')$-component $f_{\sigma\leq \sigma'}$ if $\sigma\leq \sigma'$, and zero otherwise.

We show that $F$ is an equivalence of categories when $C$ is split-exact. Choosing splittings for the exact sequences in a diagram $X\in S^{1,1}_kC$ gives an isomorphism between $F d(X)$ and $X$, showing that $F$ is essentially surjective. For every $a,b\in T^{1,1}_kC$ the map
\[F\colon T^{1,1}_kC(a,b)\longrightarrow S^{1,1}_kC(F(a),F(b))\]
is clearly injective. To see that it is surjective, we need to show that if $f\colon F(a)\longrightarrow F(b)$ is a natural transformation, the matrix
\[f_{i<j}\colon \bigoplus_{\sigma\in\phi(ij)} a_{\sigma}\longrightarrow \bigoplus_{\sigma\in\phi(ij)} b_{\sigma}\]
has zero $(\sigma,\sigma')$-component if $\sigma>\sigma'$, that is if $\sigma=(0^l1^{k+1-l})$ and $\sigma'=(0^h1^{k+1-h})$ with $l<h$. 
By naturality of $f$ the diagram
\[\xymatrix{a_\sigma=F(a)_{l-1<l}\ar[r]\ar[d]_{f_{l-1<l}}&F(a)_{i<j}\ar[r]\ar[d]_{f_{i<j}}&a_{\sigma'}=F(a)_{h-1<h}\ar[d]^{f_{j<k}}\\
b_\sigma=F(b)_{l-1<l}\ar[r]&F(b)_{i<j}\ar[r]&b_{\sigma'}=F(b)_{h-1<h}
}\]
commutes, where the horizontal maps are inclusion and projections of direct summands. The $(\sigma,\sigma')$-component of $f_{i<j}$ is the composite from the top left corner to the bottom right corner, which is zero.

The functor $F$ commutes strictly with the dualities, since by \ref{sumondc}
\[F(Da)_{i<j}=\bigoplus_{\sigma\in\phi(ij)} Da_{\overline{\sigma}}= D(\!\!\!\bigoplus_{\sigma\in\phi(k-j,j-i)}\!\!\! a_{\sigma})=(DF(a))_{i<j}\]

The map of bimodules $\Phi\colon T^{1,1}_kM\Rightarrow M_k\circ(F^{op}\otimes F)$ is defined as follows. Since $M$ is an additive functor, there is a canonical natural isomorphism
\[\bigoplus_{i<j}M(F(a)_{i<j},F(b)_{i<j})\cong \bigoplus_{i<j}\bigoplus_{l,h=i}^{j-1}M(a_{l+1},b_{h+1})\]
and $M_k(F(a),F(b))$ is by definition a subgroup of the left hand side.
Using a similar argument as for the surjectivity of $F$ on morphism, one can show that an element $\underline{m}$ belongs to $M_k(F(a),F(b))$ if and only if its components in the right hand side of the isomorphism above satisfy
\[m_{i<j,l,h}=m_{i'<j',l,h}\]
whenever $i,i'\leq l,h\leq j-1,j'-1$, and $m_{i<j,l,h}=0$ whenever $l>h$. That is, the isomorphism above restricts to an isomorphism
\[\xymatrix{M_k(F(a),F(b))\subset\bigoplus_{i<j}M(F(a)_{i<j},F(b)_{i<j})\ar@<10ex>[d]^{\cong}\ar@<-14ex>[d]_{\Phi}^{\cong}
\\ \bigoplus_{l\leq h}M(a_l,b_h)\subset\bigoplus_{i<j}\bigoplus_{l,h=i}^{j-1}M(a_{l+1},b_{h+1})
}\]
and we remind that $T^{1,1}_kM$ is defined by
\[T^{1,1}_kM(a,b)=\bigoplus_{l\leq h}M(a_l,b_h)\]
By naturality, the isomorphism $\Phi$ preserves the dualities.
\end{proof} 

The idea for the proof of \ref{sumdiagsTHR} for $S^{2,1}_kC$ is completely similar. 
The functor \[d\colon S^{2,1}_kC\longrightarrow C^{S^{2,1}_k\backslash\ast}\] is not quite a projection this time, but it has a similar function. Given a diagram $X\in S^{2,1}_kC$ and a surjective $\sigma\colon [k]\longrightarrow [2]$, the $\sigma$-component of $d(X)$ is defined by
\[d(X)_{\sigma}=\ker(X_{i-1<i-1+j<i+j}\longrightarrow X_{i\leq i-1+j<i+j})\]
where $\sigma=(0^i1^j2^{k+1-i-j})$. Notice that a kernel for this map exists, since it is the last map of the $4$-terms exact sequence of $X$ associated to the element $(i\!-\!1\!<\!i\!<\!i\!-\!1\!+\!j\!<\!i\!+\!j)$ of $Cat([3],[k])$.
The functor $d$ sends a morphism to its restrictions on the kernels.
One can do a similar construction on bimodules. On $\THR$, this gives an equivariant  map
\[\THR(S^{2,1}_kC;M_k)\longrightarrow \THR(C^{S^{2,1}_k\backslash\ast};M^{S^{2,1}_{k}\backslash\ast})\simeq \THR(C;M)^{S^{2,1}_{k}\backslash\ast}\]
after composing with the equivalence of \ref{THRproducts}.
As previously, we prove that this is an equivalence by extending it to the category of triangular matricies on $S^{2,1}_{k}\backslash\ast$. The set $S^{2,1}_{k}\backslash\ast$ can always be displayed as illustrated here for $k=5$
\[\xymatrix{000012\ar[r]&000112\ar[r]\ar[d]&001112\ar[r]\ar[d]&011112\ar[d]\\
& 000122\ar[r]&001122\ar[r]\ar[d]&011122\ar[d]\\
& & 001222\ar[r]&011222\ar[d]\\
& & &012222
}\]
This gives $S^{2,1}_{k}\backslash\ast$ a partial order reversed by the duality, since the involution flips the diagram along the "bottom left to top right" diagonal.
Proposition \ref{sumdiagsTHR} for $S^{1,1}_kC$ is a direct consequence the following statement, since by \ref{inversenonstrict} $\THR$ sends equivalences of categories to $G$-equivalences.

\begin{prop}
The functor $d\colon (S^{2,1}_kC,M_k)\longrightarrow (C^{S^{2,1}_{k}\backslash\ast},M^{S^{2,1}_{k}\backslash\ast})$ extends to a commutative diagram of morphisms of bimodules with duality
\[\xymatrix{ (S^{2,1}_kC,M_k)\ar[r]^-{d}&(C^{S^{2,1}_{k}\backslash\ast},M^{S^{2,1}_{k}\backslash\ast})\\ 
(T^{S^{2,1}_{k}\backslash\ast}C\ar[u]^{(F,\Phi)},T^{S^{2,1}_{k}\backslash\ast}M)\ar[ur]_{\tr}
}\]
where $\tr$ is the $\THR$-equivalence of \ref{triangdiagmat}. Moreover if the category $C$ is split-exact,
the functor $F$ is an equivalence of categories with duality, and $\Phi$ an isomorphism of bimodules with duality.
\end{prop}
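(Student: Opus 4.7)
The proof will proceed in close parallel to the proof of the previous proposition for the $S^{1,1}_{\cdot}$-construction, with the combinatorics of the simplicial $1$-direction replaced by the ``triangular'' combinatorics of the $S^{2,1}_{\cdot}$-direction. As in that proof, I would first invoke Remark \ref{sumondc} together with Proposition \ref{scriptdandthr} to reduce to the case in which $C$ is equipped with a functorial choice of direct sum satisfying $D(c\oplus d)=Dd\oplus Dc$ on the nose; this lets us define a functor $F$ that commutes strictly with the dualities. Set $T^{2,1}_kC := T^{S^{2,1}_k\setminus\ast}C$.

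The functor $F\colon T^{2,1}_kC\longrightarrow S^{2,1}_kC$ is defined on objects by
\[ F(a)_{\theta} \;=\; \bigoplus_{\rho\in r(\theta)} a_{\rho}, \]
where $r(\theta)$ is the set of retractions of $\theta\colon [2]\longrightarrow[k]$, i.e.\ surjective monotone $\rho\colon[k]\longrightarrow[2]$ with $\rho\theta=\id_{[2]}$, viewed as elements of $S^{2,1}_k\setminus\ast$. This is precisely the decomposition used in the essential-surjectivity argument of Proposition \ref{noneqeq}. For a poset morphism $\theta\leq \theta'$ in $Cat([2],[k])$, the structure map $F(a)_{\theta}\longrightarrow F(a)_{\theta'}$ is the matrix that is the identity on the summands indexed by $\rho\in r(\theta)\cap r(\theta')$ and zero on the remaining ones (this is inclusion of a canonical direct summand). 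On a morphism $f=\{f_{\rho\leq\rho'}\}$ of $T^{2,1}_kC$, define $F(f)_{\theta}\colon\bigoplus_{\rho\in r(\theta)}a_{\rho}\longrightarrow\bigoplus_{\rho'\in r(\theta)}b_{\rho'}$ with $(\rho',\rho)$-entry equal to $f_{\rho\leq\rho'}$ if $\rho\leq\rho'$ in $S^{2,1}_k\setminus\ast$ and zero otherwise. I would then verify directly that:
\begin{enumerate}
\item[(i)] $F(a)$ lies in $S^{2,1}_kC$, i.e.\ satisfies the $4$-term exactness condition. For each $\psi\colon[3]\longrightarrow[k]$ one partitions $r(d_l\psi)$ for $l=0,1,2,3$ according to whether the missing coordinate of $d_l\psi$ falls in the $0$, $1$, or $2$-block of $\rho$; the resulting sequence splits into a direct sum of trivial split-exact sequences indexed by $r(\psi)$ enlarged by one extra coordinate, showing $4$-term exactness on the nose.
\item[(ii)] $F$ commutes strictly with the dualities: the assignment $\rho\mapsto\overline{\rho}$ is a bijection $r(\theta)\cong r(\overline{\theta})$ reversing the partial order, and with our chosen direct sum one has $F(Da)_{\theta}=\bigoplus_{\rho\in r(\theta)}Da_{\overline{\rho}}=D\bigoplus_{\overline{\rho}\in r(\overline{\theta})}a_{\overline{\rho}}=DF(a)_{\theta}$.
\end{enumerate}

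To show that $F$ is fully faithful and essentially surjective when $C$ is split-exact, I would argue just as in the $S^{1,1}$ case: essential surjectivity is the canonical isomorphism $X\cong Fd(X)$ obtained by choosing splittings of the short exact sequences appearing in $X$, which is exactly the isomorphism produced in the proof of Proposition \ref{noneqeq}. Faithfulness is immediate. For fullness, a natural transformation $g\colon F(a)\longrightarrow F(b)$ has $(\rho',\rho)$-components that must vanish whenever $\rho\not\leq\rho'$: for any such pair one can find a pair $\theta<\theta'$ of elements of $Cat([2],[k])$ such that $\rho\in r(\theta)$, $\rho'\in r(\theta')$, and the composition of $g_{\theta'}$ with the inclusion $a_{\rho}\hookrightarrow F(a)_{\theta}\longrightarrow F(a)_{\theta'}$ and the projection $F(b)_{\theta'}\twoheadrightarrow b_{\rho'}$ factors through a kernel/cokernel that forces the $(\rho',\rho)$-entry to be zero; this is the $S^{2,1}$-analogue of the triangular-matrix argument in the proof of Proposition \ref{matriciesandsdot}.

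Finally the bimodule map $\Phi\colon T^{2,1}_kM\Rightarrow M_k\circ(F^{op}\otimes F)$ is defined by the canonical additive isomorphism
\[ \bigoplus_{\theta\in Cat([2],[k])} M(F(a)_{\theta},F(b)_{\theta})\;\cong\; \bigoplus_{\theta}\bigoplus_{\rho,\rho'\in r(\theta)} M(a_{\rho},b_{\rho'}), \]
whose restriction to the subgroup $M_k(F(a),F(b))$ (cut out by the naturality relations) corresponds exactly to the condition that each component $m_{\theta,\rho,\rho'}$ depends only on the pair $(\rho,\rho')$ and vanishes unless $\rho\leq\rho'$; this is precisely the definition of $T^{2,1}_kM(a,b)=\bigoplus_{\rho\leq\rho'}M(a_{\rho},b_{\rho'})$. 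Naturality of this isomorphism with respect to the dualities is automatic, and commutativity of the triangle with $d$ and $\tr$ is checked by the same diagonal-versus-trace computation as in Proposition \ref{matriciesandsdot}. The main obstacle I expect is step (i): establishing the $4$-term exactness of $F(a)$, which demands a careful combinatorial bookkeeping of retractions of $\theta:[2]\to[k]$ as the three coordinates $i_0<i_1<i_2$ vary in the faces of $\psi:[3]\to[k]$, but once this is in place the remaining verifications are formal consequences of the split-exact hypothesis and the parallel $S^{1,1}$ argument.
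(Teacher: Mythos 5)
Your proposal is correct and takes essentially the same route as the paper's proof: $F$ is defined via retractions $r(\theta)$ exactly as in the paper, strict equivariance is deduced from the functorial sum of Remark \ref{sumondc} by the identical computation, essential surjectivity follows by choosing splittings as in the proof of Proposition \ref{noneqeq}, and fullness/faithfulness and the isomorphism $\Phi$ follow the same triangular-matrix diagram chase as in the $S^{1,1}$ case (Proposition \ref{matriciesandsdot}). The only places you supply more detail than the paper (which mostly sketches or defers to \cite{IbLars}) are the explicit verification that $F(a)$ satisfies the $4$-term exactness condition and the off-triangular-vanishing step in the fullness argument; both match the intended argument.
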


\begin{proof} Let us denote $T^{2,1}_kC:=T^{S^{2,1}_{k}\backslash\ast}C$ the category of triangular matricies.
The functor $F\colon T^{2,1}_kC\longrightarrow S_{k}^{2,1}C$ is defined as follows. For an injective $\theta\colon [2]\longrightarrow [k]$, denote 
\[r(\theta)=\{\rho\colon [k]\longrightarrow [2]|\rho\circ\theta=\id_{[2]}\}\]
the set of retractions of $\theta$. On objects, $F$ sends $a=\{a_\rho\}\in T^{2,1}_kC$ to
\[F(a)_{\theta}=\bigoplus_{\rho\in r(\theta)}a_{\rho}\]
On morphisms, a matrix $\{f_{\rho\leq\rho'}\}\colon a\longrightarrow b$ is sent to the natural transformation given at a $\theta\colon [2]\longrightarrow [k]$ by the map
\[F(f)_\theta\colon \bigoplus_{\rho\in r(\theta)}a_{\rho}\longrightarrow \bigoplus_{\rho\in r(\theta)}b_{\rho}\]
having $(\rho,\rho')$-component $f_{\rho\leq\rho'}$ if $\rho\leq\rho'$, and zero otherwise.
Notice that the functor $F$ for $S^{1,1}_kC$ can also be defined in terms of retractions by similar formulas. It commutes strictly with the duality, since by \ref{sumondc}
\[\begin{array}{ll}F(Da)_{\theta}&=\bigoplus_{\rho\in r(\theta)} Da_{\omega\rho}=D(\bigoplus_{\rho'\in r(\omega\theta)} a_{\rho'})\\
&=D(F(a)_{\omega\theta})=(DF(a))_{\theta}\end{array}\]
The map of bimodules $\Phi$ is defined in an analogous way as in \ref{matriciesandsdot}.

The functor $F$ is an equivalence of categories with duality when $C$ is split-exact. We sketch the argument from \cite{IbLars}. Choosing splittings for the exact sequences in a diagram $X\in S_{k}^{2,1}C$ gives an isomorphism $X\cong F d(X)$, showing that $F$ is essentially surjective. $F$ is clearly faithful, and to show that it is full one needs to show by diagram chase that morphisms $F(a)\longrightarrow F(b)$ in $S_{k}^{2,1}C$ have zero $(\rho,\rho')$-components if $\rho'>\rho$.
A similar argument shows that the bimodules map $\Phi$ is an isomorphism.
\end{proof}


\subsection{Equivariant delooping of $\THR$ by the $S^{2,1}_{\cdot}$-construction}

We saw in §\ref{firstdeloop} how to deloop equivariantly $\THR(\mathcal{C};\!\mathcal{M})$ if $\mathcal{C}$ and $\mathcal{M}$ are $G$-connected. In this section we give another way of delooping $\THR$ in the split-exact case, by means of the $S^{2,1}_{\sbt}$-construction.

Let $(C,D,\mathcal{E})$ be a split-exact category with strict duality $D$, and let $M\colon C^{op}\otimes C\longrightarrow Ab$ be a bimodule with duality $J$. Recall that we denote
\[\THR(C;M)=\THR(HC;HM)\]
for simplicity.
We saw in \ref{THRforS21} how this structure induces a bimodule with duality $M_k$ on $S^{2,1}_kC$ for all natural number $k$. We now vary the $k$-direction to get an extra real direction. More precisely, let $\tau^\ast\colon S^{2,1}_{k} C\longrightarrow S^{2,1}_{l} C$ be the functor from the simplicial structure of $S^{2,1}_{\sbt} C$, associated to a $\tau\colon [l]\longrightarrow [k]$. There is a natural transformation $\tau^\ast\colon M_{k}\Rightarrow M_{l}\circ((\tau^\ast)^{op}\otimes \tau^\ast)$ given by restriction to $M_k(X,Y)$ of the map
\[\bigoplus\limits_{\theta\in Cat([2],[k])}\!\!M(X_{\theta},Y_{\theta})\longrightarrow\bigoplus\limits_{\rho\in Cat([2],[l])}\!\!M(X_{\tau\rho},Y_{\tau\rho})\]
that projects off the components corresponding to $\theta$'s that are not of the form  $\tau\rho$. This construction defines maps $\tau^\ast\colon (S^{2,1}_{k} C,M_k)\longrightarrow (S^{2,1}_{l} C,M_{l})$, and therefore maps
\[\tau^\ast\colon \THH_{\sbt}(S^{2,1}_k C;M_k)\longrightarrow \THH_{\sbt}(S^{2,1}_{l} C;M_{l})\]
defining a real space structure on
\[\THR_{\sbt}(S^{2,1}_{\sbt} C;M)=\{[k]\longmapsto\THR(S^{2,1}_k C;M_k)\}\]
Its realization is the $G$-space denoted $\THR(S^{2,1}_{\sbt} C,M)$ (we do not denote the simplicial direction in the coefficients).

Recall that for a $G$-space $X$, we denote $\Omega^{2,1}X=\map_\ast(S^{2,1},X)$ the pointed mapping space with conjugation action, where the $2$-sphere $S^{2,1}=\mathbb{C}^+$ carries the $G$-action induced by complex conjugation.
The goal of this section is to prove the following.

\begin{theorem}\label{THRdeloopings}
For a split-exact category with strict duality $(C,D,\mathcal{E})$ with bimodule with duality $M$, the map
\[\THR(C;M)\longrightarrow \Omega^{2,1}\THR(S^{2,1}_{\sbt} C;M)\]
induced by the adjoint of the projection map 
\[\THR(C;M)\times \Delta^2=\THR(S^{2,1}_2C;M_2)\times \Delta^2\longrightarrow\THR(S^{2,1}_{\sbt} C;M)\]
is a $G$-equivalence.
\end{theorem}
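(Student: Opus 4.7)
The strategy is a linearization argument using the abelian-group model $\THR^\oplus$ of Proposition \ref{abgpmodel}, combined with the levelwise identifications of Proposition \ref{sumdiagsTHR}. First I would use \ref{abgpmodel} to replace $\THR$ with $\THR^\oplus$ both in the source and in every simplicial level of $\THR(S^{2,1}_\sbt C;M)$, reducing the statement to the analogous one for $\THR^\oplus$. The advantage is that $[k]\mapsto\THR^\oplus(S^{2,1}_kC;M_k)$ is a real simplicial topological abelian group, where linear (Dold--Kan style) tools apply.

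Next, applying Proposition \ref{sumdiagsTHR} together with the product decomposition of \ref{THRproducts} in every simplicial degree $k$ yields $G$-equivalences $\THR^\oplus(S^{2,1}_kC;M_k)\simeq \map_\ast(S^{2,1}_k,\THR^\oplus(C;M))$. Factoring through the triangular matrix category as in \ref{matriciesandsdot} lets one describe the simplicial operators $\tau^{\ast}\colon S^{2,1}_kC\to S^{2,1}_lC$ after the identification, and together with the $G$-homotopy of Corollary \ref{faceshomotopic} this identifies, up to quasi-isomorphism, the normalized chain complex of $[k]\mapsto\pi_n\THR^\oplus(S^{2,1}_kC;M_k)$ with the cellular chain complex of $S^{2,1}$ tensored with $\pi_n\THR^\oplus(C;M)$, both non-equivariantly and on the $G$-fixed points (using that $S^{2,1}$ has a $G$-CW structure with a single fixed cell in dimension $(2,1)$). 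By a Dold--Kan argument for real simplicial topological abelian groups, this gives $\pi_n\THR^\oplus(S^{2,1}_\sbt C;M)\cong\pi_{n-2}\THR^\oplus(C;M)$ and the analogous isomorphism on fixed points.

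Taking $\Omega^{2,1}$ then yields $\pi_n\Omega^{2,1}\THR^\oplus(S^{2,1}_\sbt C;M)\cong\pi_n\THR^\oplus(C;M)$, and the same on fixed points; a direct inspection of the construction of the adjunction unit shows that the map in the statement realizes these isomorphisms. Transferring back via \ref{abgpmodel} then completes the proof.

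The main obstacle is the bookkeeping in the middle paragraph: one must chase through the matrix decomposition of \ref{matriciesandsdot} to verify that, after the identification $\THR^\oplus(S^{2,1}_kC;M_k)\simeq\map_\ast(S^{2,1}_k,\THR^\oplus(C;M))$, the alternating sum of face maps of the real simplicial abelian group coincides with the cellular differential of $S^{2,1}$ \emph{equivariantly}, requiring care with the $G$-action both on the underlying abelian group and on the fixed-point subgroup. A more abstract alternative would be to observe that the identification exhibits $\THR^\oplus_\sbt(S^{2,1}_\sbt C;M)$ as an equivariantly "special" simplicial real abelian group in the sense that the Segal maps $\THR^\oplus(S^{2,1}_kC;M_k)\to\prod\THR^\oplus(C;M)$ are $G$-equivalences, and invoke a general delooping principle for such simplicial objects; but executing this rigorously in the real setting requires essentially the same chain-level verification.
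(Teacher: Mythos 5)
Your proposal takes a genuinely different route from the paper. The paper proves the theorem in two stages: first a $G$-equivalence $\THR(S^{1,1}_{\sbt}C;M)\to\Omega\THR(S^{2,1}_{\sbt}C;M)$ using the shifted path construction $Psd_e$ and a Bousfield--Friedlander argument on the subdivided fibre sequence, and then a $G$-equivalence $\THR(C;M)\to\Omega^{1,1}\THR(S^{1,1}_{\sbt}C;M)$ via a \emph{two-sided} path construction $P_+P_-S^{1,1}_{\sbt}C$ (which, unlike a one-sided $P$, does inherit a real structure) together with a homotopy pullback model for $\Omega^{1,1}$. You instead propose a linearization argument: pass to $\THR^{\oplus}$, exhibit each level as a power of $\THR^{\oplus}(C;M)$ indexed by $S^{2,1}_k$, and read off a degree shift from a Dold--Kan/cellular-chain comparison. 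This is attractive — it sidesteps the failure of $PX_{\sbt}$ to carry a real structure, which the paper itself flags as the obstruction to a naive equivariant path-space proof — but as stated it has a gap you do not close.

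The crux is the sentence ``this identifies [...] the normalized chain complex of $[k]\mapsto\pi_n\THR^{\oplus}(S^{2,1}_kC;M_k)$ with the cellular chain complex of $S^{2,1}$ tensored with $\pi_n\THR^{\oplus}(C;M)$.'' The equivalences of Proposition~\ref{sumdiagsTHR} are levelwise $G$-equivalences, not isomorphisms of simplicial objects: the functor $d\colon S^{2,1}_kC\to C^{S^{2,1}_k\setminus\ast}$ that induces them involves taking kernels (the ``diagonal'' of an exact diagram), and the categorical face functors $d_l$ of $S^{2,1}_{\sbt}C$ do not commute with it on the nose. To run a Dold--Kan argument you need to know, for \emph{all} faces in \emph{all} degrees, that the alternating sum coincides (up to controlled $G$-homotopy compatible with normalization) with the cellular differential of $S^{2,1}_{\sbt}$ — this is essentially a multi-level additivity statement. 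Corollary~\ref{faceshomotopic} supplies exactly one such relation, $d_1\simeq_G d_0+d_2$ on $\THR^{\oplus}(S^{1,1}_2C;M_2)$, which is the first nontrivial case but far from the whole family; you would need the $S^{2,1}$ analogue and coherence across degrees, neither of which follows from what is established. On the fixed-point side the situation is worse: one must first pass to the edgewise subdivision to make the action simplicial, after which the levels are $\THR^{\oplus}(S^{2,1}_{2k+1}C;M_{2k+1})^G\simeq\map_\ast(S^{2,1}_{2k+1},\THR^{\oplus}(C;M))^G$, a mixed product over free and fixed orbits of $S^{2,1}_{2k+1}\setminus\ast$ as in \ref{wedgesintoproducts}. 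There is no off-the-shelf ``Dold--Kan for real simplicial abelian groups'' computing these normalized chains and identifying them with the cellular complex of $(S^{2,1})^G=S^1$; the analogue of your claim is precisely the content of lemmas like \ref{connconfspace} and \ref{connectvitysmashwithx} in the appendix, and they are proved by induction on cells and fibration sequences rather than by an a priori chain identity. Finally, even granting both homotopy-group computations, you still need the adjunction unit to realize the isomorphisms; you assert this by ``direct inspection'' but do not exhibit the comparison.

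In short: your route is plausible in outline and is a real alternative to the paper's path-space method, but it outsources the two hardest points — simplicial coherence of the levelwise identifications, and the equivariant Dold--Kan/fixed-point bookkeeping — to unstated facts. The paper avoids both by commuting homotopy fibres with realization via Bousfield--Friedlander and by the $P_+P_-$ trick, which is why it needs lemma \ref{piastbusiness}, the homotopy-pullback model of \ref{modelloop11} and \ref{fixedptshopb}, and the functor $I_k$; these are precisely the ingredients your argument would have to replace, not merely cite.
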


This theorem shows that the $G$-spaces $\{\THR((S^{2,1}_{\sbt})^{(m)}C;M)\}_{m\geq 0}$ with the structure maps defined in the statement is a real $\Omega$-spectrum, defining an infinite real loop space structure on $\THR(C;M)$ (cf. \ref{realspec}).

We give a non-equivariant proof first. It is going to give an overview of the strategy for the proof on the fixed points. 
Notice that in a completely analogous way one can define $\THR(S^{1,1}_{\sbt} C;M)$.
\begin{prop}\label{noneqdeloopings}
The map
\[\THH(C;M)\longrightarrow \Omega^{2}\THH(S^{2,1}_{\sbt} C;M)\]
is a (non-equivariant) equivalence.
\end{prop}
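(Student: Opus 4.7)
The plan is to apply Proposition \ref{sumdiagsTHR} levelwise to reduce the statement to a computation of the realization of a simplicial mapping space, and to identify that realization as a $2$-fold suspension using that $\THH(C;M)$ is a grouplike infinite loop space (the $0$-space of the real $\Omega$-spectrum of Proposition \ref{easydelooping}).

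First, I would verify that the levelwise equivalence
\[\THH(S^{2,1}_k C;M_k)\xrightarrow{\simeq}\map_\ast(S^{2,1}_k,\,\THH(C;M))\]
of Proposition \ref{sumdiagsTHR} is natural in the simplicial degree $k$. The content of this check is that the face and degeneracy functors of the simplicial category $S^{2,1}_\bullet C$ correspond, under the diagonal identification of Proposition \ref{matriciesandsdot} followed by the trace equivalence of Proposition \ref{triangdiagmat}, to the maps of mapping spaces induced by the cosimplicial structure on the pointed set $S^{2,1}_\bullet$ given by the inclusions $\delta^l\colon S^{2,1}_{k-1}\hookrightarrow S^{2,1}_k$ (on $3$-subsets of $[k]$). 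Taking geometric realization in $k$ then yields a non-equivariant equivalence
\[|\,k\mapsto\THH(S^{2,1}_k C;M_k)\,|\;\simeq\;|\,k\mapsto\map_\ast(S^{2,1}_k,X)\,|,\]
writing $X=\THH(C;M)$.

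Next, I compute the right-hand realization as $\Sigma^2 X$. Using the abelian-group model of Proposition \ref{abgpmodel} I may replace the pointed mapping space by the simplicial abelian group $X\otimes_\mathbb{Z}\tilde{\mathbb{Z}}[S^{2,1}_k]$, and realization commutes with the tensor product. The simplicial abelian group $[k]\mapsto\tilde{\mathbb{Z}}[S^{2,1}_k]$ is a $2$-fold bar-type complex whose realization is $\Sigma^2\mathbb{Z}$; one clean way to see this is to factor the $S^{2,1}_\bullet$-construction through two iterations of the $S^{1,1}_\bullet$-construction and apply the $1$-fold analogue of Proposition \ref{sumdiagsTHR} (also provided by \ref{sumdiagsTHR}) twice, reducing to the classical $\THH$-delooping result. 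Tensoring with $X$ gives $|k\mapsto\map_\ast(S^{2,1}_k,X)|\simeq\Sigma^2 X$.

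Finally, since $X$ is grouplike by \ref{easydelooping}, the unit $X\to\Omega^2\Sigma^2 X$ is a weak equivalence, and a diagram chase identifies the resulting composite with the map in the statement. The main obstacle is the simplicial-naturality check in the first step: the proof of Proposition \ref{sumdiagsTHR} is stated one $k$ at a time and passes through intermediate categories of triangular matrices, so the compatibility of that zig-zag with the face and degeneracy functors $d_l, s_l$ of $S^{2,1}_\bullet C$ must be made explicit before realization. Once naturality is in place, the subsequent identifications are a standard group-completion/delooping argument.
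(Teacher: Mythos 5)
Your overall plan—compute $\THH(S^{2,1}_\sbt C;M)$ levelwise, realize, and identify the result—is plausible, but as written the argument has two genuine gaps, and the paper takes a route that avoids both.

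First, the levelwise equivalences of Proposition \ref{sumdiagsTHR} run through a zigzag $S^{2,1}_kC\leftarrow T^{S^{2,1}_k\setminus\ast}C\to C^{S^{2,1}_k\setminus\ast}$ involving triangular-matrix categories and chosen direct sums, and compatibility with the face and degeneracy functors of $S^{2,1}_\sbt C$ is not merely a ``check'': it fails on the nose. The relevant comparison is Corollary \ref{faceshomotopic}, which shows that under the identification $\THR^\oplus(S^{1,1}_2C;M_2)\simeq\THR^\oplus(C;M)\times\THR^\oplus(C;M)$ the face $d_1$ agrees with $d_0+d_2$ only \emph{up to homotopy}, not equality. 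So the simplicial structure you want to transfer from $X\otimes_\mathbb{Z}\tilde{\mathbb{Z}}[S^{2,1}_\sbt]$ matches the actual simplicial structure on $\THR^\oplus(S^{2,1}_\sbt C;M)$ only up to a system of homotopies, and producing a coherent such system (or an honest simplicial zigzag) is precisely the hard part. Your proposal flags this as an ``obstacle'' to verify, but it is the main content of the argument, and nothing in your outline supplies it.

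Second, the identification of the realization with $\Sigma^2 X$ is not correct as a statement about pointed spaces. The simplicial object $k\mapsto\map_\ast(S^{2,1}_k,X)$ with the simplicial structure induced from $\THH(S^{2,1}_\sbt C;M)$ is a two-fold bar-type construction, and its realization is a two-fold \emph{delooping} $B^{(2)}X$, not the topological double suspension $\Sigma^2X$. For grouplike $X$ the unit $X\to\Omega^2 B^{(2)}X$ is an equivalence, but the Freudenthal map $X\to\Omega^2\Sigma^2X$ you invoke at the end is not an equivalence even for an infinite loop space $X$; it is split injective and far from surjective on homotopy. (If you intend ``$\Sigma^2$'' to mean the chain-complex shift of a simplicial abelian group rather than topological suspension, the statement becomes correct, but then it is really a delooping statement and the deduction needs to be phrased that way.)

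The paper's proof does not try to make the equivalence of Proposition \ref{sumdiagsTHR} simplicial. It keeps the honest simplicial spaces $\THH_\sbt(S_\sbt C;M)$ and $\THH_\sbt(S^{2,1}_\sbt C;M)$, introduces the path constructions $PS_\sbt C$ and $PS^{2,1}_\sbt C$ (which have contractible realization), uses Proposition \ref{sumdiagsTHR} only to verify that the levelwise face map $d_0$ is a trivial fibration, and then invokes the Bousfield--Friedlander theorem (\ref{bous-fried}, with the $\pi_*$-Kan hypotheses checked in \ref{piastbusiness} via the abelian-group model) to conclude that the levelwise fiber sequence realizes to a fiber sequence. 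This sidesteps the naturality question entirely and gives the delooping directly without identifying the realization as a suspension.
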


\begin{proof}
We prove this in two steps. First, for the standard $S_{\sbt}$-construction there is an equivalence
\[\THH(C;M)\longrightarrow\Omega\THH(S_{\sbt} C;M)\]
when $C$ is split-exact.
Here's the argument from \cite[2.1.3]{ringfctrs}. Consider the simplicial category $PS_{\sbt} C$, given in degree $k$ by
\[(PS_{\sbt} C)_k=S_{k+1} C\]
The simplicial structure is defined by forgetting the first face and degeneracy from $S_{\sbt} C$ in every simplicial level. In this situation, the forgotten degeneracy gives a simplicial homotopy equivalence $\ast=S_0C\longrightarrow PS_{\sbt} C$. By applying $\THH$ to each functor of the homotopy, one gets a simplicial contraction $\THH(PS_{\sbt} C,M)\simeq\ast$.
Moreover for every $k$, there is a sequence of functors
\[C\longrightarrow (PS_{\sbt} C)_k\stackrel{d_{0}}{\longrightarrow} S_kC\]
where $d_{0}$ is the face we removed. Applying $\THH$, we get a commutative diagram
\[\xymatrix{\THH(C;M)\ar@{=}[d]\ar[r]&\THH((PS_{\sbt} C)_{k};M_{k+1})\ar[r]^{d_0}\ar[d]_{\simeq}&\THH(S_k C;M_k)\ar[d]_\simeq\\
\THH(C;M)\ar[r]&\THH(C;M)^{k+1}\ar[r]&\THH(C;M)^k
}\]
where the vertical maps are the equivalences from \ref{sumdiagsTHR}, and the bottom map is the trivial fibration that projects off the last component of the product. Therefore the sequence of simplicial spaces
\[\THH(C;M)\longrightarrow \THH_{\sbt}(PS_{\sbt} C;M)\stackrel{d_0}{\longrightarrow} \THH_{\sbt}(S_{\sbt} C;M)\]
is levelwise a fiber sequence. Lemma \ref{piastbusiness} below says that the conditions of the Bousfield-Friedlander theorem \cite[4.9]{goersjardine} (cf. \ref{bous-fried} below) are satisfied by the map $d_0$, and therefore the realization
\[\THH(C;M)\longrightarrow \THH(PS_{\sbt} C;M)\stackrel{d_0}{\longrightarrow} \THH(S_{\sbt} C;M)\]
is a fiber sequence as well. Since the total space is canonically contractible, the map $\THH(C;M)\longrightarrow\Omega\THH(S_{\sbt} C;M)$ is an equivalence.

Second, we prove that there is a (non-equivariant) equivalence \[\THH(S_{\sbt} C;M)\stackrel{\simeq}{\longrightarrow}\Omega\THH(S_{\sbt}^{2,1}C;M)\] Consider, for every $k$, the sequence
\[S_kC\longrightarrow P(S^{2,1}_{\sbt} C)_{k}\stackrel{d_{0}}{\longrightarrow} S^{2,1}_kC\]
where the first map is the inclusion as the bottom face, that sends a diagram $X\colon Cat([1],[k])\longrightarrow C$ to $\widetilde{X}\colon Cat([2],[k+1])\longrightarrow C$ defined by
\[\widetilde{X}(i< j< l)=\left\{\begin{array}{ll}X(j-1<l-1)&\mbox{ if }i=0 \\
0 & \mbox{ else }\end{array}\right.\]
This map is simplicial, and applying $\THH$ it defines a commutative diagram
\[\xymatrix{\THH(S_kC;M_k)\ar[d]_\simeq\ar[r]&\THH((PS^{2,1}_{\sbt})_{k} C;M_{k+1})\ar[r]^{d_0}\ar[d]_{\simeq}&\THH(S^{2,1}_k C;M_k)\ar[d]_\simeq\\
\THH(C;M)^{S^{1}_k\backslash \ast}\ar[r]&\THH(C;M)^{S^{2,1}_{k+1}\backslash \ast}\ar[r]&\THH(C;M)^{S^{2,1}_k\backslash \ast}
}\]
where the vertical maps are the equivalences from \ref{sumdiagsTHR}. The bottom row is a fibration for every $k$, since the projection map is just precomposition with the inclusion $S^{2,1}_k\longrightarrow S^{2,1}_{k+1}$ induced by $\sigma^{0}\colon[k+1]\longrightarrow [k]$.
Lemma \ref{piastbusiness} below applies exactly in the same way to the realization
\[d_0\colon\THH(PS^{2,1}_{\sbt} C;M)\longrightarrow\THH(S^{2,1}_{\sbt} C;M)\]
by using the abelian group model $\THH^\oplus$ of \ref{abgpmodel}, and since $S^{2,1}_0C=0$.
Thus Bousfield-Friedlander theorem \ref{bous-fried} applies to show that the realization
\[\THH(S_{\sbt} C;M)\longrightarrow \THH(PS^{2,1}_{\sbt} C;M)\stackrel{d_0}{\longrightarrow}\THH(S^{2,1}_{\sbt} C;M)\]
is still fiber sequence. The total space is contractible by the same argument as before, and therefore $\THH(S_{\sbt} C;M)\simeq\Omega\THH(S_{\sbt}^{2,1}C;M)$.

Combining the two results we obtain an equivalence
\[\THH(C;M)\stackrel{\simeq}{\longrightarrow}\Omega\THH(S_{\sbt} C;M)\stackrel{\simeq}{\longrightarrow}\Omega^2\THH(S^{2,1}_{\sbt} C;M)\]
which is homotopic to the map of the statement.
\end{proof}

The next lemma has been used in the proof above, and variations of it will be used throughout the section. It is central in most proofs, since it allows to commute homotopy pullbacks with realizations of simplicial spaces, by means of \ref{bous-fried} below.

\begin{defn}[{\cite[IV.4]{goersjardine}}]
Let $X_{\sbt\,\sbt}$ be a bisimplicial set such that for all $k\geq 0$ the simplicial set $X_{k,\,\sbt}$ is fibrant. For all $n\geq 1$ define $\pi_n X$ to be the simplicial set
\[\pi_nX=\{[k]\longmapsto \coprod\limits_{x\in X_{k,0}}\pi_n (X_{k,\,\sbt},x)\}\]
We say that $X_{\sbt\,\sbt}$ \textbf{satisfies the $\pi_\ast$-Kan condition} if for all $n\geq 1$ the canonical map
\[\pi_n X\longrightarrow X_{\sbt,0}\]
is a Kan-fibration. A general bisimplicial set $Y_{\sbt\,\sbt}$ satisfies the $\pi_\ast$-Kan condition if there is a levelwise weakly equivalent, levelwise fibrant bisimplicial set $X_{\sbt\,\sbt}$ that satisfies the $\pi_\ast$-Kan condition.
\end{defn}

\begin{lemma}\label{piastbusiness}
The map
\[d_0\colon\THH(PS_{\sbt} C;M)\longrightarrow \THH(S_{\sbt} C;M)\]
is the realization of a map of bisimplicial sets $Y_{\sbt\,\sbt}\longrightarrow X_{\sbt\,\sbt}$ such that
$X_{\sbt\,\sbt}$ and $Y_{\sbt\,\sbt}$ satisfy the $\pi_\ast$-Kan condition, and
\[\pi_0X_{k,\,\sbt}=\pi_0Y_{k,\,\sbt}=\ast\]
for all $k$.
\end{lemma}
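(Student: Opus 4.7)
The plan is to replace $\THH$ with the abelian group model $\THH^\oplus$ of Proposition~\ref{abgpmodel}, which is a non-equivariant equivalence, and to exhibit both the source and target of $d_0$ as realizations of bisimplicial abelian groups. The $\pi_\ast$-Kan condition will then follow automatically from the abelian group structure of each row, while the $\pi_0$-triviality of each row will follow from reducedness properties built into the construction.

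First I would model $\THH^\oplus(S_k C; M_k)$ as the realization of a simplicial abelian group. The levels $HC(c,d)_i = C(c,d)(S^i)$ and $HM(c,d)_i = M(c,d)(S^i)$ are realizations of \emph{reduced} pointed simplicial abelian groups, reduced because the simplicial sphere $S^i$ has only the basepoint at simplicial level $0$, so $\mathbb{Z}(S^i)_0 = 0$. Consequently $V^\oplus(HC; HM, \underline{i})$ inherits the structure of a reduced simplicial abelian group. Replacing the topological mapping space $\map_\ast(S^{\underline{i}}, V^\oplus)$ by the simplicial internal hom into a simplicial abelian group (which remains a simplicial abelian group), and modeling the homotopy colimit over $I[n]$ by its bar construction, we obtain a bisimplicial abelian group. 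Taking the diagonal in the $[n]$-direction and the internal simplicial direction produces a simplicial abelian group $X_{k, \sbt}$ whose realization is $\THH^\oplus(S_k C; M_k)$. Varying $k$ assembles this into a bisimplicial abelian group $X_{\sbt, \sbt}$; similarly construct $Y_{\sbt, \sbt}$ from $\THH^\oplus((PS_\sbt C)_k; M_{k+1})$, and $d_0$ becomes a map of bisimplicial abelian groups.

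Second, for the $\pi_\ast$-Kan condition: each row $X_{k, \sbt}$ is a simplicial abelian group, hence a Kan complex. For any $x \in X_{k, 0}$, translation by $x$ is a simplicial automorphism sending $0$ to $x$, so $\pi_n(X_{k, \sbt}, x) \cong \pi_n(X_{k, \sbt}, 0)$ canonically. This yields an isomorphism of simplicial sets
\[
\pi_n X \;=\; \coprod_{x \in X_{\sbt, 0}} \pi_n(X_{\sbt, \sbt}, x) \;\cong\; X_{\sbt, 0} \times \pi_n(X_{\sbt, \sbt}, 0),
\]
and the projection to $X_{\sbt, 0}$ is a product projection, hence a (trivial) Kan fibration. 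The argument for $Y$ is identical.

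Third, for $\pi_0 X_{k, \sbt} = \ast$: by the reducedness inherited from $S^i_0 = \ast$ via tensor products, internal homs into simplicial abelian groups, and the bar construction (all of which preserve reducedness when applied to reduced simplicial abelian groups), each $X_{k, \sbt}$ is a reduced simplicial abelian group, i.e.\ $X_{k, 0} = 0$. A reduced simplicial set has a single $0$-simplex, hence a unique path component, so $\pi_0 X_{k, \sbt} = \ast$; and similarly for $Y$.

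The main obstacle is making rigorous the simplicial abelian group model, particularly verifying that the bar-construction homotopy colimit over $I[n]$, combined with the simplicial internal hom from $S^{\underline{i}}$ into a simplicial abelian group and the diagonal construction, preserves both the abelian group structure and the reducedness simultaneously. These are technical but routine manipulations of simplicial abelian groups and pointed simplicial sets.
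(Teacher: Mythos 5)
Your overall strategy — pass to the abelian group model $\THH^\oplus$ and exploit the fact that bisimplicial abelian groups are automatically levelwise fibrant — is the same as the paper's, and the translation argument you give for the $\pi_\ast$-Kan condition is a clean way to avoid citing \cite[IV,4.2-(2)]{goersjardine}. But there is a genuine gap in the $\pi_0$-triviality claim, and it comes from organizing the two simplicial indices the wrong way around.

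You build $X_{\sbt,\sbt}$ so that the inner simplicial set $X_{k,\sbt}$ is the diagonal of the $\THH$-simplicial direction, the internal (Eilenberg--MacLane) level and the bar-construction direction of the homotopy colimit, with the $S_{\sbt}$-index $k$ kept outside; in particular $|X_{k,\sbt}| \simeq \THH^\oplus(S_kC;M_k)$. Your reducedness argument rests on the assertion that $\mathbb{Z}(S^i)_0 = 0$, hence that each $HC(c,d)_i$ is the realization of a reduced simplicial abelian group. This fails for $i=0$: the simplicial $0$-sphere has two $0$-simplices, so $\mathbb{Z}(S^0)_0 = \mathbb{Z}$. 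Since the homotopy colimit defining $\THH_0$ runs over all of $I$, including $\underline{i}=\underline{0}$ (which is initial and contributes precisely the $\Omega^0V(\dots,0) = V(\dots,0)$ term), the $0$-simplices of your inner simplicial set are not a point. This is not a technicality one can wave away: $\THH^\oplus(S_kC;M_k)$ is genuinely not connected in general — for $k=1$ it is $\THH^\oplus(C;M)$ whose $\pi_0$ is a Hochschild-homology-type group — so no simplicial abelian group realizing to it can be reduced, and the required $\pi_0 X_{k,\sbt} = \ast$ fails.

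The paper avoids this by choosing the opposite organization: the outer index is the diagonal of the $\THH$-simplicial and internal directions, and the inner index is the $S_{\sbt}$-direction $p$, so $X^{\oplus}_{k,p} = \hocolim_{I[k]}\Omega^{\underline{i}}V^{\oplus}_k(HS_pC;HM,\underline{i})$. With that convention $X^{\oplus}_{k,0}$ involves $S_0C = 0$, hence is a single point, and reducedness (and therefore connectedness) of each inner simplicial set follows immediately; $Y^{\oplus}_{k,\sbt} = PX^{\oplus}_{k,\sbt}$ is then simplicially contractible. The reducedness you need is the $1$-reducedness of the $S_{\sbt}$-construction, not any reducedness of the simplicial spheres. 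Your translation argument for the $\pi_\ast$-Kan fibration would then go through unchanged on the correctly-organized $X^{\oplus}$, since it only uses the abelian group structure plus the already-established connectedness.
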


\begin{proof}
The space $\THH(S_{\sbt} C;M)$ is homeomorphic to the realization of the $3$-simplicial set
\[([k],[p],[q])\mapsto\hocolim_{I[k]}\Omega^{\underline{i}}V_q(HS_p C;HM,\underline{i})\]
where $V_q(HC;HM,\underline{i})$ is defined in the same way as $V(HC;HM,\underline{i})$ by replacing the topological spaces $C(c,d)(S^i)$ by their $q$-simplicies $C(c,d)(S_{q}^i)$, so that
\[|[q]\mapsto V_q(HC;HM,\underline{i})|\cong V(HC;HM,\underline{i})\]
Taking the diagonal in the first and last simplicial directions we obtain a bisimplicial set 
\[X_{\sbt\,\sbt}=\{[k][p]\mapsto\hocolim_{I[k]}\Omega^{\underline{i}}V_k(HS_p C;HM,\underline{i})\}\]
Now define another bisimplicial set by $Y_{k,\,\sbt}=PX_{k,\,\sbt}$. The $d_0$ face map of $Y_{k,\,\sbt}$ gives a map of bi-simplicial sets
\[d_0\colon Y\longrightarrow X\]
which realizes to our map $\THH(PS_{\sbt} C;M)\longrightarrow \THH(S_{\sbt} C;M)$.

The $\pi_\ast$-Kan condition is invariant under weak equivalence (cf. \cite[IV,4.2-(1)]{goersjardine}) so we can replace $X$ and $Y$ by their abelian group models
\[X^{\oplus}_{\sbt\,\sbt}=\{[k][p]\mapsto\hocolim_{I[k]}\Omega^{\underline{i}}V^{\oplus}_k(HS_p C;HM,\underline{i})\}\ \ , \ \ Y^{\oplus}_{k,\,\sbt}=PX^{\oplus}_{k,\,\sbt}\]
Indeed by \ref{abgpmodel} the canonical maps $X_{k,\,\sbt}\longrightarrow X^{\oplus}_{k,\,\sbt}$ are equivalences for all $k$. In order to prove that $X^{\oplus}$ and $Y^{\oplus}$ satisfy the $\pi_\ast$-Kan condition it is enough, by \cite[IV,4.2-(2)]{goersjardine} to show that $X^{\oplus}$ and $Y^{\oplus}$ are levelwise fibrant and levelwise connected. They are levelwise fibrant since they are bisimplicial abelian groups. The simplicial set $Y^{\oplus}_{k,\,\sbt}$ is contractible for every fixed $k$, as
\[Y^{\oplus}_{k,\,\sbt}=PX^{\oplus}_{k,\,\sbt}\simeq X^{\oplus}_{k,0}=\ast\]
and $X^{\oplus}_{k,0}=\ast$, so that the simplicial sets $X^{\oplus}_{k,\,\sbt}$ are reduced and therefore connected.
\end{proof}

\begin{prop}[Bousfield-Friedlander, {\cite[4.9]{goersjardine}}]\label{bous-fried} Let
\[\xymatrix{Z\ar[r]\ar[d]&X\ar[d]^p\\
W\ar[r]&Y
}\]
be a square of bisimplicial sets, which is levelwise homotopy cartesian. If $X$ and $Y$ satisfies the $\pi_\ast$-Kan condition and the map $\pi_0p\colon \pi_0X\longrightarrow \pi_0Y$ is a Kan-fibration, the realization of the square above is homotopy cartesian.
\end{prop}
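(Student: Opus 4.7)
\smallskip

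The plan is to reduce to the case where $p$ is a levelwise Kan fibration and then prove the key technical statement that, under the $\pi_\ast$-Kan hypothesis, the diagonal (i.e.\ the realization) of a levelwise Kan fibration is again a Kan fibration. Once that is established, the square becomes a strict pullback after realization with a Kan fibration as its right leg, hence is homotopy cartesian on realizations.

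For the reduction, I would apply a functorial fibrant replacement in the vertical simplicial direction. Specifically, factor $p$ levelwise as $X\hookrightarrow X'\twoheadrightarrow Y$ where $X_{k,\sbt}\to X'_{k,\sbt}$ is a trivial cofibration and $X'_{k,\sbt}\to Y_{k,\sbt}$ is a Kan fibration for every $k$, using a small object argument that is natural in $[k]$. The levelwise weak equivalence $X\to X'$ preserves the $\pi_\ast$-Kan condition (it is invariant under levelwise weak equivalence, as recalled in the proof of \ref{piastbusiness}), and the map $\pi_0 X'\to \pi_0 Y$ equals $\pi_0 p$ up to isomorphism, hence remains a Kan fibration. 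Replacing $Z$ by the strict pullback $W\times_Y X'$ gives a levelwise weakly equivalent object, so it suffices to prove the realization of the strict pullback square
\[
\xymatrix{W\times_Y X'\ar[r]\ar[d] & X'\ar[d]^{p'}\\ W\ar[r] & Y}
\]
is homotopy cartesian.

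The heart of the argument is then to show that the diagonal $|p'|\colon |X'|\to |Y|$ is a Kan fibration. I would prove this by induction on skeleta in the horizontal direction, analyzing fibers: for a vertex $y\in Y_{0,0}$, the fiber $F_{k,\sbt}$ of $p'$ at $y$ (or rather at its image in $Y_{k,\sbt}$) is a Kan complex for each $k$, and one checks using the $\pi_\ast$-Kan condition that the bisimplicial set $F_{\sbt,\sbt}$ is the homotopy fiber of $p'$ in a fibrewise sense. The $\pi_\ast$-Kan condition on $X'$ and $Y$ is precisely what is needed to guarantee that the long exact sequences of homotopy groups of $F_{k,\sbt}\to X'_{k,\sbt}\to Y_{k,\sbt}$ assemble into a long exact sequence of simplicial groups over $Y_{\sbt,0}$, and the assumption that $\pi_0 p'\colon \pi_0 X'\to \pi_0 Y$ is a Kan fibration provides the connecting piece $\pi_1 Y\to \pi_0 F$ with the correct lifting property. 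Combining these with the fact that $F$ itself satisfies the $\pi_\ast$-Kan condition (inherited from the long exact sequence), one obtains via the Bousfield--Friedlander spectral sequence, or equivalently the standard analysis of the diagonal of a bisimplicial Kan fibration, that $|F|\to |X'|\to |Y|$ realizes to a fibration sequence and $|p'|$ is a Kan fibration with fibre $|F|$ over each component.

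The main obstacle will be the careful verification that the $\pi_\ast$-Kan condition, which is an a priori delicate compatibility between the two simplicial directions, together with the Kan-fibration condition on $\pi_0$, is exactly strong enough to make the homotopy groups of $|F|$ compute correctly as $\pi_n |F|\cong |\pi_n F|$ and to turn the levelwise fibration into an honest fibration after taking the diagonal. Once $|p'|$ is known to be a Kan fibration, the strict pullback in the bisimplicial category commutes with the diagonal functor, so the realization of the square is a strict pullback along a Kan fibration, which is automatically homotopy cartesian; combined with the levelwise equivalence $Z\simeq W\times_Y X'$ this gives the conclusion.
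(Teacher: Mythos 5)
The paper does not prove this statement at all: it is quoted verbatim from Goerss--Jardine [IV.4.9] and used as a black box, so the only question is whether your sketch is a correct proof of the theorem itself -- and its central technical claim is false. Your reduction is fine (functorial levelwise factorization of $p$ as a trivial cofibration followed by a Kan fibration, invariance of the $\pi_\ast$-Kan condition under levelwise weak equivalence, and replacement of $Z$ by the strict pullback $W\times_Y X'$), but the diagonal of a levelwise Kan fibration satisfying all the hypotheses need \emph{not} be a Kan fibration. Concretely, let $Y$ be the bisimplicial set with $Y_{k,\bullet}$ the discrete simplicial set on $\Delta[1]_k$, and build $X\to Y$ by attaching to the two vertices and the nondegenerate edge of $\Delta[1]$ the contractible Kan complexes $F_0$ (two vertices $a,b$ joined by an edge), $F_1=\ast$ and $F_{01}=\ast$, where the horizontal face corresponding to $d_1$ sends $F_{01}$ to the vertex $b$ of $F_0$. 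Each $X_{k,\bullet}\to Y_{k,\bullet}$ is a Kan fibration, all vertical homotopy groups vanish so the $\pi_\ast$-Kan conditions hold, and $\pi_0X\to\pi_0Y$ is an isomorphism; yet the diagonal $d(X)\to\Delta[1]$ is not a Kan fibration, because the horn $\Lambda^1_0\to d(X)$ hitting $a$ has no filler over the edge of $\Delta[1]$ (the only candidate $1$-simplex lies in $F_{01}$ and its $d_1$-face is forced to be $b$). So the step you yourself flag as the ``main obstacle'' cannot be carried out, and your concluding step (``strict pullback along a Kan fibration is automatically homotopy cartesian'') has nothing to apply to.

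What the hypotheses actually buy -- and what the Bousfield--Friedlander/Goerss--Jardine argument proves -- is weaker but sufficient: for a levelwise fibration $p'$ with levelwise fibre $F$ over a vertex of $Y_{0,0}$, the canonical map from $d(F)$ to the homotopy fibre of $d(p')$ is a weak equivalence. This is established by a homotopy-group comparison (equivalently the spectral sequence of a bisimplicial set): the $\pi_\ast$-Kan condition makes the vertical homotopy groups into well-behaved horizontal simplicial objects over $Y_{\bullet,0}$, and the assumption that $\pi_0p$ is a Kan fibration controls the bottom of the resulting long exact sequences. Homotopy cartesianness of the realized square then follows by comparing $d(Z)\simeq d(W\times_YX')$ with the homotopy pullback fibrewise over the components of $d(Y)$; at no point does one obtain, or need, an honest fibration after realization. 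As written your argument has a genuine gap at its heart and would have to be reorganized along these lines.
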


\begin{rem}
One could (naively) try to prove \ref{THRdeloopings} by making an equivariant version of the proof of \ref{noneqdeloopings}: define actions on the fiber sequences
\[\THH(C;M)\longrightarrow \THH(PS_{\sbt} C;M)\stackrel{d_0}{\longrightarrow} \THH(S_{\sbt} C;M)\]
and
\[\THH(S_{\sbt} C;M)\longrightarrow \THH(PS^{2,1}_{\sbt} C;M)\stackrel{d_0}{\longrightarrow}\THH(S^{2,1}_{\sbt} C;M)\]
so that the restriction to the fixed points stay fiber sequences, and so that the total spaces are $G$-contractible. But such an argument would only give deloopings with trivial action on the circle. What fails is that the path construction $PX$ on a real space $X$ is no longer a real space since the last face map corresponds under the action to the first one, which as been removed. An attempt to fix this is to first subdivide the fiber sequences, in order to obtain simplicial actions, and apply the $P$-construction afterwards. This will give equivariant fiber sequences, but it will in general change the homotopy type of the fiber. The suggested construction works out for the second sequence, giving a $G$-equivalence
\[\THR(S^{1,1}_{\sbt} C;M)\stackrel{\simeq}{\longrightarrow} \Omega\THR(S^{2,1}_{\sbt} C;M)\]
However, it is not obvious at all how to define a $G$-map $\THR(S^{1,1}_{\sbt}C;M)\longrightarrow \THR(Psd_eS^{2,1}_{\sbt} C;M)$. We will show that the canonical map
\[\THR(C;M)\stackrel{\simeq}{\longrightarrow}\Omega^{1,1}\THR(S^{1,1}_{\sbt} C;M)\] is also a $G$-equivalence, but by a different line of reasoning.
\end{rem}

\begin{defn}
A sequence of $G$-spaces and equivariant maps $F\longrightarrow Y\longrightarrow X$ is called a \textbf{$G$-fiber sequence} if it is a fiber sequence, and its restriction to the fixed points $F^G\longrightarrow Y^G\longrightarrow X^G$ is also a fiber sequence.
\end{defn}

Recall the subdivision functor from \ref{subdivision} that takes a real space to a simplicial $G$-space. This construction works just as well for real categories. Given a simplicial category $C_{\sbt}$, we denote $sd_e C_{\sbt}$ the simplicial category with $k$-simplicies
\[(sd_e C_{\sbt})_{k}=C_{2k+1}\]
and faces and degeneracies defined in degree $k$ by $\overline{d}_l=d_ld_{2k+1-l}$ and $\overline{s}_l=s_ls_{2k+1-l}$. If $C_{\sbt}$ is a real category (cf. \ref{defrealcat}), the induced levelwise duality on $sd_e C_{\sbt}$ is simplicial. Moreover there is a natural $G$-equivariant homeomorphism $|sd_e C_{\sbt}|\cong |C_{\sbt}|$.

\begin{prop}
There is a $G$-fiber sequence
\[\THR(sd_esd_eS^{1,1}_{\sbt} C;M)\longrightarrow\THR(Psd_eS^{2,1}_{\sbt} C;M)\stackrel{\overline{d}_0}{\longrightarrow} \THR(sd_eS^{2,1}_{\sbt} C;M)\]
with $G$-contractible total space $\THR(Psd_eS^{2,1}_{\sbt} C;M)$. In particular this gives a $G$-equivalence \[\THR(S^{1,1}_{\sbt} C;M)\stackrel{\simeq}{\longrightarrow} \Omega\THR(S^{2,1}_{\sbt} C;M)\]
\end{prop}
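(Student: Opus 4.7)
The plan is to mimic the non-equivariant argument in Proposition \ref{noneqdeloopings} but with everything subdivided so that the levelwise involutions become simplicial, and then apply an equivariant version of the Bousfield--Friedlander realization theorem \ref{bous-fried} twice (once for the ordinary and once for the fixed-point bisimplicial structure).

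First I would set up the geometry. Write $X_{\sbt}=sd_eS^{2,1}_{\sbt}C$, a simplicial category with levelwise strict duality that is now compatible with the face and degeneracy operators. The path object $PX_{\sbt}$ is defined by $(PX_{\sbt})_k=X_{k+1}=S^{2,1}_{2k+3}C$, forgetting $\overline{d}_0$ and $\overline{s}_0$. The forgotten degeneracy $\overline{s}_0\colon X_k\to X_{k+1}$ gives an equivariant simplicial contraction of $PX_{\sbt}$ onto the constant simplicial object on $X_0=S^{2,1}_1C=0$. Composing the functor $\THR(-;M)$ levelwise, we get a simplicial $G$-map $\overline{d}_0\colon\THR(Psd_eS^{2,1}_{\sbt}C;M)\to\THR(sd_eS^{2,1}_{\sbt}C;M)$, whose source is $G$-contractible since the contraction is simplicial hence compatible with realization and with passage to fixed points.

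The main work is to identify the levelwise homotopy fiber as $\THR(sd_esd_eS^{1,1}_kC;M)$ and to prove the sequence is a levelwise $G$-fiber sequence. Using Proposition \ref{sumdiagsTHR} both source and target are $G$-equivalent to the pointed $G$-mapping spaces $\map_{\ast}(S^{2,1}_{2k+3},\THR(C;M))$ and $\map_{\ast}(S^{2,1}_{2k+1},\THR(C;M))$, with $\overline{d}_0$ corresponding to precomposition with the inclusion of pointed $G$-sets $S^{2,1}_{2k+1}\hookrightarrow S^{2,1}_{2k+3}$ induced by $d_0d_{2k+3}$. The pointed $G$-cofiber of this inclusion is $G$-isomorphic to $S^{1,1}_{4k+3}$, which is exactly $(sd_esd_eS^{1,1}_{\sbt})_k$ (the two subdivisions are needed precisely to turn the non-equivariant inclusions $S^{1,1}_{k}\hookrightarrow S^{2,1}_{k+1}$ of top/bottom faces, which get swapped by the involution, into a single equivariant map after appropriately subdividing both sides). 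Combining with a second application of Proposition \ref{sumdiagsTHR} identifies the fiber as $\THR(sd_esd_eS^{1,1}_kC;M)$, and the $G$-fiber-sequence property in each simplicial degree follows because a map of $G$-mapping spaces out of a pointed $G$-cofiber sequence is a $G$-fibration.

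To go from a levelwise $G$-fiber sequence to a $G$-fiber sequence on realizations I would invoke Bousfield--Friedlander \ref{bous-fried} separately on the bisimplicial structure underlying $\THR$ and on its fixed-point subsimplicial set. The required $\pi_\ast$-Kan condition and connectivity of $\pi_0$ are checked exactly as in Lemma \ref{piastbusiness}: replacing $\THR$ by its abelian group model $\THR^\oplus$ from Proposition \ref{abgpmodel} (and using Proposition \ref{abgpmodel} on fixed points, where the model is still a simplicial abelian group since the action on $V^{\oplus}$ is additive), both spaces become levelwise simplicial abelian groups, hence fibrant, and are connected because $S^{2,1}_1C=0$ gives $X^{\oplus}_{k,0}=\ast$. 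The same argument applies to the fixed-point bisimplicial abelian groups, which is the place where one has to be most careful, but the fixed-point functor commutes with $\THR^\oplus$ levelwise and preserves being an abelian group.

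With the $G$-fiber sequence and $G$-contractibility of the middle term in hand, the fiber is $G$-equivalent to the loop space of the base. The canonical $G$-homeomorphisms $|sd_eY|\cong|Y|$ applied to $\THR_{\sbt}(S^{2,1}_{\sbt}C;M)$ and $\THR_{\sbt}(sd_eS^{1,1}_{\sbt}C;M)$ translate this into the stated $G$-equivalence $\THR(S^{1,1}_{\sbt}C;M)\stackrel{\simeq}{\to}\Omega\THR(S^{2,1}_{\sbt}C;M)$. The most delicate step is the fiber identification: keeping careful track of indexing shifts and checking that, after double subdivision, the pointed $G$-cofiber of the inclusion $S^{2,1}_{2k+1}\hookrightarrow S^{2,1}_{2k+3}$ really is the level $k$ of $sd_esd_eS^{1,1}_{\sbt}$ with its correct $G$-action.
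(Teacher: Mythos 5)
Your overall plan matches the paper's: subdivide so the involution is simplicial, identify the levelwise homotopy fiber of $\overline{d}_0$ via Proposition \ref{sumdiagsTHR} as a $G$-mapping space out of $S^{1,1}_{4k+3}$, and push the levelwise $G$-fiber sequence through Bousfield--Friedlander via the abelian-group model, separately for the underlying and fixed-point bisimplicial sets. The $G$-contractibility of the path object and the reduction to $\THR^{\oplus}$ are handled correctly.

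However, there is a real gap at exactly the place you flag as ``delicate,'' and it is not merely a bookkeeping issue with the sphere cofiber. To apply Bousfield--Friedlander in the form of \ref{bous-fried} you need a square of bisimplicial objects, levelwise homotopy cartesian, with the putative fiber $\THR(sd_esd_e S^{1,1}_{\sbt}C;M)$ sitting in the corner as an honest bisimplicial set mapping into the total space $\THR(Psd_e S^{2,1}_{\sbt}C;M)$. Identifying, separately for each $k$, the homotopy fiber of $\THR(S^{2,1}_{2k+3}C;M)\to\THR(S^{2,1}_{2k+1}C;M)$ with $\THR(S^{1,1}_{4k+3}C;M)$ does not produce that simplicial map: the equivalences from \ref{sumdiagsTHR} are only $G$-weak equivalences and do not realize the fiber as a subspace, and the cofiber identification of spheres lives on the mapping-space side rather than on the $S^{2,1}$-diagram side. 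What is actually needed is an explicit simplicial, duality-preserving functor
\[I_k\colon S^{1,1}_{4k+3}C=(sd_esd_e S^{1,1}_{\sbt}C)_k\longrightarrow S^{2,1}_{2k+3}C=(Psd_e S^{2,1}_{\sbt}C)_k\]
whose composite with $\overline{d}_0=d_0d_{2k+3}$ is the zero functor, and a check that the resulting square
\[\xymatrix{\THR(S^{1,1}_{4k+3}C;M)\ar[r]^{I_k}\ar[d]&\THR(S^{2,1}_{2k+3}C;M)\ar[d]^{d_0d_{2k+3}}\\ \ast\ar[r]&\THR(S^{2,1}_{2k+1}C;M)}\]
is $G$-homotopy cartesian, which one does by comparing with the $G$-mapping-space model via a naturally commuting square involving the homeomorphism $\kappa\colon\map_\ast(S^{1,1}_{4k+3},X)\cong F_k$. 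This is the substantive content of the proof: defining $I_k$ on a diagram $X\colon Cat([1],[4k+3])\to C$ by placing $X$ on the top and bottom faces of the tetrahedron and on the connecting diagonal edge, verifying the required $4$-term exactness, verifying $I_k$ is simplicial in $k$ and commutes with $D$, and verifying the diagram with $\kappa$ commutes up to equivariant natural isomorphism. Without this construction you do not have the input data for \ref{bous-fried}, so the proof does not yet close.
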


\begin{proof} We remove the bimodules from the notation for simplicity. We are going to define a sequence of functors
\[sd_esd_eS^{1,1}_{\sbt} C\longrightarrow Psd_eS^{2,1}_{\sbt} C\stackrel{\overline{d}_0}{\longrightarrow} sd_eS^{2,1}_{\sbt} C\]
where the simplicial map $\overline{d}_0$ is given by the zero face of $sd_eS^{2,1}_{\sbt} C$, the one we removed in the definition of $P sd_eS^{2,1}_{\sbt} C$. Explicitly, in degree $k$ it is given by
\[\xymatrix{\overline{d}_0\colon S^{2,1}_{2k+3} C\ar[rr]^-{d_0d_{2k+3}}&& S^{2,1}_{2k+1} C}\]
Notice that since the duality on $sd_eS^{2,1}_{\sbt} C$ is simplicial, the simplicial category $P sd_eS^{2,1}_{\sbt} C$ also has a simplicial duality, and the  map $\overline{d}_0$ is equivariant. A similar construction gives the map on the bimodules.

Before defining the map $sd_esd_eS^{1,1}_{\sbt} C\longrightarrow Psd_eS^{2,1}_{\sbt} C$,
we want to identify the levelwise homotopy fiber of $\THR(S^{2,1}_{2k+3} C)\stackrel{d_0d_{2k+3}}{\longrightarrow}  \THR(S^{2,1}_{2k+1} C)$ in order to explain why this strange double subdivision comes out.  The $G$-equivalences of \ref{sumdiagsTHR} give a commutative diagram of $G$-maps
\[\xymatrix{\THR(S^{2,1}_{2k+3} C)\ar[d]^{\simeq}\ar[r]^{d_0d_{2k+3}}&  \THR(S^{2,1}_{2k+1} C)\ar[d]^{\simeq}\\
\map_\ast(S^{2,1}_{2k+3},\THR(C))\ar[r]_{d_0d_{2k+3}}&\map_\ast(S^{2,1}_{2k+1},\THR(C))
}\]
Where the bottom map is precomposition by the inclusion $S^{2,1}_{2k+1}\longrightarrow S^{2,1}_{2k+3}$ induced by $\sigma^{2k+1}\sigma^0\colon[2k+3]\longrightarrow [2k+1]$. Notice that in particular the bottom map of the diagram is a $G$-equivariant fibration. Let us describe its fiber. In general for a pointed $G$-space $X$, the fiber of the map
\[\map_\ast(S^{2,1}_{2k+3},X)\stackrel{d_0d_{2k+3}}{\longrightarrow}\map_\ast(S^{2,1}_{2k+1},X)\]
over the basepoint is the space $F_k$ of maps $f\colon S^{2,1}_{2k+3}\longrightarrow X$ whose values at a
\[\theta=(0^i1^{2k+4-i-l}2^l)\colon [2k+3]\longrightarrow [2]\]
are $f(\theta)=\ast$ if $i>1$ or $l>1$. The map $\kappa\colon \map_\ast(S^{1,1}_{4k+3},X)\longrightarrow F_k$ that send $g$ to
\[\kappa(g)(0^i1^j2^{2k+4-i-j})=\left\{\begin{array}{lll}g(0^j1^{4k+4-j})&\mbox{ if } i=1\\
g(0^{4k+4-j}1^j)&\mbox{ if } 2k+4-i-j=1\\
\ast&\mbox{ otherwise }
\end{array}\right.\]
is an equivariant homeomorphism. Therefore we have a commutative diagram
\[\xymatrix{&\THR(S^{2,1}_{2k+3} C)\ar[d]^{\simeq}\ar[r]^{d_0d_{2k+3}}&  \THR(S^{2,1}_{2k+1} C)\ar[d]^{\simeq}\\
\THR(C^{S^{1,1}_{4k+3}\backslash\ast})\ar[d]^{\simeq}\ar[r]&\THR(C^{S^{2,1}_{2k+3}\backslash\ast})\ar[d]^{\simeq}\ar[r]_{d_0d_{2k+3}}&\THR(C^{S^{2,1}_{2k+3}\backslash\ast})\ar[d]^{\simeq}\\
\map_\ast\!(S^{1,1}_{4k\!+\!3},\!\THR(C)\!)\!\ar[r]&\map_\ast\!(S^{2,1}_{2k\!+\!3},\!\THR(C)\!)\!\ar[r]_{d_0d_{2k\!+\!3}}&\map_\ast\!(S^{2,1}_{2k\!+\!1},\!\THR(C)\!)\!
}\]
where the bottom row is a $G$-fibration, and the map $\THR(C^{S^{1,1}_{4k+3}\backslash\ast})\longrightarrow\THR(C^{S^{2,1}_{2k+3}\backslash\ast})$ is $\THR$ of the functor
\[C^{S^{1,1}_{4k+3}\backslash\ast}\longrightarrow C^{S^{2,1}_{2k+3}\backslash\ast}\]
defined in a completely analogous way to the function $\kappa$ above. By \ref{sumdiagsTHR}, the obvious model for
the levelwise homotopy fiber of $\THR(S^{2,1}_{2k+3} C)\stackrel{d_0d_{2k+3}}{\longrightarrow}  \THR(S^{2,1}_{2k+1} C)$ is $\THR(S^{1,1}_{4k+3}C)$. The point is that it should define a simplicial space in the $k$-direction, and map simplicially into $\THR(Psd_eS^{2,1}_{\sbt} C)$. To give it a simplicial structure, notice that $4k+3=2(2k+1)+1$, meaning that $S^{1,1}_{4k+3} C$ is the degree $k$ of the double subdivision
\[S^{1,1}_{4k+3} C=(sd_esd_e S^{1,1}_{\sbt}C)_k\]
It remains to define a simplicial functor $sd_esd_e S^{1,1}_{\sbt}C\longrightarrow Psd_eS^{2,1}_{\sbt} C$.
It is defined levelwise by the functor $I_k\colon S^{1,1}_{4k+3} C\longrightarrow S^{2,1}_{2k+3} C$ that sends a diagram $X\colon Cat([1],[4k+3])\longrightarrow C$ to
\[I_k(X)_{i<j<l}=\left\{\begin{array}{llll}X_{j-1<l-1}&\mbox{ if }i=0,l\neq 2k+3\\
X_{i+2k+1<j+2k+1}&\mbox{ if }i\neq 0,l= 2k+3\\
X_{j-1<j+2k+1} &\mbox{ if }i= 0,l= 2k+3\\
0&\mbox{ otherwise }
\end{array}\right.\]
For an illustration of this functor see \ref{picIk} below.
Les us check that $I_k(X)$ is well defined. For any $\psi=(i\!<\!j\!<\!l\!<\!h)\in Cat([3],[2k+3])$, we need to show that the sequence
\[I_k(X)_{i<j<l}\longrightarrow I_k(X)_{i<j<h}\longrightarrow I_k(X)_{i<l<h}\longrightarrow I_k(X)_{j<l<h}\]
is exact. If both $i$ and $h$ are non-zero, the sequence is $0$, which is exact. If $i=0$ and $h\neq 2k+3$, the sequence is
\[X_{j-1<l-1}\longrightarrow X_{j-1<h-1}\longrightarrow X_{l-1<h-1}\longrightarrow 0\]
which is exact since it is the $3$-terms exact sequence for $X\in S^{1,1}_{4k+3}C$ associated to $\theta=(j\!-\!1\!<\!l\!-\!1\!<\!h\!-\!1)\in Cat([2],[4k+3])$. A similar argument shows that the sequence is exact for $i\neq 0$ and $h=2k+3$. The last case, the interesting one, is when $i= 0$ and $h=2k+3$. In this case the middle map of the sequence factors as
\[\xymatrix{X_{j-1<l-1}\ar[r]&X_{j-1<j+2k+1}\ar[d]\ar[r]&X_{l-1<l+2k+1}\ar[r]&X_{j+2k+1<l+2k+1}\\
&X_{l-1<j+2k+1}\ar[ur]
}\]
The sequence is therefore exact since the two sequences \[X_{j-1<l-1}\longrightarrow X_{j-1<j+2k+1}\longrightarrow X_{l-1<j+2k+1}\] and \[X_{l-1<j+2k+1}\longrightarrow X_{l-1<l+2k+1}\longrightarrow X_{j+2k+1<l+2k+1}\] are the sequences for $X$ associated respectively to $(j\!-\!1\!<\!l\!-\!1\!<\!j\!+\!2k\!+\!1)$ and $(l\!-\!1\!<\!j\!+\!2k\!+\!1\!<\!l\!+\!2k\!+\!1)$ in $Cat([2],[4k+3])$, and are therefore exact.
One can easily see that $I_k$ commutes with the dualities, and more painfully that it is a simplicial functor. Moreover the diagram
\[\xymatrix{S^{1,1}_{4k+3}C\ar[r]^{I_k}\ar[d]^d&S^{2,1}_{2k+3}C\ar[d]^d\\
C^{S^{1,1}_{4k+3}\backslash\ast}\ar[r]^{\kappa}& C^{S^{2,1}_{2k+3}\backslash\ast}
}\]
commutes up to equivariant natural isomorphism.
This construction gives a commutative diagram
\[\resizebox{1\hsize}{!}{\xymatrix{\THR(S^{1,1}_{4k\!+\!3}C)\ar[r]\ar[d]^{\simeq}&\THR(S^{2,1}_{2k\!+\!3} C)\ar[d]^{\simeq}\ar[r]^{d_0d_{2k\!+\!3}}&  \THR(S^{2,1}_{2k\!+\!1} C)\ar[d]^{\simeq}\\
\map_\ast\!(S^{1,1}_{4k\!+\!3},\!\THR(C)\!)\ar[r]&\map_\ast\!(S^{2,1}_{2k\!+\!3},\!\THR(C)\!)\ar[r]_{d_0d_{2k\!+\!3}}&\map_\ast\!(S^{2,1}_{2k\!+\!1},\!\THR(C)\!)
}}\]
Since the bottom row is a fibration when restricted to the fixed points, the sequence of simplicial spaces
\[\THR_{\sbt}(sd_esd_eS^{1,1}_{\sbt}C)^G\longrightarrow\THR_{\sbt}(Psd_eS^{2,1}_{\sbt} C)^G\longrightarrow\THR_{\sbt}(sd_eS^{2,1}_{\sbt} C)^G\]
is a levelwise fiber sequence. Lemma \ref{piastbusiness} holds for the realization of this map as well, since the abelian group model \ref{abgpmodel} for $\THR^G$ gives a levelwise fibrant bisimplicial set, and since $(sd_eS^{2,1}_{\sbt})_0 C=S^{2,1}_{1}C=0$.
Therefore Bousfield-Friedlander \ref{bous-fried} applies and it shows that the realization
\[\THR(sd_esd_eS^{1,1}_{\sbt}C)^G\longrightarrow\THR(Psd_eS^{2,1}_{\sbt} C)^G\longrightarrow\THR(sd_eS^{2,1}_{\sbt} C)^G\]
is a fiber sequence. The standard simplicial contraction $Psd_eS^{2,1}_{\sbt} C\simeq\ast$ respects the $G$-action, and therefore it induces a $G$-contraction of the total space $\THR(Psd_eS^{2,1}_{\sbt} C)$. Thus the fixed points $\THR(Psd_eS^{2,1}_{\sbt} C)^G$ are contractible, and the contraction gives an equivalence
\[\THR(sd_esd_eS^{1,1}_{\sbt}C)^G\stackrel{\simeq}{\longrightarrow}\Omega(\THR(sd_eS^{2,1}_{\sbt} C)^G)=\Omega(\THR(sd_eS^{2,1}_{\sbt} C))^G\]
where the first and the last space are respectively canonically homeomorphic to $\THR(S^{1,1}_{\sbt}C)^G$ and $\Omega(\THR(S^{2,1}_{\sbt} C))^G$.
\end{proof}

\begin{ex}\label{picIk} Here's a hopefully illuminating illustration of the functor $I_k\colon (sd_esd_eS^{1,1} C)_{k}\longrightarrow (Psd_eS^{2,1} C)_{k}$, for $k=1$. The functor $I_1$ remembers only the highlighted parts of the diagram $X\in S^{1,1}_7$
\begin{center}
\includegraphics[scale=0.75]{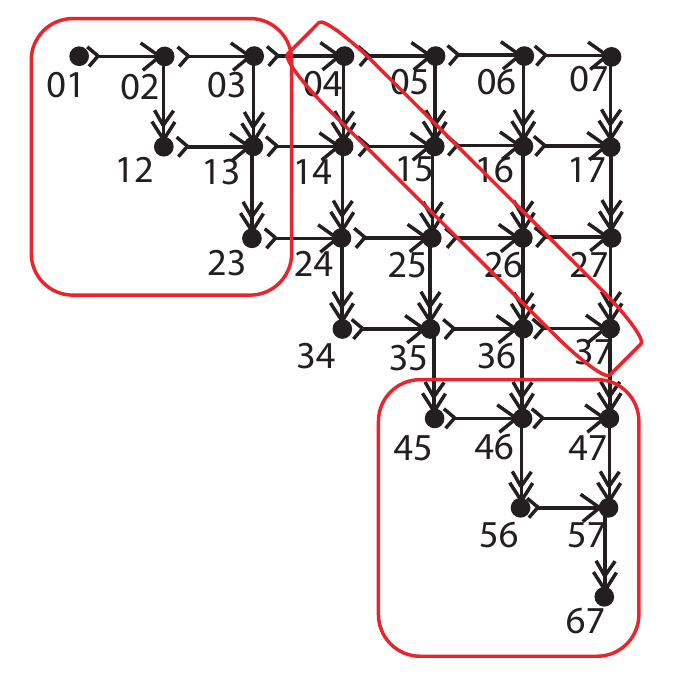}
\end{center}
Then it embeds the two triangles in the bottom and top face of the tetrahedron, and the diagonal stripe is placed on the edge formed by the intersection of the two faces
\vspace{-.3cm}
\begin{center}
\includegraphics[scale=0.7]{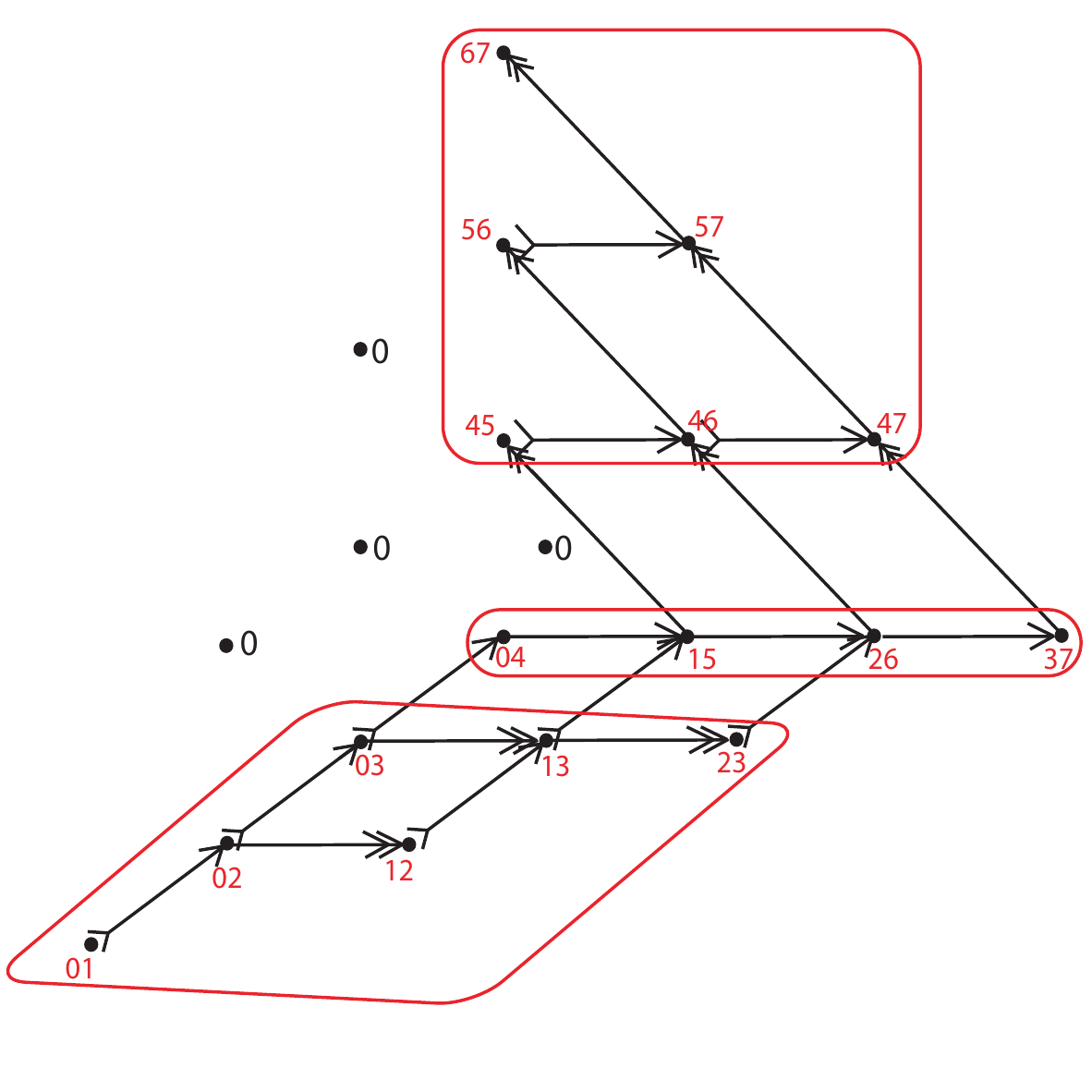}
\end{center}
It is clear from the picture that $I_1$ is equivariant, and maybe even that $I_k$ respects the simplicial structure.
\end{ex}

Theorem \ref{THRdeloopings} follows from the previous proposition and the next one.

\begin{prop}\label{THRS11}
The map $\THR(C;M)\longrightarrow\Omega^{1,1}\THR(S^{1,1}_{\sbt} C;M)$ induced by the adjoint of the projection map
\[\THR(C;M)\times\Delta^1=\THR(S^{1,1}_1C;M_1)\times\Delta^1\longrightarrow\THR(S^{1,1}_{\sbt} C;M)\]
is a $G$-equivalence.
\end{prop}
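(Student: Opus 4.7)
The plan parallels the proof of the preceding proposition, which showed $\THR(S^{1,1}_\sbt C;M)\simeq\Omega\THR(S^{2,1}_\sbt C;M)$. First, note that $S^{1,1}_\sbt C$ and Waldhausen's $S_\sbt C$ share the same underlying simplicial category, so the non-equivariant version of the statement is precisely the equivalence $\THH(C;M)\simeq\Omega\THH(S_\sbt C;M)$ established in the first step of the proof of \ref{noneqdeloopings}. That argument uses the levelwise fiber sequence $C\to PS_\sbt C\stackrel{d_0}{\to}S_\sbt C$ with contractible total space and Bousfield-Friedlander \ref{bous-fried} via the $\pi_\ast$-Kan condition of \ref{piastbusiness}.

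To upgrade to a $G$-equivalence, I would resolve the obstruction that the naive path construction $P$ breaks the real condition $d_iD=Dd_{k-i}$ by passing to the subdivision: form $Psd_eS^{1,1}_\sbt C$, a simplicial $G$-category whose extra degeneracy is automatically $G$-equivariant, yielding a $G$-contraction of $\THR(Psd_eS^{1,1}_\sbt C;M)$. The removed face becomes the simplicial $G$-functor $\overline{d}_0=d_0d_{2k+3}\colon (Psd_eS^{1,1}_\sbt C)_k=S^{1,1}_{2k+3}C\to S^{1,1}_{2k+1}C=(sd_eS^{1,1}_\sbt C)_k$. Under the equivalence of \ref{sumdiagsTHR}, this map on $\THR$ corresponds to precomposition with the inclusion $S^{1,1}_{2k+1}\hookrightarrow S^{1,1}_{2k+3}$ induced by $\sigma^{2k+1}\sigma^0$, and its levelwise fiber is $\map_\ast(S^{1,1}_{2k+3}/S^{1,1}_{2k+1},\THR(C;M))$, where the quotient is the pointed $G$-set $G_+$ with its two non-basepoint elements (the extremes $(0\,1^{2k+3})$ and $(0^{2k+3}\,1)$) swapped by the duality.

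Next, analogously to the functor $I_k$ of the preceding proposition, I would construct a simplicial $G$-functor $J_k\colon F_k\to S^{1,1}_{2k+3}C$ with image in the fiber of $\overline{d}_0$, where $F_\sbt$ is a simplicial $G$-category whose $\THR$ realization is $G$-equivalent to $\THR(C;M)$. Concretely $F_k$ should insert a diagram of $C$ at the two extremal corners $(0\,1^{2k+3})$ and $(0^{2k+3}\,1)$ of $S^{1,1}_{2k+3}$, intertwined by the duality so that the two copies are identified with a single $\THR(C;M)$ after realization, mimicking the combinatorial role played by $I_k$ in embedding $sd_esd_eS^{1,1}_\sbt C$ into $Psd_eS^{2,1}_\sbt C$.

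Finally, Bousfield-Friedlander \ref{bous-fried} (with the $\pi_\ast$-Kan condition verified both non-equivariantly and on $G$-fixed points via the abelian group model \ref{abgpmodel}, noting $S^{1,1}_0C=\ast$ makes everything $0$-reduced) upgrades the levelwise fiber sequence to a $G$-fiber sequence on realizations. Combined with the $G$-contractibility of $\THR(Psd_eS^{1,1}_\sbt C;M)$, this yields the desired equivalence $\THR(C;M)\simeq_G \Omega^{1,1}\THR(S^{1,1}_\sbt C;M)$. The main obstacle is the third step: constructing $F_\sbt$ and, crucially, tracking that the resulting loop direction carries the sign action rather than the trivial one, which is forced by the face-swap $d_0\leftrightarrow d_{2k+3}$ induced by the duality on $S^{1,1}_\sbt$ and is precisely what distinguishes $\Omega^{1,1}$ from the trivially-actioned $\Omega$ that would come out of a non-equivariant path-construction argument.
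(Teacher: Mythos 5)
Your approach does not match the paper's, and it contains a fatal gap that the paper itself explicitly flags. In the remark preceding Proposition \ref{THRS11}, the paper discusses precisely the $Psd_e$ strategy you propose and explains why it fails here, concluding "we will show that the canonical map $\THR(C;M)\to\Omega^{1,1}\THR(S^{1,1}_{\sbt}C;M)$ is also a $G$-equivalence, but by a different line of reasoning."

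The concrete problem is your second step: you claim the extra degeneracy yields a $G$-contraction of $\THR(Psd_eS^{1,1}_\sbt C;M)$. It does not. The simplicial contraction of a path object $P Y_\sbt$ identifies it with its $0$-simplicies $Y_0$, so $Psd_eS^{1,1}_\sbt C$ contracts onto $(sd_eS^{1,1}_\sbt C)_0=S^{1,1}_1C=C$, and $\THR(Psd_eS^{1,1}_\sbt C;M)\simeq_G\THR(C;M)$, which is generally far from contractible. Contrast this with the $S^{2,1}$ case treated in the preceding proposition: there $(sd_eS^{2,1}_\sbt C)_0=S^{2,1}_1C=0$, so $Psd_eS^{2,1}_\sbt C$ genuinely contracts to a point and the fibration-with-contractible-total-space argument goes through. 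Without contractibility of the total space, Bousfield--Friedlander only tells you that the levelwise fiber realizes to the homotopy fiber of $\THR(C;M)\to\THR(S^{1,1}_\sbt C;M)$, not to a loop space; and in any case the resulting loop coordinate would carry the trivial $G$-action, not the sign action needed for $\Omega^{1,1}$, exactly as the remark warns. Your hope in step three to construct $F_\sbt$ and "track the sign action" cannot repair this, because the action on the loop direction in a one-sided path-fibration argument is genuinely trivial; no choice of $F_\sbt$ changes that.

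The paper's actual proof abandons the one-sided path construction entirely. It passes instead to the two-sided construction $P_+P_-S^{1,1}_\sbt C$, which (unlike $P_+$ or $P_-$ alone) is a real simplicial category because the removed first and last faces are interchanged by the duality. It then (a) shows the subdivided inclusion $\iota\colon C\to sd_eP_+P_-S^{1,1}_\sbt C$ is a simplicial $G$-homotopy equivalence, (b) shows $\THR(P_+P_-S^{1,1}_\sbt C;M)$ maps by a $G$-equivalence into the homotopy pullback $|P_+X|\stackrel{h}{\times}_{|X|}|P_-X|$ where $X_\sbt=\THR(S^{1,1}_\sbt C;M)$ (Bousfield--Friedlander enters here, applied to the levelwise homotopy pullback squares rather than to a fiber sequence with contractible total), and (c) invokes Lemma \ref{modelloop11}, which identifies that homotopy pullback with its twist action as a model for $\Omega^{1,1}\THR(S^{1,1}_\sbt C;M)$. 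Step (c) is where the sign action on the loop coordinate appears: it comes from the involution $(y,\gamma,y')\mapsto(D_-y',D\overline{\gamma},D_+y)$ that reverses the path, which is built into the two-sided construction and has no analogue in the one-sided approach.
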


The strategy for the proof of this proposition is similar to the one given in the appendix \ref{Gbarconstructions} for proving that $M\longrightarrow\Omega^{1,1}M(S^{1,1}_{\sbt})$ is a $G$-equivalence for a group-like monoid $M$ with anti-involution. 

We factor the map of the statement through an intermediate $G$-space $\THR(P_+P_-S^{1,1}_{\sbt} C;M)$, and we prove that the two maps are $G$-equivalences.
Consider the simplicial categories $P_+S^{1,1}_{\sbt} C$ and $P_-S^{1,1}_{\sbt} C$, both defined in degree $k$ as $S^{1,1}_{k+1} C$, but with simplicial structure defined by forgetting the $0$-th face and degeneracy for $P_+S^{1,1}_{\sbt} C$, and the last ones for $P_-S^{1,1}_{\sbt} C$ (in our old notation $P=P_+$). The removed faces define simplicial functors $d_0\colon P_+S^{1,1}_{\sbt} C\longrightarrow S^{1,1}_{\sbt} C$ and $d_{L}\colon P_-S^{1,1}_{\sbt} C\longrightarrow S^{1,1}_{\sbt} C$.

\begin{rem} The duality on $S^{1,1}_{\sbt} C$ does not induce a real structure on $P_+S^{1,1}_{\sbt} C$ nor on $P_-S^{1,1}_{\sbt} C$. However, it induces mutually inverse functors \[D_+\colon P_+S^{1,1}_{\sbt} C\longrightarrow P_-S^{1,1}_{\sbt} C  \ \ \mbox{  and  }  \ \ D_-\colon P_-S^{1,1}_{\sbt} C\longrightarrow P_+S^{1,1}_{\sbt} C\] reversing the order of faces and degeneracies.
\end{rem}

We combine these two constructions into a simplicial category $P_+P_-S^{1,1}_{\sbt} C$, the double shift of $S^{1,1}_{\sbt} C$. Its $k$-simplicies are
\[P_+P_-S^{1,1}_k C=S^{1,1}_{k+2} C\]
and the simplicial structure forgets both the first and the last faces and degeneracies from $S^{1,1}_{\sbt} C$. Since the two faces we removed were interchanged, and the same for the degeneracies, the real structure of $S^{1,1}_{\sbt} C$ induces a real structure on $P_+P_-S^{1,1}_k C$.

By considering $C$ a constant simplicial category, there is a duality preserving simplicial functor
\[\iota\colon C\stackrel{\iota_0}{\longrightarrow}S^{1,1}_3C=(sd_eP_+P_-S^{1,1}_{\sbt} C)_0\stackrel{s}{\longrightarrow} sd_eP_+P_-S^{1,1}_{\sbt} C\]
where $s$ denotes the degeneracy functor and $\iota_0$ sends an object $c$ to
\[\xymatrix{
0\ar[r]&c\ar@{=}[r]\ar@{=}[d]&c\ar@{=}[d]\\
&c\ar@{=}[r]&c\ar[d]\\
&&0
}\]

\begin{rem}
One could think of $S_{\sbt} C$ as a Bar construction $BG=B(\ast;G;\ast)$ on $C$ solving the coherence problem of non-strict associativity of the sum of $C$. Then $P_+S_{\sbt} C$ and $P_-S_{\sbt} C$ should be thought of as the one sided Bar constructions $B(G;G;\ast)$ and $B(\ast;G;G)$ respectively. With this picture in mind, the double construction $P_+P_-S^{1,1}_{\sbt} C$ corresponds to a two sided Bar construction $B(G;G;G)$. The map $\iota$ above is a subdivided version of the inclusion $G\longrightarrow B(G;G;G)$, given in degree zero by the inclusion in the first summand, to make the action fit. The next lemma is analogous to the standard result that $G\longrightarrow B(G;G;G)$ is a homotopy equivalence (see \ref{Gbarconstructions} for the actual proof for monoids with action).
\end{rem}

\begin{lemma}
The inclusion $\iota\colon C\longrightarrow sd_eP_+P_-S^{1,1}_{\sbt} C$ induces a simplicial $G$-homotopy equivalence
\[\THR(C;M)\longrightarrow\THR(sd_eP_+P_-S^{1,1}_{\sbt} C;M)\]
\end{lemma}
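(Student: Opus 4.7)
The proof should follow the standard template used several times already in the paper (cf.\ Propositions \ref{obintoiso}, \ref{inclusion}, and \ref{scriptdandthr}): construct an equivariant simplicial deformation retraction of $sd_e P_+ P_- S^{1,1}_{\sbt} C$ onto $C$. As indicated by the remark preceding the lemma, this is the analogue for exact categories with duality of the classical fact that the inclusion of a (group-like) monoid into its two-sided bar construction $B(G;G;G)$ is a homotopy equivalence; the monoidal version is handled in Appendix \ref{Gbarconstructions}.

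\smallskip

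\noindent \textbf{Step 1 (retraction).} I would define a simplicial $G$-equivariant morphism of bimodules with duality $(r,\rho)\colon (sd_e P_+ P_- S^{1,1}_{\sbt} C, M_{\sbt}) \longrightarrow (C, M)$ by
\[ r(X) = X_{0 < 2k+3} \qquad \text{for } X \in S^{1,1}_{2k+3} C = (sd_e P_+ P_- S^{1,1}_{\sbt} C)_k, \]
i.e.\ the top object of the underlying flag, and defining $\rho$ as the projection of $M_{2k+3}(X,Y) \subset \bigoplus_{\omega\in Cat([1],[2k+3])} M(X_\omega,Y_\omega)$ onto the component indexed by $(0 < 2k+3)$. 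Subdividing, the face $\overline{d}_l$ translates to $d_{l+1}^{S^{1,1}} \circ d_{2k+2-l}^{S^{1,1}}$ on $S^{1,1}_{2k+3}$, and the composition of the corresponding $\delta$-maps fixes $0$ and sends $2k+1$ to $2k+3$; the analogous check handles the degeneracies $\overline{s}_l$, so $r$ is simplicial in the subdivided direction. It commutes strictly with the duality because $(DX)_{0 < 2k+3} = D(X_{0<2k+3})$, and a direct inspection of $\iota_0(c)$ (where the top entry is $c$) gives $r\circ\iota = \id_C$.

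\smallskip

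\noindent \textbf{Step 2 (homotopy).} The main work is to exhibit a simplicial $G$-homotopy between $\iota\circ r$ and $\id$ on $sd_e P_+ P_- S^{1,1}_{\sbt} C$. In each simplicial degree $k$, I would construct a $G$-natural transformation $U^k$ (in the sense of Definition \ref{Gnattransf}) relating the two endofunctors of $(S^{1,1}_{2k+3} C, M_{2k+3})$ given by $\iota\circ r$ and $\id$. The analogy with $B(G;G;G)\simeq G$ suggests the interpolation: one uses the structural maps of the flag of $X$ (admissible monos on the top row and admissible epis down the columns) to build, via chosen splittings of the exact sequences of $X$, a zig-zag relating the constant diagram $\iota(r(X))$ concentrated at $X_{0<2k+3}$ with the full flag $X$. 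The split-exactness hypothesis is precisely what allows coherent choices of such splittings. Collecting these natural transformations for varying $k$ gives a simplicial functor \[ sd_e P_+ P_- S^{1,1}_{\sbt} C \times [1] \longrightarrow sd_e P_+ P_- S^{1,1}_{\sbt} C \] whose restrictions to the two endpoints are $\iota\circ r$ and $\id$, and the analogous extension on bimodules. The subdivision plays a crucial role: it places the $G$-fixed coordinate exactly at the top–bottom pair $(0, 2k+3)$, so that the midpoint used to define $r$ is the same one used to destinate the natural transformation.

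\smallskip

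\noindent \textbf{Step 3 (pass to $\THR$).} Applying $H\mathcal{D}(-)$ and invoking Proposition \ref{THRpreservesGequiv}, the $G$-natural transformations of Step 2 induce a simplicial $G$-homotopy on $\THR_{\sbt}$ between $(\iota\circ r)_\ast$ and $\id$. Combined with $r_\ast\circ \iota_\ast = \id$ from Step 1, this exhibits $\iota_\ast \colon \THR(C; M) \longrightarrow \THR(sd_e P_+P_-S^{1,1}_{\sbt}C; M)$ as a simplicial $G$-homotopy equivalence, as required.

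\smallskip

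\noindent The hard part will be Step 2: producing natural transformations that are simultaneously simplicial in the subdivided direction and satisfy the $G$-naturality condition of Definition \ref{Gnattransf}. The combinatorics of the flags in $S^{1,1}_{2k+3}$ under the subdivided faces interact non-trivially with the duality (which reverses indices), and it is at this point that both the subdivision and the split-exactness of $C$ are really used.
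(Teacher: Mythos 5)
Your proposal diverges from the paper's proof in a way that opens a genuine gap, and the gap is in Step 2.

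The paper's proof of this lemma is entirely elementary and simplicial: after defining a retraction $r$ via iterated (forgotten) faces $\overline{d}_0$ of $sd_e S^{1,1}_{\sbt}C$, the simplicial $G$-homotopy between $\iota\circ r$ and $\id$ is given in degree $k$ by $H_\sigma=\overline{s}_0^{\,b}\,\overline{d}_0^{\,b}$ for $\sigma=(0^b1^{k+1-b})\in\Delta[1]_k$ — iterated faces and degeneracies of the subdivision and nothing else. No splittings, no natural transformations of functors, and in particular no use of split-exactness: the lemma is a pure combinatorial fact about the double path construction $P_+P_-$ of an arbitrary simplicial category, valid with no exactness hypothesis at all.

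Your Step 2 proposes something structurally different that cannot be made to work. You suggest building, for each $k$, a $G$-natural transformation $U^k$ (in the sense of Definition \ref{Gnattransf}) relating $\id$ and $\iota\circ r$ on $(S^{1,1}_{2k+3}C, M_{2k+3})$, using "coherent choices of splittings" of the exact sequences in a flag $X$. This fails for two reasons. First, no such natural transformation exists: $\iota_k(r(X))$ is the fully degenerate flag whose internal structure maps are all identities, while a general $X$ has arbitrary admissible monos and epis; there is no natural comparison morphism from one to the other in $S^{1,1}_{2k+3}C$. (This is exactly why the successful arguments in Propositions \ref{obintoiso} and \ref{inclusion} operate on the maximal subgroupoid $iS^{2,1}_{\sbt}$ or $i\mathcal{D}T^{2,1}_{\sbt}$, where the "horizontal" maps are all isomorphisms; here they are not.) Second, even ignoring naturality, choices of splittings cannot be made simultaneously functorial in $X$ and compatible with the faces and degeneracies in the simplicial $k$-direction, which is what a simplicial homotopy requires. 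The notion of homotopy that actually applies is a family $\{H_\sigma\}_{\sigma\in\Delta[1]_k}$ of \emph{functors} satisfying simplicial identities, which is weaker and much more flexible than a degreewise natural transformation; this extra flexibility is precisely what lets the paper's argument go through with no exactness hypothesis. The split-exactness you invoke enters the surrounding section through Proposition \ref{sumdiagsTHR} and the additivity theorem, not through this lemma.

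A secondary, minor point: your retraction sends $X$ to its top object $X_{0<2k+3}$, which is indeed an equivariant simplicial retraction of $\iota$. The paper's retraction (via $\overline{d}_0^{\,k+1}$) is tailored to cohere with the specific homotopy $\overline{s}_0^{\,b}\overline{d}_0^{\,b}$; if you insist on the top-object retraction you will in any case need to produce a matching simplicial homotopy by direct face/degeneracy manipulations, not by splittings.
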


\begin{proof}
In general $P_+$ and $P_-$ of a simplicial category are both simplicially homotopy equivalent to the $0$-simplicies of the category, via respectively
\[d_{0}^{\,\sbt+1}\colon P_+C_{\sbt}\longrightarrow C_0\ \ \mbox{and} \ \ d_{L}^{\,\sbt+1}\colon P_-C_{\sbt}\longrightarrow C_0\] 
where $d_{L}$ denotes the last face in every simplicial degree.
In our situation $P_+P_-S^{1,1}_{\sbt} C\simeq (P_-S^{1,1}_{\sbt} C)_0=S^{1,1}_1 C=C$. To check that the equivalence respects the $G$-action we write down the maps explicitly.

We define a simplicial retraction for $\iota$.
Denote by $\overline{d}_l$ and $\overline{s}_l$ the faces and degeneracies of $sd_eS^{1,1}_{\sbt} C$.
The functor
\[r=\overline{d}_{0}^{k+1}\colon (sd_eP_-P_+S^{1,1}_{\sbt} C)_k=(P_+sd_eS^{1,1}_{\sbt} C)_k\longrightarrow (sd_eS^{1,1}_{\sbt} C)_0=C\]
defines an equivariant retraction for $\iota$.
For every $\sigma=(0^b1^{k+1-b})\in\Delta[1]_k$ define a functor \[H_\sigma\colon sd_eP_-P_+S^{1,1}_kC\longrightarrow sd_eP_-P_+S^{1,1}_kC\] by the composite
\[H_\sigma=\overline{s}_{0}^{b}\overline{d}_{0}^{b}\]
This defines a simplicial homotopy between the identity and $\iota\circ r$. Since the action on the subdivision $sd_eS^{1,1}_{\sbt} C$ is simplicial, this is a homotopy of duality preserving functors. Applying $\THR$ to each of these functors one gets the desired simplicial homotopy, showing that $\iota$ and $r$ induce mutually inverse $G$-homotopy equivalences.
\end{proof}

We compare \[\THR(sd_eP_{\!+\!}P_{\!-\!}S^{1,1}_{\sbt}C;\!M)=|sd_e\THR(P_{\!+\!}P_{\!-\!}S^{1,1}_{\sbt}C;\!M)|\cong\THR(P_{\!+\!}P_{\!-\!}S^{1,1}_{\sbt}C;\!M)\]
 with the loop space $\Omega^{1,1}\THR(S^{1,1}_{\sbt} C;M)$ by finding a new model for the loop space.
Recall that the levelwise action on $\THR(S^{1,1}_{\sbt} C;M)$ induces a commutative diagram
\[\xymatrix{\THR(P_+S^{1,1}_{\sbt} C;M)\ar@<1ex>[r]^{D_+}\ar[d]_{d_0}&\THR(P_-S^{1,1}_{\sbt} C;M)\ar@<1ex>[l]^{D_-}\ar[d]^{d_{L}}\\
\THR(S^{1,1}_{\sbt} C;M)\ar[r]^{D}&\THR(S^{1,1}_{\sbt} C;M)
}\]
where $d_L$ denotes the iterated last face.
Since the functors $D_+$ and $D_-$ reverse the simplicial structure one needs to reverse the order of the simplex coordinate in order to have well defined maps on the realization.
To simplify the notation write $X_k=\THR(S^{1,1}_k C;M)$, so that 
\[|P_+X_{\sbt}|=P_+\THR(S^{1,1}_{\sbt} C;M)=\THR(P_+S^{1,1}_{\sbt} C;M)\] and similarly for $P_-$.
In this general situation, the homotopy pullback $|P_+X_{\sbt}|\stackrel{h}{\times}_{|X_{\sbt}|}|P_-X_{\sbt}|$ of the diagram
\[\xymatrix{ &|P_+X_{\sbt}|\ar[d]^{d_0}\\
|P_-X_{\sbt}|\ar[r]_{d_{L}}& |X_{\sbt}|
}\]
admits a $G$-action (cf. appendix \ref{Gbarconstructions}). The action sends a triple $(y,\gamma,y')$ of a $y\in |P_+X_{\sbt}|$, a $y'\in |P_-X_{\sbt}|$ and a path $\gamma\colon I\longrightarrow |X_{\sbt}|$ to
\[(D_-y',D\overline{\gamma},D_+y)\]
where $D\overline{\gamma}$ denotes the path obtained by applying the duality pointwise to the backwards path.
Notice that $P_+X_{\sbt}$ is equivalent as a simplicial space to $X_0=\THR(\ast)=\ast$. Therefore by lemma \ref{modelloop11} below, there is a $G$-homotopy equivalence
\[\Omega^{1,1}\THR(S^{1,1}_{\sbt} C;M)\stackrel{\simeq}{\longrightarrow}|P_+X_{\sbt}|\stackrel{h}{\times}_{|X_{\sbt}|}|P_-X_{\sbt}|\]
Hence it suffices to compare $\THR(P_+P_-S^{1,1}_{\sbt} C;M)$ with $|P_+X_{\sbt}|\stackrel{h}{\times}_{|X_{\sbt}|}|P_-X_{\sbt}|$.
Applying $\THR$ to the commutative diagram of simplicial functors
\[\xymatrix{P_+P_-S^{1,1}_{\sbt} C\ar[r]^-{d_{L}}\ar[d]^{d_0}&P_+S^{1,1}_{\sbt} C\ar[d]^{d_0}\\
P_-S^{1,1}_{\sbt} C\ar[r]_-{d_{L}}&S^{1,1}_{\sbt} C
}\]
one gets an equivariant map $\THR(P_+P_-S^{1,1}_{\sbt} C;M)\longrightarrow |P_+X_{\sbt}|\stackrel{h}{\times}_{|X_{\sbt}|}|P_-X_{\sbt}|$ (factoring through the actual pullback).
Proposition \ref{THRS11} will therefore follow from the next lemma.

\begin{lemma}
The natural map
\[\THR(P_+P_-S^{1,1}_{\sbt} C;M)\longrightarrow|P_+X_{\sbt}|\stackrel{h}{\times}_{|X_{\sbt}|}|P_-X_{\sbt}|\]
is a $G$-equivalence.
\end{lemma}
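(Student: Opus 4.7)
The map arises from applying $\THR(-;M)$ levelwise to the commutative square of simplicial categories
\[\xymatrix{P_+P_-S^{1,1}_{\sbt}C \ar[r]^-{d_L}\ar[d]_-{d_0}& P_+S^{1,1}_{\sbt}C \ar[d]^-{d_0}\\
P_-S^{1,1}_{\sbt}C \ar[r]^-{d_L}& S^{1,1}_{\sbt}C}\]
and comparing the realization of the upper-left corner with the homotopy pullback of the other three. The $G$-action is the real simplicial one on the two diagonal corners and swaps the off-diagonal ones via the levelwise duality on $S^{1,1}_{\sbt} C$ (since this duality interchanges $d_0$ and $d_L$). My plan is to show that in each simplicial degree this square is equivariantly homotopy cartesian after applying $\THR$, and then to pass to realizations via the Bousfield-Friedlander theorem \ref{bous-fried}, in the same spirit as the non-equivariant proof of \ref{noneqdeloopings}.

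For the levelwise claim I use \ref{sumdiagsTHR} to replace each corner by the mapping space $\map_\ast(S^{1,1}_n, \THR(C;M))$ for the appropriate $n \in \{k,k+1,k+2\}$. The square then comes from the square of pointed $G$-sets
\[\xymatrix{S^{1,1}_k \ar[r]^-{\phi^L}\ar[d]_-{\phi^0}& S^{1,1}_{k+1}\ar[d]^-{\phi^0}\\
S^{1,1}_{k+1}\ar[r]^-{\phi^L}& S^{1,1}_{k+2}}\]
where $\phi^0(0^i1^{n+1-i})=0^i1^{n+2-i}$ and $\phi^L(0^i1^{n+1-i})=0^{i+1}1^{n+1-i}$ are the two injections corresponding to the first and last face of $S^{1,1}$, intertwined by the duality. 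A direct count gives $|S^{1,1}_{k+2}\setminus\ast|=k+2=(k+1)+(k+1)-k$, so this is a pushout of pointed sets with injective maps. The induced square of mapping spaces is therefore a strict pullback, and using the abelian group model \ref{abgpmodel} the vertical maps are fibrations, giving non-equivariant homotopy cartesianness. On $G$-fixed points, the swap action on the pullback corner means its fixed set consists of symmetric pairs $(y,Dy) \in |P_+X_{\sbt}|\times|P_-X_{\sbt}|$ equipped with a self-dual path; I would identify this with $\THR(S^{1,1}_{k+2}C;M)^G$ by counting orbits of the $\mathbb{Z}/2$-action on $S^{1,1}_{k+2}$ (one fixed non-basepoint element iff $k+2$ is odd) and using the explicit fixed-point description \ref{fixedhocolim}.

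With the levelwise statement in hand, the final step is a Bousfield-Friedlander argument. The $\pi_\ast$-Kan condition for each of the four simplicial $G$-spaces is verified via the abelian group model \ref{abgpmodel}, which makes them levelwise-equivalent to levelwise fibrant bisimplicial abelian groups, exactly as in the proof of \ref{piastbusiness}; the corresponding condition on $G$-fixed points follows because the subdivided fixed-point model is again a bisimplicial abelian group. Applying \ref{bous-fried} both non-equivariantly and on $G$-fixed points yields that the realized square is $G$-homotopy cartesian, which is precisely the conclusion of the lemma. The main obstacle is the $G$-fixed point matching in the middle step: because the action on the pullback corner is a swap of the two off-diagonal factors rather than an individual involution on each, one cannot simply take fixed points termwise, and must identify the resulting twisted subspace with $\THR(S^{1,1}_{k+2}C;M)^G$ via the combinatorics of the self-dual elements of $S^{1,1}_{k+2}$ under $D$ together with the pushout decomposition above.
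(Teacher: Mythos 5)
Your non-equivariant levelwise argument via the pushout of pointed sets $S^{1,1}_{k+1}\leftarrow S^{1,1}_k\rightarrow S^{1,1}_{k+1}$ presenting $S^{1,1}_{k+2}$ is a clean alternative to the paper's route (the paper instead directly observes that under the equivalence of \ref{sumdiagsTHR} the two face maps become trivial fibrations $\THR(C;M)^{k+1}\to\THR(C;M)^k$ projecting off the first and last factor, so the strict pullback $\THR(C;M)^{k+2}$ computes the homotopy pullback). But that step was never the difficulty, and you explicitly flag where the real issue lies without resolving it.

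The gap is the equivariant half, both at the levelwise stage and in the passage to realizations. The target $|P_+X_{\sbt}|\stackrel{h}{\times}_{|X_{\sbt}|}|P_-X_{\sbt}|$ carries the swap action, so one cannot take fixed points corner-by-corner, and "counting orbits of $S^{1,1}_{k+2}$" is a plan, not a proof; moreover the Bousfield--Friedlander square you would apply on fixed points is itself twisted, so it is not even a square of $G$-fixed simplicial spaces to which \ref{bous-fried} applies directly. The tool that resolves all of this, and which your argument is missing, is lemma \ref{fixedptshopb}: for a twisted homotopy pullback where $G$ swaps $Y_+$ and $Y_-$ over $X$, one has a natural homeomorphism $(Y_+\stackrel{h}{\times}_XY_-)^G\cong Y_+\stackrel{h}{\times}_XX^G$. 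This untwists the pullback into an ordinary one sitting over the genuine fixed points $X^G$. With it, the levelwise identification on fixed points again reduces to a strict pullback along the trivial fibration $d_0$ (using \ref{sumdiagsTHR} once more), and the realization step becomes a Bousfield--Friedlander argument for the ordinary square $|sd_e(P_+X_{\sbt})_k\stackrel{h}{\times}_{sd_e(X_{\sbt})_k}sd_e(X_{\sbt})_k^G|$, to which \ref{piastbusiness} and \ref{bous-fried} apply. You should prove or cite \ref{fixedptshopb} and then restructure the fixed-point half of the argument around it; otherwise the "main obstacle" you name correctly remains an obstacle rather than a solved step.
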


\begin{proof}
First we prove that at for each $k$ the space $\THR(P_+P_-S^{1,1}_k C;M)$ is equivalent to the levelwise homotopy pullback $X_{k+1}\stackrel{h}{\times}_{X_k}X_{k+1}$, where we remember the notation $X_k=\THR(S^{1,1}_kC;M)$. Then we will use the fibration properties of $\THR$ to show that this homotopy pullback commutes (equivariantly!) with the realization.

First non-equivariantly, by \ref{sumdiagsTHR}, there is an equivalence between the following two diagrams
\[\xymatrix{X_{k+1}\ar[d]^{\simeq}\ar[r]^{d_0}&X_k\ar[d]^{\simeq}&X_{k+1}\ar[d]^{\simeq}\ar[l]_{d_{k+1}}\\
\THR(C;M)^{k+1}\ar[r]^{d_0}&\THR(C;M)^{k}&\THR(C;M)^{k+1}\ar[l]_{d_{k+1}}
}\]
where the maps on the bottom row are the trivial fibrations that project off the first and the last component, respectively. Therefore the homotopy limits of the two horizontal lines are naturally equivalent. Since the bottom maps are trivial fibrations, the homotopy pullback is equivalent to the pullback, which is $\THR(C;M)^{k+2}$. All together there is a commutative diagram
\[\xymatrix{\THR(P_+P_-S^{1,1}_k C;M)\ar[dr]_{\simeq}\ar[r]& X_{k+1}\stackrel{h}{\times}_{X_k}X_{k+1}\ar[d]^{\simeq}\\
& \THR(C;M)^{k+2}
}\]
where the diagonal map is once more the equivalence of \ref{sumdiagsTHR}. The key point for showing that this is an equivariant equivalence is to use the homeomorphism
\[(X_{k+1}\stackrel{h}{\times}_{X_k}X_{k+1})^G\cong X_{k+1}\stackrel{h}{\times}_{X_k}X_{k}^G\]
of \ref{fixedptshopb}. Again by \ref{sumdiagsTHR} there is an equivalence of diagrams
\[\xymatrix{X_{k+1}\ar[d]^{\simeq}\ar[r]^{d_0}&X_k\ar[d]^{\simeq}&X_{k}^G\ar[d]^{\simeq}\ar[l]\\
\THR(C;M)^{k+1}\ar[r]^{d_0}&\THR(C;M)^{k}&(\THR(C;M)^{k})^G\ar[l]
}\]
and hence an equivalence from $(X_{k+1}\stackrel{h}{\times}_{X_k}X_{k+1})^G$ to the homotopy pullback of the bottom line, which is equivalent to the actual pullback since $d_0$ is the (trivial) fibration. Separating the cases where $k$ is even or odd, one can easily see that there is a homeomorphism
\[\THR(C;M)\times (THR(C;M)^k)^G\cong (THR(C;M)^{k+2})^G\]
This gives a commutative diagram
\[\xymatrix{\THR(P_+P_-S^{1,1}_k C;M)^G\ar[dr]_{\simeq}\ar[r]& (X_{k+1}\stackrel{h}{\times}_{X_k}X_{k+1})^G\ar[d]^{\simeq}\\
& (\THR(C;M)^{k+2})^G
}\]
where the diagonal map is the equivalence of \ref{sumdiagsTHR}.

It remains to show that the realization of the levelwise pullback is equivalent to the pullback of the realization, that is that the canonical map
\[|X_{k+1}\stackrel{h}{\times}_{X_k}X_{k+1}|\longrightarrow |P_+X_{\sbt}|\stackrel{h}{\times}_{|X_{\sbt}|}|P_-X_{\sbt}|\]
is a $G$-equivalence.
Non-equivariantly, it follows by the Bousfield-Friedlander theorem \ref{bous-fried} that applies because of \ref{piastbusiness}. For the fixed points, there are homeomorphisms
\[\resizebox{1\hsize}{!}{$|X_{k+1}\!\stackrel{h}{\times}_{X_k}\!X_{k+1}|^G\!\cong\!|sd_e(P_+X_{\sbt}\!\stackrel{h}{\times}_{X_{\sbt}}\!P_-X_{\sbt})_{k}^G|\!\cong\!|sd_e(P_+X_{\sbt})_{k}\!\stackrel{h}{\times}_{sd_e(X_{\sbt})_k}\!sd_e(X_{\sbt})_{k}^G|$}\]
and by lemma \ref{piastbusiness}, Bousfield-Friedlander applies.
 This gives an equivalence
\[|sd_e(P_+X_{\sbt})_{k}\stackrel{h}{\times}_{sd_e(X_{\sbt})_k}sd_e(X_{\sbt})_{k}^G|\simeq|sd_eP_+X_{\sbt}|\stackrel{h}{\times}_{|sd_eX_{\sbt}|}|sd_eX_{\sbt}^G|\]
and the right hand side is homeomorphic to
\[|sd_eP_+X_{\sbt}|\stackrel{h}{\times}_{|sd_eX_{\sbt}|}|sd_eX_{\sbt}^G|\cong |P_+X_{\sbt}|\stackrel{h}{\times}_{|X_{\sbt}|}|X_{\sbt}|^G\cong (|P_+X_{\sbt}|\stackrel{h}{\times}_{|X_{\sbt}|}|P_-X_{\sbt}|)^G\]
again by \ref{fixedptshopb}.
\end{proof}


\subsection{Stable $0$-simplicies of $\THR$}
We prove that $\THR$ of an exact category is $G$-equivalent to a stabilization of its zero simplicies, an equivariant version of \cite[2.2.3]{ringfctrs}. Our proof relies heavily on the non-equivariant proof, and might hopefully be improved.

Let $(C,D,\mathcal{E})$ be an exact category with strict duality, and $M\colon C^{op}\otimes C\longrightarrow Ab$ a bimodule with duality $J$. Recall that for simplicity we denote $\THR(C;M):=\THR(HC;HM)$.

\begin{defn}
An abelian group $M$ is called \textbf{uniquely $2$-divisible} if the group homomorphism
\[M\stackrel{\Delta}{\longrightarrow}M\oplus M\stackrel{+}{\longrightarrow} M\]
sending $x$ to $x+x$ is an isomorphism, i.e. if $M$ is a $\mathbb{Z}[1/2]$-module.

A bimodule $M\colon C^{op}\otimes C\longrightarrow Ab$ is called uniquely $2$-divisible if it takes values in uniquely $2$-divisible groups.
\end{defn}

Our main example of a divisible bimodule is the following. Suppose that $M$ is a bimodule over a ring $A$, and that $2\in A$ is invertible. Then the bimodule
\[M(-,-)=\hom_A(-,-\otimes_A M)\colon \mathcal{P}_{A}^{op}\otimes \mathcal{P}_A\longrightarrow Ab\]
is uniquely $2$-divisible. The inverse for the homomorphism above is given by multiplying pointwise by the inverse of $2$. Similarly, if $J$ is an $M$-twisting of an antistructure $(A,L,\alpha)$, the bimodule
\[M^J\colon \mathcal{D}\mathcal{P}_{A}^{op}\otimes \mathcal{D}\mathcal{P}_A\longrightarrow Ab\]
of §\ref{THRrings} is uniquely $2$-divisible.

\begin{prop}\label{stable0simpl}
Let $M\colon C^{op}\otimes C\longrightarrow Ab$ be a uniquely $2$-divisible bimodule with duality over $(C,D,\mathcal{E})$. Then the inclusion of the $0$-simplicies induces a $G$-equivalence
\[\colim_m\Omega^{2m,m}\THR_0((S^{2,1}_{\sbt})^{(m)}C;M)\longrightarrow \colim_m\Omega^{2m,m}\THR((S^{2,1}_{\sbt})^{(m)}C;M)\]
where the stabilization maps are adjoint to
\[X(C;M)\times \Delta^2=X(S^{2,1}_2C;M)\times \Delta^2\longrightarrow |X(S^{2,1}_{\sbt} C;M)|\]
for $X=\THR_0,\THR$.
\end{prop}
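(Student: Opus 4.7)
The proof strategy decomposes the statement into its underlying non-equivariant assertion and its restriction to $G$-fixed points, treating them separately.

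For the underlying non-equivariant part, the statement follows directly from the non-equivariant theorem of Dundas-McCarthy \cite[2.2.3]{ringfctrs}: the underlying space of $\uuline{\THR}$ is the classical $\THH$-spectrum, and $S^{2,1}_{\sbt}C$ is non-equivariantly equivalent to the iterated $S_{\sbt}$-construction, so the Dundas-McCarthy equivalence transfers directly, after matching up the indexing of the two stabilizations.

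On $G$-fixed points, I would use the subdivision description \ref{fixedhocolim}, which identifies $\THR_{2k+1}(\mathcal{C};\mathcal{M})^G$ with a homotopy colimit over $I[2k+1]^G \cong I^G \times I^k \times I^G$ (cf. \ref{fixedIk}). Combined with the abelian group model \ref{abgpmodel} and the explicit description \ref{fixedptdescr} of the fixed summands $V(H\mathcal{C};H\mathcal{M};\underline{i})^G$, this reduces the fixed-point statement to a parallel assertion about simplicial topological abelian groups. One then mimics the Dundas-McCarthy argument in the fixed-point setting: the face relation $d_1 \simeq d_0 + d_2$ of \ref{faceshomotopic} holds equivariantly in $\THR^\oplus$, and iterating it together with the simplicial identities produces relations among the face maps which, combined with uniquely $2$-divisibility, yield equivariant contractions of the positive-simplicial-degree part of the normalized Moore complex of $\THR^\oplus_{\sbt}(C;M)^G$. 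Convergence of the contractions in the stable limit is controlled by the $G$-approximation lemma \ref{eqapproxlemma} and by a Blakers-Massey argument on fixed points, both of which produce connectivity estimates growing with $m$, so that passing to the colimit $\colim_m \Omega^{2m,m}(-)$ yields a $G$-equivalence.

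The main obstacle is constructing the equivariant contracting homotopy on the fixed-point Moore complex. Non-equivariantly, the relation of \ref{faceshomotopic} directly yields the contraction needed, essentially via a simplicial splitting after one iteration. Equivariantly, however, the duality $J$ on the bimodule induces an involution that interacts non-trivially with the face maps, and constructing the contraction requires inverting the symmetrization operator $1 + J$, which is $2$-to-$1$ modulo $2$-torsion. This is precisely where uniquely $2$-divisibility becomes essential: it provides the algebraic input to split the symmetrized complex equivariantly and to carry out the Moore-complex contraction on fixed points. Keeping track of the signs and of compatibility with Segal's subdivision functor will be the delicate technical point of the argument.
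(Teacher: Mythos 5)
Your proposal takes a genuinely different route from the paper, and it contains a gap that the paper's argument sidesteps entirely. The paper does not work directly on the fixed-point spaces at all. Instead, it observes that the non-equivariant retraction $d_0\colon \THR^\oplus(C;M)\to \THR^\oplus_0(C;M)$ (the iterated $0$-face) is not equivariant, but its $G$-conjugate $Dd_0D$ is the iterated \emph{last} face, and — this is where unique $2$-divisibility enters — one can form the average
\[
r=\tfrac{d_0+Dd_0D}{2}\colon \THR^{\oplus}(C;M)\longrightarrow \THR^{\oplus}_0(C;M),
\]
which is an equivariant retraction for $s$. The non-equivariant homotopy $H$ between $s\circ d_0$ and $\id$ (built from the $HH^\oplus$ comparison, the maps $E,W$ of \cite[I-3.3.1]{Dundasbook}, and the face relation $d_1\simeq d_0+d_2$ of \ref{faceshomotopic}) is then symmetrized in the same way: $\tfrac{H+DH(D,-)}{2}$ is a $G$-homotopy between $\id$ and $s\circ r$. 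This is a global averaging trick on the whole $\mathbb{Z}/2$-space, not an analysis on fixed points.

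The gap in your plan is that you have misidentified both the locus and the use of the $2$-divisibility hypothesis. You propose to invert the symmetrization operator $1+J$, but $J$ is only the involution on the bimodule — the actual $G$-action on $\THR^\oplus$ combines $J$, the category duality $D$, and reversal of the simplicial and $I[k]$-directions, so symmetrizing by $\tfrac{1+J}{2}$ is not the operator that makes the Dundas–McCarthy contraction equivariant. What actually needs averaging is the iterated first face $d_0$ against the iterated last face $Dd_0D$, together with the accompanying homotopy. Moreover, your strategy of proving the fixed-point statement by a parallel Moore-complex argument on $\THR^\oplus_{\sbt}(C;M)^G$ would require constructing equivariant analogues of $E$ and $W$, which is precisely the hard problem the paper explicitly flags (in the speculative discussion following the proof) as what one would need to do to \emph{remove} the $2$-divisibility hypothesis; under the hypothesis as given, the averaging argument avoids that construction entirely. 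Your acknowledgment that "keeping track of the signs and compatibility with subdivision will be the delicate technical point" is, in effect, a concession that the core of the fixed-point contraction has not been constructed — and the paper's proof shows that, granted $2$-divisibility, it does not need to be.
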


\begin{rem}\label{descr0simpl}
If $(C,\mathcal{E})$ is split-exact then by \ref{THRdeloopings} the target of the map of the statement is $G$-equivalent to $\THR(C;M)$. The proposition above gives a $G$-equivalence
\[\THR(C;M)\simeq \colim_m\Omega^{2m,m}|\bigoplus\limits_{X\in Ob (S^{2,1}_{\sbt})^{(m)} C}M(X,X)|\]
where direct sum has the involution induced by the maps $J\colon M(X,X)\longrightarrow M(DX,DX)$ as defined in \ref{wedgesintoproducts}. 
Indeed, there is a diagram of natural $G$-equivalences
\[\resizebox{1\hsize}{!}{\xymatrix{\colim\limits_m\!\Omega^{2m,m}|\!\!\!\!\!\bigoplus\limits_{X\in Ob (S^{2,1}_{\sbt})^{(m)} C}\!\!\!\!\!M(X,X)|\ar[d]^{\simeq}_{\ref{Gdeloopingsformonoid}}\\
\colim\limits_m\!\Omega^{2m,m}\THR_{0}^\oplus((S^{2,1}_{\sbt})^{(m)}C;\!M)\ar[r]^-{\simeq}_-{\ref{stable0simpl}}&\colim\limits_m\!\Omega^{2m,m}\THR^\oplus((S^{2,1}_{\sbt})^{(m)}C;\!M)\\
\colim\limits_m\!\Omega^{2m,m}\THR_{0}((S^{2,1}_{\sbt})^{(m)}C;\!M)\ar[u]_-{\simeq}^-{\ref{abgpmodel}}\ar[r]^-{\simeq}_-{\ref{stable0simpl}}&\colim\limits_m\!\Omega^{2m,m}\THR((S^{2,1}_{\sbt})^{(m)}C;\!M)\ar[u]_-{\simeq}^-{\ref{abgpmodel}}\\
& \THR(C;\!M)\ar[u]_-{\simeq}^-{\ref{THRdeloopings}}
}}\]
\end{rem}

We want to use our assumption of divisibility to pass from the non-equivariant statement to the $G$-fixed points. In \cite[V-1.2.1]{Dundasbook}, the authors prove the non-equivariant statement for the $S_{\sbt}$-construction. We give a quick review of their proof to make sure it applies when we replace the $S_{\sbt}$-construction with the $S^{2,1}_{\sbt}$-construction.

Recall from §\ref{abgpmodelsec} that $\THH_{k}^{\oplus}(C;M)$ is the homotopy colimit of the loop space of
\[V^{\oplus}(C;M,\underline{i})=\bigoplus\limits_{c_0,\dots,c_k} M(c_0,c_k)_{i_0}\otimes\bigotimes_{l=1}^k\mathbb{Z}(C(c_{l},c_{l-1})_{i_l})\]
where we denote $A_i=A(S^i)$ for an abelian group $A$.
Define $HH_{k}^{\oplus}(C;M)=V^{\oplus}(C;M,\underline{0})$, that is
\[HH_{k}^{\oplus}(C;M)=\bigoplus\limits_{c_0,\dots,c_k} M(c_0,c_k)\otimes\mathbb{Z}(C(c_1,c_0))\otimes\dots\otimes\mathbb{Z}(C(c_k,c_{k-1}))\]
The simplicial structure on $\THH_{\sbt}^{\oplus}(C;M)$ restricts to a simplicial structure on $HH_{\sbt}^{\oplus}(C;M)$, and the inclusion $V^{\oplus}(C;M,\underline{0})\longrightarrow V^{\oplus}(C;M,\underline{i})$ induces a commutative diagram
\[\xymatrix{HH_{\sbt}^{\oplus}(C;M)\ar[r]^-{\simeq}&\THH_{\sbt}^{\oplus}(C;M)\\
HH_{0}^{\oplus}(C;M)\ar[u]^s\ar[r]_-{\simeq}&\THH_{0}^{\oplus}(C;M)\ar[u]_s
}\]
where the zero simplicies are constant simplicial spaces.
The bottom horizontal map is an equivalence, as it is the direct sum of the homotopy equivalences $M(c,c)\longrightarrow \Omega^i M(c,c)(S^i)$. The top horizontal map is an equivalence by \cite[IV-2.4.1]{Dundasbook}. The non-equivariant version of \ref{stable0simpl} then follows if we can prove that the degeneracy map induces a homotopy equivalence
\[s\colon\colim\limits_m\!\Omega^{2m,m}HH_{0}^{\oplus}((S^{2,1}_{\sbt})^{(m)} C;\!M)\longrightarrow \colim\limits_m\!\Omega^{2m,m}HH_{k}^{\oplus}((S^{2,1}_{\sbt})^{(m)} C;\!M)\]
for every $k$.
A homotopy inverse for this map is induced by the iterated $0$-face $d_{0}^k$. Clearly $d_{0}^k\circ s$ is the identity. In order to define a homotopy between the other composite and the identity, one defines maps
\[E,W\colon HH_{k}^{\oplus}(C;M)\longrightarrow HH_{k}^{\oplus}(S_2C;M_2)\]
such that 
\[\begin{array}{llll}d_0E=\id\\
d_2W=d_{0}^k\circ s\\
d_2E=0=d_0W\\
d_1E=d_1W
\end{array}\]
as in \cite[I-3.3.1]{Dundasbook}, where $d_i\colon S_2C\longrightarrow C$ are the face functors.
If one can show that $d_1$ is homotopic to $d_0+d_2$ as maps
\[HH_{k}^{\oplus}(S_2C;M)\longrightarrow HH_{k}^{\oplus}(C;M)\]
we would be done by noticing that
\[d_{0}^k\circ s=d_2W+d_0W\simeq d_1W=d_1E\simeq d_0E+d_2E=\id\]
The maps $d_1$ and $d_0+d_2$ become homotopic after stabilizing with the $S^{2,1}_{\sbt}$-construction, by the following result applied to $HH_{k}^{\oplus}(-)$. Its main ingredient is the additivity theorem for the $S^{2,1}_{\sbt}$-construction of \cite{IbLars}.

\begin{lemma}[cf. {\cite[2.2.2]{ringfctrs}}]\label{formalprodpres}
If $X$ is a functor from exact categories and bimodules to abelian groups such that $X(0)=0$, there is a homotopy
\[d_1\!\simeq\!d_0\!+\!d_2\colon \colim\limits_m\!\Omega^{2m}X((S^{2,1}_{\sbt})^{(m)}S_2C;\!M)\longrightarrow \colim\limits_m\!\Omega^{2m}X((S^{2,1}_{\sbt})^{(m)}C;\!M)\]
\end{lemma}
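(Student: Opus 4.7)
The plan is to reduce the statement to an additivity theorem for the $S^{2,1}_{\sbt}$-construction, which is established in \cite{IbLars}. Abbreviate
\[F^{(m)}(C;M)=\Omega^{2m}X((S^{2,1}_{\sbt})^{(m)}C;M), \qquad F(C;M)=\colim_m F^{(m)}(C;M).\]
Since $X$ is abelian-group-valued and $X(0)=0$, the spaces $F^{(m)}$ assemble (via the structure maps adjoint to the projections $X((S^{2,1}_{\sbt})^{(m)}C;M)\wedge S^2\to |X((S^{2,1}_{\sbt})^{(m+1)}C;M)|$) into a (possibly non-connective) spectrum, so $F(-;M)$ is a homotopy functor landing in infinite loop spaces whose addition agrees with the one induced by the abelian group structure on $X$.

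First I would invoke additivity. The additivity theorem for $S^{2,1}_{\sbt}$ from \cite{IbLars} (the non-equivariant analog of \ref{sumdiags}, applied to $S_2 C$ seen as an exact category with duality or directly to the split-exact diagrams) says that
\[(d_0,d_2)\colon (S^{2,1}_{\sbt})^{(m)}S_2 C\longrightarrow (S^{2,1}_{\sbt})^{(m)}C\times (S^{2,1}_{\sbt})^{(m)}C\]
becomes a homotopy equivalence after realization and looping by $\Omega^{2m}$ (for $m$ large enough; passing to the colimit kills any shift). Applying $X(-;M)$ and taking the colimit, this yields an equivalence
\[(d_0,d_2)_\ast\colon F(S_2 C;M)\stackrel{\simeq}{\longrightarrow}F(C;M)\times F(C;M),\]
using that products commute with $\Omega^{2m}$ and with the colimit.

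Next I would exhibit a homotopy inverse via the splitting. The functor $s\colon C\times C\to S_2 C$ sending $(c',c'')$ to the canonical split exact sequence $c'\hookrightarrow c'\oplus c''\twoheadrightarrow c''$ satisfies $d_0 s=\pr_2$, $d_2 s=\pr_1$, hence $(d_0,d_2)\circ s=\id$, so $s_\ast$ is a homotopy inverse to $(d_0,d_2)_\ast$ after stabilization. Meanwhile $d_1 s\colon C\times C\to C$ is the direct sum functor $\oplus$. The standard argument that the multiplication on an infinite loop space induced by a monoidal structure agrees with its canonical addition (applied to the $H$-space structure on $F(C;M)$ coming from $\oplus$ and from abelian-group-valued $X$) yields a homotopy $\oplus_\ast\simeq +\colon F(C;M)\times F(C;M)\to F(C;M)$. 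Chaining these,
\[d_1\simeq d_{1\ast}\circ s_\ast\circ(d_0,d_2)_\ast=\oplus_\ast\circ(d_0,d_2)_\ast\simeq d_{0\ast}+d_{2\ast}\]
as maps $F(S_2 C;M)\to F(C;M)$.

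The main obstacle is the identification $\oplus_\ast\simeq +$ of the two $H$-space structures, which is where one genuinely uses that $X$ is abelian-group-valued and that $X(0)=0$ forces the stabilization to be a spectrum. This step is essentially the content of the non-equivariant \cite[2.2.2]{ringfctrs}; the only modification needed here is to replace $S_{\sbt}$ by $S^{2,1}_{\sbt}$ throughout, which is justified precisely by the additivity result of \cite{IbLars} invoked above. Everything else in the argument is formal manipulation of simplicial face maps and the section $s$.
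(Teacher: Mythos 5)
Your proof is correct and starts from the same place as the paper (the additivity theorem for $S^{2,1}_{\sbt}$ from \cite{IbLars}), but the concluding step differs in a meaningful way. The paper does not use your functorial section $s\colon C\times C\to S_2C$; instead, after applying $X$ it uses the \emph{non-functorial} section $\sigma=s_2\pr_1+s_0\pr_2$, defined via the abelian group structure on $X$, and then computes \emph{on the nose} (using simplicial identities plus linearity of the face and degeneracy maps after $X$, and the $1$-reducedness $S^{2,1}_0C=S^{2,1}_1C=0$, so e.g.\ $d_0s_2=0$) that $(d_2,d_0)\circ\sigma=\id$, $d_1\circ\sigma=\pr_1+\pr_2$, and $(d_0+d_2)\circ\sigma=\pr_1+\pr_2$. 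Since $\sigma$ is a right inverse to an equivalence, it is a homotopy inverse, and the two maps agree after precomposing with $\sigma$, which finishes the argument with no reference to $H$-space structures. Your route instead identifies $d_1\circ s=\oplus$ as functors and then invokes an Eckmann-Hilton-type comparison $\oplus_\ast\simeq+$ between the $H$-space structure coming from the monoidal $\oplus$ and the addition coming from $X$ being abelian-group-valued. That step is true here (the interchange law holds because $\oplus$ is additive and $X$ is $Ab$-valued), but it is precisely the point you do not prove, only cite; the paper's choice of section makes the identification $d_1\simeq d_0+d_2$ a literal equality after precomposition, sidestepping the Eckmann-Hilton argument entirely. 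In short, your argument is valid but outsources the key comparison of additions to a general principle, whereas the paper chooses the section so that the comparison reduces to the simplicial identities $d_ls_j$ and the linearity of $X$ on morphisms.
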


\begin{proof}
Let us denote
\[Y(-;-)=\colim_m\Omega^{2m}X((S^{2,1}_{\sbt})^{(m)}(-);-)\]
In \cite{IbLars} the authors prove an additivity theorem for the $S^{2,1}_{\sbt}$-construction, analogous to the one of \cite{Mc}. Forgetting the $G$-action, it gives a homotopy equivalence
\[(d_0,d_2)\colon Y(S_2C;M)\stackrel{\simeq}{\longrightarrow} Y(C\times C;M\times M)\]
The projections induce an equivalence
\[Y(C\times C;M\times M)\stackrel{\simeq}{\longrightarrow} Y(C;M)\times Y(C;M)\]
Indeed since $X(0)=0$, and $S^{2,1}_{\sbt}$ is $1$-reduced, the  map
\[\resizebox{1\hsize}{!}{$X((S^{2,1}_{\sbt})^{(m)}C\!\times\!(S^{2,1}_{\sbt})^{(m)}C';M\!\times\!M')\longrightarrow X((S^{2,1}_{\sbt})^{(m)}C;\!M)\times X((S^{2,1}_{\sbt})^{(m)}C';\!M')$}\]
is $4k$-connected. This follows from \cite[11.12]{mayiterated} by taking the geometric realization one simplicial direction at the time. Thus the projections induce an equivalence on the colimit
\[Y(C\times C';M\times M')\longrightarrow Y(C;M)\times Y(C';M')\]
and the proof of \ref{faceshomotopic} applies word by word to $Y$ finish the proof.
\end{proof}

\begin{proof}[Proof of \ref{stable0simpl}]
We prove that the inclusion of the zero simplicies into the realization
\[s\colon \THR^{\oplus}_0(C;M)\longrightarrow \THR^{\oplus}(C;M)\]
becomes a $G$-equivalence after stabilization.
The iterated $0$-th face map defines a map
\[d_0\colon \THR^{\oplus}(C;M)\longrightarrow \THR^{\oplus}_0(C;M)\]
This is not equivariant, since the conjugate $Dd_0D$ is the iterated last face. Notice that since the bimodule $M$ is uniquely $2$-divisible, so is the topological group $\THR^{\oplus}(C;M)$. For an element
\[x=m\wedge f_1\wedge\dots\wedge f_k\in V^{\oplus}(HC;HM,\underline{i})\]
the element $\frac{x}{2}=\frac{m}{2}\wedge f_1\wedge\dots\wedge f_k$ is the unique element such that $2\cdot \frac{x}{2}=x$. Define a natural map
\[r=\frac{d_0+Dd_0D}{2}\colon \THR^{\oplus}(C;M)\longrightarrow \THR^{\oplus}_0(C;M)\]
This is clearly a retraction for $s$, and it is equivariant since the duality is additive.
Let $H$ be the homotopy
\[\resizebox{1\hsize}{!}{\xymatrix{\colim\limits_m\!\Omega^{2m,m}\THR^\oplus((S^{2,1}_{\sbt})^{(m)}C;\!M)\ar[rr]^{s\circ d_0\underset{H}{\simeq}\id}&& \colim\limits_m\!\Omega^{2m,m}\THR^\oplus((S^{2,1}_{\sbt})^{(m)}C;\!M)}}\]
induced under the homotopy equivalences
\[\xymatrix{HH^{\oplus}(C;M)\ar[r]^-{\simeq}\ar@/_1pc/[d]_{d_0}&\THH^{\oplus}(C;M)\ar@/^1pc/[d]^{d_0}\\
HH_{0}^{\oplus}(C;M)\ar[u]_s\ar[r]_-{\simeq}&\THH_{0}^{\oplus}(C;M)\ar[u]^s
}\]
by the homotopy between the stabilization of $s\circ d_0$ and the identity for $HH^{\oplus}(C;M)$ constructed above.
Then $\frac{H+DH(D,-)}{2}$ is a $G$-homotopy between the identity and $s\circ r$.
\end{proof}

The divisibility condition of \ref{stable0simpl} is unfortunate. I end this section by some speculations over how to remove the condition.
The simplicial abelian group $HH_{\sbt}^{\oplus}(C;M)$ has an obvious real structure, and the inclusion 
\[HH_{\sbt}^{\oplus}(C;M)\longrightarrow \THR_{\sbt}^\oplus(C;M)\]
is equivariant. A first step would be to generalize the proof of \cite[IV-2.4.1]{Dundasbook} to show that the inclusion induces a $G$-equivalence on realizations. Assuming this, it is enough to prove the statement for $HH^{\oplus}$. There is a way to produce an equivariant retraction for the degeneracy map
\[s\colon HH^{\oplus}_0(C;M)\longrightarrow HH^{\oplus}_k(C;M)\]
even when $M$ is not uniquely $2$-divisible. Suppose that there is a functor
\[\kappa\colon S^{2,1}_3C\longrightarrow C\]
associating to an exact sequence $X=(a\stackrel{i}{\rightarrow}b\stackrel{m}{\rightarrow}c\stackrel{p}{\rightarrow}d)$ an object $\kappa(X)$ that is both a kernel for $p$ and a cokernel for $i$, such that $\kappa(DX)=D\kappa(X)$. In particular this is possible for $C=\mathcal{D}\mathcal{P}_A$ for any ring $A$, since the functor
\[\kappa\colon S^{2,1}_3\mathcal{P}_A\longrightarrow \mathcal{P}_A\]
defined by $\kappa(a\stackrel{i}{\rightarrow}b\stackrel{m}{\rightarrow}c\stackrel{p}{\rightarrow}d)=\ker p$, together with the natural isomorphism
\[\xi_{X}\colon\kappa DX=\ker D(i)\cong D \coker i= D b/\ker m\stackrel{(D m)^{-1}}{\rightarrow} D Im(m)=D \ker p\]
is a morphism of categories with duality, and therefore it induces a strictly duality preserving functor on the $\mathcal{D}$ construction. If we further assume that any map in $C$ admits a kernel, we can define a map
\[R\colon HH^{\oplus}_k(C;M)\longrightarrow HH^{\oplus}_0(S^{2,1}_3C;M)\]
by sending $c_k\stackrel{m}{\leftarrow} c_0\stackrel{f_1}{\leftarrow}c_1\leftarrow\dots\leftarrow c_{k-1}\stackrel{f_k}{\leftarrow}c_k$ to
\[\xymatrix{\ker(f_1\dots f_k)\ar[d]^i & \ker(f_1\dots f_k)\ar[l]_0\ar[d]^i\\
c_k\ar[d]^{f_1\dots f_k} &c_k\ar[d]^{f_1\dots f_k}\ar[l]_{m\cdot f_1\dots f_k}\\
c_0\ar[d]^p&c_0\ar[l]^{f_1\dots f_k\cdot m}\ar[d]^p\\
\coker (f_1\dots f_k)& \coker(f_1\dots f_k)\ar[l]_0
}\]
where $i$ and $p$ denote respectively the inclusion and the projection.
The map $r:=\kappa\circ R$ is a retraction for $s$, since $R\circ s$ is the diagram
\[\xymatrix{0\ar[d] & 0\ar[l]\ar[d]\\
c\ar[d]^{\id} &c\ar[d]^{\id}\ar[l]_{m}\\
c\ar[d]&c\ar[l]^{m}\ar[d]\\
0& 0\ar[l]
}\]
Moreover $r$ is equivariant since $\kappa(DX)=D\kappa(X)$.
One should think of $r$ as an average between the iterated $0$-face of $HH_{\sbt}$ and the iterated last face. Indeed,
a variation of the argument of \ref{formalprodpres} above, using the equivariant additivity theorem of \cite{IbLars}, shows that after stabilization there is a homotopy $H$ between the maps
\[d_1-d_0\underset{H}{\simeq} d_2-d_3\colon HH^{\oplus}_q(S^{2,1}_3C;M)\longrightarrow HH^{\oplus}_q(C;M)\]
which is reversed by the duality, in the sense that $DH(D(-),t)=H(-,1-t)$. In particular, the middle point $H(-,1/2)$ is an equivariant map. Also notice that
\[(d_1-d_0)\circ  R=d_{0}^k \ \ \ \mbox{and} \ \ \  (d_2-d_3)\circ R=Dd_{0}^kD\]
and the middle map $H(R,1/2)$ is in fact $r$.
We can display all these maps and homotopies in the following picture, taking place in the mapping space of self maps of the stabilization of $HH^{\oplus}_k(C;M)$.
\[\xymatrix{
&\stackrel{s\circ d_{0}^k}{\bullet}\ar[rr]^{\overline{W}}\ar[dd]_<<<<<<{H(R,-)}&& \stackrel{d_1W=d_1E}{\bullet}  \\
\ar@{--}[rrrrr]_<<<<<<<<<<<<<<<<<<{s\circ r}^>>>>>>>>>{\id}& \bullet & && \bullet \ar[ul]_{\overline{E}}\ar[dl]^-{D\overline{E}D} & \\
& \underset{s\circ Dd_{0}^kD}{\bullet}\ar[rr]_-{D\overline{W}D} && \underset{d_1DWD=d_1DED}{\bullet}
}\]
where $\overline{E}$ and $\overline{W}$ denote the homotopies induced by the maps \[E,W\colon HH^{\oplus}_k(C;M)\longrightarrow HH^{\oplus}_k(S_2C;M)\] from \cite[I-3.3.1]{Dundasbook} mentioned above.
Conjugation with the duality flips the picture along the dashed line. In particular we have two homotopies between $s\circ r$ and the identity.
In order to obtain a $G$-homotopy, one should try to take an average of these homotopies. One could try to fill in the picture, by defining a map
\[K\colon HH^{\oplus}_k(C;M)\longrightarrow HH^{\oplus}_k(S^{2,1}_3S^{1,1}_2C;M)\]
with the appropriate faces and symmetry properties. Composing with $\kappa$ it would give the desired $G$-homotopy, as an average between the concatenation of $\overline{W}$ and $\overline{E}$ and the concatenation of $D\overline{W}D$ and $D\overline{E}D$.

\newpage


\section{Real trace maps $\KR\longrightarrow \THR$}\label{trace}

The classical trace map is a natural weak map $K(C)\longrightarrow \THH(C)$ (or to $TC(C)$) first introduced in \cite{BHM} by Bökstedt, Hsiang and Madsen. It led to many important computations in $K$-theory of rings, some by means of the Dundas-McCarthy theorem (cf. \cite{dmthm}, \cite{dundasrelKandTC} and \cite[VII]{Dundasbook}).
A main step in the proof of the Dundas-McCarthy theorem is to show that the trace map induces an equivalence
\[\colim\limits_n\Omega^n\widetilde{K}(A\ltimes M(S^{n}))\simeq \THH(\mathcal{P}_A;M(S^{1}))\]
cf. \cite[5.3]{stableDM} and \cite[§4]{dmthm}.
This section collects the properties of $\KR$ and $\THR$ proved so far to show a $G$-analogue of the above equivalence.

In §\ref{stablekrsec} we define a stabilization $\widetilde{\KR}^S(A\ltimes M)$ for $\KR(A\ltimes M)$ in the "module direction". In §\ref{stableKRisTHRsec} we define a weak $G$-map (i.e. a zig-zag of $G$-maps whose wrong way pointing maps are weak $G$-equivalences)
\[\beta\colon \widetilde{\KR}(A\ltimes M)\longrightarrow \THR(\mathcal{D}\mathcal{P}_{A};M^J(S^{1,1}_{\sbt}))\]
inducing a $G$-equivalence between $\widetilde{\KR}^S(A\ltimes M)$ and $\THR(\mathcal{D}\mathcal{P}_{A};M^J(S^{1,1}_{\sbt}))$. In §\ref{reltracemaps} we define a real trace map $\tr\colon\KR(C)\longrightarrow \THR(C)$ and show that there is a diagram of weak $G$-maps
\[\xymatrix{\widetilde{\KR}(A\ltimes M)\ar[dr]_-{\beta}\ar[r]^-{\widetilde{\tr}}&\widetilde{\THR}(\mathcal{D}\mathcal{P}_{A\ltimes M})\ar[d]^\alpha\\
& \THR(\mathcal{D}\mathcal{P}_{A};M^J(S^{1,1}_{\sbt}))}\]
that commutes up to $G$-homotopy.


\subsection{Stable real $K$-theory}\label{stablekrsec}

Let $A$ be a ring, $M_{\sbt}$ be a simplicial $A$-bimodule (i.e. a simplicial object in the category of $A$-bimodules), and $(A,L,\alpha)$ an antistructure.

\begin{defn}
An \textbf{$M_{\sbt}$-twisting} of $(A,L,\alpha)$ is a family of $M_k$-twistings  \[J_k\colon L_t\otimes_A M_k\longrightarrow L_t\otimes_A M_k\]
that reverse the simplicial structure of $L_t\otimes M_{\sbt}$, in the sense that
\[\begin{array}{ll}(L_t\otimes d_l)J_k=J_{k-1}(L_t\otimes d_{k-l})\\
(L_t\otimes s_l)J_k=J_{k+1}(L_t\otimes s_{k-l})&0\leq l\leq k
\end{array}\]
where $d_l$ and $s_l$ denote faces and degeneracies of $M_{\sbt}$.
\end{defn}

\begin{ex}
Let $M$ be an $A$-bimodule, $J\colon L_t\otimes_A M\longrightarrow L_t\otimes_A M$ be an $M$-twisting of an antistructure  $(A,L,\alpha)$, and $X_{\sbt}$ a real set with involution $\omega$. The maps $J_k$ defined by the diagram
\[\xymatrix{L_t\otimes_A M(X_k)\ar[d]^{\cong}\ar@{-->}[r]^{J_k}&L_t\otimes_A M(X_k)\ar[d]_{\cong}\\
(L_t\otimes_A M)(X_k)\ar[r]_{J(\omega)}&(L_t\otimes_A M)(X_k)
}\]
assemble into a $M(X_{\sbt})$-twisting of $(A,L,\alpha)$.
\end{ex}

Given an $M_{\sbt}$-twisting $J_{\sbt}$ of $(A,L,\alpha)$, every $J_k$ defines an antistructure $(A\ltimes M_k,L^{J_k},\alpha^{J_k})$, and therefore a duality on $\mathcal{P}_{A\ltimes M_k}$ (cf. §\ref{squarezeroext}). The simplicial structure of $M_{\sbt}$ gives the structure of a real real category on 
\[[k],[n]\longmapsto iS^{2,1}_n\mathcal{D}\mathcal{P}_{A\ltimes M_k}\]
The structure map in the $k$-direction associated to a $\theta\colon[k]\longrightarrow[k']$ is defined to be the functor induced from 
the map \[\theta^\ast\colon (A\ltimes M_{k'},L^{J_{k'}},\alpha^{J_{k'}})\longrightarrow (A\ltimes M_k,L^{J_k},\alpha^{J_k})\] defined from the bimodule map $\theta^\ast\colon M_{k'}\longrightarrow M_k$ as in \ref{mtwistingantistructure}. In order to have strict simplicial identities one needs to use the strictly functorial model for the category of finitely generated projective modules described in \ref{functorialPA}.
\begin{defn} Given an $M_{\sbt}$-twisting $J_{\sbt}$, define
\[\KR(A\ltimes M_{\sbt})=\Omega^{2,1}|iS^{2,1}_{\sbt}\mathcal{D}\mathcal{P}_{A\ltimes M_{\sbt}}|\]
\end{defn}
The realization can be taken either one real direction at the time or diagonally.
The projections $A\ltimes M_k\longrightarrow A$ induce an equivariant map $\KR(A\ltimes M_{\sbt})\longrightarrow \KR(A)$, and we denote its homotopy fiber by $\widetilde{\KR}(A\ltimes M_{\sbt})$.

In order to define the stabilization for $\widetilde{\KR}(A\ltimes M)$ we iterate the equivariant Bar constructions in the $M$-direction as follows.
Let us denote $S^{2n,n}_{\sbt}$ the $2n$-fold smash product of simplicial sets
\[S^{2n,n}=(S^{1}_{\sbt})^{op}\wedge\dots\wedge (S^{1}_{\sbt})^{op}\wedge S^{1}_{\sbt}\wedge\dots\wedge S^{1}_{\sbt} \ \ \ \ \ \ \ \ \ \ (2n\mbox{ factors})\]
with levelwise involution $\omega$ given by reversing the order of the smash factors. 
Here the superscript ${}^{op}$ denotes the simplicial set where the order of the faces and degeneracies is reversed.
This involution gives $S^{2n,n}_{\sbt}$ the structure of a real set. For every $\sigma\in S^{2,1}_k$ denote
\[\iota_\sigma\colon S^{2n,n}_k\longrightarrow S^{2(n+1),n+1}_k\]
the inclusion
\[\iota_\sigma(x_1\wedge\dots\wedge x_{2n})=(x_1\wedge\dots\wedge x_{n}\wedge\sigma\wedge x_{n+1}\wedge\dots\wedge x_{2n})\]
that satisfies $\omega\circ\iota_\sigma=\iota_{\omega\sigma}\circ\omega$.
This induces a map of twistings (see \ref{defmaptwist})
\[M(\iota_\sigma)\colon M(S^{2n,n}_k)\longrightarrow M(S^{2(n+1),n+1}_k)\]
and thus a functor
\[(\iota_\sigma)_\ast\colon S^{2,1}_{\sbt}\mathcal{D}\mathcal{P}_{A\ltimes M(S^{2n,n}_k)}\longrightarrow S^{2,1}_{\sbt}\mathcal{D}\mathcal{P}_{A\ltimes M(S^{2(n+1),n+1}_k)}\]
that satisfies $(\iota_\sigma)_\ast D=D(\iota_{\omega\sigma})_\ast$.

\begin{defn}
Let $M$ be an $A$-bimodule, and $J$ an $M$-twisting of an antistructure $(A,L,\alpha)$. The stable real $K$-theory of $J$ is the $G$-space
\[\widetilde{\KR}^S(A\ltimes M)=\colim_{n}\Omega^{2n,n}\widetilde{\KR}(A\ltimes M(S^{2n,n}_{\sbt}))\]
with stabilization maps defined to be the adjoints of the realizations of
\[S^{2,1}_k\wedge|iS^{2,1}_{\sbt}\mathcal{D}\mathcal{P}_{A\ltimes M(S^{2n,n}_k)}|\longrightarrow |iS^{2,1}_{\sbt}\mathcal{D}\mathcal{P}_{A\ltimes M(S^{2(n+1),n+1}_k)}|\]
induced from mapping $\sigma\wedge x$ to $(\iota_\sigma)_\ast x$.
\end{defn}

In \cite{stableDM} the authors define stable $K$-theory as the colimit of $\Omega^{n+1}\widetilde{K}(A\ltimes M(S^{n}_{\sbt}))$. The missing degree shift in our definition has the consequence of having an extra $S^{1,1}_{\sbt}$-smash factor in the coefficients of $\THR$ in the main result of the next section compared to \cite[5.3]{stableDM}.

\begin{rem}
We extended $\KR$ to simplicial rings degree-wise. In the non-equivariant case, one can of course consider the category of simplicial modules over a simplicial ring $A_{\sbt}$, and take its $K$-theory. In order to consider only cellular modules (cf. \cite[§2.3]{waldhausen}). $K$-theory for simplicial modules and the degreewise $K$-theory differ in general, but in the case of $A\ltimes M$ the homotopy fibers of the projection maps induced by $A\ltimes M\longrightarrow A$ agree for the two theories. The published proof of this uses the description of $K$-theory via $B\widehat{GL}(A_{\sbt})^+$, the plus construction of the classifying space of the group of matricies invertible up to homotopy (see \cite[4.1]{Dundasdegreewise}).

Our equivariant situation is tricky. It is fairly straightforward to define what an antistructure over a simplicial ring should be, and this induces a duality on the category of all simplicial modules, given by a $\hom$-mapping space construction. However this duality does not preserve cellular modules, and it is not even clear if it does up to equivalence. Therefore there is no obvious $\KR$ theory for simplicial modules involving the $S^{2,1}_{\sbt}$-construction.
\end{rem}

\begin{rem}\label{stableKRgoodwilliediff} Fix an antistructure $(A,L,\alpha)$ and an $M$-twisting $J$, and consider real $K$-theory as a functor
\[\widetilde{\KR}(A\ltimes M(-))\colon \{\mbox{Based real sets}\}\longrightarrow G\!-\!\Top_\ast\]
from the category of based real sets to pointed $G$-spaces (or eventually real spectra). There is another functor
\[\widetilde{\KR}^S(A\ltimes M;-)\colon \{\mbox{Based real sets}\}\longrightarrow G\!-\!\Top_\ast\]
defined by
\[\widetilde{\KR}^S(A\ltimes M;X)=\colim_{n}\Omega^{2n,n}\widetilde{\KR}(A\ltimes M(S^{2n,n}_{\sbt}\wedge X))\] 
using similar stabilization maps.
This is a $G$-version of the Goodwillie differential for $\widetilde{\KR}(A\ltimes M(-))$ (cf. \cite{calculusI} and \cite{stableDM}). Stable real $K$-theory $\widetilde{\KR}^S(A\ltimes M)$ is the zero space of the "derivative real spectrum" \[\{\widetilde{\KR}^S(A\ltimes M;S^{2m,m})\}_{m\geq 0}\]
\end{rem}


\subsection{Stable real $K$-theory is $\THR$}\label{stableKRisTHRsec}

We show that there is an equivalence between stable $\KR$-theory and $\THR$, via a trace-like map. This theorem is the $G$-analogue of \cite[5.3]{stableDM}. Recall that we use the notation \[\THR(C;M):=\THR(HC;HM)\]

If  $M_{\sbt}$ is a simplicial $A$-bimodule, and $J_{\sbt}\colon L_t\otimes_A M_{\sbt}\longrightarrow L_t\otimes_A M_{\sbt}$ an $M_{\sbt}$-twisting of $(A,L,\alpha)$, the structure maps on $M_{\sbt}$ define a real structure on
\[[k]\longrightarrow \THR(\mathcal{D}\mathcal{P}_A;M_{k}^{J_k})\]
analogous to the case of $\KR(-)$ explained in the previous section. Its realization is the $G$-space $\THR(\mathcal{D}\mathcal{P}_A;M^{J}_{\sbt})$ (again, one needs to use the strictly functorial model of \ref{functorialPA} for $\mathcal{P}_A$ in order to get the simplicial identities strictly.).

\begin{theorem}\label{stableKRisTHR}
Let $A$ be a ring, $M$ be an $A$-bimodule, and $J$ an $M$-twisting of an antistructure $(A,L,\alpha)$. If $2$ is invertible in $A$, there is a weak $G$-equivalence
\[\widetilde{\KR}^S(A\ltimes M)\simeq\THR(\mathcal{D}\mathcal{P}_A;M^J(S^{1,1}_{\sbt}))\]
\end{theorem}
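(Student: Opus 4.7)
The plan is to exhibit both sides as the infinite real loop space of a common real $\Omega$-spectrum, with the comparison realized by the trace-like weak $G$-map $\beta\colon\widetilde{\KR}(A\ltimes M)\to\THR(\mathcal{D}\mathcal{P}_A;M^J(S^{1,1}_{\sbt}))$ constructed in this subsection. The theorem then amounts to showing that the stabilization of $\beta$ is a $G$-equivalence, which I would prove by computing both sides via the models established in Sections~4--5 and checking that the two computations agree.

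First I would apply Corollary \ref{relKR} with the bimodule $M(S^{2n,n}_{\sbt})$ and the twisting $J(S^{2n,n}_{\sbt})$, obtaining a natural $G$-equivalence
\[
\widetilde{\KR}(A\ltimes M(S^{2n,n}_{\sbt}))\;\simeq\;\Omega^{2,1}\Bigl|\!\!\bigvee_{\varphi\in Ob S^{2,1}_{\sbt}\mathcal{D}\mathcal{P}_A}\!\!\hom_A\bigl(\varphi,\varphi\otimes_{A}M(S^{2n,n}_{\sbt}\wedge S^{1,1}_{\sbt})\bigr)\Bigr|.
\]
Applying $\Omega^{2n,n}$ and passing to the colimit in $n$, the loop functor commutes with $\Omega^{2,1}$ and with the realization, and Corollary \ref{wedgesandmaps} shows that the inclusion of the wedge into the direct sum is a $G$-equivalence in the limit. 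For each summand the sequence $n\mapsto\hom_A(\varphi,\varphi\otimes_A M(S^{1,1}_{\sbt}))(S^{2n,n}_{\sbt})$ is the real Eilenberg--MacLane $\Omega$-spectrum of the simplicial abelian group $\hom_A(\varphi,\varphi\otimes_A M(S^{1,1}_{\sbt}))$, so by the $G$-suspension theorem (Theorem \ref{Gsuspthm}) and the equivariant approximation lemma \ref{eqapproxlemma} the stabilization returns to level zero. Combined, these yield
\[
\widetilde{\KR}^S(A\ltimes M)\;\simeq\;\Omega^{2,1}\THR_0^{\oplus}\bigl(S^{2,1}_{\sbt}\mathcal{D}\mathcal{P}_A;M^J(S^{1,1}_{\sbt})\bigr).
\]

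For the right-hand side, the hypothesis $2\in A^{\!\times}$ makes the bimodule $M^J\colon\mathcal{D}\mathcal{P}_A^{op}\otimes\mathcal{D}\mathcal{P}_A\to Ab$ uniquely $2$-divisible, so Proposition \ref{stable0simpl}, together with the abelian group model \ref{abgpmodel} and the $S^{2,1}_{\sbt}$-delooping of Theorem \ref{THRdeloopings}, gives
\[
\THR(\mathcal{D}\mathcal{P}_A;M^J(S^{1,1}_{\sbt}))\;\simeq\;\colim_m\Omega^{2m,m}\THR_0^{\oplus}\bigl((S^{2,1}_{\sbt})^{(m)}\mathcal{D}\mathcal{P}_A;M^J(S^{1,1}_{\sbt})\bigr).
\]
Iterating the computation of the previous paragraph (using the $p$-fold iterated model from the proof of \ref{relKR}, whose connectivity tends to infinity by the Blakers--Massey argument) produces the parallel identifications
\[
\widetilde{\KR}^S(A\ltimes M)\;\simeq\;(\Omega^{2,1})^{p}\THR_0^{\oplus}\bigl((S^{2,1}_{\sbt})^{(p)}\mathcal{D}\mathcal{P}_A;M^J(S^{1,1}_{\sbt})\bigr)
\]
for every $p\geq 1$, compatibly with the structure maps of the tower in the statement of \ref{stable0simpl}. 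This identifies the common colimit as both $\widetilde{\KR}^S(A\ltimes M)$ and $\THR(\mathcal{D}\mathcal{P}_A;M^J(S^{1,1}_{\sbt}))$, and inspection shows that the map $\beta$ realizes this comparison.

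The hard part will be the coherence in the final step: each additional iteration of $S^{2,1}_{\sbt}$ in the category, one additional Bar factor in the coefficients, and one additional $\Omega^{2,1}$-loop must balance exactly, so that the iterated formula for $\widetilde{\KR}^S$ matches the tower of Remark \ref{descr0simpl} level by level. This requires a careful use of Theorem \ref{THRdeloopings} and of the naturality of the trace map $\beta$ through the $S_{\sbt}^{2,1}$-construction. The divisibility hypothesis enters exclusively at Proposition \ref{stable0simpl} (to pass from $\THR_0^\oplus$ back to $\THR^\oplus$ in the limit); removing it would demand a different argument, which is why the theorem is formulated under the assumption that $2\in A$ is invertible.
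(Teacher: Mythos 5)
Your plan is built from exactly the ingredients the paper uses: \ref{relKR} to convert the relative $\KR$ into the wedge model, \ref{THRdeloopings} and \ref{abgpmodel} to present $\THR$ by its stable $0$-simplicies, and \ref{stable0simpl} (where $2\in A^{\times}$ enters) to compare. The overall shape — identify both sides with a common colimit of $\THR_0^{\oplus}$ levels — is also the paper's. So this is the right strategy.

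The genuine gap is exactly the point you label ``the hard part'' and then leave to ``inspection.'' Computing each side separately does not give a comparison: the colimit on the $\widetilde{\KR}^S$-side is indexed by $n$ (smashing the coefficient bimodule with $S^{2n,n}_{\sbt}$), while the colimit coming from \ref{stable0simpl} on the $\THR$-side is indexed by $m$ (iterating $S^{2,1}_{\sbt}$ on the category), and to glue these two towers you need a single natural zig-zag commuting with both sets of stabilization maps and a quantitative connectivity bound on it. The paper supplies this by (i) building, for each $k$ and $n$, an explicit zig-zag from $\widetilde{\KR}(A\ltimes M(S^{2n,n}_k))$ through $\THR_0(S^{2,1}_{\sbt}\mathcal{D}\mathcal{P}_A;-)$ to $\THR(\mathcal{D}\mathcal{P}_A;M^J(S^{2n,n}_k\wedge S^{1,1}_{\sbt}))$, checking that it is compatible with the $\widetilde{\KR}^S$-stabilization maps and with Proposition \ref{stabTHR}, and then (ii) proving Proposition \ref{connfirsttracemap}, which gives the $(2\nu_1+1,\min\{\nu_1,2\nu_2+1\})$-connectivity estimate that makes the colimit comparison an equivalence. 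Your ``by the $G$-suspension theorem and the equivariant approximation lemma the stabilization returns to level zero'' is a sketch of \ref{stabTHR}, and your iterated-model step hides the Blakers--Massey counting that is the content of \ref{connfirsttracemap}; without actually stating and proving a bound of that form you have two computations but no map between them, and the theorem does not follow.
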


\begin{rem}
The assumption that $2$ is invertible in $A$ is needed only to apply \ref{stable0simpl}. If one could prove \ref{stable0simpl} without this assumption, all the results of this section would hold without this extra condition.
\end{rem}

\begin{proof}
The weak map of the statement is constructed as follow. For every $k$ there is a zigzag
\[\xymatrix{\widetilde{\KR}(A\ltimes M(S^{2n,n}_k)) &
 \widetilde{\KR}(A;M(S^{2n,n}_k\wedge S^{1,1}_{\sbt}))\ar[l]_{\simeq}^{\ref{relKR}}\ar[d]\\
 & \Omega^{2,1}\THR_0(S^{2,1}_{\sbt}\mathcal{D}\mathcal{P}_A;M^J(S^{2n,n}_k\wedge S^{1,1}_{\sbt}))\ar[d]\\
 & \Omega^{2,1}\THR(S^{2,1}_{\sbt}\mathcal{D}\mathcal{P}_A;M^J(S^{2n,n}_k\wedge S^{1,1}_{\sbt}))\\
 &\THR(\mathcal{D}\mathcal{P}_A;M^J(S^{2n,n}_k\wedge S^{1,1}_{\sbt})) \ar[u]^-{\simeq}_-{\ref{THRdeloopings}}
}\]
For the top horizontal map we used the canonical isomorphism $M(X)(S^{1,1})\cong M(X\wedge S^{1,1})$. The top vertical map is induced, under the equivalence of \ref{relKR}, by the canonical map into the homotopy colimit
\[\xymatrix{\bigvee\limits_{\varphi\in Ob S^{2,1}_{\sbt}\mathcal{D}\mathcal{P}_A}\hom_A(\varphi,\varphi\otimes_A N)\ar@{=}[d]\\
\bigvee\limits_{\varphi\in Ob S^{2,1}_{\sbt}\mathcal{D}\mathcal{P}_A}N^J(\varphi,\varphi)\ar[d]\\ \hocolim\limits_{I}\Omega^{i}\!\!\!\!\!\!\bigvee\limits_{\varphi\in Ob S^{2,1}_{\sbt}\mathcal{D}\mathcal{P}_A}\!\!\!\!N^J(\varphi,\varphi)(S^i)\ar@{=}[d]\\
\THR_0(S^{2,1}_{\sbt}\mathcal{D}\mathcal{P}_A;N^J)}\]
where $N=M(S^{2n,n}_k\wedge S^{1,1}_{\sbt})$.
The middle vertical map is just the inclusion of the zero simplicies in the realization.

Realizing in the $k$-direction and looping we obtain a zig-zag
\[\xymatrix{\Omega^{2n,n}\widetilde{\KR}(A\!\ltimes\!M(S^{2n,n}_{\sbt}))\ar@{-->}[dd] &
 \Omega^{2n,n}\widetilde{\KR}(A;\!M(S^{2n,n}_{\sbt}\!\wedge\!S^{1,1}_{\sbt}))\ar[l]_-{\simeq}^-{\ref{relKR}}\ar[d]\\
 & \Omega^{2n,n}\Omega^{2,1}\THR(S^{2,1}_{\sbt}\mathcal{D}\mathcal{P}_A;\!M^J(S^{2n,n}_{\sbt}\!\wedge\!S^{1,1}_{\sbt}))\\
\THR(\mathcal{D}\mathcal{P}_A;\!M^J(S^{1,1}_{\sbt}))\ar[r]_-{\simeq}^-{\ref{stabTHR}} & \Omega^{2n,n}\THR(\mathcal{D}\mathcal{P}_A;\!M^J(S^{2n,n}_{\sbt}\!\wedge\!S^{1,1}_{\sbt})) \ar[u]^{\simeq}_{\ref{THRdeloopings}}
}\]
where the bottom horizontal map is induced by the same functors defining the stabilization map for $\widetilde{\KR}^S$. Precisely, it is the adjoint of the realization of the map
\[S^{2n,n}_k\wedge \THR(\mathcal{D}\mathcal{P}_A;M^J(S^{1,1}_{\sbt}))\longrightarrow\THR(\mathcal{D}\mathcal{P}_A;M^J(S^{2n,n}_k\wedge S^{1,1}_{\sbt}))\]
given by sending $\sigma\wedge x$ to $(\iota_\sigma)_\ast x$, where $\iota_\sigma\colon M(S^{1,1}_{\sbt})\longrightarrow M(S^{2n,n}_k\wedge S^{1,1}_{\sbt})$ is the map of bimodules induced by the inclusion $S^0_k\longrightarrow S^{2n,n}_k$ determined by $\sigma\in S^{2n,n}_k$. It is an equivalence by \ref{stabTHR} below.

The zig-zag respects the stabilization maps and thus induces a weak map on the colimit
\[\widetilde{\KR}^S(A\ltimes M)\longrightarrow\THR(\mathcal{D}\mathcal{P}_A;M^J(S^{1,1}_{\sbt}))\]
Apply \ref{connfirsttracemap} below to the $\nu=(2n,n-1)$-connected real set $X_{\sbt}=S^{2n,n}_{\sbt}\wedge S^{1,1}_{\sbt}$ to show that this weak map is $(2n+1,n-1)$-connected. Therefore it induced an equivalence on colimits, proving \ref{stableKRisTHR}.
\end{proof}

\begin{rem} For a fixed $M$-twisting $J$, we have the functor
\[\THR(\mathcal{D}\mathcal{P}_A;M^J(-))\colon \{\mbox{Real sets}\}\longrightarrow G\!-\!\Top_\ast\]
analogous to $\widetilde{\KR}$ in \ref{stableKRgoodwilliediff}. Its Goodwillie $G$-differential is the functor defined at a real set $X_{\sbt}$ by
\[\THR^S(\mathcal{D}\mathcal{P}_A;M^J;X_{\sbt}):=\colim_{n}\Omega^{2n,n}\THR(\mathcal{D}\mathcal{P}_A;M^J(S^{2n,n}_{\sbt}\wedge X_{\sbt}))\]
The canonical map $\THR(\mathcal{D}\mathcal{P}_A;M^J(X_{\sbt}))\longrightarrow \THR^S(\mathcal{D}\mathcal{P}_A;M^J;X_{\sbt})$ is a $G$-equivalence by \ref{stabTHR} below. This is the $G$-analogous of the property of $\THH$ of being $1$-excisive, (cf. \cite{calculusII}).
 Proposition \ref{connfirsttracemap} says that the differential of reduced $\KR$-theory is $G$-equivalent the $S^{1,1}_{\sbt}$-suspension of $\THR$.
\end{rem}

\begin{prop}\label{connfirsttracemap} Let $J$ be an $M$-twisting of $(A,L,\alpha)$ with $2\in A$ invertible, and let $X_{\sbt}$ be a real set. If the $G$-space $|X_{\sbt}|$ is $(\nu_1,\nu_2)$-connected, the map 
\[\widetilde{\KR}(A;M(X_{\sbt}))\longrightarrow\Omega^{2,1}\THR(S^{2,1}_{\sbt}\mathcal{D}\mathcal{P}_A;M^J(X_{\sbt}))\]
is non-equivariantly $(2\nu_1+1)$-connected, and $\min\{\nu_1,2\nu_2+1\}$-connected on the fixed points.
\end{prop}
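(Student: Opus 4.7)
The plan is to identify the map, using Corollary \ref{relKR} on the source and Theorem \ref{THRdeloopings} together with Proposition \ref{abgpmodel} on the target, as the $\Omega^{2,1}$ of an inclusion of wedges into direct sums, and then to apply the equivariant Blakers--Massey estimate of Proposition \ref{wedgesintoproducts} combined with a connectivity gain coming from the reduced simplicial structure of $S^{2,1}_{\sbt}$.

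Under the equivalence $\widetilde{\KR}(A;M(X_{\sbt}))\simeq\Omega^{2,1}|\bigvee_{\varphi\in Ob S^{2,1}_{\sbt}\mathcal{D}\mathcal{P}_A}\hom_A(\varphi,\varphi\otimes_AM(X_{\sbt}))|$ of Corollary \ref{relKR}, the map of the proposition becomes $\Omega^{2,1}$ applied to the composite
\[\bigl|\!\!\!\bigvee_\varphi\hom_A(\varphi,\varphi\otimes_AM(X_{\sbt}))\bigr|\longrightarrow\THR_0^\oplus(S^{2,1}_{\sbt}\mathcal{D}\mathcal{P}_A;M^J(X_{\sbt}))\longrightarrow\THR^\oplus(S^{2,1}_{\sbt}\mathcal{D}\mathcal{P}_A;M^J(X_{\sbt})),\]
where I have used Proposition \ref{abgpmodel} to pass to the abelian group model. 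The first arrow factors through the levelwise inclusion $|\bigvee_\varphi\hom_A|\to|\bigoplus_\varphi\hom_A|$ followed by the canonical map into the $I$-homotopy colimit defining $\THR_0^\oplus$.

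The key connectivity input is that each summand $\hom_A(\varphi,\varphi\otimes_AM(X_{\sbt}))$ is $(\nu_1,\min\{\nu_1,\nu_2\})$-connected as a $G$-space: non-equivariantly this is clear since $\hom_A(\varphi,-\otimes_AM)$ is additive and exact for $\varphi$ finitely generated projective, so the hom space inherits the connectivity of $X_{\sbt}$; on the fixed points, Lemma \ref{connconfspace} of the appendix applied to the additive involution induced by $\widehat{J}$ together with the involution on $X_{\sbt}$ yields $\min\{\nu_1,\nu_2\}$. Applying Proposition \ref{wedgesintoproducts} levelwise, the wedge-to-sum inclusion is $(2\nu_1+1,\min\{\nu_1,2\min\{\nu_1,\nu_2\}+1\})$-connected, and a direct case check shows $\min\{\nu_1,2\min\{\nu_1,\nu_2\}+1\}=\min\{\nu_1,2\nu_2+1\}$. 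The loss of $(2,1)$ from the outer $\Omega^{2,1}$ (cf.\ Corollary \ref{eqmappingspace}) will be compensated by realizing in the $1$-reduced simplicial direction $S^{2,1}_{\sbt}$ (using that $S^{2,1}_0C=S^{2,1}_1C=0$), which contributes exactly the needed $(2,1)$ connectivity gain.

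For the second arrow, I would use Proposition \ref{stable0simpl}---which is where the hypothesis $2\in A^\times$ enters, via the averaged retraction $r=\tfrac12(d_0+Dd_0D)$---combined with Theorem \ref{THRdeloopings}, and apply the $G$-approximation lemma \ref{eqapproxlemma} to show that the $m=1$ stage of the stabilization from Remark \ref{descr0simpl} is already sufficiently highly connected not to degrade the Blakers--Massey bound from the first arrow. The main obstacle will be the careful bookkeeping of fixed-point connectivities through this composition, in particular verifying that the $\Omega^{2,1}$--$S^{2,1}_{\sbt}$ interaction yields exactly the claimed $\min\{\nu_1,2\nu_2+1\}$ on the fixed points rather than a strictly weaker bound, and extracting quantitative information from \ref{stable0simpl} at a finite stage of stabilization instead of only in the colimit.
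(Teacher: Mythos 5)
Your plan correctly identifies the key ingredients — \ref{relKR}, \ref{abgpmodel}, \ref{wedgesintoproducts}, \ref{connconfspace}, \ref{stable0simpl} — and your degreewise connectivity estimate for the wedge-to-sum map is right: each summand $\hom_A(\varphi,\varphi\otimes_AM(X_{\sbt}))\cong\hom_A(\varphi,\varphi\otimes_AM)(X_{\sbt})$ is $(\nu_1,\min\{\nu_1,\nu_2\})$-connected by \ref{connconfspace}, and \ref{wedgesintoproducts} together with your case check gives exactly $\kappa=(2\nu_1+1,\min\{\nu_1,2\nu_2+1\})$ in each simplicial degree. But the plan for the second arrow has a genuine gap that you yourself flag as "the main obstacle," and it is not something the listed tools can close at a finite stage. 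The inclusion $\THR_0^\oplus(S^{2,1}_{\sbt}\mathcal{D}\mathcal{P}_A;M^J(X_{\sbt}))\to\THR^\oplus(S^{2,1}_{\sbt}\mathcal{D}\mathcal{P}_A;M^J(X_{\sbt}))$ is not highly connected after a single application of $S^{2,1}_{\sbt}$; Proposition \ref{stable0simpl} only produces a $G$-equivalence after passing to $\colim_m\Omega^{2m,m}(S^{2,1}_{\sbt})^{(m)}$, since the key homotopy in its proof comes from \ref{formalprodpres}, which is itself a statement after stabilization. The $G$-approximation lemma \ref{eqapproxlemma} addresses a different issue (approximating the $I[k]$-homotopy colimit by a single index), so it does not give a finite-stage estimate for $\THR_0\to\THR$ either. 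As a result, your proposed $m=1$ shortcut, and the claim that the single $S^{2,1}_{\sbt}$-direction exactly compensates the outer $\Omega^{2,1}$, would only go through if $\THR_0\to\THR$ were already an equivalence at that stage — which is not available.

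The paper's resolution is structural rather than quantitative: it replaces \emph{both} sides of the comparison by the colimits they are equivalent to, so that no connectivity estimate of $\THR_0\to\THR$ is ever needed. Concretely, the source $|\bigvee_{S^{2,1}_{\sbt}\mathcal{D}\mathcal{P}_A}\hom(\varphi,\varphi\otimes_AM(X_{\sbt}))|$ is identified with $\colim_p\Omega^{2p,p}|\bigvee_{(S^{2,1}_{\sbt})^{(p+1)}\mathcal{D}\mathcal{P}_A}\hom(\vartheta,\vartheta\otimes_AM(X_{\sbt}))|$ via the delooping argument from the proof of \ref{relKR}, and the target $\THR^\oplus$ with $\colim_p\Omega^{2p,p}\THR^\oplus((S^{2,1}_{\sbt})^{(p+1)}\mathcal{D}\mathcal{P}_A;M^J(X_{\sbt}))$ by \ref{stable0simpl} and \ref{descr0simpl}. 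All vertical maps in the resulting four-row diagram are $G$-equivalences, so the top map has the same connectivity as the bottom wedge-to-sum inclusion (looped by $\Omega^{2p,p}$ and colimited). The $1$-reducedness of $S^{2,1}_{\sbt}$ is then exploited across $p+1$ iterated directions: in simplicial degree $l\le 2p+1$ the wedge-to-sum is a $G$-equivalence, so the unlooped map is $(\kappa_1+2(p+1),\kappa_2+p+1)$-connected by the realization lemma, and after $\Omega^{2p,p}$ and the outer $\Omega^{2,1}$ one recovers exactly $\kappa$. So the fix to your proposal is: do not try to squeeze a quantitative version of \ref{stable0simpl} out at a single stage; instead re-express both sides as colimits of $\Omega^{2p,p}$-loops of iterated $S^{2,1}_{\sbt}$-constructions so that the only map needing an estimate is the wedge-to-sum, and let the iterated $1$-reducedness supply the $(2(p+1),p+1)$ gain.
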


\begin{proof}
The map of the statement is $\Omega^{2,1}$-loop of the top row of the commutative diagram (cf. §\ref{secintrokranti} and \ref{mainKR})
\[\resizebox{1\hsize}{!}{\xymatrix{|\bigvee\limits_{S^{2,1}_{\sbt}\mathcal{D}\mathcal{P}_A}\hom_A(\varphi,\varphi\otimes_A\! M(X_{\sbt}))|\ar[r]\ar@{=}[d]&
\THR(S^{2,1}_{\sbt}\mathcal{D}\mathcal{P}_A;M^J(X_{\sbt}))\ar[d]_-{\simeq}^-{\ref{abgpmodel}}\\
|\bigvee\limits_{S^{2,1}_{\sbt}\mathcal{D}\mathcal{P}_A}\hom_A(\varphi,\varphi\otimes_A\! M(X_{\sbt}))|\ar[r]\ar[d]^-{\simeq}_-{\ref{relKR}}&
\THR^{\oplus}(S^{2,1}_{\sbt}\mathcal{D}\mathcal{P}_A;M^J(X_{\sbt}))\ar[d]_-{\simeq}^-{\ref{stable0simpl}}\\
\colim\limits_p\!\Omega^{2p,p}|\!\!\!\!\!\!\!\!\!\!\bigvee\limits_{(S^{2,1}_{\sbt})^{(p\!+\!1)}\mathcal{D}\mathcal{P}_A}\!\!\!\!\!\!\!\!\!\!\!\!\hom_A(\vartheta,\vartheta\otimes_A\!\!M(X_{\sbt}))|\ar[r]\ar@{=}[d]&
\colim\limits_p\!\Omega^{2p,p}\!\THR^{\oplus}\!((S^{2,1}_{\sbt})^{(p\!+\!1)}\mathcal{D}\mathcal{P}_A;\!M^J\!(X_{\sbt}))\\
\colim\limits_p\!\Omega^{2p,p}|\!\!\!\!\!\!\!\!\!\!\bigvee\limits_{(S^{2,1}_{\sbt})^{(p\!+\!1)}\mathcal{D}\mathcal{P}_A}\!\!\!\!\!\!\!\!\!\!\!\!\hom_A(\vartheta,\vartheta\otimes_A\!\!M(X_{\sbt}))|\ar[r]&
\colim\limits_p\!\Omega^{2p,p}|\!\!\!\!\!\!\!\!\bigoplus\limits_{(S^{2,1}_{\sbt})^{(p\!+\!1)}\mathcal{D}\mathcal{P}_A}\!\!\!\!\!\!\!\!\hom_A(\vartheta,\vartheta\otimes_A\!\!M(X_{\sbt}))|\ar[u]^-{\simeq}_-{\ref{descr0simpl}}
}}\]
Here $(S^{2,1}_{\sbt})^{(p+1)}$ denotes the diagonal of the $S^{2,1}_{\sbt}$-construction iterated $(p+1)$-times, and
the bottom map is induced by the inclusion of wedges into sums. Write
\[\kappa=(2\nu_1+1,\min\{\nu_1,2\nu_2+1\})\]
for the connectivity of the statement. By the diagram above it is enough to show that the map
\[|\!\!\!\!\bigvee\limits_{(S^{2,1}_{\sbt})^{(p+1)}\mathcal{D}\mathcal{P}_A}\!\!\!\!\hom_A(\vartheta,\vartheta\otimes_A\!M(X_{\sbt}))|\longrightarrow|\!\!\!\!\bigoplus\limits_{(S^{2,1}_{\sbt})^{(p+1)}\mathcal{D}\mathcal{P}_A}\!\!\!\!\hom_A(\vartheta,\vartheta\otimes_A\!M(X_{\sbt}))|\]
is $(\kappa+(2(p+1),p+1))$-connected.
In every simplicial degree $l$, the map of spaces
\[|\!\!\!\!\bigvee\limits_{(S^{2,1}_{l})^{(p+1)}\mathcal{D}\mathcal{P}_A}\!\!\!\!\hom_A(\vartheta,\vartheta\otimes_A\!M(X_{\sbt}))|\longrightarrow|\!\!\!\!\bigoplus\limits_{(S^{2,1}_{l})^{(p+1)}\mathcal{D}\mathcal{P}_A}\!\!\!\!\hom_A(\vartheta,\vartheta\otimes_A\!M(X_{\sbt}))|\]
is $\kappa$-connected, as the inclusion of wedges into products (cf. \ref{wedgesintoproducts} and \ref{connconfspace}). Moreover, in simplicial degree $l\leq 2p+1$ the map is a $G$-equivalence. This uses that $S^{2,1}_0C=S^{2,1}_1C=0$. Therefore in degree $l$ the map is non-equivariantly $(\kappa_1+2(p+1)-l)$-connected. Thus  its realization is $(\kappa_1+2(p+1))$-connected, since in general a map of simplicial spaces which is $(c-l)$-connected in simplicial degree $l$, induces a $c$-connected map on the realization (cf. \cite[11.12]{mayiterated}).

In order to describe the map on the fixed points, we need to subdivide. In degree $l\leq p$, the subdivision of the map is an equivalence on the fixed points, since the non-subdivided map is  a $G$-equivalence up to degree $2p+1$. Thus the subdivided map is $(\kappa_2+p+1-l)$-connected on the fixed points, in every simplicial degree $l$. This shows that the realization is $(\kappa_2+p+1)$-connected on the fixed points.
\end{proof}

\begin{prop}\label{stabTHR}
If $2\in A$ is invertible, the map
\[\THR(\mathcal{D}\mathcal{P}_A;M^J(X_{\sbt}))\longrightarrow \Omega^{2n,n}\THR(\mathcal{D}\mathcal{P}_A;M^J(S^{2n,n}_{\sbt}\wedge X_{\sbt}))\]
is a $G$-equivalence for every real set $X_{\sbt}$ and all $n\geq 0$.
\end{prop}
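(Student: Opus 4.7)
The plan is to reduce the statement to an equivariant Dold--Thom equivalence, using the direct-sum abelian group model for $\THR$. First I would invoke the hypothesis that $2\in A$ is invertible: it ensures that the bimodule $M^J(Y_{\sbt})\colon \mathcal{D}\mathcal{P}_A^{op}\otimes \mathcal{D}\mathcal{P}_A\to Ab$ is uniquely $2$-divisible for every real set $Y_{\sbt}$, so Proposition \ref{stable0simpl} (together with Remark \ref{descr0simpl}) yields natural $G$-equivalences
\[\THR(\mathcal{D}\mathcal{P}_A;M^J(Y_{\sbt}))\simeq \colim_m \Omega^{2m,m}\Big|\!\!\!\!\!\!\!\!\bigoplus_{\vartheta\in Ob(S^{2,1}_{\sbt})^{(m)}\mathcal{D}\mathcal{P}_A}\!\!\!\!\!\!\!\!\!M^J(\vartheta,\vartheta)(Y_{\sbt})\Big|,\]
where I use the identification $M^J(Y_{\sbt})(\vartheta,\vartheta)\cong M^J(\vartheta,\vartheta)(Y_{\sbt})$ coming from the fact that $\hom_A(P,-)$ commutes with $-\otimes\mathbb{Z}(Y_k)$ for $P\in\mathcal{P}_A$ finitely generated projective. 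Writing $A_\vartheta := \hom_A(\vartheta,\vartheta\otimes_A M)$, which is a $G$-abelian group via $\widehat{J}$ combined with the $D$-action on the indexing set of $\vartheta$'s, the stabilization map of the statement becomes the adjoint of the natural assembly $G$-map
\[\Big|\!\bigoplus_\vartheta A_\vartheta(X_{\sbt})\Big|\wedge S^{2n,n}\longrightarrow \Big|\!\bigoplus_\vartheta A_\vartheta(S^{2n,n}_{\sbt}\wedge X_{\sbt})\Big|.\]
Hence the problem reduces to showing that for every $G$-abelian group $B$ and every real set $Y_{\sbt}$, the adjoint of $|B(Y_{\sbt})|\wedge S^{2n,n}\to |B(S^{2n,n}_{\sbt}\wedge Y_{\sbt})|$ is a $G$-equivalence.

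Next, by the canonical homeomorphism $S^{2n,n}\cong (S^{2,1})^{\wedge n}$ and induction on $n$, I would reduce to the case $n=1$. Non-equivariantly, the map $|B(Y_{\sbt})|\to \Omega^2|B(S^2_{\sbt}\wedge Y_{\sbt})|$ is the classical Dold--Thom delooping, since $B(-)$ is the Eilenberg--MacLane functor of the abelian group $B$ and so an $\Omega$-spectrum. Equivariantly, the analogous statement is the real Eilenberg--MacLane $\Omega$-spectrum property for a $G$-abelian group, and I would prove it in this setting by combining the connectivity estimate of \ref{connconfspace} for the fixed points of $B(Z_{\sbt})$, the $G$-suspension theorem \ref{Gsuspthm} to make the adjoint highly $G$-connected, and the connectivity of the map $B_i\wedge Z\to B(S^i\wedge Z)$ provided by \ref{connectvitysmashwithx} to pass to realizations. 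This is essentially the content of \ref{Gdeloopingsformonoid}, specialized from monoids with anti-involution to $G$-abelian groups.

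The main obstacle will be carefully tracking the $G$-action on $\bigoplus_\vartheta A_\vartheta$ when checking the equivalence on fixed points, since the duality $D$ both acts on the indexing set $Ob(S^{2,1}_{\sbt})^{(m)}\mathcal{D}\mathcal{P}_A$ and conjugates $A_\vartheta$ with $A_{D\vartheta}$. For this the free-versus-fixed decomposition of a direct sum with involution from \ref{wedgesintoproducts} splits the problem: on free $D$-orbits of $\vartheta$, the duality swaps pairs of summands, so the fixed points of both sides of the assembly map acquire identical structure and one reduces to the non-equivariant Dold--Thom statement; on $D$-fixed $\vartheta$'s, the $G$-abelian group $A_\vartheta$ genuinely carries an involution and the equivariant Dold--Thom argument outlined above applies.
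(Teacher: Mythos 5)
Your proposal is correct and follows essentially the same route as the paper's proof: pass to the abelian group model $\THR^\oplus$ via \ref{abgpmodel}, reduce via Proposition \ref{stable0simpl} and Remark \ref{descr0simpl} to a direct sum of abelian groups $\hom_A(\vartheta,\vartheta\otimes_A M)(Y_\sbt)$ indexed by objects of the iterated $S^{2,1}_{\sbt}$-construction, commute $\Omega^{2n,n}$ with the direct sum, and invoke Corollary \ref{Gdeloopingsformonoid}. The induction on $n$ and the re-derivation of the equivariant Dold--Thom equivalence in your sketch are redundant since \ref{Gdeloopingsformonoid} already treats an arbitrary representation sphere, and the free-versus-fixed orbit bookkeeping you flag at the end is the same point the paper absorbs into the phrase that loops commute equivariantly with direct sums.
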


\begin{proof}
We use the abelian group model $\THR^\oplus$ for $\THR$. The functors $(\iota_\sigma)_\ast$ used above to define the stabilization maps for $\THR$ give stabilization maps for $\THR^\oplus$ as well, and there is a commutative diagram
\[\xymatrix{\THR(\mathcal{D}\mathcal{P}_A;M^J(X_{\sbt}))\ar[r]\ar[d]^{\simeq}_{\ref{abgpmodel}}& \Omega^{2n,n}\THR(\mathcal{D}\mathcal{P}_A;M^J(S^{2n,n}_{\sbt}\wedge X_{\sbt}))\ar[d]^{\simeq}_{\ref{abgpmodel}}\\
\THR^\oplus(\mathcal{D}\mathcal{P}_A;M^J(X_{\sbt}))\ar[r]& \Omega^{2n,n}\THR^\oplus(\mathcal{D}\mathcal{P}_A;M^J(S^{2n,n}_{\sbt}\wedge X_{\sbt}))
}\]
We show that the bottom map is an equivalence. By \ref{stable0simpl}, it is enough to show that the map 
\[\resizebox{1\hsize}{!}{$\THR_{0}^\oplus((S^{2,1}_k)^{(p)}\mathcal{D}\mathcal{P}_A;\!M^J(X_{\sbt}))\longrightarrow \Omega^{2n,n}\THR_{0}^\oplus((S^{2,1}_k)^{(p)}\mathcal{D}\mathcal{P}_A;\!M^J(S^{2n,n}_{\sbt}\!\wedge\!X_{\sbt}))$}\]
is an equivalence for all $k$ and $p$. Since loops commute equivariantly with direct sums, the $G$-equivalence $M\longrightarrow\Omega^iM(S^i)$ of \ref{Gdeloopingsformonoid} below (with appropriate $G$-action on $S^i$) gives a commutative diagram
\[\resizebox{1\hsize}{!}{\xymatrix{\THR_{0}^\oplus((S^{2,1}_k)^{(p)}\mathcal{D}\mathcal{P}_A;\!M^J(X_{\sbt}))\ar[r]& \Omega^{2n,n}\THR_{0}^\oplus((S^{2,1}_k)^{(p)}\mathcal{D}\mathcal{P}_A;\!M^J(S^{2n,n}_{\sbt}\!\!\wedge\!X_{\sbt}))\\
\!\!\!\bigoplus\limits_{(S^{2,1}_k)^{(p)}\mathcal{D}\mathcal{P}_A}\!\!\!\!\!\!\!\hom_A(\vartheta,\vartheta\otimes_A\!M)(X_{\sbt})\ar[u]^-{\simeq}_-{\ref{descr0simpl}}\ar@<1ex>[r]& \Omega^{2n,n}\!\!\!\!\!\!\bigoplus\limits_{(S^{2,1}_k)^{(p)}\mathcal{D}\mathcal{P}_A}\!\!\!\!\!\hom_A(\vartheta,\vartheta\otimes_A\!M)(S^{2n,n}_{\sbt}\!\!\wedge\!X_{\sbt})\ar[u]^-{\simeq}_-{\ref{descr0simpl}}
}}\]
(see \ref{descr0simpl}).
Commuting $\Omega^{2n,n}$ with the direct sum, the bottom map becomes the direct sum of the maps 
\[\hom_A(\vartheta,\vartheta\otimes_A M)(X_{\sbt})\longrightarrow \Omega^{2n,n}\hom_A(\vartheta,\vartheta\otimes_A M)(X_{\sbt}\wedge S^{2n,n}_{\sbt})\]
which are $G$-equivalences again by \ref{Gdeloopingsformonoid}.
\end{proof}


\subsection{Relation with the real trace map}\label{reltracemaps}

The classical trace map is a natural weak map from $K$-theory to $\THH$, first defined in \cite{BHM}. Following \cite{ringfctrs},\cite{IbLars} we define the real analogue $\KR\longrightarrow\THR$.

Let $(\mathcal{C},D,\mathcal{E})$ be an exact category with strict duality. Recall that we denote $\THR(C)=\THR(C;C)$ when we take coefficients in the $\hom$-bimodule 
\[\hom_{HC}(-,-)\colon HC^{op}\wedge HC\longrightarrow Sp^\Sigma\]
(cf. \ref{homexeil}).

\begin{defn}\label{deftrace}
The \textbf{real trace map} $\KR(C)\longrightarrow\Omega^{2,1}\THR(S^{2,1}_{\sbt}C)$ is the weak $G$-map defined by the diagram
\[\xymatrix{\KR(C)\ar@{-->}[ddrrrr]_{\tr}&&\Omega^{2,1}|ObS^{2,1}_{\sbt} C|\ar[ll]^-{\simeq}_-{\ref{inclobinisos21}}\ar[rr]&&\Omega^{2,1}\THR_0(S^{2,1}_{\sbt} C)\ar[dd]\\
\\
&&&&\Omega^{2,1}\THR(S^{2,1}_{\sbt} C)
}\] 
\end{defn}
In \ref{deftrace}, the right-pointing horizontal map is induced from the map
\[Ob S^{2,1}_{\sbt} C\longrightarrow\THR_0(S^{2,1}_{\sbt} C)\]
that sends an object $X$ of $S^{2,1}_n C$ to the element of the homotopy colimit defined by its identity
\[\id_X\in\hom_{S^{2,1}_nC}(X,X)\subset V(HS^{2,1}_nC,0)\]
If $(C,\mathcal{E})$ is split-exact, we can compose with the $G$-equivalence of \ref{THRdeloopings} to get a weak $G$-map
\[\xymatrix{ \KR(C)\ar[r]^-{\tr}&\Omega^{2,1}\THR(S^{2,1}_{\sbt} C)&\THR(C)\ar[l]^-{\simeq}
}\] 
It induces a map of real spectra upon iteration of $S^{2,1}_{\sbt}$.

Now let $C=\mathcal{D}\mathcal{P}_{A\ltimes M}$ with duality induced by an $M$-twisting $J$ of the antistructure $(A,L,\alpha)$. In the previous section we considered another equivariant weak map from $\KR$ to $\THR$, namely
\[\xymatrix{\widetilde{\KR}(A\ltimes M)\ar@{-->}@<-3ex>[d]_-{\beta}&\Omega^{2,1}|\bigvee\limits_{\varphi\in S^{2,1}_{\sbt}\mathcal{D}\mathcal{P}_A}\hom_A(\varphi,\varphi\otimes_A M(S^{1,1}_{\sbt}))|\ar[l]^-{\simeq}_-{\ref{relKR}}\ar[d]\\
\THR(\mathcal{D}\mathcal{P}_{A };M^J(S^{1,1}_{\sbt}))\ar[r]^-{\simeq}_-{\ref{THRdeloopings}} &\Omega^{2,1}\THR(S^{2,1}_{\sbt} \mathcal{D}\mathcal{P}_{A };M^J(S^{1,1}_{\sbt}))
}\]
The trace map of \ref{deftrace} induces a weak $G$-map
\[\widetilde{\tr}\colon \widetilde{\KR}(A\ltimes M)\longrightarrow \widetilde{\THR}(\mathcal{D}\mathcal{P}_{A\ltimes M})\]
on homotopy fibers. 
We shall now compare the maps $\widetilde{\tr}$ and $\beta$ (see \cite[§4]{dmthm} for the non-equivariant case).
\begin{theorem}\label{comparisontrbeta} There is a diagram of weak $G$-maps
\[\xymatrix{\widetilde{\KR}(A\ltimes M)\ar[dr]_-{\beta}\ar[r]^-{\widetilde{\tr}}&\widetilde{\THR}(\mathcal{D}\mathcal{P}_{A\ltimes M})\ar[d]^\alpha\\
& \THR(\mathcal{D}\mathcal{P}_{A};M^J(S^{1,1}_{\sbt}))}\]
that commutes in the homotopy category.
\end{theorem}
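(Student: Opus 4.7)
The plan is to first construct the map $\alpha$ from the splitting of $A\ltimes M$, and then identify $\alpha\circ\widetilde{\tr}$ with $\beta$ by factoring both through a common model built from the category $T^{2,1}_{\sbt}(A;M)$ of Section \ref{model}.

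First I would construct $\alpha$ from the splitting $A\ltimes M\cong A\oplus M$. The projection $p\colon A\ltimes M\longrightarrow A$ is a map of antistructures split by the zero section (Proposition \ref{mtwistingantistructure}); both induce equivariant maps in $\THR$, giving an equivariant retraction $\THR(\mathcal{D}\mathcal{P}_{A\ltimes M})\longrightarrow \THR(\mathcal{D}\mathcal{P}_A)$ whose homotopy fiber is $\widetilde{\THR}(\mathcal{D}\mathcal{P}_{A\ltimes M})$. To define $\alpha$ on this homotopy fiber, I would use the $G$-equivalence $(\overline{F},\overline{\xi})\colon T^{2,1}_{\sbt}(A;M)\longrightarrow S^{2,1}_{\sbt}\mathcal{D}\mathcal{P}_{A\ltimes M}$ of Proposition \ref{noneqeq}: under this equivalence the morphism spectra split naturally as pairs $(\phi,f)$, so each smash factor of $V(HS^{2,1}_n\mathcal{D}\mathcal{P}_{A\ltimes M};\underline{i})$ decomposes and projecting onto the wedge summand containing exactly one $f$-factor (summed cyclically over the position of that factor) yields a map landing in the Bar-coefficient $\THR(\mathcal{D}\mathcal{P}_A;M^J(S^{1,1}_{\sbt}))$. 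This is the equivariant analogue of the first-order Taylor approximation discussed in Remark \ref{stableKRgoodwilliediff}.

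Next I would identify $\alpha\circ\widetilde{\tr}$ with $\beta$ on the model $\widehat{S}^{2,1}_{\sbt}(A;M)$ from Theorem \ref{mainKR}. A class in $\widetilde{\KR}(A\ltimes M)$ represented by $(\varphi,(f,g))$ with $f\colon X\longrightarrow X\otimes_A M$ and $g\colon Y\longrightarrow Y\otimes_A M$ corresponds under $\Psi$ to $\Psi(\varphi)$ equipped with the lower-triangular endomorphism $\bigl(\begin{smallmatrix}1&0\\f&1\end{smallmatrix}\bigr)$. Applying the trace places the identity of $\Psi(\varphi)$ together with this endomorphism in $\THR_0(S^{2,1}_{\sbt}\mathcal{D}\mathcal{P}_{A\ltimes M})$, and postcomposing with $\alpha$ (which picks out the linear-in-$M$ component via the decomposition above) returns precisely the element $f$ viewed as a point of $\bigvee_\varphi M^J_k(\varphi,\varphi)\subset \THR_0(S^{2,1}_{\sbt}\mathcal{D}\mathcal{P}_A;M^J_\sbt)$. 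By inspection of the construction of $\beta$ through Corollary \ref{relKR}, this is the same element, in the same cyclic position, that $\beta$ assigns to $(\varphi,(f,g))$. Once the two composites are exhibited as morphisms of bimodules with duality into a common model for $\THR(\mathcal{D}\mathcal{P}_A;M^J(S^{1,1}_{\sbt}))$, the desired homotopy follows from Proposition \ref{THRpreservesGequiv}.

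The main obstacle will be producing a \emph{$G$-equivariant} homotopy rather than just a non-equivariant one. The matrix presentation of $\Psi$ is not self-dual on the nose under $D_{L^J}$; it only intertwines the dualities up to the isomorphism $\overline{\xi}$ of Section \ref{model}. Consequently, the equivariance of the projection onto the linear-in-$M$ summand must be established by passing through the strict duality $D_J$ on $T^{2,1}_{\sbt}(A;M)$ and verifying that the induced action on the $f$-component coincides with the bimodule duality on $M^J$ via the compatibility of $\widehat{J}$ with $\widehat{J}^2$ and $\eta^\alpha$ recorded in Proposition \ref{naturalityJK}. I expect the explicit comparison to come from an equivariant simplicial homotopy, built from a $G$-natural transformation in the sense of Definition \ref{Gnattransf} between the two factorizations; by Proposition \ref{THRpreservesGequiv} this will produce the required $G$-homotopy on realizations, giving the equivariant counterpart of the comparison carried out non-equivariantly in \cite[\S 4]{dmthm}.
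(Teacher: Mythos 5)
Your overall strategy matches the paper's: construct $\alpha$ by decomposing $\THR$ of the split square‑zero extension into homogeneous pieces by degree in $M$ (via the wedge category $H\mathcal{D}\mathcal{P}_A\vee HM^J$, which is exactly the image of your $T^{2,1}$ model), project onto the linear piece, and then compare $\alpha\circ\widetilde{\tr}$ with $\beta$ through the $\Psi$‑model from Theorem~\ref{mainKR}, using $G$‑natural transformations (Definition~\ref{Gnattransf}) and Proposition~\ref{THRpreservesGequiv} to handle equivariance. These are the right tools and the right factorizations, and you correctly flag that the decomposition is the equivariant first Goodwillie derivative.

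There is, however, a genuine gap in the middle step. You claim that applying $\widetilde{\tr}$ to $(\varphi,(f,g))$ and then $\alpha$ ``returns precisely the element $f$'' as a point of $\THR_0$. This cannot be a pointwise statement: the homogeneous splitting $\bigvee_a\uuline{\THR}^{(a)}\simeq_s\uuline{\THR}(\mathcal{C}\vee\mathcal{M})$ is only a \emph{stable} $G$‑equivalence (Proposition~\ref{htpyfibwedgesthr}), so $\alpha$ has no levelwise description on $\THR_0$, and $\widetilde{\tr}$ on the fiber is a weak map whose representatives live in the realization, not in simplicial degree zero. Moreover, the $a=1$ summand comes with a smash factor $S^{1,1}_+$ (Proposition~\ref{s11smashthr}) — recording the cyclic position of the $\mathcal{M}$‑factor — not $S^{1,1}$, so one must pass through the cyclic Bar construction $M(S^{1,1}_{\sbt\,+})$ and use the canonical section $s\colon M(S^{1,1}_{\sbt})\to M(S^{1,1}_{\sbt\,+})$, $(m_1,\dots,m_k)\mapsto(-\sum_i m_i,m_1,\dots,m_k)$, before comparing with $\beta$. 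The technical heart that your outline does not produce is the map
$\widetilde{\Phi}\colon\widetilde{\KR}(A;M(S^{1,1}_{\sbt\,+}))\longrightarrow\widetilde{\THR}(\mathcal{D}\mathcal{P}_{A\ltimes M})$
sending a Bar string $(h_0,\dots,h_k)$ to the string of lower‑triangular matrices $\left(\begin{smallmatrix}1&0\\ h_0&1\end{smallmatrix}\right)\wedge\dots\wedge\left(\begin{smallmatrix}1&0\\ h_k&1\end{smallmatrix}\right)$ at the level of $\THR_k$. It is this lift, together with the auxiliary map $\widetilde{\gamma}$ from objects of $S^{2,1}_{\sbt}\mathcal{D}\mathcal{P}_{A\ltimes M}$ into $S^{2,1}_{\sbt}(A;M)$, that makes the comparison with the trace precise; the $G$‑homotopy you anticipate is then supplied by the $G$‑natural isomorphism $\kappa\colon\mathcal{D}\overline{F}\circ F'\Rightarrow\id$ restricted to objects. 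Without $\widetilde{\Phi}$ and the $s$‑section, the claim that the two composites literally agree on representatives does not parse, and the remaining identifications are carried by a rather elaborate diagram chase (squares $(I)$, $(II)$, $(III)$ in the paper) that your sketch replaces by heuristic language.
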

This last theorem applied degreewise to the simplicial modules $M(S^{2n,n}_{\sbt})$, \ref{connfirsttracemap} implies:

\begin{cor} Let $J$ be an $M$-twisting of an antistructure $(A,L,\alpha)$, and assume $2\in A$ invertible.
The composition
\[\widetilde{\KR}(A\!\ltimes\!M(S^{2n,n}_{\sbt}))\stackrel{\widetilde{\tr}}{\longrightarrow} \widetilde{\THR}(\mathcal{D}\mathcal{P}_{A\!\ltimes\! M(\!S^{2n,n}_{\sbt}\!)})\stackrel{\alpha}{\longrightarrow}\THR(\mathcal{D}\mathcal{P}_{A};\!M^J\!(S^{2n,n}_{\sbt}\wedge S^{1,1}_{\sbt}))\]
is $(4n+1,2n-1)$-connected.
\end{cor}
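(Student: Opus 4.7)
The plan is to reduce the statement to a single connectivity computation by combining Theorem \ref{comparisontrbeta} with Proposition \ref{connfirsttracemap}, and then to check that the $G$-connectivity of $S^{2n,n}\wedge S^{1,1}$ gives the stated numerics.

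First, I would apply Theorem \ref{comparisontrbeta} with the bimodule $M$ replaced by the simplicial bimodule $M(S^{2n,n}_{\sbt})$ and the induced $M(S^{2n,n}_{\sbt})$-twisting. The coefficient module in the output then becomes $(M(S^{2n,n}_{\sbt}))^J(S^{1,1}_{\sbt})\cong M^J(S^{2n,n}_{\sbt}\wedge S^{1,1}_{\sbt})$, so the triangle provided by that theorem identifies $\alpha\circ\widetilde{\tr}$, in the homotopy category of $G$-spaces, with the map
\[\beta\colon\widetilde{\KR}(A\ltimes M(S^{2n,n}_{\sbt}))\longrightarrow\THR(\mathcal{D}\mathcal{P}_A;M^J(S^{2n,n}_{\sbt}\wedge S^{1,1}_{\sbt}))\]
used in the proof of Theorem \ref{stableKRisTHR}. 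Since equivalent maps have the same connectivity, it suffices to show that $\beta$ is $(4n+1,2n-1)$-connected.

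Next I would unwind the definition of $\beta$. Reading off the zigzag in the proof of Theorem \ref{stableKRisTHR}, all its wrong-way arrows are $G$-equivalences: the bottom map is the $G$-equivalence of Corollary \ref{relKR} and the top-right map is the delooping equivalence of Theorem \ref{THRdeloopings}. The only forward-pointing map that is not a priori an equivalence is the composite of the inclusion of zero-simplices with the canonical map, and this composite is exactly the one whose connectivity is estimated in Proposition \ref{connfirsttracemap}, applied to the real set $X_{\sbt}=S^{2n,n}_{\sbt}\wedge S^{1,1}_{\sbt}$. Hence the connectivity of $\beta$ equals the connectivity supplied by that proposition for this choice of $X_{\sbt}$.

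The final step is to compute the equivariant connectivity of $|X_{\sbt}|=S^{2n,n}\wedge S^{1,1}$. Non-equivariantly this is $S^{2n}\wedge S^1=S^{2n+1}$, so $\nu_1=2n$. For the fixed points, the inclusion $(S^{2n,n})^G\wedge(S^{1,1})^G\hookrightarrow(S^{2n,n}\wedge S^{1,1})^G$ is a homeomorphism for a $\mathbb{Z}/2$-action (a non-basepoint element $[x,y]$ is fixed only if both $x$ and $y$ are fixed), giving $|X_{\sbt}|^G=S^n\wedge S^0=S^n$, so $\nu_2=n-1$. Plugging $(\nu_1,\nu_2)=(2n,n-1)$ into the estimate $(2\nu_1+1,\min\{\nu_1,2\nu_2+1\})$ of Proposition \ref{connfirsttracemap} yields $(4n+1,\min\{2n,2n-1\})=(4n+1,2n-1)$, as claimed.

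I do not expect a genuine obstacle here: all of the substantial input has been established earlier (identification of $\alpha\circ\widetilde{\tr}$ with $\beta$ in Theorem \ref{comparisontrbeta}, and the packaging of Proposition \ref{stable0simpl}, Proposition \ref{abgpmodel} and the connectivity of the inclusion of wedges into direct sums inside Proposition \ref{connfirsttracemap}). The only point that deserves attention is the book-keeping of conventions for $S^{2n,n}$ and for the involution on the smash $S^{2n,n}\wedge S^{1,1}$; once these are fixed the connectivity computation is immediate.
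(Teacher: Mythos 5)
Your proposal is correct and follows essentially the same argument as the paper: identify $\alpha\circ\widetilde{\tr}$ with $\beta$ via Theorem \ref{comparisontrbeta} applied degreewise to $M(S^{2n,n}_{\sbt})$, then feed $X_{\sbt}=S^{2n,n}_{\sbt}\wedge S^{1,1}_{\sbt}$ with $\nu=(2n,n-1)$ into Proposition \ref{connfirsttracemap} to read off $(4n+1,2n-1)$. The paper's own proof consists of the one-line remark preceding the corollary, and your write-up simply unpacks that same reduction.
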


The construction of $\alpha$ is analogous to the classical construction of \cite[§4]{dmthm}. It will be convenient to work in the category of real spectra, since some of the maps are only $G$-equivalences stably. The map $\alpha$ of \ref{comparisontrbeta} is then the infinite loop map of a map $\uuline{\alpha}$ of real spectra (cf. \ref{realspec}) defined by the diagram
\[\xymatrix{\widetilde{\uuline{\THR}}(\mathcal{D}\mathcal{P}_{A\ltimes M})\ar@{-->}[ddddd]^{\uuline{\alpha}}&\ar@{}[ddddd]_{(1)}&\widetilde{\uuline{\THR}}(\mathcal{D}\mathcal{P}_{A}\ltimes M^J)\ar[ll]_-{\simeq}^-{\ref{generalfish}}\\
&&\widetilde{\uuline{\THR}}(H\mathcal{D}\mathcal{P}_{A}\vee HM^J)\ar[u]^{\simeq}_{\ref{stableGeq}}\\
&&\bigvee_{a=1}^\infty\uuline{\THR}^{(a)}(\mathcal{D}\mathcal{P}_{A};M^J)\ar[u]^{\simeq_s}_{\ref{htpyfibwedgesthr}}\ar[d]^{\pr^{(1)}}\\
&&\uuline{\THR}^{(1)}(\mathcal{D}\mathcal{P}_{A};M^J)\\
&&S^{1,1}_+\wedge \uuline{\THR}(\mathcal{D}\mathcal{P}_{A};M^J)\ar[u]_{\ref{s11smashthr}}^{\simeq_s}\ar[d]^{\pr}\\
\uuline{\THR}(\mathcal{D}\mathcal{P}_{A};M^J(S^{1,1}_{\sbt}))&&S^{1,1}\wedge \uuline{\THR}(\mathcal{D}\mathcal{P}_{A};M^J)\ar[ll]_-{\simeq_s}^-{\ref{lastmapdiag}}
}\]
The symbol $\simeq_s$ indicates a stable $G$-equivalence of real spectra, $\simeq$ a levelwise $G$-equivalence.

The terms and maps in the diagram will be defined below. They make sense in the generality of a category 
$(C,D)$ enriched in Abelian groups with strict duality, and a bimodule $M\colon C^{op}\otimes C\longrightarrow Ab$ with duality $J$ (cf. \ref{dualitymodabelian}).

\begin{defn}
The category $C\ltimes M$ has the same objects as $C$, and morphism groups
\[(C\ltimes M)(c,c')=C(c,c')\oplus M(c,c')\]
Composition of $(f,m)\colon c\longrightarrow c'$ and $(f',m')\colon c'\longrightarrow c''$ is defined by
\[(f',m')\circ(f,m)=(f\circ f,f'\cdot m+m'\cdot f)\]
The duality $(D,J)\colon (C\ltimes M)^{op}\longrightarrow C\ltimes M$ is $D$ on objects and $D\oplus J$ on morphisms.
\end{defn}
Projection off the $M$-component gives a functor $C\ltimes M\longrightarrow C$, and $\widetilde{\THR}(C\ltimes M)$ is the homotopy fiber of the induced map.

\begin{ex}\label{generalfish}
For $C=\mathcal{D}\mathcal{P}_A$ and $M^J$ the bimodule associated to an $M$-twisting $J$ of $(A,L,\alpha)$, this construction gives an equality
\[(\mathcal{D}\mathcal{P}_A)\ltimes M^J=\mathcal{D}(\mathcal{P}(A;M))^0\]
(cf. §\ref{THRrings} and \ref{model1})
where we remind that $\mathcal{D}(\mathcal{P}(A;M))^0$ is the full subcategory of the category $\mathcal{D}\mathcal{P}(A;M)$ on objects of the form $(\phi,0)$.\\
We used this category as a model for $\mathcal{D}\mathcal{P}_{A\ltimes M}$, when we defined an equivalence of categories with duality
\[(\mathcal{D}\mathcal{P}_A)\ltimes M^J=\mathcal{D}(\mathcal{P}(A;M))^0\stackrel{\simeq}{\longrightarrow}\mathcal{D}\mathcal{P}_{A\ltimes M}\]
in \ref{inclfullsubcat} and \ref{modelPAM}. This induces a $G$-homotopy equivalence
\[\THR((\mathcal{D}\mathcal{P}_A)\ltimes M^J)\stackrel{\simeq}{\longrightarrow}\THR(\mathcal{D}\mathcal{P}_{A\ltimes M})\]
since $\THR$ preserves equivalences (cf. \ref{THRpreservesGequiv}).
More generally, we can apply the $S^{2,1}_{\sbt}$-construction. By \ref{noneqeq} and \ref{inclfullsubcat} we get a $G$-equivalence
\[\THR((S^{2,1}_k\mathcal{D}\mathcal{P}_A)\ltimes M^J)\stackrel{\simeq}{\longrightarrow}\THR(S^{2,1}_k\mathcal{D}\mathcal{P}_{A\ltimes M})\]
for every $k$.
The construction clearly extends to the deloopings of $\THR$ of §\ref{firstdeloop}, giving a levelwise equivalence of real spectra
\[\uuline{\THR}((\mathcal{D}\mathcal{P}_A)\ltimes M^J)\stackrel{\simeq}{\longrightarrow}\uuline{\THR}(\mathcal{D}\mathcal{P}_{A\ltimes M})\]
This is an equivalence over $\uuline{\THR}(\mathcal{D}\mathcal{P}_{A})$, and its restriction to the homotopy fiber is the
top horizontal map in diagram $(1)$.
\end{ex}

Let $\mathcal{C}$ be an $Sp^\Sigma$-category with duality and $\mathcal{M}$ a bimodule with duality over it (cf. §\ref{defTHR}).

\begin{defn}
Define $\mathcal{C}\vee \mathcal{M}$ as the $Sp^\Sigma$-category with the same object as $\mathcal{C}$, and morphism spectra
\[(\mathcal{C}\vee \mathcal{M})(c,c')=\mathcal{C}(c,c')\vee \mathcal{M}(c,c')\]
Composition is defined by the composite
\[\xymatrix{(\mathcal{C}(c',c'')\vee \mathcal{M}(c',c''))\wedge(\mathcal{C}(c,c')\vee \mathcal{M}(c,c'))\ar[d]^-{\cong}\\
 *\txt{$(\mathcal{C}(c',c'')\wedge \mathcal{C}(c,c'))\vee (\mathcal{C}(c',c'')\wedge \mathcal{M}(c,c'))\vee$\\
 $(\mathcal{M}(c',c'')\wedge \mathcal{C}(c,c'))\vee(\mathcal{M}(c',c'')\wedge \mathcal{M}(c,c'))$}\ar[d]^-{\circ\vee l\vee r\vee\ast}\\
 \mathcal{C}(c,c'')\vee \mathcal{M}(c,c'')\vee \mathcal{M}(c,c'')\ar[d]^-{\id\vee\mbox{fold}}\\
\mathcal{C}(c,c'')\vee \mathcal{M}(c,c'')
}\]
where $l$ and $r$ denote respectively left and right action of $\mathcal{C}$ on the bimodule $\mathcal{M}$.
The identity of an object $c$ is its identity $\mathbb{S}\longrightarrow\mathcal{C}(c,c)$ in $\mathcal{C}$ followed by the inclusion in the wedge. There is a duality on $\mathcal{C}\vee \mathcal{M}$ defined by $D$ on objects and by $D\vee J$ on morphisms.
\end{defn}

The inclusion of wedges into products defines a diagram of duality preserving functors
\[\xymatrix{HC\vee HM\ar[r]\ar[dr]& H(C\ltimes M)\ar[d]\\
&HC
}\]
The horizontal functor is a stable $G$-equivalence by \ref{wedgesintoproducts}, and it induces a $G$-equivalence
\[\widetilde{\THR}(HC\vee HM)\stackrel{\simeq}{\longrightarrow} \widetilde{\THR}(C\ltimes M)\]
The same construction on the real spectra gives the levelwise equivalence
\[\widetilde{\uuline{\THR}}(HC\vee HM)\stackrel{\simeq}{\longrightarrow} \widetilde{\uuline{\THR}}(C\ltimes M)\]
appearing as the top right vertical map in diagram $(1)$.

We next define $\uuline{\THR}^{(a)}(\mathcal{C};\mathcal{M})$.
We remind from §\ref{thhsec} that $V(\mathcal{C}\vee \mathcal{M},\underline{i}):=V(\mathcal{C}\vee \mathcal{M};\mathcal{C}\vee \mathcal{M},\underline{i})$ is a wedge sum indexed over $k$-tuples of objects of $\mathcal{C}\vee \mathcal{M}$. Hence the $(c_0,\dots,c_k)$-wedge component of $V(\mathcal{C}\vee \mathcal{M},\underline{i})$ is a wedge sum of spaces of the form
\[\mathcal{F}^0(c_0,c_k)_{i_0}\wedge \mathcal{F}^1(c_1,c_0)_{i_1}\wedge\dots\wedge \mathcal{F}^k(c_{k},c_{k-1})_{i_k}\]
where $\mathcal{F}^l$ is either $\mathcal{C}$ or $\mathcal{M}$.
For every integer $a\geq 0$ let $V^{(a)}(\mathcal{C};\mathcal{M},\underline{i})$ be the subspace of $V(\mathcal{C}\vee \mathcal{M},\underline{i})$ of wedge summands with exactly $a$ smash factors $\mathcal{M}$. For example
\[\begin{array}{llll}V^{(1)}(\mathcal{C};\!\mathcal{M},\!\underline{i})=\!\!\!\!\bigvee\limits_{c_0,\dots,c_k}&\!\!\!\!\!\!\!\!(\mathcal{M}(c_0,c_k)_{i_0}\wedge\mathcal{C}(c_1,c_0)_{i_1}\wedge\dots\wedge\mathcal{C}(c_k,c_{k-1})_{i_k})\\
&\!\!\!\!\!\!\!\!\!\!\!\vee(\mathcal{C}(c_0,c_k)_{i_0}\wedge\mathcal{M}(c_1,c_0)_{i_1}\!\wedge\!\mathcal{C}(c_2,c_1)_{i_2}\!\wedge\!\dots\!\wedge\!\mathcal{C}(c_k,c_{k-1})_{i_k})\\
&\ \ \ \ \ \ \ \ \ \ \ \ \ \ \ \ \ \ \ \ \ \ \ \ \ \ \ \ \  \vdots\\
&\!\!\!\!\!\!\!\!\!\!\!\vee(\mathcal{C}(c_0,c_k)_{i_0}\wedge\dots\wedge\mathcal{C}(c_{k-1},c_{k-2})_{i_{k-1}}\wedge\mathcal{M}(c_k,c_{k-1})_{i_k})
\end{array}\]
Define the functor $\mathcal{G}^{(a)}_k(\mathcal{C};\mathcal{M})\colon I[k]\longrightarrow \Top_\ast$ by
\[\underline{i}\longmapsto\mathcal{G}^{(a)}_k(\mathcal{C};\mathcal{M})(\underline{i})=\map_\ast(S^{\underline{i}},V^{(a)}(\mathcal{C};\mathcal{M},\underline{i}))\]
and set
\[\THR^{(a)}_k(\mathcal{C};\mathcal{M})=\hocolim_{I[k+1]}\mathcal{G}^{(a)}_k(\mathcal{C};\mathcal{M})\]
The $G$-action, the cyclic action and the simplicial structure on $\THR_{\sbt}(\mathcal{C}\vee\mathcal{M})$ all restrict to define a dihedral space structure on the simplicial space $\THR^{(a)}_{\sbt}(\mathcal{C};\mathcal{M})$. The $G$-space $\THR^{(a)}(\mathcal{C};\mathcal{M})$ is its geometric realization. 
The same construction for the spaces $V^{(a)}(\mathcal{C};\mathcal{M},\underline{i})\wedge S^{2m,m}$ leads to a real spectrum $\uuline{\THR}^{(a)}(\mathcal{C};\mathcal{M})$.
The inclusion of $V^{(a)}(\mathcal{C};\mathcal{M},\underline{i})$ in $V(\mathcal{C}\vee\mathcal{M},\underline{i})$ induces a map of real spectra
\[\uuline{\THR}^{(a)}(\mathcal{C};\mathcal{M})\longrightarrow \uuline{\THR}(\mathcal{C}\vee\mathcal{M})\]
for every $a\geq 0$. The map $\pr^{(1)}$ in diagram $(1)$ is the projection onto the $a=1$ wedge component.

\begin{prop}\label{htpyfibwedgesthr}
If $\mathcal{C}$ and $\mathcal{M}$ are $G$-connected, the canonical map
\[\bigvee_{a=0}^\infty\uuline{\THR}^{(a)}(\mathcal{C};\mathcal{M})\stackrel{\simeq_s}{\longrightarrow} \uuline{\THR}(\mathcal{C}\vee\mathcal{M})\]
is a stable $G$-equivalence of real spectra, and induces a stable $G$-equivalence
\[\bigvee_{a=1}^\infty\uuline{\THR}^{(a)}(\mathcal{C};\mathcal{M})\stackrel{\simeq_s}{\longrightarrow}\widetilde{\uuline{\THR}}(\mathcal{C}\vee\mathcal{M})\]
\end{prop}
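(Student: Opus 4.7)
The plan is to deduce the result from an exact wedge decomposition of $V(\mathcal{C}\vee\mathcal{M},\underline{i})$ together with the connectivity estimate of corollary \ref{wedgesandmaps}. First I would observe that smash distributes over wedge, so for every $\underline{i}\in I[k]$ there is a canonical equivariant homeomorphism
\[
V(\mathcal{C}\vee\mathcal{M},\underline{i}) \;\cong\; \bigvee_{a=0}^{k+1} V^{(a)}(\mathcal{C};\mathcal{M},\underline{i})
\]
grouping the smash summands by the number $a$ of $\mathcal{M}$-factors in each. This is compatible with all the face, degeneracy, cyclic and dihedral structures, and with smashing on $S^{2m,m}$, so it assembles into the map of the statement. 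Note also that $V^{(0)}(\mathcal{C};\mathcal{M},\underline{i})$ is literally $V(\mathcal{C};\mathcal{C},\underline{i})$, so $\uuline{\THR}^{(0)}(\mathcal{C};\mathcal{M})=\uuline{\THR}(\mathcal{C})$, and the $a=0$ summand realises the canonical section of the projection $\uuline{\THR}(\mathcal{C}\vee\mathcal{M})\to\uuline{\THR}(\mathcal{C})$ induced by the inclusion $\mathcal{C}\hookrightarrow\mathcal{C}\vee\mathcal{M}$.

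Next I would reduce the whole comparison at each fixed $\underline{i}$ to the canonical map
\[
\bigvee_{a}\map_*(S^{\underline{i}},V^{(a)}(\mathcal{C};\mathcal{M},\underline{i})\wedge S^{2m,m}) \longrightarrow \map_*\!\Bigl(S^{\underline{i}},\bigvee_{a}V^{(a)}(\mathcal{C};\mathcal{M},\underline{i})\wedge S^{2m,m}\Bigr)
\]
from a wedge of mapping spaces into the mapping space of the wedge, to which corollary \ref{wedgesandmaps} applies directly. The connectivity calculation of proposition \ref{fixedptdescr} adapts verbatim to each sub-wedge $V^{(a)}$, since each summand is a smash product of the same type of factors; so under the $G$-connectivity hypothesis each $V^{(a)}(\mathcal{C};\mathcal{M},\underline{i})\wedge S^{2m,m}$ is $(|\underline{i}|+2m-1)$-connected non-equivariantly and, when $\underline{i}\in I[2k+1]^G$, has fixed points $(\lceil i_0/2\rceil+\lceil(|\underline{i}|-i_0)/2\rceil+m-1)$-connected. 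Feeding these into \ref{wedgesandmaps} the dependence on $|\underline{i}|$ cancels and one finds connectivity $(4m-1,\,2m-1)$, uniform in $k$ and $\underline{i}$.

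Since homotopy colimits of $c$-connected equivariant maps of diagrams $I[k]\to\Top_*$ are again $c$-connected on both levels (as used in the proof of \ref{eqapproxlemma}), the induced simplicial $G$-map $\bigvee_{a}\THR^{(a)}_k(\mathcal{C};\mathcal{M},S^{2m,m})\to\THR_k(\mathcal{C}\vee\mathcal{M},S^{2m,m})$ is at least $(4m-1,2m-1)$-connected for every $k$. The bound persists on the realisation by the standard estimate for uniformly connected simplicial maps, giving a $(4m-1,2m-1)$-connected map on the $m$-th level of the real spectra. Since this connectivity grows linearly in $m$, applying $\Omega^{2n,n}$ to the $n$-th level map yields (by the equivariant Whitehead corollary \ref{eqmappingspace}) a $(2n-1,n-1)$-connected map, whose connectivity tends to infinity; hence the induced map on $\Omega^{2\infty,\infty}$ is a weak $G$-equivalence, which is by definition a stable $G$-equivalence of real spectra, proving the first statement.

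For the second statement I would use that the projection $\mathcal{C}\vee\mathcal{M}\to\mathcal{C}$ admits the canonical section $\mathcal{C}\hookrightarrow\mathcal{C}\vee\mathcal{M}$, stably splitting $\uuline{\THR}(\mathcal{C}\vee\mathcal{M})$ as $\uuline{\THR}(\mathcal{C})\vee\widetilde{\uuline{\THR}}(\mathcal{C}\vee\mathcal{M})$. Under the decomposition of the first paragraph the $a=0$ summand maps identically to $\uuline{\THR}^{(0)}=\uuline{\THR}(\mathcal{C})$ while the $a\geq 1$ summands project trivially, so the first statement restricts to the claimed stable equivalence on the complementary summands. The hard part of the argument is really the equivariant connectivity bookkeeping in the second paragraph, in particular convincing oneself that the proof of \ref{fixedptdescr} carries over to each $V^{(a)}$ and that the $|\underline{i}|$-cancellation in \ref{wedgesandmaps} holds uniformly in $a$; the infinite wedge over $a$ is not a problem since only the $a\leq k+1$ summands contribute in each simplicial degree $k$, and the key input that makes the whole argument go through is the stabilisation in $m$ turning uniform connectivity into a stable equivalence.
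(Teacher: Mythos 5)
Your proposal is correct and takes essentially the same approach as the paper: decompose $V(\mathcal{C}\vee\mathcal{M},\underline{i})$ as the wedge of the $V^{(a)}(\mathcal{C};\mathcal{M},\underline{i})$, apply corollary \ref{wedgesandmaps} to compare the wedge of mapping spaces with the mapping space of the wedge and observe the $|\underline{i}|$-dependence cancels to give connectivity $(4m-1,2m-1)$, then stabilize by $\Omega^{2m,m}$. The only cosmetic difference is that the paper uses the $G$-approximation lemma \ref{eqapproxlemma} to reduce to a fixed large $\underline{i}$ where you instead invoke preservation of connectivity under homotopy colimits; these are interchangeable here, and your explicit handling of the second statement via the stable splitting off of the $a=0$ summand is a fine supplement to the paper, which leaves that step implicit.
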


\begin{proof}
Since wedges commute with realizations, it is enough to prove that
\[\bigvee_{a=0}^\infty\uuline{\THR}^{(a)}_{2k+1}(\mathcal{C};\mathcal{M})\longrightarrow \uuline{\THR}_{2k+1}(\mathcal{C}\vee\mathcal{M})\]
is a stable $G$-equivalence for all $k$. For every $\underline{i}\in I[2k+1]$ and $m\geq 0$ there is a commutative diagram
\[\xymatrix{\bigvee_{a=0}^\infty\THR^{(a)}_{2k+1}(\mathcal{C};\mathcal{M},S^{2m,m})\ar[dd]&\bigvee_{a=0}^\infty\Omega^{\underline{i}}(V^{(a)}(\mathcal{C};\mathcal{M},\underline{i})\wedge S^{2m,m})\ar[l]\ar[d]\\
& \Omega^{\underline{i}}\bigvee_{a=0}^\infty (V^{(a)}(\mathcal{C};\mathcal{M},\underline{i})\wedge S^{2m,m})\ar[d]^{\cong}\\
\THR_{2k+1}(\mathcal{C}\vee\mathcal{M},S^{2m,m})&\Omega^{\underline{i}}(V(\mathcal{C}\vee\mathcal{M},\underline{i})\wedge S^{2m,m})\ar[l]
}\]
By the $G$-approximation lemma, one can choose $\underline{i}$ big enough so that the horizontal maps are as connected as we like. The top right vertical map is $(4m-1,2m-1)$-connected by \ref{wedgesandmaps}.
On realizations, the map
\[\bigvee_{a=0}^\infty\THR^{(a)}(\mathcal{C};\mathcal{M},S^{2m,m})\longrightarrow \THR(\mathcal{C}\vee\mathcal{M},S^{2m,m})\]
is also $(4m+1,2m-1)$-connected, so that
\[\Omega^{2m,m}\bigvee_{a=0}^\infty\THR^{(a)}(\mathcal{C};\mathcal{M},S^{2m,m})\longrightarrow \Omega^{2m,m}\THR(\mathcal{C}\vee\mathcal{M},S^{2m,m})\]
is $(2m+1,m-1)$-connected. This tends to infinity with $m$.
\end{proof}

We now come to the stable equivalence
\[S^{1,1}_+\wedge\uuline{\THR}(\mathcal{C};\mathcal{M})\longrightarrow\uuline{\THR}^{(1)}(\mathcal{C};\mathcal{M})\]
For $\underline{i}\in I[k]$ consider the homeomorphism
\[\phi\colon (S^{1,1}_k)_+\wedge V(\mathcal{C};\mathcal{M},\underline{i})\stackrel{\cong}{\longrightarrow}V^{(1)}(\mathcal{C};\mathcal{M},\underline{i})\]
that sends $(\sigma, m\wedge f_1\wedge\dots\wedge f_k)$ to the unique cyclic permutation
\[f_{k+1-\sigma}\wedge\dots\wedge f_{k}\wedge m\wedge f_1\wedge\dots\wedge f_{k-\sigma} \]
that has $m$ in position $\sigma$, upon using the bijection $S^{1,1}_k\cong\{0,\dots,k\}$ that counts the number of $1$-values of $\sigma\colon[k]\longrightarrow[1]$ (remember that the $\sigma\equiv 0$ and $\sigma\equiv 1$ are identified).
It induces a diagram of simplicial maps
\[\xymatrix{(S^{1,1}_k)_+\!\wedge\!\THR_k(\mathcal{C};\!\mathcal{M},\!S^{2m,m})\ar[dd]\ar[r]^-{\cong}& \hocolim\limits_{I[k]}(S^{1,1}_k)_+\!\wedge\!\Omega^{\underline{i}}(V(\mathcal{C};\!\mathcal{M},\!\underline{i})\!\wedge\! S^{2m,m})\ar[dd]\\
\ar@{}[r]|-{(2)}&
\\
\THR^{(1)}_k(\mathcal{C};\!\mathcal{M},\!S^{2m,m})&\hocolim\limits_{I[k]}\Omega^{\underline{i}}((S^{1,1}_k)_+\!\wedge\! V(\mathcal{C};\!\mathcal{M},\!\underline{i})\!\wedge\!S^{2m,m})\ar[l]^-{\cong}_-{\phi}
}\]
\begin{prop}\label{s11smashthr}
If $\mathcal{C}$ and $\mathcal{M}$ are $G$-connected,
the realization of the above map
\[S^{1,1}_+\wedge\uuline{\THR}(\mathcal{C};\mathcal{M})\stackrel{\simeq_s}{\longrightarrow}\uuline{\THR}^{(1)}(\mathcal{C};\mathcal{M})\]
is a stable $G$-equivalence of real spectra.
\end{prop}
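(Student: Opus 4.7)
The proof will proceed along the same lines as Proposition \ref{htpyfibwedgesthr}: show that, at each simplicial level $k$, the given assembly map is a stable $G$-equivalence of real spectra, and then realize. By the commutative diagram $(2)$ preceding the statement, the bottom arrow is a homeomorphism (induced by $\phi$) and the top arrow is the canonical isomorphism expressing that smashing against a finite based $G$-set commutes with the homotopy colimit over $I[k]$. Hence, level-by-level in $k$, the map in question is induced by the natural assembly map
\[
a_{\underline{i},m}\colon (S^{1,1}_k)_+\wedge\Omega^{\underline{i}}\big(V(\mathcal{C};\mathcal{M},\underline{i})\wedge S^{2m,m}\big)\longrightarrow \Omega^{\underline{i}}\big((S^{1,1}_k)_+\wedge V(\mathcal{C};\mathcal{M},\underline{i})\wedge S^{2m,m}\big),
\]
and I will estimate its connectivity uniformly, first for fixed $k$ and growing $m$, and then deduce the statement for the realization.

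\textbf{Step 1 (connectivity of $V\wedge S^{2m,m}$).} Since $\mathcal{C}$ and $\mathcal{M}$ are $G$-connected, Proposition \ref{fixedptdescr} together with standard suspension facts gives, for $\underline{i}\in I[2k+1]^G$, that $V(\mathcal{C};\mathcal{M},\underline{i})\wedge S^{2m,m}$ is non-equivariantly $(|\underline{i}|+2m-1)$-connected and its fixed points are $\big(\lceil i_0/2\rceil+\lceil(|\underline{i}|-i_0)/2\rceil+m-1\big)$-connected.

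\textbf{Step 2 (connectivity of the assembly).} The finite based $G$-set $(S^{1,1}_k)_+$ is $0$-dimensional (both non-equivariantly and on fixed points), and smashing with it is a finite wedge indexed by $S^{1,1}_k$ with involution inherited from the real structure. Thus $a_{\underline{i},m}$ is precisely the map treated in Corollary \ref{wedgesandmaps} with $K=(S^{1,1}_k)_+$. Combining that corollary with the estimates of Step~1, the map $a_{\underline{i},m}$ is non-equivariantly at least $(2(|\underline{i}|+2m-1)+1)$-connected and, on fixed points, at least $\min\{|\underline{i}|+2m-1,\,2(\lceil i_0/2\rceil+\lceil(|\underline{i}|-i_0)/2\rceil+m-1)+1\}$-connected.

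\textbf{Step 3 (assembling levels of $\uuline{\THR}$).} Fixing $k$ and taking the homotopy colimit over $I[2k+1]$, the $G$-approximation lemma \ref{eqapproxlemma} (applied exactly as in the proof of Proposition \ref{easydelooping} both to $\THR_k$ and to $\THR_k^{(1)}$) yields a commutative square in which the canonical maps into the homotopy colimits can be made arbitrarily highly connected by choosing $\underline{i}$ large. Combined with Step 2, this shows that the level-$m$ map
\[
(S^{1,1}_k)_+\wedge \THR_k(\mathcal{C};\mathcal{M},S^{2m,m})\longrightarrow \THR_k^{(1)}(\mathcal{C};\mathcal{M},S^{2m,m})
\]
has non-equivariant connectivity $\geq 4m-O_k(1)$ and fixed-point connectivity $\geq 2m-O_k(1)$. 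Applying $\Omega^{2m,m}$ costs $2m$ non-equivariantly and $m$ on fixed points (by Corollary \ref{eqmappingspace}), leaving connectivities $\geq 2m-O_k(1)$ and $\geq m-O_k(1)$ respectively; both tend to $\infty$ with $m$, so $\Omega^{2\infty,\infty}$ of the map is a weak $G$-equivalence at each $k$, i.e.\ the map of simplicial real spectra is a levelwise stable $G$-equivalence.

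\textbf{Step 4 and main obstacle.} It remains to pass to the realization over $k$. Since the source and target are realizations of simplicial real spectra, a levelwise stable $G$-equivalence induces a stable $G$-equivalence of realizations (standard for simplicial spectra; on fixed points one uses the subdivision of \ref{subdivision} to reduce to simplicial spaces). The main obstacle is bookkeeping the equivariant connectivity in Step 2 precisely enough: both the fixed points of $S^{2m,m}$ (of dimension $m$) and the relative behavior of $\lceil i_0/2\rceil$ vs.\ $i_0$ in the fixed-point formula of \ref{fixedptdescr} must cooperate. The net gain $m-O(1)$ on fixed points (rather than $2m$) is the tightest point, but it is positive, which is all that is needed to conclude that the map is a stable $G$-equivalence after letting $m\to\infty$.
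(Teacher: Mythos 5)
The core strategy — estimate the connectivity of the assembly map in diagram $(2)$ using Corollary~\ref{wedgesandmaps} and conclude that applying $\Omega^{2m,m}$ and letting $m\to\infty$ gives a stable $G$-equivalence — is the same as in the paper's proof. However, there are two genuine issues with the way you carry it out.

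First, in Step~2 you apply Corollary~\ref{wedgesandmaps} with $K=(S^{1,1}_k)_+$. This misidentifies the roles: the mapping-in space $K$ in~\ref{wedgesandmaps} should be $S^{\underline{i}}$, and $(S^{1,1}_k)_+$ is the indexing $G$-set $\Lambda$. Concretely, $a_{\underline{i},m}$ is
\[
\bigvee_{\sigma\in S^{1,1}_k}\map_\ast\!\bigl(S^{\underline{i}},\,V\wedge S^{2m,m}\bigr)\longrightarrow\map_\ast\!\bigl(S^{\underline{i}},\,\bigvee_{\sigma\in S^{1,1}_k}V\wedge S^{2m,m}\bigr),
\]
so the correct non-equivariant connectivity is $2\bigl(\conn(V\wedge S^{2m,m})-\dim S^{\underline{i}}\bigr)+1$, not $2\conn(V\wedge S^{2m,m})+1$. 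Your stated formula $2(|\underline{i}|+2m-1)+1$ omits the $-\dim S^{\underline{i}}=-|\underline{i}|$ term; the true answer is the $\underline{i}$-independent quantity $\approx 4m$, and on fixed points $\approx 2m$ (after similarly subtracting $\dim(S^{\underline{i}})^G$). The conclusion you announce in Step~3 is the correct one, but it does not follow from the formula as you wrote it in Step~2 — indeed if Step~2 were taken at face value one would wrongly conclude the map is a levelwise $G$-equivalence (since the connectivity would tend to infinity over $I[2k+1]$), contradicting the fact that the proposition only asserts a \emph{stable} equivalence.

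Second, once the roles are corrected, the detour through the $G$-approximation lemma in Step~3 and the level-by-level-in-$k$ argument in Step~4 are unnecessary and introduce a gap. Because the correct connectivity estimate is uniform in both $\underline{i}$ and $k$, the right-hand vertical map in diagram $(2)$ is already $(4m\pm O(1),\,2m\pm O(1))$-connected at every level $k$; since homotopy colimits and realizations of simplicial spaces preserve connectivity, the realization over $k$ inherits the same bound, and applying $\Omega^{2m,m}$ and letting $m\to\infty$ finishes. Your Step~4 instead appeals to a blanket principle that ``a levelwise stable $G$-equivalence of simplicial real spectra realizes to a stable $G$-equivalence,'' without verifying the uniformity in $k$ that makes this true here. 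Writing the bound as $4m-O_k(1)$ precisely obscures that point: if the $O_k(1)$ terms actually grew with $k$, the realization argument would not go through. The fix is to note explicitly that the connectivity estimate from~\ref{wedgesandmaps} does not depend on $k$ (or on $\underline{i}$), which is what the paper's one-step proof does directly.
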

\begin{proof}
The right-hand vertical map of diagram $(2)$ above is $(4m+1,2m-1)$-connected by \ref{wedgesandmaps}. Therefore its realization is also $(4m+1,2m-1)$-connected, so that 
\[\Omega^{2m,m}(S^{1,1}_+\wedge\THR(\mathcal{C};\mathcal{M},S^{2m,m}))\longrightarrow\Omega^{2m,m}\THR^{(1)}(\mathcal{C};\mathcal{M},S^{2m,m})\]
is $(2m+1,m-1)$-connected, which tends to infinity with $m$.
\end{proof}
There is one last map to discuss in diagram $(1)$, namely
\[S^{1,1}\wedge \uuline{\THR}(C;M)\longrightarrow\uuline{\THR}(C;M(S^{1,1}_{\sbt}))\]
It is the realization of the simplicial map defined in degree $k$ by
\[\xymatrix{S^{1,1}_k\!\wedge\!\THR_k(C;\!M,\!S^{2m,m})\ar[dd]\ar[r]^-{\cong} &\hocolim\limits_{I[k]}S^{1,1}_k\!\wedge\!\Omega^{\underline{i}}(V(HC;\!HM,\!\underline{i})\!\wedge\!S^{2m,m})\ar[dd]^{\simeq_s}\\
\ar@{}[r]|-{(3)}&\\
\THR_k(C;\!M(S^{1,1}_k),\!S^{2m,m})&\hocolim\limits_{I[k]}\Omega^{\underline{i}}(S^{1,1}_k\!\wedge\!V(HC;\!HM,\!\underline{i})\!\wedge\!S^{2m,m})\ar[l]\\
}\]
The right vertical map is a stable equivalence by the proof of \ref{s11smashthr}.
The bottom horizontal map is induced by the canonical maps
\[M(c_0,c_k)(S^{i_0})\wedge S^{1,1}_k\longrightarrow M(c_0,c_k)(S^{i_0}\wedge S^{1,1}_k)\]

\begin{prop}\label{lastmapdiag} 
The map 
\[S^{1,1}\wedge \uuline{\THR}(C;M)\stackrel{\simeq_s}{\longrightarrow}\uuline{\THR}(C;M(S^{1,1}_{\sbt}))\]
is a stable $G$-equivalence of real spectra.
\end{prop}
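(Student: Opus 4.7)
The plan is to use diagram $(3)$ levelwise in $k$ and apply two-out-of-three: the top horizontal map is a homeomorphism, and the right vertical map is a stable $G$-equivalence by the argument in the proof of Proposition \ref{s11smashthr}, so it suffices to show that the bottom horizontal map
\[\hocolim_{I[k]}\Omega^{\underline{i}}(S^{1,1}_k\wedge V(HC;HM,\underline{i})\wedge S^{2m,m})\longrightarrow\THR_k(C;M(S^{1,1}_k),S^{2m,m})\]
realizes in $k$ to a stable $G$-equivalence. This bottom map is the canonical map into the homotopy colimit induced by the natural transformation
\[\phi_{\underline{i}}\colon S^{1,1}_k\wedge V(HC;HM,\underline{i})\longrightarrow V(HC;HM(S^{1,1}_k),\underline{i})\]
whose wedge summands apply the canonical comparison $S^{1,1}_k\wedge M(c_0,c_k)(S^{i_0})\to M(c_0,c_k)(S^{1,1}_k\wedge S^{i_0})$ in the leading smash factor and are the identity elsewhere.

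The key input would be Proposition \ref{connectvitysmashwithx} from the appendix: for any real set $X$ and any abelian group $A$ with additive involution, the map $A(S^{i_0})\wedge|X|\to A(S^{i_0}\wedge X)$ is $(2i_0+\conn|X|)$-connected non-equivariantly and $(i_0+\min\{\conn|X|,\conn|X|^G\})$-connected on fixed points. Smashing with the remaining $G$-connected factors of $V(HC;HM,\underline{i})$ (cf.\ Propositions \ref{linear0connected} and \ref{fixedptdescr}) and with $S^{2m,m}$, then applying $\Omega^{\underline{i}}\Omega^{2m,m}$ and invoking the $G$-approximation lemma \ref{eqapproxlemma} to compare each side with a single term of the homotopy colimit, one can make the connectivity of the level-$k$ map as large as desired by choosing $\underline{i}$ with $i_0$ sufficiently large, independently of $k$.

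Realizing over $k$ and using that a map of simplicial $G$-spaces which is uniformly $n$-connected in each simplicial degree realizes to an $n$-connected map (by the May bound \cite[11.12]{mayiterated}, applied both directly and after $sd_e$-subdivision for the fixed points, as in the proof of Proposition \ref{connfirsttracemap}), we conclude that
\[\Omega^{2m,m}(S^{1,1}\wedge\THR(C;M,S^{2m,m}))\longrightarrow\Omega^{2m,m}\THR(C;M(S^{1,1}_{\sbt}),S^{2m,m})\]
has connectivity tending to infinity with $m$, which is precisely what it means for the spectrum map $S^{1,1}\wedge\uuline{\THR}(C;M)\to\uuline{\THR}(C;M(S^{1,1}_{\sbt}))$ to be a stable $G$-equivalence.

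The main obstacle will be the equivariant connectivity bookkeeping on the fixed points, combining (i) the fixed-point estimate of Proposition \ref{connectvitysmashwithx}, (ii) the $G$-suspension-theorem connectivity losses under $\Omega^{\underline{i}}$ and $\Omega^{2m,m}$ (via Corollary \ref{corgsusp}), and (iii) the fixed-point description of $V^G$ from Proposition \ref{fixedptdescr}. These estimates are entirely analogous in spirit to those in the proofs of Propositions \ref{s11smashthr} and \ref{connfirsttracemap}, but one must keep track of the low connectivity of $S^{1,1}_k$ as a discrete pointed $G$-set: the net connectivity gain cannot come from $S^{1,1}_k$ itself and must instead be extracted from the $S^{2m,m}$-factor together with the large-$i_0$ regime afforded by the $G$-approximation lemma.
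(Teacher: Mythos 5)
Your structural plan matches the paper's: work with diagram $(3)$, reduce to the bottom and right maps, and cite \ref{s11smashthr} for the right vertical. The key input is also correctly identified as (the easy case of) Proposition \ref{connectvitysmashwithx}. But the paper's argument for the bottom map is cleaner in a way you miss, and one claim in your final paragraph is not right.

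The paper does not invoke the $G$-approximation lemma \ref{eqapproxlemma} at all, nor does it aim only for a stable equivalence for the bottom map. Instead it observes that after smashing the $(2i_0-1,i_0-1)$-connected map $M(c_0,c_k)(S^{i_0})\wedge S^{1,1}_{2k+1}\to M(c_0,c_k)(S^{i_0}\wedge S^{1,1}_{2k+1})$ with the remaining factors of $V$ and with $S^{2m,m}$, and looping by $\Omega^{\underline{i}}$, the resulting natural transformation over $I[2k+1]$ has connectivity $(i_0+2m-1,\lfloor\tfrac{i_0}{2}\rfloor+m-3)$. This tends to infinity with $\underline{i}$ for \emph{every fixed} $m$ (including $m=0$), so Proposition \ref{connectgoestoinfty} immediately gives that the bottom map of $(3)$ is a genuine levelwise $G$-equivalence for each $m$ and $k$. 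The stabilization in $m$ is therefore needed \emph{only} for the right vertical map of $(3)$ (exactly as in the proof of \ref{s11smashthr}); the composite is then a stable $G$-equivalence because it is (levelwise equivalence) composed with (stable equivalence).

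Consequently, your closing remark that "the net connectivity gain cannot come from $S^{1,1}_k$ itself and must instead be extracted from the $S^{2m,m}$-factor together with the large-$i_0$ regime" is half wrong: for the bottom map the connectivity does come entirely from $i_0\to\infty$ inside the homotopy colimit, not from $S^{2m,m}$. Framing the bottom map as a stable equivalence and routing through the approximation lemma makes you grapple with a uniformity-in-$k$ issue (the $\underline{i}$ produced by \ref{eqapproxlemma} could a priori depend on $k$) that the paper avoids entirely; using \ref{connectgoestoinfty} directly gives an honest $G$-equivalence at each simplicial level, so the realization over $k$ is trivially an equivalence and the May bound is not needed here. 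Your approach can be patched — the constant produced in \ref{annoyingone} is in fact uniform in $k$ — but it is a detour, and the misattribution of the connectivity source to $S^{2m,m}$ would mislead you when carrying out the estimates.
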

\begin{proof}
We show that the subdivision of the bottom map in diagram $(3)$ is a levelwise $G$-equivalence.
The canonical map
\[M(c_0,c_k)(S^{i_0})\wedge S^{1,1}_{2k+1}\longrightarrow M(c_0,c_k)(S^{i_0}\wedge S^{1,1}_{2k+1})\]
is $(2i_0-1,i_0-1)$-connected
by an easy case of \ref{connectvitysmashwithx}. 
Thus the map 
\[\Omega^{\underline{i}}(S^{1,1}_k\wedge V(HC;HM,\underline{i})\wedge S^{2m,m})\longrightarrow \Omega^{\underline{i}}(V(HC;HM(S^{1,1}_k),\underline{i})\wedge S^{2m,m})\]
is
\[(i_0+2m-1,\lfloor\frac{i_{0}}{2}\rfloor-2+m-1)\]
connected. This tends to infinity on $I[2k+1]$, and therefore it is an equivalence on the homotopy colimit for all $m$ and $k$ by \ref{connectgoestoinfty}.
\end{proof}

\begin{proof}[Proof of \ref{comparisontrbeta}] To show that $\alpha\circ \widetilde{\tr}=\beta$ in the homotopy category,
we build a diagram of $G$-maps below, that commutes up to $G$-homotopy. The dashed maps are yet to be defined. Recall that
\[\widetilde{S}^{2,1}_{\sbt}(A;M)=|\bigvee\limits_{\varphi\in S^{2,1}_{\sbt}\mathcal{D}\mathcal{P}_A}\hom_A(\varphi,\varphi\otimes_A M)|\]
so that by definition $\widetilde{\KR}(A;M)=\Omega^{2,1}\widetilde{S}^{2,1}_{\sbt}(A;M)$.
\newpage
\phantom{a}
\vspace{-3.4cm}
\[\rotatebox{270}{\scalebox{.8}[.82]{\xymatrix{
&&&&&&
\\
\widetilde{\KR}(A\!\ltimes\!\!M)&
\Omega^{2,1}\widetilde{|ObS^{2,1}_{\sbt}\mathcal{D}\mathcal{P}_{A\ltimes\! M}|}\ar@{-->}[dddl]^{\widetilde{\gamma}}\ar[l]^-{\simeq}_-{\ref{inclobinisos21}}\ar[r]^-{\widetilde{\tr}}\ar@{}@<-6ex>[dddl]|{(I)}
&
\Omega^{2,1}\widetilde{\THR}(S^{2,1}_{\sbt}\mathcal{D}\mathcal{P}_{A\ltimes\! M})\ar@{}@<-20ex>[ddd]|{(II)}\ar `u[u] `[urrrr]^-{\alpha} `[dddddddddrrrr] [dddddddddrrr]&
\ar@{}[ddd]|{(III)}&
\Omega^{2,1}\widetilde{\THR}(HS^{2,1}_{\sbt}\mathcal{D}\mathcal{P}_{A}\!\vee\! HM)\ar[ll]^-{\simeq}_-{\ref{generalfish}}&
\widetilde{\THR}(H\mathcal{D}\mathcal{P}_{A}\!\vee\! HM)\ar[l]^-{\simeq}_-{\ref{THRdeloopings}}\ar@/^2pc/[ddd]^{pr}\\
\\
\\
\widetilde{\KR}(A;\!M(S^{1,1}_{\sbt}))\ar@{-->}[rr]^-{s}\ar[dddrr]_-{=}\ar[uuu]_-{\simeq}^-{\ref{relKR}}&&
\widetilde{\KR}(A;\!M(S^{1,1}_{\sbt\,+}))\ar[ddd]^{\pr}\ar@{-->}[uuu]_{\widetilde{\Phi}}&
\Omega^{2,1}(S^{1,1}_+\!\wedge\widetilde{S}^{2,1}_{\sbt}(A;\!M))\ar[l]_-{\ref{connectvitysmashwithx}}\ar[r]^-{\id\wedge\beta}\ar[ddd]^{\pr}&
\Omega^{2,1}(S^{1,1}_+\!\wedge\THR(S^{2,1}_{\sbt}\mathcal{D}\mathcal{P}_{A};\!M))\ar@{-->}[uuu]_{\tau}\ar[ddd]^{\pr}&
S^{1,1}_+\!\wedge\THR(\mathcal{D}\mathcal{P}_{A};\!M)\ar[l]\ar@{-->}[uuu]_{\tau}\ar[ddd]^{\pr}\\
\\
\\
&&\widetilde{\KR}(A;\!M(S^{1,1}_{\sbt}))\ar[dddrr]_-{\beta} &
\Omega^{2,1}(S^{1,1}\!\wedge\widetilde{S}^{2,1}_{\sbt}(A;\!M))\ar[l]_-{\ref{connectvitysmashwithx}}\ar[r]^-{\id\wedge\beta}&
\Omega^{2,1}(S^{1,1}\!\wedge\THR(S^{2,1}_{\sbt}\mathcal{D}\mathcal{P}_{A};\!M))\ar[ddd]_{\simeq_s}^{\ref{lastmapdiag}}
& S^{1,1}\!\wedge\THR(\mathcal{D}\mathcal{P}_{A};\!M)\ar[l]\ar[ddd]_{\simeq_s}^{\ref{lastmapdiag}}\\
\\
\\
&&&& \Omega^{2,1}\THR(S^{2,1}_{\sbt}\mathcal{D}\mathcal{P}_{A};\!M(S^{1,1}_{\sbt})) &
\THR(\mathcal{D}\mathcal{P}_{A};\!M(S^{1,1}_{\sbt}))\ar[l]_-{\simeq}^-{\ref{THRdeloopings}}&
}}}\]
\newpage

The weak map $\alpha$ is the composition of the outer arrows of the diagram from $\Omega^{2,1}\widetilde{\THR}(S^{2,1}_{\sbt}\mathcal{D}\mathcal{P}_{A\ltimes M})\simeq \widetilde{\THR}(\mathcal{D}\mathcal{P}_{A\ltimes M})$ to $\THR(\mathcal{D}\mathcal{P}_{A};M(S^{1,1}_{\sbt}))$.
The unlabeled squares commute by naturality of our constructions. We define the dashed maps and show that the squares $(I)$, $(II)$ and $(III)$ commute up to $G$-homotopy.

The map $s\colon \widetilde{\KR}(A; M(S^{1,1}_{\sbt}))\longrightarrow
\widetilde{\KR}(A; M(S^{1,1}_{\sbt\,+}))$ appearing at the top left of the diagram is induced by the canonical section \[  M(S^{1,1}_{\sbt})\cong \mathcal{N}_{\sbt} M\longrightarrow \mathcal{N}^{cy}_{\sbt} M\cong M(S^{1,1}_{\sbt\,+})\] of the projection $M(S^{1,1}_{\sbt\,+})\longrightarrow M(S^{1,1}_{\sbt})$. It sends an element $(m_1,\dots,m_k)$ to $(-\sum_im_i,m_1,\dots,m_k)$.

The map $\widetilde{\gamma}$ in the top triangle is the restriction (to the homotopy fiber) of the realization of a simplicial functor
\[\gamma_n\colon ObS^{2,1}_n\mathcal{D}\mathcal{P}_{A\ltimes M}\longrightarrow \coprod\limits_{\varphi\in ObS^{2,1}_n\mathcal{D}\mathcal{P}_{A}}\hom_A(\varphi,\varphi\otimes_A M)=S^{2,1}_n(A;M)\]
where $ObS^{2,1}_n\mathcal{D}\mathcal{P}_{A\ltimes M}$ is considered as a discrete category (i.e. only identity morphisms).
It sends an object $(H,K,\nu)$ to the object $(H\!\otimes_{A\!\ltimes\!M}\!A,K\!\otimes_{A\!\ltimes\!M}\!A,\gamma_n(\nu))$, where $\gamma_n(\nu)$ is the composite
\[\resizebox{1\hsize}{!}{\xymatrix{K_\theta\otimes_{A\ltimes\!M}\!A\ar[drr]_{\gamma_n(\nu)_\theta}\ar[r]^-{\nu_\theta\otimes A}&D_{L^J}(H_\theta)\otimes_{A\ltimes\!M}\!A\ar[r]^-{\cong}& \hom_A(H_\theta\otimes_{A\ltimes\!M}\!A,(L^{J}_t\otimes_{A\ltimes \!M}\!A)_s)\ar[d]^\cong\\
&&\hom_A(H_\theta\otimes_{A\ltimes M}\!A,L_s)}}\]
where the unlabeled isomorphisms are the canonical ones.

Let us prove that triangle $(I)$ commutes up to $G$-homotopy. It is the equivalence $\Psi$ of \ref{mainKR} that induces the vertical map of $(I)$.
Consider the diagram
\[\xymatrix{iS^{2,1}_{\sbt}\mathcal{D}\mathcal{P}_{A\ltimes M}\ar[r]_-{F'}^-{\simeq}&i\mathcal{D}T^{2,1}_{\sbt}(A;M)\ar@/^-1.3pc/[l]_-{\mathcal{D}\overline{F}}\\
ObS^{2,1}_{\sbt}\mathcal{D}\mathcal{P}_{A\ltimes M}\ar@{^{(}->}[u]_-{\iota'}\ar[r]_-{\gamma}&S^{2,1}_{\sbt}(A;M)\ar@{^{(}->}[u]_-{\iota}
}\]
where $F'$ is an inverse of $\mathcal{D}\overline{F}$ construct in the proof of \ref{noneqeq}, $\iota$ is the inclusion functor of §\ref{model}, and $\Psi=\mathcal{D}\overline{F}\circ\iota$. Since $\mathcal{D}\overline{F}$ and $F'$ are mutually inverse equivalences of categories with duality, there is a $G$-natural isomorphism $\kappa\colon \mathcal{D}\overline{F}\circ F'\Rightarrow \id$. Restricting to the discrete category of objects, it is gives $G$-natural isomorphism
\[\kappa\colon \Psi\circ\gamma=\mathcal{D}\overline{F}\circ\iota\circ\gamma =\mathcal{D}\overline{F}\circ F'\circ \iota'\Rightarrow \iota'\]
inducing a $G$-homotopy between $\widetilde{\Psi}\circ\widetilde{\gamma}$ and the inclusion of the objects of $S^{2,1}_{\sbt}\mathcal{D}\mathcal{P}_{A\ltimes M}$.

The map $\widetilde{\Phi}$ in the diagram is the restriction to the homotopy fiber of the map induced by the simplicial map
\[\Phi_k\colon \coprod\limits_{ObS^{2,1}_{\sbt}\mathcal{D}\mathcal{P}_{A}}\hom_A(\varphi,\varphi\otimes_A M((S^{1,1}_k)_+))\longrightarrow \THR_k(S^{2,1}_{\sbt}\mathcal{D}\mathcal{P}_{A\ltimes M})\]
that sends $(h_0,\dots,h_k)\in \hom_A(\varphi,\varphi\otimes_A M)^{\oplus(k+1)}$ to the element of the homotopy colimit defined by
\[\left(\begin{array}{ll}1&0\\h_0&1\end{array}\right)\wedge\dots\wedge \left(\begin{array}{ll}1&0\\h_k&1\end{array}\right)\]
in the $(\Psi(\varphi),\dots,\Psi(\varphi))$-wedge component of $V(HS^{2,1}_{\sbt}\mathcal{D}\mathcal{P}_{A\ltimes M},0)$. Each $h_i$ is actually a pair $(f_i,g_i)$ with $f_i\colon X\longrightarrow X\otimes_AM$ and $g_i\colon Y\longrightarrow Y\otimes_A M$, and
\[\left(\begin{array}{ll}1&0\\h_i&1\end{array}\right)=\Psi(f_i,g_i)=\left(\left(\begin{array}{ll}1&0\\f_i&1\end{array}\right),\left(\begin{array}{ll}1&0\\g_i&1\end{array}\right)\right)\]
With this definition square $(II)$ commutes up to $G$-homotopy. Indeed, an object $\nu\in S^{2,1}_{\sbt}\mathcal{D}\mathcal{P}_{A\ltimes M}$ is sent by $\Phi s \gamma$ to
\[\Phi s (0_{\gamma(\nu)})=\Phi (0_{\gamma(\nu)})=\id_{\Psi\gamma(\nu)}\wedge\dots\wedge\id_{\Psi\gamma(\nu)}\]
The trace map sends $\nu$ to
\[\tr(\nu)=\id_{\nu}\wedge\dots\wedge\id_{\nu}\]
The $G$-natural isomorphism $\Psi\circ\gamma\Rightarrow \iota'$ described above induces a simplicial $G$-homotopy between $\Phi s G$ and $\tr$, defined in a completely analogous way to the simplicial homotopy of \ref{THRpreservesGequiv}.

The map $\tau\colon S^{1,1}_+\wedge\THR(C;M)\longrightarrow \widetilde{\THR}(HC\vee HM)$ is the composite
\[\xymatrix{S^{1,1}_+\wedge\THR(C;M)\ar[ddr]_{\tau}\ar[r]^-{\simeq_s}_-{\ref{s11smashthr}}& \THR^{(1)}(C;M)\ar[d]\\
&\bigvee_{a\geq 1}\THR^{(a)}(C;M)\ar[d]^-{\simeq_s}_{\ref{htpyfibwedgesthr}}\\
& \widetilde{\THR}(HC\vee HM)}\]
where the unlabeled map is the inclusion in the $a=1$ wedge summand. It is easy to check that square $(III)$ commutes. Indeed, an element $\sigma\wedge h\in (S^{1,1}_k)_+\wedge\hom_A(\varphi,\varphi\otimes_A M)$ is sent by both composites to the element of $\THR_k(S^{2,1}_{\sbt}\mathcal{D}\mathcal{P}_{A\ltimes M})$ represented by
\[\id_{\Psi(\varphi)}\wedge\dots\wedge\id_{\Psi(\varphi)}\wedge\left(\begin{array}{ll}1&0\\h&1\end{array}\right)
\wedge\id_{\Psi(\varphi)}\wedge\dots\wedge \id_{\Psi(\varphi)}\]
where the matrix is in the smash factor in position $\sigma$.
\end{proof}

\newpage


\section{Appendix}

\subsection{Quillen modules over antistructures and $M$-twistings}\label{appendixdualities}

In \cite{quillenmod} Quillen defines a category of bimodules over an object $X$ of an arbitrary category $C$. Here
we show that in the case of the category of antistructures $C$ (cf. §\ref{ringduality}), the category of bimodules over $(A,L,\alpha)$ is equivalent to the category of twistings of $(A,L,\alpha)$ of §\ref{squarezeroext}.

\begin{defn}[\cite{quillenmod}] Let $C$ be a category with finite limits and $X$ an object of $C$. The \textbf{category of bimodules over $X$} is the category
\[\modu(X)=(C/X)^{ab}\]
of abelian group objects in the over category $C/X$ (cf. \cite[III-§6]{maclane}).
\end{defn}

Let $\mathcal{T}_{(A,L,\alpha)}$ be the category of twistings of an antistructure $(A,L,\alpha)$. An object is a pair $(M,J)$ of an $A$-bimodule $M$ and an $M$-twisting $J\colon L_t\otimes_AM\longrightarrow L_t\otimes_AM$ (cf. \ref{defmaptwist}). A morphism from $(M,J)$ to $(M',J')$ is a map of bimodules $f\colon M\longrightarrow M'$ such that
\[\xymatrix{L_t\otimes_AM\ar[d]_{J}\ar[r]^-{L_t\otimes f}&L_t\otimes_AM'\ar[d]^{J'}\\
L_t\otimes_AM\ar[r]_-{L_t\otimes f}&L_t\otimes_AM'
}\]
commutes.

\begin{theorem}\label{quillenmodstory}
Let $(A,L,\alpha)$ be an antistructure. There is an equivalence of categories
\[\Xi\colon \mathcal{T}_{(A,L,\alpha)}\longrightarrow \modu(A,L,\alpha)\]
\end{theorem}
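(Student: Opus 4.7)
The plan is to define $\Xi$ on objects by sending an $M$-twisting $J$ of $(A,L,\alpha)$ to the split antistructure $\Xi(J) = (A\ltimes M, L^J, \alpha^J)$ from Proposition \ref{mtwistingantistructure}, equipped with its natural abelian group structure in the over category of antistructures. The zero section is the map of antistructures $s\colon (A,L,\alpha)\hookrightarrow (A\ltimes M, L^J, \alpha^J)$ from \ref{mtwistingantistructure}. For the addition, I would first identify the product in the over category:
\[(A\ltimes M, L^J, \alpha^J)\times_{(A,L,\alpha)}(A\ltimes M, L^J, \alpha^J)\cong (A\ltimes(M\oplus M), L^{J\oplus J}, \alpha^{J\oplus J}),\]
which is a direct computation since fiber products of antistructures over $(A,L,\alpha)$ are computed pointwise on the $A$-bimodules. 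The addition is then induced by the fold maps $M\oplus M\to M$ and $L_t\otimes_A(M\oplus M)\to L_t\otimes_A M$, which commute with $J\oplus J$ by additivity of $J$. Functoriality on morphisms is clear: a map of twistings $f\colon M\to N$ goes to the antistructure map $(A\ltimes f,\, L_t\oplus L_t\otimes_A f)$, which already preserves the abelian group structure.

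To construct an inverse $\Xi'$, take an abelian group object $(p,F)\colon (B,K,\beta)\to(A,L,\alpha)$ with zero section $(s,S)$ and addition $\mu$. On the ring level, Quillen's classical result applied to $p\colon B\to A$ shows $B\cong A\ltimes M$ canonically, where $M:=\ker(p)$ is an $A$-bimodule via the splitting. On the module level, $F\colon K\to L$ is additively split by $S$, so $K = L\oplus N$ where $N:=\ker(F)$ is an abelian group. The hypothesis that $\widetilde{F}\colon K_t\otimes_B A\to L_t$ is an isomorphism, together with $L_t\otimes_B A = L_t$ via $p$, forces a canonical isomorphism $N\cong L_t\otimes_A M$ of abelian groups: indeed $K_t\otimes_B A = L_t\oplus(N\otimes_B A)$, so $\widetilde{F}$ being iso gives $N\otimes_B A=0$, while tracing the $A\otimes A$-action (which sends $M\subset B$ to $N$) produces the identification. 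The restriction of $\beta$ to $N$ then defines an additive map $J\colon L_t\otimes_A M\to L_t\otimes_A M$, and the identities $\beta^2=\id$ and $\beta(k\cdot b\otimes b')=\beta(k)\cdot b'\otimes b$ restrict to exactly the two axioms of an $M$-twisting (cf. \ref{defmaptwist}). Crucially, the requirement that \emph{the section} $(s,S)$ be a map of antistructures excludes the twisted case of \ref{antiinvolutioncase} with nonzero $r$, ensuring $\beta$ restricts cleanly to $N$. Set $\Xi'(B,K,\beta):=(M,J)$; naturality of the splittings makes $\Xi'$ into a functor.

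To finish, I would verify $\Xi'\circ\Xi=\id$ directly from the explicit definition of $(A\ltimes M, L^J, \alpha^J)$ in \ref{mtwistingantistructure}: the kernel of projection on $A$ is $M$ with the correct bimodule structure, the kernel of projection $L^J\to L$ is the summand $L_t\otimes_A M$, and the restriction of $\alpha^J$ to this summand is $J$ by definition. Conversely, $\Xi\circ\Xi'$ is naturally isomorphic to the identity via the canonical splittings $B\cong A\ltimes M$ and $K\cong L\oplus(L_t\otimes_A M)=L^J$; checking that this isomorphism preserves the abelian group structure (not just the antistructure) is formal, since the abelian group object axioms in both cases are induced by the same addition on $M$ and $L_t\otimes_A M$.

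The main obstacle will be the identification $N\cong L_t\otimes_A M$ as $B\otimes B$-modules, and the verification that the induced involution on this tensor product is an $M$-twisting in the sense of \ref{defmaptwist}; this is where the precise form of the axioms of a map of antistructures (Definition \ref{mapofantistruct}) and the abelian group structure play off each other. Once this identification is in place, the remaining verifications are bookkeeping with the formulas of \ref{mtwistingantistructure}.
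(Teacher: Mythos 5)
Your overall strategy --- building an explicit inverse $\Xi'$ rather than checking fully faithful plus essentially surjective as the paper does --- is legitimate, and your treatment of $\Xi$ on objects and morphisms matches the paper. But the construction of $\Xi'$ has a concrete gap in the identification of $N = \ker F$ with $L_t\otimes_A M$.

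You write that $K_t\otimes_B A = L_t\oplus(N\otimes_B A)$ and conclude from $\widetilde F$ being an isomorphism that $N\otimes_B A = 0$. Both steps fail. The splitting $K = S(L)\oplus N$ is an $A$-module splitting (via $e_1$), not a $B$-module splitting: $S(L)\subset K_t$ is typically not closed under the $M$-action. (Already in the model case $K_t = L^J_t = L_t\oplus L_t\otimes_A M$ one has $S(l)\cdot(0,n) = (0, l\otimes n)\notin S(L)$.) So the tensor product does not decompose as you claim; one only has a right-exact sequence, and the correct conclusion is that the map $N\otimes_B A\to K_t\otimes_B A$ is zero, not that $N\otimes_B A$ vanishes. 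In fact $N\cong L_t\otimes_A M$ is what you want, and then $N\otimes_B A = N/NM = L_t\otimes_A M\ne 0$, which contradicts your assertion outright.

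The hypothesis you should be leaning on is the one attached to the \emph{section}, not the projection: since $e = (e_1,e_2)$ is itself a map of antistructures, condition (ii) of Definition \ref{mapofantistruct} gives that $\widetilde{e}_2\colon L_t\otimes_A B\to K_t$ is an isomorphism of $B$-modules. Combined with $B\cong A\ltimes M$ this yields directly $K_t\cong L_t\otimes_A(A\ltimes M)\cong L_t\oplus L_t\otimes_A M$, with no hand-waving about ``tracing the $A\otimes A$-action.'' This is exactly the route the paper takes; it then invokes Lemma \ref{classantiFLt} to read off a full twisting $(J,K)$ and uses equivariance of the zero section to force $K=0$. Once you replace your $\widetilde F$-argument with the $\widetilde{e}_2$-argument, the rest of your proposal (including the observation that the abelian-group-object axioms force the derivation part of any morphism to vanish, i.e.\ fullness) is in line with the paper and goes through.
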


Explicitly, a module over $X\in C$ is a quadruple $(\theta,\eta,e,(-)^{-1})$ of a map $\theta\colon Y\longrightarrow X$ in $C$ and maps $\eta\colon Y\times_XY\longrightarrow Y$, $e\colon X\longrightarrow Y$ and $(-)^{-1}\colon Y\longrightarrow Y$ over $X$ satisfying conditions expressing associativity and commutativity of $\eta$, that $e$ is a unit for $\eta$ and that $(-)^{-1}$ is inversion for the addition $\eta$.

The functor $\Xi$ at a twisting $(M,J)$ is defined as the following quadruple $(\theta,\eta,e,(-)^{-1})$. A map of antistructures is a pair of maps, one defined between the underlying rings and one between the underlying modules, satisfying certain conditions (cf. \ref{mapofantistruct}). The map of antistructures $\theta$ is the projection
\[\theta\colon (A\ltimes M,L^J,\alpha^J)\longrightarrow (A,L,\alpha)\]
both on the ring and on the module.
We remind that $(A\ltimes M,L^J,\alpha^J)$ is the antistructure defined in §\ref{squarezeroext}, with $L^J=L_t\oplus L_t\otimes_AM$ and $\alpha^J=(\alpha, J)$. In order to define the map $\eta$ we notice that the pull-back of antistructures is componentwise, in particular
\[(A\ltimes M,L^J,\alpha^J)\times_{(A,L,\alpha)}(A\ltimes M,L^J,\alpha^J)\!=\!(A\ltimes M\times _AA\ltimes M,L^J\times_LL^J,\alpha^J\times\alpha^J)\]
The group map $\eta$ is the map of antistructures
\[\eta=(\eta_1,\eta_2)\colon (A\ltimes M\times _AA\ltimes M,L^J\times_LL^J,\alpha^J\times\alpha^J)\longrightarrow (A\ltimes M,L^J,\alpha^J)\]
defined by $\eta_1(a,m,a,m')=(a,m+m')$ and $\eta_2(l,l'\otimes m,l,l''\otimes m')=(l,l'\otimes m+l''\otimes m')$.
The unit map
\[e\colon (A,L,\alpha)\longrightarrow (A\ltimes M,L^J,\alpha^J)\]
is the zero section both on $A$ and on $L$. Inversion
\[(-)^{-1}\colon (A\ltimes M,L^J,\alpha^J)\longrightarrow (A\ltimes M,L^J,\alpha^J)\]
sends $(a,m)$ to $(a,-m)$ on the ring and $(l,l'\otimes m)$ to $(l,-l'\otimes m)$ on the module.
On morphisms, $\Xi$ sends a map of twistings $f\colon M\longrightarrow M'$ to the morphism of $(A,L,\alpha)$-bimodules defined by the map of antistructures
\[(A\ltimes f,L_t\oplus L_t\otimes f)\colon (A\ltimes M,L^J,\alpha^J)\longrightarrow (A\ltimes M',L^{J'},\alpha^{J'})\]

We show below that $\Xi$ is fully faithful and essentially surjective. For essential surjectivity we need to know more about the antistructures with underlying module $L_t\oplus L_t\otimes_AM$.

\begin{defn}\label{deffulltwist}
A \textbf{full twisting of an antistructure $(A,L,\alpha)$} is a pair $(J,K)$ of additive maps $J\colon L_t\otimes_A M\longrightarrow L_t\otimes_A M$ and $K\colon L\longrightarrow L_t\otimes_A M$ satisfying
\begin{enumerate}
\item $J(l\otimes m\cdot a\otimes a')=J(l\otimes m)\cdot a'\otimes a$
\item $J\circ J=\id$
\item $K(l\cdot a\otimes a')=K(l\cdot a\otimes 1)\cdot a'\otimes 1+K(l\cdot 1\otimes a')\cdot 1\otimes a-K(l)\cdot a'\otimes a$
\item $K\circ\alpha=-J\circ K$
\end{enumerate}
We say that $(J,K)$ is \textbf{regular} if $K$ defines a module map $K\colon L_s\longrightarrow (L_t\otimes_A M)_t$, that is if
\[K(l\cdot 1\otimes a)=K(l)\cdot a\otimes 1\]
for all $l\in L$ and $a\in A$.
\end{defn}

\begin{rem}
If $(J,K)$ is regular, the identity $K=-JK\alpha$ implies that $K(l\cdot a\otimes 1)=K(l)\cdot 1\otimes a$ as well. Notice also that an $M$-twisting is a full twisting with $K=0$. 
\end{rem}

We generalize the construction of the antistructure $(A\ltimes M,L^{J},\alpha^J)$ of §\ref{squarezeroext}. From a full twisting $(J,K)$, define another $A\ltimes M$-module structure on $F(L_t)=L_t\oplus L_t\otimes_AM$ by defining $(l,l'\otimes m)\cdot 1\otimes (a,n)$ to be
\[(l\cdot 1\otimes a,l'\otimes m\cdot 1\otimes a+J(\alpha(l)\otimes n)+K(\alpha(l)\cdot a\otimes 1)-K(\alpha(l))\cdot 1\otimes a)\]
This module structure together with the standard structure of $F(L_t)$ define a $A\ltimes M\otimes A\ltimes M$-module denoted $L^{J,K}$. Define a map $\alpha^{J,K}\colon L^{J,K}\longrightarrow L^{J,K}$ by
\[\alpha^{J,K}(l,l'\otimes m)=(\alpha(l),J(l'\otimes m)+K(l))\]
Notice that if the full twisting is regular, $L^{J,K}=L^J$, but $\alpha^{J,K}$ does not need to agree with $\alpha^J$.

\begin{lemma}\label{classantiFLt}
The triple $(A\ltimes M,L^{J,K},\alpha^{J,K})$ is an antistructure covering $(A,L,\alpha)$. Moreover every antistructure $(A\ltimes M,L^M,\alpha^M)$ over $(A,L,\alpha)$ with underlying target module $L_{t}^M=L_t\oplus L_t\otimes_AM$ is of the form
\[(A\ltimes M,L^M,\alpha^M)=(A\ltimes M,L^{J,K},\alpha^{J,K})\]
for some full twisting $(J,K)$.
\end{lemma}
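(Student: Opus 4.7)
My plan is to proceed in two stages corresponding to the two halves of the statement, relying heavily on the pattern of the proof of Proposition~\ref{mtwistingantistructure}.

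For the first stage (existence), I would verify the four antistructure axioms for $(A\ltimes M, L^{J,K},\alpha^{J,K})$ in the following order. Axiom~2 ($(\alpha^{J,K})^2=\id$) is a direct calculation: the first coordinate gives $\alpha^2=\id$, and the second yields $J^2(l'\otimes m)+JK(l)+K\alpha(l)$, which vanishes by $J^2=\id$ together with $K\circ\alpha=-J\circ K$. Axiom~3 is immediate since $L^{J,K}_t=F(L_t)\cong L_t\otimes_A(A\ltimes M)$ is f.g.\ projective as $L_t$ is. For Axiom~1 one must check that $\alpha^{J,K}$ swaps the target and source actions; this is where conditions~1 and~3 of Definition~\ref{deffulltwist} enter, with~3 precisely correcting for the additional $K$-terms in the source action. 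Axiom~4 is the main obstacle and can be handled by generalizing Lemma~\ref{lemmaisoequivariant}: the same map $\xi_{L_t}$ (modified by a $K$-correction term in its second component) exhibits the canonical map $A\ltimes M\to \hom_{A\ltimes M}(L^{J,K}_t,L^{J,K}_s)$ as the direct sum $f_\alpha\oplus(f_\alpha\otimes M)$ of two copies of an isomorphism, hence as an isomorphism. That the projection $(p,p)$ is a map of antistructures covering $(A,L,\alpha)$ then follows exactly as in~\ref{mtwistingantistructure}.

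For the second stage (classification), given any antistructure $(A\ltimes M,L^M,\alpha^M)$ covering $(A,L,\alpha)$ with $L^M_t=L_t\oplus L_t\otimes_A M$, I would first use that the projection $(p,p)\colon(A\ltimes M,L^M,\alpha^M)\to(A,L,\alpha)$ is a map of antistructures to force the first component of $\alpha^M(l,l'\otimes m)$ to be $\alpha(l)$. The second component, as an additive map out of $L\oplus L\otimes_A M$, then decomposes uniquely as $K(l)+J(l'\otimes m)$ for additive maps $K\colon L\to L_t\otimes_A M$ and $J\colon L_t\otimes_A M\to L_t\otimes_A M$, so that $\alpha^M=\alpha^{J,K}$ by definition. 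The identities defining a full twisting are now extracted from the antistructure axioms for $\alpha^M$: conditions~2 and~4 come from $(\alpha^M)^2=\id$ by equating coefficients of $l'\otimes m$ and $l$ respectively, while conditions~1 and~3 come from the intertwining relation $\alpha^M(x\cdot a\otimes b)=\alpha^M(x)\cdot b\otimes a$ applied to $x=(0,l'\otimes m)$ and $x=(l,0)$ respectively (the cross-term in condition~3 reflects the fact that $x\cdot a\otimes b$ mixes the target and source actions via $F$).

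It remains to check that $L^M$ and $L^{J,K}$ agree as full $(A\ltimes M)\otimes(A\ltimes M)$-modules, not merely as carriers of the involution. The target actions coincide by hypothesis. For the source action, Axiom~1 of antistructure forces $x\cdot 1\otimes(a,n)=\alpha^M\bigl(\alpha^M(x)\cdot(a,n)\otimes 1\bigr)$, so once we know $\alpha^M=\alpha^{J,K}$ and the target actions agree, the source action of $L^M$ is uniquely determined; a direct computation of the right-hand side using $J^2=\id$, $JK=-K\alpha$, and condition~1 of full twisting yields precisely the formula defining the source action on $L^{J,K}$. The main obstacle throughout is the extra term $K(\alpha(l)\cdot a\otimes 1)-K(\alpha(l))\cdot 1\otimes b$ in the source action: it is designed exactly so that Axiom~1 holds even when $K\ne 0$, and verifying this cancellation in both the existence and classification halves is the technical heart of the argument.
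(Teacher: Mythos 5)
Your proposal is correct in outline, and the existence half mirrors the paper's argument closely: the paper likewise reduces Axiom~4 to Lemma~\ref{lemmaisoequivariant}, noting that the $K$-corrected map $\xi_P$ is still a bijection (though no longer a module map unless $(J,K)$ is regular).

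The classification half, however, takes a genuinely different route. The paper begins by analyzing the \emph{source} $A\ltimes M$-module structure on $L^M$: since each source multiplication $(-)\cdot 1\otimes(a,n)$ is a map of right $A\ltimes M$-modules for the standard target structure on $F(L_t)$, the fullness computation from Proposition~\ref{modelPAM} forces it to have matrix form $\left(\begin{smallmatrix}\phi_{a,n}&0\\f_{a,n}&\phi_{a,n}\otimes M\end{smallmatrix}\right)$, and the paper extracts $J$ from the maps $f_{0,m}$ via $J(l\otimes m)=f_{0,m}(\alpha(l))$. Only afterward does it read off $K$ as the residual part of $\alpha^M$. You instead read both $J$ and $K$ directly off the additive decomposition of $\alpha^M_2$ into its restrictions to the two summands, derive the four full-twisting identities from $(\alpha^M)^2=\id$ and the intertwining axiom, and then recover the source module structure from $\alpha^M$ via Axiom~1. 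Your route avoids invoking the fullness lemma, which is a small gain in self-containment; the paper's route makes the link between $J$ and the source module structure transparent, which is conceptually natural since $J$ really \emph{is} the deviation of the source action from the trivial one. One presentational caution: your Step~3 (deriving conditions~1 and~3 from the intertwining relation) implicitly uses the source action formula $x\cdot 1\otimes(a,n)=\alpha^M\bigl(\alpha^M(x)\cdot(a,n)\otimes 1\bigr)$, which you establish as a consequence of Axiom~1 only in Step~4. The logic is sound since that formula is a consequence of Axiom~1 alone and does not rely on the full-twisting identities, but it would read more cleanly to record the source-action formula first and then extract the four conditions, rather than appealing to the intertwining relation with an as-yet-unidentified source action.
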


\begin{proof}
The proof that $(A\ltimes M,L^{J,K},\alpha^{J,K})$ is an antistructure is essentially the same as the special case $K=0$ presented in \ref{mtwistingantistructure}. It uses the fact that the map
\[\xi_P\colon F(\hom(P,L_s))\longrightarrow \hom(F(P),L^{J,K}_s)\]
that sends $(\lambda,\delta\otimes n)$ to the module map
\[\xi(\lambda,\delta\otimes n)(p,p'\otimes m)=(\lambda(p),\delta(p)\otimes n+K(\alpha(\lambda(p)))+J(\alpha(\lambda(p'))\otimes m))\]
is a bijection (not a map of modules unless $(J,K)$ is regular, cf. \ref{lemmaisoequivariant}).

Now suppose that $(A\ltimes M,L^M,\alpha^M)$ is an antistructure over $(A,L,\alpha)$ with $L_{t}^M=F(L_t)=L_t\oplus L_t\otimes_AM$.
Let us identify the second $A\ltimes M$-module structure on $L^M$. Multiplication by $(a,n)\in A\ltimes M$ in the second structure is a map of right modules
\[(-)\cdot 1\otimes (a,n)\colon F(L_t)\longrightarrow F(L_t)\]
By \ref{modelPAM}, such a map is of the form
\[(l,l'\otimes m)\cdot 1\otimes (a,n)=(\phi_{a,n}(l),\phi_{a,n}(l')\otimes m+f_{a,n}(l))\]
for some maps of right $A$-modules $\phi_{a,n}\colon L_t\longrightarrow L_t$ and $f_{a,n}\colon L_t\longrightarrow L_t\otimes_A M$. The fact that the second structure is indeed a module structure gives some compatibility identities between these maps, and from these one can see that the maps $f_{0,n}$ fit together into a map $J\colon L_t\otimes_A M\longrightarrow L_t\otimes_A M$ defined by
\[J(l\otimes m)=f_{0,m}(\alpha(l))\]
The map $\alpha^M$ must be of the form
\[\alpha^M(l,l'\otimes m)=(\alpha(l),J(l'\otimes m)+K(l))\]
for some map $K\colon L\longrightarrow L_t\otimes_A M$. Again by the module identities, one can check that the pair $(J,K)$ defines a full twisting.
\end{proof}

\begin{ex}\label{antiinvolutioncase2} Recall from \ref{antiinvolutioncase} that an anti-involution $\overline{\alpha}$ on $A\ltimes M$ covering $\alpha\colon A^{op}\longrightarrow A$ is of the form
\[\overline{\alpha}(a,m)=(\alpha(a),r(a)+j(m))\]
for maps $j$ and $r$ satisfying relations very similar to \ref{deffulltwist}. Indeed, the antistructure $(A\ltimes M,A\ltimes M,\overline{\alpha})$ is the same as the one associated to the full twisting $(J,K)$ given by $(j,r)$ under the canonical isomorphism $L_t\otimes_A M=A\otimes_A M\cong M$.

Because of the properties of $r$, the full twisting given by $(j,r)$ is regular if and only if $r=0$.
\end{ex}

\begin{proof}[Proof of \ref{quillenmodstory}]
We show that $\Xi$ is fully faithful and essentially surjective. Faithfulness is clear.

To see that $\Xi$ is full, let 
\[\Xi(M,J)\longrightarrow \Xi(M',J')\]
be a map of abelian group objects. This is a map of antistructures
\[(g,G)\colon (A\ltimes M,L^J,\alpha^J)\longrightarrow (A\ltimes M',L^{J'},\alpha^{J'})\]
over $(A,L,\alpha)$ satisfying certain compatibility conditions with the group structure. We need to show that $(g,G)=\Xi(f)$ for some map of bimodules $f\colon M\longrightarrow M'$. The map $g\colon A\ltimes M\longrightarrow A\ltimes M'$ is a ring map over $A$, and so of the form
\[g(a,m)=(a,f(m)+r(a))\]
for a map of bimodules $f\colon M\longrightarrow M'$ and a derivation $r\colon A\longrightarrow M'$. The map $(g,G)$ preserves the unit maps
\[e\colon (A,L,\alpha)\longrightarrow (A\ltimes M,L^J,\alpha^J) \ \ \ , \ \ e'\colon (A,L,\alpha)\longrightarrow (A\ltimes M,L^{J'},\alpha^{J'})\]
defined by the zero section, that is $(g,G)\circ e=e'$. In particular the map $g$ satisfies $g(a,0)=(a,0)$, and so $r$ must be zero. This shows that $g=A\ltimes f$. Let us see that $G=L_t\oplus L_t\otimes f$.
The map $G$ is an additive map over $L$ also commuting with the zero section, and so of the form
\[G(l,l'\otimes m)=(l,F(l'\otimes m))\]
for some additive map $F\colon L_t\otimes_AM\longrightarrow L_t\otimes_AM'$. Since $(g,G)$ is a map of antistructures it preserves the target module structure, i.e.
\[G((l,l'\otimes m)\cdot (a,n)\otimes 1)=G(l,l'\otimes m)\cdot(a,f(n))\otimes 1\]
This forces $F$ to satisfy
\[F(l'\otimes m\cdot a+l\otimes n)=F(l'\otimes m)\cdot a+l\otimes f(n)\]
and setting $a=0$ we obtain $F=L_t\otimes f$.

To see that $\Xi$ is essentially surjective we need to prove that any abelian group object \[(\theta\colon (B,K,\beta)\longrightarrow (A,L,\alpha),\eta,e,(-)^{-1})\] in the category of antistructures over $(A,L,\alpha)$ is isomorphic to $\Xi(M,J)$ for some $A$-bimodule $M$ and $M$-twisting $J$. Since the map $e$ is a map over $(A,L,\alpha)$ the diagram
\[\xymatrix{(A,L,\alpha)\ar[dr]_{\id}\ar[r]^-{e}&(B,K,\beta)\ar[d]^{\theta}\\
&(A,L,\alpha)
}\]
commutes, that is $e$ is a section for $\theta$. In particular the underlying ring map $e_1\colon A\longrightarrow B$ is a section for the ring map $\theta_1\colon B\longrightarrow A$.
Define $M:=\ker\theta_1$ with $A$-bimodule structure via the section $e_1$, that is $a\cdot m\cdot a'=e_1(a)\cdot m\cdot e_1(a')$. There is a bijection
\[\phi\colon A\ltimes M \longrightarrow B\]
defined by $\phi(a,m)=e_1(a)+m$. It is a ring map if and only if the product of any two elements $m,m'$ in the kernel $M$ is zero. To show this, notice that for any $m\in M$ the pairs $(m,0)$ and $(0,m)$ are in the pullback $B\times_A B$. Since $e_1$ is the unit for $\eta_1\colon B\times_A B\longrightarrow B$ it satisfies
\[\eta_1(m,0)=\eta_1(0,m)=\eta_1(e_1(0),m)=m\]
Since $\eta_1$ is a ring map we have
\[m\cdot m'=\eta_1(m,0)\cdot \eta_1(0,m')=\eta_1((m,0)\cdot (0,m'))=\eta_1(0)=0\]
and therefore $\phi$ is a ring isomorphism.

The underlying map of modules $e_2\colon L\longrightarrow K$
induces an isomorphism of $B$-modules
\[\widetilde{e}_2\colon L_t\otimes_AB\longrightarrow K_t\]
(cf. \ref{mapofantistruct}) that induces an isomorphism of $A\ltimes M$-modules
\[L_t\oplus L_t\otimes_AM\cong L_t\otimes_AA\ltimes M\stackrel{L_t\otimes\phi}{\longrightarrow}L_t\otimes_AB\stackrel{\widetilde{e}_2}{\longrightarrow}K_t\]
By lemma \ref{classantiFLt} above there is a full twisting $(J,K)$ such that the map
\[(\phi,\widetilde{e}_2\circ (L_t\otimes\phi))\colon (A\ltimes M,L^{J,K},\alpha^{J,K})\longrightarrow (B,K,\beta)\]
is an isomorphism of antistructures. A calculation shows that
\[(\phi,\widetilde{e}_2\circ (L_t\otimes\phi))\circ(s,S)=(e_1,e_2)\]
where $(s,S)\colon (A,L,\alpha)\longrightarrow (A\ltimes M,L^{J,K},\alpha^{J,K})$ is the zero section.
Therefore $(s,S)$ is also a map of antistructures, and as such $S\circ \alpha=\alpha^{J,K}\circ S$. This forces $K=0$, and therefore we defined an isomorphism of antistructures
\[(\phi,\widetilde{e}_2\circ (L_t\otimes\phi))\colon (A\ltimes M,L^{J},\alpha^{J})\longrightarrow (B,K,\beta)\]
It remains to show that $(\phi,\widetilde{e}_2\circ (L_t\otimes\phi))$ defines a map of abelian group objects
\[\Xi(M,J)\longrightarrow (\theta,\eta,e,(-)^{-1})\]
The identity $(\phi,\widetilde{e}_2\circ (L_t\otimes\phi))\circ(s,S)=e$ above shows that the unit is preserved, since the zero section $(s,S)$ is the unit for $\Xi(M,J)$. The ring map $\phi$ preserves the group structure $\eta_1$, in the sense that the diagram
\[\xymatrix{ A\ltimes M\times_AA\ltimes M\ar[r]^-{\phi\times_A\phi}\ar[d]&B\times_AB\ar[d]^{\eta_1}\\
A\ltimes M\ar[r]_-{\phi}&B
}\]
commutes. Here the left vertical map is the underlying ring map for the group multiplication map of $\Xi(M,J)$.  Indeed the top composite is
\[\eta_1(\phi(a,m),\phi(a,m'))=\eta_1(e_1(a),e_1(a))+\eta_1(m,0)+\eta_1(0,m')=e_1(a)+m+m'\]
which is equal to the bottom composite $\phi(a,m+m')$.
A similar proof shows that the group law is preserved for the maps of modules.
\end{proof}

We finish the section with a classification of all the dualities on $\mathcal{P}_{A\ltimes M}$ covering a given $(A,L,\alpha)$ and a discussion on the equivalence of categories $F\colon \mathcal{P}(A;M)\longrightarrow \mathcal{P}_{A\ltimes M}$.
We remind from §\ref{model1} that the category $\mathcal{P}(A;M)$ has objects $Ob\mathcal{P}_A$, and the morphism set from $P$ to $Q$ is
\[\hom_A(P,Q)\oplus\hom_A(P,Q\otimes_AM)\]
The equivalence $F$ is defined by $F(P)=P\oplus P\otimes_AM$.

\begin{prop}\label{classanti}
Let $(A\ltimes M,L^M,\alpha^M)$ be an antistructure admitting a map of antistructures
\[\theta\colon(A\ltimes M,L^M,\alpha^M)\longrightarrow (A,L,\alpha)\]
with the projection $A\ltimes M\longrightarrow M$ as underlying ring map. Then there is a full twisting $(J,K)$ and an isomorphism of antistructures over $(A,L,\alpha)$ between $(A\ltimes M,L^M,\alpha^M)$ and $(A\ltimes M,L^{J,K},\alpha^{J,K})$.
\end{prop}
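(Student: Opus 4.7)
The plan is to reduce the general case to Lemma \ref{classantiFLt}, which handles the situation where the underlying target module is already $L_t \oplus L_t \otimes_A M$. So the core task is to construct, from the map of antistructures $\theta = (p,F)\colon (A\ltimes M, L^M, \alpha^M) \to (A, L, \alpha)$, an isomorphism of right $A\ltimes M$-modules $\epsilon\colon L^M_t \to L_t \oplus L_t\otimes_A M$ that is compatible with $F$ (i.e.\ the projection onto the first summand equals $F$ under $\epsilon$).

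First, by Definition \ref{mapofantistruct} applied to $\theta$, the induced map
\[\widetilde{F}\colon L^M_t \otimes_{A\ltimes M} A \longrightarrow L_t\]
is an isomorphism of right $A$-modules. I would now run the argument from the proof of Proposition \ref{modelPAM} which shows that any finitely generated projective $A\ltimes M$-module $K$ is (non-canonically) isomorphic to $H(K) = (K\otimes_{A\ltimes M}A)\otimes_A A\ltimes M$. Applied to $K = L^M_t$, this produces an $A\ltimes M$-linear isomorphism $\epsilon \colon L^M_t \to L_t \otimes_A A\ltimes M = L_t \oplus L_t\otimes_A M$ fitting into a commutative square in which the vertical map $L^M_t \to p^\ast(L^M_t\otimes_{A\ltimes M}A) \cong p^\ast L_t$ is precisely $F$ (identifying via $\widetilde{F}$), and the corresponding vertical map for $H(L^M_t)$ is the projection onto $L_t$. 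In particular, $\pi\circ\epsilon = F$, so $\epsilon$ is an isomorphism over $L$.

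Next I would use $\epsilon$ to transport the remaining data. The second $A\ltimes M$-module structure $L^M_s$ and the additive involution $\alpha^M$ push forward along $\epsilon$ to give a new $A\ltimes M\otimes A\ltimes M$-module structure on $L_t \oplus L_t \otimes_A M$ (whose $A$-target structure is the standard one) together with an additive involution. Call this antistructure $(A\ltimes M, \widetilde{L}, \widetilde{\alpha})$. By construction $\epsilon$ is an isomorphism of antistructures from $(A\ltimes M, L^M, \alpha^M)$ to $(A\ltimes M, \widetilde{L}, \widetilde{\alpha})$, and by the compatibility $\pi\circ\epsilon = F$ it is over $(A,L,\alpha)$.

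Finally, the underlying right module of $\widetilde{L}$ is exactly $L_t \oplus L_t \otimes_A M$, so Lemma \ref{classantiFLt} applies and produces a full twisting $(J,K)$ with $(A\ltimes M, \widetilde{L}, \widetilde{\alpha}) = (A\ltimes M, L^{J,K}, \alpha^{J,K})$. Composing with $\epsilon$ gives the required isomorphism of antistructures over $(A,L,\alpha)$. The main obstacle is verifying the compatibility with $F$ at step two: one needs to track carefully that the splitting $\epsilon$ produced by the projectivity argument really identifies the reduction map $\pi'\colon L^M_t \to L^M_t\otimes_{A\ltimes M}A$ with $\widetilde{F}^{-1}\circ F$, which is what forces $\epsilon$ to be \emph{over} $(A,L,\alpha)$ rather than just an abstract module isomorphism; the rest is a transport-of-structure argument plus the direct appeal to \ref{classantiFLt}.
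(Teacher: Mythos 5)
Your proof is correct and takes essentially the same approach as the paper: both use the isomorphism $\widetilde{F}$ (the paper's $\widetilde{G}$) together with the projectivity argument from Proposition \ref{modelPAM} to produce an $A\ltimes M$-linear isomorphism $L^M_t\cong F(L_t)=L_t\oplus L_t\otimes_A M$, transport the antistructure along it, and conclude by Lemma \ref{classantiFLt}. You are in fact slightly more explicit than the paper about the point that the lift $\epsilon$ satisfies $\pi\circ\epsilon=\pi'$, which is exactly what makes the resulting isomorphism lie over $(A,L,\alpha)$ and lets the lemma apply — the paper treats this as implicit in the construction.
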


\begin{proof}
If $(\pr,G)\colon (A\ltimes M,L^M,\alpha^M)\longrightarrow (A,L,\alpha)$
is a map of antistructures, the map
\[\widetilde{G}\colon L^{M}_t\otimes_{A\ltimes M} A\longrightarrow  L_t\]
given by $\widetilde{G}(l\otimes a)=G(l)\cdot a\otimes 1$ is an isomorphism of right $A$-modules (cf. \ref{mapofantistruct}).
This induces an isomorphism 
\[F(\widetilde{G}^{-1})\colon F(L_t)\stackrel{\cong}{\longrightarrow} F(L^{M}_t\otimes_{A\ltimes M} A)\]
with right-hand side isomorphic (non-canonically) to $L^{M}_t$ (cf. proof of \ref{modelPAM}). Therefore there is a unique $A\ltimes M\otimes A\ltimes M$-module structure on $F(L_t)$ and a unique $\beta\colon F(L_t)\longrightarrow F(L_t)$ such that the isomorphism $L_{t}^M\cong F(L_t)$ gives an isomorphism of antistructures
\[(A\ltimes M,L^M,\alpha^M)\cong (A\ltimes M,F(L_t),\beta)\]
By lemma \ref{classantiFLt} above the antistructure $(A\ltimes M,F(L_t),\beta)$ must be of the form $(A\ltimes M,L^{J,K},\alpha^{J,K})$ for some full twisting $(J,K)$.
\end{proof}

The next follows directly from \ref{dualitiesandequivalences}.

\begin{prop}
Given a full twisting $(J,K)$ of $(A,L,\alpha)$, there is a duality on $\mathcal{P}(A;M)$ covering $(A,L,\alpha)$ such that the functor $F\colon \mathcal{P}(A;M)\longrightarrow\mathcal{P}_{A\ltimes M}$ is an equivalence of categories with duality.
\end{prop}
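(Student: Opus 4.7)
The plan is to import the duality from $\mathcal{P}_{A\ltimes M}$ across the equivalence of categories $F$ via Proposition \ref{dualitiesandequivalences}. First, by Proposition \ref{classantiFLt}, the full twisting $(J,K)$ produces an antistructure $(A\ltimes M,L^{J,K},\alpha^{J,K})$ covering $(A,L,\alpha)$ in the sense that the projection $\pr\colon A\ltimes M\twoheadrightarrow A$ extends to a map of antistructures. By the functorial construction of dualities on categories of projective modules (\S\ref{ringduality}), this yields a duality $(D_{L^{J,K}},\eta^{\alpha^{J,K}})$ on $\mathcal{P}_{A\ltimes M}$ together with a morphism of categories with duality $(\mathcal{P}_{A\ltimes M},D_{L^{J,K}})\longrightarrow(\mathcal{P}_A,D_L)$ whose underlying functor is $-\otimes_{A\ltimes M}A$.

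Next, the proof of Proposition \ref{modelPAM} shows, using only the underlying ring structure of $A\ltimes M$, that $F\colon \mathcal{P}(A;M)\longrightarrow \mathcal{P}_{A\ltimes M}$ is fully faithful and essentially surjective (the duality on $A\ltimes M$ plays no role in those arguments). Pick a quasi-inverse $G$ and natural isomorphisms $\mu\colon \id\Rightarrow GF$ and $\varepsilon\colon FG\Rightarrow\id$ making $(F,G)$ into an adjoint pair, e.g. by the recipe of the Remark following \ref{dualitiesandequivalences}. Then define
\[
D':= G\,D_{L^{J,K}}\,F\colon \mathcal{P}(A;M)^{op}\longrightarrow \mathcal{P}(A;M),
\]
with $\eta'$ the natural isomorphism specified in \ref{dualitiesandequivalences}. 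That proposition guarantees that $(D',\eta')$ is a duality on $\mathcal{P}(A;M)$ and that $(F,\varepsilon_{D_{L^{J,K}}F})$ is an equivalence of categories with duality $(\mathcal{P}(A;M),D')\longrightarrow(\mathcal{P}_{A\ltimes M},D_{L^{J,K}})$, with inverse $(G,GD_{L^{J,K}}(\varepsilon))$.

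Finally, to see that $D'$ covers $(A,L,\alpha)$, compose with the morphism of categories with duality $(\mathcal{P}_{A\ltimes M},D_{L^{J,K}})\longrightarrow (\mathcal{P}_A,D_L)$ produced in the first step. The composite functor sends an object $P\in \mathcal{P}(A;M)$ to $F(P)\otimes_{A\ltimes M}A=(P\oplus P\otimes_A M)\otimes_{A\ltimes M}A\cong P$, which is canonically naturally isomorphic to the identity inclusion $\mathcal{P}(A;M)\hookrightarrow \mathcal{P}_A$ on objects; on morphisms it collapses the $M$-component of $\hom_A(P,Q)\oplus \hom_A(P,Q\otimes_A M)$. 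This is exactly the projection functor that witnesses $\mathcal{P}(A;M)$ as covering $\mathcal{P}_A$, and it now comes equipped with a natural isomorphism compatible with the dualities.

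There is no genuine obstacle: everything beyond the statement of \ref{dualitiesandequivalences} is bookkeeping. The only mildly delicate point is checking that the pulled-back structure $(D',\eta')$ restricts/projects correctly to $(\mathcal{P}_A,D_L)$, but this is immediate from the fact that the projection $A\ltimes M\to A$ is itself a map of antistructures and that $F$ covers the inclusion $\mathcal{P}(A;M)\to\mathcal{P}_A$ canonically.
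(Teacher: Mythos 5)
Your proposal is correct and takes exactly the same route as the paper, which simply states that this proposition "follows directly from \ref{dualitiesandequivalences}"; you have spelled out the same pullback-of-duality argument (antistructure from \ref{classantiFLt}, equivalence of categories from \ref{modelPAM}, transport via \ref{dualitiesandequivalences}) in full detail.
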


In the regular case, we are able to describe the duality on $\mathcal{P}(A;M)$ corresponding to $(J,K)$ explicitly.

\begin{prop} Let $(J,K)$ be a regular $M$-twisting of $(A,L,\alpha)$. The functor $D_{J}$ from \ref{defs21(A;M)}, together with the natural isomorphism $\eta$ defined by the pair
\[\eta_P=(\eta_{P}^\alpha,\eta_{P}^M)\in\hom_A(P,D_{L}^2(P))\oplus\hom_A(P,D_{L}^2(P)\otimes_A M)\]
with $\eta_{P}^M(p)=\mu^{-1}(\lambda\mapsto -K(\lambda(p)))$ is a duality structure on $\mathcal{P}(A;M)$. 

The functor $F\colon \mathcal{P}(A;M)\longrightarrow\mathcal{P}_{A\ltimes M}$ together with the natural isomorphism \[\xi_P\colon F(\hom(P,L_s))\longrightarrow \hom(F(P),L^{J,K}_s)\]
defined by
\[\xi(\lambda,\delta\otimes n)(p,p'\otimes m)=(\lambda(p),\delta(p)\otimes n+K(\alpha(\lambda(p)))+J(\alpha(\lambda(p'))\otimes m))\]
is an equivalence of categories with duality
\[(F,\xi)\colon(\mathcal{P}(A;M),D_J,\eta)\longrightarrow (\mathcal{P}_{A\ltimes M},D_{L^{J,K}},\eta)\]
over $(\mathcal{P}_{A},D_{L},\eta^\alpha)$.
\end{prop}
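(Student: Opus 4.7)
The proof has two essentially independent pieces: (1) show that $(D_J,\eta)$ is a duality on $\mathcal{P}(A;M)$, and (2) show that $(F,\xi)$ is an equivalence of categories with duality lifting the projection $(A\ltimes M,L^{J,K},\alpha^{J,K})\to (A,L,\alpha)$. I would treat the two parts separately, and in each case the main technical content is tracking how the ``correction term'' coming from $K$ interacts with the duality.

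For (1), I would first verify that $\eta=(\eta^\alpha,\eta^M)$ is a natural transformation $\id\Rightarrow D_J^2$. The $\hom_A$-component of naturality is the familiar naturality of $\eta^\alpha$; the second component, for a morphism $(\phi,f)\colon P\to Q$, requires the equality
\[(D_L^{2}\phi\otimes M)\circ\eta^M_P+\widehat{J}^{2}(f)\circ\eta^\alpha_P=(\eta^\alpha_Q\otimes M)\circ f+\eta^M_Q\circ\phi,\]
where the $\widehat{J}^2(f)$-term is handled by \ref{naturalityJK}, and regularity of $(J,K)$ is exactly what makes $\lambda\mapsto -K(\lambda(p))$ a well-defined module map $D_L(P)\to (L_t\otimes_AM)_s$ (one needs $K(l\cdot 1\otimes a)=K(l)\cdot a\otimes 1$, which is the defining condition of regularity). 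Next I would verify the triangle identity $D_J(\eta_P)\circ \eta_{D_JP}=\id_{D_JP}$ in $\mathcal{P}(A;M)$. The $\phi$-component is the triangle identity for the antistructure $(A,L,\alpha)$, and the $f$-component, once one unravels the definitions of $\mu$, $\widehat{J}$, and $\eta^M$, reduces to the identity $K\circ\alpha=-J\circ K$, which is axiom (4) of \ref{deffulltwist}.

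For (2), the map $\xi_P$ is a bijection by the same argument as in \ref{lemmaisoequivariant}: the additional summand $K(\alpha(\lambda(p)))$ lies in the first coordinate and contributes only a lower-triangular perturbation of the $K=0$ formula, which does not affect invertibility. Naturality of $\xi$ in $P$ is a direct computation, and $A\ltimes M$-linearity (with the second module structure on $L^{J,K}$ determined by the full twisting) uses the defining compatibility of $K$ with the tensor structure. The crucial check is the coherence square of \ref{eqcatdual}: namely that
\[D_{L^{J,K}}(\xi_P)\circ \eta^{\alpha^{J,K}}_{F(P)}=\xi_{D_LP}\circ F(\eta_P).\]
Both sides involve $K$-terms — on the right through $\eta^M_P$, on the left through $\alpha^{J,K}$ — and the point of the sign in $\eta^M_P(p)=\mu^{-1}(\lambda\mapsto -K(\lambda(p)))$ is exactly to make these $K$-contributions cancel under $\xi$.

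Finally, $F$ is an equivalence of the underlying categories by the same argument as in \ref{modelPAM} (the proof that $F$ is fully faithful is formal, and essential surjectivity uses that every $A\ltimes M$-module is non-canonically isomorphic to one of the form $P\otimes_A(A\ltimes M)$, which is independent of the duality). Combined with the coherence check above, \ref{equivalenceequivariant} then promotes $(F,\xi)$ to an equivalence of categories with duality. The hard part of the whole argument is the coherence of $\xi$ together with the triangle identity for $\eta$: both are precisely the calculations that force the sign $-K$ in the definition of $\eta^M_P$ and force regularity of $(J,K)$, and both are where essentially all of the bookkeeping for the twisting lives.
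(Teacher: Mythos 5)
Your proposal is correct and takes the same route as the paper, which gives for this proposition only the one-line proof ``Same proof of \ref{lemmaisoequivariant}'': the substance is to repeat the explicit-inverse construction of that lemma (the $K$-term only perturbs off the diagonal, so invertibility persists) and then check the coherence constraints that the paper leaves implicit. What you flesh out correctly is precisely where the three axioms $K\circ\alpha=-J\circ K$, regularity, and the sign in $\eta^M_P$ enter: well-definedness of $\eta^M$ (regularity plus axiom (4), which gives $K(l\cdot a\otimes 1)=K(l)\cdot 1\otimes a$), the triangle identity for $(D_J,\eta)$, and the coherence square for $\xi$. One small slip: the correction $K(\alpha(\lambda(p)))$ lands in the \emph{second} output coordinate (the $L_t\otimes_AM$ factor), not the first; your ``lower-triangular'' picture is nonetheless the right one since the perturbation is driven only by the first input variable $\lambda$.
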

Here $\mu\colon \hom_A(P,D_{L}^2(P))\otimes_A M\longrightarrow\hom_A(P,D_{L}^2(P)\otimes_A M)$ is the canonical isomorphism.
Notice that since $(J,K)$ is regular, $\lambda\mapsto -K(\lambda(p))$ is indeed an element of $\hom(D_L(P),(L_t\otimes_A M)_s)$.

\begin{proof}
Same proof of \ref{lemmaisoequivariant}.
\end{proof}

\begin{rem}
In this more general setting, the action on the category $\mathcal{D}\mathcal{P}(A;M)$ does not restrict to the subcategory $\mathcal{D}\mathcal{P}(A;M)^0$ on objects of the form $(\phi,0)$, since
\[D_J(\phi,0)=(D_L(\phi),0)\circ(\eta^\alpha,\eta^M)=(D_L(\phi),(D_L(\phi)\otimes M)\circ\eta^M)\]
and $\eta^M$ is not necessarily zero if $K\neq 0$ (cf. §\ref{model2}).
Therefore there is no obvious duality on the category $S^{2,1}_{\sbt}(A;M)$ for a non-zero $K$, and no analogue for theorem \ref{mainKR}.
\end{rem}

What stops us from explicitly computing the duality on $\mathcal{P}(A;M)$ associated to a non-regular $(J,K)$, is the fact that the bijections $\xi_P$ are not maps of modules. However, we know by \ref{modelPAM} that there exist an isomorphism
\[\hom_{A\ltimes M}(F(P),L^{J,K}_s)\cong F(\hom_{A\ltimes M}(F(P),L^{J,K}_s)\otimes_{A\ltimes M}A)\]
and the right hand side is canonically isomorphic to $F(\hom_A(P,L_s))$. One could try to write down these isomorphisms by choosing projective basis for the modules, but I do not think that the result would not be very satisfying.

One could alternatively proceed backwards. Given a duality $(D,\eta)$ on $\mathcal{P}(A;M)$ covering $(A,L,\alpha)$, there is an induced duality on $\mathcal{P}_{A\ltimes M}$. This duality is associated to an antistructure by \ref{classdualpa}, and this antistructure only depends on what the duality does to $A\ltimes M$. For the module $A\ltimes M$, all the choices of isomorphism are canonical, and one can explicitly show that this antistructure is induced by the following full twisting. The functor $D\colon\mathcal{P}(A;M)^{op}\longrightarrow \mathcal{P}(A;M)$ on a morphism $(\phi,f)\colon P\longrightarrow Q$ is of the form
\[D(\phi,f)=(D_L(\phi),H_{P,Q}(\phi)+I_{P,Q}(f))\]
for natural maps
\[H_{P,Q}\colon \hom(P,Q)\longrightarrow \hom(D_L(Q),D_L(P)\otimes_A M)\]
and
\[I_{P,Q}\colon \hom(P,Q\otimes_A M)\longrightarrow \hom(D_L(Q),D_L(P)\otimes_A M)\]
Denote $f_m\colon A\longrightarrow A\otimes_A M$ the map of right $A$-modules
\[f_m(a)=1\otimes m\cdot a\]
Now define $\overline{J}\colon D_L(A)\otimes_A M\longrightarrow D_L(A)\otimes_A M$ by
\[\overline{J}(\lambda\otimes m)=I_{A,A}(f_m)(\lambda)\]
The $J$-map of the full twisting is then
\[J\colon L_t\otimes_A M\cong D_L(A)\otimes_A M\stackrel{\overline{J}}{\longrightarrow }D_L(A)\otimes_A M\cong L_t\otimes_A M\]
For the map $K$, consider the second component of the natural isomorphism $\eta$ at $A$. This is a module map $\eta^M\colon A\longrightarrow D_{L}^2(A)\otimes_A M\cong D_L(L_t)\otimes_A M$, and $K\colon L_t\longrightarrow L_t\otimes_A M$ is defined by
\[K(l)=\mu(\eta^M(1))(l)+\overline{H}(l)\]
where $\overline{H}$ corresponds to $H_{A,D_L(A)}$ under the canonical isomorphisms
\[\xymatrix{{L_t\cong D_L(A)\cong\hom_A(A,D_L(A))}\ar@<-12ex>[d]^-{\overline{H}}\ar[rr]^-{H_{A,D_L(A)}}&&\hom_A(D_{L}^2(A),D_L(A)\otimes_A M)\ar[d]^{\eta_\alpha}\\
{L_t\otimes_A M\cong D_L(A)\otimes_A M}&&\hom_A(A,D_L(A)\otimes_A M)\ar[ll]_-{\cong}
}\]
and $\mu\colon D_L(L_t)\otimes_AM\longrightarrow\hom_A(L_t,(L_t\otimes_A M)_s)$ is as usual the canonical isomorphism.


\subsection{$G$-Bar constructions (joint work with Kristian Moi)}\label{Gbarconstructions}

This section is inspired in part by conversations with Nisan Stiennon.
Recall that $S^{1,1}_{\sbt}$ denotes the simplicial circle $S^{1}_{\sbt}=\Delta[1]/\partial\Delta[1]$ with levelwise $G=\mathbb{Z}/2$-action induced from the involution $\omega_k$ on  $Cat([k],[1])$ sending $\sigma\colon [k]\longrightarrow [1]$ to
\[(\omega\sigma)(j)= 1-\sigma(k-j)\]
Writing $\sigma=(0^j1^{k+1-j})$, the involution sends $\sigma$ to $(0^{k+1-j}1^j)$.
Given a topological monoid $M$, consider the simplicial topological monoid $M(S^{1,1}_{\sbt})$ defined degreewise by taking reduced configurations of points of $S^{1,1}_{\sbt}$ labeled by $M$, that is
\[M(S^{1,1}_{\sbt})_k=M(S^{1,1}_k)=(\bigoplus_{\sigma\in S^{1,1}_k}M\cdot\sigma)/M\cdot\ast\]
Faces and degeneracies are defined by linear extension of faces and degeneracies of $S^{1,1}_{\sbt}$. Notice that this is isomorphic to the standard Bar construction on $M$. Now suppose that $M$ has an anti-involution, that is a monoid map $D\colon M^{op}\longrightarrow M$ with $D^{2}=\id$. This induces a degreewise $G$-action $D\colon M(S^{1,1}_k)^{op}\longrightarrow M(S^{1,1}_k)$ by sending $\sum_im_i\sigma_i$ to
\[D(\sum_im_i\sigma_i)=\sum_iD(m_i)\omega\sigma_{i}\]
This action is not simplicial, but it defines a real space structure on $M(S^{1,1}_{\sbt})$ (cf. \ref{defrealob}). Its realization is the $G$-space denoted $M(S^{1,1})$. The one-simplicies of $M(S^{1,1}_{\sbt})$ are canonically $G$-homeomorphic to $M$, and the adjoint of the projection map
\[M\times\Delta^1\longrightarrow M(S^{1,1})\]
induces a $G$-map $M\longrightarrow \Omega^{1,1}M(S^{1,1})$. Here $\Omega^{1,1}=\map_\ast(|S^{1,1}_{\sbt}|,-)$ denotes the pointed mapping space with conjugation action. The next result has also been proved independently, using somewhat different methods, by Nisan Stiennon \cite{Nisan}.

\begin{prop}\label{S11deloopformonoids}
Let $M$ be a monoid with anti-involution, and suppose that $\pi_0(M)$ is a group. Then the map $M\longrightarrow \Omega^{1,1}M(S^{1,1})$ is a $G$-equivalence.
\end{prop}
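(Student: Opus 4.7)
The plan is to follow the strategy used in the proof of Proposition \ref{THRS11}: factor the map $M\longrightarrow\Omega^{1,1}M(S^{1,1})$ through a double path construction on the simplicial space $M(S^{1,1}_{\sbt})$, show that this intermediate object is $G$-equivalent to $M$ on one side and models the equivariant loop space on the other. Write $X_{\sbt}=M(S^{1,1}_{\sbt})$; note that $X_0=\ast$ and $X_1=M$ as $G$-spaces, and the levelwise involution is not simplicial but the pair of shifts it induces will be.

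Define simplicial topological monoids $P_+X_{\sbt}$ and $P_-X_{\sbt}$ with $(P_\pm X)_k=X_{k+1}$, obtained by forgetting the $0$-th (resp.\ last) face and degeneracy. The involution $D$ does not preserve either $P_\pm X$ separately, but it induces mutually inverse maps $D_\pm\colon P_\pm X\rightleftarrows P_\mp X$ reversing the simplicial structure. Consequently the double shift $P_+P_-X_{\sbt}$, with $(P_+P_-X)_k=X_{k+2}$ and both outermost faces and degeneracies forgotten, carries a genuine simplicial $G$-action. Include $M=X_1$ into this double shift by the composition
\[
\iota\colon M=X_1=(sd_eP_+P_-X_{\sbt})_0\stackrel{s}{\longrightarrow}sd_eP_+P_-X_{\sbt},
\]
which is a simplicial $G$-map. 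The iterated zero face $r=d_0^{\,\sbt+1}\colon sd_eP_+P_-X_{\sbt}\longrightarrow X_1=M$ is an equivariant retraction of $\iota$, and the standard simplicial contraction $\overline{s}_0^{\,b}\overline{d}_0^{\,b}$ provides a simplicial $G$-homotopy from $\iota\circ r$ to the identity, so $\iota$ is a simplicial $G$-homotopy equivalence.

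On the other hand, applying the functors $d_0$ and $d_L$ (iterated last face) levelwise produces a commutative square
\[
\xymatrix{|P_+P_-X_{\sbt}|\ar[r]^-{d_L}\ar[d]_-{d_0} & |P_+X_{\sbt}|\ar[d]^-{d_0}\\ |P_-X_{\sbt}|\ar[r]_-{d_L} & |X_{\sbt}|=M(S^{1,1})}
\]
and hence an equivariant comparison map into the homotopy pullback $|P_+X_{\sbt}|\stackrel{h}{\times}_{|X_{\sbt}|}|P_-X_{\sbt}|$, where the $G$-action on the pullback is the one of appendix \ref{Gbarconstructions} that sends $(y,\gamma,y')$ to $(D_-y',D\overline{\gamma},D_+y)$. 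A levelwise analysis as in the proof of \ref{THRS11} shows $(P_+P_-X)_k\simeq X_{k+1}\stackrel{h}{\times}_{X_k}X_{k+1}$ both non-equivariantly and, using the fixed-point identification of homotopy pullbacks from \ref{fixedptshopb}, on $G$-fixed points. To pass to realizations equivariantly, apply the Bousfield–Friedlander theorem \ref{bous-fried} twice (once non-equivariantly, once to the subdivision of the fixed-point simplicial spaces): the $\pi_\ast$-Kan condition follows from $X_0=\ast$ exactly as in lemma \ref{piastbusiness}, since $X_{\sbt}$ and its path constructions are $0$-reduced bisimplicial-after-singular-realization sets.

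Finally, since $P_\pm X_{\sbt}\simeq X_0=\ast$ as simplicial spaces via the removed face, both $|P_+X|$ and $|P_-X|$ are $G$-contractible, and the standard model for the homotopy pullback over the contractible spaces identifies it equivariantly with $\Omega^{1,1}|X_{\sbt}|=\Omega^{1,1}M(S^{1,1})$. Chasing the constructions shows this composite agrees with the adjoint of the projection $M\times\Delta^1\longrightarrow M(S^{1,1})$, completing the proof. The hypothesis that $\pi_0(M)$ is a group enters precisely here: it is what guarantees that the classical map $M\to\Omega|X_{\sbt}|$ is a weak equivalence, i.e.\ that the equivalence $P_+X\simeq\ast$ gives the expected loop space on realization rather than just the homotopy fibre of some non-surjective comparison map. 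The main obstacle is the fixed-point analysis in step two: one must verify that the levelwise homotopy pullback computation commutes with geometric realization on $G$-fixed points, which requires both the subdivision trick and a careful check that the path constructions $sd_e(P_\pm X)_k$ are $0$-reduced so that Bousfield–Friedlander applies on the fixed-point simplicial spaces.
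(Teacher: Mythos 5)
Your high-level architecture is right and is the same as the paper's: include $M$ into $sd_eP_+P_-M(S^{1,1}_{\sbt})$ via the degree shift, contract by $\overline{s}_0^{\,b}\overline{d}_0^{\,b}$, and compare $|P_+P_-X_{\sbt}|$ with the homotopy pullback $|P_+X_{\sbt}|\stackrel{h}{\times}_{|X_{\sbt}|}|P_-X_{\sbt}|$, which is then identified with $\Omega^{1,1}M(S^{1,1})$ via Lemma \ref{modelloop11}. But the last step of your proposal has a genuine gap that the paper's proof avoids.

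You propose to identify $(P_+P_-X)_k$ with the levelwise \emph{homotopy} pullback $X_{k+1}\stackrel{h}{\times}_{X_k}X_{k+1}$ and then pass to realizations with Bousfield--Friedlander, arguing that the $\pi_\ast$-Kan condition follows as in Lemma \ref{piastbusiness} from the observation that $X_0=\ast$. This does not work for a general group-like topological monoid. The sufficient criterion \cite[IV,4.2-(2)]{goersjardine} used in Lemma \ref{piastbusiness} requires the simplicial directions to be levelwise \emph{connected}, which for $\THR^\oplus$ came from a simplicial abelian group model and from $S^{2,1}_0C=S^{2,1}_1C=0$. Here $M(S^{1,1}_k)\cong M^k$ is connected only if $M$ is connected; the hypothesis is merely that $\pi_0M$ is a group, so the $\pi_\ast$-Kan condition cannot be verified this way and there is no abelian group model to substitute. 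The paper sidesteps this entirely: it observes that $(P_+P_-X)_k = X_{k+2}$ is already the \emph{strict} pullback $X_{k+1}\times_{X_k}X_{k+1}$ (because the simplicial maps on $M(S^{1,1}_{\sbt})$ are literal projections $M^{k+1}\to M^k$), and realization commutes with strict pullbacks, so $|P_+P_-X_{\sbt}|\cong|P_+X_{\sbt}|\times_{|X_{\sbt}|}|P_-X_{\sbt}|$ as $G$-spaces on the nose. The only comparison left is from the strict pullback of realizations to the homotopy pullback, which the paper handles via the quasi-fibration criterion Lemma \ref{gplikeqfib}. That is precisely where the group-like hypothesis enters: it is what makes the levelwise trivial fibrations $d_0$ and $d_L$ realize to quasi-fibrations. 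Your account of where group-likeness enters (``guarantees that the classical map $M\to\Omega|X_{\sbt}|$ is a weak equivalence'') describes the conclusion rather than the mechanism.

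One further imprecision: $|P_+X_{\sbt}|$ and $|P_-X_{\sbt}|$ are not $G$-spaces -- the involution $D_\pm$ \emph{exchanges} them -- so ``both are $G$-contractible'' is not meaningful as stated. The correct statement is Lemma \ref{modelloop11}: a contraction $H_+$ of $|P_+X_{\sbt}|$ determines a compatible contraction $H_-$ of $|P_-X_{\sbt}|$ by conjugating with $D_\pm$ and reversing time, and the pair $(H_+,H_-)$ gives a $G$-homotopy inverse to the inclusion $\Omega^{1,1}|X_{\sbt}|\hookrightarrow |P_+X_{\sbt}|\stackrel{h}{\times}_{|X_{\sbt}|}|P_-X_{\sbt}|$.
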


\begin{rem}
Kristian Moi proved that for any monoid with anti-involution $M$, the canonical map $M^G\longrightarrow (\Omega^{1,1}M(S^{1,1}))^G$ induces an equivalence in homology after localizing the action of $\pi_0M$ on $H_\ast(M^G)$.
\end{rem}

The proof of the proposition is given later in the section. Now let $S^{1,0}_{\sbt}$ be the simplicial circle with trivial $G$-action, and suppose that the topological monoid with anti-involution $M$ is abelian. In this case the fixed points of the action $M^G$ is a submonoid of $M$, and one can define $M(S^{1,0}_{\sbt})$ as the same simplicial space as $M(S^{1,1}_{\sbt})$, but
with levelwise action defined by 
\[D(\sum_im_i\sigma_i)=\sum_iD(m_i)\sigma_{i}\]
This action is now simplicial, and the adjoint of the projection map
\[M\times\Delta^1\longrightarrow M(S^{1,0})\]
induces an equivariant map $M\longrightarrow \Omega M(S^{1,1})$ where the loop space now carries the pointwise action (that is, the conjugation action on the mapping space for the trivial action on the circle).

\begin{prop}\label{S10deloopformonoids}
Let $M$ be an abelian topological monoid with additive involution, and suppose that both $\pi_0(M)$ and $\pi_0(M^G)$ are groups. Then the canonical map \[M\longrightarrow \Omega M(S^{1,0})\] is a $G$-equivalence.
\end{prop}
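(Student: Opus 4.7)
The plan is to reduce the statement to the classical group-completion theorem applied once non-equivariantly to $M$ and once to the fixed-point submonoid $M^G$. The key simplification relative to Proposition \ref{S11deloopformonoids} is that the involution on $S^{1,0}_{\sbt}$ is trivial, so all the constructions involved are strictly simplicial and commute cleanly with the passage to fixed points.

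First I would verify the non-equivariant statement: the map $M\longrightarrow\Omega M(S^{1,0})$ is (up to the canonical identification $M(S^{1,0})\cong BM$) the classical comparison map from a topological monoid to the loops on its classifying space, which is a weak equivalence when $\pi_0(M)$ is a group (Dold--Thom / McDuff--Segal group-completion). This handles connectivity of the underlying spaces.

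Next I would analyze the fixed points. Since $M$ is abelian and $D$ is additive, $M^G\subset M$ is an abelian submonoid, and by hypothesis $\pi_0(M^G)$ is a group. Because the $G$-action on $S^{1,0}_{\sbt}$ is trivial, the levelwise action on $M(S^{1,0}_{\sbt})$ is strictly simplicial, so realization commutes with fixed points and yields a canonical homeomorphism
\[
M(S^{1,0})^G\;\cong\;|M(S^{1,0}_{\sbt})^G|\;\cong\;|M^G(S^{1,0}_{\sbt})|\;=\;M^G(S^{1,0}),
\]
using that the fixed points of the diagonal action on $\bigoplus_{\sigma\in S^{1,0}_k}M\cdot\sigma/M\cdot\ast$ are $\bigoplus_{\sigma\in S^{1,0}_k}M^G\cdot\sigma/M^G\cdot\ast$. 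Moreover, because $S^{1,0}$ carries the trivial action, the $G$-action on $\Omega M(S^{1,0})$ is pointwise, so $\Omega$ commutes with $(-)^G$ and
\[
(\Omega M(S^{1,0}))^G\;=\;\Omega(M(S^{1,0})^G)\;\cong\;\Omega M^G(S^{1,0}).
\]
Under these identifications the restriction of $M\longrightarrow\Omega M(S^{1,0})$ to $G$-fixed points is the canonical map $M^G\longrightarrow\Omega M^G(S^{1,0})$, so applying the non-equivariant case again to $M^G$ gives the required equivalence on fixed points.

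The hard part is nothing deep here; it is really just bookkeeping to make sure that the three operations involved --- the levelwise free $M$-labeling, geometric realization, and the loop functor --- all commute with the $G$-fixed-point functor in the trivial-action setting. Once this has been checked (which is clean because the levelwise action is simplicial and the action on $S^{1,0}$ is trivial), both a classical group-completion input on $M$ and one on $M^G$ finish the argument. Note that the additional hypothesis $\pi_0(M^G)$ being a group, absent from Proposition \ref{S11deloopformonoids}, is exactly what is needed to apply group-completion to $M^G$; without it the fixed-point map would only be a homology equivalence after group-completion, not a weak equivalence.
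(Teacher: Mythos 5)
Your proposal takes essentially the same approach as the paper: both reduce the fixed-point statement to two applications of the classical non-equivariant group-completion equivalence (once for $M$, once for $M^G$), using the fact that the levelwise $G$-action on $M(S^{1,0}_{\sbt})$ is simplicial so that realization commutes with fixed points, and that $\Omega$ commutes with $(-)^G$ because the circle carries the trivial action. The only cosmetic difference is that the paper re-derives the non-equivariant equivalence via the path-space fibration $M\to PM(S^{1,0})\to M(S^{1,0})$ whereas you cite it directly, but the underlying argument and the role of the hypothesis that $\pi_0(M^G)$ is a group are the same.
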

The proof of this last proposition can be easily reduced to the classical non-equivariant proof (see e.g. \cite[7.6]{egbgfib}), which we recall.

\begin{proof}
Consider the simplicial space $P M(S^{1,0}_{\sbt})$ define in degree $k$ as 
\[PM(S^{1,0}_{\sbt})_k=M(S^{1,0}_{k+1})\]
with simplicial structure obtained by forgetting $d_0$ and $s_0$ in every simplicial degree from $M(S^{1,0}_{\sbt})$.
The forgotten face $d_0$ gives a simplicial map $d_0\colon PM(S^{1,0}_{\sbt})\longrightarrow M(S^{1,0}_{\sbt})$. Levelwise it is the trivial fibration with fiber $M$. It is a known fact that since $M$ is group-like, a levelwise fibration realizes to a fiber sequence
\[M\longrightarrow PM(S^{1,0})\longrightarrow M(S^{1,0})\]
This can be shown using the criterion \ref{gplikeqfib} at the end of this section.
Moreover the space  $PM(S^{1,0}_{\sbt})$ is simplicially homotopy equivalent to $M(S^{1,0}_0)=\ast$, and therefore the canonical map $M\longrightarrow \Omega M(S^{1,0})$ is an equivalence.

We now prove the statement on the fixed points.
The restriction of the map $d_0$ levelwise on the fixed points, gives a sequence 
\[M^G\longrightarrow PM^G(S^{1,0})\longrightarrow M^G(S^{1,0})\]
which is again a fiber sequence since $M^G$ is group-like.
This gives an equivalence
\[M^G\stackrel{\simeq}{\longrightarrow}\Omega M^G(S^{1,0})\cong \Omega M(S^{1,0})^G\cong(\Omega M(S^{1,0}))^G \]
where the first homeomorphism comes from the fact that the action on $S^{1,0}_{\sbt}$ is trivial.
\end{proof}

Since $G$ has only two irreducible real representations, one can decompose any representation sphere $S^V$ as a smash product of $S^{1,0}$ and $S^{1,1}$. An easy induction argument using the fact that $M(S^{1,i}_{\sbt}\wedge S^{1,j}_{\sbt})\cong M(S^{1,i})( S^{1,j}_{\sbt})$ for $i,j=0,1$ gives the following.

\begin{cor}\label{Gdeloopingsformonoid}
Let $M$ be an abelian topological monoid with additive involution, and suppose that both $\pi_0(M)$ and $\pi_0(M^G)$ are groups. Then the canonical map
\[M\longrightarrow\Omega^VM(S^V)\]
is a $G$-equivalence.
\end{cor}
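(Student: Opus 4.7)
The plan is to reduce the statement to Propositions \ref{S11deloopformonoids} and \ref{S10deloopformonoids} by an inductive argument, splitting the representation $V$ into its trivial and sign parts. Since $G = \mathbb{Z}/2$ has only two irreducible real representations, we may decompose $V \cong \mathbb{R}^p \oplus q\cdot \mathbb{R}^{1,1}$, so that the simplicial sphere factors as $S^V_{\sbt} \cong S^p_{\sbt} \wedge S^{q,q}_{\sbt}$ where $S^{q,q}_{\sbt} = (S^{1,1}_{\sbt})^{\wedge q}$. Using the isomorphism of simplicial spaces $M(X \wedge Y) \cong M(X)(Y)$ (which is $G$-equivariant when $X$ and $Y$ are real sets and $M$ has an involution), the map $M \to \Omega^V M(S^V)$ factors as
\[M \longrightarrow \Omega^p M(S^p) \longrightarrow \Omega^p\Omega^{q,q}M(S^p)(S^{q,q}) \cong \Omega^V M(S^V),\]
so it suffices to prove each of the two maps is a $G$-equivalence.

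First I would handle the trivial part $M \to \Omega^p M(S^p)$ by induction on $p$, writing each step as $\Omega^j$ applied to $M(S^j) \to \Omega M(S^j)(S^1)$ and invoking Proposition \ref{S10deloopformonoids} on the monoid $K = M(S^j)$. Since the action on $S^j$ is trivial, $K$ inherits its abelian monoid structure and additive involution from $M$, and $K^G = M^G(S^j)$. The required hypothesis ``$\pi_0(K)$ and $\pi_0(K^G)$ are groups'' holds for $j = 0$ by assumption on $M$, and for $j \geq 1$ because both $K$ and $K^G$ are connected (the $0$-simplices of the bar construction reduce to the basepoint).

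Next I would attack the sign part $\Omega^p\bigl(M(S^p) \to \Omega^{q,q} M(S^p)(S^{q,q})\bigr)$ by induction on $q$, at each step applying Proposition \ref{S11deloopformonoids} to the monoid $L = M(S^p)(S^{r,r})$ for $0 \leq r < q$. Since $M$ is abelian, $L$ is an abelian monoid with involution, hence in particular a monoid with anti-involution. The only hypothesis to check is that $\pi_0(L)$ is a group: if $p \geq 1$ or $r \geq 1$ then $L$ is connected, and if $p = r = 0$ then $L = M$ and the assumption on $\pi_0(M)$ applies. Finally, $G$-equivalences are preserved by $\Omega^p$ and $\Omega^{q,q}$ (the mapping space out of a $G$-CW complex), so composing the two $G$-equivalences yields the desired conclusion.

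The main obstacle to a naive induction, which is circumvented by doing the trivial part before the sign part, is the analysis of $\pi_0\bigl(M(S^{q,q})^G\bigr)$: the $G$-action on $S^{q,q}$ is non-trivial, so the fixed points of $M(S^{q,q})$ are not simply the bar construction on $M^G$, and controlling connectivity there would require extra work. By delooping in the trivial directions first, every application of Proposition \ref{S10deloopformonoids} is to a monoid whose $G$-fixed subspace is a bar construction on a trivial action, which is manifestly connected (or is $M^G$ itself), and every application of Proposition \ref{S11deloopformonoids} needs only the non-equivariant $\pi_0$ hypothesis, which is easy.
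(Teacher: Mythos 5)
Your proof is correct and is essentially the inductive argument the paper has in mind; the paper only says ``an easy induction argument using $M(S^{1,i}_{\sbt}\wedge S^{1,j}_{\sbt})\cong M(S^{1,i})(S^{1,j}_{\sbt})$'' and you have simply filled in the details, decomposing $V$ into trivial and sign summands and invoking \ref{S10deloopformonoids} and \ref{S11deloopformonoids} at each step. One small remark: the danger you flag (controlling $\pi_0(M(S^{q,q})^G)$ if one deloops in the sign directions first) is in fact avoidable by a quotient argument --- the $0$-simplices of $sd_e M(S^{q,q}_{\sbt})^G$ are $M(S^{q,q}_1)^G = M(S^0)^G = M^G$, so $\pi_0(M(S^{q,q})^G)$ is a quotient monoid of $\pi_0(M^G)$ and hence still a group --- but your ordering is cleaner and sidesteps the need for this observation.
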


We now prove proposition \ref{S11deloopformonoids}. As seen in the proof above, a standard technique to show that something is a loop space, is to build a fiber sequence with contractible total space. Here we develop a framework that allows us to show that a $G$-space is $\Omega^{1,1}$ of something, and later we will apply it to prove the proposition.

Let $X$ be a pointed $G$-space with action given by an involution $D\colon X\longrightarrow X$, and let
\[\xymatrix{Y_+\ar[d]_{p_+}\ar[r]^{D_+}_{\cong}&Y_-\ar[d]^{p_-}\\
X\ar[r]^D&X
}\]
be a commutative diagram of pointed spaces. Denote $D_-$ the inverse of the homeomorphism $D_+$. Since $D^2=\id$, it follows that $Dp_-=p_+D_-$.
In this situation, the homotopy pullback $Y_+\stackrel{h}{\times}_XY_-$ of the diagram
\[\xymatrix{& Y_+\ar[d]^{p_+}\\
Y_-\ar[r]_{p_-}& X
}\]
admits $G$-action, defined as follows. An element $(y,\gamma,y')\in Y_+\stackrel{h}{\times}_XY_-$ is a triple $(y,\gamma,y')\in Y_+\times X^I\times Y_-$ such that $\gamma(0)=p_+(y)$ and $\gamma(1)=p_-(y')$. This triple is sent by the action to
\[(D_-(y'),D\overline{\gamma},D_+(y))\]
where $D\overline{\gamma}$ denotes the path obtained by applying the involution $D$ pointwise to the backwards path $\overline{\gamma}(t)=\gamma(1-t)$.

One can describe the fixed points of the homotopy pullback again as a homotopy pullback.
\begin{lemma}\label{fixedptshopb}
There is a natural homeomorphism between $(Y_+\stackrel{h}{\times}_XY_-)^G$ and the homotopy pullback $Y_+\stackrel{h}{\times}_XX^G$ of the diagram
\[\xymatrix{& Y_+\ar[d]^{p_+}\\
X^G\ar[r]& X
}\]
where the bottom map is the inclusion of the fixed points.
\end{lemma}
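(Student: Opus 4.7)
The plan is to unpack the fixed point condition explicitly and then construct a homeomorphism by the standard trick of cutting a symmetric path in half at its midpoint. A triple $(y,\gamma,y') \in Y_+ \times X^I \times Y_-$ with $\gamma(0)=p_+(y)$ and $\gamma(1)=p_-(y')$ is fixed by the $G$-action precisely when $y = D_-(y')$ (equivalently $y' = D_+(y)$) and $\gamma = D\overline{\gamma}$, i.e.\ $\gamma(t) = D(\gamma(1-t))$ for all $t \in I$. The key observation is that such a symmetric path is forced to have $\gamma(1/2) \in X^G$, and is completely determined by its restriction to $[0,1/2]$ together with this midpoint.

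First I would define a map
\[\Phi\colon (Y_+\stackrel{h}{\times}_XY_-)^G \longrightarrow Y_+\stackrel{h}{\times}_X X^G\]
by sending a fixed triple $(y,\gamma,D_+(y))$ to $(y,\delta,\gamma(1/2))$, where $\delta\colon I \to X$ is the reparametrized first half of $\gamma$, namely $\delta(t) = \gamma(t/2)$. This is well-defined since $\delta(0) = p_+(y)$ and $\delta(1) = \gamma(1/2) \in X^G$ by the symmetry condition above.

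Next I would define the inverse $\Psi\colon Y_+\stackrel{h}{\times}_X X^G \to (Y_+\stackrel{h}{\times}_XY_-)^G$ by sending $(y,\delta,x)$ with $\delta(1)=x \in X^G$ to $(y,\gamma,D_+(y))$, where
\[\gamma(t) = \begin{cases} \delta(2t) & 0 \le t \le 1/2,\\ D(\delta(2-2t)) & 1/2 \le t \le 1. \end{cases}\]
Continuity at $t=1/2$ uses precisely that $\delta(1)=x=D(x)$. The boundary condition at $t=1$ is checked using the commutativity of the given square, which gives $p_-\circ D_+ = D\circ p_+$; thus $\gamma(1) = D(\delta(0)) = D(p_+(y)) = p_-(D_+(y))$. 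The symmetry $\gamma(t)=D(\gamma(1-t))$ is immediate from the two-part formula, so $\Psi(y,\delta,x)$ does land in the fixed points.

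Finally I would check that $\Phi$ and $\Psi$ are mutually inverse and continuous. The composite $\Phi\Psi$ is plainly the identity, and $\Psi\Phi$ is the identity because a symmetric path is recovered from its first half by reflecting through $D$. Both maps are continuous because they only involve reparametrization and concatenation of paths in $X$, and naturality in $(Y_+,Y_-,X,p_\pm,D,D_\pm)$ is immediate from the formulas. I do not expect a real obstacle here beyond careful bookkeeping with the $1/2$-reparametrization and the verification that the commutative square gives exactly the boundary identity needed for $\gamma(1)$.
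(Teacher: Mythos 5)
Your proof is correct and is essentially the same argument as the paper's: cut the $D$-symmetric path at $t=1/2$, observe that the midpoint is forced to lie in $X^G$, and reconstruct by reflecting the first half through $D$. Your formula for the inverse, $\gamma(t)=\delta(2t)$ on $[0,1/2]$ and $\gamma(t)=D\delta(2-2t)$ on $[1/2,1]$, is in fact written more carefully than the paper's (whose first branch $\delta(t)$ appears to be a typo for $\delta(2t)$, since otherwise continuity at $t=1/2$ fails); the verification via $p_-\circ D_+=D\circ p_+$ is also exactly what is needed.
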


\begin{proof}
An element of $(Y_+\stackrel{h}{\times}_XY_-)^G$ is a triple of the form $(y,\gamma,D_+y)$, where the path $\gamma$ satisfies
\[\gamma=D\overline{\gamma}\]
This means that $\gamma$ is determined by its restriction to $[0,1/2]$, since $\gamma(1-t)=D\gamma(t)$. Moreover $\gamma(1/2)$ is a fixed point since
\[D\gamma(1/2)=\overline{\gamma}(1/2)=\gamma(1-1/2)=\gamma(1/2)\]
An element of $Y_+\stackrel{h}{\times}_XX^G$ is a path $\delta\colon I\longrightarrow X$ and a $y\in Y_+$ such that $p_+(y)=\delta(0)$ and $\delta(1)\in X^G$. Thus we can map $(y,\gamma,D_+y)$ to the pair $(y,\gamma|_{[0,1/2]})$. This defines a map
\[(Y_+\stackrel{h}{\times}_XY_-)^G\longrightarrow Y_+\stackrel{h}{\times}_XX^G\]
with inverse sending $(y,\delta)$ to the triple $(y,\gamma,D_+y)$ with $\gamma$ defined by
\[\gamma(t)=\left\{\begin{array}{ll}\delta(t)&\mbox{ if }t\in[0,1/2]\\
D\delta(2-2t)&\mbox{ if }t\in[1/2,1]
\end{array}\right.\]
\end{proof}

There is an equivariant inclusion map $\iota\colon \Omega^{1,1}X\longrightarrow Y_+\stackrel{h}{\times}_XY_-$ that sends a loop $\tau$ of $X$ to the triple $(\ast_+,\tau,\ast_-)$, where $\ast_+$ and $\ast_-$ denote respectively the base points of $Y_+$ and $Y_-$.

\begin{lemma}\label{modelloop11}
A contraction of $Y_+$ induces a $G$-homotopy inverse for the inclusion 
\[\iota\colon \Omega^{1,1}X\longrightarrow Y_+\stackrel{h}{\times}_XY_-\]
\end{lemma}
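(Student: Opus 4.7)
The plan is to build an explicit $G$-equivariant retraction $r\colon Y_+\stackrel{h}{\times}_X Y_-\to \Omega^{1,1}X$ for $\iota$ out of a based contraction $c_t\colon Y_+\to Y_+$ with $c_0=\id$ and $c_1=\mathrm{const}_{\ast_+}$ together with its conjugate $c_t':=D_+\circ c_t\circ D_-$ on $Y_-$. For a triple $(y,\gamma,y')$ in the homotopy pullback I would define $r(y,\gamma,y')$ to be the concatenation, on $[0,1/3],[1/3,2/3],[2/3,1]$, of the path $u\mapsto p_+(c_{1-u}(y))$ from $\ast$ to $p_+(y)=\gamma(0)$, of $\gamma$ reparametrized, and of the path $u\mapsto p_-(c_u'(y'))$ from $p_-(y')=\gamma(1)$ to $\ast$.

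First I would check $G$-equivariance of $r$. The action sends $(y,\gamma,y')$ to $(D_-y',D\overline{\gamma},D_+y)$, and applying $r$ and comparing with $D\overline{r(y,\gamma,y')}$ -- which reverses and applies $D$ piecewise -- the two side paths get interchanged, and the identities $D\circ p_+=p_-\circ D_+$ and $c_t'\circ D_+=D_+\circ c_t$ make the two expressions agree piece by piece. Next, for $r\circ\iota$, based\-ness of $c_t$ forces the two side paths of $r(\iota(\tau))$ to be constant at $\ast$, so $r(\iota(\tau))$ is $\tau$ reparametrized onto $[1/3,2/3]$; the standard linear homotopy stretching this reparametrization is symmetric around $1/2$ and therefore equivariant on $\Omega^{1,1}X$. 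For $\iota\circ r$, I would define a homotopy $H_s$ on $Y_+\stackrel{h}{\times}_X Y_-$ sending $(y,\gamma,y')$ to $(c_s(y),\gamma_s,c_s'(y'))$, with $\gamma_s$ obtained from $\gamma$ by prepending $u\mapsto p_+(c_{s(1-u)}(y))$ and appending $u\mapsto p_-(c_{su}'(y'))$. At $s=0$ it is the identity, at $s=1$ it is $\iota\circ r$ (after reparametrization), and $G$-equivariance follows from the same two compatibilities, since the prepended path on one side matches, after $D$ and time-reversal, the appended path on the conjugate side.

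The main obstacle is that the chosen contraction $c_t$ is not itself $G$-equivariant -- $Y_+$ carries no involution -- so all equivariance must come from the way $c_t$ and $c_t'=D_+c_tD_-$ are tied together by $D_+$. The construction above is designed precisely so that the asymmetry between the two end pieces (a path in $Y_+$ versus its $D_+$-conjugate in $Y_-$) matches the asymmetry between the two factors of the homotopy pullback under the flip action, and this is what makes both the definition of $r$ and the intermediate paths in $H_s$ fit together into genuine $G$-maps and $G$-homotopies.
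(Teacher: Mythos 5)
Your proof is correct and takes essentially the same route as the paper: conjugate the given contraction of $Y_+$ by $D_\pm$ to get the matching contraction of $Y_-$, build the retraction $r$ by concatenating the three path segments, and give the ``unwinding'' homotopy for $\iota\circ r$. You are a bit more careful than the paper in one spot -- the paper asserts $r\circ\iota=\id$ on the nose, whereas with the standard concatenation on $[0,1]$ this is only true up to a reparametrization, which you correctly supply as a small extra (equivariant) homotopy.
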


\begin{proof}
Suppose that there is a contraction $H_+\colon Y_+\times I\longrightarrow Y_+$ from the identity to $\ast_+$. Define a contraction $H_-\colon Y_-\times I\longrightarrow Y_-$ from $\ast_-$ to the identity by
\[H_-(y',t)=D_+H_+(D_-(y'),1-t)\]
This gives a an equivariant map $r\colon Y_+\stackrel{h}{\times}_XY_-\longrightarrow \Omega^{1,1}X$ defined by
\[r(y,\gamma,y')=(p_+\circ H_+(y,-))\ast\gamma\ast(p_-\circ H_-(y',-))\]
where $\ast$ denotes concatenation of paths. The composite $r\circ \iota$ is the identity. The other composite sends $(y,\gamma,y')$ to
\[(i\circ r)(y,\gamma,y')=(\ast_+,(p_+\circ H_+(y,-))\ast\gamma\ast(p_-\circ H_-(y',-)),\ast_-)\]
A $G$-homotopy between this map an the identity is defined by sending $(y,\gamma,y',t)$ to
\[(p_+(H_+(y,t)),(p_+\circ H_+(y,-))|_{[t,1]}\ast\gamma\ast(p_-\circ H_-(y',-))|_{[0,1-t]},p_-(H_-(y,t)))\]
\end{proof}

Let $X_{\sbt}$ be a real space with contractible $0$-simplicies. Define $P_+X_{\sbt}$ to be the simplicial space with $k$-simplicies $X_{k+1}$, and with simplicial structure given by forgetting $d_0$ and $s_0$ from $X_{\sbt}$ in every simplicial degree. Similarly, denote $P_-X_{\sbt}$ the simplicial space with the same simplicies, but where the simplicial structure is given by forgetting the last face and degeneracy of $X_{\sbt}$. The forgotten faces give simplicial maps 
\[\xymatrix{&P_+X_{\sbt}\ar[d]^{d_0}\\
P_-X_{\sbt}\ar[r]_{d_{L}}&X_{\sbt}}\]
The levelwise involution $D$ on $X_{\sbt}$ does not give a real structure on $P_+X_{\sbt}$, nor on $P_-X_{\sbt}$, but it induces a map $D_+\colon P_+X_{\sbt}\longrightarrow P_-X_{\sbt}$ that reverses the order of faces and degeneracies. It induces a homeomorphism on realizations \[D_+\colon |P_+X_{\sbt}|\longrightarrow |P_-X_{\sbt}|\]
defined by sending $[x;t_0,\dots,t_n]$ to $[Dx;t_n,\dots,t_0]$.
Setting $X=|X_{\sbt}|$, $Y_+=|P_+X_{\sbt}|$, $Y_-=|P_-X_{\sbt}|$, $p_+=d_0$ and $p_-=d_{L}$ we are in the situation described above. Moreover the forgotten degeneracy defines a simplicial homotopy equivalence $s\colon \ast\simeq X_0\longrightarrow P_+X_{\sbt}$ and therefore a contraction of $Y_+$.

When $X_{\sbt}=M(S^{1,1}_{\sbt})$, the lemma gives a model for $\Omega^{1,1}M(S^{1,1})$.
We prove \ref{S11deloopformonoids} by defining a $G$-equivalence 
\[M\longrightarrow |P_+X|\stackrel{h}{\times}_{|X|}|P_-X|\]
We define it through the the two-sided Bar construction $P_+P_-M(S^{1,1}_{\sbt})$. The levelwise action of $M(S^{1,1}_{\sbt})$ induces a real structure on $P_+P_-M(S^{1,1}_{\sbt})$, and considering $M$ as a constant real space there is an equivariant inclusion
\[M\longrightarrow sd_eP_+P_-M(S^{1,1}_{\sbt})\]
that in simplicial degree $k$ sends $m$ to $m\cdot(0^{k+2}1^{k+2})\in M(S^{1,1}_{2k+3})$.
Notice also that $P_+P_-M(S^{1,1}_{\sbt})$ is by definition the pullback $P_+X\times_{X} P_-X$, and therefore our construction gives a $G$-map 
\[M\longrightarrow |sd_eP_+P_-M(S^{1,1}_{\sbt})|\cong|P_+P_-M(S^{1,1}_{\sbt})|\longrightarrow |P_+X|\stackrel{h}{\times}_{|X|}|P_-X|\]
where the last map is the canonical inclusion from the pullback into the homotopy pullback.
The proof of \ref{S11deloopformonoids} follows by the two lemmas below.

\begin{lemma}
The inclusion $M\longrightarrow sd_eP_+P_-M(S^{1,1}_{\sbt})$ is a simplicial $G$-homotopy equivalence.
\end{lemma}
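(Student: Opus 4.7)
The plan is to mimic the proof of the analogous lemma proved earlier in the paper (the one showing that $C \to sd_e P_+ P_- S^{1,1}_{\sbt} C$ induces a simplicial $G$-homotopy equivalence after applying $\THR$), working now with the topological monoid $M$ in place of the simplicial category $C$. The essential point in both situations is that each of $P_+$ and $P_-$ separately contracts onto the $0$-simplices, and combining the two standard contractions produces an equivariant contraction after subdivision.

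First I would observe that the real structure on $M(S^{1,1}_{\sbt})$ is defined via maps that swap the first face with the last face and the first degeneracy with the last degeneracy. These are precisely the faces and degeneracies that are forgotten when forming $P_+P_-M(S^{1,1}_{\sbt})$. Consequently the levelwise involution restricts to $P_+P_-M(S^{1,1}_{\sbt})$, and after applying the edgewise subdivision this levelwise involution becomes strictly simplicial on $sd_e P_+P_-M(S^{1,1}_{\sbt})$, whose $k$-simplices are $M(S^{1,1}_{2k+3})$. In particular every face and degeneracy of $sd_e P_+P_-M(S^{1,1}_{\sbt})$ commutes with the $G$-action.

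Second, I would define a retraction $r\colon sd_eP_+P_-M(S^{1,1}_{\sbt}) \to M$ by $r = \bar d_0^{\,k+1}$ in simplicial degree $k$, where $\bar d_0$ denotes the $0$-th face map in the subdivided simplicial structure. By the first step $r$ is a simplicial equivariant map, and a direct calculation shows that $r\circ\iota = \id_M$: iterating $\bar d_0$ collapses the generator $m\cdot(0^{k+2}1^{k+2}) \in M(S^{1,1}_{2k+3})$ to $m\cdot(01) \in M(S^{1,1}_1) = M$.

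Third, I would construct a simplicial $G$-homotopy $H\colon sd_eP_+P_-M(S^{1,1}_{\sbt}) \times \Delta[1] \to sd_eP_+P_-M(S^{1,1}_{\sbt})$ from $\id$ to $\iota\circ r$ by the usual formula: for $\sigma = (0^b1^{k+1-b}) \in \Delta[1]_k$, set $H_\sigma = \bar s_0^{\,b}\bar d_0^{\,b}$. Each $H_\sigma$ is equivariant because $\bar d_0$ and $\bar s_0$ are, and the simplicial identities verifying that $H$ is simplicial and restricts to $\id$ at $\sigma = 0$ and to $\iota \circ r$ at $\sigma = 1$ are the same as in the earlier lemma. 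The only point that needs any care, and the main obstacle, is the first step: verifying that the real structure really does restrict to $P_+P_-M(S^{1,1}_{\sbt})$ and becomes simplicial after subdivision. This is a combinatorial bookkeeping exercise about which faces and degeneracies are flipped by the involution, after which the rest of the argument is formal.
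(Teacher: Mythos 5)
Your proposal is correct and follows the same strategy as the paper: identify $sd_e P_+P_-M(S^{1,1}_{\sbt})$ with $P_+sd_eM(S^{1,1}_{\sbt})$, use the standard simplicial contraction of a $P_+$-construction onto its zero simplicies via the iterated forgotten face, and observe that $G$-equivariance is automatic because the involution on $sd_eM(S^{1,1}_{\sbt})$ is simplicial and hence commutes with all faces and degeneracies, including the forgotten ones. The one place to be careful with your wording is that $\bar d_0$ and $\bar s_0$ in the formulas $r=\bar d_0^{\,k+1}$ and $H_\sigma=\bar s_0^{\,b}\bar d_0^{\,b}$ must be the first face and degeneracy of $sd_eM(S^{1,1}_{\sbt})$ — precisely the ones that $P_+$ forgets — and not the faces and degeneracies of $sd_eP_+P_-M(S^{1,1}_{\sbt})$ itself (these are different maps, namely $d_0d_{2k+3}$ versus $d_1d_{2k+2}$ in terms of the faces of $M(S^{1,1}_{\sbt})$, and only the former collapses everything onto the middle component); your phrase ``the $0$-th face map in the subdivided simplicial structure'' leaves this ambiguous, though your subsequent computation landing in $M(S^{1,1}_1)=M$ makes the intent clear, and the paper also projects onto the $(0^{k+2}1^{k+2})$-component, which is exactly $\bar d_0^{\,k+1}$ in this sense.
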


\begin{proof}
It is well known that the inclusion into the two sided-Bar construction is a homotopy equivalence, but since we want to be careful with the $G$-action we write down the maps.
Define a retraction 
$r\colon M(S^{1,1}_{2k+3})\longrightarrow M$ by projecting onto the $(0^{k+2}1^{k+2})$-component.
Denote $\overline{d}_l$ and $\overline{s}_l$ the faces an degeneracies of $sd_eM(S^{1,1}_{\sbt})$, and define a simplicial map $H\colon sd_eP_+P_-M(S^{1,1}_{\sbt})\times\Delta[1]\longrightarrow sd_eP_+P_-M(S^{1,1}_{\sbt})$ in degree $k$ by
\[H(-,\sigma)=\overline{s}_{0}^{b}\overline{d}_{0}^{b}\]
where $\sigma=(0^b,1^{k+1-b})$. Notice that, in degree $k$, the retraction followed by the inclusion is the map $s_{0}^{k+1}d_{0}^{k+1}$, and therefore this is a homotopy from the retraction to the identity. Since the action on $sd_eM(S^{1,1}_{\sbt})$ is simplicial, the action commutes with the faces and degeneracies defining $H$, and therefore $H$ is a $G$-homotopy.
\end{proof}

\begin{lemma}
The canonical map $|P_+P_-M(S^{1,1}_{\sbt})|\longrightarrow |P_+X|\stackrel{h}{\times}_{|X|}|P_-X|$ is a $G$-equivalence.
\end{lemma}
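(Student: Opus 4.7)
My plan is to show that the canonical map is a $G$-equivalence by checking separately that it is a non-equivariant equivalence and an equivalence on $G$-fixed points, using the Bousfield--Friedlander theorem \ref{bous-fried} in both cases after reducing to a pullback of simplicial spaces.

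The starting observation is that $P_+P_-M(S^{1,1}_{\sbt})$ is the \emph{strict} levelwise pullback of $P_+M(S^{1,1}_{\sbt}) \rightarrow M(S^{1,1}_{\sbt}) \leftarrow P_-M(S^{1,1}_{\sbt})$, since removing both the first and last face/degeneracy is the same as the pullback of removing each separately. So it suffices to show the natural map from a levelwise pullback to the homotopy pullback of the realizations is a $G$-equivalence.

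For the non-equivariant statement, $d_L\colon P_-M(S^{1,1}_{\sbt})_k\to M(S^{1,1}_{\sbt})_k$ is, in every simplicial degree, the trivial projection $M^{k+2}\to M^{k+1}$ off the last coordinate, hence a split fibration with fiber $M$. This makes the levelwise square a homotopy pullback. To commute this with realization, I would apply \ref{bous-fried}: the hypothesis that $\pi_0M$ is a group implies that the simplicial spaces $M(S^{1,1}_{\sbt})$ and $P_{\pm}M(S^{1,1}_{\sbt})$ (viewed as bisimplicial sets after taking singular complexes) satisfy the $\pi_\ast$-Kan condition, since $\pi_n M(S^{1,1}_k) \cong \pi_n(M^k)$ and the face/degeneracy maps are given by the (now invertible) multiplication in $\pi_0M$. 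The argument of Lemma \ref{piastbusiness} then transports verbatim, showing that realization commutes with the homotopy pullback non-equivariantly.

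For the fixed points, I would invoke Lemma \ref{fixedptshopb} to rewrite
\[(|P_+X|\stackrel{h}{\times}_{|X|}|P_-X|)^G \cong |P_+X|\stackrel{h}{\times}_{|X|}|X|^G,\]
and then subdivide to compute $|P_+P_-M(S^{1,1}_{\sbt})|^G$ as the realization of $sd_e(P_+P_-M(S^{1,1}_{\sbt}))^G$ which, in degree $k$, is the fixed-point submonoid of $M(S^{1,1}_{2k+3})$ under the involution $\sum m_\sigma\sigma\mapsto \sum D(m_\sigma)(\omega\sigma)$. Exactly as above, this is naturally the strict pullback
\[sd_e(P_+M(S^{1,1}_{\sbt}))_k \times_{sd_e M(S^{1,1}_{\sbt})_k} sd_e(M(S^{1,1}_{\sbt}))_k^G,\]
where the first factor is homeomorphic to $P_+X^{2k+2}$-type data and the right-hand map is a split fibration in each simplicial degree (the fixed set sits inside the full set as a direct summand of monoids, because the $\omega$-action permutes coordinates of $M(S^{1,1}_{2k+3})$ with an involution). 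Therefore each level is again a genuine homotopy pullback, and the $\pi_\ast$-Kan condition on the fixed point bisimplicial sets holds: the hypothesis that $\pi_0 M$ is a group makes $\pi_0$ of the monoid $M(S^{1,1}_{2k+3})^G$ into a group as well, so one applies \ref{bous-fried} once more to conclude that realization commutes with the homotopy pullback on fixed points.

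The main obstacle is verifying the $\pi_\ast$-Kan condition on the fixed points, since the involution on $M(S^{1,1}_{\sbt})$ is not simplicial and one must work on the subdivision; however the explicit fixed-point description of Lemma \ref{fixedptshopb} reduces this to the non-equivariant situation for the monoid $M(S^{1,1}_{2k+3})^G$, which is group-like because $\pi_0M$ is. Combining the two equivalences yields the desired $G$-equivalence, completing the proof of Proposition \ref{S11deloopformonoids}.
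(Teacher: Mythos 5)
Your overall strategy matches the paper's: identify $P_+P_-M(S^{1,1}_{\sbt})$ as the levelwise \emph{strict} pullback, observe that one of the two legs is a (trivial) fibration in each simplicial degree, describe the fixed points as a pullback via Lemma \ref{fixedptshopb}, and then argue that realization preserves the homotopy pullback on underlying spaces and on fixed points. Where the proofs genuinely diverge is in the criterion used to commute the homotopy pullback with realization. The paper introduces the quasi-fibration criterion \ref{gplikeqfib} (levelwise homotopy cartesian face squares imply the realization square is homotopy cartesian) and applies it directly, with the group-like hypothesis on $M$ entering precisely to make the face-map squares homotopy cartesian. You instead invoke Bousfield--Friedlander \ref{bous-fried} together with the $\pi_\ast$-Kan condition, citing the argument of \ref{piastbusiness} by analogy rather than as a direct citation (that lemma is stated for $\THH$, not for Bar constructions of monoids, so ``transports verbatim'' is a shorthand that hides a verification). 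Both criteria work; the paper's \ref{gplikeqfib} is cleaner here because it is stated directly for simplicial spaces and does not require passing through singular bisimplicial sets or checking the $\pi_\ast$-Kan condition explicitly.

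One imprecision worth fixing: you write that ``the right-hand map is a split fibration in each simplicial degree (the fixed set sits inside the full set as a direct summand).'' The inclusion $X_k^G\hookrightarrow X_k$ is a split \emph{inclusion}, not a fibration, and its being a direct summand is not what makes the pullback compute the homotopy pullback. What actually matters, and what the paper says, is that the \emph{other} leg $d_0\colon (P_+X)_k\to X_k$ is a (levelwise trivial) fibration, so that the strict pullback $ (P_+X)_k\times_{X_k}X_k^G$ already models the homotopy pullback in each degree, and the induced map $(P_+X)_k\times_{X_k}X_k^G\to X_k^G$ is again a fibration because fibrations pull back. After that correction your argument on fixed points goes through.
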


\begin{proof}
Levelwise, the space $P_+P_-M(S^{1,1}_{\sbt})_k=M(S^{1,1}_{k+2})$ is the pullback \[(P_+X)_k{\times}_{X_k}(P_-X)_k=M(S^{1,1}_{k+1})\times_{M(S^{1,1}_{k})}M(S^{1,1}_{k+1})\]
since $M(S^{1,1}_{k})\cong M^k$. Moreover the action corresponds to the action on the pullback given by $D(y,y')=(Dy',Dy)$. The fixed points of the action are again described by a pullback \[(P_+P_-M(S^{1,1}_{\sbt}))_{k}^G\cong M(S^{1,1}_{k+1})\times_{M(S^{1,1}_{k})}(M(S^{1,1}_{k}))^G\]
Since realization commutes with pullbacks one gets a $G$-homeomorphism
\[|P_+P_-M(S^{1,1}_{\sbt})|\cong|P_+X|{\times}_{|X|}|P_-X|\] where again the action on the right hand side is defined by $D(y,y')=(Dy',Dy)$.
Therefore one just needs to show that the canonical map \[|P_+X|{\times}_{|X|}|P_-X|\longrightarrow |P_+X|\stackrel{h}{\times}_{|X|}|P_-X|\] is a $G$-equivalence. By the descriptions of the fixed points above, we need to show that the two diagrams
\[\xymatrix{|P_+X|{\times}_{|X|}|P_-X|\ar[d]\ar[r]&|P_+X|\ar[d]^{d_0}\\
|P_-X|\ar[r]_{d_{L}}&|X|} \ \ \ \
\xymatrix{|P_+X|{\times}_{|X|}|X|^G\ar[d]\ar[r]&|P_+X|\ar[d]^{d_0}\\
|X|^G\ar[r]&|X|}
\]
are homotopy cartesian. This is the case if all the vertical maps are quasi-fibrations. Both maps $d_0$ are levelwise trivial fibrations, and since fibrations pull back along any map, levelwise the vertical maps are all fibrations. In order to show that the realizations are quasi fibrations one can use the criterion \ref{gplikeqfib} below, which holds for all the vertical maps if we assume that $M$ is group-like.
\end{proof}

\begin{lemma}\label{gplikeqfib}
Let $f_{\sbt}\colon Y_{\sbt}\longrightarrow X_{\sbt}$ be a map of simplicial spaces such that for all $k$ and $0\leq l\leq k$ the diagrams
\[\xymatrix{Y_k\ar[r]^-{d_l}\ar[d]_{f_k}& Y_{k-1}\ar[d]^{f_{k-1}}\\
X_k\ar[r]^-{d_l}&X_{k-1}
}\]
are homotopy cartesian. Then the diagram
\[\xymatrix{Y_0\ar[r]\ar[d]_{f_0}& |Y_{\sbt}|\ar[d]^{|f|}\\
X_0\ar[r]^{}&|X_{\sbt}|
}\]
is also homotopy cartesian.
\end{lemma}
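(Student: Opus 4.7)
The plan is to reduce to showing that the simplicial space of homotopy fibers has a contractible realization, so that the homotopy fiber of $|f|$ is detected in simplicial degree $0$. First, by replacing $f_{\sbt}$ with a levelwise fibrant replacement (Reedy), we may assume each $f_k\colon Y_k\longrightarrow X_k$ is a Serre fibration with actual fiber $F_k$ over the basepoint of $X_k$, and that this replacement is compatible with the simplicial structure. The hypothesis that each face square is homotopy cartesian then translates into the statement that for each $0\leq l\leq k$, the induced map $d_l\colon F_k\longrightarrow F_{k-1}$ on fibers is a weak equivalence.

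Next I would invoke the standard fact that the realization of a simplicial space all of whose face maps are weak equivalences is weakly equivalent to the space of $0$-simplices, via the canonical inclusion $F_0\hookrightarrow |F_{\sbt}|$. This follows, for example, by checking that the bisimplicial set obtained by applying singular chains becomes constant up to equivalence in the simplicial direction, or more directly by the observation that $[k]\mapsto F_k$ satisfies the $\pi_*$-Kan condition with all face maps of the associated $\pi_n$ simplicial sets being isomorphisms, so $|F_{\sbt}|$ is homotopy equivalent to $F_0$.

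The final step is to combine this with a Bousfield-Friedlander style argument (as in \ref{bous-fried}) to conclude that the realization square
\[\xymatrix{|F_{\sbt}|\ar[r]\ar[d]& |Y_{\sbt}|\ar[d]^{|f|}\\
|X_0|\ar[r]&|X_{\sbt}|
}\]
is homotopy cartesian. Here the hypothesis that every face square in the simplicial direction is homotopy cartesian plus the above computation $|F_{\sbt}|\simeq F_0$ yields the required $\pi_*$-Kan condition for both $X_{\sbt}$ and $Y_{\sbt}$: the $\pi_n$-simplicial sets behave as if they were fibrations over the base $X_{\sbt}$, whose simplicial paths in $\pi_*$ all originate from paths in $X_0$. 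Hence the fiber of $|f|$ over the basepoint is weakly equivalent to $|F_{\sbt}|\simeq F_0$, which is precisely what the homotopy cartesian condition for the bottom square asserts.

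I expect the main obstacle to be the verification that the $\pi_*$-Kan condition is inherited from the hypothesis on the face squares, since the homotopy cartesianness of each square alone does not automatically give a Kan condition on $\pi_n Y$ as a simplicial set; one must use both that face squares are homotopy cartesian and the already-established equivalences $F_k\simeq F_0$ to produce lifts. An alternative that sidesteps this is to use Rezk's or May's group-completion-style realization theorem directly: when the face maps act compatibly on fibers via equivalences, any choice of compatible system of paths in $X_{\sbt}$ produces the required horn-fillings in $\pi_n Y$.
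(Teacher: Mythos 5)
Note first that the paper states this lemma without proof, so there is no argument of the paper's to compare against; what follows is a review of your proposal on its own terms. Your outline is reasonable: replace $f_k$ by fibrations via a levelwise mapping-path-space construction, which is natural and so compatible with the simplicial structure; form the simplicial space of fibers $F_{\sbt}$; observe that the hypothesis makes every face map $F_k\longrightarrow F_{k-1}$ a weak equivalence so that $F_0\longrightarrow |F_{\sbt}|$ is an equivalence (using goodness or fat realization); and then try to show that $|F_{\sbt}|\longrightarrow |Y_{\sbt}|\longrightarrow |X_{\sbt}|$ is a fiber sequence.

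The gap is precisely where you flag it, and it is genuine. Running the final step through Bousfield-Friedlander (\ref{bous-fried}) requires the $\pi_\ast$-Kan condition for both $X_{\sbt}$ and $Y_{\sbt}$, and your claim that this is ``inherited from the hypothesis on the face squares'' cannot hold: the $\pi_\ast$-Kan condition is an intrinsic property of the bisimplicial set of singular simplices of $X_{\sbt}$ alone and is insensitive to the map $f$, whereas the hypothesis of the lemma constrains only how $Y_{\sbt}$ sits over $X_{\sbt}$. Taking $f=\id_{X_{\sbt}}$ shows that the hypothesis holds trivially for every $X_{\sbt}$, so it places no constraint whatsoever on $X_{\sbt}$, and $\pi_\ast$-Kan can certainly fail. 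Since the lemma imposes no connectivity or group-like hypothesis, you cannot appeal to \ref{bous-fried} here; what is needed is a result that dispenses with $\pi_\ast$-Kan in this setting, namely V.\ Puppe's theorem on simplicial quasifibrations (\emph{A remark on homotopy fibrations}, Manuscripta Math.\ 12 (1974), 113--120), which establishes exactly that a levelwise fibration of simplicial spaces whose operator squares are all homotopy cartesian realizes to a quasifibration with fiber $F_0$. Your reduction from face maps to all simplicial operators is fine (since $d_j s_j=\id$, the pasting law for homotopy pullbacks gives the degeneracy squares as homotopy cartesian), but the Bousfield-Friedlander step as written would fail in the generality stated, and one should instead cite or reprove the Puppe/Rezk-style result you allude to at the end.
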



\subsection{$G$-connectivity of $A(S^j)\wedge |X|\longrightarrow A(S^j\wedge X)$}

Let $X_{\sbt}$ be a based real set with involution $\overline{(\cdot)}$, and $A$ an abelian group with additive involution $D$. Recall that $A(X)$ is defined as the realization of the real set with $k$-simplicies
\[A(X_k)=\bigoplus_{x\in X_k} A\cdot x/A\cdot\ast\]
and action induced by sending $a\cdot x$ to $Da\cdot\overline{x}$, and that $S^j$ denotes the smash of $j$ simplicial circles $S^1$ with involution given by reversing the order of the smash factors. Since realization preserves products there is a natural homeomorphism
\[A(S^j)\wedge |X|\cong|[k]\longmapsto A(S_{k}^j)\wedge X_k|\]

\begin{prop}\label{connectvitysmashwithx}
The map
\[A(S^j)\wedge |X|\longrightarrow A(S^j\wedge X)\]
induced by the map that sends $(\sum m_i\cdot\sigma_i)\wedge x$ to $\sum m_i\cdot(\sigma_i\wedge x)$
is non-equivariantly $(2j+\conn X)$-connected, and its restriction to the fixed points is
\[j+\min\{\conn X,\conn X^G\}\]
connected (cf. §\ref{secghtpytheory}).
\end{prop}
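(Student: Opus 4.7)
The plan is to establish both the non-equivariant and fixed-point connectivity bounds by reducing the problem to the case where $X$ is a $G$-representation sphere, then using iterated Freudenthal-type arguments for the structure maps of the ``real Eilenberg--MacLane spectrum'' $\{A(S^j)\}_{j\geq 0}$.

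First I would argue that it suffices to treat $X=S^V$ for a finite-dimensional orthogonal $G$-representation $V$ (so $V=\mathbb{R}^p$ decomposed as $q$ trivial and $p-q$ sign copies). The functor $A(-)$ applied to pointed simplicial sets sends wedges to direct sums and preserves cofiber sequences, while smashing with a fixed based space likewise preserves cofiber sequences. Since the map is natural in $X$ and both source and target are functorial in $X$, a cellular induction on the $G$-CW structure of $|X|$, combined with the five lemma in both the underlying and fixed-point categories, reduces the general connectivity estimate to the case of $G$-representation spheres.

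For $X=S^V$, the map $A(S^j)\wedge S^V \to A(S^j\wedge S^V)$ is, up to the canonical $G$-homeomorphism $S^j\wedge S^V\cong S^{j}\wedge (S^{1,0})^{\wedge q}\wedge (S^{1,1})^{\wedge (p-q)}$, an iterate of the structure maps $A(S^j)\wedge S^{1,0}\to A(S^{j+1})$ and $A(S^j)\wedge S^{1,1}\to A(S^{j+1})$. Non-equivariantly, $A(S^j)$ is $(j-1)$-connected, so the ordinary Freudenthal suspension theorem makes each structure map $2j$-connected, and iterating $|V|=p$ times produces the claimed $(2j+\conn X)$-connectivity. Equivariantly, Lemma \ref{connconfspace} gives that $A(S^j)^G$ is $(\lceil j/2\rceil-1)$-connected (since the fixed points of $S^j$ are $S^{\lceil j/2\rceil}$), and applying the $G$-Suspension Theorem \ref{Gsuspthm} to each structure map bounds the fixed-point connectivity. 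The estimate $j+\min\{\conn X,\conn X^G\}$ then emerges from tracking how many of the $p$ suspension coordinates are trivial (contributing to both $\conn X$ and $\conn X^G$) versus sign (contributing only to $\conn X$).

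The main obstacle will be the equivariant bookkeeping in the sphere case: the $G$-Suspension Theorem returns a pair of connectivity estimates that must be propagated through each iterated suspension, and the contribution to the fixed-point connectivity depends on whether each additional $S^1$ factor is fixed or free, as well as on whether it pairs with a factor of $S^j$ that already contributes to $(S^j)^G=S^{\lceil j/2\rceil}$. Setting up the inductive hypothesis so that the two bounds $(\nu_1,\nu_2)$ combine cleanly across all $p$ steps, and verifying that the minimum $\min\{\conn X,\conn X^G\}$ emerges rather than a weaker quantity like $\min\{\conn X,2\conn X^G+1\}$, is the technical heart of the argument. The cellular reduction, while conceptually standard, also requires applying the five lemma both non-equivariantly and on fixed points simultaneously, which relies on the fact that $A(-)$ and $(-\wedge X)$ preserve $G$-equivariant cofiber sequences.
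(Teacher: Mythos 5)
Your proposed reduction to $G$-representation spheres is the critical gap. For $G=\mathbb{Z}/2$, a $G$-CW filtration produces cofibers that are wedges of $(G/H)_+\wedge S^n$, i.e.\ spheres with \emph{trivial} action on $S^n$ and with $G$ merely permuting the wedge summands; these are \emph{not} representation spheres $S^V$. Conversely, a nontrivial representation sphere cannot appear as a single quotient of consecutive skeleta. So the cellular five-lemma induction you sketch does not reduce the statement to the case $X=S^V$; it reduces it to $X=\bigvee_J S^{n+1}$ with $G$ acting on the index set $J$ and trivially on each $S^{n+1}$, which is exactly the case the paper handles in Proposition~\ref{shpereinduction}. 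This distinction matters: with trivial action on the sphere, $(A(S^j)\wedge S^{n+1})^G = A(S^j)^G\wedge S^{n+1}$, and the fixed-point connectivity comes out of the $G$-suspension theorem applied to a \emph{trivial} $(n+1)$-fold suspension; by contrast, iterating sign-representation structure maps $A(S^j)\wedge S^{1,1}\to A(S^{j+1})$ behaves differently on fixed points and you have not shown that the bound $\min\{\conn X,\conn X^G\}$ emerges rather than a weaker quantity.

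A second point you pass over: the five-lemma argument in the paper compares a \emph{cofiber} sequence on the source side ($A(S^j)\wedge|X^{(n)}|\to A(S^j)\wedge|X|\to A(S^j)\wedge\bigvee S^{n+1}$) against a \emph{fiber} sequence on the target side ($A(S^j\wedge X^{(n)})\to A(S^j\wedge X)\to A(S^j\wedge\bigvee S^{n+1})$). Establishing that the latter is a $G$-fiber sequence is the content of Lemma~\ref{fibseqinclskel} and requires work (it uses Bousfield--Friedlander on the subdivision). Your proposal invokes ``$A(-)$ preserves cofiber sequences'' on both sides, which does not match what is actually needed. You also need a Blakers--Massey step to compare the resulting cofiber sequence with the fiber sequence, and a case analysis on how $n$ compares to $\conn X$ and $\conn X^G$; in the case $n<\conn X^G$ the fixed-point connectivity of the target is bounded by $j+\conn X$ via Lemma~\ref{connconfspace}, which is where $\conn X$ enters the fixed-point estimate --- an input your sketch does not account for.
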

We need two lemmas before starting the proof.

\begin{lemma}\label{connconfspace}
For any simplicial based $G$-set $X$, the space $A(X)^G$ is
\[\min\{\conn(X),\conn(X^G)\}\]
connected.
\end{lemma}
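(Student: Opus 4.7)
The plan is to exploit the $G$-equivariant cofibration $X^G \hookrightarrow X$, whose cofiber $X/X^G$ carries an induced $G$-action that is free away from the basepoint, and then to analyze the two ends separately. Applying the exact functor $A(-)$ produces a short exact sequence of simplicial $\mathbb{Z}[G]$-modules
\[
0 \longrightarrow A(X^G) \longrightarrow A(X) \longrightarrow A(X/X^G) \longrightarrow 0,
\]
where $G$ acts on $A(X^G)$ only through the involution $D$ of $A$, and where each simplicial level of $A(X/X^G)$ is a free $\mathbb{Z}[G]$-module. The freeness uses that $\mathbb{Z}(X_k/X_k^G)$ is $\mathbb{Z}[G]$-free away from the basepoint together with the elementary isomorphism $A\otimes\mathbb{Z}[G]\cong\mathbb{Z}[G]\otimes A^e$ of $\mathbb{Z}[G]$-modules (with $A^e$ carrying the trivial action) given by $a\otimes g\mapsto g\otimes g^{-1}a$. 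Since the quotient term is $\mathbb{Z}[G]$-projective in every degree, the sequence splits equivariantly level by level, and the exact functor $(-)^G$ applied to it yields a short exact sequence of simplicial abelian groups
\[
0 \longrightarrow A^G(X^G) \longrightarrow A(X)^G \longrightarrow A(X/X^G)^G \longrightarrow 0.
\]

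Next I would bound the connectivity of the two outer terms. By classical Dold--Thom, $\pi_i A^G(X^G)=\widetilde{H}_i(X^G;A^G)$, which vanishes for $i\leq\conn(X^G)$. For the quotient, the $\mathbb{Z}[G]$-freeness of $A(X/X^G)$ implies that the norm map defines a natural isomorphism of simplicial abelian groups $A(X/X^G)_G\xrightarrow{\cong}A(X/X^G)^G$, so $\pi_*A(X/X^G)^G$ is computed by a Cartan--Leray type spectral sequence
\[
E^2_{p,q} \;=\; H_p\bigl(G;\,\widetilde{H}_q(X/X^G;\,A)\bigr) \;\Longrightarrow\; \pi_{p+q}A(X/X^G)^G.
\]
The long exact sequence of the pair $(X,X^G)$ in homology with coefficients in $A$ forces $\widetilde{H}_q(X/X^G;A)=0$ for $q\leq\min\{\conn X,\,\conn X^G+1\}$, so the $E^2$-page vanishes in that range and $A(X/X^G)^G$ is at least $\min\{\conn X,\,\conn X^G+1\}$-connected.

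To finish, the long exact sequence of homotopy groups associated to the short exact sequence of simplicial abelian groups above sandwiches $\pi_iA(X)^G$ between $\pi_iA^G(X^G)$ and $\pi_iA(X/X^G)^G$; both vanish for $i\leq\min\{\conn X,\,\conn X^G\}$ by the previous paragraph, so $A(X)^G$ is $\min\{\conn X,\,\conn X^G\}$-connected, as claimed. The main technical point requiring care is the $\mathbb{Z}[G]$-projectivity of $A(X/X^G)$ in each simplicial degree, from which the splitting of the sequence of fixed points, the norm isomorphism, and ultimately the Cartan--Leray computation all follow by standard homological algebra.
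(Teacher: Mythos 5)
Your argument is correct but proceeds along a genuinely different route from the paper. The paper's proof is elementary and combinatorial: it observes that $A(X)^G$ is a fibrant simplicial set (being a simplicial abelian group), replaces $X$ by $\sin_{\sbt}|X|$ so that $X$ and $X^G$ are fibrant, writes a fixed chain explicitly as a sum of free orbits and fixed basis elements, and kills it degree by degree using the fibrancy of $X$ (for the free part) and of $X^G$ (for the fixed part). Your approach instead exploits the cofiber sequence $X^G\to X\to X/X^G$, Dold--Thom, the norm isomorphism, and the group-homology spectral sequence; this is more conceptual and makes visible \emph{why} both connectivities enter, at the price of heavier machinery. Both reach the same bound, and your version would generalize more readily to larger finite groups.

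One point should be tightened, although it does not affect the conclusion. The module $A(X_k/X_k^G)\cong\bigoplus_\lambda\bigl(\mathbb{Z}[G]\otimes_{\mathbb{Z}}A^e\bigr)$ is \emph{induced from the trivial subgroup}, not $\mathbb{Z}[G]$-free or even $\mathbb{Z}[G]$-projective in general; if $A$ has torsion (e.g.\ $A=\mathbb{Z}/2$, which is certainly allowed), a $\mathbb{Z}[G]$-projective module would have to be torsion-free as an abelian group. Induction from the trivial subgroup is still enough for everything you actually use: it gives $H_p(G;A(X_k/X_k^G))=0$ for $p>0$, so the strict orbits agree with the homotopy orbits and the Cartan--Leray spectral sequence applies, and the norm map is an isomorphism. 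For the exactness of the sequence of fixed points the simplest justification is not projectivity at all but the observation that the projection $A(X_k)\to A(X_k^G)$ (setting the free-orbit coefficients to zero) is manifestly $G$-equivariant and retracts the inclusion, so the short exact sequence of $\mathbb{Z}[G]$-modules splits canonically in each simplicial degree.
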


\begin{proof}
Recall that if $Y$ is a fibrant based simplicial set, the $k$-th homotopy group of $|Y|$ is trivial if and only if for every $y\in Y_k$ with $d_ly=\ast$, there is a $z\in Y_{k+1}$ such that $d_0z=y$ and $d_{l>0}z=\ast$. Now, $A(X)^G$ is the realization of the simplicial set
\[[k]\longmapsto A(X_k)^G\]
which is fibrant since it is a simplicial abelian group. 
Moreover, by replacing $X$ with the $G$-homotopy equivalent simplicial $G$-set $\sin_{\sbt} |X|$, we can assume $X$ and $X^G$ to be fibrant as well.

An element $\xi\in A(X_k)^G$ is a finite sum of the form
\[\xi=\sum_{i}a_i\cdot x_i+\sum_{j}b_j\cdot y_j+\sum_{i}Da_i\cdot\overline{x}_i\]
where the $x_i$ belong to $X_k\backslash X_{k}^G$, the $y_j$ belong to $X_{k}^G$, the $a_i$ to $A$ and the $b_j$ to $A^G$. Moreover if $\xi$ is such that $d_l\xi=\ast$ we must also have $d_lx_i=\ast$ and $d_ly_j=\ast$.

For $k\leq \min\{\conn(X),\conn(X^G)\}$, we can choose elements $z_i\in X_{k+1}$ such that $d_0z_i=x_i$ and $d_{l>0}z_i=\ast$, and elements $w_j\in X_{k+1}^G$ such that $d_0w_j=y_j$ and $d_{l>0}w_j=\ast$. The linear combination
\[\psi=\sum_{i}a_i\cdot z_i+\sum_{j}b_j\cdot w_j+\sum_{i}Da_i\cdot \overline{z}_i\in A(X_{k+1})^G\]
satisfies $d_0\psi=\xi$ and $d_{l>0}\psi=\ast$, showing that $\pi_kA(X)^G$ is trivial.
\end{proof}

Recall that a $G$-fiber sequence is a fiber sequence of $G$-maps whose restriction to the fixed points is also a fiber sequence. A real abelian group is a simplicial abelian group with levelwise additive involution that reverses the simplicial structure (cf. \ref{defrealob}). 

\begin{lemma}\label{fibseqinclskel}
Suppose that $Y$ is a real subset of a real set $X$, and $N$ is a real abelian group. Suppose moreover that both $N$ and $N^G$ are connected. Then the maps of real sets $Y\longrightarrow X\longrightarrow X/Y$ induce a $G$-fiber sequence
\[N(Y)\longrightarrow N(X)\longrightarrow N(X/Y)\]
on the realization.
\end{lemma}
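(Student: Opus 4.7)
The plan is to reduce the lemma to the classical fact that a short exact sequence of simplicial abelian groups realizes to a fibration sequence, applied both non-equivariantly and on $G$-fixed points.

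First, since $N(-)$ sends the pointed quotient of simplicial sets $X/Y$ to the cokernel of $N(Y) \hookrightarrow N(X)$, the inclusion $Y \hookrightarrow X$ gives rise to a short exact sequence of real abelian groups
\[0 \longrightarrow N(Y) \longrightarrow N(X) \longrightarrow N(X/Y) \longrightarrow 0.\]
Non-equivariantly, any surjection of simplicial abelian groups is a Kan fibration, so this realizes to a fibration sequence $|N(Y)| \to |N(X)| \to |N(X/Y)|$. For the fixed points, I would use Proposition \ref{subdivision} to identify $|N(Z)|^G$ with $|sd_e N(Z)^G|$ for $Z = Y, X, X/Y$. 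It then suffices to show that
\[0 \longrightarrow (sd_e N(Y))^G_k \longrightarrow (sd_e N(X))^G_k \longrightarrow (sd_e N(X/Y))^G_k \longrightarrow 0\]
is exact for every $k$, as this is again a short exact sequence of simplicial abelian groups and hence a fibration sequence on realizations.

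The key step is an orbit decomposition. In subdivision degree $k$, the set $X_{2k+1}$ is a bona fide $G$-set (not merely a real set), and $N(X_{2k+1})$ decomposes as a $\mathbb{Z}[G]$-module as a direct sum indexed by the $G$-orbits of $X_{2k+1} \setminus \ast$: each summand is either a copy of $N_{2k+1}$ (for a fixed-point orbit, with $G$ acting via $D$) or a copy of $N_{2k+1} \oplus N_{2k+1}$ (for a free orbit, with $G$ acting by swap-and-$D$). Since $Y$ is a real subset, it is $G$-closed in $X$, so each $G$-orbit is either entirely contained in $Y_{2k+1}$ or gives an orbit of $X_{2k+1}/Y_{2k+1}$, and correspondingly the short exact sequence above splits as a direct sum of pieces of the form $M \xrightarrow{=} M \to 0$ (orbits in $Y$) or $0 \to M \xrightarrow{=} M$ (orbits not meeting $Y$). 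Both types remain short exact after applying $(-)^G$, and hence so does their direct sum.

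The resulting short exact sequence of simplicial abelian groups realizes to a fibration sequence, which via the subdivision homeomorphism gives the desired $|N(Y)|^G \to |N(X)|^G \to |N(X/Y)|^G$. The connectivity assumptions on $N$ and $N^G$ guarantee that the bases $|N(X/Y)|$ and $|N(X/Y)|^G$ are path-connected, so these really are pointed fiber sequences in the sense required by the definition of a $G$-fiber sequence. The main technical point is unpacking the identification $(X/Y)^G = X^G/Y^G$ and verifying the orbit decomposition of the quotient, but once that is done the argument is essentially formal; there is no homological obstruction of the $H^1(G;N(Y))$ type because each orbit-piece is split short exact and the splitting is $G$-equivariant up to the trivial pieces.
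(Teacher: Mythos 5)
Your argument is correct and reaches the same conclusion, but it routes around the Bousfield--Friedlander machinery that the paper invokes. The paper first establishes that, levelwise in the $X$-direction, the projection $N(X_{2k+1})\to N(X_{2k+1}/Y_{2k+1})$ and its restriction to fixed points are fibrations (via the same orbit/splitting idea you use, phrased as the non-simplicial homeomorphism $N[X_k\setminus X_k^G]\oplus N^G(X_k^G)\cong N(X_k)^G$), and then applies Bousfield--Friedlander to commute geometric realization with the levelwise fiber sequence; the connectivity hypotheses on $N$ and $N^G$ enter precisely in verifying the $\pi_\ast$-Kan condition for the bisimplicial abelian groups. You instead observe that the whole sequence is a short exact sequence of simplicial abelian groups, hence a Kan fibration sequence (Moore's lemma), hence a fibration sequence on realization with no $\pi_\ast$-Kan check required. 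This is a genuinely cleaner reduction: by keeping everything in the category of simplicial abelian groups you avoid \ref{bous-fried} entirely, and as you note the connectivity hypothesis is then not strictly load-bearing.

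One small thing you gloss over: $N(Z)$ is a \emph{bisimplicial} abelian group (both $N$ and $Z$ have a simplicial direction), so $sd_e N(Z)$ as a single subdivision is not quite meaningful and Proposition \ref{subdivision} as stated does not apply directly. You either need to pass to the diagonal $\mathrm{diag}\,N(Z)$ (a real simplicial abelian group with the same realization) and subdivide that, so that the $k$-simplices become $N_{2k+1}(X_{2k+1})$ with the simultaneous involution, or else subdivide both simplicial directions as the paper does and take the diagonal afterwards. Your orbit decomposition is then carried out inside $N_{2k+1}(X_{2k+1})$, with summands copies of $N_{2k+1}$ and $N_{2k+1}\oplus N_{2k+1}$, exactly as you wrote. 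Once that is made explicit, the argument is complete and the identification $(X/Y)^G\cong X^G/Y^G$ you flag is indeed elementary, resting only on the fact that a real subset is stable under the involution so orbits lie entirely in $Y$ or entirely outside it.
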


\begin{proof}
Let us first show that for every $k$, both $N(X_k)\longrightarrow N(X_k/Y_k)$ and $N(X_k)^G\longrightarrow N(X_k/Y_k)^G$ are fibrations. We start with the non-equivariant map. This is the map
\[\bigoplus_{x\in X_k\backslash\ast}\!\!N\cdot x\longrightarrow \!\!\!\!\!\bigoplus_{[x]\in (X_k/Y_k)\backslash\ast}\!\!\!\!\!\!\!N\cdot [x]\cong \!\!\!\bigoplus_{x\in X_k\backslash Y_k}\!\!\!N\cdot x\]
that projects off the summands generated by elements of $Y_k$. It is clearly a fibration. Let us describe this map on the fixed points. Quite in general, there is a (non simplicial) homeomorphism
\[N[X_k\backslash X_{k}^G]\oplus N^G(X_{k}^G)\cong N(X_k)^G\]
that sends $(\sum n_i\cdot x_i,\sum m_j\cdot w_j)$ to
\[\sum n_i\cdot x_i+\sum m_j\cdot w_j+\sum D n_i\cdot \overline{x}_i\]
Here $N[-]$ denotes the unreduced configuration space.
Under this identification, the quotient map is the map
\[(\!\!\!\bigoplus_{x\in X_k\backslash X_{k}^G}\!\!\!\!\!N\cdot x)\oplus\!\!\!\bigoplus_{x\in X_{k}^G\backslash\ast}\!\!\!\!N^G\cdot x\longrightarrow (\!\!\!\!\!\!\!\bigoplus_{x\in X_k\backslash Y_k\backslash X_{k}^G}\!\!\!\!\!\!\!\!N\cdot x)\oplus \!\!\!\!\!\!\bigoplus_{x\in (X_{k}\backslash Y_k)^G}\!\!\!\!\!\!\!N^G\cdot x\]
that projects off the $Y_k$-summands. It is clearly a fibration.

In particular we showed that for every $k$ the sequence
\[N(Y_k)\longrightarrow N(X_k)\longrightarrow N(X_k/Y_k)\]
is a $G$-fiber sequence. We use Bousfield-Friedlander theorem \ref{bous-fried} to show that its realization is also a $G$-fiber sequence. Clearly our map is the realization of a map of bisimplicial $G$-abelian groups, namely the projection map from
\[[p][k]\longmapsto \{N_{2p+1}(X_{2k+1})\}\]
to
\[[p][k]\longmapsto \{N_{2p+1}(X_{2k+1}/Y_{2k+1})\}\]
By \cite[IV,4.2-(2)]{goersjardine}, the conditions for the Bousfield-Friedlander theorem \ref{bous-fried} apply if the simplicial sets $sd_e N_{\sbt}(X_{k}/Y_{k})$, $sd_e N_{\sbt}(X_{k}/Y_{k})^G$, $sd_e N_{\sbt}(X_{k})$ and $sd_e N_{\sbt}(X_{k})^G$ are fibrant and connected. They are fibrant since they are simplicial abelian groups. They are levelwise connected since both $N$ and $N^G$ are connected (for the fixed points use the splitting $ N_{\sbt}(X_{k})\cong N[X_k\backslash X_{k}^G]\oplus N^G(X_{k}^G)$).
\end{proof}

We prove \ref{connectvitysmashwithx} by induction on the cells of $|X|$, and therefore we need to prove it for spheres first. Suppose that $X$ is a wedge of spheres $X=\bigvee_J S^{n+1}$ where $J$ is a $G$-set. That is, $G$ acts trivially on the sphere and permutes the wedge components. This action is defined precisely in \ref{wedgesintoproducts}.

\begin{prop}\label{shpereinduction}
In the case of $X=\bigvee_J S^{n+1}$, the map 
\[\tau_{S^{n+1}}\colon A(S^j)\wedge \bigvee_J S^{n+1}\longrightarrow A(S^j\wedge \bigvee_J S^{n+1})\]
is $(2j+n,j+n)$-connected.
\end{prop}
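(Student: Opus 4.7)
The plan is to reduce the wedge case to a single sphere and then handle the single-sphere case via an equivariant Freudenthal-style suspension argument. First I would factor $\tau_{S^{n+1}}$ through the wedge-decomposition as
\[\bigvee_J (A(S^j) \wedge S^{n+1}) \xrightarrow{\bigvee_J \tau_1} \bigvee_J A(S^{j+n+1}) \xrightarrow{\iota} \bigoplus_J A(S^{j+n+1}) \cong A(\textstyle\bigvee_J S^{j+n+1}),\]
where $\tau_1$ denotes the single-sphere map $A(S^j)\wedge S^{n+1}\to A(S^{j+n+1})$. Since $G$ acts only by permuting wedge summands, wedging preserves $(c_1,c_2)$-connectivity: fixed points of a wedge decompose as the wedge over $J^G$ of fixed-summand fixed points, so the first arrow has the same equivariant connectivity as $\tau_1$ itself. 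The second arrow $\iota$ is the inclusion of a wedge into a direct sum, whose connectivity is controlled by \ref{wedgesintoproducts}; since each $A(S^{j+n+1})$ is $(j+n)$-connected non-equivariantly and its fixed points are at least $(\lceil j/2\rceil+n)$-connected by \ref{connconfspace}, one finds $\iota$ is $(2(j+n)+1,\,j+n)$-connected, which beats the desired $(2j+n,j+n)$ bound. Thus the proposition reduces to the single-sphere statement.

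For the single-sphere case the key observation is that the adjoint $\widetilde{\tau_1}\colon A(S^j)\to \Omega^{n+1}A(S^{j+n+1})$ is a $G$-equivalence. Non-equivariantly it is the $(n+1)$-fold delooping structure map of the Eilenberg--MacLane $\Omega$-spectrum $HA$, an equivalence by standard delooping theory for group-like simplicial abelian groups. On fixed points, since $S^{n+1}$ carries trivial $G$-action, loops commute with fixed-points, so $(\Omega^{n+1}A(S^{j+n+1}))^G=\Omega^{n+1}A(S^{j+n+1})^G$, and the fixed-point map becomes the $(n+1)$-fold delooping structure map for the group-like simplicial abelian group $A(S^j)^G$, again an equivalence. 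Consequently $\tau_1$ equals $\epsilon\circ\Sigma^{n+1}\widetilde{\tau_1}$, where $\epsilon\colon \Sigma^{n+1}\Omega^{n+1}A(S^{j+n+1})\to A(S^{j+n+1})$ is the counit; and since $\Sigma^{n+1}\widetilde{\tau_1}$ is an equivalence, $\tau_1$ has the same connectivity as $\epsilon$. Using the equivariant Freudenthal suspension theorem \ref{Gsuspthm} dualized via the triangle identities of the suspension-loops adjunction, one has: for a $G$-space $Y$ that is $(\nu_1,\nu_2)$-connected, the counit $\Sigma^V\Omega^V Y\to Y$ for trivial $V$ of dimension $n+1$ is at least $(2\nu_1-n,\,\min\{\nu_1,\,2\nu_2-n\})$-connected. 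Plugging in $Y=A(S^{j+n+1})$ with $\nu_1=j+n$ and $\nu_2\geq \lceil j/2\rceil+n$, the counit, and hence $\tau_1$, is at least $(2j+n,\,j+n)$-connected.

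The main obstacle is the fixed-point bound in the single-sphere case. A priori, $A(S^{j+n+1})^G$ is only $(\lceil j/2\rceil+n)$-connected, strictly less than $(j+n)$ when $j\geq 2$; nevertheless $\tau_1$ achieves the stronger bound because it compares two different models of the same stage of $HA$, differing only in higher homotopy that is invisible at this range. Capturing this requires invoking the equivariant Freudenthal theorem carefully to exploit the $\min\{\nu_1,\,2\nu_2-\dim V^G+1\}$ term and verifying the arithmetic identity $2\lceil j/2\rceil\geq j$ (with equality for $j$ even) to ensure the minimum is exactly $j+n$. Setting up the adjoint-equivalence in the $G$-equivariant category, including the interaction between the reversal action on $S^j$ and the involution $D$ on $A$, is the other delicate point.
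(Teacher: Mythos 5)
Your proof is correct and uses the same two key ingredients as the paper: the fact that the adjoint $\widetilde{\tau}_1\colon A(S^j)\to\Omega^{n+1}A(S^j\wedge S^{n+1})$ is a $G$-equivalence (this is Proposition \ref{Gdeloopingsformonoid}, which you invoke implicitly as "standard delooping theory for group-like simplicial abelian groups"), and the equivariant Freudenthal theorem \ref{Gsuspthm}. The two arguments are dual to one another via the triangle identities of the $\Sigma^{n+1}\dashv\Omega^{n+1}$ adjunction. The paper looks at the unit side: it writes the (equivalence) $\widetilde{\tau}_1$ as $\Omega^{n+1}\tau_1\circ\mu$, bounds $\mu$ directly by \ref{Gsuspthm}, deduces the connectivity of $\Omega^{n+1}\tau_1$, and then de-loops. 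You instead look at the counit side, factoring $\tau_1=\epsilon\circ\Sigma^{n+1}\widetilde{\tau}_1$ and reducing to bounding the connectivity of $\epsilon$. Your approach requires one extra step — deriving the counit's connectivity from the unit's via the triangle identity — which you flag as the "delicate point" and carry out correctly; the paper's route avoids this by applying Freudenthal to $\mu_{A(S^j)}$ directly, which is a bit more economical. The reduction from the wedge over $J$ to a single sphere is identical in both (factor through $\bigvee_J A(S^{j+n+1})\hookrightarrow\bigoplus_J A(S^{j+n+1})$ and apply \ref{wedgesintoproducts}). Incidentally, your fixed-point connectivity bound for the wedge-into-sum map ($j+n$) is what \ref{wedgesintoproducts} actually gives; the paper's stated $j+2n+1$ appears to result from a small misapplication of that proposition, though this has no effect on the conclusion since both exceed the required bound.
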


\begin{proof} Notice that the canonical homeomorphism
\[A(S^j\wedge \bigvee_J S^{n+1})\cong\bigoplus_JA(S^j\wedge S^{n+1})\]
is equivariant (for the action on the direct sum of \ref{wedgesintoproducts}). 
Rewrite $\tau_{S^{n+1}}$ as
\[A(S^j)\!\wedge\!\bigvee\limits_J S^{n+1}\!\cong\!\bigvee\limits_J (A(S^j)\!\wedge\!S^{n+1})\longrightarrow \bigoplus\limits_J A(S^j\!\wedge\!S^{n+1})\!\cong\!A(S^j\!\wedge\!\bigvee\limits_J S^{n+1})\]
The middle map factors as
\[\xymatrix{\bigvee_J (A(S^j)\wedge S^{n+1})\ar[r]^f\ar[dr]&\bigvee_J A(S^j\wedge S^{n+1})\ar[d]^i\\
&\bigoplus_J A(S^j\wedge S^{n+1})
}\]
The connectivity of $i$ is non-equivariantly $2(j+n)+1$ as inclusion of wedges into sums. Its connectivity on fixed points is, by \ref{wedgesintoproducts},
\[\min\{2(j+n)+1,2(\frac{j}{2}+n)+1\}=j+2n+1\]
since $A(S^j\wedge S^{n+1})$ is $(j+n,\frac{j}{2}+n)$-connected by \ref{linear0connected}.
Therefore it is enough to show that the connectivity of $f$ is at least $(2j+n,j+n)$.

Non-equivariantly, $f$ is just a wedge of maps $A(S^j)\wedge S^{n+1}\longrightarrow A(S^j\wedge S^{n+1})$, and on the fixed points it is the wedge
\[\bigvee_{J^G} A(S^j)^G\wedge S^{n+1}\longrightarrow\bigvee_{J^G} A(S^j\wedge S^{n+1})^G\]
Therefore $f$ is as connected as the map $\tau\colon A(S^j)\wedge S^{n+1}\longrightarrow A(S^j\wedge S^{n+1})$. To see how much $\tau$ is connected, consider the commutative diagram
\[\xymatrix{\Omega^{n+1}(A(S^j)\wedge S^{n+1})\ar[r]^-{\Omega^{n+1}\tau}&\Omega^{n+1}A(S^j\wedge S^{n+1})\cong\Omega^{n+1}A(S^{j})(S^{n+1})\\
A(S^j)\ar[u]\ar[ur]_{\simeq}
}\]
The diagonal map is an equivalence by \ref{Gdeloopingsformonoid}, where the topological monoid with involution is now $A(S^{j})$.  The vertical map is the adjoint of the identity, which is $(2j-1,j-1)$-connected by the $G$-suspension theorem \ref{Gsuspthm}. Thus $\tau$ is $(2j+n,j+n)$-connected.
\end{proof}

\begin{proof}[Proof of \ref{connectvitysmashwithx}]
We make a proof by induction on the cells of $|X|$.

Suppose that $|X|$ is zero dimensional, that is a  pointed $G$-set. In this case $\tau$ is the inclusion of wedges into sums
\[\tau\colon A(S^j)\wedge |X|\cong\bigvee_{|X|}A(S^j)\longrightarrow
\bigoplus_{|X|}A(S^j)\cong A(S^j\wedge X)\]
Non equivariantly this is $2(j-1)+1=(2j-1)$-connected ($X$ is in this case $(-1)$-connected). Its restriction to the fixed  points $\tau^G$ has connectivity
\[\min\{2(\frac{j}{2}-1)+1,j-1\}=j-1\]
by \ref{wedgesintoproducts}.

For the induction step, suppose that we proved the statement for all $Y$ of dimension less than or equal to $n$, and take $X$ of dimension $n+1$.

Let us start by calculating the non-equivariant connectivity of \[\tau\colon A(S^j)\wedge |X|\longrightarrow A(S^j\wedge X)\]
We need to separate different cases.
\begin{itemize}
\item $n<\conn X$: Since $X$ is of dimension $n+1$, this forces $X$ to be contractible, and in this case $\tau$ is trivially an equivalence since both its source and its target are contractible.
\item $\conn X< n$: The inclusion of the (simplicial) $n$-skeleton $X^{(n)}\longrightarrow X^{(n+1)}=X$ induces a diagram of maps
\[\xymatrix{A(S^j)\wedge |X^{(n)}|\ar[r]\ar[d]^-{\tau^{(n)}}&A(S^j)\wedge |X|\ar[r]\ar[d]^-{\tau}&A(S^j)\wedge \bigvee S^{n+1}\ar[d]^-{\tau_{S^{n+1}}}\\
A(S^j\wedge X^{(n)})\ar[r]&A(S^j\wedge X)\ar[r]&A(S^j\wedge \bigvee S^{n+1})
}\]
The bottom row is a fiber sequence by the lemma \ref{fibseqinclskel}, by setting $N=A(S^j)$.
The top row is a cofiber sequence, as smash of a cofiber sequence with a space. By the Blakers-Massey theorem (cf. \cite[2.3]{calculusII}), the map from $A(S^j)\wedge |X^{(n)}|$ to the homotopy fiber of the projection map $A(S^j)\wedge |X|\longrightarrow A(S^j)\wedge \bigvee S^{n+1}$ is
\[1-2+(j+\conn X^{(n)}+1)+(j+n)\geq 2j+n-1\geq 2j+\conn X\]
connected. By a five-lemma argument for the long exact sequences in homotopy groups, $\tau$ is $(2j+\conn X)$-connected as long as  both $\tau_{S^{n+1}}$ and $\tau^{(n)}$ are at least $(2j+\conn X)$-connected.
The map $\tau_{S^{n+1}}$ is $(2j+n)$-connected by the sphere case \ref{shpereinduction}, which is greater than $2j+\conn X$. By hypothesis of induction, the connectivity of $\tau^{(n)}$ is $2j+\conn X^{(n)}$. Since the inclusion of the $n$-skeleton $X^{(n)}\longrightarrow X$ is $n$-connected and $\conn X< n$, we must have $\conn X^{(n)}\geq\conn X$, and therefore $\tau^{(n)}$ is at least $(2j+\conn X)$-connected.
\item $n=\conn X$: In this case 
$X$ is homotopy equivalent to a wedge of spheres (see e.g. \cite[4.16]{hatcher}). Therefore $\tau$ is $(2j+n)$-connected by \ref{shpereinduction}.
\end{itemize} 
We now study the connectivity of the restriction to the fixed points \[\tau^G\colon A(S^j)^G\wedge |X|^G\longrightarrow A(S^j\wedge X)^G\] with a similar technique. By replacing $X$ with its subdivision, we can assume that the action is simplicial, so that $|X|^G\cong |X^G|$ is a subcomplex of $|X|$. Denote
\[c=\min\{\conn X^G,\conn X\}\]
and we separate the problem in the following cases.
\begin{itemize}
\item $n<c$: Both $X$ and $X^G$ are at most of dimension $n+1$, and therefore contractible. The map $\tau^G$ is an equivalence since both its source and its target are contractible (cf. \ref{connconfspace}).
\item $c<n$: The inclusion of the $n$-skeleton gives a diagram
\[\xymatrix{A(S^j)^G\wedge |X^{(n)}|^G\ar[r]\ar[d]^-{{\tau^{(n)}}^G}&A(S^j)^G\wedge X^G\ar[r]\ar[d]^-{\tau^G}&A(S^j)^G\wedge (\bigvee S^{n+1})^G\ar[d]^-{\tau_{S^{n+1}}^G}\\
A(S^j\wedge X^{(n)})^G\ar[r]&A(S^j\wedge X)^G\ar[r]&A(S^j\wedge \bigvee S^{n+1})^G
}\]
with a fiber sequence as bottom row by \ref{fibseqinclskel}. Notice that since ${X^{(n)}}^G={X^G}^{(n)}$, the top row is a cofiber sequence. By Blakers-Massey, the map from $A(S^j)^G\wedge |X^{(n)}|^G$ to the homotopy fiber of the projection map $A(S^j)^G\wedge |X|^G\longrightarrow A(S^j)^G\wedge (\bigvee S^{n+1})^G$ is
\[\lceil\frac{j}{2}\rceil+\conn {X^{(n)}}^G+\lceil\frac{j}{2}\rceil+n\geq j+c\]
connected.
By the five lemma, $\tau^G$ is $(j+c)$-connected if both $\tau_{S^{n+1}}^G$ and ${\tau^{(n)}}^G$ are at least $(j+c)$-connected.
The map $\tau_{S^{n+1}}^G$ is $(j+n)$-connected by \ref{shpereinduction}, which is bigger than $j+c$.
By hypothesis of induction, the connectivity of  ${\tau^{(n)}}^G$ is 
\[j+\min\{\conn{X^{(n)}}^G,\conn X^{(n)}\}\]
Since the inclusions of the zero skeleta $X^{(n)}\longrightarrow X$ and ${X^{(n)}}^{G}={X^{G}}^{(n)}\longrightarrow X^G$ are $n$-connected, and $n<c$, we must have $c\leq \min\{\conn{X^{(n)}}^G,\conn X^{(n)}\}$. Therefore ${\tau^{(n)}}^G$ is at least $(j+c)$-connected.
\item $n<\conn X^G$: This forces $X^G$ to be contractible since it is at most of dimension $n+1$. In this case $\tau^G$ has contractible source, and its target has connectivity
\[\min\{j+\conn X,\lceil\frac{j}{2}\rceil+\conn X^G\}=j+\conn X\]
by \ref{connconfspace}.
\item $c=n,n\geq \conn X^G$: This condition implies $n=\conn X^G\leq\conn X$. In particular $X^G$ is homotopy equivalent to a wedge of $(n+1)$-spheres. Consider $X^G$ as a trivial simplicial $G$-set, and factor $\tau^G$ as
\[A(S^j)^G\wedge |X|^G=(A(S^j)\wedge |X|^G)^G{\longrightarrow} A(S^j\wedge X^G)^G\stackrel{A(\iota)^G}{\longrightarrow} A(S^j\wedge X)^G\]
The first map is the map $\tau$ for the trivial $G$-space $X^G$, which is $(j+n)$-connected by \ref{shpereinduction} since $X^G$ is a wedge of spheres. The second map is induced by the $G$-map $\iota\colon S^j\wedge X^G\longrightarrow S^j\wedge X$ by functoriality of $A(-)$. It remains to show that $A(\iota)^G$ is $(j+c)$-connected. The inclusion $\iota\colon S^j\wedge X^G\longrightarrow S^j\wedge X$ induces a fiber sequence
\[A(S^j\wedge X^G)^G\stackrel{A(\iota)^G}{\longrightarrow} A(S^j\wedge X)^G\longrightarrow A(S^j\wedge X/X^G)^G\]
by \ref{fibseqinclskel}. Therefore $A(\iota)^G$ is as connected as $A(S^j\wedge X/X^G)^G$. By \ref{connconfspace} this has connectivity
\[\min\{j+\conn X/X^G,\lceil\frac{j}{2}\rceil+\conn X^G/X^G\}=j+\conn X/X^G=j+c\]
\end{itemize}
\end{proof}

\newpage

\bibliographystyle{amsalpha}
\bibliography{biblio}

\end{document}